\DeclareMathAlphabet{\mymathbb}{U}{BOONDOX-ds}{m}{n}
\def\twocell[#1]{\arrow[#1, dash, phantom, "\Rightarrow"{scale=1.125, yshift=-.4pt, description, allow upside down, sloped, inner sep=0pt}]}
\tikzset{curve/.style={settings={#1},to path={(\tikztostart)
			.. controls ($(\tikztostart)!\pv{pos}!(\tikztotarget)!\pv{height}!270:(\tikztotarget)$)
			and ($(\tikztostart)!1-\pv{pos}!(\tikztotarget)!\pv{height}!270:(\tikztotarget)$)
			.. (\tikztotarget)\tikztonodes}},
	settings/.code={\tikzset{quiver/.cd,#1}
		\def\pv##1{\pgfkeysvalueof{/tikz/quiver/##1}}},
	quiver/.cd,pos/.initial=0.35,height/.initial=0}
\newcommand{\mynewtheorem}[2]{\newaliascnt{#1}{theorem}\newtheorem{#1}[#1]{#2}\aliascntresetthe{#1}}
\newtheorem{theorem}{Theorem}[section]
\theoremstyle{definition}
\newtheorem*{claim*}{Claim}
\newtheorem*{introdefinition}{Definition}
\newtheorem*{notation}{Notation}
\newtheorem*{remark*}{Remark}
\newtheorem{introthm}{Theorem}
\newaliascnt{introcorollary}{introthm}
\newtheorem{introcorollary}[introcorollary]{Corollary}
\newcommand{\qednow}{\pushQED{\qed}\qedhere\popQED}
\newcommand{\Aa}{{\mathcal{A}}}
\newcommand{\Bb}{{\mathcal{B}}}
\newcommand{\Cc}{{\mathcal{C}}}
\newcommand{\Dd}{{\mathcal{D}}}
\newcommand{\Ee}{{\mathcal{E}}}
\newcommand{\Ff}{{\mathcal{F}}}
\newcommand{\Mm}{{\mathcal{M}}}
\newcommand{\myMm}{\,\widehat{\!\smash{\Mm}\vphantom{\textup{t}}}}
\newcommand{\Ii}{{\mathcal{I}}}
\newcommand{\Kk}{{\mathcal{K}}}
\newcommand{\Qq}{{\mathcal Q}}
\newcommand{\Tt}{{\mathcal{T}}}
\newcommand{\Vv}{{\mathcal{V}}}
\newcommand{\Ww}{{\mathcal{W}}}
\newcommand{\Xx}{{\mathcal X}}
\newcommand{\Z} {{\mathbb{Z}}}
\renewcommand{\phi}{\varphi}
\renewcommand{\epsilon}{\varepsilon}
\DeclareMathOperator{\Sp}{Sp}
\DeclareMathOperator{\Spc}{Spc}
\DeclareMathOperator{\Cat}{Cat}
\DeclareMathOperator{\CAT}{CAT}
\newcommand{\PrL}{\textup{Pr}^{\textup{L}}}
\DeclareMathOperator{\Hom}{Hom}
\DeclareMathOperator{\Fun}{Fun}
\DeclareMathOperator{\PSh}{PSh}
\DeclareMathOperator{\Shv}{Shv}
\DeclareMathOperator{\Ar}{Ar}
\DeclareMathOperator{\CMon}{CMon}
\DeclareMathOperator{\AdTrip}{AdTrip}
\DeclareMathOperator{\Span}{Span}
\DeclareMathOperator{\Rep}{Rep}
\newcommand{\catop}{^{\mathrm{op}}}
\newcommand{\op}{{\textup{op}}}
\newcommand{\Seg}{{\textup{Seg}}}
\renewcommand{\smallint}{{\textstyle\int}}
\DeclareMathOperator{\core}{\iota}
\newcommand{\colim}{\textup{colim}}
\newcommand{\const}{\textup{const}}
\newcommand{\id}{\textup{id}}
\newcommand{\pr}{\textup{pr}}
\newcommand{\BC}{\textup{BC}}
\newcommand{\Orb}{{\vphantom{\textup{t}}\smash{\textup{Orb}}}}
\newcommand{\Glo}{\textup{Glo}}
\newcommand{\res}{\textup{res}}
\newcommand{\ind}{\textup{ind}}
\newcommand{\coind}{\textup{coind}}
\newcommand{\Nm}{\mathop{\vphantom{t}\smash{\textup{Nm}}}}
\DeclareMathOperator{\Nmadj}{\widetilde{\Nm}}
\renewcommand{\sqcup}{\amalg}
\renewcommand{\bigsqcup}{\coprod}
\newcommand{\ulhelper}[2]{\underline{\setbox0=\hbox{$#1#2$}\dp0=1pt \box0\relax}}
\newcommand{\ul}[1]{{\mathpalette\ulhelper{#1}}\hbox{\rule[-2pt]{0pt}{0pt}}}
\newcommand{\ulFun}{\ul{\Fun}}
\newcommand{\ulbbU}[1]{\ul{#1}}
\newcommand{\finSets}{\mathbb{F}}
\newcommand{\finTsets}{{\finSets_{T}}}
\newcommand{\finPsets}{\finSets_{T}^{P}}
\newcommand{\ulfinPsets}{\ul{\finSets}_T^{P}}
\newcommand{\Fin}{\textup{Fin}}
\newcommand{\FinGrpd}{\mathscr F}
\newcommand{\FinGrpdfaith}{\FinGrpd_{\raise2pt\hbox{$\scriptstyle\dagger$}}}
\newcommand{\Ab}{\textup{Ab}}
\newcommand{\Mod}{{\textup{Mod}}}
\newcommand{\ulSpan}{\ul{\textup{Span}}}
\newcommand{\coSeg}{\mathrm{coSeg}}
\newcommand{\blank}{{\textup{--}}}
\newcommand{\tcatUn}[1]{\mathop{\hfuzz=10pt\hbox to 0pt{$\textstyle\bm\int$}\kern.3pt\raise.2pt\hbox to 0pt{$\textstyle\bm\int$}\lower.2pt\hbox to 0pt{$\textstyle\bm\int$}\kern.3pt\hbox to 0pt{$\textstyle\bm\int$}\kern-.1pt\raise.1pt\hbox{\color{white}$\textstyle\int$}}}
\newcommand{\GammaS}{{\Gamma\kern-1.5pt\mathscr S}}
\newcommand{\mySp}{{\mathscr S\kern-2ptp}}
\newcommand{\mathscrGr}{{\mathscr G\kern-1.25ptr}}
\newcommand{\Mack}{{\textup{Mack}}}
\newcommand{\pt}{\textup{pt}}
\DeclareMathOperator{\fib}{fib}
\newcommand{\ev}{\mathrm{ev}}
\newcommand{\fw}{\mathrm{fw}}
\newcommand{\loc}{{\textup{loc}}}
\newcommand{\qfgen}{\mathrm{qfgen}}
\newcommand{\QFin}{\mathrm{QFin}}
\newcommand{\Set}{\mathrm{Set}}
\newcommand{\Qprod}{{\textup{$\Qq$-$\times$}}}
\newcommand{\Qoplus}{{\textup{$\Qq$-$\oplus$}}}
\newcommand{\Qcoprod}{{\textup{$\Qq$-$\amalg$}}}
\newcommand{\CatBQProd}{\Cat(\Bb)^\Qprod}
\newcommand{\iso}{\xrightarrow{\;\smash{\raisebox{-0.5ex}{\ensuremath{\scriptstyle\sim}}}\;}}
\newcommand{\isoname}[1]{\xrightarrow[\;\smash{\raisebox{0.5ex}{\ensuremath{\scriptstyle\sim}}}\;]{#1}}
\newcommand\noloc{%
	\nobreak
	\mspace{6mu plus 1mu}
	{:}
	\nonscript\mkern-\thinmuskip
	\mathpunct{}
	\mspace{2mu}
}
\newcommand{\buildarrowfromtikz}[1]{\mathrel{\begin{tikzcd}[ampersand replacement=\&, cramped, cells={nodes={inner sep=0pt}}, column sep=small] \null\arrow[r, #1]\&\null
\end{tikzcd}}}
\renewcommand{\rightarrowtail}{\buildarrowfromtikz{tail}}
\renewcommand{\twoheadrightarrow}{\buildarrowfromtikz{->>}}
\title[Parametrized (higher) semiadditivity and the universality of spans]{Parametrized (higher) semiadditivity\\ and the universality of spans}
\author{Bastiaan Cnossen}
\address{B.C.: Fakultät für Mathematik, Universität Regensburg, 93040 Regensburg, Germany}
\author{Tobias Lenz}
\address{T.L.:{\hskip0pt minus 1pt} Mathematical{\hskip0pt minus 1pt} Institute,{\hskip0pt minus 1pt} University{\hskip0pt minus 1pt} of{\hskip0pt minus 1pt} Utrecht,{\hskip0pt minus 1pt} Budapestlaan{\hskip0pt minus 1pt} 6,{\hskip0pt minus 1pt} 3584{\hskip0pt minus 1pt} CD{\hskip0pt minus 1pt} Utrecht, The Netherlands \& Mathematisches Institut, Rheinische{\hskip0pt minus 1pt} Friedrich-Wilhelms-Universit\"at{\hskip0pt minus 1pt} Bonn,{\hskip0pt minus 1pt} Endenicher{\hskip0pt minus 1pt} Allee{\hskip0pt minus 1pt} 60,{\hskip0pt minus 1pt} 53115{\hskip0pt minus 1pt} Bonn,{\hskip0pt minus 1pt} Germany (\textit{current address})}
\author{Sil Linskens}
\address{S.L.:{\hskip0pt minus 1pt} Mathematisches{\hskip0pt minus 1pt} Institut,{\hskip0pt minus 1pt} Rheinische{\hskip0pt minus 1pt} Friedrich-Wilhelms-Universit\"at{\hskip0pt minus 1pt} Bonn,{\hskip0pt minus 1pt} Endenicher{\hskip0pt minus 1pt} Allee{\hskip0pt minus 1pt} 60,{\hskip0pt minus 1pt} 53115{\hskip0pt minus 1pt} Bonn,{\hskip0pt minus 1pt} Germany}
\begin{document}
\begingroup\parskip=0pt
	\begin{abstract}
		Semiadditivity of an $\infty$-category, i.e.\ the existence of biproducts, provides it with useful algebraic structure in the form of a canonical enrichment in commutative monoids. This ultimately comes from the fact that the $\infty$-category of commutative monoids is the universal semiadditive $\infty$-category equipped with a finite-product-preserving functor to spaces, or equivalently that the $(2,1)$-category of spans of finite sets is the universal semiadditive $\infty$-category. In this article, we prove a vast generalization of these facts in the context of parametrized semiadditivity, a notion we define using Hopkins--Lurie's framework of ambidexterity. This simultaneously generalizes a result of Harpaz for higher semiadditivity and a result of Nardin for equivariant semiadditivity. We deduce that every parametrized semiadditive $\infty$-category is canonically enriched in Mackey functors/sheaves with transfers.

		As an application, we reprove the Mackey functor description of global spectra first obtained by the second-named author and generalize it to $G$-global spectra. Moreover, we obtain universal characterizations of the $\infty$-categories of $\Z$-valued $G$-Mackey profunctors and of quasi-finitely genuine $G$-spectra as studied by Kaledin and Krause--McCandless--Nikolaus, respectively.
	\end{abstract}

	\maketitle
	\tableofcontents
\endgroup

	\section{Introduction}
	Various interesting categories\footnote{Throughout this article, we will say `category' for `$\infty$-category.'} arising throughout mathematics, like any abelian category, the stable homotopy category, or the derived category of a scheme, are \emph{semiadditive}, meaning that they admit both finite coproducts and finite products and that these two constructions agree. This simple \emph{property} provides the category with useful algebraic \emph{structure}. For example, every semiadditive category admits an enrichment in the category of commutative monoids: given two morphisms $f,g\colon X \to Y$, their sum $f+g$ is given as the composite
	\begin{equation}\label{eq:addition}
	f + g \colon X \xrightarrow{\Delta} X \times X \xrightarrow{f \times g} Y \times Y \cong Y\amalg Y \xrightarrow{\nabla} Y,
	\end{equation}
	where $\Delta = (\id_X,\id_X)$ denotes the diagonal and $\nabla = (\id_Y,\id_Y)$ denotes the fold map (`codiagonal'). Getting this structure for free is particularly helpful in higher categorical contexts, where specifying a commutative monoids requires an infinite tower of coherence data in addition to the map $(\ref{eq:addition})$.

	One convenient way to encode the higher coherences in a commutative monoid is via \textit{span categories}. Recall that for a category $\Aa$ with pullbacks there is a span category $\Span(\Aa)$: it has the same objects as $\Aa$, and a morphism $A$ to $B$ is given by a \textit{span} $A \xleftarrow{} C \xrightarrow{} B$; composition in $\Span(\Aa)$ is given via pullback. For a category $\Cc$ with finite products, a commutative monoid in $\Cc$ can then be defined as a product-preserving functor from the span category of finite sets to $\Cc$:
	\[
	\CMon(\Cc) \coloneqq \Fun^{\times}(\Span(\Fin),\Cc).
	\]
	Since products in $\Span(\Fin)$ are computed by taking disjoint unions of finite sets, such a commutative monoid $M\colon \Span(\Fin) \to \Cc$ must be given on objects by sending a finite set $S$ to the $S$-fold product $M(1)^S$ of the value of $M$ at the one-point set $1 \in \Fin$. For a finite set $S$, the span $S \xleftarrow{=} S \to 1$ determines an `addition map' $M(1)^S \to M(1), (x_s)_{s\in S} \mapsto \sum_{s \in S} x_s$, and functoriality of $M$ encodes the higher coherences that these addition maps are supposed to satisfy, including unitality, associativity, and commutativity.

	As evidenced by the enrichment \eqref{eq:addition}, the notion of semiadditivity of categories is intimately tied to the theory of commutative monoids. At the heart of this connection lies the fact that the category $\CMon(\Cc)$ is the \textit{semiadditive completion} of $\Cc$, in the sense that the forgetful functor $\CMon(\Cc) \to \Cc$ given by evaluation at the one-point set is universal among product-preserving functors from a semiadditive category to $\Cc$. This can in turn be traced back to a universal property of the span category $\Span(\Fin)$ itself: it is the \textit{free semiadditive category} generated by the point.

	The main contribution of this article is to formulate and prove analogues of these statements in a general context of \emph{parametrized semiadditivity}. Before stating our results, let us illustrate how these parametrized variations of semiadditivity naturally come up in examples.

	(1) \textit{Equivariant semiadditivity} \cite{exposeI, nardin2016exposeIV, balmerAmbrogio_Mackey, QuigleyShay2021Tate, CLL_Global}:

	Given a finite group $G$, one obtains an equivariant analogue of commutative monoids by replacing $\Fin$ by the category $\Fin_G$ of finite $G$-sets: for a category $\Cc$ with finite products, we define the category of \textit{$\Cc$-valued $G$-Mackey functors\footnote{Note that we do not assume a $G$-Mackey functor to take values in commutative \textit{groups} in $\Cc$.}} as
	\[
		\Mack_G(\Cc) \coloneqq \Fun^{\times}(\Span(\Fin_G), \Cc).
	\]
	In \cite{exposeI}, an accompanying definition of \textit{$G$-semiadditivity} was suggested: instead of asking $S$-indexed coproducts to agree with $S$-indexed products merely for all finite sets $S$, like in ordinary semiadditivity, one would demand this more generally whenever $S$ is a \textit{finite $G$-set}. Formulating this requires the language of parametrized category theory: a \emph{$G$-category} is a product-preserving functor $\Cc\colon\Fin_G^\op\to\Cat$, or equivalently a contravariant functor on the $1$-category of transitive $G$-sets. More informally, this amounts to specifying a category $\Cc_H\coloneqq\Cc(G/H)$ for every subgroup $H\leqslant G$, together with a restriction map $\textup{res}^H_K\colon\Cc_H\to\Cc_K$ for all $K\leqslant H\leqslant G$.\footnote{There is some further structure related to conjugation by elements of $G$, which we are ignoring here for the sake of exposition.} For such $G$-categories there are natural notions of `$G$-(co)limits,' and having colimits for the orbit $G/H$ amounts to the existence of a left adjoint $\ind_G^H$ to the restriction $\textup{res}^G_H$ enjoying certain properties, while the existence of limits over $G/H$ corresponds to the existence of a well-behaved right adjoint $\coind^G_H$. Accordingly, $G$-semiadditivy in particular amounts to an equivalence between induction and coinduction. Just like semiadditivity, this property is ubiquitous in equivariant mathematics:

	\begin{itemize}
		\item For finite groups $H \leqslant G$ and an $H$-representation $V$, there is an \textit{induced $G$-representation} $\ind^G_H(V) = \bigoplus_{[g] \in G/H} V$. The assignment $V \mapsto \ind^G_H(V)$ defines a functor $\ind^G_H\colon \Rep_H \to \Rep_G$ which is both left and right adjoint to the restriction functor $\res^G_H\colon \Rep_G \to \Rep_H$.
		\item In stable equivariant homotopy theory, the restriction functor $\res^G_H\colon \Sp_G \to \Sp_H$ from genuine $G$-spectra to genuine $H$-spectra admits left and right adjoints $\ind^G_H$ and $\coind^G_H$, respectively. There is a canonical natural equivalence
		\[
		\ind^G_H \iso \coind^G_H,
		\]
		due to Wirthmüller \cite{wirthmuller1974equivariant}.
		\item Similar natural equivalences exist for equivariant KK-theory \cite{MeyerNest2006, Meyer2008, BEL2023Kasparov, CLL_Adams}, proper equivariant homotopy theory \cite{DHLPS2019Proper}, and global homotopy theory \cite{g-global,CLL_Global}.
	\end{itemize}
	The connection between equivariant semiadditivity and Mackey functors was investigated by Nardin \cite{nardin2016exposeIV}, who showed that the $G$-category of spans of finite $G$-sets is the \textit{free $G$-semiadditive $G$-category}.

	(2) \textit{Higher semiadditivity} \cite{hopkins2013ambidexterity, harpaz2020ambidexterity,CSY20}:

	For a finite group $G$ and a $G$-representation $V$ over a field $k$ such that the group order $\lvert G \rvert$ is invertible in $k$, there is a canonical isomorphism
	\[
	\Nm\nolimits_G\colon V/G \iso V^G, \qquad [v] \mapsto \sum\nolimits_{g} gv
	\]
	between the vector space $V/G$ of $G$-coinvariants and the space $V^G$ of $G$-invariants. We may think of this isomorphism as a `higher' version of semiadditivity, where limits and colimits indexed by finite sets are replaced by limits and colimits indeed by \textit{finite groupoids}. This phenomenon occurs more generally in chromatic homotopy theory: it was shown by \cite{GreenleesSadofsky, HoveySadofsky} that for a finite group $G$ and a $K(n)$-local spectrum $X$ with $G$-action, a certain \emph{norm map} $\Nm\nolimits_G\colon X_{hG} \to X^{hG}$ is an equivalence. In fact, Hopkins and Lurie \cite{hopkins2013ambidexterity} observed that this phenomenon occurs for a much larger class of indexing spaces:

	Recall that an \emph{$m$-finite space} ($m\ge-2$) is an $m$-truncated space with finitely many path components, each of which has finite homotopy groups; for example, the $0$-finite spaces are precisely the finite sets, while the $1$-finite spaces are precisely finite groupoids. Hopkins and Lurie then proved that the category of $K(n)$-local spectra is \emph{$m$-semiadditive} for all $m$ in the sense that there is a preferred equivalence
	\[
	\Nm\nolimits_A\colon \colim_A X \iso \lim\nolimits_A X
	\]
	for every $m$-finite space $A$ and every $A$-indexed $K(n)$-local spectrum $X\colon A \to \Sp_{K(n)}$.

	The accompanying notion of higher commutative monoids was investigated by Harpaz \cite{harpaz2020ambidexterity}: for a category $\Cc$ admitting limits over $m$-finite spaces, one defines the category $\CMon_m(\Cc)$ of \textit{$m$-commutative monoids} as
	\[
		\CMon_m(\Cc) \coloneqq \Fun^{m\text{-fin}}(\Span(\Spc_m), \Cc),
	\]
	where $\Spc_m$ denotes the category of $m$-finite spaces. Harpaz showed that category $\Span(\Spc_m)$ is the free $m$-semiadditive category on a single generator, and deduced that the forgetful functor $\CMon_m(\Cc) \to \Cc$, given by evaluation at the one-point space, exhibits $\CMon_m(\Cc)$ as the \textit{$m$-semiadditive completion} of $\Cc$, in the sense that it is terminal among functors that preserve $m$-finite limits from an $m$-semiadditive category to $\Cc$.

	(3) \textit{Ambidexterity in geometry}:

	Variations of semiadditivity also come up in geometric contexts. Consider for example a finite covering map $f\colon X \to Y$ of locally compact Hausdorff spaces, giving rise to a pullback functor $f^*\colon \Shv(Y;\Sp) \to \Shv(X;\Sp)$ at the level of sheaves of spectra. Then the functor $f^*$ admits both a left adjoint $f_!$ and a right adjoint $f_*$, and there exists a preferred natural equivalence $f_! \iso f_*$, see \cite{Volpe}*{Proposition~6.18}. Similarly, for a finite étale map $f\colon X \to Y$ of schemes, the pullback functor $f^*\colon \mathrm{SH}(Y) \to \mathrm{SH}(X)$ on Voevodsky's categories of motivic spectra again admits both adjoints which are canonically isomorphic \cite{AyoubI,Hoyois2017Motivic}. In \cite{Scholze2023SixFunctors} it is shown that this is a general phenomenon for six-functor formalisms in the sense of \cite{Mann2022padic}: there exists a class of `cohomologically proper étale maps' $f\colon X \to Y$ for which $f^*$ admits both adjoints which canonically agree.

	As will be discussed below, the appropriate corresponding notion of `commutative monoid' in this context is that of a \textit{sheaf with transfers}.

	\subsection*{Parametrized semiadditivity}

	A common feature among all variations of semiadditivity discussed above is that they provide equivalences $f_! \iso f_*$ between the left and right adjoint of a given functor $f^*$. This phenomenon, called \textit{ambidexterity}, was systematically investigated by Hopkins and Lurie \cite{hopkins2013ambidexterity} for applications in chromatic homotopy theory, and forms the basis of the general notion of parametrized semiadditivity introduced in this paper.

	We work in the setting of parametrized categories $\Cc\colon \Aa\catop \to \Cat$ indexed by some category $\Aa$. The notions of semiadditivity we introduce depend on a chosen wide subcategory $\Qq \subseteq \Aa$ which is closed under base change and which is \textit{inductible}, meaning that it is closed under diagonals and only contains truncated morphisms.

	\begin{introdefinition}[$\Qq$-semiadditivity]
		Given an inductible subcategory $\Qq \subseteq \Aa$, a functor $\Cc\colon \Aa\catop \to \Cat$ is called \textit{$\Qq$-semiadditive} if the following conditions are satisfied:
		\begin{enumerate}[(1)]
			\item It admits \textit{$\Qq$-colimits} (Definition \ref{def:Q_Colimits}): The functors $q^* \coloneqq \Cc(q) \colon \Cc(B) \to \Cc(A)$ for $q\colon A \to B$ in $\Qq$ admit left adjoints $q_!\colon \Cc(A) \to \Cc(B)$ satisfying a base change condition;
			\item It satisfies \textit{ambidexterity} for $\Qq$ (Construction \ref{cons:Ambidexterity}): For every morphism $q\colon A \to B$ in $\Qq$ an inductively defined transformation $\Nmadj_q\colon q^*q_! \to \id_{\Cc(A)}$ exhibits the left adjoint $q_!$ additionally as a \textit{right} adjoint to $q^*$.
		\end{enumerate}
	\end{introdefinition}

	Equivalently we may ask $\Cc$ to admit both $\Qq$-colimits and $\Qq$-limits (the dual notion) and demand that $\Qq$-limits commute with $\Qq$-colimits; see \Cref{prop:Characterization_Q_Semiadditivity} for a precise statement.

	This definition captures all aforementioned variations of semiadditivity:

	\begin{itemize}
		\item We recover ordinary semiadditivity of a category $\Cc$ by letting $\Aa = \Qq$ be the category $\Fin$ of finite sets and taking $\Cc(A) := \Fun(A,\Cc)$ for $A \in \Fin$: condition (1) demands the existence of finite coproducts, while condition (2) asks that finite coproducts are also exhibited as products (via some preferred map);
		\item Given a finite group $G$, we obtain \textit{$G$-semiadditivity} \cite{nardin2016exposeIV} for a $G$-category $\Cc$ by letting $\Aa = \Qq$ be the category of finite $G$-sets;
		\item We obtain \textit{equivariant semiadditivitity} \cite{CLL_Global} by letting $\Aa$ be the category of finite groupoids and letting $\Qq$ be the collection of faithful functors of groupoids.

		\item We obtain \textit{$m$-semiadditivity} of a category $\Cc$ by letting $\Aa = \Qq$ be the category $\Spc_{m}$ of $m$-finite spaces and taking $\Cc(A) \coloneqq \Fun(A,\Cc)$ for $A \in \Spc_{m}$.
		\item  We obtain ambidexterity for six-functor formalisms by letting $\Aa$ be the indexing category of the six-functor formalism and taking $\Qq$ to be an appropriate collection of finite étale maps in $\Aa$.
		\item Various variations are possible, including $p$-typical $m$-semiadditivity, global semiadditivity and very $G$-semiadditivity for an arbitrary group $G$; see \Cref{subsec:Examples}.
	\end{itemize}

	It is often convenient to restrict attention to functors $\Aa\catop \to \Cat$ that satisfy an appropriate sheaf condition. For example, for ordinary semiadditivity we need to restrict to product-preserving functors $\Fin\catop \to \Cat$ to guarantee they are of the form $A \mapsto \Fun(A,\Cc)$ for a category $\Cc$, so that the left and right adjoints of restriction indeed correspond to colimits and limits in $\Cc$. For this reason, we will fix a topos $\Bb$ and work in the context of \textit{$\Bb$-categories}, defined as limit-preserving functors $\Bb\catop \to \Cat$; note that for the topos $\Bb = \Shv_{\tau}(\Aa)$ of sheaves on $\Aa$ with respect to a given Grothendieck topology $\tau$, a $\Bb$-category contains the same data as a $\tau$-sheaf of categories $\Cc\colon \Aa\catop \to \Cat$. To make sure that $\Qq$-semiadditivity is compatible with descent, we will work with what we call \textit{locally inductible} subcategories $\Qq \subseteq \Bb$, see \Cref{def:Locally_Inductible} for a precise definition.

	\subsection*{Main results}

	We will now provide an overview of the main results established in this article. As explained above, we fix a topos $\Bb$ together with a locally inductible subcategory $\Qq \subseteq \Bb$, and work in the context of $\Bb$-categories. We may then consider the \emph{parametrized span category} $\ul{\Span}(\Qq)$ of $\Qq$, given by
	\[
		\ul{\Span}(\Qq) \colon \Bb\catop \to \Cat, \qquad A \mapsto \Span(\Qq_{/A}),
	\]
	where the functoriality in $A$ is induced by the pullback functors $f^*\colon \Qq_{/B} \to \Qq_{/A}$ for $f\colon A \to B$. This $\Bb$-category turns out to be $\Qq$-semiadditive: the functor $\Span(f^*)$ admits a left adjoint given by applying $\Span(\blank)$ to the postcomposition functor $f_!\colon \Qq_{/A} \to \Qq_{/B}$. That this is also a \textit{right} adjoint of $f^*$ follows from the self-duality of span categories; a slightly more elaborate argument shows that this adjunction is indeed exhibited by the map $\Nmadj$. Our first main result shows that $\ul{\Span}(\Qq)$ is in fact universal with this property:

	\begin{introthm}[\Cref{thm:universal_prop_par_spans}]
		\label{introthm:universal_prop_par_spans}
		The $\Bb$-category $\ul{\Span}(\Qq)$ is the free $\Qq$-semiadditive $\Bb$-category on a single generator.
	\end{introthm}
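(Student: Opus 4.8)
The plan is to reduce the theorem to two separate universal properties linked by the inclusion $j\colon \ul{\Qq} \hookrightarrow \ul{\Span}(\Qq)$ of the ``forward'' morphisms, where $\ul{\Qq}\colon \Bb\catop \to \Cat$ denotes the $\Bb$-category $A \mapsto \Qq_{/A}$ with restriction functors given by base change (this is a $\Bb$-category precisely because $\Qq$ is closed under base change, which by local inductibility also makes it satisfy descent). Unwinding the meaning of ``free $\Qq$-semiadditive $\Bb$-category on a single generator,'' the claim is that for every $\Qq$-semiadditive $\Bb$-category $\Dd$, evaluation at the generator $* = [1 \xrightarrow{\id} 1] \in \Qq_{/1} = \ul{\Span}(\Qq)(1)$ induces an equivalence $\Fun^{\Qq\text{-}\colim}(\ul{\Span}(\Qq), \Dd) \iso \Dd$, where $\Fun^{\Qq\text{-}\colim}$ denotes the $\Bb$-category of $\Qq$-colimit-preserving functors. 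I would obtain this by factoring evaluation at $*$ through restriction along $j$.

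\textbf{Step 1: $\ul{\Qq}$ is the free $\Bb$-category admitting $\Qq$-colimits on a single generator.} The $\Bb$-category $\ul{\Qq}$ admits $\Qq$-colimits: for $q\colon A \to B$ in $\Qq$ the base-change functor $q^*\colon \Qq_{/B} \to \Qq_{/A}$ has the postcomposition functor $q_!$ as a left adjoint, and the Beck--Chevalley condition holds by pasting of pullback squares. For the universal property, the point is that every object $(p\colon X \to A)$ of $\Qq_{/A}$ equals $p_!$ applied to the terminal object $\id_X$ of $\Qq_{/X}$, which is in turn the base change of $*$ along $X \to 1$; and every morphism of $\Qq_{/A}$ lies in $\Qq$, since by inductibility of $\Qq$ it factors as a base change of a relative diagonal followed by a base change of $p$. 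It then follows by the usual argument that a $\Qq$-colimit-preserving functor out of $\ul{\Qq}$ is both determined and freely determined by its value on $*$; this is the parametrized incarnation of the fact that $\Fin$ is freely generated under finite coproducts by the point, and should be available from the general theory of parametrized colimits.

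\textbf{Step 2: $j$ exhibits $\ul{\Span}(\Qq)$ as the $\Qq$-semiadditive completion of $\ul{\Qq}$.} Equivalently, for every $\Qq$-semiadditive $\Dd$ restriction along $j$ is an equivalence
\[
\Fun^{\Qq\text{-}\colim}(\ul{\Span}(\Qq), \Dd) \iso \Fun^{\Qq\text{-}\colim}(\ul{\Qq}, \Dd).
\]
For fully faithfulness: every morphism of $\Span(\Qq_{/A})$ factors as $g_! \circ h^!$ for morphisms $g,h$ of $\Qq_{/A}$, where $h^!$ is the span with backward leg $h$; since $\Dd$ is $\Qq$-semiadditive, a $\Qq$-colimit-preserving functor $\ul{\Span}(\Qq) \to \Dd$ preserves $\Qq$-limits as well (the norm maps $\Nmadj_q$ are preserved, being defined by a universal inductive procedure), and hence preserves the adjunctions $h_! \dashv h^!$; as $g_! = j(g)$ and $h_! = j(h)$, such a functor is pinned down by its restriction along $j$, and a $2$-categorical refinement of this observation yields full faithfulness. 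The real content is essential surjectivity: extending a given $\Qq$-colimit-preserving $F_0\colon \ul{\Qq} \to \Dd$ over $j$. Using the self-duality $\ul{\Span}(\Qq)\catop \simeq \ul{\Span}(\Qq)$ interchanging forward and backward morphisms, the sought extension $\bar F$ is forced to restrict on backward morphisms to the mate of $F_0$ built from the ambidexterity data of $\Dd$; one then has to glue these two halves into an honest functor. That this is possible -- i.e.\ the compatibility of the two halves and all higher coherences -- is exactly a Beck--Chevalley statement relating $\Qq$-colimits and $\Qq$-limits in $\Dd$, which holds by $\Qq$-semiadditivity. To make the coherences manageable I would use a fibrational model of $\ul{\Span}(\Qq)$ over $\Bb$ in the style of Barwick's adequate triples, turning the extension problem into the verification of a single Beck--Chevalley condition.

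I expect organizing this extension to be the main obstacle; it is the technical heart of the arguments of Harpaz \cite{harpaz2020ambidexterity} and Nardin \cite{nardin2016exposeIV} in the special cases, and in the present generality one must additionally keep the construction compatible with descent along $\Bb$ (where local inductibility of $\Qq$ enters) and with the inductive definition of $\Nmadj$. A clean way to handle both is to filter $\Qq$ by the truncatedness of its morphisms and induct: the base case is essentially trivial, and the inductive step feeds the semiadditivity already established at lower truncation levels into the Beck--Chevalley verification, mirroring the induction that sets up ambidexterity in the first place. Combining Steps 1 and 2 then yields, naturally in the $\Qq$-semiadditive $\Bb$-category $\Dd$, equivalences $\Fun^{\Qq\text{-}\colim}(\ul{\Span}(\Qq), \Dd) \iso \Fun^{\Qq\text{-}\colim}(\ul{\Qq}, \Dd) \iso \Dd$, which is the assertion.
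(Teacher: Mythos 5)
Your overall plan — factor evaluation at $\pt$ through restriction along $\ulbbU{\Qq} \hookrightarrow \ul{\Span}(\Qq)$, reduce Step~1 to the (known) universal property of $\ulbbU{\Qq}$ from \cite{martiniwolf2021limits}*{Theorem~7.1.13}, and then induct on the truncatedness of maps in $\Qq$ — is the same shape as the paper's proof. But there are two concrete problems in the execution.

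First, your full-faithfulness argument does not work as stated. You argue that a $\Qq$-cocontinuous functor out of $\ul{\Span}(\Qq)$ ``preserves the adjunctions $h_!\dashv h^!$'' in the span categories $\Span(\Qq_{/A})$ and is therefore pinned down on backward arrows. However, preserving $\Qq$-colimits and $\Qq$-limits is a statement about the \emph{parametrized} adjunctions $q_!\dashv q^*\dashv q_*$ between the fibers, not about internal adjunctions between objects of a single fiber $\Span(\Qq_{/A})$; a mere functor $\Span(\Qq_{/A})\to\Dd(A)$ has no reason to send adjoint morphisms to adjoint morphisms. The paper never needs this claim: uniqueness and existence of the extension are handled together by a Kan-extension argument (the ``good inclusion'' framework and the coSegal/Segal conditions), not by a fully faithfulness $+$ essential surjectivity split.

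Second, your two-stage decomposition — first extend to all forward arrows, then extend to all backward arrows — cannot be run in one shot. The mechanism the paper uses to add backward arrows is a ``coSegal $\Leftrightarrow$ Segal'' comparison (Proposition~\ref{prop:Segal_Equals_CoSegal}, Corollary~\ref{cor:Extending_Contravariant_Direction}), and its hypothesis requires the diagonals $\Delta(\Qq)$ to already be present in the class $\Qq_L$ of left-pointing arrows one is extending from. Starting from $\ulbbU{\Qq}$, the left-pointing arrows are only equivalences, so $\Delta(\Qq)\subseteq\iota\Qq$ fails whenever $\Qq$ contains a non-monomorphism. The paper resolves this by \emph{interleaving}: at each truncation level $n$ it passes through the intermediate category $\ul{\Span}(\Qq_{\le n},\Qq_{\le n-1},\Qq_{\le n})$, first enlarging the forward arrows to level $n$ (no semiadditivity needed, Lemma~\ref{lem:Segal_covariant}/Lemma~\ref{lem:Extending_Covariant_Direction}) and only then the backward arrows, exploiting that diagonals of $n$-truncated maps are $(n-1)$-truncated. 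Your write-up gestures at the truncation induction, but as stated your Steps~1 and~2 are not compatible with it, and the identification of the extension as a ``gluing of two halves'' elides precisely the coherence problem whose resolution (via parametrized factorization systems, good inclusions, and the compatibility of the coSegal map with the norm maps, Proposition~\ref{prop:Compatibility_CoSegal_And_Norm}) constitutes the technical substance of the paper's argument.
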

	More precisely, this means that for every $\Qq$-semiadditive $\Bb$-category $\Dd$, evaluation at the identity maps $\id_A \in \Span(\Qq_{/A})$ induces an equivalence of parametrized categories
	\[
	\ul{\Fun}^{\Qq\text-\times}(\ul{\Span}(\Qq),\Dd) \iso \Dd.
	\]
	Here the left-hand side denotes the full subcategory of the parametrized functor category spanned by the functors which are \emph{$\Qq$-continuous}, meaning that they commute with the rights adjoints of restrictions along maps in $\Qq$.

	\Cref{introthm:universal_prop_par_spans} in particular generalizes the universal property of the span category $\Span(\Spc_m)$ established by Harpaz, and in fact our proof follows a similar strategy: the inverse to the evaluation functor is produced via an iterative process of (parametrized) left and right Kan extensions.

	While the language of parametrized categories is necessary to even state the universal property of $\ul\Span(\Qq)$ for general $\Bb$, this result nevertheless has various applications outside parametrized category theory. In particular, since this paper first appeared on the arXiv, Ben-Moshe \cite{ben-moshe-transchromatic} has employed \Cref{introthm:universal_prop_par_spans} to establish naturality of the so-called \emph{transchromatic character map}. Moreover, we further build on \Cref{introthm:universal_prop_par_spans} in \cite{CLL_Span2} to prove a universal property for $(\infty,2)$-categories of iterated spans, which is relevant to the construction of six-functor formalisms.

	\subsection*{$\bm\Qq$-commutative monoids}
	Every choice of $\Qq$ gives rise to a corresponding notion of commutative monoids: If $\Cc$ is a $\Bb$-category admitting $\Qq$-limits, we define the $\Bb$-category of \textit{$\Qq$-commutative monoids in $\Cc$} as
	\[
		\ul{\CMon}^{\Qq}(\Cc) \coloneqq \ul{\Fun}^{\Qprod}(\ul{\Span}(\Qq), \Cc).
	\]

	Building on \Cref{introthm:universal_prop_par_spans}, we prove:

	\begin{introthm}[\Cref{thm:universal_prop_par_Qcom_monoids}, \Cref{thm:universal_prop_par_Qcom_monoids_presentable}]
		\label{introthm:universal_prop_par_Qcom_monoids}
		For a $\Bb$-category $\Cc$ admitting $\Qq$-limits, the forgetful functor $\ul{\CMon}^{\Qq}(\Cc) \to \Cc$ is terminal among $\Qq$-limit-preserving functors $\Dd \to \Cc$ from a $\Qq$-semiadditive $\Bb$-category $\Dd$:
		\[
			\ul\Fun^{\Qprod}(\Dd,\ul{\CMon}^{\Qq}(\Cc)) \iso \ul\Fun^{\Qprod}(\Dd,\Cc).
		\]
		We will say that $\ul{\CMon}^{\Qq}(\Cc)$ is the \textit{$\Qq$-semiadditive completion} of $\Cc$.

		Moreover, if $\Cc$ is presentable (and $\Qq$ satisfies a mild smallness condition), then also $\ul{\CMon}^{\Qq}(\Cc)$ is presentable, and the forgetful functor admits a left adjoint $\Cc \to \ul{\CMon}^{\Qq}(\Cc)$ which is initial among left adjoint functors $\Cc \to \Dd$ into a presentable $\Qq$-semiadditive $\Bb$-category $\Dd$:
		\[
			\ul\Fun^{\mathrm{L}}(\ul{\CMon}^{\Qq}(\Cc),\Dd) \iso \ul\Fun^{\mathrm{L}}(\Cc,\Dd).
		\]
	\end{introthm}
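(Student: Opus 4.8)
The plan is to deduce both statements formally from \Cref{introthm:universal_prop_par_spans}, the single genuinely new input being that forming $\Qq$-commutative monoids preserves $\Qq$-semiadditivity. First I would record two soft facts about parametrized functor categories. If $\Cc$ admits $\Qq$-limits, then for any $\Bb$-category $\Ee$ the $\Bb$-category $\ul{\Fun}(\Ee,\Cc)$ admits $\Qq$-limits computed pointwise; since $\Qq$-limits commute with $\Qq$-limits, the full subcategory $\ul{\Fun}^{\Qprod}(\Ee,\Cc)$ is closed under them and hence also admits $\Qq$-limits, again computed pointwise. In particular $\ul{\CMon}^{\Qq}(\Cc)$ admits $\Qq$-limits, and the forgetful functor (evaluation at the identity spans) preserves them. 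Combining this pointwise description with the Cartesian closure of $\Cat(\Bb)$ yields, for all $\Bb$-categories $\Dd,\Ee$ and all $\Cc$ with $\Qq$-limits, a natural equivalence
\[
\ul{\Fun}^{\Qprod}\bigl(\Dd,\ul{\Fun}^{\Qprod}(\Ee,\Cc)\bigr)\iso\ul{\Fun}^{\Qprod}\bigl(\Ee,\ul{\Fun}^{\Qprod}(\Dd,\Cc)\bigr),
\]
as both sides are identified with the $\Bb$-category of parametrized bifunctors $\Dd\times\Ee\to\Cc$ preserving $\Qq$-limits in each variable separately.

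The crucial step --- and the one I expect to be the main obstacle --- is the lemma that $\ul{\Fun}^{\Qprod}(\Dd,\Cc)$ is $\Qq$-semiadditive whenever $\Dd$ is $\Qq$-semiadditive and $\Cc$ admits $\Qq$-limits. By the previous paragraph this category admits $\Qq$-limits, so the content is that for every $q\colon A\to B$ in $\Qq$ the restriction functor acquires a left adjoint and that the transformation $\Nmadj_q$ exhibits it also as a right adjoint. Here one must exploit the ambidexterity of $\Dd$: since restriction along $q$ is already ambidextrous on $\Dd$, it induces on $\ul{\Fun}^{\Qprod}(\Dd,\Cc)$ a left adjoint to $q^*$ assembled from the coinduction functors of $\Cc$ together with the self-duality of $\Dd$ along $q$, and the resulting norm transformations are equivalences because they are built pointwise out of those of $\Dd$. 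The prototypical case $\Dd=\ul{\Span}(\Qq)$ --- namely that $\ul{\CMon}^{\Qq}(\Cc)$ is always $\Qq$-semiadditive --- should already require the full argument.

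Granting this lemma, the $\Qq$-semiadditive completion statement follows at once: for any $\Qq$-semiadditive $\Dd$,
\begin{align*}
\ul{\Fun}^{\Qprod}\bigl(\Dd,\ul{\CMon}^{\Qq}(\Cc)\bigr)
&=\ul{\Fun}^{\Qprod}\bigl(\Dd,\ul{\Fun}^{\Qprod}(\ul{\Span}(\Qq),\Cc)\bigr)\\
&\iso\ul{\Fun}^{\Qprod}\bigl(\ul{\Span}(\Qq),\ul{\Fun}^{\Qprod}(\Dd,\Cc)\bigr)
\iso\ul{\Fun}^{\Qprod}(\Dd,\Cc),
\end{align*}
where the first equivalence is the Fubini equivalence above and the second is \Cref{introthm:universal_prop_par_spans} applied to the $\Qq$-semiadditive $\Bb$-category $\ul{\Fun}^{\Qprod}(\Dd,\Cc)$. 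Unwinding the constructions, both equivalences are evaluation at the generating object $\id$ of $\ul{\Span}(\Qq)$, which on $\ul{\CMon}^{\Qq}(\Cc)$ is exactly the forgetful functor; hence the composite is induced by composing with the forgetful functor, as claimed.

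For the presentable refinement, suppose $\Cc$ is presentable and $\Qq$ is small enough that $\ul{\Span}(\Qq)$ is a small $\Bb$-category. Then $\ul{\Fun}(\ul{\Span}(\Qq),\Cc)$ is presentable and $\ul{\CMon}^{\Qq}(\Cc)$ is cut out inside it by inverting a small set of $\Qq$-limit comparison maps, hence is an accessible localization and in particular presentable; the forgetful functor $U$ preserves limits and is accessible, so the parametrized adjoint functor theorem produces a left adjoint $F\colon\Cc\to\ul{\CMon}^{\Qq}(\Cc)$. To obtain its universal property I would pass to right adjoints via $\ul{\Fun}^{\mathrm L}(\Xx,\Yy)\simeq\ul{\Fun}^{\mathrm R}(\Yy,\Xx)^{\op}$, reducing to the claim that the equivalence of the previous paragraph restricts to the subcategories of accessible limit-preserving functors $\Dd\to\ul{\CMon}^{\Qq}(\Cc)$ and $\Dd\to\Cc$. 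In one direction, postcomposition with the right adjoint $U$ sends right adjoints to right adjoints; in the other, $U$ is conservative (a morphism of $\Qq$-commutative monoids invertible on underlying objects is invertible, since every object of $\ul{\Span}(\Qq)$ is a $\Qq$-limit of identity spans and $\Qq$-commutative monoids preserve $\Qq$-limits) and limit-preserving, hence reflects limit-preservation, while accessibility of the produced lift is controlled by the smallness of $\ul{\Span}(\Qq)$ together with the explicit formula for the lift coming from \Cref{introthm:universal_prop_par_spans}. Dualizing back gives $\ul{\Fun}^{\mathrm L}(\ul{\CMon}^{\Qq}(\Cc),\Dd)\iso\ul{\Fun}^{\mathrm L}(\Cc,\Dd)$, and chasing the identifications shows it is restriction along $F$.
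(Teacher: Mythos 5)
Your overall plan is the paper's plan: reduce both parts of the theorem to the universal property of $\ul\Span(\Qq)$ via a Fubini equivalence for $\Qq$-limit-preserving bifunctors, with the one new ingredient being that $\ul\Fun^{\Qprod}(\Dd,\Cc)$ is $\Qq$-semiadditive whenever $\Dd$ is $\Qq$-semiadditive and $\Cc$ is $\Qq$-complete. You identify this correctly as the ``main obstacle,'' and the Fubini chain and presentable refinement (accessible localization cut out by a small set of comparison maps, then dualize through $\ul\Fun^{\mathrm{R}}$) both match Theorems~\ref{thm:universal_prop_par_Qcom_monoids}, \ref{thm:universal_prop_par_Qcom_monoids2} and \ref{thm:universal_prop_par_Qcom_monoids_presentable} and their supporting lemmas in the paper.

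However, your proof of the key lemma has a genuine gap. You say the left adjoint on $\ul\Fun^{\Qprod}(\Dd,\Cc)$ is ``assembled from the coinduction functors of $\Cc$ together with the self-duality of $\Dd$ along $q$'' and that ``the resulting norm transformations are equivalences because they are built pointwise out of those of $\Dd$.'' This last assertion cannot be right as stated: the norm on $\ul\Fun^{\Qprod}(\Dd,\Cc)$ is the one built by the inductive procedure of Construction~\ref{cons:Ambidexterity} for \emph{that} functor category, using the units and counits of the candidate adjoint. There is no direct ``pointwise'' relationship to the norms internal to the source $\Dd$ (and $\Cc$ need not be semiadditive, so nothing is pointwise in the target either). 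What is needed is precisely to show that the candidate adjoint, which you correctly expect to be pre-composition with the $\ul\Span(\Qq)$-(co)tensoring on $\Dd$, induces natural transformations whose associated inductively defined norm map is an equivalence. This is nontrivial and is exactly the content of Theorem~\ref{thm:semiadd-from-tensoring} and its dual, whose proof occupies Section~6 of the paper (Lemma~\ref{lemma:norm-vs-bifunctor-q*}, Proposition~\ref{prop:norm-vs-bifunctor-q!}, and the Beck--Chevalley computations around them). Without that tensoring characterization of $\Qq$-semiadditivity --- or an equivalent argument matching the $\mu$-maps from the $\ul\Span(\Qq)$-action with the inductively defined norms --- the lemma is only plausibly stated, not proved, and the rest of the derivation cannot get off the ground.

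Two smaller remarks. First, when you pass to $\ul\Fun^{\mathrm{R}}$ for the presentable statement, the argument that $\mathbb U$ is conservative needs no appeal to ``every object of $\ul\Span(\Qq)$ being a $\Qq$-limit of identities'' (though that works); it is simpler to observe that $\mathbb U$ factors as restriction along the wide subcategory $\ulbbU{\Qq}\catop\subseteq\ul\Span(\Qq)$ followed by an equivalence, as in Lemma~\ref{lemma:U-pres-refl-lim}. Second, in your reduction to the prototypical case $\Dd=\ul\Span(\Qq)$: the paper does not actually treat that case separately; once Theorem~\ref{thm:Q-x-out-of-semiadd} is in place it applies uniformly, so this sentence in your proposal is a red herring.
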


	\subsection*{Sheaves with transfers} Even though $\Qq$-commutative monoids in $\Cc$ are defined as certain \textit{parametrized} functors, we will show as our third main result that they admit a concrete non-parametrized description in terms of \textit{sheaves with transfers} for suitable choices of $\Cc$. To define these, we denote by $\Span(\Bb,\Bb,\Qq)$ the wide subcategory of the span category $\Span(\Bb)$ whose morphisms are spans $A \leftarrow C \rightarrow B$ where the map $C \to B$ lies in $\Qq$.

	\begin{introdefinition}
		For a presentable category $\Ee$ we define the category $\Shv^{\Qq}(\Bb;\Ee)$ of \textit{$\Ee$-valued $\Bb$-sheaves with $\Qq$-transfers} to be the full subcategory
		\[
			\Shv^{\Qq}(\Bb;\Ee) \subseteq \Fun(\Span(\Bb,\Bb,\Qq), \Ee)
		\]
		spanned by those functors $F\colon \Span(\Bb,\Bb,\Qq) \to \Ee$ whose restriction $F\vert_{\Bb\catop}\colon \Bb\catop \to \Ee$ preserves limits. We may assemble the categories of sheaves with transfers on the slice categories $\Bb_{/B}$ into a $\Bb$-category $\ul{\Shv}^{\Qq}(\Bb;\Ee)$.
	\end{introdefinition}

	If $\Qq$ is the groupoid core of $\Bb$, this simplifies to the $\Bb$-category $\ul\Shv(\Bb;\Ee)\colon B\mapsto \Fun^{\textup{R}}(\Bb_{/B}^\op,\Ee)$ of \emph{$\Ee$-valued $\Bb$-sheaves}, which can be equivalently described as the sheafification of the constant functor $\Bb\catop \to \Cat$ with value $\Ee$. We then show:

	\begin{introthm}[\Cref{thm:span-BBQ}]\label{introthm:mackey-sheaves}
		For every presentable category $\Ee$, there exists a (necessarily unique) natural equivalence
		\[
			\ul{\CMon}^{\Qq}(\ul\Shv(\Bb;\Ee)) \simeq \ul{\Shv}^{\Qq}(\Bb;\Ee)
		\]
		of $\Bb$-categories over $\ul\Shv(\Bb;\Ee)$.
	\end{introthm}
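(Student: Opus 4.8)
The plan is to establish the equivalence concretely, by identifying $\Qq$-product-preserving parametrized functors into the sheaf $\Bb$-category $\ul\Shv(\Bb;\Ee)$ with $\Ee$-valued sheaves with $\Qq$-transfers, and then checking that this identification is sufficiently natural to assemble into an equivalence of $\Bb$-categories. As a first reduction, note that both sides are $\Bb$-categories, i.e.\ limit-preserving functors $\Bb^{\op}\to\Cat$, and that for $B\in\Bb$ the value of each at $B$ is the same construction carried out over the slice topos $\Bb_{/B}$ with its induced locally inductible subcategory $\Qq_{/B}\subseteq\Bb_{/B}$: one has $(\ul\Span(\Qq))|_{\Bb_{/B}}\simeq\ul\Span(\Qq_{/B})$ and $(\ul\Shv(\Bb;\Ee))|_{\Bb_{/B}}\simeq\ul\Shv(\Bb_{/B};\Ee)$, and $\Shv^{\Qq}(\Bb;\Ee)$ restricts likewise. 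It therefore suffices to produce an equivalence of categories
\[
\Fun^{\Qprod}_{\Bb}\bigl(\ul\Span(\Qq),\ul\Shv(\Bb;\Ee)\bigr)\;\simeq\;\Shv^{\Qq}(\Bb;\Ee)
\]
over $\Shv(\Bb;\Ee)$, natural in the pair $(\Bb,\Qq)$ under base change; restricting to the slices $\Bb_{/B}$ then reassembles this into the desired equivalence of $\Bb$-categories.

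Given a $\Qq$-product-preserving $F\colon\ul\Span(\Qq)\to\ul\Shv(\Bb;\Ee)$, which is a natural family of functors $F_B\colon\Span(\Qq_{/B})\to\Fun^{\textup R}(\Bb_{/B}^{\op},\Ee)$, transpose each component to a functor $\widetilde F_B\colon\Span(\Qq_{/B})\times\Bb_{/B}^{\op}\to\Ee$ that is limit-preserving in the second variable, and set $G(A):=\widetilde F_A(\id_A,\id_A)$ for $A\in\Bb$. Then $G$ extends canonically to a functor $\Span(\Bb,\Bb,\Qq)\to\Ee$: its restriction along the backward embedding $\Bb^{\op}\hookrightarrow\Span(\Bb,\Bb,\Qq)$ is induced by the contravariant $\Bb_{/-}^{\op}$-variable of $\widetilde F$ and is limit-preserving because each $F_A(\id_A)$ is a sheaf (here one uses that the functors $Y\mapsto A'\times_A Y$ preserve colimits, colimits in $\Bb$ being universal); its restriction along $\Qq\hookrightarrow\Span(\Bb,\Bb,\Qq)$ encodes the transfers, built from the covariant span variable; and the relations in the span category follow from the coherences of $F$ together with the slice-change identifications $\widetilde F_B(f^{\ast}X',Y)\simeq\widetilde F_{B'}(X',Y)$ for $f\colon B\to B'$. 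So $F\mapsto G$ defines a functor $\Fun^{\Qprod}_{\Bb}(\ul\Span(\Qq),\ul\Shv(\Bb;\Ee))\to\Shv^{\Qq}(\Bb;\Ee)$, and it lies over $\Shv(\Bb;\Ee)$ since evaluating $F$ at the identity spans corresponds to restricting $G$ to $\Bb^{\op}$.

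Conversely, a sheaf with $\Qq$-transfers $G$ determines functors $\widetilde F^{G}_B\bigl(X\xrightarrow{q}B,\,Y\to B\bigr):=G(X\times_B Y)$, with $X\times_B Y$ regarded over $B$; this is covariant in spans in $\Qq_{/B}$ (base change of both legs stays in $\Qq$ since $\Qq$ is inductible), contravariant and limit-preserving in $Y$ (as $Y\mapsto X\times_B Y$ preserves colimits), and natural in $B$, so it assembles to a parametrized functor $F^{G}$. The crucial point is that $F^{G}$ is automatically $\Qq$-product-preserving: for $q\colon A\to B$ in $\Qq$ the coinduction on $\ul\Shv(\Bb;\Ee)$ is computed by the topos-theoretic formula $(q_{\ast}H)(Y)\simeq H(A\times_B Y)$, while on $\ul\Span(\Qq)$ the right adjoint $q_{\ast}$ of pullback is postcomposition with $q$ (self-duality of span categories, as in the discussion preceding \Cref{introthm:universal_prop_par_spans}); hence for $M=(Z\to A)$ both sides of the comparison $F^{G}_B(q_{\ast}M)\simeq q_{\ast}\bigl(F^{G}_A(M)\bigr)$ unwind — using $Z\times_A(A\times_B Y)\simeq Z\times_B Y$ — to the sheaf $Y\mapsto G(Z\times_B Y)$. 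One then checks that $G\mapsto F^{G}$ and $F\mapsto G$ are mutually inverse, the identity $F^{G}\simeq F$ being exactly where $\Qq$-product-preservation of $F$ enters, compatibly over $\Shv(\Bb;\Ee)$ and with base change. This proves the theorem; the asserted uniqueness is then automatic, since $\ul\Shv^{\Qq}(\Bb;\Ee)\to\ul\Shv(\Bb;\Ee)$ is thereby exhibited as the $\Qq$-semiadditive completion of $\ul\Shv(\Bb;\Ee)$, which is characterized by the universal property of \Cref{introthm:universal_prop_par_Qcom_monoids}.

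The main obstacle is the bookkeeping behind the passage ``$B$-naturality of $(\widetilde F_B)_B$'' $\Longleftrightarrow$ ``a single functor on $\Span(\Bb,\Bb,\Qq)$'': one must verify that the slice-change identifications genuinely glue the $\widetilde F_B$ into a functor on the large span category with \emph{no} residual condition beyond ``restriction to $\Bb^{\op}$ preserves limits'', and that under this dictionary $\Qq$-product-preservation on the parametrized side corresponds to no further condition on the other side, functorially in $(\Bb,\Qq)$. This is where one uses in an essential way that $\Qq$ is \emph{locally} inductible: it ensures that all slices $\Qq_{/B}$ are as expected, that diagonals of $\Qq$-morphisms again lie in $\Qq$ (so that base change of spans is well-behaved), and that the parametrized $\Qq$-limits of $\ul\Shv(\Bb;\Ee)$ admit the explicit pullback description used above.
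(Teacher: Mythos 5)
Your objectwise formulas are correct, and in fact the inverse formula $\widetilde F^{G}_{B}(X,Y)=G(X\times_{B}Y)$ is exactly what the paper's construction produces once unwound; your verification of $\Qq$-limit preservation via $Z\times_{A}(A\times_{B}Y)\simeq Z\times_{B}Y$ and the objectwise check that $F\mapsto G$ and $G\mapsto F^{G}$ are mutually inverse are also valid. But the proposal does not actually prove the theorem. The three load-bearing phrases --- ``$G$ extends canonically to a functor $\Span(\Bb,\Bb,\Qq)\to\Ee$,'' ``assembles to a parametrized functor $F^{G}$,'' and ``one then checks that $G\mapsto F^{G}$ and $F\mapsto G$ are mutually inverse'' --- are precisely the $\infty$-categorical content of the statement, and nothing is offered for them. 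Producing a functor on a span $\infty$-category from separate covariant and contravariant pieces plus relations, and gluing a family of fiberwise functors $\widetilde F_{B}$ into a single natural transformation of $\Bb$-categories, each require controlling an infinite tower of coherences; these are not ``bookkeeping'' in the sense of something you can wave at, they \emph{are} the proof, and you acknowledge as much in your final paragraph without resolving it.

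The paper's argument avoids ever having to ``assemble'' anything by hand, by constructing the equivalence as a single precomposition. Concretely: first, the cocartesian unstraightening $\smallint\ul\Span(\Qq)$ is identified with the span category $\Span_{\fw}(\Ar(\Bb)^{\Qq})$ using \cite{HHLNa}, so that by \cite{CLL_Global}*{Lemma~2.2.13} parametrized functors $\ul\Span(\Qq)\to\ul\PSh(\Bb;\Ee)$ are the same as ordinary functors $\Span_{\fw}(\Ar(\Bb)^{\Qq})\to\Ee$; second, $\Qq$-continuity on the parametrized side translates under this dictionary into inverting an explicit class of spans $\Ww^{\Span}_{s}$ (\Cref{prop:charact-Q-semiadd}); third, the source functor $s\colon\Span_{\fw}(\Ar(\Bb)^{\Qq})\to\Span(\Bb,\Bb,\Qq)$ is shown to be a localization at exactly that class via the criterion of \cite{CHLL_Bispans} (\Cref{prop:local-BBQ}). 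The equivalence is then restriction along $s$ followed by the unstraightening identification, and the parametrized upgrade in \S 7.3 is likewise done through a compatible family of unstraightenings rather than by checking naturality in $B$ by hand. Note also that the direction you try to make explicit --- building a functor on $\Span(\Bb,\Bb,\Qq)$ out of a parametrized $F$ --- is the harder one; the paper runs the construction from $\Span(\Bb,\Bb,\Qq)$ to parametrized functors, where it is a restriction, and deduces that it is an equivalence from the localization statement. To turn your proposal into a proof you would need to supply substitutes for these three steps; as written there is a genuine gap.
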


	Applying this result for $\Ee = \Spc$ and combining it with \Cref{introthm:universal_prop_par_Qcom_monoids} we deduce:

	\begin{introcorollary}\label{introcor:transfer-enrichment}
		Every $\Qq$-semiadditive $\Bb$-category $\Cc$ is canonically enriched in sheaves with $\Qq$-transfers: the parametrized Hom-functor $\ul{\Hom}\colon \Cc\catop \times \Cc \to \ul{\Spc}_{\Bb}$ uniquely lifts to $\ul{\Shv}^{\Qq}(\Bb;\Spc)$.
	\end{introcorollary}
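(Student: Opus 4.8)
The plan is to deduce the corollary from \Cref{introthm:universal_prop_par_Qcom_monoids} and \Cref{introthm:mackey-sheaves} by feeding the parametrized mapping-space bifunctor into the universal property \emph{one variable at a time}. Recall that $\ul{\Spc}_{\Bb}=\ul{\Shv}(\Bb;\Spc)$ is presentable and in particular admits $\Qq$-limits. Since $\Cat(\Bb)$ is cartesian closed, $\ul{\Hom}\colon\Cc\catop\times\Cc\to\ul{\Spc}_{\Bb}$ corresponds under currying to a $\Bb$-functor $F\colon\Cc\catop\to\ulFun(\Cc,\ul{\Spc}_{\Bb})$ sending $x\in\Cc(B)$ to the corepresentable $\Bb_{/B}$-functor $\ul{\Hom}(x,\blank)$. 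The first step is to show that each $\ul{\Hom}(x,\blank)$ is $\Qq$-continuous, so that $F$ factors through the full $\Bb$-subcategory $\ulFun^{\Qprod}(\Cc,\ul{\Spc}_{\Bb})$. (One might instead hope to use that $\Cc\catop\times\Cc$ is again $\Qq$-semiadditive and that $\ul{\Hom}$ itself preserves $\Qq$-limits, but the latter fails: already for an ordinary semiadditive $1$-category, $\Hom$ does not carry $\Qq$-biproducts in $\Cc\catop\times\Cc$ to $\Qq$-products, so one really must work one variable at a time.)

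For the $\Qq$-continuity I would fix $B\in\Bb$ and $x\in\Cc(B)$, pass to $\Bb_{/B}$ (where $x$ lies over the terminal object), and exploit that a $\Qq$-semiadditive $\Bb$-category in particular admits $\Qq$-limits: for $q\colon A\to B'$ in $\Qq$ over $B$ there is an adjunction $q^*\dashv q_*$ in $\Cc$ satisfying base change, and for $z\in\Cc(A)$ one obtains natural equivalences
\[
\ul{\Hom}(x|_{B'},\,q_*z)\;\simeq\;q_*\,\ul{\Hom}\bigl(q^*(x|_{B'}),\,z\bigr)\;=\;q_*\,\ul{\Hom}(x|_{A},\,z),
\]
where $q_*$ on the right is the pushforward of $\Bb$-sheaves of spaces. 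The equality holds because $q^*(x|_{B'})$ is just the restriction of $x$ along $A\to B'\to B$; the first equivalence is checked on sections over each $t\colon B''\to B'$ by combining base change for $q_*$ (rewriting $t^*q_*z$ as $q''_*(t')^*z$ for the induced pullback square) with the adjunction $q''^*\dashv q''_*$ in $\Cc(B'')$. Abstractly this is the statement that corepresentable $\Bb$-functors preserve parametrized limits and are hence $\Qq$-continuous, which is presumably already part of the formal theory of $\Qq$-(co)limits; pinning down this interaction of parametrized mapping spaces with pushforwards/norm maps is the one genuinely non-formal ingredient and is where I expect the work to lie.

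Granting this, the rest is assembly. \Cref{introthm:universal_prop_par_Qcom_monoids}, applied to the $\Qq$-semiadditive $\Bb$-category $\Cc$ and the target $\ul{\Spc}_{\Bb}$, yields an equivalence $\ulFun^{\Qprod}\bigl(\Cc,\ul{\CMon}^{\Qq}(\ul{\Spc}_{\Bb})\bigr)\iso\ulFun^{\Qprod}(\Cc,\ul{\Spc}_{\Bb})$ over $\ulFun^{\Qprod}(\Cc,\ul{\Spc}_{\Bb})$, and \Cref{introthm:mackey-sheaves} with $\Ee=\Spc$ identifies $\ul{\CMon}^{\Qq}(\ul{\Spc}_{\Bb})\simeq\ul{\Shv}^{\Qq}(\Bb;\Spc)$ over $\ul{\Spc}_{\Bb}$. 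Composing $F$ with the inverse of this equivalence and uncurrying then gives a $\Bb$-functor $\Cc\catop\times\Cc\to\ul{\Shv}^{\Qq}(\Bb;\Spc)$ whose composite with the forgetful functor to $\ul{\Spc}_{\Bb}$ is $\ul{\Hom}$. For uniqueness I would note that this forgetful functor is conservative---it is levelwise restriction along the identity-on-objects inclusion $\Bb\catop\hookrightarrow\Span(\Bb,\Bb,\Qq)$, hence an equivalence on underlying $\Bb$-objects---and $\Qq$-continuous, so that under currying any lift of $\ul{\Hom}$ along it automatically lands in $\ulFun^{\Qprod}(\Cc,\ul{\Shv}^{\Qq}(\Bb;\Spc))$ and is therefore pinned down by the uniqueness already supplied by \Cref{introthm:universal_prop_par_Qcom_monoids}.
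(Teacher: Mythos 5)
Your proof is correct and follows essentially the same route the paper takes: curry $\ul\Hom$, note that each corepresented functor $\ul\Hom(x,\blank)$ is $\Qq$-continuous, then apply \Cref{thm:span-BBQ} (intro Theorem C) and \Cref{thm:universal_prop_par_Qcom_monoids2} (intro Theorem B) to lift, and finally use conservativity and $\Qq$-limit-reflection of the forgetful functor (the paper's Lemma~\ref{lemma:U-pres-refl-lim}) to pin the lift down among all lifts, not just the $\Qq$-continuous ones. Your parenthetical observation is worth dwelling on, since it is a genuine subtlety the paper's exposition glosses over: the statement ``$\ul\Hom$ is $\Qq$-continuous out of $\Cc\catop\times\Cc$'' is indeed false (already for $\Qq=\Fin$ and $\Cc=\Ab$: $\Hom(X_1\oplus X_2,\,Y_1\oplus Y_2)$ has four summands, not two), and the paper's Corollary~\ref{cor:Unique_Lift_To_CMon} therefore cannot be applied to the two-variable $\ul\Hom$ directly --- the proof of Corollary~\ref{cor:Trivial_Descent_Enrichment} cites it as if it could, without flagging that one must first curry in one variable (or, equivalently, use the $\Bb$-categorical form of the equivalence $\ulFun^{\Qprod}(\Cc,\ul\CMon^\Qq(\ul\Spc_\Bb))\iso\ulFun^{\Qprod}(\Cc,\ul\Spc_\Bb)$ and map $\Cc\catop$ into it, which is what you do). One small remark on your ``non-formal ingredient'': the identity $\ul\Hom_{B'}(x|_{B'},q_*z)\simeq q_*\ul\Hom_A(q^*(x|_{B'}),z)$ is in fact purely formal --- it is the internal form of the adjunction $q^*\dashv q_*$, valid in any $\Bb$-category with $\Qq$-limits and independent of the norm maps --- so the argument for $\Qq$-continuity of $\ul\Hom(x,\blank)$ is no more than the general fact that (co)representable functors preserve parametrized limits, exactly as you say in the following sentence.
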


	Similarly, via the universal property of the $\Bb$-category $\ul\Shv(\Bb;\Spc)$ \cite{martiniwolf2021limits} this implies:

	\begin{introcorollary}\label{introcor:free-presentable}
		The $\Bb$-category $\ul\Shv^\Qq(\Bb;\Spc)$ of space-valued $\Bb$-sheaves with $\Qq$-transfers is the free $\Qq$-semiadditive presentable $\Bb$-category: for any other such $\Dd$, evaluation at a certain `free sheaf with transfers' $\mathbb P(1)$ induces an equivalence
		\[
			\ul\Fun^\textup{L}(\ul\Shv^\Qq(\Bb;\Spc),\Dd)\iso\Dd.
		\]
	\end{introcorollary}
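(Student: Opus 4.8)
The plan is to deduce this corollary by combining \Cref{introthm:universal_prop_par_Qcom_monoids} and \Cref{introthm:mackey-sheaves} with the universal property of $\ul\Shv(\Bb;\Spc)$ as the free presentable $\Bb$-category on a single generator, due to Martini and Wolf \cite{martiniwolf2021limits}.

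Concretely, I would apply the presentable half of \Cref{introthm:universal_prop_par_Qcom_monoids} to $\Cc = \ul\Shv(\Bb;\Spc)$, which is presentable and admits $\Qq$-limits (as is already implicit in the formation of $\ul{\CMon}^{\Qq}(\ul\Shv(\Bb;\Ee))$ in \Cref{introthm:mackey-sheaves}). Granting the mild smallness condition on $\Qq$ --- which should either be recorded as a standing hypothesis or added to the statement --- this shows that $\ul{\CMon}^{\Qq}(\ul\Shv(\Bb;\Spc))$ is presentable and $\Qq$-semiadditive, and that the left adjoint $L\colon \ul\Shv(\Bb;\Spc) \to \ul{\CMon}^{\Qq}(\ul\Shv(\Bb;\Spc))$ of the forgetful functor induces, for every presentable $\Qq$-semiadditive $\Bb$-category $\Dd$, an equivalence of parametrized functor categories
\[
L^{*}\colon \ul\Fun^{\mathrm{L}}(\ul{\CMon}^{\Qq}(\ul\Shv(\Bb;\Spc)), \Dd) \iso \ul\Fun^{\mathrm{L}}(\ul\Shv(\Bb;\Spc), \Dd)
\]
given on objects by restriction along $L$. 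Feeding in the identification $\ul{\CMon}^{\Qq}(\ul\Shv(\Bb;\Spc)) \simeq \ul\Shv^{\Qq}(\Bb;\Spc)$ over $\ul\Shv(\Bb;\Spc)$ from \Cref{introthm:mackey-sheaves} with $\Ee = \Spc$ --- which in particular shows that $\ul\Shv^{\Qq}(\Bb;\Spc)$ is itself presentable and $\Qq$-semiadditive --- and composing with the Martini--Wolf equivalence $\ul\Fun^{\mathrm{L}}(\ul\Shv(\Bb;\Spc), \Dd) \iso \Dd$, i.e.\ evaluation at the generating object of $\ul\Shv(\Bb;\Spc)$, produces the desired equivalence $\ul\Fun^{\mathrm{L}}(\ul\Shv^{\Qq}(\Bb;\Spc), \Dd) \iso \Dd$. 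Naturality in $\Dd$ is inherited from the three constituent equivalences.

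The one thing that needs to be checked --- and the only place I expect genuine, if routine, work --- is that this composite equivalence is implemented by evaluation at a \emph{single} object, as the statement asserts. Tracing through, the composite sends a $\Bb$-cocontinuous functor $F\colon \ul\Shv^{\Qq}(\Bb;\Spc) \to \Dd$ to $F \circ L$ and then evaluates at the generating object of $\ul\Shv(\Bb;\Spc)$; since $L^{*}$ is restriction along $L$, the result is $F$ applied to the image of that generating object under $L$. Transporting this image across the identification of \Cref{introthm:mackey-sheaves} yields the object $\mathbb P(1) \in \ul\Shv^{\Qq}(\Bb;\Spc)$ of the statement --- aptly the ``free sheaf with $\Qq$-transfers on the point'', being the value of the free $\Qq$-semiadditive functor $L$ on the generating sheaf --- and exhibits the composite as evaluation at $\mathbb P(1)$. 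I would also note that no hypothesis on $F$ beyond being a left adjoint is required: a left adjoint between presentable $\Bb$-categories preserves all $\Bb$-colimits, and into a $\Qq$-semiadditive target it is automatically compatible with the relevant norm maps --- which is precisely why all the functor categories above are the plain $\ul\Fun^{\mathrm{L}}$. Beyond this bookkeeping there is no further obstacle; the mathematical content sits entirely in \Cref{introthm:universal_prop_par_Qcom_monoids}, \Cref{introthm:mackey-sheaves}, and the Martini--Wolf theorem.
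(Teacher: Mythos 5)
Your proposal is correct and follows essentially the same route as the paper's proof of \Cref{cor:free-semiadditive-Mackey}: chain \Cref{thm:span-BBQ} (identifying $\ul\Shv^\Qq(\Bb;\Spc)\simeq\ul\CMon^\Qq(\ul\Spc_\Bb)$), then \Cref{thm:universal_prop_par_Qcom_monoids_presentable}, then the Martini--Wolf universal property of $\ul\Spc_\Bb$. Your remark about the smallness hypothesis on $\Qq$ and the identification of $\mathbb P(1)$ as the image of the generator under the free functor matches the paper's treatment (cf.\ \Cref{cor:free-is-corep}).
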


	When $\Bb$ is the sifted cocompletion $\PSh_{\Sigma}(\Aa) = \Fun^{\times}(\Aa\catop,\Spc)$ for an extensive category $\Aa$, and $\Qq$ is given by the maps with fibers in an inductible category $Q\subseteq\Aa$ closed under coproducts,	we show in \Cref{subsec:Mackey_Functors} that the category $\Shv^{\Qq}(\Bb;\Ee)$ simplifies to the category
	\[
		\Mack^{Q}(\Aa;\Ee) \coloneqq \Fun^{\times}(\Span(\Aa,\Aa,Q),\Ee)
	\]
	of \textit{$\Ee$-valued $Q$-Mackey functors on $\Aa$}. In this case, Corollary~\ref{introcor:transfer-enrichment} therefore provides canonical enrichments in Mackey functors: for example, every $G$-semiadditive $G$-category is enriched in $G$-Mackey functors. Similarly, Corollary~\ref{introcor:free-presentable} specializes to a universal property of $G$-Mackey functors.

	\subsection*{Applications} Theorem~\ref{introthm:mackey-sheaves} can be applied to prove, in a unified way, Mackey functor descriptions of various categories considered in (global) equivariant homotopy theory. In particular, we use the universal property of global spectra established in \cite{CLL_Global} to deduce the following result which has also been concurrently proven by Pützstück \cite{puetzstueck}:

	\begin{introthm}[\Cref{cor:global-spectra}]
		Let $G$ be any finite group. Then the category of \emph{$G$-global spectra} from \cite{g-global} is naturally equivalent to
		\begin{equation*}
			\Fun^\times(\Span(\FinGrpd_{/BG},\FinGrpd_{/BG},\FinGrpd_{/BG}[\FinGrpdfaith]),\Sp)
		\end{equation*}
		where $\FinGrpd_{/BG}$ is the $(2,1)$-category of finite groupoids over $BG$, and $\FinGrpd_{/BG}[\FinGrpdfaith]$ denotes the wide subcategory of faithful functors.
	\end{introthm}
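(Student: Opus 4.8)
The plan is to exhibit $G$-global spectra as the underlying plain category of the \emph{free stable $\Qq$-semiadditive presentable $\Bb$-category on one generator} for a suitable choice of $(\Bb,\Qq)$, and then identify the latter with a category of Mackey functors via \Cref{introthm:mackey-sheaves}. Concretely, I would take $\Aa \coloneqq \FinGrpd_{/BG}$, a small extensive $(2,1)$-category, and $Q \coloneqq \FinGrpd_{/BG}[\FinGrpdfaith] \subseteq \Aa$ the wide subcategory of faithful functors over $BG$; as discussed in \Cref{subsec:Examples}, this $Q$ is inductible and closed under finite coproducts, so the subcategory $\Qq \subseteq \Bb \coloneqq \PSh_\Sigma(\Aa) = \Fun^\times(\Aa\catop,\Spc)$ of maps with fibers in $Q$ is locally inductible; here $\Bb$ is (a model for) the category of $G$-global spaces, cf.\ \cite{CLL_Global}. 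With these choices, \Cref{subsec:Mackey_Functors} identifies the target category of the theorem with the category of $\Sp$-valued $\Bb$-sheaves with $\Qq$-transfers:
\[
  \Fun^\times\bigl(\Span(\FinGrpd_{/BG},\FinGrpd_{/BG},\FinGrpd_{/BG}[\FinGrpdfaith]),\Sp\bigr) = \Mack^{Q}(\Aa;\Sp) \simeq \Shv^{\Qq}(\Bb;\Sp),
\]
the right-hand side being the value at the terminal object of the $\Bb$-category $\ul\Shv^{\Qq}(\Bb;\Sp)$.

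Next I would establish the universal property: $\ul\Shv^{\Qq}(\Bb;\Sp)$ is the free stable $\Qq$-semiadditive presentable $\Bb$-category on one generator. Applying \Cref{introthm:mackey-sheaves} with $\Ee = \Sp$ gives $\ul\Shv^{\Qq}(\Bb;\Sp) \simeq \ul{\CMon}^{\Qq}(\ul\Shv(\Bb;\Sp))$, where $\ul\Shv(\Bb;\Sp)$ is the free stable presentable $\Bb$-category on one generator (parametrized stabilization; cf.\ \cite{martiniwolf2021limits}). Since $\ul{\CMon}^{\Qq}(-) = \ul\Fun^{\Qprod}(\ul{\Span}(\Qq),-)$ is cut out inside a parametrized functor category by a condition stable under finite limits and colimits, it preserves stability; hence $\ul{\CMon}^{\Qq}(\ul\Shv(\Bb;\Sp))$ is stable, and it is presentable and $\Qq$-semiadditive by \Cref{introthm:universal_prop_par_Qcom_monoids}. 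For any stable $\Qq$-semiadditive presentable $\Bb$-category $\Dd$, combining the presentable universal property of $\ul{\CMon}^{\Qq}$ from \Cref{introthm:universal_prop_par_Qcom_monoids} with that of $\ul\Shv(\Bb;\Sp)$ yields
\[
  \ul\Fun^{\mathrm{L}}\bigl(\ul\Shv^{\Qq}(\Bb;\Sp),\Dd\bigr) \simeq \ul\Fun^{\mathrm{L}}\bigl(\ul{\CMon}^{\Qq}(\ul\Shv(\Bb;\Sp)),\Dd\bigr) \simeq \ul\Fun^{\mathrm{L}}\bigl(\ul\Shv(\Bb;\Sp),\Dd\bigr) \simeq \Dd,
\]
as required. (One could instead deduce this by tensoring \Cref{introcor:free-presentable} with $\ul\Shv(\Bb;\Sp)$ in presentable $\Bb$-categories, but the route above sidesteps compatibilities of $\ul{\CMon}^{\Qq}(-)$ with $\otimes$.)

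Finally I would invoke the universal property of $G$-global spectra from \cite{CLL_Global}, which characterizes the $\Bb$-category underlying $G$-global spectra as the free presentable $\Bb$-category that is both fiberwise stable and equivariantly semiadditive, on a single generator; equivariant semiadditivity there is precisely $\Qq$-semiadditivity for the present $\Qq$ (see \Cref{subsec:Examples}). Matching universal properties gives an equivalence between the $\Bb$-category of $G$-global spectra and $\ul\Shv^{\Qq}(\Bb;\Sp)$, and passing to values at the terminal object yields the asserted equivalence with $\Fun^\times(\Span(\FinGrpd_{/BG},\FinGrpd_{/BG},\FinGrpd_{/BG}[\FinGrpdfaith]),\Sp)$.

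The hard part is this last step: one has to check carefully that the localization onto ``equivariantly stable'' global categories appearing in \cite{CLL_Global} coincides with the composite of parametrized stabilization and $\Qq$-semiadditive completion, and to perform the slicing bookkeeping --- identifying $\PSh_\Sigma(\FinGrpd_{/BG})$ with $\PSh_\Sigma(\FinGrpd)_{/BG}$ and the restriction of $\ul\Shv^{\Qq}$ to this slice with the construction built from the sliced data $(\Aa,Q)$. These steps are routine but delicate; by contrast, the inputs from the present paper (\Cref{introthm:mackey-sheaves}, \Cref{introthm:universal_prop_par_Qcom_monoids}, \Cref{introcor:free-presentable}, \Cref{subsec:Mackey_Functors}) enter essentially formally.
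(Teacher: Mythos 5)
Your proposal is correct in substance and follows the same overall strategy as the paper: invoke the universal property of $G$-global spectra from \cite{CLL_Global} and match it against the universal property of a Mackey-functor model obtained from \Cref{introthm:mackey-sheaves} and \Cref{introthm:universal_prop_par_Qcom_monoids}. However, you introduce an extra re-indexing step that the paper avoids, and this slightly shifts where the work lies.

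Specifically, you set $\Aa=\FinGrpd_{/BG}$ and $\Bb'=\PSh_\Sigma(\Aa)$ at the outset, and then need to carry out ``slicing bookkeeping'' to compare the $\Bb'$-parametrized universal properties with the $\Glo$-parametrized ones in \cite{CLL_Global}. You correctly flag this as the delicate point. The paper sidesteps it entirely: it proves \Cref{cor:atomic-orbital-Mackey}, which for an atomic orbital $P\subseteq T$ exhibits the free $P$-stable presentable $T$-category directly as the $T$-category $A\mapsto\Fun^\times\bigl(\Span((\mathbb F_T)_{/A},(\mathbb F_T)_{/A},(\mathbb F_T)_{/A}[\mathbb F^P_T]),\Sp\bigr)$. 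Taking $T=\Glo$ and the appropriate $P$ (so that $\mathbb F_T=\FinGrpd$ and $\mathbb F_T^P=\FinGrpdfaith$) matches it against the free presentable equivariantly stable global category of \cite{CLL_Global}*{Theorem 7.3.2}, and the value of this natural equivalence at $BG\in\Glo$ is the statement. No re-parametrization over $\FinGrpd_{/BG}$ is needed, so the ``hard part'' you identify is absent from the paper's route. (For reference, the paper also isolates the stability argument you gesture at --- that $\ul\CMon^\Qq$ applied to a fiberwise-stable presentable category is again fiberwise stable --- in a dedicated lemma using a $2$-functoriality argument with $\Sp^\textup{fib}=\Fun^\textup{exc}_*(\Spc^\textup{fin}_*,-)$, rather than the somewhat terse ``cut out by a finite-(co)limit-stable condition'' claim you make; your reasoning is salvageable since in the stable setting $q_*$ preserves finite colimits, but it deserves to be spelled out.) Both approaches are viable, but staying with the $\Glo$-parametrization throughout and only evaluating at $BG$ at the very end is cleaner and is what the paper does.
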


	We further use our results from \cite{CLL_Clefts} to reprove the Mackey functor description of $G$-equivariant stable homotopy theory from \cite{cmnn}, see Corollary~\ref{cor:equiv-spectra}.

	On the other hand, we may also use Theorem~\ref{introthm:mackey-sheaves} to provide parametrized interpretations of categories that already come with Mackey functor descriptions, like the category of \textit{Mackey profunctors} defined by Kaledin \cite{Kaledin2022Mackey} and the closely related category of \emph{quasi-finitely genuine $G$-spectra} as defined by Krause--McCandless--Nikolaus \cite{KMN2023Polygonic}. For this, let $\QFin_G \subseteq \Set_G$ denote the full subcategory spanned by the \textit{quasi-finite}\footnote{We adopt the terminology from \cite{KMN2023Polygonic}; Kaledin used the term `admissible.'} $G$-sets, i.e.\ those $G$-sets $S$ for which all orbits are finite and for which the fixed point sets $S^H$ are finite for all cofinite subgroups $H \subseteq G$. Following \cite{KMN2023Polygonic}, we define a \emph{quasi-finitely genuine $G$-spectrum} to be a functor $M \colon \Span(\QFin_G) \to \Sp$ such that for every quasi-finite $G$-set $S$ the canonical map
	\[
		M(S) \to \prod_{\overline{s} \in S/G} M(\pi^{-1}(\overline{s}))
	\]
	is an equivalence, where $\pi\colon S \to S/G$ denotes the quotient map. The category of all such functors is denoted $\Sp^{\qfgen}_G$.

	Letting $\smash{\widehat{\Orb}}_G \subseteq \QFin_G$ denote the full subcategory spanned by the finite orbits $G/H$, we will refer to $\widehat{\Orb}_G$-categories as \emph{$G$-procategories}. Applying our framework to a certain collection of `quasi-finite covering maps,' we define what it means for a $G$-procategory to be \emph{very $G$-semiadditive}, and we prove:

	\begin{introthm}[Theorem~\ref{thm:qfin}]
		\label{introthm:univ_prop_Kaledins_profunctors}
		The category $\Sp^{\qfgen}_G$ of quasi-finitely genuine $G$-spectra is the underlying category of the free presentable very $G$-semiadditive stable $G$-procategory.
	\end{introthm}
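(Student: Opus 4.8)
The plan is to deduce \Cref{introthm:univ_prop_Kaledins_profunctors} from the general machinery---principally \Cref{introcor:free-presentable}---once the correct choice of ambient topos $\Bb$ and locally inductible subcategory $\Qq$ has been set up. First I would identify the relevant parametrizing base: for the category $\smash{\widehat{\Orb}}_G$ of finite orbits, one takes $\Bb = \PSh_{\Sigma}(\smash{\widehat{\Orb}}_G)$, whose objects are exactly the ``$G$-procategories'' of the introduction (equivalently, one may describe $\Bb$ as pro-finite $G$-sets / $\QFin_G$ after identifying $\QFin_G$ with a suitable full subcategory of $\PSh_{\Sigma}(\smash{\widehat{\Orb}}_G)$). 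Next I would pin down the inductible subcategory $Q \subseteq \QFin_G$ cut out by the ``quasi-finite covering maps'': a map $S \to T$ of quasi-finite $G$-sets that restricts to a map with finite discrete fibers, so that $Q$ is closed under base change and diagonals and consists of truncated (indeed $0$-truncated) morphisms. This is precisely the hypothesis needed to invoke the discussion in \Cref{subsec:Mackey_Functors}, which identifies $\Shv^{\Qq}(\Bb;\Ee)$ with the Mackey-functor category $\Mack^{Q}(\QFin_G;\Ee) = \Fun^{\times}(\Span(\QFin_G,\QFin_G,Q),\Ee)$.

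The second step is to reconcile this Mackey-functor category with the definition of $\Sp^{\qfgen}_G$ given in the excerpt. By definition $\Sp^{\qfgen}_G \subseteq \Fun(\Span(\QFin_G),\Sp)$ is carved out by the condition that $M(S) \to \prod_{\bar s \in S/G} M(\pi^{-1}(\bar s))$ be an equivalence for all quasi-finite $S$, i.e.\ that $M$ be ``$G$-product-preserving'' along the canonical covering-space decomposition into orbits. The point is that this is exactly the condition of being a $Q$-Mackey functor: since every quasi-finite $G$-set decomposes as a (possibly infinite) coproduct of finite orbits and the quotient map $S\to S/G$ lies in $Q$, the product-preservation requirement $\Fun^{\times}(\Span(\QFin_G,\QFin_G,Q),\Sp)$---products in the span category being computed by the relevant disjoint unions/covers---unwinds precisely to the displayed Segal-type condition. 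Thus $\Sp^{\qfgen}_G \simeq \Mack^{Q}(\QFin_G;\Sp) \simeq \Shv^{\Qq}(\Bb;\Sp)$. Here I should be a little careful about the direction of the product (over $S/G$, with fibers the preimages of orbit representatives) versus the general ``$P$-product'' formalism; verifying that the two indexing conventions agree---and that the ``mild smallness condition'' of \Cref{introthm:universal_prop_par_Qcom_monoids} is met, given that $\QFin_G$ is only essentially small after the quasi-finiteness cutoff---is the fiddly part of this step.

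Having made these identifications, the theorem is immediate: by \Cref{introthm:mackey-sheaves} we have $\ul{\Shv}^{\Qq}(\Bb;\Sp) \simeq \ul{\CMon}^{\Qq}(\ul{\Shv}(\Bb;\Sp))$, and the latter is the $\Qq$-semiadditive completion of $\ul{\Shv}(\Bb;\Sp)$; combined with the stable and presentable versions of \Cref{introthm:universal_prop_par_Qcom_monoids} (applied to the stable $\Bb$-category of $\Sp$-valued $\Bb$-sheaves), this exhibits $\ul{\Shv}^{\Qq}(\Bb;\Sp)$ as the free presentable $\Qq$-semiadditive stable $\Bb$-category---which, once one \emph{defines} ``very $G$-semiadditive'' to mean ``$\Qq$-semiadditive'' for this particular $\Qq$ of quasi-finite covering maps, is exactly the assertion. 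Concretely one reads off $\ul\Fun^{\mathrm L}(\ul\Shv^\Qq(\Bb;\Sp),\Dd)\iso\Dd$ from \Cref{introcor:free-presentable} applied in the stable setting, with the distinguished object being the image of the free sheaf with transfers $\mathbb P(1)$; passing to underlying categories (evaluating at the terminal object of $\Bb$, i.e.\ at $G/G$) then yields the statement as phrased.

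The main obstacle I anticipate is entirely in the second step: checking that the collection of ``quasi-finite covering maps'' genuinely defines a locally inductible subcategory of $\Bb = \PSh_{\Sigma}(\smash{\widehat{\Orb}}_G)$ in the technical sense of \Cref{def:Locally_Inductible}, and not merely an inductible subcategory of the $1$-category $\QFin_G$. This requires understanding base change of such maps in the sifted-cocomplete topos (so that the pullback functors $f^*\colon \Qq_{/B}\to\Qq_{/A}$ are well-defined and the span $\Bb$-category makes sense), and controlling the interaction with the ambient topology---in particular confirming that the quasi-finiteness condition on fixed points $S^H$ for cofinite $H$ is stable under the operations involved and is reflected correctly by the extensivity/coproduct-closure hypotheses feeding \Cref{subsec:Mackey_Functors}. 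Once that bookkeeping is done, everything else is an application of Theorems~\ref{introthm:universal_prop_par_Qcom_monoids} and~\ref{introthm:mackey-sheaves} together with the translation between parametrized and Mackey-functor language.
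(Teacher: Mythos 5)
Your high-level strategy is right — identify a pre-inductible/locally inductible $\Qq$ inside an appropriate presheaf topos, identify $\Sp^{\qfgen}_G$ with sheaves with $\Qq$-transfers, and then cite the free-presentable-$\Qq$-stable universal property (\Cref{cor:free-stable-Mackey}). But the middle identification, as you set it up, is wrong in two related ways, and fixing it requires a different part of the general machinery than the one you invoke.

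First, you take $Q$ to be the \emph{wide} subcategory of ``quasi-finite covering maps'' with finite discrete fibers. The paper instead takes $Q = \QFin_G$ to be the image of the \emph{full} embedding $\QFin_G \hookrightarrow \PSh(\widehat{\Orb}_G)$, $S \mapsto (G/H \mapsto S^H)$, so that $Q$ contains \emph{all} $G$-maps between quasi-finite $G$-sets. This is what ``very $G$-semiadditive'' means — $\QFin_G$-semiadditivity for this full subcategory — and it is also what the defining span category $\Span(\QFin_G)$ of $\Sp^{\qfgen}_G$ has on both the left- and right-pointing sides. Your wide class would only ask for ambidexterity along maps with finite fibers; the full class includes, e.g., maps $S \to G/G$ from a quasi-finite $S$ with \emph{infinitely} many orbits, which encode a genuinely stronger, infinite-indexed biproduct condition.

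Second — and this is the real gap — you then try to identify $\Sp^{\qfgen}_G$ with a Mackey-functor category $\Fun^{\times}(\Span(\QFin_G,\QFin_G,Q),\Sp)$ by routing through \Cref{subsec:Mackey_Functors}, which lives over the sifted cocompletion of an extensive category and encodes only \emph{finite} product preservation. But the very-additivity condition defining $\Sp^{\qfgen}_G$ demands $M(S)\iso \prod_{\bar s \in S/G} M(\pi^{-1}(\bar s))$ for \emph{all} quasi-finite $S$, and $S/G$ can be infinite (take $G = \Z$ and $S = \bigsqcup_{n\ge 2}\Z/n$; each $S^{m\Z}$ is finite but $S/G$ is infinite). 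Finite-product-preserving functors on the span category are strictly weaker than very additive ones, so your claimed equivalence $\Sp^{\qfgen}_G \simeq \Mack^Q(\QFin_G;\Sp)$ fails. The paper avoids \Cref{subsec:Mackey_Functors} entirely here and instead uses the Grothendieck-site formalism of \Cref{subsec:Smaller_Spans}: taking $\Bb = \PSh(\widehat{\Orb}_G)$ (not $\PSh_\Sigma$), and equipping the image $\Aa$ of $\QFin_G$ with the topology $\tau$ from \Cref{ex:preinductible-topology} (covers are families whose disjoint union is an effective epimorphism, so in particular the possibly-infinite orbit decomposition $\{\pi^{-1}(\bar s)\hookrightarrow S\}_{\bar s\in S/G}$ is a cover). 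One then proves directly that $\tau$-sheaves $\Span(\QFin_G)\to\Ee$ are precisely the very additive functors, and concludes via $\Shv^{\Qq}(\Bb;\Sp)\simeq\Shv_\tau^{\Aa\cap\Qq}(\Aa;\Sp)$ from \Cref{cor:Smaller_Spans} and the universal property of \Cref{cor:fs-Mackey-sheaf}. Without this replacement, your argument would only characterize the category of finite-product-preserving functors on $\Span(\QFin_G)$, which is a different (and larger) category than $\Sp^{\qfgen}_G$.
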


	We further provide a similar universal interpretation of Kaledin's category ${\myMm}(G,\mathbb Z)$ of \emph{$\mathbb Z$-valued Mackey profunctors}, see Theorem~\ref{thm:Mackey-pro-fun}.

	\subsection*{Acknowledgments}
	We thank Lior Yanovski for sharing with us their alternative approach to the main theorem of \cite{harpaz2020ambidexterity}, which heavily inspired our proof of \Cref{introthm:universal_prop_par_spans}. We further thank Sebastian Wolf for providing us with the key idea for the proof of \Cref{lem:Colimit_Of_Subcategories}, and Shay Ben-Moshe for useful comments on a draft version of this article.

	The first and third author are associate members of the SFB 1085 Higher Invariants. The second author is an associate members of the Hausdorff Center for Mathematics (DFG GZ 2047/1, project ID 390685813), and the third was. The third author was supported by the DFG Schwerpunktprogramm 1786 `Homotopy Theory and Algebraic Geometry' (project ID SCHW 860/1-1).

	\section{Recollections on parametrized category theory}

	The variants of higher semiadditivity we are interested in are most conveniently phrased using the language of \textit{parametrized category theory}, which we will briefly recall now.

	\begin{definition}[Parametrized categories]
		Throughout this article, we will use the following two notions of parametrized categories:
		\begin{enumerate}
			\item For a small category $T$, we define a \textit{$T$-category} to be a functor $\Cc\colon T\catop \to \Cat$. We write $\Cat_T := \Fun(T\catop,\Cat)$ for the (very large) category of $T$-categories.
			\item For an ($\infty$-)topos $\Bb$, we define a \textit{$\Bb$-category} to be a limit-preserving functor $\Cc\colon \Bb\catop \to \Cat$. We write $\Cat(\Bb)\coloneqq\Fun^{\mathrm{R}}(\Bb^\op,\Cat)$ for the (very large) category of $\Bb$-categories.
		\end{enumerate}
	For a morphism $f\colon A \to B$ in $T$ or $\Bb$, we refer to the functor $f^* := \Cc(f)\colon \Cc(B) \to \Cc(A)$ as the \textit{restriction functor} of $f$.
	\end{definition}

	\begin{remark}
		\label{rmk:Limit_Extension}
		The formalism of $\Bb$-categories is more general than that of $T$-categories: restriction along the Yoneda embedding $T \hookrightarrow \PSh(T)$ defines an equivalence $\Cat(\PSh(T)) \iso \Cat_T$ between $\PSh(T)$-categories and $T$-categories, with inverse given by limit-extension. Given a $T$-category $\Cc$, we will generally abuse notation and denote its limit-extension $\PSh(T)\catop \to \Cat$ again by $\Cc$.

		While the general formalism of parametrized semiadditivity will be developed for $\Bb$-categories, most of our examples will come from $T$-categories for suitable $T$. It will occasionally be convenient to state definitions that apply both to $\Bb$-categories as well as to $T$-categories; in these cases we work with functors $\Aa\catop \to \Cat$ for some (either small or large) category $\Aa$.
	\end{remark}

	\begin{example}[$\Bb$-groupoid]
		Every object $B$ of $\Bb$ defines a $\Bb$-category $\ul{B}$ via the Yoneda embedding:
		\[
			\ul{B} \coloneqq \hom_{\Bb}(-,B) \colon \Bb\catop \to \Spc \hookrightarrow \Cat.
		\]
 		The $\Bb$-categories of this form are called \textit{$\Bb$-groupoids}.
	\end{example}

	\begin{example}[The $\Bb$-category of $\Bb$-groupoids]
		Since $\Bb$ is a topos, the functor $\Bb\catop \to \Cat$ given by $B \mapsto \Bb_{/B}$ (i.e.~the cartesian unstraightening of the target map $\Ar(\Bb)\to\Bb$) preserves limits and thus defines a $\Bb$-category that we denote by $\ul{\Spc}_{\Bb}$ and refer to as the \textit{$\Bb$-category of $\Bb$-groupoids}.
	\end{example}

	\begin{definition}[Underlying category]
		Every $\Bb$-category $\Cc$ has an \textit{underlying category} $\Gamma \Cc := \Cc(1)$, where $1 \in \Bb$ is the terminal object.
	\end{definition}

	\begin{definition}[Parametrized functor category]
		The category $\Cat(\Bb)$ is cartesian closed by \cite{martini2021yoneda}*{Proposition~3.2.11}. We denote the internal hom by $\ul\Fun_\Bb(\Cc,\Dd)$ (or by $\ul\Fun(\Cc,\Dd)$ if $\Bb$ is clear from the context), and denote its underlying category by $\Fun_\Bb(\Cc,\Dd)\coloneqq \Gamma\ulFun(\Cc,\Dd)$.
	\end{definition}

	\begin{proposition}[Categorical Yoneda Lemma, cf.\ \cite{CLL_Global}*{Lemma~2.2.7, Corollary~2.2.9}]
		\label{prop:Yoneda}
		For an object $B \in \Bb$, evaluation at $\id_B\in\ul B(B)$ defines a natural equivalence
		\begin{equation*}
			\Fun_\Bb(\ul B,\Cc)\iso\Cc(B).
		\end{equation*}
		As a consequence, there are natural equivalences
		\[
			\ulFun_{\Bb}(\ul{B}, \Cc) \simeq \Cc(B \times -)
		\]
		and
		\[
		\ul\Fun_\Bb(\Cc,\Dd)(B)\simeq \Fun_\Bb(\Cc\times\ul B,\Dd)\simeq \Fun_\Bb(\Cc,\ul\Fun_\Bb(\ul B,\Dd)).
		\]
	\end{proposition}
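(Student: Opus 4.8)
The plan is to derive everything from the cartesian closedness of $\Cat(\Bb)$ together with the representability of the evaluation-at-$B$ functor. First I would establish the basic equivalence $\Fun_\Bb(\ul B,\Cc)\iso\Cc(B)$. The key observation is that the assignment $\Cc\mapsto\Cc(B)$ is corepresented by $\ul B$: for a $\Bb$-category $\Cc$, a map $\ul B\to\Cc$ is a natural transformation $\hom_\Bb(-,B)\Rightarrow\Cc$ of limit-preserving functors $\Bb\catop\to\Cat$, and since $\hom_\Bb(-,B)$ is the Yoneda image of $B$, the ordinary (1-categorical, or rather $\infty$-categorical) Yoneda lemma in the functor $\infty$-category $\Fun(\Bb\catop,\Cat)$ identifies such transformations with objects of $\Cc(B)$ via evaluation at $\id_B\in\hom_\Bb(B,B)$. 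One must check that the relevant mapping space in $\Cat(\Bb)\subseteq\Fun(\Bb\catop,\Cat)$ is the full subcategory spanned by limit-preserving functors, but since $\hom_\Bb(-,B)$ is itself limit-preserving (it is a $\Bb$-groupoid), no condition is lost: every transformation out of it automatically lands in $\Cat(\Bb)$. This gives the equivalence on underlying groupoids; upgrading to an equivalence of $\infty$-categories $\Fun_\Bb(\ul B,\Cc)\simeq\Cc(B)$ requires the same argument applied after tensoring with an arbitrary simplex, or equivalently citing the enriched Yoneda lemma, which is exactly the content of \cite{CLL_Global}*{Lemma~2.2.7}.

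Next I would bootstrap the two consequences. For the first, $\ulFun_\Bb(\ul B,\Cc)\simeq\Cc(B\times-)$: by definition of the internal hom and the formula $\ul\Fun_\Bb(\Cc,\Dd)(A)\simeq\Fun_\Bb(\Cc\times\ul A,\Dd)$ (which I would take as the defining property of the internal hom evaluated at $A$, established via the cartesian closed structure), we compute $\ulFun_\Bb(\ul B,\Cc)(A)\simeq\Fun_\Bb(\ul B\times\ul A,\Cc)$. Now $\ul B\times\ul A\simeq\ul{B\times A}$ since the Yoneda embedding $\Bb\hookrightarrow\Cat(\Bb)$ preserves products (both are computed pointwise as $\hom_\Bb(-,B)\times\hom_\Bb(-,A)\simeq\hom_\Bb(-,B\times A)$), so applying the basic equivalence gives $\Fun_\Bb(\ul{B\times A},\Cc)\simeq\Cc(B\times A)$, naturally in $A$. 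This is precisely $\Cc(B\times-)$ as a $\Bb$-category. For the second consequence, the identity $\ul\Fun_\Bb(\Cc,\Dd)(B)\simeq\Fun_\Bb(\Cc\times\ul B,\Dd)$ is again the defining adjunction property of the internal hom in the cartesian closed category $\Cat(\Bb)$, evaluated at $B$ via the first consequence applied to $\ul\Fun_\Bb(\Cc,\Dd)$; and the further identification with $\Fun_\Bb(\Cc,\ul\Fun_\Bb(\ul B,\Dd))$ follows by combining the tensor-hom adjunction $\Fun_\Bb(\Cc\times\ul B,\Dd)\simeq\Fun_\Bb(\Cc,\ul\Fun_\Bb(\ul B,\Dd))$, which is once more cartesian closedness, now used in the first variable.

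The main obstacle I anticipate is purely bookkeeping: making precise the passage from the levelwise/1-categorical Yoneda statement to the fully $\infty$-categorical enriched statement $\Fun_\Bb(\ul B,\Cc)\simeq\Cc(B)$ as an equivalence of $\infty$-categories rather than merely of spaces, and ensuring all equivalences are natural in $B$ and in $\Cc$ (and compatible with the various forgetful functors, so that the later claims about equivalences "over $\ul\Shv(\Bb;\Ee)$" etc.\ make sense). This is exactly why the statement is phrased as a citation to \cite{CLL_Global}*{Lemma~2.2.7, Corollary~2.2.9} and \cite{martini2021yoneda}: rather than reproving the enriched Yoneda lemma, I would invoke Martini's yoneda lemma for $\Bb$-categories and the cartesian closed structure of \cite{martini2021yoneda}*{Proposition~3.2.11}, and spend the bulk of the (short) argument on the two formal manipulations above, checking that $\ul B\times\ul A\simeq\ul{B\times A}$ and that the internal-hom adjunction unwinds as claimed. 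Since all of this is standard parametrized category theory, the proof should be a few lines referencing the literature, with the diagram chases left to the reader.
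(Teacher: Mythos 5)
Your proposal is essentially correct but takes a genuinely different route from the paper. You prove the base equivalence $\Fun_\Bb(\ul B,\Cc)\iso\Cc(B)$ directly (via Martini's Yoneda lemma for $\Bb$-categories, or by upgrading the ordinary Yoneda lemma from groupoid cores to categories by testing against cotensors by $[n]$), and then derive the other two equivalences formally from cartesian closedness plus the fact that $\ul B\times\ul A\simeq\ul{B\times A}$. The paper instead reduces the general topos case to the presheaf topos case $\Bb=\PSh(T)$, where the statement is already proved in \cite{CLL_Global}*{Lemma~2.2.7, Corollary~2.2.9}: the key content of the paper's proof is the non-trivial verification that the fully faithful inclusion $\Cat(\Bb)\hookrightarrow\Fun(\Bb\catop,\CAT)$ (after a universe enlargement) preserves internal homs, which is checked on complete Segal spaces and using the sheaf property of $\Dd$ together with local cartesian closedness of $\Bb$. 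Your route bypasses this reduction entirely, trading it for an appeal to Martini's general internal Yoneda lemma. Both are valid; the paper's approach has the advantage of making the result a formal corollary of the $T$-category statement it already uses elsewhere, while yours is shorter if one is willing to cite the general theory. Two small cautions on your write-up: (i) \cite{CLL_Global}*{Lemma~2.2.7} is phrased only for $T$-categories, so for the base equivalence in a general topos you really do need the \cite{martini2021yoneda}/\cite{martiniwolf2021limits} version, not the $T$-category one; (ii) the phrase "the defining adjunction property of the internal hom, evaluated at $A$" hides a use of the Yoneda equivalence $\Fun_\Bb(\ul A,\blank)\simeq(\blank)(A)$ (cartesian closedness alone tells you nothing about pointwise values of the internal hom), so the logical order should be: first prove the Yoneda equivalence, then obtain $\ul\Fun_\Bb(\Cc,\Dd)(A)\simeq\Fun_\Bb(\Cc\times\ul A,\Dd)$ by combining Yoneda with the currying equivalence, and only then deduce $\ul\Fun_\Bb(\ul B,\Cc)\simeq\Cc(B\times-)$. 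As written the dependency between the last two displayed equivalences reads as circular, though the underlying argument is fine once reordered.
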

	\begin{proof}
		In the special case $\Bb=\PSh(T)$ (hence $\Cat(\Bb)\simeq\Cat_T$), this is the content of \cite{CLL_Global}*{Lemma~2.2.7 and Corollary~2.2.9}. For the general case, we claim that the embedding $\Cat(\Bb) \hookrightarrow \Fun(\Bb\catop,\CAT)$ preserves internal homs, where we jump universes to ensure smallness of $\Bb$. For this, let $\Cc,\Dd\in\Cat(\Bb)$ arbitrary. By the above special case, the internal hom in the category on the right satisfies
		\begin{align*}
			\iota\big(\ul{\Fun}(\Cc,\Dd)(A)^{[n]}\big) \simeq \iota\ulFun(\Cc\times[n],\Dd)(A) &\simeq \iota\Fun(\Cc\times[n],\Dd(A \times \blank))\\&=\hskip3.14159pt\hom(\Cc\times[n],\Dd(A \times \blank)),
		\end{align*}
		for every $A\in\Bb$ and $n\ge 0$; in particular, the complete Segal space associated to $\ul\Fun(\Cc,\Dd)(A)$ is contained in the original universe, so $\ul\Fun(\Cc,\Dd)$ is contained in $\Fun(\Bb\catop, \Cat)$. It remains to show that it is even contained in $\Fun^{\mathrm{R}}(\Bb\catop,\Cat)$. As $\Fun(\Cc,\blank)$ is a right adjoint, it suffices that $\Bb^\op\to\Fun(\Bb^\op,\CAT),A\mapsto\Dd(A\times\blank)$ preserves small limits. Since limits in functor categories are pointwise, this amounts to saying that $\Dd(\blank\times B):\Bb^\op\to\CAT$ preserves small limits for every $B$, which directly follows from cartesian closure of $\Bb$ and the sheaf property of $\Dd$.
	\end{proof}

	\begin{remark}
		In what follows, we will freely cite results from \cite{CLL_Global} for internal homs of $T$-categories even when working with general $\Bb$-categories; in each case the reduction step used in the proof of \Cref{prop:Yoneda} applies.
	\end{remark}

	\begin{remark}
		\label{rmk:fun-slice}
		Given an object $B \in \Bb$, every $\Bb$-category $\Cc$ canonically gives rise to a $\Bb_{/B}$-category $\pi_B^*\Cc$ by precomposing $\Cc$ with the (colimit-preserving) forgetful functor $\pi_B\colon \Bb_{/B} \to \Bb$. The resulting functor $\pi_B^*\colon \Cat(\Bb) \to \Cat(\Bb_{/B})$ preserves internal homs by \cite{CLL_Global}*{Corollary~2.2.11}, and as a result there is for all $\Cc, \Dd \in \Cat(\Bb)$ and every $B \in \Bb$ a natural equivalence
		\[
		\ul\Fun_\Bb(\Cc,\Dd)(B)\iso\Fun_{\Bb_{/B}}(\pi_B^*\Cc,\pi_B^*\Dd).
		\]
		Under this equivalence, restriction along $f\colon A\to B$ corresponds to restriction along $\Bb_{/f}\colon\Bb_{/A}\to\Bb_{/B}$ and conjugating by the evident equivalence, see \cite{CLL_Global}*{Lemma~2.2.12}.
	\end{remark}

	\subsection{Parametrized colimits}

	In parametrized category theory, there is a notion of `groupoid-indexed colimit' that we will now recall. To this end, recall that a class of morphisms $\Qq$ in a category $\Aa$ is said to be \textit{closed under base change} if base changes ($=$ pullbacks) of morphisms in $\Qq$ along morphisms in $\Aa$ exist and are again in $\Qq$.

	\begin{definition}[$\Qq$-colimits]
		\label{def:Q_Colimits}
		Let $\Aa$ be a category and let $\Qq$ be a class of morphisms in $\Aa$ closed under base change. Given a functor $\Cc\colon \Aa\catop \to \Cat$, we say that $\Cc$ \textit{admits $\Qq$-colimits} or \textit{is $\Qq$-cocomplete} if the following conditions are satisfied:
		\begin{enumerate}[(1)]
			\item For every morphism $q\colon A \to B$ in $\Qq$, the functor $q^* \colon \Cc(B) \to \Cc(A)$ admits a left adjoint $q_!\colon \Cc(A) \to \Cc(B)$.
			\item For every pullback square
			\[
			\begin{tikzcd}
				A' \dar[swap]{q'} \rar{g} \drar[pullback] & A \dar{q} \\
				B' \rar{f} & B
			\end{tikzcd}
			\]
			in $\Aa$ with $q$ in $\Qq$, the Beck--Chevalley transformation $\BC_!\colon q'_!g^* \to f^*q_!$ of functors $\Cc(A) \to \Cc(B')$ is an equivalence.
		\end{enumerate}

		Dually, we define what it means for $\Cc$ to \emph{admit $\Qq$-limits}.
	\end{definition}

	\begin{remark}
		By \cite{martiniwolf2021limits}*{Corollary~3.2.11}, the above amounts to saying that $q^*\colon\pi_B^*\Cc\to\ul\Fun(\ul A,\pi_B^*\Cc)\simeq\Cc(A\times_B\blank)$ has a \emph{parametrized left adjoint} $q_!$, i.e.~a left adjoint in the homotopy $2$-category of ${\Fun}\big((\Aa_{/B})^\op,{\Cat}\big)$.
	\end{remark}

	We will mostly use this notion in the case of $\Bb$-categories for a topos $\Bb$. In this case, we will further assume that the class of morphisms $\Qq$ in $\Bb$ is \textit{local}, meaning that a morphism $q\colon A \to B$ is in $\Qq$ whenever there exists an effective epimorphism $\bigsqcup_{i\in I} B_i \twoheadrightarrow B$ in $\Bb$ such that each of the base change maps $A \times_B B_i \to B_i$ is in $\Qq$.

	\begin{remark}\label{rk:q-loc-colim}
		Let $T$ be small and let $Q\subseteq T$ be closed under base change. We define $\Qq\coloneqq Q_\loc$ as the collection of all maps $q\colon X\to Y$ in $\PSh(T)$ such that for every map $A\to Y$ from a representable the base change $A\times_YX\to A$ belongs to $Q$. Given an effective epimorphism $\coprod_i Y_i\to Y$, the Yoneda lemma shows that any map $A\to Y$ from a representable factors through one of the $Y_i$. Thus, $Q_\loc$ is a local class in $\PSh(T)$, and we will refer to it as the \emph{local class generated by $Q$}. By \cite{CLL_Global}*{Remark~2.3.15}, a $T$-category is then $Q$-cocomplete if and only if its limit extension is $Q_\loc$-cocomplete.
	\end{remark}

	\begin{definition}
		Let $\Cc,\Dd\colon\Aa^\op\to\Cat$ be $\Qq$-cocomplete. A natural transformation $F\colon \Cc \to \Dd$ is said to \textit{preserve $\Qq$-colimits} if for every morphism $q\colon A \to B$ in $\Qq$ the Beck--Chevalley map $q_!F_A \to F_B q_!$ is an equivalence; alternatively we say that \textit{$F$ is $\Qq$-cocontinuous}.

		If $\Aa=\Bb$ is a topos, we denote by $\Cat(\Bb)^{\Qcoprod} \subseteq \Cat(\Bb)$ the (non-full) subcategory spanned by those $\Bb$-categories admitting $\Qq$-colimits and those $\Bb$-functors preserving $\Qq$-colimits. Dually, we define the non-full subcategory $\CatBQProd\subseteq\Cat(\Bb)$.
	\end{definition}

	\begin{remark}
		If $\Aa=T$ is small and $Q\subseteq T$ is closed under base change, \cite{CLL_Global}*{Lemma~2.3.16} shows that a functor $F\colon\Cc\to\Dd$ in $\Cat_T$ preserves $Q$-colimits if and only if it preserves $Q_\loc$-colimits when viewed as a map in $\Cat(\PSh(T))$.
	\end{remark}

	\begin{example}\label{ex:non-param-colim}
		In the case $\Bb=\Spc$, the condition of being $\Qq$-cocomplete reduces to a non-parametrized cocompleteness condition. Recall that taking global sections defines an equivalence $\Cat(\Spc)\iso\Cat$, with inverse given by sending a category $\Cc$ to $\Fun(\blank,\Cc)$. For local $\Qq \subseteq \Spc$, a category $\Cc$ then has $\Qq$-colimits if and only if $q^*\colon\Fun(B,\Cc)\to\Fun(A,\Cc)$ has a left adjoint (satisfying base change) for every $q\colon A\to B$ in $\Qq$. Specializing to $B=1$, we see that $\Cc$ has $A$-indexed colimits for all $A\in\Qq_{/1}\subseteq\Spc\subseteq\Cat$; conversely, if $\Cc$ admits such colimits, then Kan's pointwise formula and the closure of $\Qq$ under base change show that all the required adjoints exist and satisfy base change, i.e.~$\Cc$ is $\Qq$-cocomplete as a $\Bb$-category.

		In the same way, we see that a $\Spc$-functor is $\Qq$-cocontinuous if and only if it preserves $\Qq_{/1}$-colimits as a functor of non-parametrized categories.
	\end{example}

	\begin{construction}\label{const:parametrized_Q}
		For a local class of morphisms $\Qq$ in $\Bb$ and an object $B \in \Bb$, we denote by
		\[
		\ulbbU{\Qq}(B) \subseteq \Bb_{/B}
		\]
		the full subcategory spanned by those morphisms $q\colon A \to B$ which are contained in $\Qq$. Since $\Qq$ is closed under base change, pullback along a morphism $f\colon A \to B$ restricts to a functor $f^*\colon \ulbbU{\Qq}(B) \to \ulbbU{\Qq}(A)$, and since $\Qq$ is local
		we obtain a $\Bb$-subcategory $\ulbbU{\Qq} \subseteq \ul{\Spc}_{\Bb}$.
	\end{construction}

	\begin{remark}
		The $\Bb$-category $\ulbbU{\Qq}$ is a \textit{class of $\Bb$-groupoids} in the terminology of \cite{martiniwolf2021limits}, and thus determines a notion of \textit{$\ulbbU{\Qq}$-colimits} in a $\Bb$-category. By Proposition~5.4.2 of \emph{op.\ cit.} this precisely recovers the above definitions of $\Qq$-colimits and $\Qq$-cocontinuity.
	\end{remark}

	\begin{proposition}[{\cite{martiniwolf2021limits}*{Proposition~5.2.7}}] \label{prop:fun-colimits}
		Let $\Cc$ and $\Dd$ be $\Bb$-categories, and assume that $\Dd$ has $\Qq$-colimits. Then:
		\begin{enumerate}
			\item The $\Bb$-category $\ul\Fun(\Cc,\Dd)$ again has all $\Qq$-colimits.
			\item For any $\Cc\to\Cc'$ the restriction $\ul\Fun(\Cc',\Dd)\to\ul\Fun(\Cc,\Dd)$ is $\Qq$-cocontinuous.
			\item For any $\Qq$-cocontinuous functor $\Dd\to\Dd'$ the induced functor $\ul\Fun(\Cc,\Dd)\to\ul\Fun(\Cc,\Dd')$ is again $\Qq$-cocontinuous. \qed
		\end{enumerate}
	\end{proposition}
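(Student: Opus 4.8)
The plan is to make precise the principle that $\Qq$-colimits in a parametrized functor category are computed ``pointwise in the target.'' By the remark following \Cref{def:Q_Colimits}, a $\Bb$-category $\Ee$ is $\Qq$-cocomplete exactly when for every $q\colon A\to B$ in $\Qq$ the restriction functor $q^*\colon\pi_B^*\Ee\to\ul\Fun_{\Bb_{/B}}(\ul A,\pi_B^*\Ee)$ — with $\ul A$ the slice object $A\xrightarrow{q}B$ — admits a parametrized left adjoint over $\Bb_{/B}$; and, by the definition of $\Qq$-cocontinuity, a $\Bb$-functor is $\Qq$-cocontinuous exactly when the associated Beck--Chevalley transformations are equivalences. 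So I would first use that $\pi_B^*$ preserves internal homs (\Cref{rmk:fun-slice}) to rewrite $\pi_B^*\ul\Fun_\Bb(\Cc,\Dd)\simeq\ul\Fun_{\Bb_{/B}}(\pi_B^*\Cc,\pi_B^*\Dd)$, and then combine the categorical Yoneda lemma (\Cref{prop:Yoneda}) with cartesian closure of $\Cat(\Bb_{/B})$ to identify the restriction functor $q^*$ of $\ul\Fun(\Cc,\Dd)$ with the result of applying the internal-hom $2$-functor $\ul\Fun_{\Bb_{/B}}(\pi_B^*\Cc,\blank)$ to the restriction functor $q^*_\Dd\colon\pi_B^*\Dd\to\ul\Fun_{\Bb_{/B}}(\ul A,\pi_B^*\Dd)$ of $\Dd$. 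In short: $q^*$ on the functor category is ``postcomposition with $q^*_\Dd$.''

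Granting this identification, part~(1) is immediate. As $\Dd$ has $\Qq$-colimits, $q^*_\Dd$ admits a parametrized left adjoint $q_!^\Dd$ over $\Bb_{/B}$; since the internal-hom $2$-functor $\ul\Fun_{\Bb_{/B}}(\pi_B^*\Cc,\blank)$ preserves adjunctions, it carries $q_!^\Dd\dashv q^*_\Dd$ to a parametrized adjunction on $\ul\Fun_{\Bb_{/B}}(\pi_B^*\Cc,\pi_B^*\Dd)$. Hence $\ul\Fun(\Cc,\Dd)$ is $\Qq$-cocomplete, with left adjoint $q_!$ given by postcomposition with $q_!^\Dd$.

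Parts~(2) and~(3) would then follow formally. For~(2): under the identifications above, restriction along a functor $\Cc\to\Cc'$ is \emph{pre}composition (with the appropriate pullbacks of that functor), while the $q_!$'s on both functor categories are \emph{post}composition with the single functor $q_!^\Dd$; pre- and postcomposition commute, compatibly with the units and counits of the whiskered adjunctions, so the Beck--Chevalley transformation witnessing $\Qq$-cocontinuity of the restriction functor is an equivalence. For~(3): a $\Qq$-cocontinuous functor $\psi\colon\Dd\to\Dd'$ induces postcomposition with $\psi$ on functor categories, and the relevant Beck--Chevalley transformation is the image under postcomposition of the Beck--Chevalley map $q_!^{\Dd'}\psi\to\psi q_!^\Dd$ for $\psi$, which is an equivalence by hypothesis; hence so is its image.

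The step I expect to require genuine care is the bookkeeping in the first paragraph: verifying that under the Yoneda and cartesian-closure equivalences the restriction functor $q^*$ of $\ul\Fun(\Cc,\Dd)$ really does correspond to postcomposition with $q^*_\Dd$, and that this identification is sufficiently natural (in $B$ and in the varying slices) that the parametrized strength of the adjunction — all of the Beck--Chevalley data, not merely the underlying adjunction between global functor categories — survives the whiskering in the second paragraph. Everything beyond that is formal $2$-categorical manipulation; alternatively, one may simply invoke \cite{martiniwolf2021limits}*{Proposition~5.2.7}.
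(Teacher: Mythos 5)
The paper does not supply a proof of this proposition at all: the \verb|\qed| immediately after item~(3) signals that it is cited wholesale from \cite{martiniwolf2021limits}*{Proposition~5.2.7}, exactly as you observe in your final sentence. There is therefore no in-paper argument to compare against; what you have written is an independent reconstruction, and it is a sound one. Your central identification is correct: under the equivalences of \Cref{rmk:fun-slice}, \Cref{prop:Yoneda}, and the exponential law $\ul\Fun(\ul A,\ul\Fun(\pi_B^*\Cc,\pi_B^*\Dd))\simeq\ul\Fun(\pi_B^*\Cc,\ul\Fun(\ul A,\pi_B^*\Dd))$, the constant-diagram functor $q^*$ on $\pi_B^*\ul\Fun(\Cc,\Dd)$ is indeed $\ul\Fun_{\Bb_{/B}}(\pi_B^*\Cc,q^*_\Dd)$, i.e.\ postcomposition with the constant-diagram functor of $\Dd$; this is a diagram chase with the hom-tensor adjunction, and you are right to flag it as the step needing real bookkeeping, since one must check it is natural enough to carry the full parametrized adjunction (the Beck--Chevalley data, not just a fiberwise adjunction). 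Granted that, part~(1) follows because the internal-hom $2$-functor preserves $\Bb_{/B}$-adjunctions, and parts~(2) and~(3) reduce to the facts that pre- and postcomposition commute and that postcomposition is $2$-functorial on Beck--Chevalley $2$-cells. In short: your outline is correct and is in all likelihood the same argument that underlies the cited reference, which the paper declines to reproduce.
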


	\begin{construction}\label{const:coprod_pres_func_cat}
		Let $\Cc,\Dd$ be $\Qq$-cocomplete $\Bb$-categories. We define a full $\Bb$-subcategory $\ul\Fun_\Bb^{\Qcoprod}(\Cc,\Dd)\subseteq\ul\Fun_\Bb(\Cc,\Dd)$ spanned in degree $B\in\Bb$ by the
		objects corresponding to $\pi_B^{-1}\Qq$-cocontinuous functors $\pi_B^*\Cc\to\pi_B^*\Dd$ under the equivalence from \Cref{rmk:fun-slice}; see \cite{martiniwolf2021limits}*{Remark~5.2.4} for a proof that this is indeed a $\Bb$-category.

		By \cite{CLL_Global}*{Remark~2.3.27}, $\ul\Fun_\Bb^{\Qcoprod}(\Cc,\Dd)$ can equivalently be described as the full subcategory spanned in degree $B\in\Bb$ by those objects that correspond to $\Qq$-cocontinuous functors $\Cc\to\ul\Fun(\ul B,\Dd)$ under the final equivalence of Proposition~\ref{prop:Yoneda}.
	\end{construction}

        Assume now that the morphisms in $\Qq$ are closed under composition and contain all equivalences, so that $\Qq \subseteq \Bb$ defines a wide subcategory. In this case, the $\Bb$-category $\ulbbU{\Qq}$ admits $\Qq$-colimits. In fact, it is universal with this property:

        \begin{proposition}[{\cite{martiniwolf2021limits}*{Theorem~7.1.13}}]\label{prop:uni_prop_U_Q}
                The $\Bb$-category $\ulbbU{\Qq}$ is the free $\Qq$-cocomplete $\Bb$-category with $\Qq$-colimits: for every $\Qq$-cocomplete $\Bb$-category $\Dd$, evaluation at the point $\pt \colon \ul{1} \to \ulbbU{\Qq}$ defines an equivalence of $\Bb$-categories
                \[
                        \ulFun^{\Qcoprod}(\ulbbU{\Qq}, \Dd) \iso \Dd,
                \]
                whose inverse is given by left Kan extension along $\pt \colon \ul{1} \to \ulbbU{\Qq}$. \qed
        \end{proposition}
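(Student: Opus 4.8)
The plan is to prove this by the classical argument identifying a free cocompletion with the subcategory generated by a point under the prescribed class of colimits, carried out in the parametrized setting. Two structural observations drive everything. First, under the standing assumption that $\Qq$ is wide, the $\Bb$-functor $\pt\colon\ul{1}\to\ulbbU{\Qq}$ picks out, over each $B\in\Bb$, the terminal object $\id_B$ of $\Bb_{/B}$; being an inclusion of terminal objects it is fully faithful. Second, $\ulbbU{\Qq}$ is generated under $\Qq$-colimits by $\pt$ in the strongest possible sense: for $q\colon A\to B$ in $\Qq$ the left adjoint $q_!=q\circ(\blank)\colon\ulbbU{\Qq}(A)\to\ulbbU{\Qq}(B)$ sends the terminal object $\id_A$ to $q$, so already objectwise every element of $\ulbbU{\Qq}(B)$ is a value of some $q_!$ on a terminal object, and $\ulbbU{\Qq}$ is a full $\Bb$-subcategory of $\ul{\Spc}_{\Bb}$.

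For a $\Qq$-cocomplete $\Bb$-category $\Dd$ I would construct the candidate inverse as a parametrized left Kan extension along $\pt$. Given $d\in\Gamma\Dd$, viewed as $d\colon\ul{1}\to\Dd$ via \Cref{prop:Yoneda}, one identifies the comma $\Bb$-groupoid occurring in the pointwise formula at an object $q\colon A\to B$ of $\ulbbU{\Qq}(B)$ with $\ul A$, so that $(\pt_!d)(q)\simeq q_!(d_A)$, the $\Qq$-colimit in $\Dd$ of the constant $A$-diagram on $d$; here $d_A\in\Dd(A)$ is the restriction of $d$ along $A\to 1$, and convergence of the formula is precisely the generation statement above. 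Using the Beck--Chevalley equivalences in $\Dd$ (iterated $\Qq$-colimits are again $\Qq$-colimits) one checks that $\pt_!d$ is $\Qq$-cocontinuous, so that, with the help of \Cref{prop:fun-colimits} and \Cref{const:coprod_pres_func_cat}, left Kan extension assembles into a functor $\pt_!\colon\Dd\simeq\ulFun(\ul{1},\Dd)\to\ulFun^{\Qcoprod}(\ulbbU{\Qq},\Dd)$.

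It then remains to check that $\pt_!$ and evaluation at $\pt$ are mutually inverse. For $\pt^*\pt_!\simeq\id_\Dd$, full faithfulness of $\pt$ makes the unit an equivalence; concretely $(\pt_!d)(\id_B)=(\id_B)_!(d_B)=d_B$, i.e.\ $d$ restricted to $\Dd(B)$. For $\pt_!\pt^*\simeq\id$ on $\ulFun^{\Qcoprod}(\ulbbU{\Qq},\Dd)$, given a $\Qq$-cocontinuous $F$ the counit $\pt_!\pt^*F\to F$ is an equivalence on the generator $\pt$ by the previous point, both its source and target are $\Qq$-cocontinuous, and since $\ulbbU{\Qq}$ is generated under $\Qq$-colimits by $\pt$, a transformation of $\Qq$-cocontinuous functors invertible on the generator is invertible. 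Finally, one runs all of this over each slice topos $\Bb_{/B}$ --- using that $\pi_B^*\ulbbU{\Qq}$ is again of the same form for the preimage class $\pi_B^{-1}\Qq\subseteq\Bb_{/B}$, and that $\pi_B^*$ is compatible with parametrized functor categories and with $\Qq$-cocontinuity (\Cref{rmk:fun-slice}) --- so that the equivalence of underlying categories upgrades to the asserted equivalence of $\Bb$-categories, compatibly with restriction.

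The main obstacle is the generation statement, together with the accompanying principle that a map of $\Qq$-cocontinuous functors out of $\ulbbU{\Qq}$ which is an equivalence on $\pt$ is already an equivalence. In the parametrized world this is not a bare objectwise claim: one must produce and control the parametrized colimit diagrams exhibiting $\ulbbU{\Qq}$ as built from $\pt$, which is precisely the content of the cocompletion machinery of \cite{martiniwolf2021limits}. Granting that input, the existence and $\Qq$-cocontinuity of the pointwise Kan extension and the descent to slices are routine.
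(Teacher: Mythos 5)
The paper does not prove this proposition at all: it is cited verbatim from Martini--Wolf (Theorem~7.1.13), hence the immediate \verb|\qed|. So there is no internal proof to compare against, and your sketch is in effect supplying the argument the paper has outsourced.

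That said, your sketch is correct in outline and captures the right mechanism. The computation of the comma $\Bb_{/B}$-groupoid $\{\pt\}_{/q}$ as $\ul A$ (since a map $\id_{B'}\to f^*q$ in $\Qq_{/B'}$ is exactly a section of $f^*q$, i.e.\ a map $B'\to A$ over $B$) is right, and the resulting pointwise formula $(\pt_!d)(q)\simeq q_!q^*(d_B)$ is what one wants. The cocontinuity of $\pt_!d$ does reduce, as you say, to functoriality and base change of $(\blank)_!$ in $\Dd$. The generation argument — for $q=q_!(\id_A)$ in $\ulbbU{\Qq}(B)$ and any natural transformation $\alpha\colon F\to G$ of $\Qq$-cocontinuous functors, the naturality square relating $q_!\alpha_A(\id_A)$ to $\alpha_B(q)$ through the two Beck--Chevalley equivalences forces $\alpha_B(q)$ to be invertible once $\alpha_A(\id_A)$ is — is the correct way to promote invertibility at $\pt$ to invertibility everywhere. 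And the passage from an equivalence on $\Gamma(\blank)$ to an equivalence of $\Bb$-categories via the slices $\Bb_{/B}$ using \Cref{rmk:fun-slice} is the right closing move.

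The one thing I would flag is that you are candid about the real load-bearing input, namely the existence and pointwise formula for parametrized left Kan extensions and the finality/descent arguments needed to manipulate the parametrized colimits, and you explicitly defer these to \cite{martiniwolf2021limits}. That is honest, but it means your argument is, in the end, a reorganization of the same machinery the paper is citing rather than an independent proof. Within that scope, everything you wrote is sound, and the formula $(\pt_!d)(\id_B)=d_B$ correctly settles the unit, so the two triangle checks are complete once the generation principle is granted.
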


	While most of our paper only refers to the above `groupoid indexed colimits,' we will on some rare occasions need the complementary notion of \emph{fiberwise colimits}:

	\begin{definition}\label{defi:fw-colim}
		Let $K$ be a (non-parametrized) category. We say that a $\Bb$-category $\Cc$ has \emph{fiberwise $K$-shaped colimits} if the category $\Cc(A)$ has $K$-shaped colimits for every $A \in \Bb$ and the restriction functor $f^*\colon\Cc(B)\to\Cc(A)$ preserves $K$-shaped colimits for each $f\colon A \to B$ in $\Bb$.

		Given a functor $F\colon\Cc\to\Dd$ of $\Bb$-categories with fiberwise $K$-shaped colimits, we say that $F$ \emph{preserves fiberwise $K$-shaped colimits} if each $F_A\colon\Cc(A)\to\Dd(A)$ preserves $K$-shaped colimits.
	\end{definition}

	\begin{remark}\label{rk:fun-colimits-full}
		\cite{martiniwolf2021limits}*{Proposition~5.7.2} also proves the analogue of Proposition~\ref{prop:fun-colimits} for fiberwise colimits in $\Bb$-categories and functors preserving them, making precise that {all} colimits in functor categories are pointwise.
	\end{remark}

	Combining the above two notions of colimits we define:

	\begin{definition}\label{defi:cocomplete}
		A $\Bb$-category is called \emph{cocomplete} if it is $\Bb$-cocomplete in the sense of Definition~\ref{def:Q_Colimits} and moreover \emph{fiberwise cocomplete}, i.e.~has all small fiberwise colimits in the sense of Definition~\ref{defi:fw-colim}.

		A $\Bb$-functor $F\colon\Cc\to\Dd$ of cocomplete $\Bb$-categories is called \emph{cocontinuous} if it is $\Bb$-cocontinuous and preserves all small fiberwise colimits.
	\end{definition}

	\begin{warn}
		If $\Bb=\PSh(T)$, we referred to the above notion as \emph{$T$-cocompleteness} in \cite{CLL_Global, CLL_Clefts}, which clashes with the terminology in Definition~\ref{def:Q_Colimits} above.
	\end{warn}

	\begin{remark}
		If $\Cc$ is cocomplete, then the inclusion of constant diagrams $\pi_A^*\Cc\to\ul\Fun(\Kk,\pi_A^*\Cc)$ has a left adjoint for every $A\in\Bb$ and every small $\Bb_{/A}$-category $\Kk$ by \cite{martiniwolf2021limits}*{Corollary~5.4.7}. In particular, it makes sense to talk about $\Kk$-shaped colimits in $\pi_A^*\Cc$ for any such $\Kk$.
	\end{remark}

	\section{Parametrized semiadditivity}

	In this section, we introduce a wide range of generalized notions of semiadditivity for parametrized categories, using the framework of \textit{ambidexterity} by Hopkins and Lurie \cite{hopkins2013ambidexterity}. We recall this framework in \Cref{subsec:Ambidexterity}, and specialize it in  \Cref{subsec:Parametrized_Semiadditivity} to $\Bb$-categories for a topos $\Bb$. In \Cref{subsec:Presheaf_Topoi} we discuss the case of presheaf topoi, where we present a more flexible `user interface' for parametrized semiadditivity. A wide range of examples is provided in \Cref{subsec:Examples}, and several alternative characterizations of parametrized semiadditivity are given in \Cref{subsec:Characterizations_Q_Semiadditivity}.

	\subsection{Ambidexterity}
	\label{subsec:Ambidexterity}

	We start with a recollection on ambidexterity.

	\begin{definition}[Inductible subcategory]
		\label{def:Inductible_Subcategory}
		Let $\Aa$ be a category and let $\Qq$ be a wide subcategory of $\Aa$ closed under base change. We say that $\Qq$ is \textit{inductible} if the following conditions are satisfied:
		\begin{enumerate}
			\item $\Qq$ is \textit{closed under diagonals}: for every morphism $q\colon A \to B$ in $\Qq$, the diagonal map $\Delta_q \colon A \to A \times_B A$ is again in $\Qq$;
			\item $\Qq$ is \textit{truncated}: every morphism $q\colon A \to B$ in $\Qq$ is truncated (i.e.~$n_q$-truncated for some natural number $n_q$).
		\end{enumerate}
	\end{definition}

	The assumptions on $\Qq$ allow us to make inductive definitions for morphisms in $\Qq$ by iteratively passing to diagonals, explaining our terminology. The condition that $\Qq$ is closed under diagonals in $\Aa$ admits various alternative characterizations:

	\begin{lemma}
		\label{lem:CharacterizationClosedUnderDiagonals}
		For a wide subcategory $\Qq\subseteq\Aa$ closed under base change, the following conditions are equivalent:
		\begin{enumerate}[(1)]
			\item $\Qq$ is closed under diagonals;
			\item $\Qq$ is \emph{left-cancelable}: for morphisms $p\colon A \to B$ and $q\colon B \to C$ in $\Aa$, if both $q$ and $qp$ are in $\Qq$ then also $p$ is in $\Qq$;
			\item $\Qq$ admits pullbacks and the inclusion $\Qq \hookrightarrow \Aa$ preserves pullbacks.
		\end{enumerate}
	\end{lemma}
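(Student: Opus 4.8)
The plan is to run the cycle of implications $(1)\Rightarrow(2)\Rightarrow(3)\Rightarrow(1)$, using throughout only that $\Qq\subseteq\Aa$ is a wide subcategory (hence closed under composition and containing all equivalences) that is closed under base change.

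For $(1)\Rightarrow(2)$, suppose $\Qq$ is closed under diagonals and let $p\colon A\to B$ and $q\colon B\to C$ be morphisms with $q,qp\in\Qq$. Forming the pullback $A\times_C B$ of $q$ along $qp$ — which exists, and whose projection $\pr_B\colon A\times_C B\to B$ is a base change of $qp$ and hence lies in $\Qq$ — we may factor $p$ as the composite
\[
	A\xrightarrow{\;(\id_A,p)\;}A\times_C B\xrightarrow{\;\pr_B\;}B
\]
whose first map is the graph of $p$ relative to $C$. This graph sits in a square
\[
\begin{tikzcd}
	A\rar{(\id_A,p)}\dar[swap]{p} & A\times_C B\dar{(p\,\pr_A,\,\pr_B)} \\
	B\rar{\Delta_q} & B\times_C B
\end{tikzcd}
\]
which is cartesian — this is the standard fact that the graph of a morphism is a base change of a diagonal, verified by pasting the square against the (trivially cartesian) outer rectangle obtained by postcomposing the two horizontal maps with $\pr_A\colon A\times_C B\to A$ and $\pr_1\colon B\times_C B\to B$, the right-hand square of the resulting decomposition being visibly a pullback. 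Since $\Delta_q\in\Qq$ by hypothesis and $\Qq$ is closed under base change, we get $(\id_A,p)\in\Qq$, and therefore $p=\pr_B\circ(\id_A,p)\in\Qq$.

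For $(2)\Rightarrow(3)$, let $A\xrightarrow{f}C\xleftarrow{g}B$ be a cospan in $\Qq$. The pullback $A\times_C B$ exists in $\Aa$ and its two projections are base changes of $g$ and $f$, hence lie in $\Qq$, so the pullback square lies entirely in $\Qq$; it moreover remains a limit cone there, since any cone over this cospan whose legs $u\colon T\to A$ and $v\colon T\to B$ lie in $\Qq$ induces a unique $w\colon T\to A\times_C B$ in $\Aa$, and $\pr_A\circ w=u\in\Qq$ together with $\pr_A\in\Qq$ forces $w\in\Qq$ by left-cancelability (a routine comparison of mapping spaces, using that each $\Qq(X,Y)$ is a union of path components of $\Aa(X,Y)$, turns this observation into the full universal property in $\Qq$). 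As this $\Qq$-pullback coincides with the $\Aa$-pullback and pullbacks are unique, the inclusion $\Qq\hookrightarrow\Aa$ preserves all pullbacks that exist in $\Qq$. Finally, for $(3)\Rightarrow(1)$ and a morphism $q\colon A\to B$ in $\Qq$, forming the pullback of $q$ along $q$ in $\Qq$ and invoking preservation identifies it with $A\times_B A$ together with its two projections (which thus lie in $\Qq$); the cone with apex $A$ and both legs equal to $\id_A\in\Qq$ then induces a map $A\to A\times_B A$ in $\Qq$, and in $\Aa$ this map can only be the diagonal $\Delta_q$, so $\Delta_q\in\Qq$.

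The one step that is not purely formal is the identification of the graph square in $(1)\Rightarrow(2)$ as a pullback; this is a standard diagram chase via the pasting lemma for pullback squares, and once it is in hand the remaining two implications follow by repeated appeals to closure under base change, closure under composition, and the cancellation property.
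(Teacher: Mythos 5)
Your proof is correct and follows essentially the same route as the paper's: the same graph-of-a-morphism factorization for $(1)\Rightarrow(2)$, the same observation that a map into a pullback of a cospan in $\Qq$ lies in $\Qq$ precisely when its components do for $(2)\Rightarrow(3)$, and the same identification of the diagonal via the universal property for $(3)\Rightarrow(1)$ — you simply spell out the pullback-pasting argument and the mapping-space comparison that the paper leaves as "immediate."
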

	\begin{proof}
		For (1) $\implies$ (2), observe that with $p$ and $q$ as in (2) we may factor $p$ as the composite of $(1,p)\colon A \to A \times_C B$ and $\pr_B\colon A \times_C B \to B$. The first map is a base change of $\Delta_q\colon B \to B \times_C B$ and the second map is a base change of $qp$, hence by assumption both lie in $\Qq$ and thus so does $p$. For (2) $\implies$ (3), consider morphisms $A \to B$ and $A' \to B$ in $\Qq$. It follows from (2) that a map $C \to A \times_B A'$ is in $\Qq$ if and only if the two components $C \to A$ and $C \to A'$ are, from which (3) is an immediate consequence. The implication (3) $\implies$ (1) is clear.
	\end{proof}

	Consider an inductible subcategory $\Qq$ of a category $\Aa$, and let $\Cc\colon \Aa\catop \to \Cat$ be a functor which is $\Qq$-cocomplete in the sense of \Cref{def:Q_Colimits}. The restriction of $\Cc$ to $\Qq\catop$ admits a cartesian unstraightening $\smallint (\Cc\vert_{\Qq^\op}) \to \Qq$, which due to $\Qq$-cocompleteness of $\Cc$ is a \textit{Beck--Chevalley fibration} in the sense of \cite{hopkins2013ambidexterity}*{Definition~4.1.3} and thus gives rise to a notion of \textit{$\Cc$-ambidexterity}:

	\begin{construction}[Ambidexterity, \cite{hopkins2013ambidexterity}*{Construction~4.1.8}]
		\label{cons:Ambidexterity}
		Let $\Qq$ be an inductible subcategory of a category $\Aa$ and let $\Cc\colon \Aa\catop \to \Cat$ be a functor which is $\Qq$-cocomplete in the sense of \Cref{def:Q_Colimits}. We will inductively define what it means for an $n$-truncated morphism $q\colon A \to B$ in $\Qq$ to be \textit{$\Cc$-ambidextrous}, in which case we will construct a transformation $\mu_q^{(n)}\colon \id_{\Cc(B)} \to q_!q^*$ exhibiting $q_!$ as a right adjoint to $q^*$.

		The induction starts at $n = -2$, in which case any $(-2)$-truncated morphism $q$ is declared to be $\Cc$-ambidextrous. Since $q$ is an equivalence, the counit map $q_!q^* \to \id_{\Cc(B)}$ is an equivalence, and we define $\mu_q^{(-2)}\colon \id_{\Cc(B)} \to q_!q^*$ as its inverse.

		Assume now that we have defined the $n$-truncated $\Cc$-ambidextrous morphisms for some $n \geq -2$ and have assigned to them the required transformations $\mu_q^{(n)}$. We say that an $(n+1)$-truncated morphism $q\colon A \to B$ in $\Qq$ is \textit{weakly $\Cc$-ambidextrous} if its diagonal $\Delta_q\colon A \to A \times_B A$ is $\Cc$-ambidextrous (which is well-defined since $\Delta_q$ is $n$-truncated). Consider the following commutative diagram:
		\[
		\begin{tikzcd}
			A \ar[bend left=15]{drr}{\id_A} \ar[bend right=15, swap]{ddr}{\id_A} \drar{\Delta} \\
			& A \times_B A \dar{\pr_2} \rar{\pr_1} \drar[pullback] & A \dar{q} \\
			& A \rar{q} & B\rlap.
		\end{tikzcd}
		\]
		We define the \textit{adjoint norm map} $\Nmadj_q\colon q^*q_! \to \id$ as the following composite:
		\[
			\Nmadj_q\colon q^*q_!\xrightarrow{\BC_!^{-1}} \pr_{1!}\pr_2^* \xrightarrow{\mu_{\Delta}^{(n)}} \pr_{1!}\Delta_!\Delta^*\pr_{2}^*\simeq \id.
		\]
		An $(n+1)$-truncated morphism $q\colon A \to B$ is called \textit{$\Cc$-ambidextrous} if every base change $q'$ of $q$ is weakly $\Cc$-ambidextrous and the adjoint norm map $\Nmadj_{q'}\colon q^{\prime*}q'_* \to \id_{\Cc(A')}$ exhibits $q'_!$ as a right adjoint of $q^{\prime*}$. In this case, we let $\mu_q^{(n+1)}\colon \id_{\Cc(B)} \to q_!q^*$ denote the corresponding unit for the resulting adjunction $q^* \dashv q_!$.
	\end{construction}

	\begin{remark}\label{rk:projections-swapped}
		The norm map is independent of the choice of pullback. In particular, taking the same object $A\times_BA$ but with the two projection maps swapped, we see that we can equivalently define the adjoint norm map as the composite
		\begin{equation*}
			q^*q_!\simeq\pr_{2!}\pr_1^*\xrightarrow{\;\mu\;}\pr_{2!}\Delta_!\Delta^*\pr_1^*\simeq\id.
		\end{equation*}
	\end{remark}

	\begin{remark}\label{rk:ambidexterity-restriction}
		Let $f\colon\Aa'\to\Aa$ be a functor and let $\Qq'\subseteq\Aa'$ be inductible such that $f(\Qq')\subseteq\Qq$ and $f$ preserves pullbacks along maps in $\Qq'$. Given any $\Cc\colon\Aa^\op\to\Cat$, we define $f^*\Cc\coloneqq \Cc\circ f\colon \Aa^{\prime\op}\to\Cat$. It then follows straight from the definition that $f^*\Cc$ is $\Qq'$-cocomplete if $\Cc$ is $\Qq$-cocomplete, and that $q\in\Qq'$ is (weakly) $f^*\Cc$-ambidextrous if $f(q)$ is (weakly) $\Cc$-ambidextrous. Moreover, the adjoint norm map for $q$ agrees with the adjoint norm map for $f(q)$ in $\Cc$.
	\end{remark}

	\begin{remark}[Norm map]
		In the situation of \Cref{cons:Ambidexterity}, consider a weakly $\Cc$-ambidextrous morphism $q\colon A \to B$. If the functor $q^*\colon \Cc(B) \to \Cc(A)$ admits a right adjoint $q_*\colon \Cc(A) \to \Cc(B)$, then the adjoint norm map $\Nmadj_q\colon q^*q_! \to \id$ corresponds to a transformation $\Nm_q\colon q_! \to q_*$ that we call the \textit{norm map} associated to $q$. It follows that $q$ is $\Cc$-ambidextrous if and only if for each base change $q'$ the restriction functor $q^{\prime*}$ admits a right adjoint $q'_*$ and the norm map $\Nm_{q'}\colon q'_! \to q'_*$ is an equivalence.
	\end{remark}

	The above construction interacts with natural transformations as one would expect:

	\begin{proposition}[cf.\ {\cite{CSY20}*{Theorem~3.2.3}}]
		\label{prop:adj-norm-vs-B-functor}
		Let $F\colon \Cc \to \Dd$ be a natural transformation of $\Qq$-cocomplete functors $\Aa^\op\to\Cat$. Assume that for every $(n-1)$-truncated map $p$ in $\Qq$ at least one of the Beck--Chevalley maps $\BC_!\colon p_!F\to Fp_!$ and $\BC_*\colon Fp_*\to p_*F$ is invertible.
		\begin{enumerate}
			\item  Let $q$ be an $n$-truncated map that is both weakly $\Cc$-ambidextrous and weakly $\Dd$-ambidextrous. Then the following diagram commutes:
			\[
			\begin{tikzcd}
				q^*q_!F \dar[swap]{\Nmadj_qF} \rar{\BC_!} & q^*Fq_! \rar{\sim} & Fq^*q_! \dar{F\Nmadj_q} \\
				F \ar[equal]{rr} && F.
			\end{tikzcd}
			\]
			\item Assume in addition that $q^*\colon\Cc(B)\to\Cc(A)$ and $q^*\colon\Dd(B)\to\Dd(A)$ admit right adjoints $q_*$. Then the following diagram commutes:
			\begin{equation*}
				\begin{tikzcd}
					q_!F\arrow[r, "\Nm_q"]\arrow[d, "\BC_!"'] &[1em] q_*F\\
					Fq_!\arrow[r, "F(\Nm_q)"'] & Fq_*\arrow[u, "\BC_*"']\rlap.
				\end{tikzcd}
			\end{equation*}
			\item Assume that $q$ is $\Cc$-ambidextrous and $\Dd$-ambidextrous and that at least one of the Beck--Chevalley maps $q_!F\to Fq_!$ and $Fq_*\to q_*F$ is invertible. Then also the following diagram commutes:
			\begin{equation*}
				\begin{tikzcd}
					F\arrow[d, "\mu_q"']\arrow[r, "F(\mu_q)"] & Fq_!q^*\\
					q_!q^*F\arrow[r,equal] & q_!Fq^*\arrow[u, "\BC_!"']\llap.
				\end{tikzcd}
			\end{equation*}
		\end{enumerate}
	\end{proposition}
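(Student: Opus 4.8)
The plan is to prove statements~(1), (2) and~(3) simultaneously by induction on the truncation level $n$, treating~(1) at level $n$ as the crux from which~(2) and~(3) at level $n$ follow by formal mate calculus. In the base case $n=-2$ every morphism $q$ in sight is an equivalence, so $q_!,q^*,q_*$ are mutually inverse, the transformations $\Nmadj_q,\Nm_q,\mu_q$ are the resulting canonical equivalences, and the coherence of $F$ makes $\BC_!\colon q_!F\to Fq_!$ and $\BC_*\colon Fq_*\to q_*F$ invertible; the three diagrams then commute by inspection, using the triangle identities.

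For the inductive step, assume the proposition for all morphisms of truncation level below $n$. The heart of the argument is~(1) at level $n$, so let $q\colon A\to B$ be an $n$-truncated morphism in $\Qq$ that is weakly $\Cc$- and $\Dd$-ambidextrous. Unwinding \Cref{cons:Ambidexterity}, the transformation $\Nmadj_q$ is the composite of the formal Beck--Chevalley isomorphism $\BC_!^{-1}\colon q^*q_!\iso\pr_{1!}\pr_2^*$ for the pullback square of $q$ against itself, the whiskered unit $\pr_{1!}\mu_{\Delta_q}^{(n-1)}\pr_2^*$, and the canonical identification $\pr_{1!}\Delta_{q!}\Delta_q^*\pr_2^*\simeq\id$. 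I would check compatibility with $F$ one factor at a time. The outer two factors only involve formal Beck--Chevalley isomorphisms for pullback squares and the genuine adjunction (co)units of $q_!\dashv q^*$ and $\Delta_{q!}\dashv\Delta_q^*$, whose mates with $F$ are by definition the structure maps $\BC_!$; their compatibility with $F$ is a formal consequence of naturality and the triangle identities, which I would isolate as a preparatory lemma. The middle factor is precisely statement~(3) at level $n-1$ for the diagonal $\Delta_q$, which is $(n-1)$-truncated and ambidextrous in both $\Cc$ and $\Dd$ (as $q$ is weakly ambidextrous in both); the remaining hypotheses of~(3) at level $n-1$ are supplied by the ambient hypothesis of~(1) at level $n$, which gives invertibility of one of $\BC_!,\BC_*$ for every $(n-1)$-truncated morphism, in particular for $\Delta_q$ and for all $(n-2)$-truncated morphisms. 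Pasting the three compatibilities proves~(1); by \Cref{rk:projections-swapped} it does not matter which pullback we chose.

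Statement~(2) at level $n$ then follows by taking mates: $\Nm_q$ is the transpose of $\Nmadj_q$ along $q^*\dashv q_*$, and $\BC_!$ and $\BC_*$ are Beck--Chevalley comparisons built from $F$ together with the adjunctions $q_!\dashv q^*$ and $q^*\dashv q_*$, so the square in~(2) is obtained from the triangle in~(1) by a purely $2$-categorical manipulation requiring only that $q_*$ exist. For statement~(3) at level $n$, I would use that ambidexterity of $q$ makes the norm map $\Nm_q$ invertible in both $\Cc$ and $\Dd$; combined with~(2) at level $n$ this shows that invertibility of $\BC_!$ and of $\BC_*$ for $q$ are equivalent, so the hypothesis of~(3) guarantees that $F$, with its coherence equivalence $Fq^*\simeq q^*F$, constitutes a well-behaved (strong) morphism between the ambidextrous adjunctions $q^*\dashv q_!$ in $\Cc$ and in $\Dd$. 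For such a morphism, compatibility of the counits is equivalent to compatibility of the units; since the counit of $q^*\dashv q_!$ is $\Nmadj_q$, the former is exactly~(1) at level $n$, and we are done.

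The step I expect to be the main obstacle is the careful bookkeeping forced by the asymmetric hypothesis ``at least one of $\BC_!,\BC_*$ is invertible'': one must check at each stage of the induction that the invertibility actually needed is available one truncation level down, which works because the inductive construction of the norm map only ever descends to diagonals. Closely related, stating and proving the preliminary lemma that formal Beck--Chevalley isomorphisms and genuine adjunction (co)units are compatible with $F$, and tracking which mate is which throughout, is routine but error-prone $2$-categorical algebra.
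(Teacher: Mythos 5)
Your proposal is correct and follows essentially the same route as the paper: a standard mate argument in the base case; (2) and (3) deduced from (1) by adjoining over and unit–counit duality; and the inductive step for (1) hinging on applying (3) one truncation level down to the diagonal $\Delta_q$, with the outer factors of $\Nmadj_q$ handled by formal Beck–Chevalley compatibility. The only presentational difference is that the paper first establishes the implications $(1)\Rightarrow(2)\Rightarrow(3)$ uniformly in $n$ before running the induction solely on (1), whereas you fold everything into one simultaneous induction; this is a cosmetic reorganization, not a different argument.
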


	\begin{proof}
		First fix $n$ and $q$ and observe that $(2)$ follows from $(1)$ via adjoining over, also see \cite{CSY20}*{Lemma 2.2.11}. We will now show that this in turn implies $(3)$: indeed, in the diagram
		\begin{equation*}
			\begin{tikzcd}
				F\arrow[r, "F(\eta)"]\arrow[d,"\eta"'] & Fq_*q^*\arrow[d, "\BC_*"] \arrow[r, "F(\Nm^{-1})"] &[2em] Fq_!q^*\\
				q_*q^*F\arrow[r,equals] & q_*Fq^*\arrow[r, "\Nm^{-1}"'] & q_!Fq^*\arrow[u, "\BC_!"']
			\end{tikzcd}
		\end{equation*}
		the right-hand square commutes by $(2)$ and the assumption that at least one of the two Beck--Chevalley maps is invertible, while the left-hand square commutes by direct inspection.

		Using this, we will now prove $(1)$ by induction on $n$. For $n=-2$, $\Nmadj_q$ is simply the inverse of the unit $\id\to q^*q_!$, and the statement follows by a standard mate argument, also see \cite{CSY20}*{Lemma~2.2.3(3)}. If we already know the statement for $n-1$, then we consider the diagram
		\[\begin{tikzcd}
			{q^*q_!F} & { (\pr_1)_!\pr_2^* F} && { (\pr_1)_!\Delta_! \Delta^* \pr_2^* F} & F \\
			& {(\pr_1)_!F\pr_2^*} && { (\pr_1)_!\Delta_! \Delta^* F \pr_2^*} \\
			{q^*F q_!} & {{}} && { (\pr_1)_!\Delta_! F \Delta^* \pr_2^*} & F \\
			& {(\pr_1)_!F\pr_2^*} && {(\pr_1)_! F \Delta_! \Delta^* \pr_2^*} \\
			{ F q^*q_!} & {F(\pr_1)_!(\pr_2)^* } && {F (\pr_1)_!\Delta_! \Delta^* \pr_2^*} & F
			\arrow["\sim", from=1-4, to=1-5]
			\arrow["{\BC_!^{-1}}", from=5-1, to=5-2]
			\arrow["{\BC_!}"', from=1-1, to=3-1]
			\arrow["\sim"', from=3-1, to=5-1]
			\arrow[equals, from=1-4, to=2-4]
			\arrow[equals, from=1-2, to=2-2]
			\arrow[equals, from=3-4, to=2-4]
			\arrow["{\BC_!}"', from=4-4, to=5-4]
			\arrow["{\BC_!}"', from=3-4, to=4-4]
			\arrow["{\BC_!^{-1}}", from=1-1, to=1-2]
			\arrow["{\BC_!}"', from=4-2, to=5-2]
			\arrow[Rightarrow, no head, from=2-2, to=4-2]
			\arrow[Rightarrow, no head, from=1-5, to=3-5]
			\arrow[""{name=0, anchor=center, inner sep=0}, Rightarrow, no head, from=3-5, to=5-5]
			\arrow["{\textstyle(*)}"{description}, draw=none, from=1-1, to=5-2]
			\arrow["{\mu_{\Delta}}", from=1-2, to=1-4]
			\arrow["{\mu_{\Delta}}", from=2-2, to=2-4]
			\arrow["{\mu_{\Delta}}", from=4-2, to=4-4]
			\arrow["{\mu_{\Delta}}", from=5-2, to=5-4]
			\arrow["{\textstyle(\dagger)}"{description,xshift=-6pt}, draw=none, from=2-2, to=4-4]
			\arrow["\sim", from=3-4, to=3-5]
			\arrow["\sim", from=5-4, to=5-5]
			\arrow["\textstyle{(*)}"{description,xshift=-2pt}, draw=none, from=4-4, to=0]
		\end{tikzcd}\]
		whose top and bottom row spell out $\Nmadj_q$ and $F(\Nmadj_q)$, respectively; here and in what follows, we will simply denote the naturality constraints of an $\Aa^\op$-natural transformation by equality signs to streamline notation.

		The two subdiagrams marked $(*)$ commute by basic mate arguments, cf.~\cite{CSY20}*{Lemma~2.2.4(1)}, while the subdiagram $(\dagger)$ commutes by the induction hypothesis and the above implication $(1)\Rightarrow(3)$. As all the remaining subdiagrams commute simply by naturality, this completes the inductive step.
	\end{proof}

	As an immediate consequence, we can now describe the interaction of the norm with base change, cf.~\cite{hopkins2013ambidexterity}*{Proposition~4.2.1 and Remark~4.2.3}:

	\begin{corollary}\label{cor:norm-vs-restrictions}
		Let
		\begin{equation*}
			\begin{tikzcd}
				A'\arrow[d, "q'"']\arrow[r, "g"]\drar[pullback]& A\arrow[d, "q"]\\
				B'\arrow[r, "f"'] & B
			\end{tikzcd}
		\end{equation*}
		be a pullback in $\Aa$ such that $q$ is a map in $\Qq$ (hence so is $q'$).
		\begin{enumerate}
			\item If $\Cc$ is $\Qq$-cocomplete and $q$ is weakly $\Cc$-ambidextrous, then we have a commutative diagram
			\begin{equation*}
				\begin{tikzcd}
					q^{\prime*}f^*q_!\arrow[r,equals] & g^*q^*q_!\arrow[d, "g^*\Nmadj_q"]\\
					\arrow[u, "\BC_!"]q^{\prime*}q'_!g^*\arrow[r, "\Nmadj_{q'}"'] & g^*\rlap.
				\end{tikzcd}
			\end{equation*}
			\item Assume in addition that $q^*$ and $q^{\prime*}$ admit right adjoints. Then also
			\begin{equation*}
				\begin{tikzcd}
					f^*q_!\arrow[r, "f^*\Nm_q"] &[1em] f^*q_*\arrow[d, "\BC_*"]\\
					q'_!g^*\arrow[u, "\BC_!"]\arrow[r,"\Nm_{q'}"'] & q'_*g^*
				\end{tikzcd}
			\end{equation*}
			commutes.
		\end{enumerate}
		\begin{proof}
			For the first statement, let $\pi_B\colon\Aa_{/B}\to\Aa$ denote the projection. It then suffices to apply Proposition~\ref{prop:adj-norm-vs-B-functor}(1) to the $\Aa_{/B}$-natural transformation $f^*\colon\pi_B^*\Cc\to\Cc(A\times_B\blank)$, using Remark~\ref{rk:ambidexterity-restriction} to identify the adjoint norms on both sides.

			The second statement follows in the same way from Proposition~\ref{prop:adj-norm-vs-B-functor}(2).
		\end{proof}
	\end{corollary}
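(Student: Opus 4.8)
The plan is to deduce both statements from \Cref{prop:adj-norm-vs-B-functor} after passing to the slice category $\Aa_{/B}$. Write $\pi_B\colon\Aa_{/B}\to\Aa$ for the forgetful functor. Since pullbacks in $\Aa_{/B}$ are computed in $\Aa$, the preimage $\Qq_{/B}\subseteq\Aa_{/B}$ is again a wide, base-change-stable, inductible subcategory, $\pi_B$ carries $\Qq_{/B}$ into $\Qq$, and $\pi_B$ preserves pullbacks along maps in $\Qq_{/B}$. Alongside $\pi_B^*\Cc$, the assignment $(X\xrightarrow{h}B)\mapsto\Cc(A\times_B X)$ defines a second functor $(\Aa_{/B})^\op\to\Cat$; since $\Qq$ is closed under base change and $\Cc$ is $\Qq$-cocomplete, this functor is again $\Qq_{/B}$-cocomplete, and the pullback functors along the projections $A\times_B X\to X$ assemble into a natural transformation $f^*\colon\pi_B^*\Cc\to\Cc(A\times_B\blank)$ (which is the parametrized restriction along $q$).

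Next I would check that this transformation satisfies the hypothesis of \Cref{prop:adj-norm-vs-B-functor}: for every map $p$ of $\Qq_{/B}$, the Beck--Chevalley transformation $\BC_!\colon p_!f^*\to f^*p_!$ must be invertible. Unwinding the definitions, this is precisely the base-change condition~(2) of \Cref{def:Q_Colimits} for $\Cc$, applied to the pullback square obtained by base-changing $p$ along $A\times_B(\blank)$; so it holds. The remaining ingredients are the two adjoint norm maps appearing in the proposition's pentagon: by \Cref{rk:ambidexterity-restriction} applied to $\pi_B$, the morphism $q$ (viewed in $\Aa_{/B}$) is weakly $\pi_B^*\Cc$-ambidextrous with adjoint norm the original $\Nmadj_q$; and \Cref{rk:ambidexterity-restriction} applied to the functor $A\times_B(\blank)\colon\Aa_{/B}\to\Aa$ — which sends $q$ to a base change of $q$ and preserves pullbacks along $\Qq_{/B}$ — shows that $q$ is also weakly $\Cc(A\times_B\blank)$-ambidextrous, its adjoint norm being computed from the adjoint norms of the base changes of $q$, which over $f\colon B'\to B$ is exactly $\Nmadj_{q'}$.

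Feeding these into \Cref{prop:adj-norm-vs-B-functor}(1) and tracking all the identifications, the resulting pentagon translates into the square of part~(1). For part~(2) I would run the same argument with \Cref{prop:adj-norm-vs-B-functor}(2): the extra hypothesis that $q^*$ and $q^{\prime*}$ admit right adjoints is exactly what that part requires (after the slice translation), and its conclusion reproduces the second square. The step needing the most care is the slice-category bookkeeping — verifying the hypotheses of \Cref{rk:ambidexterity-restriction} for $\pi_B$ and $A\times_B(\blank)$, that $\Cc(A\times_B\blank)$ is genuinely $\Qq_{/B}$-cocomplete, and, above all, that the abstract pentagon of \Cref{prop:adj-norm-vs-B-functor}, once restriction functors, left adjoints, Beck--Chevalley maps and adjoint norms have been matched with their classical incarnations and the relevant base changes tracked through the parametrized structure, really does reproduce the stated diagram. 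Once this dictionary is in place, no further computation is required.
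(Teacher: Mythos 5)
There is a genuine gap: you have misidentified the natural transformation to which \Cref{prop:adj-norm-vs-B-functor}(1) should be applied. You take $F$ to be the ``parametrized restriction along $q$'' with components $\Cc(X)\to\Cc(A\times_B X)$ (restriction along $A\times_B X\to X$). With this choice, applying the pentagon of \Cref{prop:adj-norm-vs-B-functor}(1) at the map $\overline q$ (that is, $q$ viewed as the map from $(q\colon A\to B)$ to the terminal object $(\id_B)$ in $\Aa_{/B}$) yields a diagram between functors $\Cc(A)\to\Cc(A\times_B A)$, not between functors $\Cc(A)\to\Cc(A')$ as needed. This reproduces only the special case $B'=A$, $f=q$ of the corollary, with $q'=\pr_1$ and $g=\pr_2$. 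Relatedly, \Cref{rk:ambidexterity-restriction} applied to $A\times_B(\blank)\colon\Aa_{/B}\to\Aa$ identifies the adjoint norm of $\overline q$ in $\Cc(A\times_B\blank)$ with $\Nmadj_{\pr_1}$ for the \emph{fixed} base change $\pr_1=A\times_B q\colon A\times_B A\to A$; it is a single natural transformation, not a family indexed by $f\colon B'\to B$, so the phrase ``which over $f\colon B'\to B$ is exactly $\Nmadj_{q'}$'' does not make sense.

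The correct choice is the parametrized restriction along $f$: the $\Aa_{/B}$-natural transformation $\pi_B^*\Cc\to\Cc(\blank\times_B B')$ whose component at $(h\colon X\to B)$ is restriction along the projection $X\times_B B'\to X$ (a base change of $f$), which is why the paper names it $f^*$. With this $F$, the source of $\overline q$ is $(q\colon A\to B)$, at which $F$ is $g^*\colon\Cc(A)\to\Cc(A')$, so the pentagon now lands in the right place. \Cref{rk:ambidexterity-restriction} applied to $\pi_B$ gives $\Nmadj_{\overline q}=\Nmadj_q$ on the $\pi_B^*\Cc$ side; applied to $\blank\times_B B'\colon\Aa_{/B}\to\Aa$, which sends $\overline q$ to $q\times_B B'=q'$, it gives $\Nmadj_{\overline q}=\Nmadj_{q'}$ on the $\Cc(\blank\times_B B')$ side. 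Tracking the remaining identifications ($\overline q_!=q_!$ and $q'_!$, $\overline q^*=q^*$ and $q'^*$, and the naturality equivalence $q'^*f^*\simeq g^*q^*$) now produces exactly the square in part (1). Your verification of the Beck--Chevalley hypothesis was sound and carries over unchanged to this $F$, but the identification of $F$ itself is the crux of ``applying'' the proposition, and getting it wrong leaves the general statement unproved.
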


	\subsection{Parametrized semiadditivity}
	\label{subsec:Parametrized_Semiadditivity}
	The notion of ambidexterity leads to a variety of notions of parametrized semiadditivity for $\Bb$-categories. These varieties are most naturally indexed on \emph{locally inductible subcategories}, which we introduce now.

		\begin{definition}\label{def:Locally_Inductible}
		A wide local subcategory $\Qq$ of a topos $\Bb$ is \emph{locally inductible} if
		\begin{enumerate}
			\item every morphism $q\colon A \to B$ in $\Qq$ \textit{locally truncated}: there exists a covering $(B_i \to B)_{i\in I}$ (i.e.~the induced map $\bigsqcup_{i \in I} B_i \to B$ is an effective epimorphism) such that each base change $q_i\colon B_i \times_B A \to B_i$ is truncated, \textit{and}
			\item $\Qq$ is closed under diagonals.
		\end{enumerate}
		Note that the collection of truncated morphisms in $\Qq$ is an inductible subcategory of $\Bb$, so that the framework of ambidexterity applies.
		\end{definition}

	\begin{definition}[$\Qq$-semiadditivity]
	\label{def:Q_Semiadditivity}
	Let $\Bb$ be a topos equipped with a local inductible subcategory $\Qq$. We say that a $\Bb$-category $\Cc$ is \textit{$\Qq$-semiadditive} if it admits $\Qq$-colimits and if every truncated map $q\colon A\rightarrow B$ in $\Qq$ is $\Cc$-ambidextrous.
	\end{definition}

	\begin{remark}\label{rk:norm-res-pb}
		Let $f\colon\Bb'\to\Bb$ be a left adjoint functor that preserves pullbacks, and let $\Qq'\subseteq\Bb',\Qq\subseteq\Bb$ be locally inductible with $f(\Qq')\subseteq\Qq$. Specializing Remark~\ref{rk:ambidexterity-restriction}, we see that for any $\Qq$-semiadditive $\Bb$-category $\Cc$ the restriction $f^*\Cc$ is a $\Qq'$-semiadditive $\Bb'$-category, with the evident (adjoint) norms for truncated maps.

		In particular, if $A\in\Bb$ is arbitrary, we can apply this to the forgetful functor $\pi_A\colon\Bb_{/A}\to\Bb$ and the locally inductible subcategory $\Qq'=\Bb_{/A}[\Qq]\coloneqq\pi_A^{-1}(\Qq)$. This will in various proofs allow us to restrict to slices, simplifying notation.
	\end{remark}

	\begin{remark}\label{rem:functor-cat-Q-semi}
		Suppose that $\Cc$ is $\Qq$-semiadditive. Because parametrized (co)limits in functor categories are computed pointwise, one easily checks by induction that $\ulFun(\Ii,\Cc)$ is again $\Qq$-semiadditive for every small $\Bb$-category $\Ii$, with (adjoint) norm maps given pointwise by the norms in $\Cc$.
	\end{remark}

	\begin{remark}
		Note that the definition of $\Qq$-semiadditivity for a locally inductible class $\Qq$ only requires that truncated maps in $\Qq$ are $\Cc$-ambidextrous, because only in this case does the inductive procedure of \Cref{cons:Ambidexterity} terminate. Nevertheless, we will show in Theorem~\ref{thm:norm-non-truncated} that there are natural units and counits witnessing an adjunction $q^*\dashv q_!$ for any map $q\in \Qq$.
	\end{remark}

	Conversely, it suffices to check $\Qq$-cocompleteness on the classes $\Qq_{\le n}$ of $n$-truncated maps for every finite $n$:

	\begin{lemma}\label{lemma:colim-enough-truncated}
		Let $\Qq\subseteq\Bb$ be a locally truncated local class. Then a $\Bb$-category $\Cc$ is $\Qq$-cocomplete if and only if it is $\Qq_{\le n}$-cocomplete for every $n\ge-2$.
		\begin{proof}
			The `only if' part is clear. For the other direction, fix $q\colon A\to B$ and consider the full subcategory $\Sigma\subseteq\Bb_{/B}$ of all $f\colon B'\to B$ such that the pullback $q'\coloneqq f^*(q)\colon A\times_BB'\to B'$ is truncated. This is a sieve as truncated maps are stable under pullback, and it is covering by the assumption that $q$ be locally truncated. Moreover, $\Qq_{\le n}$-cocompleteness for all $n\ge-2$ shows that $q^{\prime*}$ admits a left adjoint $q_!'$ satisfying base change along maps in $\Sigma$. Letting $q$ vary, the lemma is therefore an instance of Corollary~\ref{cor:colimits-local}.
		\end{proof}
	\end{lemma}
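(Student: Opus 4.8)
The plan is to prove Lemma~\ref{lemma:colim-enough-truncated} by reducing it to a statement about gluing parametrized left adjoints along a covering sieve, which I would isolate as a separate ``Corollary~\ref{cor:colimits-local}'' on local detection of $\Qq$-colimits. The ``only if'' direction is immediate since each $\Qq_{\le n}$ is contained in $\Qq$. For the substantive direction, I would fix a morphism $q\colon A\to B$ in $\Qq$ and, rather than directly producing a left adjoint to $q^*\colon\Cc(B)\to\Cc(A)$, I would work in the slice $\Bb_{/B}$ (using the reduction of Remark~\ref{rk:fun-slice}/Remark~\ref{rk:norm-res-pb} to pass to $\pi_B^*\Cc$ and $\Bb_{/B}[\Qq]$) and consider the full subcategory $\Sigma\subseteq\Bb_{/B}$ of those $f\colon B'\to B$ whose pullback $f^*(q)$ is truncated.

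The two key facts about $\Sigma$ are: (i) it is a \emph{sieve}, because truncated morphisms are stable under pullback in a topos; and (ii) it is a \emph{covering sieve}, which is exactly the hypothesis that $q$ is locally truncated — there is a covering $(B_i\to B)_{i\in I}$ with each base change truncated, and every object of $\Sigma$ is witnessed by the effective epimorphism $\bigsqcup_i B_i\times_B B'\to B'$. Next, for each $f\colon B'\to B$ in $\Sigma$ the pulled-back map $q'=f^*(q)$ is a \emph{truncated} map in $\Qq$ (truncated maps in $\Qq$ form the inductible subcategory $\Qq_{\le n}$ for varying $n$, and $\Qq$ is closed under base change), so the assumed $\Qq_{\le n}$-cocompleteness for all $n\ge-2$ furnishes a left adjoint $q'_!$ to $q^{\prime*}$, and moreover the Beck--Chevalley condition holds for all pullback squares lying over $\Sigma$ since those again involve only truncated maps. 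Thus, after restricting to $\Sigma$, the restriction of $\pi_B^*\Cc$ locally admits the desired parametrized left adjoint; invoking the local-to-global principle (Corollary~\ref{cor:colimits-local}) upgrades this to an honest parametrized left adjoint of $q^*$ over all of $\Bb_{/B}$, and letting $q$ range over $\Qq$ gives $\Qq$-cocompleteness of $\Cc$.

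The main obstacle is the local-to-global step itself: one must know that having a parametrized left adjoint (equivalently, $\Qq$-colimits, equivalently the relevant relative adjunction data) can be checked locally on a covering sieve, and that the locally-defined left adjoints glue to a global one compatibly with the Beck--Chevalley squares. This is where the topos-theoretic input (effective epimorphisms, descent for $\Cat$-valued sheaves, and the sheaf property of $\Cc$) really enters; the bookkeeping lies in verifying that the gluing is compatible with the parametrized structure, i.e.\ that the adjunction counit/unit and the base-change equivalences descend. I expect this to be exactly the content of the cited Corollary~\ref{cor:colimits-local}, so modulo that result the proof is the short sieve argument sketched above; the only care needed is to make sure that the truncation levels are handled uniformly — $q$ need not be globally $n$-truncated for a single $n$, but on each piece of the cover it is, and that is all that is used.
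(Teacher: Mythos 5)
Your proposal is correct and mirrors the paper's proof essentially verbatim: the same sieve $\Sigma$ of morphisms over which $q$ becomes truncated, the same two observations that it is a sieve (pullback-stability of truncation) and covering (local truncatedness of $q$), the same appeal to $\Qq_{\le n}$-cocompleteness to supply the slicewise left adjoints with Beck--Chevalley, and the same final reduction to Corollary~\ref{cor:colimits-local}. Your additional remark that the truncation level may vary over the cover is exactly the subtlety that makes the sieve construction necessary, and your identification of the gluing corollary as the real content is accurate.
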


	In the same way Lemma~\ref{lemma:cocont-local} specializes to:

	\begin{lemma}\label{lemma:cocont-enough-truncated}
		Let $\Qq\subseteq\Bb$ be local and locally truncated. Then a functor $F\colon\Cc\to\Dd$ of $\Qq$-cocomplete $\Bb$-categories is $\Qq$-cocontinuous if and only if it is $\Qq_{\le n}$-cocontinuous for all $n\ge-2$.\qed
	\end{lemma}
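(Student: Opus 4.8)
The plan is to mirror the proof of Lemma~\ref{lemma:colim-enough-truncated} above, replacing the appeal to the (unstated, but clearly available) Corollary~\ref{cor:colimits-local} by the corresponding statement Lemma~\ref{lemma:cocont-local} about $\Qq$-cocontinuity being a local condition. Concretely: the `only if' direction is immediate since $\Qq_{\le n}\subseteq\Qq$ for every $n$, so a $\Qq$-cocontinuous functor preserves in particular all $\Qq_{\le n}$-colimits. For the converse, suppose $F\colon\Cc\to\Dd$ preserves $\Qq_{\le n}$-colimits for every $n\ge -2$; we must show that for each $q\colon A\to B$ in $\Qq$ the Beck--Chevalley map $q_!F_A\to F_Bq_!$ is an equivalence.

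First I would fix such a $q$ and, exactly as in Lemma~\ref{lemma:colim-enough-truncated}, introduce the full subcategory $\Sigma\subseteq\Bb_{/B}$ of those $f\colon B'\to B$ for which the pullback $q'\coloneqq f^*q\colon A\times_BB'\to B'$ is truncated. Stability of truncated morphisms under pullback makes $\Sigma$ a sieve, and local truncatedness of $\Qq$ makes it a covering sieve. For any $f\colon B'\to B$ in $\Sigma$ the map $q'$ lies in $\Qq_{\le n}$ for some $n$ (after possibly passing to a further cover, but since we are working in the slice $\Bb_{/B'}$ and $\Qq$ is local this is harmless), so by hypothesis $F$ preserves $q'$-indexed colimits, i.e.\ the Beck--Chevalley map $q'_!F\to Fq'_!$ is an equivalence. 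Thus $F$ is ``$\Qq$-cocontinuous locally on $B$'' in the precise sense required by Lemma~\ref{lemma:cocont-local}, and letting $q$ vary we conclude that $F$ is $\Qq$-cocontinuous.

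The only genuine subtlety is the bookkeeping needed to pass from ``$q'$ is truncated'' to ``$q'\in\Qq_{\le n}$ for a single finite $n$'': being truncated means $n_{q'}$-truncated for \emph{some} $n$ depending on $q'$, and a priori this $n$ need not be bounded as $f$ ranges over $\Sigma$. This is not actually an obstacle, because the locality statement of Lemma~\ref{lemma:cocont-local} only asks for a covering family on each of whose members $F$ preserves the relevant colimit, and each individual $q'$ with $f\in\Sigma$ is $n_{q'}$-truncated and hence handled by the $\Qq_{\le n_{q'}}$-cocontinuity hypothesis. So no uniform bound is required, and the argument goes through verbatim as the dual/analogue of the preceding lemma. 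I would simply record this as ``In the same way Lemma~\ref{lemma:cocont-local} specializes to'' and leave the routine details to the reader, since they are identical to those just carried out for $\Qq$-cocompleteness.
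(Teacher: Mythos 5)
Your proposal is correct and matches exactly what the paper intends: the paper itself only says ``In the same way Lemma~\ref{lemma:cocont-local} specializes to:'' and leaves the verbatim transcription of the argument from Lemma~\ref{lemma:colim-enough-truncated} to the reader, which is precisely what you carry out. The parenthetical worry about ``possibly passing to a further cover'' is unnecessary---once $f\in\Sigma$, the pullback $q'=f^*q$ is already a single $n_{q'}$-truncated map in $\Qq$, hence lies in $\Qq_{\le n_{q'}}$ outright---but this does not affect the validity of the argument.
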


	While the definition of $\Qq$-semiadditivity only refers to $\Qq$-colimits, we in fact also have all $\Qq$-limits:

	\begin{corollary}
		\label{cor:Semiadditive_Cats_Have_QLimits}
		Every $\Qq$-semiadditive $\Bb$-category $\Cc$ admits $\Qq$-limits.
	\end{corollary}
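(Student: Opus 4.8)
The plan is to deduce the existence of $\Qq$-limits from the existence of $\Qq$-colimits together with ambidexterity, by showing that for every truncated $q\colon A\to B$ in $\Qq$ the left adjoint $q_!$ is in fact also a right adjoint to $q^*$, and that these right adjoints satisfy the dual base change condition. The core input is already packaged in \Cref{cons:Ambidexterity}: by hypothesis every truncated $q$ in $\Qq$ is $\Cc$-ambidextrous, which by construction means precisely that the adjoint norm map $\Nmadj_q\colon q^*q_!\to\id$ (together with the corresponding unit $\mu_q^{(n)}\colon\id\to q_!q^*$) exhibits $q_!$ as a right adjoint to $q^*$. So the first step is simply to record that $q_*\coloneqq q_!$ serves as a right adjoint to $q^*$ for every truncated $q\in\Qq$, with unit $\mu_q$ and counit the (inverse image under this adjunction of the) norm.

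The second step is to verify the dual Beck--Chevalley condition: for a pullback square as in \Cref{def:Q_Colimits} with $q$ (hence $q'$) in $\Qq$, the transformation $\BC_*\colon f^*q_*\to q'_*g^*$ is an equivalence. Since $q_*=q_!$ and $q'_*=q'_!$, and since the analogous map $\BC_!\colon q'_!g^*\to f^*q_!$ is an equivalence by $\Qq$-cocompleteness, this should follow by comparing the two Beck--Chevalley maps via the norm. Concretely, \Cref{cor:norm-vs-restrictions}(2) gives a commuting square relating $f^*\Nm_q$, $\Nm_{q'}$, $\BC_!$ and $\BC_*$; as the norm maps are equivalences (by ambidexterity) and $\BC_!$ is an equivalence, two-out-of-three forces $\BC_*$ to be an equivalence as well. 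One should be slightly careful that $\Cref{cor:norm-vs-restrictions}$ is stated for the non-parametrized $\BC$ of functors $\Aa^\op\to\Cat$, but in our setting $\Aa=\Bb$ and the parametrized base change condition of \Cref{def:Q_Colimits} is exactly this pointwise condition, so no extra work is needed. Alternatively, and perhaps more cleanly, one can invoke the uniqueness of adjoints: $f^*q_!$ and $q'_!g^*$ are both left adjoint to the common functor obtained from $g^*q^* = q'^* f^*$, so the canonical comparison $\BC_!$ is automatically an equivalence --- and dualizing, $f^*q_*$ and $q'_*g^*$ are both \emph{right} adjoint to that same functor, whence $\BC_*$ is an equivalence too.

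The third and final step is to reduce from truncated maps to all of $\Qq$. Here I would apply the dual of \Cref{lemma:colim-enough-truncated}: since $\Qq$ is locally truncated, a $\Bb$-category admits $\Qq$-limits as soon as it admits $\Qq_{\le n}$-limits for every $n\ge-2$, and the previous two steps establish exactly the latter (the right adjoints $q_*$ exist for all truncated $q$ and satisfy dual base change along covering sieves, using that the sieve $\Sigma$ of objects over which a given $q$ becomes truncated is covering). Strictly speaking one needs the ``dual'' versions of Lemmas~\ref{lemma:colim-enough-truncated} and the locality statement \Cref{cor:colimits-local} it rests on, but these are formal: replace $\Bb$ by nothing and $\Cc$ by its pointwise opposite $\Cc^{(\blank)\op}$, or simply observe that the cited results are statements about Beck--Chevalley fibrations which are self-dual under passing to the opposite fibration.

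\textbf{Main obstacle.} The only genuinely non-formal ingredient is the compatibility of the norm with base change, i.e.\ the input from \Cref{cor:norm-vs-restrictions}, but that is already available in the excerpt; so the ``hard part'' here is really just the bookkeeping of dualizing the locality machinery (Lemmas~\ref{lemma:colim-enough-truncated}, \ref{lemma:cocont-enough-truncated} and \Cref{cor:colimits-local}) to the limit side and making sure the local-truncation hypothesis on $\Qq$ feeds into it correctly. Everything else is a direct unwinding of the definition of $\Cc$-ambidexterity.
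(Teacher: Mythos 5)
Your main argument is correct and is essentially the paper's proof: reduce to $n$-truncated maps via (the dual of) \Cref{lemma:colim-enough-truncated}, observe that $q_*$ exists by the definition of $\Cc$-ambidexterity, and then deduce the $\BC_*$ condition from the $\BC_!$ condition by two-out-of-three using the commuting square of \Cref{cor:norm-vs-restrictions}(2) and the invertibility of $\Nm_q$ and $\Nm_{q'}$.

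One caution: the ``alternative, perhaps cleaner'' argument you offer does not work as stated. You claim that $f^*q_!$ and $q'_!g^*$ are both left adjoint to $g^*q^* = q'^*f^*$, but $g^*q^*$ is a functor $\Cc(B)\to\Cc(A')$ while $f^*q_!$ and $q'_!g^*$ are functors $\Cc(A)\to\Cc(B')$; the adjointness you want does not typecheck. What you do have is that $q'_!g^*$ is left adjoint to $g_*q'^*$ and $f^*q_!$ is left adjoint to $q^*f_*$ (when those right adjoints exist), and identifying \emph{those} two right adjoints is itself a Beck--Chevalley condition on pushforwards, not an automatic consequence of uniqueness of adjoints. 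Invertibility of $\BC_!$ is a genuine hypothesis in \Cref{def:Q_Colimits}, and the content of the corollary is precisely that ambidexterity transports it to $\BC_*$; the norm-compatibility input from \Cref{cor:norm-vs-restrictions} is needed, which your main argument correctly uses. So keep the first version and drop the aside.
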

	\begin{proof}
		By Lemma~\ref{lemma:colim-enough-truncated}${}^\op$ it is enough to show that it has $\Qq_{\le n}$-limits for all $n\ge-2$.

		Let $q\colon A\to B$ be a map in $\Qq_{\le n}$. By $\Qq$-semiadditivity, we know that $q^*$ has a right adjoint $q_*$, so it only remains to verify the Beck--Chevalley condition, i.e.~that for every pullback
		\begin{equation*}
			\begin{tikzcd}
				A'\arrow[d, "q'"'] \arrow[r, "f'"] \drar[pullback] & A\arrow[d,"q"]\\
				B'\arrow[r, "f"'] & B
			\end{tikzcd}
		\end{equation*}
		the Beck--Chevalley map $\BC_*\colon f^*q_*\to q'_*f^{\prime*}$ is an equivalence. However, this follows immediately from Corollary~\ref{cor:norm-vs-restrictions} by $2$-out-of-$3$.
	\end{proof}

	In the same way one shows (using Lemma~\ref{lemma:cocont-enough-truncated} and its dual):

	\begin{corollary}
		\label{cor:Functor_Preserves_Limits_Iff_Preserves_Colimits}
	A functor between $\Qq$-semiadditive $\Bb$-categories preserves $\Qq$-limits if and only if it preserves $\Qq$-colimits.\qed
	\end{corollary}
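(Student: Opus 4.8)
The plan is to follow the proof of Corollary~\ref{cor:Semiadditive_Cats_Have_QLimits} almost verbatim, with Proposition~\ref{prop:adj-norm-vs-B-functor}(2) taking over the role that Corollary~\ref{cor:norm-vs-restrictions} played there. Fix a functor $F\colon\Cc\to\Dd$ of $\Qq$-semiadditive $\Bb$-categories; by Corollary~\ref{cor:Semiadditive_Cats_Have_QLimits} both admit $\Qq$-limits, so the statement makes sense. Using Lemma~\ref{lemma:cocont-enough-truncated} and its dual, I would immediately reduce to proving that $F$ preserves $\Qq_{\le n}$-colimits for every $n\ge-2$ if and only if it preserves $\Qq_{\le n}$-limits for every $n\ge-2$, so that only truncated maps $q\colon A\to B$ in $\Qq$ need to be considered.

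The core of the argument runs as follows. Suppose first that $F$ preserves $\Qq$-colimits, and let $q\colon A\to B$ be an $n$-truncated map in $\Qq$. Since $\Cc$ and $\Dd$ are $\Qq$-semiadditive, $q$ is both $\Cc$- and $\Dd$-ambidextrous, the functors $q^*$ in $\Cc$ and in $\Dd$ admit right adjoints $q_*$, and the norm maps $\Nm_q\colon q_!\to q_*$ in $\Cc$ and in $\Dd$ are equivalences. The hypothesis that $F$ preserves $\Qq$-colimits says precisely that $\BC_!\colon p_!F\to Fp_!$ is invertible for every map $p$ in $\Qq$, in particular for every $(n-1)$-truncated one, so the hypotheses of Proposition~\ref{prop:adj-norm-vs-B-functor}(2) are met, giving a commutative square
\[
\begin{tikzcd}
	q_!F\arrow[r, "\Nm_q"]\arrow[d, "\BC_!"'] &[1em] q_*F\\
	Fq_!\arrow[r, "F(\Nm_q)"'] & Fq_*\arrow[u, "\BC_*"']\rlap.
\end{tikzcd}
\]
In this square $\Nm_q$, $F(\Nm_q)$ and $\BC_!$ are equivalences — the first two by $\Qq$-semiadditivity of $\Dd$ and $\Cc$, the last by assumption — so $\BC_*\colon Fq_*\to q_*F$ is an equivalence by $2$-out-of-$3$. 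Letting $n$ and $q$ vary and invoking the dual of Lemma~\ref{lemma:cocont-enough-truncated}, it follows that $F$ preserves $\Qq$-limits. The converse direction is obtained by reading the same square the other way: if $F$ preserves $\Qq$-limits, then $\BC_*$ is invertible for all $p\in\Qq$, which again verifies the hypothesis of Proposition~\ref{prop:adj-norm-vs-B-functor}(2), and the square then shows that $\BC_!$ is an equivalence for every truncated $q$, whence $F$ preserves $\Qq$-colimits.

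I do not expect a genuine obstacle here; the only point worth double-checking is that the hypothesis of Proposition~\ref{prop:adj-norm-vs-B-functor}(2) on the lower-dimensional Beck--Chevalley maps is satisfied, but this is immediate from the standing assumption. If one also wants the parametrized strengthening — that a functor commuting with the right adjoints of restrictions along maps in $\Qq$ on all slices can equivalently be detected via $\Qq$-colimits — one runs the identical argument for $\pi_B^*F\colon\pi_B^*\Cc\to\pi_B^*\Dd$ over each $B\in\Bb$, using Remark~\ref{rk:norm-res-pb} to identify the relevant norms on slices.
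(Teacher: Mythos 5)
Your proof is correct and is, in substance, the argument the paper intends when it writes ``in the same way one shows'': reduce to truncated maps by Lemma~\ref{lemma:cocont-enough-truncated} and its dual, then for each truncated $q$ use the commutative square of Proposition~\ref{prop:adj-norm-vs-B-functor}(2) and conclude by two-out-of-three, since the two norm maps are equivalences by $\Qq$-semiadditivity. You correctly substitute Proposition~\ref{prop:adj-norm-vs-B-functor}(2) for Corollary~\ref{cor:norm-vs-restrictions} as the key tool --- the former is exactly what one needs for a general functor $F$, whereas the latter is its specialization to restriction functors, used in the preceding Corollary~\ref{cor:Semiadditive_Cats_Have_QLimits}. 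Your observation that the hypothesis of Proposition~\ref{prop:adj-norm-vs-B-functor}(2) on $(n-1)$-truncated maps holds trivially (because the standing assumption on $F$ gives invertibility of $\BC_!$ or $\BC_*$ for \emph{all} maps in $\Qq$, so no genuine induction is needed) is a fair simplification. Your closing paragraph about a ``parametrized strengthening'' is unnecessary: the definition of preserving $\Qq$-(co)limits is already stated in terms of the Beck--Chevalley maps for all $q\in\Qq$, so there is no further sliced statement to check; but this is a harmless aside rather than a flaw.
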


	\begin{definition}
		A functor $F\colon\Cc\to\Dd$ of $\Qq$-semiadditive $\Bb$-categories is called \emph{$\Qq$-semiadditive} if it preserves $\Qq$-colimits or, equivalently, $\Qq$-limits. We write $\Cat(\Bb)^{\Qq\text-\oplus}$ for the category of $\Qq$-semiadditive $\Bb$-categories and $\Qq$-semiadditive functors. Given $\Cc,\Dd\in\Cat(\Bb)^{\Qq\text-\oplus}$, we write $\ul\Fun^{\Qq\text-\oplus}(\Cc,\Dd)\coloneqq\ul\Fun^{\Qq\text-\times}(\Cc,\Dd)=\ul\Fun^{\Qq\text-\amalg}(\Cc,\Dd)$.
	\end{definition}

	\subsection{Presheaf topoi}
	\label{subsec:Presheaf_Topoi}
	For the applications we have in mind, we are mainly interested in the case where the topos $\Bb$ is a presheaf topos $\PSh(T)$ on some small category $T$, so that $\Bb$-categories correspond to $T$-categories $T\catop \to \Cat$ by \Cref{rmk:Limit_Extension}. In this case, the local classes $\Qq$ that appear in practice are usually generated by a much smaller collection of morphisms, and the condition of $\Qq$-semiadditivity of a $T$-category simplifies accordingly. We suggestively refer to these smaller classes as `pre-inductible':

	\begin{definition}[Pre-inductible subcategory]
		Let $T$ be a small category and let $Q \subseteq \PSh(T)$ be a replete subcategory containing all representable presheaves. We say that $Q$ is \textit{pre-inductible} if the following conditions are satisfied:
		\begin{enumerate}
			\item (Locality) Consider a morphism $q\colon A \to B$ in $\PSh(T)$ with $B \in Q$. Then $q$ lies in $Q$ if and only if for every pullback square
			\[
			\begin{tikzcd}
				A' \dar[swap]{q'} \rar{f} \drar[pullback] & A \dar{q} \\
				B' \rar{g} & B
			\end{tikzcd}\]
			in $\PSh(T)$ with $B' \in T$ the base change $q'$ lies in $Q$.
			\item (Diagonals) For every morphism $q$ in $Q$, also its diagonal $\Delta_q$ lies in $Q$.
			\item (Truncation) Every morphism in $Q$ with target in $T$ is truncated.
		\end{enumerate}
	\end{definition}

	\begin{remark}
		\label{rmk:Preinductible_Closed_Under_Base_Change}
		The first axiom together with the pasting law implies that $Q$ is closed under base change along maps $f\colon A\to B$ such that $A,B\in Q$ (but $f$ need not be a map in $Q$).
	\end{remark}

	There are two `extreme' cases of pre-inductible subcategories:

	\begin{example}
		\label{ex:Inductible_Is_Preinductible}
		Every inductible subcategory $Q \subseteq T$ is pre-inductible when regarded as a subcategory of $\PSh(T)$: condition (3) holds by assumption and conditions (1) and (2) are a consequence of the fact that the Yoneda embedding preserves pullbacks.
	\end{example}

	\begin{example}
		Every locally inductible subcategory $\Qq \subseteq \PSh(T)$ is in particular pre-inductible: conditions (1) and (2) hold by assumption and condition (3) follows from the fact that every locally truncated map with representable target $A \in T$ is already truncated: any cover $(A_i\to A)_{i\in I}$ of $A$ has to contain a map $A_i\to A$ hitting the component of $\id_A$, so that already $A_i\to A$ itself is an effective epimorphism, implying the claim.
	\end{example}

	\begin{definition}[$Q$-semiadditivity]
		\label{def:Q-semiadditivity_preinductible}
		Let $Q\subseteq \PSh(T)$ be a pre-inductible subcategory, and let $\Aa \subseteq \PSh(T)$ be the full subcategory spanned by the objects of $Q$. For a $T$-category $\Cc$, we denote by $\Cc\vert_{\Aa\catop}\colon \Aa\catop \to \Cat$ its right Kan extension along $T\catop \hookrightarrow \Aa\catop$, or equivalently the restriction to $\Aa\catop$ of $\Cc\colon \PSh(T)\catop \to \Cat$.
		\begin{enumerate}
			\item We say that $\Cc$ is \textit{$Q$-cocomplete} if $\Cc\vert_{\Aa\catop}$ is $Q$-cocomplete in the sense of \Cref{def:Q_Colimits};
			\item We say that $\Cc$ is \textit{$Q$-semiadditive} if in addition every truncated morphism $q$ in $Q$ is $\Cc\vert_{\Aa\catop}$-ambidextrous.
		\end{enumerate}
	\end{definition}

	\begin{remark}
		If $Q = \Qq \subseteq \PSh(T)$ is in fact locally inductible then $\Aa = \PSh(T)$ and one observes that \Cref{def:Q-semiadditivity_preinductible} specializes to \Cref{def:Q_Semiadditivity}.
	\end{remark}

	\begin{remark}
		If $T = \Bb$ happens to be a topos and $Q = \Qq\subseteq\Bb$ is an inductible local class, then after passing to a larger universe we may regard $\Qq$ as a pre-inductible subcategory of $\PSh(\Bb)$ by \Cref{ex:Inductible_Is_Preinductible}. In this case we get $\Aa = \Bb$, and we see that a $\Bb$-category $\Cc$ is $\Qq$-semiadditive in the sense of Definition~\ref{def:Q_Semiadditivity} if and only if its underlying functor $\Bb\catop \to \Cat$ is $\Qq$-semiadditive in the sense of \Cref{def:Q-semiadditivity_preinductible}.
	\end{remark}

	 The main reason for introducing $Q$-semiadditivity for pre-inductible $Q$ is the flexibility of this setup: essentially all examples of parametrized semiadditivity provided in \Cref{subsec:Examples} below will be of this form. We will now show that this setup is indeed a special case of our general formalism of $\Qq$-semiadditivity for locally inductible $\Qq$.

	\begin{construction}
		Let $Q \subseteq \PSh(T)$ be a pre-inductible subcategory. A morphism $q\colon A \to B$ in $\PSh(T)$ is said to be \textit{locally in $Q$} if for every morphism $B' \to B$ in $\PSh(T)$ with $B' \in Q$ we have that the base change map $A \times_B B' \to B'$ lies in $Q$. Since such morphisms are clearly closed under composition and contain all equivalences, they determine a wide subcategory $Q_{\loc}$ of $\PSh(T)$. We refer to $Q_{\loc}$ as the \textit{locally inductible subcategory generated by $Q$}.
	\end{construction}

	\begin{remark}
	If $Q = \Qq$ is already locally inductible, then we have $\Qq_{\loc} = \Qq$.
	\end{remark}

	\begin{lemma}
		For every pre-inductible subcategory $Q \subseteq \PSh(T)$, the wide subcategory $Q_{\loc} \subseteq \PSh(T)$ is locally inductible.
	\end{lemma}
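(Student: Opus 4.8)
The plan is to verify the four defining properties of a locally inductible subcategory from \Cref{def:Locally_Inductible} for $Q_{\loc}$: closure under base change, locality, local truncatedness of every morphism, and closure under diagonals (the fact that $Q_{\loc}$ is a wide subcategory having already been recorded above). The workhorse throughout will be the trivial observation that a morphism $p\colon X\to Y$ that lies in $Q_{\loc}$ and whose target $Y$ is an object of $Q$ already lies in $Q$ --- this is just the definition of $Q_{\loc}$ applied to $\id_Y$.

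For closure under base change I would take $q\colon A\to B$ in $Q_{\loc}$ and an arbitrary $f\colon B'\to B$, and test $A\times_B B'\to B'$ against an arbitrary $C\to B'$ with $C\in Q$, using $(A\times_B B')\times_{B'}C\cong A\times_B C$ together with $q\in Q_{\loc}$ applied to $C\to B'\to B$. For locality, given $q\colon A\to B$ and an effective epimorphism $\bigsqcup_{i}B_i\twoheadrightarrow B$ with all $A\times_B B_i\to B_i$ in $Q_{\loc}$, and a test object $C\to B$ with $C\in Q$, I would invoke the locality axiom of pre-inductibility to reduce to showing that $A\times_B C'\to C'$ lies in $Q$ for every representable $C'\to C$; since $C'$ is representable, the composite $C'\to C\to B$ factors through some $B_{i_0}$ by the Yoneda argument recalled in \Cref{rk:q-loc-colim}, so $A\times_B C'\cong(A\times_B B_{i_0})\times_{B_{i_0}}C'$ is a base change of a map in $Q_{\loc}$ whose target $C'$ lies in $Q$, hence lies in $Q$. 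Local truncatedness is the easiest: for $q\in Q_{\loc}$ I would cover $B$ by the effective epimorphism $\bigsqcup_{c\to B}c\twoheadrightarrow B$, the coproduct running over all maps into $B$ from objects $c\in T$ (this is an effective epimorphism because every presheaf is a colimit of representables), and note that each base change $A\times_B c\to c$ lies in $Q_{\loc}$, has target in $Q$, hence lies in $Q$, hence is truncated by the truncation axiom of pre-inductibility.

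For closure under diagonals I would use \Cref{lem:CharacterizationClosedUnderDiagonals} --- now applicable since $Q_{\loc}$ is a wide subcategory closed under base change --- to reduce to showing that $Q_{\loc}$ is left-cancelable. So, given $p\colon X\to Y$ and $q\colon Y\to Z$ with $q,qp\in Q_{\loc}$ and a test object $Y'\to Y$ with $Y'\in Q$, I would argue as follows. Pulling back along the composite $Y'\to Y\xrightarrow{q}Z$ exhibits $Y\times_Z Y'\to Y'$ and $X\times_Z Y'\to Y'$ as base changes of $q$ and $qp$, so they lie in $Q_{\loc}$ and hence in $Q$ (their target $Y'$ being in $Q$); in particular $Y\times_Z Y'$ and $X\times_Z Y'$ are objects of $Q$. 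Applying \Cref{lem:CharacterizationClosedUnderDiagonals} to $Q$ on the full subcategory $\Aa$ spanned by its objects --- which is closed under base change by \Cref{rmk:Preinductible_Closed_Under_Base_Change} and under diagonals by assumption --- the resulting left-cancelability of $Q$ gives that the canonical map $X\times_Z Y'\to Y\times_Z Y'$ lies in $Q$. Since the graph of $Y'\to Y$ identifies $X\times_Y Y'$ with the base change $(X\times_Z Y')\times_{Y\times_Z Y'}Y'$ of this map along a morphism between objects of $Q$, \Cref{rmk:Preinductible_Closed_Under_Base_Change} finally shows that $X\times_Y Y'\to Y'$ lies in $Q$, completing the verification.

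I expect the left-cancelability step to be the main obstacle: since the pre-inductibility hypotheses on $Q$ only constrain its own objects and morphisms, the delicate point is to check that every object entering the identity $X\times_Y Y'\cong(X\times_Z Y')\times_{Y\times_Z Y'}Y'$ genuinely lies in $Q$ before appealing to \Cref{lem:CharacterizationClosedUnderDiagonals} and \Cref{rmk:Preinductible_Closed_Under_Base_Change}. A secondary subtlety is the reduction to representable test objects in the locality argument, which rests on the presheaf-topos fact recalled in \Cref{rk:q-loc-colim} that an effective epimorphism onto an object is detected on maps from representables.
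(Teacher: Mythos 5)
Your proof is correct and follows the same strategy as the paper's: reduce closure under diagonals to left-cancelability via \Cref{lem:CharacterizationClosedUnderDiagonals}, then verify left-cancelability of $Q_{\loc}$ by base-changing $q$ and $qp$ along $Y'\to Y\to Z$, applying left-cancelability of $Q$ to the resulting maps with target $Y'\in Q$, and finally pulling back along the graph map $(b,1)\colon Y'\to Y\times_Z Y'$ using \Cref{rmk:Preinductible_Closed_Under_Base_Change}. Your three-stage pullback is exactly the paper's commutative diagram, just described by fiber-product identities rather than drawn out.

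The main difference is one of emphasis: the paper disposes of closure under base change, composition, and locality with "easy to check" and a pointer to \Cref{rk:q-loc-colim}, and your proposal spells these out; conversely you are a bit more explicit than the paper about the domain of applicability of \Cref{lem:CharacterizationClosedUnderDiagonals} (namely on the full subcategory of $\PSh(T)$ spanned by objects of $Q$, where $Q$ is wide and closed under base change by \Cref{rmk:Preinductible_Closed_Under_Base_Change}). The concern you raise at the end --- that one must check each object appearing in $(X\times_Z Y')\times_{Y\times_Z Y'}Y'$ lies in $Q$ --- is legitimate but you have already discharged it in the body of the argument, since $X\times_Z Y'$ and $Y\times_Z Y'$ are sources of maps in $Q$ with target $Y'$.
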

	\begin{proof}
		It is easy to check that $Q_{\loc}$ is closed under base change and composition, and it is local by the same argument as in Remark~\ref{rk:q-loc-colim}.

		For a morphism $q\colon A \to B$ in $Q_{\loc}$ we may cover $B$ by representable objects so that assumption (3) immediately implies that $q$ is locally truncated. It remains to show that $Q_{\loc}$ is closed under diagonals. By \Cref{lem:CharacterizationClosedUnderDiagonals}, we may equivalently show that $Q_{\loc}$ is left-cancelable: if $p\colon A \to B$ and $q\colon B \to C$ are morphisms of presheaves on $T$ such that $q$ and $qp$ are in $Q_{\loc}$, then also $p$ must be in $Q_{\loc}$. In other words, given a morphism $b\colon B' \to B$ in $\PSh(T)$ with $B' \in Q$, we have to show that the base change $p'\colon A \times_B B' \to B'$ is in $Q$. To this end, consider the following commutative pullback diagram:
		\[
		\begin{tikzcd}
			A \times_B B' \dar[swap]{p'} \rar{1 \times_q 1} \drar[pullback] & A \times_C B' \dar[swap]{p''} \rar{\pr_1} \drar[pullback] & A \dar{p} \\
			B' \rar{(b,1)} \drar["="'] & B \times_C B' \dar[swap]{q'} \rar{\pr_1} \drar[pullback,xshift=-.8pt] & B \dar{q} \\
			& B' \rar{qb} & C.
		\end{tikzcd}
		\]
		Since $q$ and $qp$ are locally in $Q$, the morphisms $q'$ and $q'p''$ are in $Q$. As $Q$ is closed under diagonals, \Cref{lem:CharacterizationClosedUnderDiagonals} implies that also $p''$ is in $Q$, and hence $p'$ is in $Q$ by \Cref{rmk:Preinductible_Closed_Under_Base_Change}. This finishes the proof.
	\end{proof}

	The following is the main result of this subsection:

	\begin{proposition}
		\label{prop:Check_Semiadditivity_Locally}
		Let $Q \subseteq \PSh(T)$ be a pre-inductible subcategory.
		\begin{enumerate}
			\item A $T$-category $\Cc$ is $Q$-cocomplete in the sense of \Cref{def:Q-semiadditivity_preinductible} if and only if its limit-extension $\Cc\colon \PSh(T)\catop \to \Cat$ is $Q_{\loc}$-cocomplete in the sense of \Cref{def:Q_Colimits}.
			\item A $T$-category $\Cc$ is $Q$-semiadditive in the sense of \Cref{def:Q-semiadditivity_preinductible} if and only if its limit-extension $\Cc\colon \PSh(T)\catop \to \Cat$ is $Q_{\loc}$-semiadditive in the sense of \Cref{def:Q_Semiadditivity}.
		\end{enumerate}
	\end{proposition}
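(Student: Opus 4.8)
The strategy is to exploit that the locally inductible class $Q_\loc$ restricts back to $Q$ over the objects of $\Aa$, so that the two conditions agree on that part of the data, and then to upgrade adjoints and ambidexterity witnesses from $\Aa$ to all of $\PSh(T)$ using that $\Cc$ is limit-preserving together with the \emph{locality} of $\Qq$-cocompleteness and $\Cc$-ambidexterity. First I would record the bookkeeping fact that a morphism with target in $\Aa$ lies in $Q_\loc$ if and only if it lies in $Q$: one direction is immediate from the definition of ``locally in $Q$'' by base changing along the identity of the target, and the other is \Cref{rmk:Preinductible_Closed_Under_Base_Change}. In particular $Q\subseteq Q_\loc$, and this already yields the ``if'' directions: restricting the parametrized left adjoints provided by $Q_\loc$-cocompleteness of the limit extension to $\Aa\subseteq\PSh(T)$ exhibits $\Cc\vert_{\Aa\catop}$ as $Q$-cocomplete (Beck--Chevalley against maps of $\Aa$ being a special case of Beck--Chevalley against all maps of presheaves); and by \Cref{rk:ambidexterity-restriction} applied to the inclusion $\Aa\hookrightarrow\PSh(T)$ --- which preserves pullbacks of spans with one leg in $Q$, since these are computed in $\PSh(T)$ and land in $\Aa$ --- a truncated map of $Q_\loc$ with target in $\Aa$ is $\Cc\vert_{\Aa\catop}$-ambidextrous once it is $\Cc$-ambidextrous.

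For the ``only if'' direction of~(1), given $q\colon A\to B$ in $Q_\loc$ with $B$ an arbitrary presheaf, the sieve on $B$ generated by the maps $y(t)\to B$ out of representables is covering, so by the locality of parametrized colimits (\Cref{cor:colimits-local}) it suffices to show that for each such $y(t)$ the base change $q_t\colon A_t\to y(t)$ --- which lies in $Q$ since $y(t)\in Q$ and $q\in Q_\loc$, so that $A_t,y(t)\in\Aa$ --- has the property that $q_t^*$ admits a left adjoint satisfying Beck--Chevalley against \emph{all} maps into $y(t)$, i.e.\ a parametrized left adjoint over $\PSh(T)_{/y(t)}$. To this end I would write an arbitrary object $X\to y(t)$ of this slice as a colimit $X\simeq\colim_j y(s_j)$ of representables mapping to it; since colimits in $\PSh(T)$ are universal and $\Cc$ sends colimits to limits, the functor $(q_t\vert_X)^*$ is identified with the limit of the functors $(q_t\vert_{y(s_j)})^*$, each of which is a base change of $q_t\in Q$ along a map of representables, hence again lies in $Q$ with all objects in $\Aa$, and therefore admits a left adjoint by $Q$-cocompleteness of $\Cc\vert_{\Aa\catop}$. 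Beck--Chevalley \emph{within} $\Aa$ turns these left adjoints, together with their mates along all transition maps, into a compatible diagram, and a standard limit-of-adjunctions argument produces a left adjoint of $(q_t\vert_X)^*$ that by construction commutes with every restriction functor; this is precisely the data of a parametrized left adjoint, proving~(1).

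For~(2), with $Q$-cocompleteness of $\Cc$ now available, one argues in the same spirit, by induction on the truncation level. A truncated $q\in Q_\loc$ is $\Cc$-ambidextrous as soon as each base change $q_t\colon A_t\to y(t)$ to a representable is, since the adjoint norm map is compatible with base change (\Cref{cor:norm-vs-restrictions}) and, $\Cc$ being a sheaf, restriction along a cover is jointly conservative; and $\Cc$-ambidexterity of $q_t$ over $\PSh(T)_{/y(t)}$ is established by the same colimit-of-representables reduction: its diagonal $\Delta_{q_t}$ again lies in $Q$ with target in $\Aa$ (\Cref{lem:CharacterizationClosedUnderDiagonals}), hence is $\Cc$-ambidextrous by the inductive hypothesis, and the required equivalences of adjoint norm maps over arbitrary $X\to y(t)$ reduce, after restriction to representables over $X$, to the equivalences guaranteed by $\Cc\vert_{\Aa\catop}$-ambidexterity of maps in $Q$ --- the two adjoint norm maps agreeing by \Cref{rk:ambidexterity-restriction}.

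The main obstacle is the gluing step in the ``only if'' directions, together with its apparent circularity: checking Beck--Chevalley against a presheaf $B'\notin\Aa$ would seem to presuppose left adjoints over $B'$ that one has not yet constructed. This is resolved by building the left adjoint \emph{as} the relevant limit of adjoints over representables, so that compatibility with all restriction functors --- and hence full Beck--Chevalley, not merely Beck--Chevalley along the chosen cover --- is automatic. Beyond this the remaining work is bookkeeping: keeping track of Beck--Chevalley transformations under pastings of pullback squares and threading the above reduction through the inductive definition of ambidexterity.
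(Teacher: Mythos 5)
Your overall strategy — restrict back to representables, where $Q_\loc$ and $Q$ agree, and then upgrade by locality — is the paper's strategy too, and your bookkeeping fact as well as both ``if'' directions are correct. For the ``only if'' direction of~(1) you are essentially reproving the cited \cite{CLL_Global}*{Remark~2.3.15} (which the paper takes for granted); your colimit-of-representables reduction and limit-of-adjunctions argument is a reasonable route, though the ``standard limit-of-adjunctions argument'' would need to be spelled out with the Beck--Chevalley compatibilities made precise.

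Part~(2) is where the proposal has a genuine gap. You assert that a truncated $q \in Q_\loc$ is $\Cc$-ambidextrous as soon as all base changes to representables are, ``since the adjoint norm map is compatible with base change and restriction along a cover is jointly conservative.'' But joint conservativity lets you check that a \emph{specific map} is an equivalence after pulling back along a cover; the ambidexterity condition is that $\Nmadj_{q'}\colon q'^*q'_!\to\id$ exhibits a counit of adjunction, which is not literally the invertibility of $\Nmadj_{q'}$. Your later phrase ``the required equivalences of adjoint norm maps'' conflates the two. The paper is careful about precisely this point: in the inductive step it first establishes $(Q_\loc)_{\le n}$-\emph{completeness}, via the observation that for $n$-truncated $q\in Q$ ambidexterity of $\Cc\vert_{\Aa^\op}$ gives a right adjoint $q_*\simeq q_!$ satisfying base change along $\Aa$-maps (using \Cref{cor:norm-vs-restrictions}), which by the dual of the cited remark yields $q_*$ for all $n$-truncated $q\in Q_\loc$. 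Only then is the norm $\Nm_q\colon q_!\to q_*$ a well-defined natural transformation, and invertibility of \emph{that} single map is what conservativity plus \Cref{cor:norm-vs-restrictions} allows one to check locally on representables. Without establishing completeness first, the target $q_*$ of the norm map is not available, and the localization argument you invoke does not apply. This is fixable by inserting the completeness step (as the paper does), but as written the proposal skips it and relies on an imprecise reduction of the counit condition to a local equivalence check.
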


	\begin{proof}
		For the first statement, the `if'-part is clear; for the other direction, we note that it even suffices to check the existence of adjoints and the Beck--Chevalley conditions after restricting to maps in $Q_\loc$ with representable target by \cite{CLL_Global}*{Remark~2.3.15}.

		For the second statement, the `if'-direction is again clear. For the `only if'-direction, we will argue by induction that $\Cc$ is $(Q_\loc)_{\le n}$-semiadditive for all $n\ge-2$.

		For $n=-2$ there is nothing to show. Now assume that we already know that $\Cc$ is $(Q_\loc)_{\le n-1}$-semiadditive. By assumption, the restriction along any $n$-truncated $q\in Q$ has a right adjoint $q_*$, and $\Nmadj\colon q^*q_!\to\id$ adjoins to an equivalence $q_!\to q_*$. Arguing as in Corollary~\ref{cor:Semiadditive_Cats_Have_QLimits}, we deduce from Corollary~\ref{cor:norm-vs-restrictions} that $q_*$ satisfies base change along maps in $\Aa$, so \cite{CLL_Global}*{Remark~2.3.15${}^\op$} shows that $\Cc$ is $(Q_\loc)_{\le n}$-complete. Given now a general map $q\colon A\to B$ in $\Qq$, Corollary~\ref{cor:norm-vs-restrictions} shows that $f^*\Nm_q$ agrees up to equivalence with the norm along $q'\coloneqq f^*(q)$; in particular, $f^*\Nm_q$ is invertible whenever $B'$ is representable (so that $q'\in Q$). Covering $B$ by representables, we see that $\Nm_q$ itself is invertible, as desired.
	\end{proof}

	\subsection{Examples}
	\label{subsec:Examples}
	We will now provide various examples of pre-inductible subcategories and discuss their associated notion of semiadditivity. Let us start with the non-parametrized examples:

	\begin{example}[Ordinary semiadditivity]\label{ex:first-Q-example}
		\label{ex:Finite_Sets}
		The subcategory $\Fin \subseteq \Spc$ of finite sets is pre-inductible. A category $\Cc$ is $\Fin$-semiadditive if and only if it is semiadditive in the classical sense.
	\end{example}

	\begin{example}[$m$-semiadditivity]
	\label{ex:m_finite_Spaces}
		Given an integer $-2 \leq m < \infty$, recall that a space is called \textit{$m$-finite} for $-2 \leq m < \infty$ if it is $m$-truncated, has finitely many path components, and all its homotopy groups are finite. The subcategory $\Spc_m \subseteq \Spc$ by the $m$-finite spaces is pre-inductible, and a category $\Cc$ is $\Spc_m$-semiadditive if and only if $\Cc$ is $m$-semiadditive in the sense of \cite{hopkins2013ambidexterity}*{Definition~4.4.2}.
	\end{example}

	\begin{example}[$\infty$-semiadditivity]
		Recall that a space is called \textit{$\pi$-finite} if it is $m$-finite for some integer $m$. The subcategory $\Spc_{\pi} \subseteq \Spc$ of $\pi$-finite spaces is pre-inductible, and the associated notion of semiadditivity is that of \textit{$\infty$-semiadditivity} \cite{CSY20}*{Definition~3.1.10}: a category $\Cc$ is $\infty$-semiadditive if and only if it is $m$-semiadditive for all $m \geq -2$.
	\end{example}

	\begin{example}[$p$-typical $m$-semiadditivity]
		\label{ex:p_typical}
		As a variation on the previous two examples, let $p$ be a prime and let $\Spc^{(p)}_m \subseteq \Spc_m$ be the full subcategory consisting of the $m$-finite $p$-spaces, i.e.\ those $m$-finite spaces all of whose homotopy groups are $p$-groups. Then $\Spc^{(p)}_m$ is pre-inductible, and the corresponding notion of semiadditivity is that of \textit{$p$-typical $m$-semiadditivity} \cite{CSY2021AmbiHeight}*{Definition~3.1.1}. Working with $\Spc^{(p)}_{\pi}$, the category of $\pi$-finite $p$-spaces, similarly gives the notion of \textit{$p$-typical $\infty$-semiadditivity}.
	\end{example}

	\begin{example}
		\label{ex:Q-semiadd-spc}
		As the common generalization of the previous examples, let $Q \subseteq \Spc$ be a full subcategory of truncated spaces which is closed under base change and extensions and which satisfies $1 \in Q$. Then $Q$ is pre-inductible, and a category is $Q$-semiadditive if and only if it admits $A$-shaped limits and colimits for every $A\in Q$ and the norm map $\Nm_A\colon\colim_A\to\lim_A$ is an equivalence for each such $A$.

		 In fact, this is the most general form of semiadditivity our formalism provides in the non-parametrized setting: given an arbitrary locally inductible subcategory $\Qq \subseteq \Spc$, the full subcategory $Q := \Qq_{/1} \subseteq \Spc_{/1} = \Spc$ satisfies the above assumptions, and since $\Qq = Q_{\loc}$ we see that $\Qq$-semiadditivity agrees with $Q$-semiadditivity by \Cref{prop:Check_Semiadditivity_Locally}.
	\end{example}

	It turns out that the individual categories of a parametrized semiadditive category inherit some degree of non-parametrized semiadditivity.

	\begin{lemma}[Fiberwise semiadditivity]\label{lem:fiberwise-semiadd}
		Let $\Qq$ be a locally inductible subcategory of a topos $\Bb$ and consider the full subcategory $Q_{\fib} \subseteq \Spc$ consisting of those spaces $A$ which are truncated and for which the map $\colim_{A} 1 \to 1$ in $\Bb$ is contained in $\Qq$.
		\begin{enumerate}[(1)]
			\item The subcategory $Q_{\fib} \subseteq \Spc$ is pre-inductible;
			\item Every $\Qq$-semiadditive $\Bb$-category $\Cc\colon \Bb\catop \to \Cat$ is fiberwise $Q_{\fib}$-semiadditive, i.e.\ factors through the (non-full) subcategory $\Cat^{Q_{\fib}\text{-}\oplus}$ of $Q_{\fib}$-semiadditive categories.
		\end{enumerate}
	\end{lemma}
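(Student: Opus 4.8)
The plan is to deduce (1) from \Cref{ex:Q-semiadd-spc} and to obtain (2) by restricting ambidexterity along the functor $c\colon\Spc\to\Bb$, $A\mapsto\colim_A 1_\Bb$ (the unique colimit‑preserving functor, equivalently the inverse image of the geometric morphism to $\Spc$). Throughout I use that $c$ is left exact because $\Bb$ is a topos, hence preserves finite limits and — by the inductive description of truncatedness via diagonals — truncated objects and morphisms; note also that, by definition, $A\in Q_{\fib}$ exactly when $A$ is truncated and $c(A)\to 1_\Bb$ lies in $\Qq$.

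For (1) I would verify the hypotheses of \Cref{ex:Q-semiadd-spc}. The subcategory $Q_{\fib}$ is a replete full subcategory of truncated spaces, and it contains $1$ since $c(1)=1_\Bb$ and $\Qq$ is wide. For closure under base change, given $A\to B\leftarrow B'$ in $Q_{\fib}$ the pullback $A\times_B B'$ is again truncated, $c(A\times_B B')\cong c(A)\times_{c(B)}c(B')$ as $c$ is left exact, the map $c(A)\to c(B)$ lies in $\Qq$ by left‑cancellability (\Cref{lem:CharacterizationClosedUnderDiagonals}, using that $c(A)\to 1_\Bb$ and $c(B)\to 1_\Bb$ do), hence so does its base change $c(A)\times_{c(B)}c(B')\to c(B')$, and composing with $c(B')\to 1_\Bb$ gives $A\times_B B'\in Q_{\fib}$. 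For closure under extensions, given $p\colon E\to B$ with $B\in Q_{\fib}$ and every fibre $E_b\in Q_{\fib}$, writing $E=\colim_{b\in B}E_b$ and applying $c$ exhibits $c(p)$ as the colimit of the maps $c(E_b)\to 1_\Bb$; a set of representatives for $\pi_0(B)$ then yields an effective epimorphism onto $c(B)$ along which the base change of $c(p)$ is a coproduct of these maps, so $c(p)\in\Qq$ by two applications of locality, whence $c(E)\to 1_\Bb\in\Qq$; moreover $E$ must be truncated, since otherwise the $E_b$ would have unbounded truncation levels, forcing $c(E)\to 1_\Bb$ to fail to be locally truncated and contradicting its membership in $\Qq$. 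Thus $Q_{\fib}$ is pre‑inductible.

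For (2) I would first show that the underlying category $\Gamma\Cc=\Cc(1_\Bb)$ of a $\Qq$-semiadditive $\Bb$-category $\Cc$ is $Q_{\fib}$-semiadditive. By (1) the subcategory $Q_{\fib}$ is inductible, and $c$ restricts to a pullback‑preserving functor $Q_{\fib}\to\Bb$ landing in the inductible subcategory of truncated morphisms of $\Qq$ (for $\phi$ in $Q_{\fib}$ the morphism $c(\phi)$ is truncated, and lies in $\Qq$ by left‑cancellability). Since $\Cc$ is $\Qq$-semiadditive, truncated morphisms of $\Qq$ are $\Cc$-ambidextrous, so \Cref{rk:ambidexterity-restriction} gives that $c^*\Cc\coloneqq\Cc\circ c\colon Q_{\fib}^\op\to\Cat$ is $Q_{\fib}$-cocomplete with every morphism of $Q_{\fib}$ being $c^*\Cc$-ambidextrous. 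As $\Cc$ preserves limits and $c(A)=\colim_A 1_\Bb$, there is furthermore a natural equivalence $c^*\Cc(A)=\Cc(\colim_A 1_\Bb)\simeq\lim_A\Gamma\Cc=\Fun(A,\Gamma\Cc)$ in $A\in Q_{\fib}$; thus $c^*\Cc$ is the restriction to $Q_{\fib}^\op$ of the limit‑extension of the $\pt$-category $\Gamma\Cc$, and by \Cref{def:Q-semiadditivity_preinductible} (with $T=\pt$) the established properties of $c^*\Cc$ are precisely the statement that $\Gamma\Cc$ is $Q_{\fib}$-semiadditive. To handle an arbitrary $\Cc(A')$ I would apply this to the topos $\Bb_{/A'}$ with the locally inductible subcategory $\Bb_{/A'}[\Qq]$ and the $\Bb_{/A'}[\Qq]$-semiadditive $\Bb_{/A'}$-category $\pi_{A'}^*\Cc$ (\Cref{rk:norm-res-pb}), whose underlying category is $\Cc(A')$; since $\colim_A 1_{\Bb_{/A'}}\to 1_{\Bb_{/A'}}$ is the base change of $c(A)\to 1_\Bb$ along $A'\to 1_\Bb$, the analogue of $Q_{\fib}$ computed in $\Bb_{/A'}$ contains $Q_{\fib}$, so $\Cc(A')$ is in particular $Q_{\fib}$-semiadditive. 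Lastly, for $f\colon A'\to B'$ and $A\in Q_{\fib}$ the projections $c(A)\times A'\to A'$ and $c(A)\times B'\to B'$ lie in $\Qq$ (base changes of $c(A)\to 1_\Bb$), and the Beck--Chevalley equivalence of \Cref{def:Q_Colimits}(2) for the pullback square with horizontal maps $\id_{c(A)}\times f$ and $f$ becomes, under the natural equivalences $\Cc(c(A)\times X)\simeq\Fun(A,\Cc(X))$ above, the assertion that $f^*$ commutes with $A$-shaped colimits; hence each restriction $f^*$ is a $Q_{\fib}$-semiadditive functor, and $\Cc$ factors through $\Cat^{Q_{\fib}\text{-}\oplus}$.

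I expect the main obstacle to be the truncatedness bookkeeping in (1): showing that an extension of $Q_{\fib}$-objects is again a truncated space is not formal, and the sketched argument genuinely uses that morphisms of $\Qq$ are locally truncated, together with the fact — itself extracted from locality of $\Qq$ — that $c(E)\to 1_\Bb$ lies in $\Qq$. Granting (1), part (2) is essentially formal, combining the restriction principle for ambidexterity (\Cref{rk:ambidexterity-restriction}) with the Beck--Chevalley condition built into $\Qq$-cocompleteness; the only real input there is the identification of $c^*\Cc$ with $A\mapsto\Fun(A,\Gamma\Cc)$, which comes from $\Cc$ preserving limits.
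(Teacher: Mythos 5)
Your approach mirrors the paper's: part (1) is deduced from \Cref{ex:Q-semiadd-spc} via the left exact left adjoint $L=c\colon\Spc\to\Bb$, and part (2) restricts ambidexterity along $c$ using \Cref{rk:norm-res-pb} (you by slicing to $\Bb_{/A'}$, the paper by considering the functors $X\times L(\blank)$ for varying $X\in\Bb$ — the same reduction) and then uses the Beck--Chevalley condition to obtain semiadditivity of the restriction functors $f^*$. Both also identify $c^*\Cc$ with $A\mapsto\Fun(A,\Gamma\Cc)$ via limit-preservation of $\Cc$.

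One step deserves caution, and it is precisely the one you flag as the main obstacle. In the extension-closure argument for (1), the portion of your argument showing $c(E)\to 1_\Bb\in\Qq$ — covering $c(B)$ by $\pi_0(B)$ and applying locality of $\Qq$ twice — is correct, and indeed more explicit than the paper's one-line assertion. But the companion claim that $E$ itself must be truncated does not follow from the reasoning you sketch: unbounded truncation levels of the fibers $E_b$ need not force $c(E)\to 1_\Bb$ to fail to be locally truncated, because $L$ need not reflect (even local) truncatedness. It can collapse truncation entirely — for instance when $L$ is far from conservative, $L(E_b)$ could be $0$-truncated for all $b$ no matter how complicated the $E_b$ are — so the contradiction you aim for is simply unavailable in general. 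The paper is silent on this step, so this is a shared terseness rather than a defect unique to you; but the argument you offer for the truncatedness of $E$ would not survive scrutiny as written, and you should not present it as closing the gap you correctly identified.
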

	\begin{proof}
		Denote by $L\colon\Spc\to\Bb$ the unique left exact left adjoint, given on objects by sending a space $A$ to $\colim_A 1 \in \Bb$. Note that $Q_{\fib}$ consists precisely of those truncated spaces $A$ such that the canonical map $L(A) \to L(\pt) = 1$ is in $\Qq$. Since $L$ preserves colimits and finite limits, it follows that $Q_{\fib}$ contains the point and is closed under base change and extensions, hence it is pre-inductible by \Cref{ex:Q-semiadd-spc}.

		Given now any object $X\in\Bb$, the functor $X\times L(\blank)$ is again a pullback-preserving left adjoint. Since $\Qq$ is closed under base change, we see that $X \times L(A) \to X$ is in $\Qq$ for all $A \in Q_{\fib}$, and thus by left-cancelability and locality of $\Qq$ we deduce that $X \times L(\blank)$ maps all morphisms of $(Q_{\fib})_{\loc} \subseteq \Spc$ to morphisms in $\Qq$. By Remark~\ref{rk:norm-res-pb} we conclude that the category $\Cc(X)$ is $Q_{\fib}$-semiadditive. Moreover, if $f\colon X\to Y$ is any map in $\Bb$, then the base change condition for $\Cc$ shows that $f^*\colon\Cc(Y)\to\Cc(X)$ preserves $A$-indexed (co)limits for $A \in Q_{\fib}$. It follows that $\Cc$ factors through $\Cat^{Q_{\fib}\text{-}\oplus}$, finishing the proof.
	\end{proof}

	\begin{example}
		If $\Qq \subseteq \Bb$ is a locally inductible subcategory containing the map $1\amalg 1\to 1$ (hence all fold maps $X\amalg X\to X$), then each $\Cc(X)$ is semiadditive in the usual sense, and each $f^*\colon\Cc(Y)\to\Cc(X)$ is a semiadditive functor.
	\end{example}

	We now come to the examples of semiadditivity that are truly parametrized.

	\begin{example}[$G$-semiadditivity]
		\label{ex:Finite_G_Sets}
		For a finite group $G$ the subcategory $\Fin_G \subseteq \Spc_G = \PSh(\Orb_G)$ of finite $G$-sets is pre-inductible. A \emph{$G$-category} $\Cc \colon \Orb_G\catop \to \Cat$ is $\Fin_G$-semiadditive if and only if $\Cc$ is $G$-semiadditive in the sense of \cite{nardin2016exposeIV, QuigleyShay2021Tate}.
	\end{example}

	\begin{example}[Equivariant semiadditivity]\label{ex:last-semiadd-ex}\label{ex:equiv_sadd}
		Consider the subcategory $\Glo \subseteq \FinGrpd$ of the $(2,1)$-category $\FinGrpd$ of finite groupoids spanned by the \textit{connected} finite groupoids, i.e.\ groupoids of the form $BG$ for a finite group $G$. Consider also the wide subcategory $\FinGrpdfaith$ of $\FinGrpd$ spanned by the faithful functors between finite groupoids. Identifying $\FinGrpd$ with the full subcategory of $\PSh(\Glo)$ spanned by the finite disjoint unions of representable objects, the resulting subcategory $\FinGrpdfaith$ of $\PSh(\Glo)$ is pre-inductible. A \emph{global category} $\Cc\colon \Glo\catop \to \Cat$ is $\FinGrpdfaith$-semiadditive if and only if it is \textit{equivariantly semiadditive} in the sense of \cite{CLL_Global}*{Example~4.5.2}.
	\end{example}

	\begin{example}[Global semiadditivity]\label{ex:global-semiadditivity}
		In fact, also the full subcategory $\FinGrpd \subseteq \PSh(\Glo)$ is pre-inductible. We will refer to the associated notion of semiadditivity as \textit{global semiadditivity}. Informally, the difference to the notion from the previous example is that we now require that for \emph{any} homomorphism $\alpha\colon H\to G$ of finite groups the restriction functor $\alpha^*$ admits both adjoints and that they agree, instead of just requiring this for subgroup inclusions.

		The notion of global semiadditivity may be seen as a generalization of the notion of 1-semiadditivity from \Cref{ex:m_finite_Spaces} as follows: Given a (non-parametrized) category $\Cc$ we may form its \textit{Borelification} $\Cc^\flat$, i.e.\ the global category defined via $\Cc^\flat(BG) \coloneqq \Fun(BG,\Cc)$; here we use the canonical embedding $\Glo \subseteq \FinGrpd \hookrightarrow \Spc$. Since the essential image of the inclusion functor $\FinGrpd \hookrightarrow \Spc$ is precisely the subcategory of 1-finite spaces, one observes that a category $\Cc$ is $1$-semiadditive if and only if its Borelification is globally semiadditive, also cf.~\cite{CLL_Adams}*{Lemma~5.9}. In this sense, global semiadditivity generalizes $1$-semiadditivity (see also Remark~\ref{rk:1-semiadd-embed}).
	\end{example}

	\begin{example}[$P$-semiadditivity]
		\label{ex:Finite_P_Sets}
		As a common generalization of Examples~\ref{ex:Finite_Sets}, \ref{ex:Finite_G_Sets}, and \ref{ex:equiv_sadd} (but \emph{not} of the previous example), let $T$ be a small category and let $P \subseteq T$ be an atomic orbital subcategory, in the sense of \cite{CLL_Global}*{Definition~4.3.1}. Let $\finTsets \subseteq \PSh(T)$ be the full subcategory of $\PSh(T)$ spanned by finite disjoint unions of representable presheaves, and let $\finPsets \subseteq \finTsets$ be the wide subcategory consisting of finite disjoint unions of morphisms of the form $\bigsqcup_{i=1}^n p_i\colon \bigsqcup_{i=1}^n A_i \to B$, where each morphism $p_i\colon A_i \to B$ lies in $P$. Then the subcategory $\finPsets \subseteq \PSh(T)$ is pre-inductible. A $T$-category $\Cc$ is $\finPsets$-semiadditive if and only if it is $P$-semiadditive in the sense of \cite{CLL_Global}*{Definition~4.5.1}.
	\end{example}

	\begin{example}[Very $G$-semiadditive $G$-procategories]\label{ex:last-Q-example}
		\label{ex:Quasi-finite_G_Sets}
	Let $G$ be an arbitrary group. We denote by $\smash{\widehat{\Orb}}_G \subseteq \Orb_G$ the full subcategory spanned by the orbits of the form $G/H$ where $H$ is a finite-index subgroup of $G$.\footnote{This subcategory is denoted $\Orb_G$ by \cite{KMN2023Polygonic}.} We refer to functors $\Cc\colon \smash{\widehat{\Orb}}_G\catop \to \Cat$ as \textit{$G$-procategories}.

	In \cite{Kaledin2022Mackey}*{Definition~3.1}, Kaledin considers $G$-sets $S$ satisfying the following two conditions:
	\begin{enumerate}
		\item For every $s \in S$ the stabilizer subgroup $G_s \subseteq G$ is cofinite.
		\item Every cofinite subgroup $H \subseteq G$ the fixed point set $S^H$ is finite.
	\end{enumerate}
	Following \cite{KMN2023Polygonic}, we will call such $G$-sets \textit{quasi-finite}, and write $\QFin_G$ for the full subcategory of $\Set_G$ spanned by them. Assigning to $S$ the presheaf $G/H \mapsto S^H$ determines a fully faithful functor $\QFin_G \hookrightarrow \PSh(\smash{\widehat{\Orb}}_G)$ which exhibits $\QFin_G$ as a pre-inductible subcategory. We say a profinite $G$-category $\Cc$ is \textit{very $G$-semiadditive} if it is $\QFin_G$-semiadditive.
	\end{example}

	\begin{example}[Tempered ambidexterity]
	Let $\Tt$ be a full subcategory of $\Glo$ containing the final object $1$, and consider the category $\PSh(\Tt)$.  We write $R\colon \Spc\rightarrow \PSh(\Tt)$ for the fully faithful right adjoint of $\ev_1\colon \PSh(\Tt)\rightarrow \Spc$. We observe that $R(\Spc_{\pi})$ is a pre-inductible subcategory of $\PSh(\Tt)$. To see this, note that the category $\Tt$ is a full subcategory of $\Spc$, and so by Yoneda's lemma $R(BG)$ is equivalent to the representable object associated to $B G\in \Tt$. The remaining properties of a pre-inductible subcategory are inherited from $\pi$-finite spaces, using that $R$ preserves limits. In \cite{Ell3}, Lurie considers the case where $\Tt$ is the full subcategory of $\Glo$ spanned by the groupoids with abelian isotropy. The main result of \cite{Ell3} shows that the $\Tt$-category of tempered local systems associated to an oriented $\mathbf{P}$-divisible group is $R(\Spc_{\pi})$-semiadditive.
	\end{example}

	\begin{example}
	In \cite{Scholze2023SixFunctors}*{Lecture~6}, Scholze defined for every six-functor formalism $\Dd$ notions of \textit{cohomologically proper} and \textit{cohomologically \'etale} morphisms $f$: roughly speaking, this condition demands that the functor $f_!$ given by the covariant functoriality of the six-functor formalism is \textit{right} (resp.\ \textit{left}) adjoint to the morphism $f^*$ coming from the contravariant functoriality in some preferred way.

	Only remembering the contravariant functoriality, every six-functor formalism $\Dd$ forgets to a category parametrized by some category $T$. As we will show in future work, the class $Q$ of maps in $T$ that are both cohomologically \'etale and cohomology proper form an inductible subcategory of $T$, and $\Dd$ is $Q$-semiadditive in the sense of \Cref{def:Q-semiadditivity_preinductible}.
	\end{example}

	\subsection{Alternative characterizations of \texorpdfstring{$\bm\Qq$}{Q}-semiadditivity}
	\label{subsec:Characterizations_Q_Semiadditivity}
	Recall that ordinary semiadditivity of a category $\Cc$ may be characterized by asking $\Cc$ to admit finite products and finite coproducts, and requiring these to commute with each other. The following result provides an analogous characterization for $\Qq$-semiadditivity:

	\begin{proposition}
		\label{prop:Characterization_Q_Semiadditivity}
		Let $\Cc$ be a $\Qq$-complete and $\Qq$-cocomplete $\Bb$-category. The following are equivalent:
		\begin{enumerate}
			\item The category $\Cc$ is $\Qq$-semiadditive.
			\item The category $\Cc^\op$ is $\Qq$-semiadditive.
			\item For every truncated $q\colon A\rightarrow B$ in $\Qq$, the functor $q_!\colon \ulFun(\ul{A},\pi^*_B\Cc)\rightarrow \pi^*_B\Cc$ preserves $\Qq$-limits.
			\item For every truncated $q\colon A\rightarrow B$ in $\Qq$, the functor $q_*\colon \ulFun(\ul{A},\pi^*_B\Cc)\rightarrow \pi^*_B\Cc$ preserves $\Qq$-colimits.
			\item For every pullback square
			\[
			\begin{tikzcd}
				A' \dar[swap]{q'} \rar{p'} \drar[pullback] & A \dar{q} \\
				B' \rar{p} & B
			\end{tikzcd}
			\]
			consisting of truncated maps in $\Qq$ the double Beck--Chevalley transformation
			$\BC_{!,*}\colon p_!q'_*\rightarrow q_*p'_!$ is an equivalence.
			\item For every truncated map $q\colon A\rightarrow B$ in $\Qq$ the double Beck--Chevalley transformation $\BC_{!,*}\colon q_!{\pr_1}_* \rightarrow q_*{\pr_2}_!$ associated to the pullback square
			\[
			\begin{tikzcd}
				A \times_B A \dar[swap]{\pr_1} \rar{\pr_2} \drar[pullback] & A \dar{q} \\
				A \rar{q} & B.
			\end{tikzcd}
			\] in $\Qq$ is an equivalence.
		\end{enumerate}
	\end{proposition}

	We may think of conditions (3)--(5) as expressing that `$\Qq$-limits commute with $\Qq$-colimits.'

	\begin{proof}
		We will prove that $(1)\Rightarrow(3)\Rightarrow(5)\Rightarrow(6)\Rightarrow(1)$. Dually, we then have $(2)\Rightarrow(4)\Rightarrow(5)\Rightarrow(6)\Rightarrow(2)$, so that all statements are indeed equivalent.

		For $(1)\Rightarrow(3)$, note that $q_!$ is a left adjoint, so it preserves $\Qq$-colimits by \cite{martiniwolf2021limits}*{Proposition~5.2.5}, hence $\Qq$-limits by Corollary~\ref{cor:Functor_Preserves_Limits_Iff_Preserves_Colimits}. The implication $(3)\Rightarrow(5)$ simply amounts to spelling out the definition of preserving $\Qq$-limits, while $(5)\Rightarrow(6)$ is immediate. Finally, for $(6)\Rightarrow(1)$, we will be done by induction if we show that for any truncated $q$ the norm map $\Nm_q\colon q_!\to q_*$ can be factored as the composite
		\[
		q_! \simeq q_!{\pr_1}_*\Delta_* \xrightarrow{\Nm_{\Delta}^{-1}} q_!{\pr_1}_*\Delta_! \xrightarrow{\BC_{!,*}} q_*{\pr_2}_!\Delta_! \simeq q_*.
		\]
		Up to replacing $\Cc$ by $\Cc\catop$, the proof of this claim is identical to that of \cite{CLL_Global}*{Lemma~4.4.2} and will hence be omitted.
	\end{proof}

	\begin{remark}
		We will later prove that one can equivalently drop all the truncatedness assumptions, see Corollary~\ref{cor:Characterization_Q_Semiadditivity_nontruncated}.
	\end{remark}

	\begin{remark}
		Assume that $\Bb=\PSh(T)$ and $\Qq=Q_{\loc}$ for some pre-inductible $Q\subset\PSh(T)$. Recall that if $q$ is any map in $Q$, then so is its diagonal $\Delta$ by left cancelability. Thus, combining the inductive argument from the implication $(6)\Rightarrow(1)$ above with Proposition~\ref{prop:Check_Semiadditivity_Locally}, we see that it suffices to check invertibility of the double Beck--Chevalley transformation $\BC_{!,*}\colon q_!{\pr_1}_* \rightarrow q_*{\pr_2}_!$ for $q$ contained in $Q$ (as opposed to all of $Q_\loc$). It follows directly that we can also restrict to maps contained in $Q$ in conditions (3), (4), and (5).
	\end{remark}

	\section{Parametrized span categories}
	In this section, we will introduce parametrized span categories and prove their semiadditivity properties.

	We begin by briefly recalling Barwick's \emph{span categories} \cite{barwick2017spectral}:

	\begin{definition}
		An \textit{adequate triple} is a triple $(\Cc,\Cc_L,\Cc_R)$ where $\Cc$ is a category and $\Cc_L,\Cc_R \subseteq \Cc$ are wide subcategories such that every (solid) cospan
		\[
			\begin{tikzcd}
				A\arrow[r, "r'",dashed]\arrow[d,"\ell'"',dashed] & B\arrow[d,"\ell"]\\
				C\arrow[r,"r"'] & D
			\end{tikzcd}
		\]
		with $\ell\in\Cc_L$ and $r\in\Cc_R$ can be completed to a pullback square, and such that for any such pullback we have $r'\in\Cc_R$ and $\ell'\in\Cc_L$. We define a \emph{morphism of adequate triples} $(\Cc,\Cc_L,\Cc_R)\to(\Dd,\Dd_L,\Dd_R)$ to be a functor $F\colon\Cc\to\Dd$ with $F(\Cc_L)\subseteq\Dd_L$, $F(\Cc_R)\subseteq\Dd_R$, and preserving pullbacks of maps in $\Cc_L$ along maps in $\Cc_R$. The category of adequate triples will be denoted by $\AdTrip$.
	\end{definition}

	\begin{construction}
		We may associate to any adequate triple $(\Cc,\Cc_L,\Cc_R)$ a \textit{span category} $\Span(\Cc,\Cc_L,\Cc_R)$ which may informally be described as follows:
		\begin{itemize}
			\item The objects of $\Span(\Cc,\Cc_L,\Cc_R)$ are the objects of $\Cc$;
			\item A morphism from $X$ to $Y$ is a \textit{span} $X \xleftarrow{l} U \xrightarrow{r} Y$ where $l \in \Cc_L$ and $r \in \Cc_R$;
			\item The composite of a span $X \xleftarrow{l} U \xrightarrow{r} Y$ with a span $Y \xleftarrow{l'} V \xrightarrow{r'} Z$ is given by the pullback span $X \leftarrow U \times_Y V \to Z$.
		\end{itemize}
		For a formal definition of the functor
		\[
			\Span \colon \AdTrip \to \Cat
		\]
		we refer to \cite{HHLNa}*{Definition~2.12}. This functor has the following properties:
		\begin{enumerate}
			\item There is a natural equivalence $\Span(\Cc,\Cc_L,\Cc_R)\catop \simeq \Span(\Cc,\Cc_R,\Cc_L)$ \cite[Lemma~2.14]{HHLNa};
			\item The functor $\Span(-)$ admits a left adjoint by \cite[Theorem~2.18]{HHLNa}, hence preserves all limits (computed pointwise in $\AdTrip$);
			\item There are natural inclusions of wide subcategories
			\[
				\Cc_L\catop \hookrightarrow \Span(\Cc,\Cc_L,\Cc_R) \qquad \text{ and } \qquad \Cc_R \hookrightarrow \Span(\Cc,\Cc_L,\Cc_R).
			\]
		\end{enumerate}
	\end{construction}

	\begin{definition}
		An \textit{adequate triple of $\Bb$-categories} is a $\Bb$-category $\Cc$ equipped with two wide subcategories $\Cc_L$ and $\Cc_R$ such that $(\Cc(A),\Cc_L(A), \Cc_R(A))$ is an adequate triple for every $A \in \Bb$ and such that for every $f\colon A'\to A$ in $\Bb$ the restriction functor $f^*$ is a map of adequate triples; equivalently, this is a limit-preserving functor $\Bb\catop \to \AdTrip$. We define the \textit{parametrized span category} $\ul{\Span}(\Cc,\Cc_L,\Cc_R)$ of this triple as the following composite:
		\[
			\Bb\catop \xrightarrow{(\Cc,\Cc_L,\Cc_R)} \AdTrip \xrightarrow{\Span} \Cat.
		\]
		Since $\Span(-)$ preserves limits this is indeed a $\Bb$-category.
	\end{definition}

	We will next concern ourselves with parametrized limits and colimits in parametrized span categories. In the non-parametrized setting, recall that for an extensive category $\Cc$ (one that admits finite coproducts which suitably interact with pullbacks, to be recalled in Definition~\ref{defi:extensive}), its span category $\Span(\Cc)$ admits products and coproducts which are both computed as coproducts in $\Cc$; in particular $\Span(\Cc)$ is semiadditive. We will now prove the analogous statements for parametrized span categories.

	\begin{definition}[Parametrized extensiveness]
		Let $(\Cc,\Cc_L,\Cc_R)$ be an adequate triple of $\Bb$-categories, and let $\Qq$ be a local class of morphisms $\Qq$ in $\Bb$. We will say that the triple $(\Cc,\Cc_L,\Cc_R)$ is \textit{left $\Qq$-extensive} if the following conditions are satisfied:
		\begin{enumerate}
			\item The category $\Cc$ is $\Qq$-cocomplete;
			\item The following hold for every morphism $q\colon A \to B$ in $\Qq$:
			\begin{enumerate}
				\item The left adjoint $q_!\colon \Cc(A) \to \Cc(B)$ of $q^*$ is again a morphism of adequate triples.
				\item The unit $\eta\colon \id \to q^*q_!$ and counit $\epsilon\colon q_!q^*\to \id$ of the adjunction $q_!\colon \Cc(A) \rightleftarrows \Cc(B) \noloc q^*$ are contained in $\Cc_L$;
				\item For morphisms $r\colon X \to Y$ and $r'\colon X' \to Y'$ in $\Cc_R(A)$ and $\Cc_R(B)$, respectively, the naturality squares
				\[
				\begin{tikzcd}
					X \rar{\eta_X} \dar[swap]{r} & q^*q_!X \dar{q^*q_!r} \\
					Y \rar{\eta_Y} & q^*q_!Y
				\end{tikzcd}
				\qquad \text{ and } \qquad
				\begin{tikzcd}
					q_!q^*X' \dar[swap]{q_!q^*q'} \rar{\epsilon_{X'}} & X' \dar{r'} \\
					q_!q^*Y' \rar{\epsilon_{Y'}} & Y'
				\end{tikzcd}
				\]
				are pullback squares.
			\end{enumerate}
		\end{enumerate}
		We say the triple $(\Cc,\Cc_L,\Cc_R)$ is \textit{right $\Qq$-extensive} if $(\Cc,\Cc_R,\Cc_L)$ is left extensive, and we say it is \textit{$\Qq$-extensive} if it is both left and right $\Qq$-extensive.
	\end{definition}

	\begin{proposition}
		\label{prop:Colimits_In_Span}
		Let $\Qq$ be a local class of morphisms in $\Bb$, and let $(\Cc,\Cc_L,\Cc_R)$ be an adequate triple of $\Bb$-categories. If the triple is left $\Qq$-extensive, then the parametrized span category $\ul{\Span}(\Cc,\Cc_L,\Cc_R)$ admits $\Qq$-colimits, and the inclusion
		\[
		\Cc_R \hookrightarrow \ul{\Span}(\Cc,\Cc_L,\Cc_R)
		\]
		preserves them. Dually, if $(\Cc,\Cc_L,\Cc_R)$ is right $\Qq$-extensive, then $\ul{\Span}(\Cc,\Cc_L,\Cc_R)$ admits $\Qq$-limits, and the inclusion
		\[
		\Cc_L\catop \hookrightarrow \ul{\Span}(\Cc,\Cc_L,\Cc_R)
		\]
		preserves them.
	\end{proposition}
	\begin{proof}
		Observe that the second claim follows immediately from the first one by swapping the roles of $\Cc_L$ and $\Cc_R$ and using the equivalence $\ul{\Span}(\Cc,\Cc_L,\Cc_R)\catop \simeq \ul{\Span}(\Cc,\Cc_R,\Cc_L)$. For the first claim, fix a morphism $q\colon A \to B$ in $\Qq$, and consider the adjunction
		\[
		q_!\colon \Cc(A) \rightleftarrows \Cc(B) \noloc q^*.
		\]
		The compatibility assumptions precisely tell us that the conditions of \cite{BachmannHoyois2021Norms}*{Corollary C.21} are satisfied, so that applying $\Span(-)$ to both of these functors results in another adjunction
		\[
		\Span(q_!)\colon \Span(\Cc(A),\Cc_L(A),\Cc_R(A)) \leftrightarrows \Span(\Cc(B),\Cc_L(B),\Cc_R(B)) \noloc \Span(q^*),
		\]
		with unit and counit given by the spans
		\[
			X \xleftarrow{\id} X \xrightarrow{\eta} q^*q_!X \qquad \text{ and } \qquad q_!q^*Y \xleftarrow{\id} q_!q^*Y \xrightarrow{\epsilon} Y
		\]
		in $\Cc(A)$ and $\Cc(B)$, respectively. The Beck--Chevalley conditions for the adjunctions $q_! \dashv q^*$ then immediately imply the Beck--Chevalley conditions for the adjunctions $\Span(q_!) \dashv \Span(q^*)$, showing that $\ul{\Span}(\Cc,\Cc_L,\Cc_R)$ admits $\Qq$-colimits. Finally, it is clear from the construction that the commutative squares
		\[
		\begin{tikzcd}
			\Cc_R(B) \rar[hookrightarrow] \dar[swap]{q^*} & \Span(\Cc(B),\Cc_L(B),\Cc_R(B)) \dar{\Span(q^*)} \\
			\Cc_R(A) \rar[hookrightarrow] & \Span(\Cc(A),\Cc_L(A),\Cc_R(A))
		\end{tikzcd}
		\]
		is vertically left adjointable, which precisely amounts to the inclusion $\Cc_R \hookrightarrow \ul{\Span}(\Cc,\Cc_L,\Cc_R)$ preserving $\Qq$-colimits.
	\end{proof}

	\begin{lemma}\label{cor:test_colimit_pres_span}
		Let $(\Cc,\Cc_L,\Cc_R)$ and $\Qq$ be as in \Cref{prop:Colimits_In_Span}, and consider any $\Bb$-functor $F\colon \ul{\Span}(\Cc,\Cc_L,\Cc_R)\rightarrow \Dd$. If the triple is left $\Qq$-extensive, then $F$ preserves $\Qq$-colimits if and only if the composite $\Cc_R \xhookrightarrow{\iota} \ul{\Span}(\Cc,\Cc_L,\Cc_R)\xrightarrow{F} \Dd$ preserves $\Qq$-colimits. If the triple is right $\Qq$-extensive, the dual statement for $\Qq$-limits holds.
	\end{lemma}
	\begin{proof}
		Since the inclusion $\iota$ preserves $\Qq$-colimits by \Cref{prop:Colimits_In_Span}, the `only if'-direction is clear. For the converse, assume that $F \circ \iota$ preserves $\Qq$-colimits. We have to show that for any map $q\colon A \to B$ in $\Qq$, the Beck--Chevalley transformation $q_! F_A \Rightarrow F_B q_!$ filling the right square of the following diagram is an equivalence:
		\[\begin{tikzcd}
			{\Cc_R(A)} & {\Span(\Cc(A),\Cc_L(A),\Cc_R(A))} \dlar[Rightarrow, shorten <= 8pt, shorten >= 10pt, "\sim"{description}] & {\Dd(A)} \dlar[Rightarrow, shorten <= 16pt, shorten >= 8pt] \\
			{\Cc_R(B)} & {\Span(\Cc(B),\Cc_L(B),\Cc_R(B))} & {\Dd(B)}.
			\arrow["{q_!}"', from=1-2, to=2-2]
			\arrow["{F_A}", from=1-2, to=1-3]
			\arrow["{F_B}", from=2-2, to=2-3]
			\arrow["{q_!}", from=1-3, to=2-3]
			\arrow["{\iota_B}", from=2-1, to=2-2, hookrightarrow]
			\arrow["{q_!}"', from=1-1, to=2-1]
			\arrow["{\iota_A}", from=1-1, to=1-2, hookrightarrow]
		\end{tikzcd}\]
		Since the inclusion $\iota_A\colon \Cc_R(A)\hookrightarrow \Span(\Cc(A),\Cc_L(A),\Cc_R(A))$ is essentially surjective, it suffices to test this after precomposing by $\iota_A$. But since Beck--Chevalley transformations compose and $\iota\colon \Cc_R \hookrightarrow \ul{\Span}(\Cc,\Cc_L,\Cc_R)$ preserves $\Qq$-colimits, this follows from the assumption that $F \circ \iota$ preserves $\Qq$-colimits. This finishes the proof of the first statement. Once again the second statement is formally dual to the first.
	\end{proof}

	Let now
	\begin{equation}\label{diag:not-a-pb}
		\begin{tikzcd}
			A' \dar[swap]{q'} \rar{p'} & A \dar{q} \\
			B' \rar{p} & B
		\end{tikzcd}
	\end{equation}
	be any commutative diagram in $\Qq$ such that $q,q'\in \Qq_L$ and $p,p' \in \Qq_R$. Then we have by our specific construction for any left $\Qq_L$- and right $\Qq_R$-extensive triple $(\Cc,\Cc_L,\Cc_R)$ an equivalence
	\[
		p_!q'_*=\Span(p_!)\Span(q'_!)\simeq\Span(q_!)\Span(p'_!)=q_*p'_!
	\]
	induced by the total mate of the naturality equivalence $p'^*q^*\simeq q'^*p^*$ in $\Cc$. As the next proposition shows, this equivalence can be described purely abstractly via the double Beck--Chevalley map whenever $(\ref{diag:not-a-pb})$ is actually a pullback:

	\begin{proposition}
		\label{prop:Limits_Commute_With_Colimits_In_Spans}
		Let $(\Cc,\Cc_L,\Cc_R)$ be an adequate triple of $\Bb$-categories. Let $\Qq_L$ and $\Qq_R$ be local classes of morphisms in $\Bb$ and assume that the triple is left $\Qq_R$-extensive and right $\Qq_L$-extensive. Then $\Qq_R$-colimits commute with $\Qq_L$-limits in $\ul{\Span}(\Cc,\Cc_R,\Cc_L)$, i.e.~for every pullback square
		\[
		\begin{tikzcd}
			A' \dar[swap]{q'} \rar{p'} \drar[pullback] & A \dar{q} \\
			B' \rar{p} & B
		\end{tikzcd}
		\]
		with $q,q'\in \Qq_L$ and $p,p' \in \Qq_R$, the double Beck--Chevalley map $\BC_{!,*}\colon p_!q'_*\rightarrow q_*p'_!$ is an equivalence. More precisely, for the concrete choices of units and counits made above, it agrees with the canonical equivalence $\Span(p_!)\Span(q'_!)\simeq\Span(q_!)\Span(p'_!)$.
	\end{proposition}
	\begin{proof}
	The units and counits for the adjunctions
	\begin{align*}
		p_!\colon\ul\Span(\Cc,\Cc_L,\Cc_R)(B')&\rightleftarrows \ul\Span(\Cc,\Cc_L,\Cc_R)(B)\noloc p^*\\
		p'_!\colon\ul\Span(\Cc,\Cc_L,\Cc_R)(A')&\rightleftarrows \ul\Span(\Cc,\Cc_L,\Cc_R)(A)\noloc p'^*
	\end{align*}
	constructed above are simply the forward maps associated to the corresponding units and counits on $\Cc$. Thus, the Beck--Chevalley map $p'_!q'^*\to q^*p_!$ of functors $\ul\Span(\Cc,\Cc_L,\Cc_R)(B')\to\ul\Span(\Cc,\Cc_L,\Cc_R)(A)$ is the forward map associated to the corresponding Beck--Chevalley map in $\Cc$, so its inverse agrees with the corresponding backwards map.

	Similarly, the unit and counit for $q^*\dashv q_*$ and $q'^*\dashv q_*'$ were given as the backwards map of the corresponding counit and unit, respectively in $\Cc$. Plugging this in, we see that the double Beck--Chevalley map in $\ul\Span(\Cc,\Cc_L,\Cc_R)$ is the backwards map associated to the composite
	\[
		q_!p'_!\xrightarrow{\;\eta\;} q_!p'_!q'^*q'_!\xrightarrow{\;\BC_!\;} q_!q^*p_!q'_!\xrightarrow{\;\epsilon\;}p_!q'_!
	\]
	in $\Cc$. The latter is by definition the total mate of the naturality equivalence, in particular itself invertible. Thus, the double Beck--Chevalley map can be equivalently described as the forward map associated to its inverse, as claimed.
	\end{proof}

	\begin{corollary}
		In the above situation, let $\Qq_0\subseteq\Qq_L\cap\Qq_R$ be locally inductible. Then $\ul\Span(\Cc,\Cc_L,\Cc_R)$ is $\Qq_0$-semiadditive.
	\end{corollary}
	\begin{proof}
		This is an immediate consequence of the criterion for $\Qq_0$-semiadditivity from Proposition~\ref{prop:Characterization_Q_Semiadditivity}.
	\end{proof}

	\begin{remark}
		Using the inductive description of the norm maps in terms of the double Beck--Chevalley maps given in the proof of Proposition~\ref{prop:Characterization_Q_Semiadditivity}, we see that with respect to the above choices the norm map $q_!\to q_*$ is simply the identity for every truncated morphism $q$ in $\Qq_0$. We immediately get that the adjoint norm map $\Nmadj\colon q^*q_!\to\id$ and the corresponding map $\mu\colon\id\to q_!q^*$ are just the counit and unit $q^*q_!\to\id$ and $\id\to q_!q^*$ constructed in the proof of Proposition~\ref{prop:Colimits_In_Span}, i.e.~they are given by flipping the spans representing the unit and counit, respectively, of $q_!\dashv q^*$.
	\end{remark}

	We close this section by discussing a key special case of the above construction:

	\begin{construction}[Parametrized span category]\label{constr:parametrized_spans}
		Let $\Qq$ be a left-cancelable local wide subcategory of $\Bb$. Then the functor $\Bb\catop \to \Cat, \, A \mapsto \Bb_{/A}$ restricts to a limit-preserving functor
		\[
			\ulbbU{\Qq}\colon \Bb\catop \to \Cat, \qquad A \mapsto \Qq_{/A},
		\]
		cf.\ \Cref{const:parametrized_Q}. For an object $A \in \Bb$, the full subcategory $\Qq_{/A} \subseteq \Bb_{/A}$ is closed under pullbacks by \Cref{lem:CharacterizationClosedUnderDiagonals}; thus we obtain a limit preserving functor
		\[
			(\ulbbU{\Qq},\ulbbU{\Qq},\ulbbU{\Qq})\colon\Bb^\op\to\AdTrip.
		\]
		We denote the resulting span category by $\ul\Span(\Qq)$.

		More generally, let $\Qq_L,\Qq_R\subseteq\Qq$ be left-cancelable wide local subcategories. For an object $A \in \Bb$, we denote by
		\[
			\Qq_{/A}^L \subseteq \Qq_{/A} \qquad \text{ and } \qquad \Qq_{/A}^R \subseteq \Qq_{/A}
		\]
		the wide subcategories containing all objects, but containing only those morphisms in $\Qq_{/A}$ whose underlying map in $\Qq$ lies in $\Qq_L$ or $\Qq_R$, respectively. This results in wide $\Bb$-subcategories
		\[
			\ulbbU{\Qq}^L \subseteq \ulbbU{\Qq} \qquad \text{ and } \qquad \ulbbU{\Qq}^R \subseteq \ulbbU{\Qq}.
		\]
		We also obtain a wide $\Bb$-subcategory $\ul{\Span}(\Qq,\Qq_L,\Qq_R)$ of $\ul\Span(\Qq)$ given at an object $A \in \Bb$ by
		\[
			\ul{\Span}(\Qq,\Qq_L,\Qq_R)(A) \coloneqq \Span(\Qq_{/A}, \Qq_{/A}^L, \Qq_{/A}^R) \subseteq \Span(\Qq_{/A}).
		\]
		The inclusions of the left- and right-pointing morphisms give rise to canonical inclusions
		\[
			(\ulbbU{\Qq}^L)\catop \hookrightarrow \ul{\Span}(\Qq,\Qq_L,\Qq_R) \hookleftarrow \ulbbU{\Qq}^R.
		\]
	\end{construction}

	\begin{warn}
		We warn the reader that the underlying category of $\ul{\Span}(\Qq)$ is \emph{not} equivalent to $\Span(\Qq)$, because $\Qq_{/1} \neq \Qq$.
	\end{warn}

	\begin{corollary}
		\label{cor:Span_Complete_And_Cocomplete}
		Let $\Qq_L,\Qq_R\subseteq\Qq\subseteq\Bb$ be left-cancelable wide local subcategories.
		\begin{enumerate}
			\item The $\Bb$-category $\ul{\Span}(\Qq,\Qq_L,\Qq_R)$ is both $\Qq_L$-complete and $\Qq_R$-cocomplete.
			\item The canonical inclusion $\ulbbU{\Qq^R} \hookrightarrow \ul{\Span}(\Qq,\Qq_L,\Qq_R)$ preserves $\Qq_R$-colimits.
			\item The canonical inclusion $\ulbbU{\Qq^L}\catop \hookrightarrow \ul{\Span}(\Qq^L)$ preserves $\Qq_L$-limits.
			\item $\Qq_L$-limits and $\Qq_R$-colimits in $\ul\Span(\Qq,\Qq_L,\Qq_R)$ commute in the sense of Proposition~\ref{prop:Limits_Commute_With_Colimits_In_Spans}. In particular, if $\Qq_0\subseteq\Qq_L\cap\Qq_R$ is locally inductible, then $\ul\Span(\Qq,\Qq_L,\Qq_R)$ is $\Qq_0$-semiadditive.
		\end{enumerate}
		\begin{proof}
			By the above discussion, we only have to show that $(\ulbbU{\Qq},\ulbbU{\Qq^L},\ulbbU{\Qq^R})$ is left $\Qq_R$-extensive and right $\Qq_L$-extensive. We will argue for the former, the other argument being analogous.

			For this pick any morphism $q\colon A \to B$ in $\Qq_R$, and consider the adjunction
			\[
			q_!\colon \Qq_{/A} \rightleftarrows \Qq_{/B} \noloc q^*,
			\]
			where $q_!(f\colon A' \to A) = q \circ f$.	It is clear that $q_!$ maps $\Qq^L_{/A}$ to $\Qq^L_{/B}$, $\Qq^R_{/A}$ to $\Qq^R_{/B}$, and that it even preserves all pullbacks. The counit $q_!q^*(f)\to f$ is given by the map $\pr\colon A'\times_AB\to A'$, while the unit is given by $(\id,f)\colon A'\to A'\times_BA$. The former is in $\Qq_R$ as $\Qq_R$ is closed under base change, while the latter is so as it is a base change of the diagonal $A\to A\times_BA$ and since $\Qq_R$ is also assumed to be closed under diagonals.

			Finally, one directly checks from the above descriptions that the naturality squares for the unit and counit are pullback squares.
		\end{proof}
	\end{corollary}

	\section{The universal property of parametrized spans}\label{sec:Univ_Prop_Spans}

	Throughout this section, we fix a topos $\Bb$. If $\Qq\subseteq\Bb$ is locally inductible, then Corollary~\ref{cor:Span_Complete_And_Cocomplete} in particular shows that the parametrized span category $\ul\Span(\Qq)$ is a $\Qq$-semiadditive $\Bb$-category. The goal of this section is to show that it is in fact universal among these. More precisely, if we denote by $\pt\colon \ul{1}\rightarrow \ul{\Span}(\Qq)$ the functor given in degree $B\in \Bb$ by the object $\id_B \in \Qq_{/B} \subseteq \Span(\Qq_{/B})$, we will show:

	\begin{theorem}
		\label{thm:universal_prop_par_spans}
		For every $\Qq$-semiadditive $\Bb$-category $\Dd$, restriction along the functor $\pt\colon \ul{1}\rightarrow \ul{\Span}(\Qq)$ induces an equivalence of $\Bb$-categories.
		\[
		\ul{\Fun}^{\Qq\text-\oplus}(\ul{\Span}(\Qq),\Dd) \iso \Dd.
		\]
	\end{theorem}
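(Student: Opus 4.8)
The plan is to reduce \Cref{thm:universal_prop_par_spans} to the universal property of $\ulbbU{\Qq}$ recorded in \Cref{prop:uni_prop_U_Q}, and then to bridge the gap between $\ulbbU{\Qq}$ and $\ul{\Span}(\Qq)$ by an iteration of parametrized Kan extensions organised by the truncation level of the backwards-pointing legs of spans.

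First, I would factor $\pt$ as $\ul 1 \xrightarrow{\pt} \ulbbU{\Qq} \xhookrightarrow{\iota} \ul{\Span}(\Qq)$, where $\iota$ is the inclusion of the wide subcategory of forwards-pointing spans. Since $\iota$ is $\Qq$-cocontinuous by \Cref{prop:Span_Complete_And_Cocomplete} and composites of $\Qq$-cocontinuous functors are $\Qq$-cocontinuous, precomposition with $\iota$ sends a $\Qq$-semiadditive functor to a $\Qq$-cocontinuous one, so we obtain a restriction functor $\iota^*\colon\ulFun^{\Qq\text-\oplus}(\ul{\Span}(\Qq),\Dd)\to\ulFun^{\Qcoprod}(\ulbbU{\Qq},\Dd)$; composing it with the equivalence $\ulFun^{\Qcoprod}(\ulbbU{\Qq},\Dd)\iso\Dd$ of \Cref{prop:uni_prop_U_Q} recovers evaluation at $\pt$. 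It therefore suffices to show that $\iota^*$ is an equivalence, and the idea is to build an explicit inverse by extending $\Qq$-cocontinuous functors $\ulbbU{\Qq}\to\Dd$ along $\iota$: left Kan extension handles the forwards maps (implicitly, through \Cref{prop:uni_prop_U_Q}), while a sequence of right Kan extensions handles the backwards maps.

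Concretely, for $-2\le n\le\infty$ I would set $\ul{\Span}^{(n)}\coloneqq\ul{\Span}(\Qq,\Qq_{\le n},\Qq)\subseteq\ul{\Span}(\Qq)$ in the notation of \Cref{def:Parametrized_Span_2}, the wide $\Bb$-subcategory whose morphisms are spans with $n$-truncated backwards leg. Then $\ul{\Span}^{(-2)}=\ulbbU{\Qq}$; by \Cref{prop:Span_Complete_And_Cocomplete_2} each $\ul{\Span}^{(n)}$ is $\Qq$-cocomplete, is $\Qq_{\le n}$-semiadditive, and is closed under $\Qq$-colimits and $\Qq_{\le n}$-limits inside $\ul{\Span}(\Qq)$; and $\ul{\Span}(\Qq)=\colim_n\ul{\Span}^{(n)}$ in $\Cat(\Bb)$, where the colimit formed in $\Cat(\Bb)$ incorporates the sheafification that is necessary because a general morphism of $\Qq$ is only \emph{locally} truncated. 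One then proves by induction on $n$ that restriction along $\ulbbU{\Qq}\hookrightarrow\ul{\Span}^{(n)}$ induces an equivalence between $\ulFun^{\Qcoprod}(\ulbbU{\Qq},\Dd)$ and the full parametrized subcategory of $\ulFun(\ul{\Span}^{(n)},\Dd)$ spanned by the functors preserving $\Qq$-colimits and $\Qq_{\le n}$-limits. The base case $n=-2$ is trivial. For the step $n-1\rightsquigarrow n$, the inclusion $\ul{\Span}^{(n-1)}\hookrightarrow\ul{\Span}^{(n)}$ preserves $\Qq$-colimits and $\Qq_{\le n-1}$-limits, so it is enough to extend a functor $\tilde G$ on $\ul{\Span}^{(n-1)}$ to one on $\ul{\Span}^{(n)}$ preserving $\Qq$-colimits and $\Qq_{\le n}$-limits, in an essentially unique way. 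I would do this by a parametrized right Kan extension of $\tilde G$ along $\ul{\Span}^{(n-1)}\hookrightarrow\ul{\Span}^{(n)}$: its values on the new $n$-truncated backwards maps are forced by $\Qq_{\le n}$-continuity, and one evaluates the pointwise formula using that the diagonal $\Delta_q$ of an $n$-truncated map $q$ in $\Qq$ is $(n-1)$-truncated, mirroring exactly the recursion in \Cref{cons:Ambidexterity}. The hypothesis that $\Dd$ is $\Qq$-semiadditive --- so that the norm maps along $(n-1)$-truncated maps in $\Dd$ are equivalences --- is what makes this right Kan extension exist with the expected values and what ensures the extended functor is again $\Qq$-cocontinuous; the latter I would check using \Cref{cor:test_colimit_pres_span} to reduce $\Qq$-cocontinuity to the backwards maps, and then \Cref{prop:adj-norm-vs-B-functor} and \Cref{cor:norm-vs-restrictions} to identify the Beck--Chevalley transformations that appear. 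Essential uniqueness at each stage follows because the right Kan extension is right adjoint to restriction and the pointwise analysis shows its unit and counit are equivalences.

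Finally, letting $n\to\infty$: a functor $\ul{\Span}(\Qq)=\colim_n\ul{\Span}^{(n)}\to\Dd$ preserving $\Qq$-colimits and all $\Qq_{\le n}$-limits preserves $\Qq$-limits by the dual of \Cref{lemma:cocont-enough-truncated}, hence is $\Qq$-semiadditive by \Cref{cor:Functor_Preserves_Limits_Iff_Preserves_Colimits}; conversely every $\Qq$-semiadditive functor restricts to such a compatible system. As $\ulFun(\blank,\Dd)$ carries the colimit $\ul{\Span}(\Qq)=\colim_n\ul{\Span}^{(n)}$ to the corresponding limit, the stagewise equivalences assemble into the desired equivalence $\iota^*$, and hence into \Cref{thm:universal_prop_par_spans}. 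The main obstacle will be the inductive step: identifying the comma categories that govern the parametrized right Kan extension along $\ul{\Span}^{(n-1)}\hookrightarrow\ul{\Span}^{(n)}$, showing that their defining diagrams admit the cofinal ``diagonal'' shapes dictated by \Cref{cons:Ambidexterity} so that the Kan extension exists and has the predicted values on backwards maps, and then verifying that the result stays $\Qq$-cocontinuous --- the one place where the ambidexterity of $\Dd$ is genuinely used.
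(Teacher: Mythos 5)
Your overall plan --- filter $\ul{\Span}(\Qq)$ by truncation level, use an iterated Kan-extension argument where ambidexterity of $\Dd$ enters in the contravariant step, and assemble via a colimit statement --- is the same shape as the paper's proof. However, the specific filtration you propose does not work, and this is not just a technicality.

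You filter by $\ul{\Span}^{(n)} = \ul{\Span}(\Qq, \Qq_{\le n}, \Qq)$, keeping the ambient class and the forward legs equal to all of $\Qq$ and truncating only the backward legs. The paper instead filters by the \emph{full} subcategories $\ul{\Span}(\Qq_{\le n})$ and uses the intermediate $\ul{\Span}(\Qq_{\le n}, \Qq_{\le n-1}, \Qq_{\le n})$, so that the ambient class is truncated at every stage. The difference is decisive for the Kan extension step. To justify the right Kan extension from $\ul{\Span}^{(n-1)}$ to $\ul{\Span}^{(n)}$ having the required properties (well-defined, agreeing with the given functor after restriction, still preserving $\Qq$-colimits), one needs the dual of the ``good inclusion'' machinery in \Cref{def:Compatible_With_Factorization_System}--\Cref{prop:good_equivalence} together with a Segal-versus-coSegal comparison (\Cref{prop:Segal_Equals_CoSegal}). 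Both of these require the condition $\Delta(\Qq_{\mathrm{ambient}}) \subseteq \Qq_L$ (or $\Qq_R$): the coSegal map $\pt_B \rightarrowtail (f\colon C\to B)$ has its forward leg a base change of the diagonal $\Delta_f$, so for that coSegal map to lie in a given wide subcategory one needs $\Delta_f$ to be in the corresponding leg-class. In your filtration the objects $f$ range over all of $\Qq$, including maps that are not truncated, so their diagonals $\Delta_f$ are likewise not truncated and never land in any $\Qq_{\le n}$. Concretely: for an object $X = (f\colon C\to B)$ with $f$ exactly $n$-truncated, the Segal map $\pt_B \twoheadrightarrow X$ (whose backward leg is $f$ itself) lies in $\ul{\Span}^{(n)}$ but not in $\ul{\Span}^{(n-1)}$, so the inclusion is not ``co-good'' and neither RKE-along-it nor the dual of \Cref{cor:test_colimit_pres_span} controls the extension. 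In the paper's version, the ambient is $\Qq_{\le n}$, so every object's diagonal is $(n-1)$-truncated and the requisite condition $\Delta(\Qq_{\le n}) \subseteq \Qq_{\le n-1}$ holds; this is the whole point of truncating forward legs and objects in lockstep even though only the backward legs appear to be ``in play.''

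A secondary but real issue: your proposed colimit identification $\ul{\Span}(\Qq) = \colim_n \ul{\Span}^{(n)}$ involves \emph{wide} subcategories, whereas \Cref{lem:Colimit_Of_Subcategories} is stated and proved for \emph{full} subcategories (the proof checks that each object of a $k$-chain is locally in some $\Vv_n$, which suffices only because fullness then forces the whole chain to be). The paper's filtration $\ul{\Span}(\Qq_{\le n}) \subseteq \ul{\Span}(\Qq)$ consists of full subcategories (with fewer objects), which is exactly what the lemma requires. This is fixable, but both issues have the same root: your filtration does not restrict the objects.

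To repair the plan, replace $\ul{\Span}^{(n)}$ by the paper's $\ul{\Span}(\Qq_{\le n})$ (and reinstate the intermediate $\ul{\Span}(\Qq_{\le n},\Qq_{\le n-1},\Qq_{\le n})$), at which point your ``one direction at a time'' Kan extension argument becomes exactly the chain $\ul{\Span}(\Qq_{\le n-1}) \hookrightarrow \ul{\Span}(\Qq_{\le n},\Qq_{\le n-1},\Qq_{\le n}) \hookrightarrow \ul{\Span}(\Qq_{\le n})$, and the factorization of $\pt$ through $\ulbbU{\Qq}$ becomes unnecessary --- the left Kan extension lemma \Cref{lem:Extending_Covariant_Direction} already absorbs the passage through $\ulbbU{\Qq_{\le n}}$.
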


	As we will explain at the end of this section, this can be seen as a parametrized generalization of a theorem of Harpaz \cite{harpaz2020ambidexterity} identifying the free non-parametrized $m$-semiadditive category. Our proof of \Cref{thm:universal_prop_par_spans} is inspired by an alternative approach to this non-parametrized result due to Lior Yanovski, and we would like to thank him for sharing his ideas and notes with us.

	The first key technical ingredient needed for the proof of the universal property is an extension result for functors out of parametrized span categories, letting us increase the number of right-pointing arrows on which the functor is defined. We will present a general form of this result in \Cref{subsec:Extending_Cosegal_Functors} and specialize to span categories in \Cref{sec:Good_Inclusions_Of_Span_Categories}.  As the second key ingredient, we prove in \Cref{sec:Cosegal_Vs_Segal} that for functors into a semiadditive category this result can be dualized, allowing us to increase the number of \textit{left}-pointing arrows on which the functor is defined. Combining these ingredients, we then establish the universal property in \Cref{sec:Universal_Property}.

	\subsection{The co{S}egal condition}
	\label{subsec:Extending_Cosegal_Functors}

	This section contains one of the main technical ingredients needed for the proof of the universal property of parametrized spans: the existence of unique extensions for so-called \textit{coSegal functors}, see \Cref{prop:good_equivalence} below.

	Following \cite{shah2022parametrizedII}*{Definition 3.1} we define a \textit{factorization system} on a $\Bb$-category $\Cc$ to be a pair $(E,M)$ of wide $\Bb$-subcategories $E, M \subseteq \Cc$ such that for every $B \in \Bb$ the wide subcategories $E(B)$ and $M(B)$ of $\Cc(B)$ define a factorization system on $\Cc(B)$ in the sense of \cite{HTT}*{Definition~5.2.8.8}. We denote maps in $E$ with the symbol $\twoheadrightarrow$ and maps in $M$ with the symbol $\rightarrowtail$.

	\begin{definition}[Distinguished object]
		Let $(E,M)$ be a factorization system on a $\Bb$-category $\Cc$. We say an object $X \in \Gamma M \subseteq \Gamma \Cc$ is \textit{distinguished} if the corresponding $\Bb$-functor $X\colon \ul{1} \to M$ is fully faithful. We will frequently denote a distinguished object by $\pt$, and denote the corresponding inclusion by $\{\pt\} \hookrightarrow M$. By restriction we also obtain an object $\pt_B \coloneqq B^* \pt \in \Cc(B)$ for every $B \in \Bb$. Given an object $X\in \Cc(B)$, we refer to maps in $M$ of the form $\pt_B \rightarrowtail X$ as \textit{coSegal maps}.
	\end{definition}

	Given the inclusion $\Cc\subseteq \Dd$ of a subcategory and an object $X\in \Gamma\Cc$, we write $\Cc_{/X}$ for the pullback $\Cc\times_{\Dd} \Dd_{/X}$.

	\begin{definition}
	Let $F\colon \Cc'\rightarrow \Dd$ be a functor, and let $\Cc \subseteq  \Cc'$ be a full subcategory. We say $F$ is (pointwise) \emph{left Kan extended from $\Cc$} if for every object $A\in \Bb$ and every object $X\in \Cc'(A)$, the $\Bb_{/A}$-parametrized colimit of the composite $\pi_A^*\Cc_{/X}\rightarrow \pi_A^*\Cc\rightarrow \pi_A^*\Dd$ exists and the canonical map
	\[
	\operatorname{colim}\limits_{\pi_A^*\Cc_{/X}} \pi_A^* F \rightarrow F_A(X)
	\]
	is an equivalence.
	\end{definition}

	\begin{remark}
	It follows from \cite{martiniwolf2021limits}*{Remark 6.3.6} that a pointwise left Kan extension admits the universal property of a left Kan extension.
	\end{remark}

	\begin{definition}[CoSegal functor]
		Let $\Cc$ and $\Dd$ be $\Bb$-categories, let $(E,M)$ be a factorization system on $\Cc$, and let $\pt$ be a distinguished object. We say a $\Bb$-functor $F\colon \Cc \to \Dd$ is \textit{coSegal} if $F|_M$ is left Kan extended from $\pt$. We let $\Fun_{\coSeg}(\Cc,\Dd) \subseteq \Fun_{\Bb}(\Cc,\Dd)$ denote the full subcategory of coSegal $\Bb$-functors.
	\end{definition}

	\begin{example}
	As we will show in the next subsection, $\pt$ is a distinguished object for the standard factorization system on $\ul\Span(\Qq)$, and a functor $\ul\Span(\Qq)\to\Dd$ into a $\Qq$-cocomplete category is coSegal if and only if it is $\Qq$-cocontinuous.
	\end{example}

	Given a general subcategory $\Cc^\circ\subseteq\Cc$, it does not even make sense to ask whether the restriction of a coSegal functor on $\Cc$ to $\Cc^\circ$ is again coSegal. We therefore introduce:

	\begin{definition}
		\label{def:Compatible_With_Factorization_System}
		Let $(E,M)$ be a factorization system on an non-parametrized category $\Cc$. We say that a subcategory $\Cc^\circ\subseteq\Cc$ is \emph{compatible with $(E,M)$} if the following two conditions hold for every object $X \in \Cc^{\circ}$:
		\begin{enumerate}
			\item Every morphism $X \twoheadrightarrow Y$ in $E$ belongs to $\Cc^{\circ}$.
			\item For a morphism $e\colon X \twoheadrightarrow X'$ in $E$, a morphism $f\colon X' \to Y$ belongs to $\Cc^{\circ}$ if and only if the composite $fe\colon X \to Y$ belongs to $\Cc^{\circ}$.
		\end{enumerate}
		Given a parametrized factoriation system $(E,M)$ on some $\Bb$-category $\Cc$, we call a parametrized subcategory $\Cc^\circ\subseteq\Cc$ \emph{compatible with $(E,M)$} if each $\Cc^\circ(A)\subseteq\Cc(A)$ is compatible with $(E(A),M(A))$.
	\end{definition}

	\begin{example}
		A \emph{full} subcategory $\Cc^\circ\subseteq\Cc$ is compatible with $(E,M)$ if and only if it satisfies the following (a priori weaker) version of the first axiom: \emph{given $X\twoheadrightarrow Y$ in $E$, if $X\in \Cc^\circ$ then also $Y\in \Cc^\circ$}.
	\end{example}

	\begin{lemma}
	\label{lemma:Restricted_Factorization_System}
	If $\Cc^{\circ} \subseteq \Cc$ is compatible with $(E,M)$, then $E^\circ\coloneqq E\cap \Cc^\circ$ and $M^\circ\coloneqq M\cap \Cc^\circ$ form a factorization system on $\Cc^\circ$.
	\begin{proof}
		It suffices to prove the non-parametrized version of the statement. First note that if $f$ is a morphism in $\Cc^\circ$ with factorization $f=me$ in $(\Cc,E,M)$, then $e$ belongs to $E^\circ$ by part (1) of \Cref{def:Compatible_With_Factorization_System}, so $m$ belongs to $M^\circ$ by part (2).

		It then only remains to show that $E^\circ$ and $M^\circ$ are orthogonal to each other, which amounts to saying that for every commutative square
		\begin{equation*}
			\begin{tikzcd}
				X\arrow[d, ->>]\arrow[r] & Y\arrow[d, rightarrowtail]\\
				X'\arrow[r] & Y'
			\end{tikzcd}
		\end{equation*}
		in $\Cc^\circ$ the essentially unique lift $X'\to Y$ in $\Cc$ already belongs to $\Cc^\circ$. This is again immediate from part (2) of the definition.
	\end{proof}
	\end{lemma}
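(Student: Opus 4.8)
The plan is to reduce immediately to the non-parametrized situation: since a parametrized factorization system on a $\Bb$-category is by definition a pointwise factorization system on each $\Cc(A)$, and compatibility of $\Cc^\circ$ with $(E,M)$ was likewise defined pointwise, it suffices to prove that for an ordinary category $\Cc$ with factorization system $(E,M)$ and a compatible subcategory $\Cc^\circ\subseteq\Cc$, the pair $(E^\circ,M^\circ)=(E\cap\Cc^\circ,\,M\cap\Cc^\circ)$ is a factorization system on $\Cc^\circ$. I would first record that $E^\circ$ and $M^\circ$ are automatically \emph{wide} subcategories of $\Cc^\circ$: they contain all objects and all identities (which lie in $E\cap M$), and they are closed under composition since $E$, $M$, and $\Cc^\circ$ all are.

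Next I would verify the existence of factorizations. Given a morphism $f\colon X\to Y$ in $\Cc^\circ$, factor it in $\Cc$ as $f = m\circ e$ with $e\colon X\twoheadrightarrow Z$ in $E$ and $m\colon Z\rightarrowtail Y$ in $M$. By part (1) of \Cref{def:Compatible_With_Factorization_System} (applied to $X\in\Cc^\circ$), the map $e$ lies in $\Cc^\circ$; in particular $Z\in\Cc^\circ$, so $e\in E^\circ$. Since now both $f$ and $e$ lie in $\Cc^\circ$ and $e\in E$, part (2) of the definition shows that $m$ lies in $\Cc^\circ$, hence $m\in M^\circ$. This provides the required factorization inside $\Cc^\circ$.

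It then remains to check that $E^\circ$ and $M^\circ$ are orthogonal in $\Cc^\circ$. Consider a commutative square in $\Cc^\circ$ with left edge $e\colon X\twoheadrightarrow X'$ in $E^\circ$ and right edge $g\colon Y\rightarrowtail Y'$ in $M^\circ$. Since $e\in E$ and $g\in M$ are orthogonal in $\Cc$, there is an essentially unique diagonal lift $\ell\colon X'\to Y$ in $\Cc$. Uniqueness of $\ell$ as a lift in $\Cc^\circ$ is automatic because $\Cc^\circ\subseteq\Cc$ is a subcategory, so the only real point is that $\ell$ itself lies in $\Cc^\circ$; this again follows from part (2) of \Cref{def:Compatible_With_Factorization_System}, because the composite $\ell\circ e\colon X\to Y$ is the top edge of the square, which lies in $\Cc^\circ$, while $e\in E$. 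This establishes orthogonality and completes the proof.

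I do not anticipate a genuine obstacle: the statement is essentially a formal manipulation, and the two axioms in \Cref{def:Compatible_With_Factorization_System} appear to be tailored precisely so that both halves of the factorization-system axioms go through. The only mild subtleties are to remember that wideness of $E^\circ,M^\circ$ and uniqueness of diagonal lifts in $\Cc^\circ$ both come for free from the corresponding facts in $\Cc$, so that the substantive content is confined to the two applications of part (2) of compatibility.
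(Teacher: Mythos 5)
Your proof is correct and follows essentially the same route as the paper: reduce to the non-parametrized case, produce factorizations via parts (1) and (2) of the compatibility definition, and establish orthogonality by applying part (2) to the diagonal lift. Your write-up is slightly more explicit about wideness of $E^\circ,M^\circ$ and about uniqueness of lifts being inherited, but the substance is identical.
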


	\begin{definition}[Good subcategory]
		Let $\Cc$ be equipped with a factorization system $(E,M)$ and a distinguished object $\pt$. We say that a subcategory $\Cc^{\circ}$ is \textit{good} if it is compatible with $(E,M)$ and contains the coSegal maps $\pt_B \rightarrowtail X$ for all $X\in \Cc^{\circ}(B)$.
	\end{definition}

	Note that for a good subcategory $\Cc^{\circ}$ we may now again talk about coSegal functors: Lemma~\ref{lemma:Restricted_Factorization_System} shows that $(E^\circ,M^\circ)$ is a parametrized factorization system on $\Cc^\circ$, and the fact that $\Cc^{\circ}$ contains all coSegal maps ensures that $\Cc^{\circ}$ inherits a distinguished object $\pt^{\circ} = \pt$. The goal for the rest of this section is to show that every coSegal functor on $\Cc^{\circ}$ uniquely extends to $\Cc$ and that all coSegal functors on $\Cc$ are of this form. We start with the following preliminary result:

	\begin{lemma}
		\label{lem:Restriction_Along_Good_Inclusion_Conservative}
		Restriction along a good inclusion $\Cc^{\circ} \hookrightarrow \Cc$ takes coSegal functors to coSegal functors, and the resulting functor $\Fun_{\coSeg}(\Cc,\Dd) \to \Fun_{\coSeg}(\Cc^{\circ},\Dd)$ is conservative.
	\end{lemma}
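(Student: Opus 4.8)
The plan is to treat the two assertions separately. Both rest on the observation that the coSegal condition on a functor $F\colon\Cc\to\Dd$ constrains $F$ only through its restriction to the distinguished object $\pt$, together with the shapes of the parametrized comma categories $\{\pt\}_{/X}=\{\pt\}\times_M M_{/X}$ of coSegal maps into a given object $X$.

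For the first assertion I would fix a coSegal functor $F$ and verify that $F|_{\Cc^\circ}$ is coSegal with respect to the factorization system $(E^\circ,M^\circ)=(E\cap\Cc^\circ,M\cap\Cc^\circ)$ furnished by \Cref{lemma:Restricted_Factorization_System} and its inherited distinguished object $\pt$. Concretely, given $A\in\Bb$ and an object $X$ of $\Cc^\circ(A)$ — equivalently of $M^\circ(A)$, since $M^\circ$ is wide — one must show that $F_A(X)$ is the $\Bb_{/A}$-parametrized colimit of $\pi_A^*(F|_{\{\pt\}})$ over the comma category $\{\pt\}\times_{M^\circ}M^\circ_{/X}$. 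The key step is to identify this comma category with $\{\pt\}\times_M M_{/X}$: as $\pt$ is distinguished in both $M$ and $M^\circ$, both are $\Bb_{/A}$-groupoids whose value at $f\colon A'\to A$ is the space of coSegal maps $\pt_{A'}\rightarrowtail f^*X$ computed in $M^\circ(A')$, respectively $M(A')$, and these spaces coincide because $f^*X\in\Cc^\circ(A')$ and $\Cc^\circ$ is good, so that every such coSegal map already lies in $\Cc^\circ(A')$ and hence in $M^\circ(A')=M(A')\cap\Cc^\circ(A')$. Granting this identification, the canonical comparison map witnessing the Kan extension condition for $F|_{M^\circ}$ at $X$ is literally the one for $F|_M$ at $X$, which is an equivalence because $F$ is coSegal; hence $F|_{\Cc^\circ}$ is coSegal.

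For conservativity I would use that a morphism $\alpha\colon F\to G$ of $\Bb$-functors is invertible precisely when every component $\alpha_A(X)\colon F_A(X)\to G_A(X)$, with $A\in\Bb$ and $X$ an object of $\Cc(A)$, is invertible. So suppose $\alpha$ is a transformation of coSegal functors $\Cc\to\Dd$ whose restriction to $\Cc^\circ$ is invertible. Since $\pt$ is also the distinguished object of $\Cc^\circ$, the component $\alpha(\pt)$ is an equivalence, and hence so are all of its restrictions $\alpha(\pt_{A'})$ for $A'\in\Bb$. Now fix $A\in\Bb$ and an object $X$ of $\Cc(A)$; because $M$ is wide, $X$ is also an object of $M(A)$, so the coSegal condition exhibits $F_A(X)$ and $G_A(X)$ as the $\Bb_{/A}$-parametrized colimits of $\pi_A^*(F|_{\{\pt\}})$ and $\pi_A^*(G|_{\{\pt\}})$ over $\pi_A^*\{\pt\}_{/X}$, naturally in $\alpha$. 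Under these identifications $\alpha_A(X)$ is the induced map on colimits of the morphism of diagrams coming from $\alpha|_{\{\pt\}}$, which is objectwise a restriction of $\alpha(\pt)$, hence an equivalence; so $\alpha_A(X)$ is an equivalence. As $A$ and $X$ were arbitrary, $\alpha$ is invertible, which is the asserted conservativity.

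The only genuinely delicate point is the identification $\{\pt\}\times_{M^\circ}M^\circ_{/X}\simeq\{\pt\}\times_M M_{/X}$ in the first part, and this is exactly where both clauses of the definition of a good subcategory enter: compatibility with $(E,M)$ in order to make sense of the restricted factorization system and its distinguished object, and closure under coSegal maps in order to obtain the essential surjectivity. The remaining steps are formal, using only the universal property of parametrized pointwise Kan extensions (\cite{martiniwolf2021limits}*{Remark~6.3.6}) and the objectwise detection of equivalences of $\Bb$-functors. In particular, the argument does not invoke the extension statement \Cref{prop:good_equivalence}; rather, it is one of the ingredients in its proof.
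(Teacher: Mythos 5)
Your proof is correct and takes essentially the same approach as the paper. Both parts hinge on the same two observations: for the first, that goodness of $\Cc^\circ$ makes the comma $\Bb$-groupoid of coSegal maps into $X$ the same whether computed in $M^\circ$ or in $M$; for the second, that evaluation at $\pt$ is conservative on coSegal functors because the colimit formula expresses each value $F_A(X)$ functorially in $F(\pt)$. The only difference in presentation is that the paper packages the conservativity step as a factorization through $\Fun_{\coSeg}(M,\Dd)\simeq\Fun(\{\pt\},\Dd)$, whereas you argue componentwise via the universal property of the parametrized Kan extension, which is the same content spelled out in slightly more detail.
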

	\begin{proof}
		For the first claim, we must show that for a coSegal functor $F\colon \Cc \to \Dd$ the restriction $F\vert_{M^{\circ}}$ is pointwise left Kan extended from $\{\pt\}$. By definition, this requires the comparison of $F_AX$ with a certain colimit indexed by a category of morphisms $\pt_B \rightarrowtail X$ in $M^{\circ}$ for $X \in C^{\circ}(B)$. But by assumption this indexing category agrees with the category of \textit{all} morphisms $\pt_B \rightarrowtail X$ in $M$, hence the claim is immediate from the coSegal property of $F$.

		For the second claim, it suffices to observe that further composition with evaluation at $\pt \in \Cc$ is conservative: it equals the composition
		\[
		\Fun_{\coSeg}(\Cc,\Dd) \to \Fun_{\coSeg}(M,\Dd) \iso \Fun(\{\pt\},\Dd) \simeq \Dd,
		\]
		where the first functor is conservative since $M \subseteq \Cc$ is wide, and the second functor is an equivalence by the definition of being coSegal.
	\end{proof}

	Next, we show that the coSegal condition behaves well with respect to left Kan extension along the
	inclusion $\Cc^{\circ} \hookrightarrow \Cc$. If $\Cc^\circ\subseteq\Cc$ is a subcategory compatible with the factorization system, e.g.\ if it is good, we use the notations
	\[
	\Cc^{\circ}_{/X} := \Cc^{\circ} \times_{\Cc} \Cc_{/X} \qquad \text{ and } \qquad M^{\circ}_{/X} := M^{\circ} \times_{M} M_{/X}
	\]
	for an object $X \in \Cc$.

	\begin{lemma}
		For every global section $X \in \Gamma\Cc$, the inclusion $M_{/X} \hookrightarrow \Cc_{/X}$ admits a parametrized left adjoint, and the resulting adjunction $\Cc_{/X}\rightleftarrows M_{/X}$ restricts to an adjunction $\Cc_{/X}^\circ\rightleftarrows M_{/X}^\circ$.
		\begin{proof}
		Let $A\in\Bb$. Then the inclusion $M_{/X}(A)\hookrightarrow\Cc_{/X}(A)$ can be identified with the inclusion $M(A)_{/X_A}\to \Cc(A)_{/X_A}$. By \cite{CLL_Clefts}*{proof of Proposition~3.33} the latter is fully faithful and it admits a left adjoint $\lambda_A$ such that the unit consists of maps in $\Cc(A)_{/X_A}\times_{\Cc(A)}E(A)$. To see that the $\lambda_A$'s assemble into a parametrized functor, it will be enough to check the Beck--Chevalley condition. By full faithfulness of the inclusions this just amounts to saying that for every $f\colon A\to A'$ there is \emph{some} dashed arrow filling
			\begin{equation*}
				\begin{tikzcd}
					\Cc(A')_{/X_{A'}}\arrow[r, "f^*"]\arrow[d,"\lambda_{A'}"'] & \Cc(A)_{/X_{A}}\arrow[d, "\lambda_A"]\\
					M(A')_{/X_{A'}}\arrow[r, dashed] & M(A)_{/X_A}\rlap.
				\end{tikzcd}
			\end{equation*}
			This follows at once from the fact that the vertical arrows are localizations at $\Cc(A')_{/X_{A'}}\times_{\Cc(A')} E(A')$ and $\Cc(A)_{/X_{A}}\times_{\Cc(A)} E(A)$, respectively, by \emph{loc.~cit.}

			Next we show the second statement, which entails showing that the left adjoint $\lambda$ as well as the unit and counit restrict accordingly. The fact that $\lambda_A$ sends objects of $\Cc^\circ_{/X}$ to objects of $M^\circ_{/X}$ and that the unit lies pointwise in $\Cc^\circ_{/X}$ is immediate from part (1) of \Cref{def:Compatible_With_Factorization_System}, while part (2) guarantees that it maps morphisms of $\Cc^\circ_{/X}$ to morphisms in $M^\circ_{/X}$. Finally, the counit is an equivalence, hence lies pointwise in $M^\circ_{/X}$ as claimed.
		\end{proof}
	\end{lemma}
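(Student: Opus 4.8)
The plan is to deduce the statement fiberwise from the corresponding $1$-categorical fact and then promote it to a parametrized adjunction. First I would unwind the definitions: for every $A\in\Bb$ the inclusion $M_{/X}(A)\hookrightarrow\Cc_{/X}(A)$ is the inclusion $M(A)_{/X_A}\hookrightarrow\Cc(A)_{/X_A}$, where $X_A\coloneqq A^*X\in\Cc(A)$ and $M(A)_{/X_A}$ denotes the subcategory of the slice $\Cc(A)_{/X_A}$ on those objects whose structure map lies in $M(A)$. Applying (the proof of) \cite{CLL_Clefts}*{Proposition~3.33} to the factorization system $(E(A),M(A))$ on $\Cc(A)$ and the object $X_A$, this inclusion is fully faithful and admits a left adjoint $\lambda_A$, computed by $(E,M)$-factoring a map $Z\to X_A$ as $Z\twoheadrightarrow Z'\rightarrowtail X_A$ and returning $Z'\rightarrowtail X_A$; moreover $\lambda_A$ exhibits $\Cc(A)_{/X_A}$ as a localization at the class $W_A$ of morphisms over $X_A$ whose underlying map lies in $E(A)$, and the unit of $\lambda_A\dashv\iota_A$ is given componentwise by the $E$-leg of the factorization.

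Next I would assemble the $\lambda_A$ into a parametrized left adjoint of $M_{/X}\hookrightarrow\Cc_{/X}$. It is enough to check the Beck--Chevalley condition, i.e.\ that for every $f\colon A\to A'$ in $\Bb$ the mate transformation comparing $\lambda_A\circ f^*$ with $f^*\circ\lambda_{A'}$ is invertible; and since both inclusions $\iota_A,\iota_{A'}$ are fully faithful, this reduces to producing \emph{some} functor $M(A')_{/X_{A'}}\to M(A)_{/X_A}$ making the evident square with the two $\lambda$'s commute. Such a functor exists because $f^*$, being a restriction functor of the $\Bb$-category $\Cc$, preserves the factorization system and hence carries $W_{A'}$ into $W_A$; as $\lambda_A$ inverts $W_A$, the composite $\lambda_A\circ f^*$ inverts $W_{A'}$ and therefore factors, essentially uniquely, through the localization $\lambda_{A'}$. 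Unwinding identifies this factoring functor with $f^*\colon M(A')_{/X_{A'}}\to M(A)_{/X_A}$, giving the parametrized adjunction $\lambda\colon\Cc_{/X}\rightleftarrows M_{/X}\noloc\iota$.

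Finally I would restrict this adjunction along $\Cc^\circ_{/X}\hookrightarrow\Cc_{/X}$ and $M^\circ_{/X}\hookrightarrow M_{/X}$, where everything is governed by the two axioms of \Cref{def:Compatible_With_Factorization_System}. On objects: if $Z\to X_A$ has $Z\in\Cc^\circ(A)$, then its $E$-leg $Z\twoheadrightarrow Z'$ keeps $Z'$ in $\Cc^\circ(A)$ by axiom~(1), so $\lambda_A$ lands in $M^\circ_{/X}(A)$. On morphisms: given a morphism of $\Cc^\circ_{/X}(A)$, comparing the $E$-legs of its source and target and invoking axiom~(2) shows the induced map between factorizations lies in $\Cc^\circ$, so $\lambda_A$ restricts to $\Cc^\circ_{/X}(A)\to M^\circ_{/X}(A)$. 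The unit lies pointwise in $\Cc^\circ_{/X}$ by axiom~(1), since its components are $E$-legs of objects of $\Cc^\circ$, and the counit is an equivalence and hence automatically lands in $M^\circ_{/X}$. Thus all of the adjunction data restricts, proving the second claim.

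The step I expect to be most delicate is the parametrized one: there is no closed formula for the $\lambda_A$ that is visibly compatible across all $A$, so both the Beck--Chevalley verification and the restriction of the unit and counit to $\Cc^\circ$ must be argued entirely through the localization description of $\lambda_A$ at the class $W_A$, together with axioms~(1)--(2) of compatibility, rather than by manipulating explicit factorizations. Once this is in place the remaining checks are formal.
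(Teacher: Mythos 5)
Your proposal is correct and follows the same strategy as the paper: identify the fiberwise inclusions with $M(A)_{/X_A}\hookrightarrow\Cc(A)_{/X_A}$, invoke \cite{CLL_Clefts}*{Proposition~3.33} for the fiberwise left adjoints $\lambda_A$ and their localization description, reduce the Beck--Chevalley check (via full faithfulness of the inclusions) to producing \emph{some} dashed arrow making the square commute and extract it from the universal property of localization, and finally restrict the adjunction data using axioms (1) and (2) of \Cref{def:Compatible_With_Factorization_System}. The only divergence from the paper is cosmetic: you spell out why the dashed arrow exists (because $f^*$, as a map of $\Bb$-categories preserving the factorization system, sends $W_{A'}$ into $W_A$, so $\lambda_A f^*$ inverts $W_{A'}$ and factors through $\lambda_{A'}$), whereas the paper simply cites the localization property of \emph{loc.\ cit.} and leaves this step implicit.
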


	\begin{corollary}
	The inclusion $M_{/X}^{\circ}\hookrightarrow \Cc^\circ_{/X}$ is final in the sense of \cite{martiniwolf2021limits}*{Proposition 4.6.1}, i.e.~$\Cc^\circ_{/X}$-shaped colimits can be computed after restricting to $M_{/X}^\circ$.
	\end{corollary}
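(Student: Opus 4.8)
The plan is to obtain this as a purely formal consequence of the lemma just proved, combined with the parametrized analogue of the statement that a functor admitting a left adjoint is final.

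First I would recall the non-parametrized prototype. If a functor $f\colon\Ii\to\Kk$ admits a left adjoint $g$ with unit $\eta$, then for every object $k\in\Kk$ the comma category $\Ii\times_{\Kk}\Kk_{k/}$ has the pair $(g(k),\eta_k)$ as an initial object, hence is weakly contractible; by the usual cofinality criterion this means $f$ is final, so that $\colim_{\Ii}f^*F\simeq\colim_{\Kk}F$ for every diagram $F\colon\Kk\to\Cc$. The preceding lemma supplies precisely a parametrized adjunction $\lambda\colon\Cc^\circ_{/X}\rightleftarrows M^\circ_{/X}\noloc\iota$ in which the inclusion $\iota\colon M^\circ_{/X}\hookrightarrow\Cc^\circ_{/X}$ is the \emph{right} adjoint. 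Applying the parametrized ``right adjoints are final'' principle to $\iota$ then gives immediately that $\Cc^\circ_{/X}$-shaped colimits can be computed after restriction along $\iota$, which is exactly the assertion.

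Concretely, one checks the criterion of \cite{martiniwolf2021limits}*{Proposition~4.6.1}: it amounts to verifying that the relevant parametrized slice $\Bb$-categories over objects of $\Cc^\circ_{/X}$ are weakly contractible, and the unit of $\lambda\dashv\iota$ exhibits parametrized initial objects of these slices exactly as in the non-parametrized computation above. The step I expect to require the most care is merely the bookkeeping: ensuring the adjunction produced by the lemma is fed into \cite{martiniwolf2021limits}*{Proposition~4.6.1} in the exact form needed — parametrized over all slices $\Bb_{/A}$, for the $\Bb$-subcategories $M^\circ_{/X}$ and $\Cc^\circ_{/X}$ — so that one may quote the internal ``right adjoints are final'' statement rather than redo the comma-category argument by hand. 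Beyond that there is essentially nothing to compute.
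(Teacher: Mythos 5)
Your proposal follows the same route as the paper: it observes that the general principle ``parametrized right adjoints are final'' applies to the inclusion $\iota\colon M^\circ_{/X}\hookrightarrow\Cc^\circ_{/X}$, whose left adjoint was produced by the preceding lemma, and then verifies finality via Quillen's Theorem A by noting the relevant comma categories have initial objects supplied by the unit. The paper's proof is exactly this, citing \cite{martini2021yoneda}*{Corollary~4.4.8} for parametrized Theorem A and \cite{martiniwolf2021limits}*{Corollary~3.3.5} for the initial objects.
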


	\begin{proof}
	We will prove more generally that any parametrized right adjoint $R\colon \Cc\rightarrow \Dd$ is final. By Quillen's Theorem A for parametrized categories, see \cite{martini2021yoneda}*{Corollary 4.4.8}, it suffices to show that the comma category $\pi_A^*\Cc_{X/}$ is weakly contractible for all $X\in \Dd(A)$. However by \cite{martiniwolf2021limits}*{Corollary 3.3.5} this category even admits an initial object, and so is clearly weakly contractible.
	\end{proof}

	\begin{proposition}\label{prop:restriction-final}
	Let $\iota\colon \Cc^{\circ}\hookrightarrow \Cc$ be a good inclusion such that $\Cc$ is small, and suppose that $\Dd$ admits all colimits. Then the Beck--Chevalley transformation filling the square
	\[\begin{tikzcd}
		{\Fun(\Cc^\circ,\Dd)} & {\Fun(\Cc,\Dd)} \\
		{\Fun(M^\circ,\Dd)} & {\Fun(M,\Dd)}
		\arrow["{-\vert_{M^\circ}}"', from=1-1, to=2-1]
		\arrow["{\iota_!}", from=1-1, to=1-2]
		\arrow["{\iota_!}", from=2-1, to=2-2]
		\arrow["{-\vert_M}", from=1-2, to=2-2]
	\end{tikzcd}\] is an equivalence.
	\end{proposition}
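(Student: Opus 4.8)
The plan is to reduce everything to the pointwise formula for parametrized left Kan extensions. Since $\Cc$ is small and $\Dd$ admits all colimits, the left Kan extensions $\iota_!$ along $\Cc^\circ\hookrightarrow\Cc$ and along $M^\circ=M\cap\Cc^\circ\hookrightarrow M$ exist and are computed by the pointwise colimit formula (cf.\ \cite{martiniwolf2021limits}), and a map of parametrized functors out of $M$ is an equivalence as soon as it is so on objects. Thus it suffices to fix $F\in\Fun(\Cc^\circ,\Dd)$, an object $A\in\Bb$ and $X\in M(A)$, and to show that the component at $X$ of the Beck--Chevalley transformation, a map $(\iota_!(F\vert_{M^\circ}))_A(X)\to(\iota_!F)_A(X)$ in $\Dd(A)$, is an equivalence.

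To unwind both sides, write $\Cc^\circ_{/X}=\Cc^\circ\times_\Cc\Cc_{/X}$ and $M^\circ_{/X}=M^\circ\times_MM_{/X}$. Viewing $X$ as an object of $\Cc(A)$, the pointwise formula identifies $(\iota_!F)_A(X)$ with $\colim_{\pi_A^*\Cc^\circ_{/X}}\pi_A^*F$ and $(\iota_!(F\vert_{M^\circ}))_A(X)$ with $\colim_{\pi_A^*M^\circ_{/X}}\pi_A^*F$, and under these identifications the Beck--Chevalley map is exactly the comparison morphism of colimits induced by the inclusion $\pi_A^*M^\circ_{/X}\hookrightarrow\pi_A^*\Cc^\circ_{/X}$ coming from $M\subseteq\Cc$. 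So it remains to show that this inclusion of indexing $\Bb_{/A}$-categories is final.

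For this I would reduce to $A=1$: replacing $\Bb$ by $\Bb_{/A}$ and $\Cc$ by $\pi_A^*\Cc$ --- which still carries a factorization system $(\pi_A^*E,\pi_A^*M)$, the distinguished object $\pi_A^*\pt$, and the good subcategory $\pi_A^*\Cc^\circ$, since all of these notions are defined levelwise and hence preserved by $\pi_A^*$, while $\pi_A^*\Dd$ remains cocomplete and $\pi_A^*\Cc$ remains small --- the object $X$ becomes a global section of $\pi_A^*M\subseteq\pi_A^*\Cc$, and the inclusion in question becomes $M^\circ_{/X}\hookrightarrow\Cc^\circ_{/X}$ for a global section $X$. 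This is precisely the finality statement proved in the Corollary just above, which finishes the argument. (Alternatively one can skip this reduction by checking directly that parametrized finality is stable under the base change functors $\pi_A^*$.)

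The only genuinely new input is the finality of $M^\circ_{/X}\hookrightarrow\Cc^\circ_{/X}$, which is already in hand; the main thing to be careful about is the bookkeeping in the reduction --- verifying that a good inclusion stays good after slicing over $A$, and that the Beck--Chevalley transformation of the square really does restrict, componentwise in $X$, to the colimit comparison map along $M^\circ_{/X}\hookrightarrow\Cc^\circ_{/X}$. Both are routine once the pointwise formulas are set up.
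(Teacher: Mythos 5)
Your proposal is correct and matches the paper's proof: both fix $F$ and $X\in M(A)$, identify the Beck--Chevalley map with the colimit comparison along $\pi_A^*M^\circ_{/X}\hookrightarrow\pi_A^*\Cc^\circ_{/X}$, observe that $\pi_A^*\Cc^\circ\hookrightarrow\pi_A^*\Cc$ is again a good inclusion so that $X$ becomes a global section, and invoke the finality Corollary. You merely spell out the bookkeeping that the paper compresses into ``a quick computation shows.''
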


	Note that in the above situation the two left Kan extension functors indeed exist by \cite{martiniwolf2021limits}*{Corollary~6.3.7}.

	\begin{proof}
	Consider a functor $F\colon \Cc^\circ\rightarrow \Dd$ and let $X\in M(A)$. Then a quick computation shows that the composite
	\[
		\colim_{\pi_A^*M^\circ_{/X}}F\simeq\iota_!(F\vert_{M^{\circ}})(X) \to(\iota_!F)\vert_M(X)\simeq\colim_{\pi^*_A\Cc^{\circ}_{/X}}F
	\]
	is induced on colimits by the map $\pi^*_AM^\circ_{/X}\rightarrow \pi^*_A \Cc^\circ_{/X}$ of $\Bb_{/A}$-categories. However we note that $\pi^*_A \Cc^\circ \hookrightarrow \pi^*_A \Cc$ is again a good inclusion, and so this is an equivalence by the previous corollary.
	\end{proof}

	\begin{definition}
		Let $\Cc$ be a $\Bb$-category equipped with a factorization system and a distinguished object. We say a $\Bb$-category $\Dd$ \textit{admits $\Cc$-coSegal colimits} if for every object $Y \in \Gamma\Dd$ the pointwise left Kan extension of $Y\colon \{\pt\} \to \Dd$ along the inclusion $\{\pt\} \hookrightarrow M$ exists.
	\end{definition}

	\begin{remark}
		Suppose $\Dd$ admits $\Cc$-coSegal colimits and suppose $\Cc^\circ \subseteq \Cc$ is a good inclusion. Then $\Dd$ also admits $\Cc^\circ$-coSegal colimits.
	\end{remark}

	\begin{lemma}
		\label{lem:LKE_Along_Good_Inclusion}
		Let $\Cc^{\circ} \subseteq \Cc$ be a good subcategory and suppose $\Dd$ is a $\Bb$-category with $\Cc$-coSegal colimits. For every coSegal $\Bb$-functor $F^{\circ}\colon \Cc^{\circ} \to \Dd$, there exists a left Kan extension $F\colon \Cc \to \Dd$ along the inclusion $\Cc^{\circ} \hookrightarrow \Cc$. Furthermore, the $\Bb$-functor $F$ is again coSegal, and the canonical map $F^{\circ}(\pt) \to F(\pt)$ is an equivalence.
	\end{lemma}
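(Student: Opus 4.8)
The plan is to construct $F$ directly as the pointwise left Kan extension $\iota_!F^{\circ}$ along $\iota\colon\Cc^{\circ}\hookrightarrow\Cc$, and to show that the colimits appearing in its defining formula all reduce to $\Cc$-coSegal colimits, which exist by assumption. Fix $A\in\Bb$ and an object $X\in\Cc(A)$. Since $\pi_A^*\Cc^{\circ}\hookrightarrow\pi_A^*\Cc$ is again a good inclusion, $\pi_A^*F^{\circ}$ is again coSegal, and $\pi_A^*\Dd$ again admits coSegal colimits, we may pass to the slice $\Bb_{/A}$ and assume $X\in\Gamma\Cc$; the pointwise nature of parametrized left Kan extensions (\cite{martiniwolf2021limits}) then assembles the resulting values into the desired functor $F$. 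So the content is to show that $\colim_{\Cc^{\circ}_{/X}}F^{\circ}$ exists for every $X\in\Gamma\Cc$.

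To this end I would run the following chain of reductions. First, by the finality of $M^{\circ}_{/X}\hookrightarrow\Cc^{\circ}_{/X}$ established above, $\colim_{\Cc^{\circ}_{/X}}F^{\circ}$ agrees with $\colim_{M^{\circ}_{/X}}(F^{\circ}|_{M^{\circ}})|_{M^{\circ}_{/X}}$. Next, since $F^{\circ}$ is coSegal the restriction $F^{\circ}|_{M^{\circ}}$ is pointwise left Kan extended from $\{\pt\}\hookrightarrow M^{\circ}$; because the double slice $(M^{\circ}_{/X})_{/(W\to X)}$ is canonically the slice $(M^{\circ})_{/W}$, pulling back along the domain projection $M^{\circ}_{/X}\to M^{\circ}$ preserves this property, so $(F^{\circ}|_{M^{\circ}})|_{M^{\circ}_{/X}}$ is pointwise left Kan extended from $\{\pt\}\times_{M^{\circ}}M^{\circ}_{/X}$, on which it is the constant diagram at $F^{\circ}(\pt)$. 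By transitivity of left Kan extensions, $\colim_{M^{\circ}_{/X}}(F^{\circ}|_{M^{\circ}})|_{M^{\circ}_{/X}}$ therefore agrees with the colimit of that constant diagram over $\{\pt\}\times_{M^{\circ}}M^{\circ}_{/X}$. Finally, a morphism $\pt_C\to X$ in $M^{\circ}$ is the same datum as a coSegal map into $X$ — its source $\pt_C$ lies in $\Cc^{\circ}$ automatically, as $\pt$ is a distinguished object of $\Cc^{\circ}$ — so $\{\pt\}\times_{M^{\circ}}M^{\circ}_{/X}\simeq\{\pt\}\times_{M}M_{/X}$; and since $M$ is wide, $X$ is an object of $M$, so this last colimit is exactly the value at $X$ of the pointwise left Kan extension of $F^{\circ}(\pt)\colon\{\pt\}\to\Dd$ along $\{\pt\}\hookrightarrow M$. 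That left Kan extension exists because $\Dd$ admits $\Cc$-coSegal colimits, which completes the proof that $F=\iota_!F^{\circ}$ exists.

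It then remains to identify $F$. As $\iota$ is fully faithful, $\id_X$ is terminal in $\Cc^{\circ}_{/X}$ for $X\in\Cc^{\circ}$, so the unit $F^{\circ}\to\iota^*\iota_!F^{\circ}$ is an equivalence; hence $F|_{\Cc^{\circ}}\simeq F^{\circ}$, and in particular the canonical map $F^{\circ}(\pt)\to F(\pt)$ is an equivalence. Unwinding the chain above moreover shows that the canonical comparison map from the left Kan extension of $F|_{\{\pt\}}$ along $\{\pt\}\hookrightarrow M$ to $F|_{M}$ is a pointwise equivalence, so that $F|_{M}$ is left Kan extended from $\pt$ and $F$ is coSegal. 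I expect the main obstacle to be precisely this last point, together with the base-change bookkeeping in the first paragraph: each individual step (finality, the double-slice identification, transitivity, the reduction to slices) is formal, but one must take care that every equivalence in the chain is induced by the canonical comparison maps, so that in the end $F|_{M}$ is genuinely recognized as a left Kan extension of $F|_{\{\pt\}}$ rather than merely objectwise equivalent to one.
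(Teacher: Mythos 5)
Your approach is essentially sound and covers the same mathematical ground as the paper's proof, but it organizes the argument differently. The paper instead works as follows: first embed $\Dd$ (via coYoneda, preserving whatever colimits exist) into a cocomplete $\Bb$-category $\Dd'$, where the left Kan extension $F$ manifestly exists; then apply \Cref{prop:restriction-final} to conclude that $F\vert_M$ \emph{is} (not merely pointwise agrees with) the left Kan extension of $F^\circ\vert_{M^\circ}$ along $M^\circ\hookrightarrow M$; then apply transitivity of left Kan extensions to see $F\vert_M$ is left Kan extended from $\{\pt\}$; finally, observe that since $F\vert_M$ is computed by $\Cc$-coSegal colimits, it lands in $\Dd$, and since $M\subseteq\Cc$ is wide this forces all of $F$ to land in $\Dd$. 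You instead compute the colimit defining $\iota_!F^\circ(X)$ directly and show term by term that it reduces to a coSegal colimit.

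The ingredients are genuinely the same — the finality of $M^\circ_{/X}\hookrightarrow\Cc^\circ_{/X}$ (which is exactly what drives \Cref{prop:restriction-final}), transitivity, and the double-slice identification. The real difference is how coherence is handled, and you correctly identify this as the delicate point. Your chain of equivalences shows that the relevant colimits exist, but to conclude that $F\vert_M$ is \emph{recognized} as a Kan extension from $\{\pt\}$ (rather than just objectwise isomorphic to one), you must track that every comparison map in the chain is the canonical one. The paper's strategy is precisely designed to eliminate this bookkeeping: \Cref{prop:restriction-final} is stated as a Beck--Chevalley equivalence of functors, so coherence is built in, and transitivity then applies on the nose. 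If you want to close the gap in your version, the cleanest fix is to note that once you have established existence of $F$ in $\Dd$, you can then run the paper's Kan-extension-in-$\Dd'$ argument (which is now available since $F$ is known to land in $\Dd$) to get the coSegal property coherently, rather than trying to propagate canonicity through your chain.

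One small imprecision: your justification of $\{\pt\}\times_{M^\circ}M^\circ_{/X}\simeq\{\pt\}\times_M M_{/X}$ — ``its source $\pt_C$ lies in $\Cc^\circ$'' — only addresses objects. For the morphisms to agree you also need that $\{\pt\}$, viewed inside $M$, already factors through $M^\circ$; this holds because $\pt$ is a global section of $\Cc^\circ$ (so $\{\pt\}\subseteq\Cc^\circ$) and $\{\pt\}\subseteq M$ by definition, giving $\{\pt\}\subseteq M\cap\Cc^\circ=M^\circ$. Worth stating explicitly.
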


	\begin{proof}
		By changing universe we may assume that $\Cc$ is small. Let us first assume that $\Dd$ is cocomplete, so that we may apply \Cref{prop:restriction-final} to deduce the restriction $F\vert_M \colon M \to \Dd$ is the left Kan extension along $M^{\circ} \hookrightarrow M$ of the restriction $F^{\circ}\vert_{M^{\circ}}$. Because $F^{\circ}$ is coSegal, $F^{\circ}\vert_{M^{\circ}}$ is itself left Kan extended from $\{\pt\}$, and thus it follows from transitivity of left Kan extension that also $F\vert_M$ is left Kan extended from $\{\pt\}$, i.e.\ that $F$ is coSegal. The final claim follows from the assumption that the inclusions $\{\pt\} \hookrightarrow M^{\circ}$ and $\{\pt\} \hookrightarrow M$ are fully faithful.

		For arbitrary $\Dd$, pick an embedding $\Dd \hookrightarrow \Dd'$ into a cocomplete $\Bb$-category $\Dd'$ which preserves all colimits that exists in $\Dd$ (e.g.\ the coYoneda embedding). By the previous paragraph, the left Kan extension $F$ of $F^\circ$ exists as a functor into $\Dd'$ and has the required properties. Furthermore as we have seen, once $F$ is restricted to $M$ it is pointwise left Kan extended from $\{\pt\}$ and hence lands in $\Dd$ by the assumption that $\Dd$ has coSegal colimits. However $M\subseteq \Cc$ is a wide subcategory, and so $F$ itself lands in $\Dd$.
	\end{proof}

	Putting everything together, we obtain the main result of this subsection:
	\begin{proposition}\label{prop:good_equivalence}
		Consider a good inclusion $\iota\colon \Cc^{\circ} \hookrightarrow \Cc$, and let $\Dd$ be a $\Bb$-category with $\Cc$-coSegal colimits. Then restriction along $\Cc^\circ\hookrightarrow \Cc$ induces an equivalence of categories
		\[
		\Fun_{\coSeg}(\Cc,\Dd) \iso \Fun_{\coSeg}(\Cc^{\circ},\Dd).
		\]
	\end{proposition}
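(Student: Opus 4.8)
The plan is to exhibit an explicit inverse to the restriction functor using parametrized left Kan extension along $\iota$, with essentially all of the work going into a reduction of everything to the distinguished object $\pt$.

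First, recall that \Cref{lem:Restriction_Along_Good_Inclusion_Conservative} provides a well-defined and conservative restriction functor $\iota^*\colon\Fun_{\coSeg}(\Cc,\Dd)\to\Fun_{\coSeg}(\Cc^\circ,\Dd)$. In the other direction, \Cref{lem:LKE_Along_Good_Inclusion} shows that each coSegal functor $F^\circ\colon\Cc^\circ\to\Dd$ admits a left Kan extension $\iota_!F^\circ$ along $\iota$, that $\iota_!F^\circ$ is again coSegal, and that the unit $\eta_{F^\circ}\colon F^\circ\to\iota^*\iota_!F^\circ$ of this left Kan extension is an equivalence at $\pt$. Since pointwise left Kan extensions enjoy the universal property of left Kan extensions (\cite{martiniwolf2021limits}*{Remark~6.3.6}) and $\Fun_{\coSeg}\subseteq\Fun$ is full, I would conclude that $\iota^*$ admits a left adjoint $\iota_!$ on the level of coSegal functor categories; it then suffices to check that the unit and counit of this adjunction are equivalences.

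For the unit, I would first observe that evaluation at $\pt$ is conservative on $\Fun_{\coSeg}(\Cc^\circ,\Dd)$: exactly as in the proof of \Cref{lem:Restriction_Along_Good_Inclusion_Conservative}, it factors as $\Fun_{\coSeg}(\Cc^\circ,\Dd)\to\Fun_{\coSeg}(M^\circ,\Dd)\iso\Fun(\{\pt\},\Dd)\simeq\Dd$, the first arrow being restriction to the wide subcategory $M^\circ\subseteq\Cc^\circ$ (which inherits the distinguished object $\pt$ because $\Cc^\circ$ is good) and the middle equivalence coming from the fact that $\Dd$ has $\Cc^\circ$-coSegal colimits. Since $\eta_{F^\circ}$ is a map of coSegal functors on $\Cc^\circ$ that is an equivalence at $\pt$, it is therefore an equivalence. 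For the counit, fix a coSegal $F$ on $\Cc$; then $\iota^*F$ is coSegal and $\iota^*$ is conservative on $\Fun_{\coSeg}(\Cc,\Dd)$ by \Cref{lem:Restriction_Along_Good_Inclusion_Conservative}, so it is enough to show $\iota^*\epsilon_F$ is an equivalence. But the triangle identity $\iota^*\epsilon_F\circ\eta_{\iota^*F}=\id$ exhibits $\iota^*\epsilon_F$ as a retraction of $\eta_{\iota^*F}$, which is an equivalence by the previous step. Hence both unit and counit are equivalences, and $\iota^*$ is an equivalence with inverse $\iota_!$.

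The delicate point, and the only place where the coSegal hypothesis really enters beyond \Cref{lem:LKE_Along_Good_Inclusion}, is this reduction to $\pt$: a good inclusion need not be fully faithful, so one cannot conclude that $\eta_{F^\circ}$ is an equivalence from the general principle that left Kan extension along a fully faithful functor is fully faithful. Instead one leverages the identification $\Fun_{\coSeg}(M^\circ,\Dd)\simeq\Dd$ to spread the equivalence at $\pt$ across all of $\Cc^\circ$. Everything else is bookkeeping with the left-Kan-extension adjunction and its triangle identities.
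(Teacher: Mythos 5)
Your proof is correct and follows essentially the same route as the paper: establish the adjunction $\iota_!\dashv\iota^*$ between coSegal functor categories via \Cref{lem:Restriction_Along_Good_Inclusion_Conservative} and \Cref{lem:LKE_Along_Good_Inclusion}, then conclude by conservativity of $\iota^*$ together with the fact that the unit is an equivalence (detected by evaluation at $\pt$). The only difference is that you spell out the counit step via the triangle identity explicitly, whereas the paper leaves it implicit in the general principle that a conservative right adjoint with invertible unit is an equivalence.
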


	\begin{proof}
		By \Cref{lem:Restriction_Along_Good_Inclusion_Conservative} the restriction functor $\iota^*\colon \Fun(\Cc,\Dd) \to \Fun(\Cc^{\circ},\Dd)$ restricts to a conservative functor $\Fun_{\coSeg}(\Cc,\Dd) \to \Fun_{\coSeg}(\Cc^{\circ},\Dd)$. By \Cref{lem:LKE_Along_Good_Inclusion}, this restriction admits a left adjoint
		\[
			\iota_! \colon \Fun_{\coSeg}(\Cc^{\circ},\Dd) \to \Fun_{\coSeg}(\Cc,\Dd),
		\]
		given by left Kan extension along $\iota$. Since $\iota^*$ is conservative when restricted to the coSegal functors, it remains to show that the unit $\id \to \iota^*\iota_!$ of this adjunction is an equivalence. This follows again from \Cref{lem:LKE_Along_Good_Inclusion} and the fact that evaluation at the point is conservative.
	\end{proof}

	\subsection{Good inclusions of span categories}
	\label{sec:Good_Inclusions_Of_Span_Categories}
	Our main interest in \Cref{prop:good_equivalence} is in the case where $\Cc$ is a parametrized span category. In this subsection, we show that various inclusions of parametrized span categories are good inclusions.

	\begin{convention}
		\label{conv:QL_QR_Q}
		Throughout this subsection, we fix local wide subcategories $\Qq_L,\Qq_R \subseteq \Qq \subseteq \Bb$ closed under diagonals. We do \emph{not} assume any of these to be locally inductible yet.

		We recall from Construction~\ref{constr:parametrized_spans} the definition of the parametrized span category $\ul{\Span}(\Qq,\Qq_L,\Qq_R)$ and its wide subcategories $(\ulbbU{\Qq}^L)\catop$ and $\ulbbU{\Qq}^R$. By applying \cite{HHLNa}*{Proposition~4.9} levelwise, it follows that these subcategories define a factorization system on $\ul{\Span}(\Qq,\Qq_L,\Qq_R)$: the left class consists of the left-pointing maps and the right class consists of the right-pointing maps:
		\begin{align*}
			A \twoheadrightarrow B &= (A \leftarrow B = B), \\
			A \rightarrowtail B &= (A = A \to B).
		\end{align*}
		We further take the distinguished object of the span category to always be the identity map $\pt := (1 \to 1) \in \core(\Qq_{/1}) \subseteq \ul{\Span}(\Qq,\Qq_L,\Qq_R)(1)$.
	\end{convention}

	As a first step, we show that the $\Cc$-coSegal colimits for $\Cc = \ul{\Span}(\Qq,\Qq_L,\Qq)$ can be expressed in terms of $\Qq$-colimits.

	\begin{proposition}
	\label{prop:Functor_CoSegal_Iff_QColimit_Preserving}
	\label{prop:coSegal_Colimits_Are_QColimits}
	Let $\Dd$ be $\Qq$-cocomplete. Then $\Dd$ has $\ul\Span(\Qq,\Qq_L,\Qq)$-coSegal colimits and a functor $F\colon\ul\Span(\Qq,\Qq_L,\Qq)\rightarrow \Dd$ is coSegal if and only if it preserves $\Qq$-colimits.
	\end{proposition}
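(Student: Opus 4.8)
The plan is to reduce everything to the universal property of $\ulbbU{\Qq}$ recorded in \Cref{prop:uni_prop_U_Q}. First I would pin down the factorization system: by \Cref{conv:QL_QR_Q} the right class $M$ of the factorization system on $\ul{\Span}(\Qq,\Qq_L,\Qq)$ consists of the right-pointing maps, i.e.\ $M=\ulbbU{\Qq}^R$ with $\Qq_R=\Qq$. Since $\Qq$ is closed under diagonals it is left-cancelable (\Cref{lem:CharacterizationClosedUnderDiagonals}), so the structure maps of any object of $\Qq_{/A}$ force every morphism of $\Qq_{/A}$ to have underlying map in $\Qq$; hence $\ulbbU{\Qq}^R=\ulbbU{\Qq}$ and $M=\ulbbU{\Qq}$. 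Moreover the distinguished object $\pt=(1\to 1)$ corresponds, under $\{\pt\}\hookrightarrow M=\ulbbU{\Qq}$, to the point $\pt\colon\ul{1}\to\ulbbU{\Qq}$ appearing in \Cref{prop:uni_prop_U_Q} (here $\{\pt\}\simeq\ul{1}$, as $\pt$ is distinguished, hence fully faithful). Thus, by definition, $F$ is coSegal exactly when $F|_{\ulbbU{\Qq}}$ is pointwise left Kan extended from $\{\pt\}$, and $\Dd$ admits $\ul{\Span}(\Qq,\Qq_L,\Qq)$-coSegal colimits exactly when the pointwise left Kan extension along $\{\pt\}\hookrightarrow\ulbbU{\Qq}$ of every $Y\colon\{\pt\}\to\Dd$ exists.

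Next I would compute the relevant comma categories. For $X=(q\colon A\to B)\in\ulbbU{\Qq}(B)=\Qq_{/B}$, unwinding the definition of the parametrized slice and using that each $\ulbbU{\Qq}(C)\subseteq\Bb_{/C}$ is a full subcategory with $\pt_C=\id_C$, one finds that the $\Bb_{/B}$-category $\{\pt\}_{/X}$ has value $\Map_{\ulbbU{\Qq}(C)}(\id_C,f^*X)\simeq\Map_{\Bb_{/B}}(C,A)$ at an object $f\colon C\to B$; in other words, $\{\pt\}_{/X}$ is the $\Bb_{/B}$-groupoid represented by $X$ viewed as an object of $\Bb_{/B}$. (Conceptually: $\ulbbU{\Qq}$ sits inside $\ul{\Spc}_{\Bb}$, and the comma category of the point over a $\Bb$-groupoid is that $\Bb$-groupoid itself.) Since $q\in\Qq$ and $\Dd$ is $\Qq$-cocomplete, $\pi_B^*\Dd$ admits colimits of this shape, so the required pointwise left Kan extensions exist; this already establishes that $\Dd$ has $\ul{\Span}(\Qq,\Qq_L,\Qq)$-coSegal colimits. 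Moreover, unwinding the colimit via the categorical Yoneda lemma (\Cref{prop:Yoneda}) identifies its value at $X$ with $q_!q^*$ applied to $Y$, and — since $\{\pt\}\hookrightarrow\ulbbU{\Qq}$ was identified with $\pt\colon\ul{1}\to\ulbbU{\Qq}$ — exhibits this pointwise left Kan extension as a left Kan extension along $\pt$.

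To conclude, I would invoke \Cref{prop:uni_prop_U_Q}: left Kan extension along $\pt$ is the inverse of the evaluation equivalence $\ulFun^{\Qcoprod}(\ulbbU{\Qq},\Dd)\iso\Dd$, so its essential image is precisely the subcategory of $\Qq$-cocontinuous functors. Together with the previous step — a pointwise left Kan extension from $\{\pt\}$ exists for every value, is in particular an ordinary left Kan extension, and therefore agrees (by uniqueness) with the one along $\pt$ — this shows that a functor $G\colon\ulbbU{\Qq}\to\Dd$ is pointwise left Kan extended from $\{\pt\}$ if and only if it is $\Qq$-cocontinuous. Applying this with $G=F|_{\ulbbU{\Qq}}$, and using \Cref{cor:test_colimit_pres_span} with $\Qq_R=\Qq$ (which says $F$ preserves $\Qq$-colimits if and only if $F|_{\ulbbU{\Qq}}$ does), I obtain that $F$ is coSegal iff $F|_{\ulbbU{\Qq}}$ is left Kan extended from $\{\pt\}$ iff $F|_{\ulbbU{\Qq}}$ is $\Qq$-cocontinuous iff $F$ preserves $\Qq$-colimits.

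The only genuinely non-formal ingredient is the comma-category computation in the second step — carefully unwinding the parametrized slice $\{\pt\}_{/X}$ and recognizing it as the representable $\Bb_{/B}$-groupoid on $q$. Everything else is a formal consequence of the universal property of $\ulbbU{\Qq}$ and of \Cref{cor:test_colimit_pres_span}.
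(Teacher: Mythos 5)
Your proof is correct and takes essentially the same route as the paper: both reduce to \Cref{prop:uni_prop_U_Q} together with \Cref{cor:test_colimit_pres_span}, after observing that left-cancelability of $\Qq$ forces $M=\ulbbU{\Qq}$. The explicit identification of $\{\pt\}_{/X}$ as a representable $\Bb_{/B}$-groupoid is a pleasant sanity check but not needed—\Cref{prop:uni_prop_U_Q} already hands you the (pointwise) left Kan extension along $\pt$ as a functor together with the characterization of its essential image as the $\Qq$-cocontinuous functors, which settles both halves of the statement directly and avoids the slightly awkward step of first verifying the pointwise colimits exist and then reconciling them with the Kan extension afterward.
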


	\begin{proof}
	By \Cref{prop:uni_prop_U_Q}, the left Kan extension of any $\ul1\to\Dd$ along $\{\pt\}\hookrightarrow\ulbbU{\Qq}$ exists, and a functor $\ulbbU{\Qq}\to\Dd$ arises this way if and only if it is $\Qq$-cocontinuous. The claim follows as by \Cref{cor:test_colimit_pres_span} a functor $F\colon \ul\Span(\Qq,\Qq_L,\Qq)\rightarrow \Dd$ preserves $\Qq$-colimits if and only if the composite $G\colon \ulbbU{\Qq}\rightarrow \ul\Span(\Qq,\Qq_L,\Qq)\rightarrow \Dd$ preserves $\Qq$-colimits.
	\end{proof}

	We now provide two examples of good inclusions of parametrized span categories.

	\begin{lemma}
	\label{lem:Extending_Covariant_Direction}
	The inclusion
	\[
	\iota\colon \ul{\Span}(\Qq_L) \hookrightarrow \ul{\Span}(\Qq, \Qq_L,\Qq)
	\]
	is good. In particular, for a $\Bb$-category $\Dd$ admitting $\Qq$-colimits, restriction along $\iota$ induces an equivalence of $\infty$-categories
	\[
	\iota^*\colon \Fun_{\coSeg}(\ul{\Span}(\Qq, \Qq_L,\Qq),\Dd) \iso \Fun_{\coSeg}(\ul{\Span}(\Qq_L),\Dd).
	\]
	\end{lemma}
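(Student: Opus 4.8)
The plan is to recognize $\ul\Span(\Qq_L)$ as a \emph{full} $\Bb$-subcategory of $\ul\Span(\Qq,\Qq_L,\Qq)$, verify directly that this inclusion is good, and then read off the displayed equivalence from \Cref{prop:good_equivalence} together with \Cref{prop:coSegal_Colimits_Are_QColimits}. For the fullness, I would fix $A\in\Bb$ and look at the functor $\iota_A\colon\Span((\Qq_L)_{/A})\to\Span(\Qq_{/A},\Qq_{/A}^L,\Qq_{/A})$. It is clearly the identity on objects onto the objects whose underlying map lies in $\Qq_L$; for fullness, consider such objects $X=(\tilde X\to A)$ and $Y=(\tilde Y\to A)$ and a morphism between them in the target, i.e.\ a span $X\xleftarrow{l}Z\xrightarrow{r}Y$ with $l$ left-pointing (so its underlying map $Z\to\tilde X$ lies in $\Qq_L$) and $r$ an arbitrary morphism of $\Qq_{/A}$. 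Closure of $\Qq_L$ under composition forces $Z\to\tilde X\to A$ to lie in $\Qq_L$, so $Z\in(\Qq_L)_{/A}$, and then left-cancelability of $\Qq_L$ (\Cref{lem:CharacterizationClosedUnderDiagonals}) forces the map $Z\to\tilde Y$ over $A$ into $\Qq_L$ as well; hence the whole span already lives in $(\Qq_L)_{/A}$. Since $\Qq_L$ is closed under base change, the $\iota_A$ are compatible with the pullback functoriality, so $\iota$ indeed exhibits $\ul\Span(\Qq_L)$ as a full $\Bb$-subcategory.

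Next I would check goodness with respect to the standard factorization system of \Cref{conv:QL_QR_Q}, whose class $E$ consists of the left-pointing maps and whose distinguished object is $\pt=\id_1$. By the Example following \Cref{def:Compatible_With_Factorization_System}, for a full subcategory it is enough to see that it is closed under targets of maps in $E$: if $X\twoheadrightarrow Y$ lies in $E(A)$ with $X\in\ul\Span(\Qq_L)(A)$, this unwinds to a map $\tilde Y\to\tilde X$ over $A$ with underlying map in $\Qq_L$, whence $\tilde Y\to A\in\Qq_L$ by closure under composition, i.e.\ $Y\in\ul\Span(\Qq_L)(A)$. So $\ul\Span(\Qq_L)$ is compatible with $(E,M)$; by \Cref{lemma:Restricted_Factorization_System} it inherits a factorization system, and being full it automatically contains every coSegal map $\pt_B\rightarrowtail X$ with target in it — so $\iota$ is a good inclusion. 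Finally, whenever $\Dd$ admits $\Qq$-colimits, \Cref{prop:coSegal_Colimits_Are_QColimits} supplies the $\ul\Span(\Qq,\Qq_L,\Qq)$-coSegal colimits needed to apply \Cref{prop:good_equivalence} to $\iota$, giving the equivalence $\iota^*\colon\Fun_{\coSeg}(\ul\Span(\Qq,\Qq_L,\Qq),\Dd)\iso\Fun_{\coSeg}(\ul\Span(\Qq_L),\Dd)$.

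I do not anticipate a genuine obstacle here: the argument is entirely bookkeeping with spans, and the only place requiring care is keeping track of the directions of the two legs in the fullness check, where closure of $\Qq_L$ under composition and left-cancellation do all the work. The one additional point worth confirming is that the factorization system inherited via \Cref{lemma:Restricted_Factorization_System} coincides with the intrinsic one on $\ul\Span(\Qq_L)=\ul\Span(\Qq_L,\Qq_L,\Qq_L)$ coming from \Cref{conv:QL_QR_Q}, so that $\Fun_{\coSeg}(\ul\Span(\Qq_L),\Dd)$ is unambiguous; this is immediate from fullness, since the left- and right-pointing maps of $\ul\Span(\Qq_L)$ are exactly the intersections with it of those of $\ul\Span(\Qq,\Qq_L,\Qq)$ (again using left-cancelability to see that the remaining leg is automatically in $\Qq_L$).
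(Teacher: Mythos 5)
Your proof is correct and takes essentially the same approach as the paper's one-paragraph argument: note fullness of the inclusion via left-cancelability of $\Qq_L$, verify compatibility with the factorization system (where fullness makes condition~(2) and containment of the coSegal maps automatic), and invoke \Cref{prop:good_equivalence} together with \Cref{prop:coSegal_Colimits_Are_QColimits}. The only difference is that you unpack the span-level bookkeeping that the paper leaves implicit.
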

	\begin{proof}
	Note that $\ul{\Span}(\Qq_L)$ is a \textit{full} subcategory of $\ul\Span(\Qq,\Qq_L,\Qq)$ by left-cancel\-ability, and obviously contains $\pt$. It thus remains to show that $\ul{\Span}(\Qq_L)$ is compatible with the factorization system. Condition (1) follows from left-cancelability of $\Qq_L$, while Condition (2) is automatic. The final claim follows immediately from \Cref{prop:good_equivalence} and \Cref{prop:coSegal_Colimits_Are_QColimits}.
	\end{proof}

	\begin{notation}
	Given any collection of maps $\Qq \subseteq \Bb$, we write $\Delta(\Qq)$ for the collection of all maps in $\Bb$ of the form $\Delta_q\colon A\to A\times_BA$ for morphisms $q\colon A \to B$ in $\Qq$.
	\end{notation}

	\begin{lemma}\label{lem:Segal_covariant}
	Assume that $\Delta(\Qq) \subseteq \Qq_R$. Then the inclusion
	\[
	\iota \colon \ul{\Span}(\Qq, \Qq_L,\Qq_R) \hookrightarrow \ul{\Span}(\Qq, \Qq_L,\Qq)
	\]
	is good. In particular, for any $\Bb$-category $\Dd$ admitting $\Qq$-colimits, restriction along $\iota$ induces an equivalence of $\infty$-categories
	\[
	\Fun_{\coSeg}(\ul{\Span}(\Qq, \Qq_L,\Qq),\Dd) \iso \Fun_{\coSeg}(\ul{\Span}(\Qq,\Qq_L,\Qq_R),\Dd).
	\]
	\end{lemma}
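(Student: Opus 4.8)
The plan is to check directly that $\iota$ is a good inclusion in the sense of the definition of good subcategory above: namely, that $\ul\Span(\Qq,\Qq_L,\Qq_R)$ is a wide $\Bb$-subcategory of $\ul\Span(\Qq,\Qq_L,\Qq)$ containing the distinguished object $\pt=(1\to1)$, is compatible with the factorization system $(E,M)$ on $\ul\Span(\Qq,\Qq_L,\Qq)$ from \Cref{conv:QL_QR_Q}, and contains all coSegal maps. Wideness and the fact that $\pt$ lies in $\ul\Span(\Qq,\Qq_L,\Qq_R)(1)$ are immediate, using that $\Qq_R\subseteq\Qq$ is a wide subcategory (so $\id_1\in\Qq_{/1}$ has its underlying map in $\Qq_R$). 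Given this, the ``in particular'' statement follows from \Cref{prop:good_equivalence}, whose hypothesis that $\Dd$ admits $\ul\Span(\Qq,\Qq_L,\Qq)$-coSegal colimits is provided by \Cref{prop:coSegal_Colimits_Are_QColimits} for any $\Qq$-cocomplete $\Dd$, i.e.\ any $\Dd$ admitting $\Qq$-colimits.

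For compatibility, which is checked levelwise, I would fix $A\in\Bb$ and make the following observation: whether a span $(X\xleftarrow{\ell}C\xrightarrow{r}Y)$ in $\ul\Span(\Qq,\Qq_L,\Qq)(A)$ lies in $\ul\Span(\Qq,\Qq_L,\Qq_R)(A)$ depends only on its forwards leg $r$ -- one needs $r\in\Qq_R$ -- since its backwards leg $\ell$ automatically lies in $\Qq_L$ and $\Qq_L$ is closed under composition. Condition (1) of compatibility is then clear: a left-pointing map $X\twoheadrightarrow Y$ has identity forwards leg, which lies in $\Qq_R$ as $\Qq_R$ is wide. Condition (2) is also clear: the composite $fe$ of $f\colon X'\to Y$ with a left-pointing map $e\colon X\twoheadrightarrow X'$ is computed by a pullback along an identity, hence has the same forwards leg as $f$, so $f$ lies in $\ul\Span(\Qq,\Qq_L,\Qq_R)(A)$ if and only if $fe$ does. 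None of this uses the hypothesis on $\Delta(\Qq)$; as in the proof of \Cref{lem:Extending_Covariant_Direction}, this part is routine.

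The one point requiring thought -- and the only place where $\Delta(\Qq)\subseteq\Qq_R$ enters -- is that $\ul\Span(\Qq,\Qq_L,\Qq_R)$ must contain the coSegal maps. Unwinding the definitions, a coSegal map $\pt_B\rightarrowtail X$, for an object $X=(p\colon X\to B)$ of $\Qq_{/B}$ and $\pt_B=(\id_B)$, is the forwards span $(\pt_B\xleftarrow{=}\pt_B\xrightarrow{s}X)$ determined by a section $s$ of $p$, and it lies in $\ul\Span(\Qq,\Qq_L,\Qq_R)(B)$ exactly when $s\in\Qq_R$. I would then use that every section $s$ of $p$ is a base change of the diagonal $\Delta_p\colon X\to X\times_B X$: the square with top row $s\colon B\to X$, left column $s\colon B\to X$, right column $\Delta_p$, and bottom row $(\id_X,sp)\colon X\to X\times_B X$ commutes and is cartesian (a one-line check: the pullback of $\Delta_p$ along $(\id_X,sp)$ is the set of $u\in X$ with $u=spu$, which is the image of $s$, isomorphic to $B$ via $s$). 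Since $p\in\Qq$ gives $\Delta_p\in\Delta(\Qq)\subseteq\Qq_R$ and $\Qq_R$ is locally inductible, hence closed under base change, we conclude $s\in\Qq_R$. This completes the verification that $\iota$ is good, and the displayed equivalence of functor categories is then exactly \Cref{prop:good_equivalence} combined with \Cref{prop:coSegal_Colimits_Are_QColimits}.
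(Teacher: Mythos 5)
Your proof is correct and takes essentially the same approach as the paper's. The only cosmetic difference is in the final step showing that coSegal maps lie in $\ul\Span(\Qq,\Qq_L,\Qq_R)$: you write down a single cartesian square exhibiting the section $s$ as a base change of $\Delta_p$ along $(\id_X,sp)$, whereas the paper draws a $3\times 3$ grid of pullbacks whose upper-left square is the same square you identify (just read in the other direction, exhibiting $s$ as a base change of $\Delta_p$ along the bottom map rather than the right map). Both use the same observation that a section of $p$ is a base change of the diagonal of $p$, which is where $\Delta(\Qq)\subseteq\Qq_R$ enters, and both conclude via \Cref{prop:good_equivalence} and \Cref{prop:coSegal_Colimits_Are_QColimits}.
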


	\begin{proof}
	Note that $\ul{\Span}(\Qq, \Qq_L,\Qq_R)$ contains all left-pointing maps in $\ul{\Span}(\Qq, \Qq_L,\Qq)$, and thus condition (1) of \Cref{def:Compatible_With_Factorization_System} is immediate. Condition (2) follows from a simple calculation of the composition in the relevant span categories. This shows that $\ul{\Span}(\Qq, \Qq_L,\Qq_R)$ is compatible with the factorization system on $\ul{\Span}(\Qq, \Qq_L,\Qq)$. To see it is even a good subcategory, it remains to show that it contains the coSegal maps $s\colon \pt_B \rightarrowtail A$ for all $A \in \Qq_{/B}$, which boils down to showing that for every span
	\[\begin{tikzcd}
		B & B & A \\
		& B
		\arrow["s", from=1-2, to=1-3]
		\arrow["f", from=1-3, to=2-2]
		\arrow[Rightarrow, no head, from=1-2, to=2-2]
		\arrow[Rightarrow, no head, from=1-1, to=1-2]
		\arrow[Rightarrow, no head, from=1-1, to=2-2]
	\end{tikzcd}
	\]
	in $\Qq_{/B}$ the morphism $s$ is contained in $\Qq_R$. To this end, consider the following diagram in $\Qq$
	\[\begin{tikzcd}
		B & A & B \\
		A & {A\times_BA} & A \\
		& A & B
		\arrow[from=2-2, to=2-3]
		\arrow["f", from=2-3, to=3-3]
		\arrow["s", from=1-3, to=2-3]
		\arrow[from=1-2, to=2-2]
		\arrow["\Delta", from=2-1, to=2-2]
		\arrow["f"', from=3-2, to=3-3]
		\arrow[from=2-2, to=3-2]
		\arrow["s", from=1-1, to=1-2]
		\arrow["f", from=1-2, to=1-3]
		\arrow[pullback, draw=none, xshift=-3.5pt, from=2-2, to=3-3]
		\arrow[pullback, from=1-2, to=2-3]
		\arrow["s"', from=1-1, to=2-1]
		\arrow[pullback, draw=none, from=1-1, to=2-2]
	\end{tikzcd}\]
	in which all squares are pullbacks. Since the diagonal map $\Delta\colon A \to A \times_B A$ lies in $\Qq_R$ by assumption, also its base change $s\colon B \rightarrow A$ lies in $\Qq_R$ as desired. We conclude that $\ul{\Span}(\Qq, \Qq_L,\Qq_R)$ is a good subcategory. The final claim follows immediately from \Cref{prop:good_equivalence} and \Cref{prop:coSegal_Colimits_Are_QColimits}.
	\end{proof}

	Using this, we can give a more concrete description of coSegal functors out of $\ul\Span(\Qq,\Qq_L,\Qq_R)$ when $\Delta(\Qq)\subseteq\Qq_R$, generalizing Proposition~\ref{prop:Functor_CoSegal_Iff_QColimit_Preserving}.

	\begin{construction}[CoSegal map]
		\label{cons:CoSegal_Map}
			Let $\Dd$ be a $\Bb$-category admitting $\Qq$-colimits, and let $F\colon \ulbbU{\Qq}^R \to \Dd$ be a $\Bb$-functor. Assume moreover that $\Delta(\Qq)\subseteq\Qq_R$. We will construct for every morphism $q\colon A \to B$ in $\Qq$ a \textit{coSegal map}
			\[
			\mathrm{coSegal}\colon q_!F(\id_A) \to F(q).
			\]
			For this, let $F'\colon \ulbbU{\Qq} \to \Dd$ be the left Kan extension of $F(\id_1) \colon \ul{1} \to \Dd$. By \Cref{lem:Segal_covariant} (for $\Qq_L=\iota\Qq$), the restriction of $F'$ to $\ulbbU{\Qq}^R$ is still left Kan extended along $\{\id_1\} \hookrightarrow \ulbbU{\Qq}^R$, and hence there exists a unique natural transformation
			\[
			\mathrm{coSegal}\colon F'\vert_{\ulbbU{\Qq}^R} \to F
			\]
			which results in the identity of $F(\id_1)$ when evaluated on $\id_1$. Since $F'$ preserves $\Qq$-colimits by \Cref{prop:uni_prop_U_Q}, we have $F'(q) \simeq q_!F'(\id_A) = q_!F(\id_A)$ for every morphism $q\colon A \to B$ in $\Qq$, resulting in the desired map
			\[
			q_!F(\id_A) \simeq F'(q) \xrightarrow{\mathrm{coSegal}} F(q).
			\]
		\end{construction}

		\begin{construction}
			We continue to assume that $\Delta(\Qq)\subseteq\Qq_R$. Consider any $\Bb$-functor $F\colon \ul{\Span}(\Qq,\Qq_L,\Qq_R) \to \Dd$. For every morphism $q\colon A \to B$ in $\Qq$, we may apply \Cref{cons:CoSegal_Map} to the restriction of $F$ to $\ulbbU{\Qq}^R$ to obtain a coSegal map
			\[
			\mathrm{coSegal}\colon q_!F(\id_A) \to F(q).
			\]
			Unwinding definitions, we see that $F$ is a coSegal functor if and only if the coSegal map is an equivalence for every $q \in \Qq$.
		\end{construction}

	\subsection{CoSegal functors as Segal functors}
	\label{sec:Cosegal_Vs_Segal}
	While \Cref{lem:Segal_covariant} allows us to extend the \textit{covariant} functoriality of a functor out of a parametrized span category, we will also need an analogous result which lets us extend the \textit{contravariant} functoriality, at least under suitable semiadditivity assumptions on $\Dd$. We will accomplish this by showing that in this case a functor out of a parametrized span category is coSegal if and only if it is \textit{Segal}, defined dually:

	\begin{definition}[Segal functor]\label{defi:segal-functor}
	A functor $F\colon \ul{\Span}(\Qq,\Qq_L,\Qq_R) \to \Dd$ is called \textit{Segal} if the composite
	\[
	\ul{\Span}(\Qq,\Qq_R,\Qq_L) \simeq \ul{\Span}(\Qq,\Qq_L,\Qq_R)\catop \xrightarrow{F\catop} \Dd\catop
	\]
	is coSegal. We denote by
	\[
	\Fun_{\Seg}(\ul{\Span}(\Qq,\Qq_L,\Qq_R),\Dd) \subseteq \Fun_{\Bb}(\ul{\Span}(\Qq,\Qq_L,\Qq_R),\Dd)
	\]
	the full subcategory spanned by the Segal functors.
	\end{definition}

	The following is the main result of this subsection:

	\begin{proposition}\label{prop:Segal_Equals_CoSegal}
	Assume $\Qq$ is locally inductible such that every map in $\Qq$ is truncated, and let $\Dd$ be a $\Qq$-semiadditive $\Bb$-category. If $\Delta(\Qq) \subseteq \Qq_L \cap \Qq_R$, then  a $\Bb$-functor $F\colon\ul\Span(\Qq,\Qq_L,\Qq_R)\to\Dd$ is Segal if and only if it is coSegal.
	\end{proposition}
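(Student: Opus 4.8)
The plan is to reduce the statement to a single identity relating the coSegal and Segal maps of $F$ through the norm of $\Dd$. Since $\Delta(\Qq)\subseteq\Qq_R$, the construction following \Cref{cons:CoSegal_Map} equips $F$ with a coSegal map $\mathrm{coSegal}_q\colon q_!F(\id_A)\to F(q)$ for every $q\colon A\to B$ in $\Qq$, and $F$ is coSegal precisely when all of these are equivalences. Dually, since $\Delta(\Qq)\subseteq\Qq_L$ and $\Dd$ is $\Qq$-complete by \Cref{cor:Semiadditive_Cats_Have_QLimits}, the dual construction produces a Segal map $\mathrm{Segal}_q\colon F(q)\to q_*F(\id_A)$ for every such $q$, and $F$ is Segal precisely when all of these are equivalences. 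Both families of maps are compatible with base change (for $q_*$ this is \Cref{cor:norm-vs-restrictions}, and the remaining ingredients are $\Bb$-natural, cf.\ \Cref{rk:norm-res-pb}), and a morphism in $\Dd(B)$ is an equivalence as soon as its restrictions along some cover of $B$ are; since the maps in $\Qq$ are locally truncated, it therefore suffices to prove the equivalence of ``$F$ is coSegal'' and ``$F$ is Segal'' after restricting attention to \emph{truncated} $q\in\Qq$.

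Thus everything comes down to the following identity: for every truncated $q\colon A\to B$ in $\Qq$, the composite
\[
q_!F(\id_A)\xrightarrow{\ \mathrm{coSegal}_q\ }F(q)\xrightarrow{\ \mathrm{Segal}_q\ }q_*F(\id_A)
\]
agrees with the norm map $\Nm_q\colon q_!\to q_*$ of $\Dd$ evaluated at $F(\id_A)$. Granting this, the proposition follows at once: $\Nm_q$ is an equivalence by $\Qq$-semiadditivity of $\Dd$, so $\mathrm{coSegal}_q$ is an equivalence if and only if $\mathrm{Segal}_q$ is; combined with the reduction of the previous paragraph this shows that $F$ is coSegal if and only if it is Segal.

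Establishing this identity is the main obstacle. Both sides are defined by induction on the truncation level of $q$: the norm is built from the factorization
\[
\Nm_q\colon q_!\simeq q_!(\pr_1)_*\Delta_*\xrightarrow{\Nm_\Delta^{-1}}q_!(\pr_1)_*\Delta_!\xrightarrow{\BC_{!,*}}q_*(\pr_2)_!\Delta_!\simeq q_*
\]
attached to the pullback square relating $q$ to $\pr_1,\pr_2\colon A\times_BA\to A$ and the diagonal $\Delta=\Delta_q$ (as in the proof of \Cref{prop:Characterization_Q_Semiadditivity}), while $\mathrm{coSegal}_q$ and $\mathrm{Segal}_q$ are built from the iterated left, resp.\ right, Kan extensions underlying \Cref{cons:CoSegal_Map} and its dual, whose inductive step is again governed by $\Delta$; note that $\Delta\in\Qq_L\cap\Qq_R$ by hypothesis and is one truncation level lower, so the inductive hypothesis applies to it. The plan is to run these two inductions in parallel: using the explicit units and counits of the adjunctions $\Span(q_!)\dashv\Span(q^*)$ from the proof of \Cref{prop:Span_Complete_And_Cocomplete} one rewrites $\mathrm{Segal}_q\circ\mathrm{coSegal}_q$ as $q_!(\pr_1)_*$ applied to $\mathrm{Segal}_\Delta\circ\mathrm{coSegal}_\Delta$, sandwiched between the canonical Beck--Chevalley maps, and then feeds in the inductive hypothesis together with the compatibility of the norm with Beck--Chevalley maps and restrictions (\Cref{prop:adj-norm-vs-B-functor}, \Cref{cor:norm-vs-restrictions}) to recognise the outcome as the displayed factorization of $\Nm_q$. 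The base case $n=-2$ is immediate, as then $q$ is an equivalence and $\mathrm{coSegal}_q$, $\mathrm{Segal}_q$ and $\Nm_q$ are all the evident coherence isomorphisms; a convenient way to organize the bookkeeping in the inductive step is to first check the case $F=\id_{\ul\Span(\Qq)}$, where all three maps are literally identities by the Remark following \Cref{prop:Span_Complete_And_Cocomplete}.
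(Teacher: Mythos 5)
The crux of your proposal is the claim that, for an arbitrary $\Bb$-functor $F\colon\ul\Span(\Qq,\Qq_L,\Qq_R)\to\Dd$ and every truncated $q\colon A\to B$, the composite $q_!F(\id_A)\xrightarrow{\mathrm{coSegal}_q}F(q)\xrightarrow{\mathrm{Segal}_q}q_*F(\id_A)$ equals $\Nm_q$. This is \emph{false} without assuming that $F$ is coSegal. For a counterexample, take $\Bb=\Spc$, $\Qq=\Fin_{\loc}$, $\Dd=\ul\Shv(\Spc;\Ab)$ and let $F$ be the $\Bb$-functor corresponding to the \emph{constant} functor $\Span(\Fin)\to\Ab$ with value a nonzero group $G$. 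For $q\colon 2\to 1$, one computes from \Cref{lem:Description_CoSegal_Map} that $F(\Delta_q)$ and $F(\nabla_q)$ are both the identity of $(G,G)$, so $\mathrm{coSegal}_q$ is the fold map $G\oplus G\to G$ and $\mathrm{Segal}_q$ is the diagonal $G\to G\times G$; their composite sends $(a,b)\mapsto(a+b,a+b)$, which is not the norm (the identity of $G^{2}$). This defeats your base-for-induction identity outright, so the inductive scheme cannot succeed as written.

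The paper's version of this compatibility (\Cref{prop:Compatibility_CoSegal_And_Norm}) \emph{does} assume $F$ is coSegal, and this hypothesis is genuinely used: the key diagram in its proof passes through $\mathrm{coSegal}^{-1}$ along $\pr_1\colon A\times_BA\to A$, which has the same truncation level as $q$, so its invertibility is not supplied by any induction hypothesis — it is exactly the coSegal condition on $F$. With that restricted statement in hand, the proposition follows precisely as you outline at the end of your second paragraph (a composite being an equivalence makes one factor invertible iff the other is), provided one also invokes the self-duality $\ul\Span(\Qq,\Qq_L,\Qq_R)^\op\simeq\ul\Span(\Qq,\Qq_R,\Qq_L)$ to deduce "Segal $\Rightarrow$ coSegal" from "coSegal $\Rightarrow$ Segal". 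So: either add the coSegal hypothesis to your key identity and then close the argument by this symmetry (as the paper does), or find an argument for the unrestricted identity — but the counterexample above shows the latter is hopeless. A secondary, less serious issue is that your inductive step is only sketched ("sandwiched between the canonical Beck--Chevalley maps"); the paper instead proves the compatibility in one diagram, deferring all inductive content to \Cref{prop:adj-norm-vs-B-functor}, which it applies to the diagonal. That structure is cleaner and worth adopting once the hypothesis is corrected.
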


	Before discussing the proposition, let us record the main corollary:

	\begin{corollary}
		\label{cor:Extending_Contravariant_Direction}
		Assume $\Qq$ is locally inductible and truncated, let $\Dd$ be a $\Qq$-semiadditive $\Bb$-category, and assume that $\Delta(\Qq) \subseteq \Qq_L$. Then restricting along the inclusion $\ul{\Span}(\Qq, \Qq_L,\Qq)\hookrightarrow \ul{\Span}(\Qq)$ defines an equivalence of categories
		\[
		\Fun_{\coSeg}(\ul{\Span}(\Qq),\Dd) \iso \Fun_{\coSeg}(\ul{\Span}(\Qq,\Qq_L,\Qq),\Dd).
		\]
	\end{corollary}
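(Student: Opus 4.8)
The plan is to deduce this formally from \Cref{prop:Segal_Equals_CoSegal} and \Cref{lem:Segal_covariant} by passing to opposite $\Bb$-categories. First I would check that \Cref{prop:Segal_Equals_CoSegal} applies on both sides of the desired equivalence: for the inclusion target $\ul{\Span}(\Qq)=\ul{\Span}(\Qq,\Qq,\Qq)$ its hypothesis reads $\Delta(\Qq)\subseteq\Qq\cap\Qq$, which holds because $\Qq$ is locally inductible, and for $\ul{\Span}(\Qq,\Qq_L,\Qq)$ it reads $\Delta(\Qq)\subseteq\Qq_L\cap\Qq=\Qq_L$, which is precisely the hypothesis of the corollary. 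Hence in each of the two parametrized functor categories the coSegal functors into $\Dd$ form the same full subcategory as the Segal functors, so it suffices to prove that restriction along $\ul{\Span}(\Qq,\Qq_L,\Qq)\hookrightarrow\ul{\Span}(\Qq)$ induces an equivalence
\[
\Fun_{\Seg}(\ul{\Span}(\Qq),\Dd)\iso\Fun_{\Seg}(\ul{\Span}(\Qq,\Qq_L,\Qq),\Dd).
\]

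Next I would unwind \Cref{defi:segal-functor}: using the self-duality $\ul{\Span}(\Qq,\Qq_L,\Qq_R)\catop\simeq\ul{\Span}(\Qq,\Qq_R,\Qq_L)$ together with the standard equivalence $(-)\catop\colon\Fun_\Bb(\Cc,\Ee)\iso\Fun_\Bb(\Cc\catop,\Ee\catop)\catop$ — which identifies Segal functors into $\Dd$ with coSegal functors into $\Dd\catop$ and sends restriction along a subcategory inclusion to restriction along the opposite inclusion — the displayed claim becomes the assertion that restriction along
\[
\ul{\Span}(\Qq,\Qq,\Qq_L)=\ul{\Span}(\Qq,\Qq_L,\Qq)\catop\hookrightarrow\ul{\Span}(\Qq)\catop=\ul{\Span}(\Qq)
\]
induces an equivalence on coSegal functors into $\Dd\catop$. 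Now $\Dd\catop$ is again $\Qq$-semiadditive by \Cref{prop:Characterization_Q_Semiadditivity}, in particular $\Qq$-cocomplete; and the inclusion just displayed is exactly the one occurring in \Cref{lem:Segal_covariant} with the left label taken to be $\Qq$ and the right label taken to be $\Qq_L$, whose hypothesis $\Delta(\Qq)\subseteq\Qq_L$ is our assumption. Thus \Cref{lem:Segal_covariant} supplies precisely the required equivalence, and unwinding the identifications finishes the argument.

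No step here is genuinely hard: essentially all of the content has been absorbed into \Cref{prop:Segal_Equals_CoSegal} and the good-inclusion result \Cref{lem:Segal_covariant}. The only point that warrants a little care is the bookkeeping of the dualities — one must confirm that the identification ``coSegal $=$ Segal'' of \Cref{prop:Segal_Equals_CoSegal} is compatible with restriction (which is immediate, as it is an equality of full subcategories) and that passing to opposite $\Bb$-categories genuinely turns the restriction functor in the statement into the one handled by \Cref{lem:Segal_covariant}, rather than some transpose of it.
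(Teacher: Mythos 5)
Your proof is correct and follows exactly the same route as the paper: convert coSegal to Segal via \Cref{prop:Segal_Equals_CoSegal}, then unwind \Cref{defi:segal-functor} by passing to opposites so that the claim becomes the statement of \Cref{lem:Segal_covariant} applied to $\ul{\Span}(\Qq,\Qq,\Qq_L)\hookrightarrow\ul{\Span}(\Qq)$ with target $\Dd\catop$. The only difference is expository: you spell out the hypothesis-checking (that \Cref{prop:Segal_Equals_CoSegal} applies on both sides and that $\Dd\catop$ admits $\Qq$-colimits) and the duality bookkeeping in more detail than the paper, which simply says ``unwinding definitions.''
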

	\begin{proof}
		By \Cref{prop:Segal_Equals_CoSegal}, we may equivalently show the claim for Segal functors. Unwinding definitions, this reduces to the claim that restriction along the inclusion $\ul{\Span}(\Qq, \Qq,\Qq_L)\hookrightarrow \ul{\Span}(\Qq)$ defines an equivalence
		\[
		\Fun_{\coSeg}(\ul{\Span}(\Qq),\Dd^\op) \iso \Fun_{\coSeg}(\ul{\Span}(\Qq,\Qq,\Qq_L),\Dd^\op).
		\]
		This is a special case of \Cref{lem:Segal_covariant}.
	\end{proof}

	The proposition should not be too surprising: as explained above, the coSegal condition amounts to demanding that for every $q\in\Qq$ a certain map $q_!F(\pt)\to F(q)$ is an equivalence, while the Segal condition amounts to saying that the dually defined map $F(q)\to q_*F(\pt)$ is an equivalence. The equivalence $\Nm_q\colon q_!F(\pt)\simeq q_*F(\pt)$ coming from $\Qq$-semiadditivity of $\Dd$ thus strongly suggests that these two conditions are equivalent. While this is true, relating the above two maps (defined basically in terms of maps in $\ul\Span(\Qq)$) to the norm map $\Nm_q$ of $\Dd$ turns out to be somewhat subtle and will take up the remainder of this subsection.

	We begin by describing the coSegal map explicitly in the above situation:

	\begin{lemma}
	\label{lem:Description_CoSegal_Map}
		Let $q\colon A \to B$ be a morphism in $\Qq$ and assume $\Delta(\Qq)\subseteq\Qq_R$. Then the coSegal map $q_!F(\id_A) \to F(q)$ is adjoint to
		\[
		F(\id_A) \xrightarrow{F(\Delta_q)} F(q^*(q)) = q^*F(q).
		\]
		Here the map $\Delta_q\colon \id_A \to q^*(q) = (\pr_1\colon A \times_B A \to A)$ is the morphism in the slice $\Qq_{/A}^R$ corresponding to the diagonal map $\Delta_q\colon A \to A \times_B A$.
	\end{lemma}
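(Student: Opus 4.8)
The plan is to trace through \Cref{cons:CoSegal_Map} and recognize the coSegal map, after passing to the adjoint under $q_!\dashv q^*$ in $\Dd$, as $F$ applied to a unit map that computes to $\Delta_q$. Recall that $F'\colon\ulbbU{\Qq}\to\Dd$ denotes the left Kan extension of $F(\id_1)\colon\ul{1}\to\Dd$ along $\{\id_1\}\hookrightarrow\ulbbU{\Qq}$, which is $\Qq$-cocontinuous by \Cref{prop:uni_prop_U_Q}, and that $\mathrm{coSegal}\colon F'|_{\ulbbU{\Qq}^R}\to F$ is the unique natural transformation restricting to the identity at $\id_1$. First I would record two consequences of $\Bb$-naturality. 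Since $F$ and $F'$ are $\Bb$-functors, restriction along $A\to 1$ yields canonical equivalences $F(\id_A)\simeq A^*F(\id_1)\simeq F'(\id_A)$, and likewise $q^*F'(q)\simeq F'(q^*(q))$ and $q^*F(q)\simeq F(q^*(q))$. And since $\mathrm{coSegal}$ is $\Bb$-natural and the identity at $\id_1$, under these identifications the component $\mathrm{coSegal}_{\id_A}$ is the identity of $A^*F(\id_1)$, while $\mathrm{coSegal}$ at the object $q^*(q)\in\ulbbU{\Qq}^R(A)$ is identified with $q^*\mathrm{coSegal}_q$.

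Since $q=q_!(\id_A)$ in $\ulbbU{\Qq}(B)$ and $F'$ is $\Qq$-cocontinuous, we have $F'(q)\simeq q_!F'(\id_A)$, and by construction the coSegal map is the composite $q_!F(\id_A)\simeq q_!F'(\id_A)\xrightarrow{\BC_!}F'(q)\xrightarrow{\mathrm{coSegal}_q}F(q)$. Its adjoint under $q_!\dashv q^*$ in $\Dd$ is therefore
\[
F(\id_A)\xrightarrow{\ \eta\ }q^*q_!F(\id_A)\xrightarrow{q^*\BC_!}q^*F'(q)\xrightarrow{q^*\mathrm{coSegal}_q}q^*F(q),
\]
with $\eta$ the unit of $q_!\dashv q^*$ in $\Dd$. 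The first two arrows involve only $F'$: by a standard mate argument (cf.\ \cite{CSY20}*{Lemma~2.2.3}) expressing the compatibility of a left-adjoint-preserving functor with the units of source and target, applied to the $\Qq$-cocontinuous $\Bb$-functor $F'$, the composite $F'(\id_A)\xrightarrow{\eta}q^*q_!F'(\id_A)\xrightarrow{q^*\BC_!}q^*F'(q)\simeq F'(q^*(q))$ equals $F'$ applied to the unit $\id_A\to q^*q_!(\id_A)=q^*(q)$ of the adjunction $q_!\dashv q^*$ inside $\ulbbU{\Qq}$. Computing this unit directly in $\Qq_{/A}$, where $q_!$ is postcomposition with $q$ and $q^*$ is pullback along $q$, identifies it with the diagonal $\Delta_q\colon\id_A\to q^*(q)$ of the statement.

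Combining these, the adjoint of the coSegal map is, under $F(\id_A)\simeq F'(\id_A)$, the composite $\mathrm{coSegal}_{q^*(q)}\circ F'(\Delta_q)\colon F(\id_A)\to F(q^*(q))=q^*F(q)$. Since $\Delta(\Qq)\subseteq\Qq_R$, the map $\Delta_q$ is a genuine morphism of $\ulbbU{\Qq}^R(A)=\Qq_{/A}^R$, so naturality of $\mathrm{coSegal}$ along $\Delta_q$ gives $\mathrm{coSegal}_{q^*(q)}\circ F'(\Delta_q)=F(\Delta_q)\circ\mathrm{coSegal}_{\id_A}$, and since $\mathrm{coSegal}_{\id_A}$ is the identity under the above identifications this composite is just $F(\Delta_q)$, as claimed. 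The real work is the bookkeeping of the two mate arguments and the mutual compatibility of the various canonical identifications of $F(\id_A)$, $F'(\id_A)$, $A^*F(\id_1)$ and of $q^*F'(q)$ with $F'(q^*(q))$; I expect the identification of the $\ulbbU{\Qq}$-unit with $\Delta_q$ and its interaction with the $\Qq$-cocontinuity of $F'$ to be the most delicate point, although it is ultimately formal.
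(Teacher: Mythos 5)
Your proposal is correct and takes essentially the same route as the paper's proof: both reduce to (i) the identification of the unit of $q_!\dashv q^*$ in $\ulbbU{\Qq}$ at $\id_A$ with the diagonal $\Delta_q$, (ii) the $\Qq$-cocontinuity of $F'$ entering via a mate/Beck--Chevalley argument, and (iii) naturality of the comparison transformation $\mathrm{coSegal}$ along the arrow $\Delta_q$ (which lies in $\Qq_R$). The only cosmetic difference is that you phrase the adjunction via the unit in $\Dd$ and string the steps sequentially, whereas the paper packages the same ingredients on the counit side in a single commutative diagram.
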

	\begin{proof}
		Consider the following diagram:
		\[
		\begin{tikzcd}
			F'(q) \ar[bend right=50, equal]{ddd} \dar{F'(q_!\Delta_q)}&\arrow[l,"\sim"'] q_!F'(\id_A) \rar["\mathrm{coSegal}", "\sim"'] \dar{q_!F'(\Delta_q)} &[1.5em] q_!F(\id_A) \dar{q_!F(\Delta_q)} \\
			F'(q_!q^*(q)) \dar[equal] &\arrow[l,"\sim"'] q_!F'(q^*(q)) \rar{\mathrm{coSegal}} \dar[equal] & q_!F(q^*(q)) \dar[equal] \\
			F'(q_!q^*(q)) \dar{F'(\pr_1)} &\arrow[l,"\sim"'] q_!q^*F'(q) \rar{\mathrm{coSegal}} \dar{\epsilon_q} & q_!q^*F(q) \dar{\epsilon_q} \\
			F'(q) \rar[equal] & F'(q) \rar{\mathrm{coSegal}} & F(q).
		\end{tikzcd}
		\]
		The right half of the diagram commutes by naturality of the coSegal map $F' \to F$. In the left half of the diagram we use that $F'\colon\ulbbU{\Qq}\to\Dd$ preserves $\Qq$-colimits; the bottom left square commutes because the counit $\epsilon_q\colon q_!q^*(q) \to q$ in $\ulbbU{\Qq}$ is given by the projection map $\pr_1\colon A \times_B A \to A$. The left vertical composite is the identity as $\Delta_q\colon A\to A \times_B A$ is a section of $\pr_1$. As the top of the diagram is the canonical identification $F'(q) = q_!(\id_A)$, the claim follows.
	\end{proof}

	\begin{corollary}\label{cor:BC-in-terms-of-span}
		Assume again that $\Delta(\Qq)\subseteq\Qq_R$. Let $\Dd$ be a $\Bb$-category admitting $\Qq$-colimits and let $F\colon\ulbbU{\Qq}^R\to\Dd$ be a $\Bb$-functor. Consider a pullback square
		\begin{equation*}
			\begin{tikzcd}
				A\arrow[r, "q'"]\arrow[d, "p'"'] \drar[pullback] & B\arrow[d, "p"]\\
				C\arrow[r, "q"'] & D
			\end{tikzcd}
		\end{equation*}
		in $\Qq$, expressing the relation $q'=p^*(q)$ in $\ulbbU{\Qq}$. Then the diagram
		\begin{equation*}
			\begin{tikzcd}
				q'_!F(\id_A) \rar[equal] \dar[swap]{\textup{coSegal}} & q_!'(p')^*F(\id_C)
				\arrow[r, "\textup{BC}_!"] & p^*q_!F(\id_C)\arrow[d, "p^*(\textup{coSegal})"] \\
				F(q') \ar[equal]{rr} && p^*F(q)
			\end{tikzcd}
		\end{equation*}
		commutes up to homotopy.
	\end{corollary}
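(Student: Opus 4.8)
The plan is to pass to adjoints under the adjunction $q'_! \dashv (q')^*$, which is available since $q'$ lies in $\Qq$ and $\Dd$ admits $\Qq$-colimits. Both composites in the diagram are maps $q'_!F(\id_A)\to p^*F(q)$, and via the identification $F(q')\simeq p^*F(q)$ we regard them as maps $q'_!F(\id_A)\to F(q')$; since two such maps are homotopic as soon as their adjoint transposes $F(\id_A)\to (q')^*F(q')$ are, it suffices to identify these transposes. Throughout I would use \Cref{lem:Description_CoSegal_Map}, which applies here because $\Delta(\Qq)\subseteq\Qq_R$. For the left-hand vertical map this lemma directly gives that the transpose of the coSegal map $q'_!F(\id_A)\to F(q')$ is $F(\Delta_{q'})\colon F(\id_A)\to (q')^*F(q')$, where $\Delta_{q'}\colon \id_A\to (q')^*(q')$ is the morphism of $\Qq_{/A}^R$ represented by the diagonal $A\to A\times_B A$.

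It remains to compute the transpose of the composite $q'_!(p')^*F(\id_C)\xrightarrow{\BC_!}p^*q_!F(\id_C)\xrightarrow{p^*(\textup{coSegal})}p^*F(q)$ running along the top and right. The key observation is that $\BC_!\colon q'_!(p')^*\to p^*q_!$ is by definition the mate of the commuting square $(p')^*q^* = (q')^*p^*$, so a standard triangle-identity computation shows that $(q')^*(\BC_!)\circ\eta^{q'}$ coincides, under the identification $(q')^*p^* = (p')^*q^*$, with $(p')^*\eta^q$, where $\eta^q\colon \id\to q^*q_!$ is the unit. Feeding this into the formula for the adjoint transpose, the transpose of the top–right composite becomes $(p')^*$ applied to $F(\id_C)\xrightarrow{\eta^q}q^*q_!F(\id_C)\xrightarrow{q^*(\textup{coSegal})}q^*F(q)$. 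But this is exactly the transpose (under $q_!\dashv q^*$) of the coSegal map $q_!F(\id_C)\to F(q)$, which by \Cref{lem:Description_CoSegal_Map} equals $F(\Delta_q)$. Hence the transpose of the top–right composite is $(p')^*F(\Delta_q)=F\big((p')^*\Delta_q\big)$, using that $F$ is a $\Bb$-functor.

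The argument is then completed by observing that $\Delta_{q'}$ and $(p')^*\Delta_q$ agree as morphisms of $\Qq_{/A}^R$: this is the elementary fact that the diagonal of a base change is the base change of the diagonal, namely that pulling back $\Delta_q\colon \id_C\to (\pr_1\colon C\times_D C\to C)$ along $p'\colon A\to C$ produces, after the canonical equivalence $A\times_C(C\times_D C)\simeq A\times_B A$, precisely the diagonal $\Delta_{q'}\colon \id_A\to (q')^*(q')$. Applying $F$ finishes the proof. I expect the only delicate points to be the mate manipulation relating $\BC_!$ to $(p')^*\eta^q$ and the careful bookkeeping of the various $\Bb$-functoriality identifications; there is no conceptual obstacle, since \Cref{lem:Description_CoSegal_Map} has already reduced both coSegal maps to statements about diagonals.
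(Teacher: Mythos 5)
Your proposal is correct and follows the same strategy as the paper: both reduce the claim to its adjoint transposes via \Cref{lem:Description_CoSegal_Map} and then conclude from the fact that $\Delta_{q'}$ is the image of $\Delta_q$ under the pullback functor $(p')^*\colon\Qq_{/C}\to\Qq_{/A}$. The only difference is presentational — you spell out the mate identity relating $\BC_!$ to the units $\eta^q$, $\eta^{q'}$ explicitly, whereas the paper absorbs that step into the equality signs of its transposed diagram.
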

	\begin{proof}
		By \Cref{lem:Description_CoSegal_Map}, this is equivalent to the commutativity of the following diagram:
		\[
		\begin{tikzcd}
			F(\id_A) \dar[swap]{F(\Delta_{q'})} \ar[equal]{rr} && {p'}^*F(\id_C) \dar{p^{\prime*}F(\Delta_q)} \\
			q^{\prime*}F(q') \rar[equal] & q^{\prime*}p^*F(q) \rar[equal] & p^{\prime*}q^*F(q).
		\end{tikzcd}
		\]
		But this is immediate from the fact that the image of the map $\Delta_{q} \colon C\to C\times_DC$ under the pullback functor ${p'}^*\colon \Qq_{/C} \to \Qq_{/A}$ is the diagonal $\Delta_{q'}\colon A \to A \times_{B} A$.
	\end{proof}

	The description of the coSegal map from \Cref{lem:Description_CoSegal_Map} naturally leads us to consider the following more general coSegal maps:

	\begin{construction}
	Let $F\colon \ulbbU{\Qq}^R \to \Dd$ be a $\Bb$-functor and assume that $\Delta(\Qq) \subseteq \Qq_R$. For morphisms $p\colon A \to B$ and $q \colon B \to C$ in $\Qq$, we define a \textit{coSegal map}
	\[
	\textup{coSegal}\colon q_!F(p) \to F(qp)
	\]
	as the map adjoint to $F(p) \xrightarrow{F(1,p)} F(q^*(qp)) = q^*F(qp)$. Here $(1,p)\colon p \to q^*(qp)$ is the morphism in the slice $\Qq_{/B}^R$ corresponding to the map $(1,p) \colon A \to A \times_C B$.
	\end{construction}

	\begin{remark}\label{rk:gen-coSegal-natural}
		On $\ul\Span(\Qq)$, the functor $q_\circ\colon p\mapsto qp$ is simply the left adjoint $q_!$ of $q^*$, and the maps $(1,p)$ form the unit, see Corollary~\ref{cor:Span_Complete_And_Cocomplete}. In particular, we see that the above generalized coSegal map `is' natural in $p$.
	\end{remark}

	\begin{proposition}
		\label{prop:Compatibility_CoSegal_And_Norm}
		Assume that $\Qq$ is locally inductible, let $\Qq_L,\Qq_R \subseteq \Qq$ be subclasses with $\Delta(\Qq)\subseteq\Qq_L\cap \Qq_R$, and let $\Dd$ be a $\Qq$-semiadditive $\Bb$-category. For any truncated map $q$ in $\Qq$ and any $\Bb$-functor $F\colon \ul{\Span}(\Qq,\Qq_L,\Qq_R) \to \Dd$ that is coSegal the composite
		\[
		\begin{tikzcd}
			q_!F(\id_A) \rar{\mathrm{coSegal}\;} &[1em] F(q) \rar{\mathrm{Segal\;}} & q_*F(\id_A)
		\end{tikzcd}
		\]
		is homotopic to the norm map $\Nm_q\colon q_! F(\id_A) \to q_* F(\id_A)$ in $\Dd$.
	\end{proposition}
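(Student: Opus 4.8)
The plan is to argue by induction on $n$, where $q$ is an $n$-truncated morphism of $\Qq$ (for a general $q\in\Qq$, if needed, one first covers $B$ and passes to slices via \Cref{rk:norm-res-pb} and the compatibility of the coSegal and Segal maps with base change, cf.\ \Cref{cor:BC-in-terms-of-span}; this is the analogue of \Cref{lemma:colim-enough-truncated}). In the base case $n=-2$ the morphism $q$ is an equivalence, so the coSegal map $q_!F(\id_A)\to F(q)$ and the Segal map $F(q)\to q_*F(\id_A)$ unwind to the canonical equivalences; since $\Nm_q$ is their composite by definition of $\mu^{(-2)}$ in \Cref{cons:Ambidexterity}, the statement holds. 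For the inductive step, assume the claim for all $(n-1)$-truncated maps in $\Qq$ and let $q\colon A\to B$ be $n$-truncated; write $\Delta\coloneqq\Delta_q\colon A\to A\times_BA$ and $\pr_1,\pr_2\colon A\times_BA\to A$ for the two projections.

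By \Cref{cons:Ambidexterity} the norm $\Nm_q$ is the transpose under $q^*\dashv q_*$ of the adjoint norm $\Nmadj_q\colon q^*q_!\to\id$, and transposition along this adjunction is an equivalence on the relevant mapping spaces; hence it suffices to identify the transpose of $\mathrm{Segal}\circ\mathrm{coSegal}$ with $\Nmadj_q$ evaluated at $F(\id_A)$. A straightforward adjunction chase, using \Cref{lem:Description_CoSegal_Map} for $\mathrm{coSegal}$ and its $F\catop$-analogue for $\mathrm{Segal}$, shows that this transpose equals the composite
\[
q^*q_!F(\id_A)\xrightarrow{\;q^*(\mathrm{coSegal})\;}q^*F(q)=F(q^*(q))\xrightarrow{\;F(\gamma)\;}F(\id_A),
\]
where $\gamma\colon q^*(q)\twoheadrightarrow\id_A$ is the left-pointing morphism of $\ul\Span(\Qq,\Qq_L,\Qq_R)(A)$ determined by $\Delta$ (here one uses $\Delta(\Qq)\subseteq\Qq_L$). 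Applying \Cref{cor:BC-in-terms-of-span} to the pullback square with corners $A\times_BA,A,A,B$ identifies $q^*(\mathrm{coSegal})$ with $\mathrm{coSegal}_{\pr_1}$ along the Beck--Chevalley equivalence $\BC_!\colon\pr_{1!}\pr_2^*F(\id_A)\iso q^*q_!F(\id_A)$. Since $\Nmadj_q$ at $F(\id_A)$ is, by \Cref{cons:Ambidexterity} and \Cref{rk:projections-swapped}, obtained from this same equivalence followed by $\pr_{1!}\mu_\Delta$ and the canonical identification $\pr_{1!}\Delta_!\Delta^*\pr_2^*\simeq\id$, we are reduced to the following \emph{core claim}: the map $F(\gamma)\circ\mathrm{coSegal}_{\pr_1}$ coincides with the composite
\[
\pr_{1!}\pr_2^*F(\id_A)\xrightarrow{\;\pr_{1!}\mu_\Delta\;}\pr_{1!}\Delta_!\Delta^*\pr_2^*F(\id_A)\xrightarrow{\;\sim\;}F(\id_A)
\]
as morphisms $\pr_{1!}\pr_2^*F(\id_A)\to F(\id_A)$ in $\Dd(A)$.

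To prove the core claim I would expand both sides: on the left, $\mathrm{coSegal}_{\pr_1}$ is rewritten using \Cref{lem:Description_CoSegal_Map}, the naturality of the generalized coSegal maps in the covariant variable (\Cref{rk:gen-coSegal-natural}) and the coSegal property of $F$; on the right, the unit $\mu_\Delta$ of the norm adjunction $\Delta^*\dashv\Delta_!$ is expanded through the adjoint norm $\Nmadj_\Delta$. As $\Delta$ is $(n-1)$-truncated it is $\Dd$-ambidextrous, so the inductive hypothesis applies to it and provides the identity $\mathrm{Segal}_\Delta\circ\mathrm{coSegal}_\Delta=\Nm_\Delta$ at $F(\id_A)$, which is exactly what is needed to match the two composites via a mate-calculus argument parallel to the inductive step of \Cref{prop:adj-norm-vs-B-functor}. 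I expect this final diagram chase to be the main obstacle: all the morphisms in play live in $\Dd(A)$ or $\Dd(A\times_BA)$ and are assembled from Beck--Chevalley transformations, (generalized) coSegal maps, and units and counits of $q^*\dashv q_!$ and $\Delta^*\dashv\Delta_!$, and the work lies in bookkeeping the many canonical identifications --- in particular in correctly moving back and forth between the coSegal maps out of the parametrized span category and the norm maps of $\Dd$ --- using \Cref{prop:adj-norm-vs-B-functor} and \Cref{cor:norm-vs-restrictions}. Beyond \Cref{lem:Description_CoSegal_Map}, \Cref{cor:BC-in-terms-of-span}, and the recursive structure of the norm, no genuinely new ingredient appears to be needed.
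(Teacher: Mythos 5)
Your proof follows essentially the same outline as the paper's up to a point — reduce to identifying the transpose of $\mathrm{Segal}\circ\mathrm{coSegal}$ with $\Nmadj_q$, unwind the Segal map via the dual of \Cref{lem:Description_CoSegal_Map} to get $F(\nabla_q)$, rewrite $q^*(\mathrm{coSegal})$ via \Cref{cor:BC-in-terms-of-span} to land on $\mathrm{coSegal}_{\pr_1}$ — but then diverges in how the remaining `core claim' is handled. The paper does not run an explicit induction inside this proof; instead it expands the composite into a large diagram and invokes \Cref{prop:adj-norm-vs-B-functor}(2) for the diagonal $\Delta$ to get the key commuting square $(*)$. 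You propose to prove the statement by direct induction on the truncatedness of $q$ and use the inductive hypothesis applied to $\Delta$ at the same spot. These are in fact the same ingredient in disguise: when one specializes \Cref{prop:adj-norm-vs-B-functor}(2) to $q=\Delta$ and uses that $F(\Nm_\Delta)$ is the identity in the span category, the resulting square commutes precisely because $\mathrm{Segal}_\Delta\circ\mathrm{coSegal}_\Delta=\Nm_\Delta$, i.e.\ exactly your inductive hypothesis. So your route is a correct `inlined' variant of the paper's argument: it avoids stating and invoking \Cref{prop:adj-norm-vs-B-functor} at the cost of re-running the induction locally. The paper's factoring through \Cref{prop:adj-norm-vs-B-functor} buys reusability and a cleaner bookkeeping of the Beck--Chevalley hypotheses; your inline induction keeps the argument self-contained and avoids having to verify those hypotheses abstractly for the natural transformation $F\colon\ul\Span(\Qq,\Qq_L,\Qq_R)\to\Dd$. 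The one place where you underestimate the work is the `core claim': it is not just a matter of plugging in $\mathrm{Segal}_\Delta\circ\mathrm{coSegal}_\Delta=\Nm_\Delta$, since one also has to pass from $\mu_\Delta$ to $\Nm_\Delta^{-1}\circ\eta_\Delta$ and then translate $\eta_\Delta$ and the counit isomorphism $\pr_{1!}\Delta_!\simeq\id$ into statements about $F(\eta)$ and the generalized coSegal map (the paper's bottom rows, settled via \Cref{rk:gen-coSegal-natural}). That further diagram chase is routine but not empty; the paper's diagram is a good roadmap for it. With those extra squares filled in, your inductive argument goes through.
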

	\begin{proof}
	As $\Qq_L$ contains the diagonal $\Delta_q\colon A \to A \times_B A$ of $q$ by assumption, the span
		\begin{equation*}
			A\times_BA\xleftarrow{\;\Delta_q\;} A\xrightarrow{\;=\;}A
		\end{equation*}
		defines a map $q^*q \to \id_A$ in $\ulSpan(\Qq,\Qq_L,\Qq_R)(A)=\Span(\Qq_{/A},\Qq_{/A}^L,\Qq_{/A}^R)$; we will denote this span by $\nabla_q$. The dual of \Cref{lem:Description_CoSegal_Map} shows that the Segal map $F(q) \to q_*F(\id_A)$ is adjoint to the composite
		\[
			q^*F(q) = F(q^*q) \xrightarrow{F(\nabla_q)} F(\id_A).
		\]
		The proposition is thus equivalent to the claim that the composite
		\begin{equation*}
			q^*q_!F(\id_A)\xrightarrow{\textup{coSegal}} q^*F(q) = F(q^*q)\xrightarrow{F(\nabla_q)} F(\id_A)
		\end{equation*}
		is homotopic to the adjoint norm map $\Nmadj_q$ of $\Dd$.

		To prove this, we consider the following diagram, where the top row spells out the definition of $\Nmadj_q\colon q^*q_!F(\id_A)\to F(\id_A)$:
		\begin{equation*}\hskip-54.84pt\hfuzz=55pt
			\begin{tikzcd}[cramped]
				q^*q_!F(\id_A)\arrow[d, "\textup{coSegal}"']\arrow[r, "\textup{BC}_!^{-1}"] &[.75em] \arrow[d, equal] \pr_{1!}\pr_2^* F(\id_A)\arrow[r, "\eta"] &[-.33em] \arrow[d,equal]\pr_{1!}\Delta_*\Delta^*\pr_2^* F(\id_A)\arrow[r, "\Nm_\Delta^{-1}"] & \pr_{1!}\Delta_!\Delta^*\pr_2^* F(\id_A)\arrow[d,equal]\arrow[r,"\sim"]  & F(\id_A)\arrow[ddd,"\sim"]\\
				F(q^*q)\arrow[r,"\textup{coSegal}^{-1}\,"'] & \pr_{1!}F(\id_{A\times_BA})\arrow[r, "\eta"']\arrow[dr, bend right=21pt, "F(\eta)"'] & \pr_{1!}\Delta_*\Delta^*F(\id_{A\times_BA})\arrow[from=d,"\textup{BC}_*=\text{Segal}"{description}]\arrow[r,phantom,"(*)"{description}] & \pr_{1!}\Delta_!\Delta^*F(\id_{A\times_BA})\arrow[d, "\textup{BC}_! = \textup{coSegal}"{description}]\\[1ex]
				&& \pr_{1!}F(\Delta_*\id_A)\arrow[r, "F(\Nm_\Delta^{-1})"']\arrow[d,equal]& \pr_{1!}F(\Delta_!\id_A)\arrow[d,equal]\\
				&&\pr_{1!}F(\Delta)\arrow[r,equal]&\pr_{1!}F(\Delta)\arrow[r, "\textup{coSegal}"'] & F(\pr_{1}\Delta)
			\end{tikzcd}
		\end{equation*}
		Here the square on the left commutes by Corollary~\ref{cor:BC-in-terms-of-span}. Moreover, as $\Delta$ is a truncated map in $\Qq_L\cap\Qq_R$ by assumption, Proposition~\ref{prop:adj-norm-vs-B-functor} shows that the rectangle $(*)$ commutes before inverting the norm equivalences; as the right hand vertical map is invertible by assumption, we conclude that also the rectangle with the inverted norm maps commutes. The rightmost rectangle commutes by direct inspection, and so does the triangle in the second column. To finish the proof it will therefore suffice to show that the bottom composite $F(q^*q)\to F(\id_A)$ is simply the map $F(\nabla)$. However by Remark~\ref{rk:gen-coSegal-natural}, the subcomposite $F(\pr_1)=F(q^*q)\to F(\pr_1\Delta)$ agrees with $F(\pr_{1{\circ}}\eta)$, so this is a straight-forward computation in $\ul\Span(\Qq,\Qq_L,\Qq_R)$, using that the unit map $\eta\colon\id_{A\times_BA}\to \Delta$ is given by the analogous span in $\Qq_{/A \times_B A}$:
		\begin{equation*}
			A\times_BA \xleftarrow{\;\Delta_q\;} A\xrightarrow{\;=\;} A.\qedhere
		\end{equation*}
	\end{proof}

	\begin{proof}[Proof of \Cref{prop:Segal_Equals_CoSegal}]
	By symmetry, it suffices to show that any coSegal functor $F\colon \ul{\Span}(\Qq,\Qq_L,\Qq_R) \to \Dd$ is also Segal. But this is immediate from \Cref{prop:Compatibility_CoSegal_And_Norm} since $\Nm_q$ is an equivalence.
	\end{proof}

	\subsection{The universal property}
	\label{sec:Universal_Property}
	Combining all results of this section, we will now prove the universal property of $\ul{\Span}(\Qq)$ for locally inductible $\Qq$ from \Cref{thm:universal_prop_par_spans}: for every $\Qq$-semiadditive $\Bb$-category $\Dd$ evaluation at the global section $\pt$ restricts to an equivalence of $\Bb$-categories
	\[
		\ev_{\pt}\colon \ul{\Fun}^{\Qoplus}(\ul{\Span}(\Qq),\Dd) \iso \Dd.
	\]
	The main part will be an induction proving the analogous statement for the truncations $\Qq_{\le n}$. In order to pass to the full span category, we will use:

	\begin{lemma}\label{lem:Colimit_Of_Subcategories}
		Let $\Vv$ be a $\Bb$-category. Assume we have an increasing chain $\Vv_0\subseteq\Vv_1\subseteq\cdots\subseteq\Vv$ of full $\Bb$-subcategories such that each $X\in\Vv(A)$ is \emph{locally in the $\Vv_n$'s} in the following sense: there exists a cover $(f_i\colon A_i\to A)_{i\in I}$ and for each $i\in I$ a natural number $n_i\in\mathbb N$ such that $f_i^*X\in\Vv_{n_i}(A_i)$.

		Then the inclusions exhibit $\Vv$ as the colimit in $\Cat(\Bb)$ of the $\Vv_n$'s.
		\begin{proof}
			Let us write $\Bb'\coloneqq\Fun^\textup{R}(\Bb^\op,\Spc)\simeq\Bb$. Identifying $\Cat$ with complete Segal spaces we then obtain a fully faithful functor
			\begin{equation*}
				\Cat(\Bb)=\Fun^\textup{R}(\Bb^\op,\Cat)\to\Fun^\textup{R}(\Bb^\op,\Fun(\Delta^\op,\Spc))\simeq\Fun(\Delta^\op,\Bb')
			\end{equation*}
			given in degree $[k]\in\Delta$ by $\Cc\mapsto\core\big(\Cc^{[k]}\big)$, also see \cite{martini2021yoneda}*{Proposition~3.5.1}. As fully faithful functors reflect colimits and since colimits in functor categories are pointwise, it will be enough to show that $\iota(\Vv^{[k]})$ is the colimit of the $\iota(\Vv^{[k]}_n)$'s. Clearly, each $\iota(\Vv^{[k]}_n)\to\iota(\Vv^{[k]})$ is fully faithful (i.e.~a monomorphism of $\Bb$-groupoids). Given now an object $X_\bullet=(X_0\to\cdots\to X_k)\in\Vv^{[k]}(A)$ we can find covers $(f_i^{(j)}\colon A_i^{(j)}\to A)_{i\in I_j}$ such that each $(f_i^{(j)})^*X_j$ is contained in some $\Vv^{[k]}_{n_{i,j}}$. Passing to a common refinement and setting $n_i=\max\{n_{i,0},\dots,n_{i,k}\}$ we see that $X_\bullet$ is locally in the $\Vv^{[k]}_n$'s. Replacing $\Vv$ by $\iota(\Vv^{[k]})$ we are therefore altogether reduced to proving the analogous statement in $\Bb'$.

			In $\Spc$, transfinite compositions of monomorphisms are monomorphisms; exhibiting $\Bb'\simeq\Bb$ as a left exact localization of a presheaf topos, we therefore see that the same holds true in $\Bb'$, and it only remains to show that the subgroupoid inclusion $\colim_n\Vv_n\to\Vv$ is essentially surjective. This follows immediately from the assumption that each $X\in\Vv(A)$ be locally in the $\Vv_n$'s.
		\end{proof}
	\end{lemma}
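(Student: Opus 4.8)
The plan is to reduce the statement to a standard fact about sequential colimits of monomorphisms in an $\infty$-topos, by passing to the complete Segal space model of $\Cat(\Bb)$.

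First I would set up the reduction. Write $\Bb'\coloneqq\Fun^{\mathrm{R}}(\Bb^\op,\Spc)$, which is again equivalent to $\Bb$. Sending a category to its complete Segal space gives a fully faithful limit-preserving embedding $\Cat\hookrightarrow\Fun(\Delta^\op,\Spc)$, and postcomposing with it produces a fully faithful embedding
\[
\Cat(\Bb)=\Fun^{\mathrm{R}}(\Bb^\op,\Cat)\hookrightarrow\Fun^{\mathrm{R}}\bigl(\Bb^\op,\Fun(\Delta^\op,\Spc)\bigr)\simeq\Fun(\Delta^\op,\Bb'),\qquad\Cc\mapsto\bigl([k]\mapsto\iota(\Cc^{[k]})\bigr),
\]
see \cite{martini2021yoneda}*{Proposition~3.5.1}. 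Since $\Vv$ and all the $\Vv_n$ lie in its essential image, and since colimits in $\Fun(\Delta^\op,\Bb')$ are computed pointwise in $\Bb'$, a cocone of $\Bb$-categories is a colimit cocone in $\Cat(\Bb)$ as soon as its image is one in $\Fun(\Delta^\op,\Bb')$; I am therefore reduced to showing, for each fixed $[k]\in\Delta$, that the canonical map $\colim_n\iota(\Vv_n^{[k]})\to\iota(\Vv^{[k]})$ in $\Bb'$ is an equivalence. Two observations make this tractable. Since each $\Vv_n\subseteq\Vv$ is full, each $\iota(\Vv_n^{[k]})\to\iota(\Vv^{[k]})$ is a monomorphism of objects of $\Bb'$. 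And the local hypothesis on $\Vv$ propagates to $[k]$-chains: for $X_\bullet=(X_0\to\cdots\to X_k)\in\Vv^{[k]}(A)$ one may pass to a common refinement of the finitely many covers witnessing that each $X_j$ is locally in the $\Vv_n$'s and take the maximum of the resulting indices, so that $X_\bullet$ is locally in the $\Vv_n^{[k]}$'s.

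Next I would handle the topos-theoretic core. After the reduction we are, for each $[k]$, in the following situation in the $\infty$-topos $\Bb'$: an increasing chain of subobjects $W_0\hookrightarrow W_1\hookrightarrow\cdots\hookrightarrow W$ such that every section of $W$ is, locally on its domain, a section of some $W_n$, and we must show that $\colim_nW_n\to W$ is an equivalence. I would prove it is both a monomorphism and an effective epimorphism, which forces it to be an equivalence. For the monomorphism, I would use that $\Bb'$ is a left exact localization of a presheaf $\infty$-topos: in the latter a sequential colimit of monomorphisms is a monomorphism (checked pointwise in $\Spc$), and the localization functor preserves both colimits and finite limits, hence monomorphisms, so the claim transfers. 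For the effective epimorphism, I would invoke that effective epimorphisms may be tested locally on the target: the local hypothesis says exactly that every section of $W$, after base change along a suitable cover, lifts through some $W_n$ and hence through $\colim_nW_n$.

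I expect the genuine work to be the bookkeeping of the first step rather than the topos-theoretic core, which is essentially forced. In particular one must check carefully that postcomposition with the complete-Segal embedding keeps us inside limit-preserving functors (so that colimits of $\Bb$-categories may legitimately be probed levelwise in $\Bb'$) and that fullness of the $\Vv_n$ is precisely what makes the levelwise comparison maps into monomorphisms and what lets the local hypothesis descend to chains; none of this is deep, but all of it is needed for the reduction to go through.
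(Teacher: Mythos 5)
Your proposal is correct and follows essentially the same route as the paper's proof: embed $\Cat(\Bb)$ into $\Fun(\Delta^\op,\Bb')$ via complete Segal spaces, reduce levelwise to a colimit of subobjects in $\Bb'$, note the local hypothesis descends to $[k]$-chains, and finish by showing the colimit comparison map is both a monomorphism (transfinite compositions of monos in a left exact localization of a presheaf topos) and an effective epimorphism (from the local hypothesis). The only cosmetic difference is that you make the ``mono $+$ effective epi $\Rightarrow$ equivalence'' conclusion explicit, which the paper phrases as ``it remains to show essential surjectivity.''
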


	\begin{proof}[Proof of \Cref{thm:universal_prop_par_spans}]
		The map $\ev_{\pt}$ is given in degree $B\in\Bb$ by
		\[\hskip-12.25pt\hfuzz=12.25pt
			\Fun^{\pi_B^{-1}\Qq\text-\times}(\pi_B^*\ul{\Span}(\Qq),\pi_B^*\Dd)\simeq
			\Fun^{\pi_B^{-1}\Qq\text-\times}(\ul{\Span}(\pi_B^{-1}\Qq),\pi_B^*\Dd) \to (\pi_B^*\Dd)(\id_B) =\Dd(B).
		\]
		Replacing $\Bb$ by $\Bb_{/B}$, it therefore suffices to prove the statement on underlying functor categories.

		By the previous lemma, the inclusions exhibit $\ul\Span(\Qq)$ as a colimit of the truncations $\ul\Span({\Qq_{\le n}})$, which by Lemma~\ref{lemma:cocont-enough-truncated} induces an equivalence
		\[
			{\Fun}^{\Qoplus}(\ul{\Span}(\Qq),\Dd)\iso\lim\nolimits_{n\ge-2}\Fun^{\Qq_{\le n}\text-\oplus}(\ul{\Span}(\Qq_{\leq n}),\Dd).
		\]
		Thus, it suffices to prove the theorem for $\Qq$ replaced by $\Qq_{\leq n}$ for all $n\ge-2$. As a functor $\Span(\Qq_{\leq n})\to \Dd$ is $\Qq_{\le n}$-semiadditive if and only if it is coSegal (Proposition~\ref{prop:Functor_CoSegal_Iff_QColimit_Preserving}), we are altogether reduced to prove that evaluation at the identity defines an equivalence
		\[
			\Fun_\coSeg(\ul\Span(\Qq_{\le n}),\Dd)\iso\Gamma\Dd.
		\]
		We proceed by induction on $n$. When $n = -2$, we get that $\ul{\Span}(\Qq_{\le n}) = \ul1$ is the terminal $\Bb$-category and every functor is coSegal, so the claim holds trivially. Assume that the result holds for $n-1$, and consider the inclusions
		\[
		\ul{\Span}(\Qq_{\leq n-1}) \hookrightarrow \ul{\Span}(\Qq_{\leq n}, \Qq_{\leq n-1},\Qq_{\leq n}) \hookrightarrow \ul{\Span}(\Qq_{\leq n}).
		\]
		As $\Delta(\Qq_{\leq n}) \subseteq \Qq_{\leq n-1}$, \Cref{lem:Extending_Covariant_Direction} and \Cref{cor:Extending_Contravariant_Direction} show that restriction along these inclusions induce equivalences
		\[
		\Fun_{\coSeg}(\ul{\Span}(\Qq_{\leq n}),\Dd) \iso \Fun_{\coSeg}(\ul{\Span}(\Qq_{\leq n}, \Qq_{\leq n-1},\Qq_{\leq n}), \Dd)
		\]
		and
		\[
		\Fun_{\coSeg}(\ul{\Span}(\Qq_{\leq n}, \Qq_{\leq n-1},\Qq_{\leq n}), \Dd) \iso \Fun_{\coSeg}(\ul{\Span}(\Qq_{\leq n-1}),\Dd).\qedhere
		\]
	\end{proof}

	Via Lemma~\ref{lem:Extending_Covariant_Direction} we also immediately obtain the following generalization:

	\begin{corollary}\label{cor:free-semiadd-more-colims}
		Let $\Qq_L\subseteq\Qq\subseteq\Bb$ be wide local subcategories closed under diagonals, and assume that $\Qq_L$ is locally inductible. Then $\ul\Span(\Qq,\Qq_L,\Qq)$ is $\Qq_L$-semiadditive and $\Qq$-cocomplete, and for any other such $\Dd$ evaluation at $\id_1$ induces an equivalence
		\[
			\ul\Fun_\Bb^{\Qq\text-\amalg}(\ul\Span(\Qq,\Qq_L,\Qq),\Dd)\iso\Dd.\qednow
		\]
	\end{corollary}

	\subsection{Examples}
	\label{subsec:Examples_univ_prop_spans}

	We will now specialize the universal property of parametrized spans to the examples given in \Cref{subsec:Examples}. Recall that in each of these examples the topos $\Bb$ is a presheaf topos $\PSh(T)$ on a small category $T$ and that the locally inductible subcategory $\Qq$ is of the form $Q_{\loc}$ for some pre-inductible subcategory $Q \subseteq \PSh(T)$. For easier reference, we explicitly spell out this special case of the theorem:

	\begin{theorem}
		\label{thm:universal_prop_par_spans_preinduct}
		For a pre-inductible subcategory $Q \subseteq \PSh(T)$, the free $Q$-semi\-ad\-ditive $T$-category is the $T$-category $\ul{\Span}(Q)$ given by
		\[
			\ul{\Span}(Q)\colon T\catop \to \Cat, \quad B \mapsto \Span(Q_{/B}).
		\]
	\end{theorem}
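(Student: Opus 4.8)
The plan is to reduce this to the general \Cref{thm:universal_prop_par_spans} by specializing to the presheaf topos $\Bb = \PSh(T)$ and the locally inductible subcategory $\Qq \coloneqq Q_{\loc}$ generated by $Q$, and then transporting the resulting equivalence along the equivalence $\Cat(\PSh(T)) \simeq \Cat_T$ of \Cref{rmk:Limit_Extension}.

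First I would check that $\ul\Span(Q_{\loc})$, viewed as a $\PSh(T)$-category, corresponds under limit extension to the $T$-category $B \mapsto \Span(Q_{/B})$. Since the functoriality of both $\ul\Span(Q_{\loc})$ and $\ul\Span(Q)$ is given by pullback, it is enough to see that $(Q_{\loc})_{/B} = Q_{/B}$ inside $\PSh(T)_{/B}$ for every representable $B \in T$: if $q\colon A \to B$ lies in $Q_{\loc}$ then its base change along $\id_B$ (using $B \in Q$) is $q$ itself, so $q \in Q$; conversely, if $q \in Q$ then for any $B' \to B$ with $B' \in Q$ the base change $A \times_B B' \to B'$ lies in $Q$ by \Cref{rmk:Preinductible_Closed_Under_Base_Change}, whence $q \in Q_{\loc}$. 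Next I would record that, under the same equivalence, the $Q$-semiadditive $T$-categories correspond exactly to the $Q_{\loc}$-semiadditive $\PSh(T)$-categories by \Cref{prop:Check_Semiadditivity_Locally}(2); applying this statement after restricting to slices via \Cref{rk:norm-res-pb}, together with the compatibility of limit extension with internal homs recorded in \Cref{prop:Yoneda} and the remark following it, identifies $\ul\Fun^{Q\text-\oplus}(\ul\Span(Q),\Dd)$ with $\ul\Fun^{Q_{\loc}\text-\oplus}(\ul\Span(Q_{\loc}),\Dd)$ for every $Q$-semiadditive $T$-category $\Dd$.

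With these bookkeeping identifications in hand the theorem is immediate: \Cref{thm:universal_prop_par_spans} applied to $\PSh(T)$ and $Q_{\loc}$ says that evaluation at $\pt\colon \ul{1} \to \ul\Span(Q_{\loc})$ induces an equivalence $\ul\Fun^{Q_{\loc}\text-\oplus}(\ul\Span(Q_{\loc}),\Dd) \iso \Dd$ of $\PSh(T)$-categories, and transporting back to $\Cat_T$ gives the equivalence $\ul\Fun^{Q\text-\oplus}(\ul\Span(Q),\Dd) \iso \Dd$, exhibiting $\ul\Span(Q)$ as the free $Q$-semiadditive $T$-category on a single generator. I do not expect any genuine obstacle here: all the mathematical content is already contained in \Cref{thm:universal_prop_par_spans} and \Cref{prop:Check_Semiadditivity_Locally}, and the only point requiring care is matching the span categories, the semiadditive functor categories, and the evaluation-at-the-point functor across the equivalence $\Cat(\PSh(T)) \simeq \Cat_T$.
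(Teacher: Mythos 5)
Your proposal is correct and follows essentially the same route as the paper's proof: specialize \Cref{thm:universal_prop_par_spans} to $\Bb = \PSh(T)$ and $\Qq = Q_\loc$, match the semiadditivity notions via \Cref{prop:Check_Semiadditivity_Locally}, and observe $(Q_\loc)_{/B} = Q_{/B}$ for representable $B$. The only difference is that you spell out the verification of $(Q_\loc)_{/B} = Q_{/B}$ (using $B\in Q$ and \Cref{rmk:Preinductible_Closed_Under_Base_Change}), which the paper simply asserts.
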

	\begin{proof}
		Taking $\Bb = \PSh(T)$ and $\Qq = Q_{\loc}$, \Cref{thm:universal_prop_par_spans} says that the free $Q_{\loc}$-semiadditive $\Bb$-category is given by the $\Bb$-category $\ul{\Span}(Q_{\loc})\colon \Bb\catop \to \Cat$ sending $B$ to $\Span((Q_{\loc})_{/B})$. By \Cref{prop:Check_Semiadditivity_Locally}, it follows that its underlying $T$-category is the free $Q$-semiadditive $T$-category. Since for $B \in T$ we have $(Q_{\loc})_{/B} = Q_{/B}$, the claim follows.
	\end{proof}

	\begin{example}[Ordinary semiadditivity]
		Taking $Q = \Fin \subseteq \Spc$ recovers the well-known fact that the category $\Span(\Fin)$ of spans of finite sets is the free semiadditive category on a single generator.
	\end{example}

	\begin{example}[$m$-semiadditivity]\label{ex:spans_universal_m_semi}
		For $-2 \leq m < \infty$, taking $Q = \Spc_m \subseteq \Spc$ recovers the fact that the category $\Span(\Spc_m)$ of spans of $m$-finite spaces is the free $m$-semiadditive category on a single generator, as was previously established by Harpaz \cite{harpaz2020ambidexterity}*{Theorem~1.1}. Similarly, taking $Q = \Spc_{\pi}$ shows that $\Span(\Spc_{\pi})$ is the free $\infty$-semiadditive category on a single generator. Analogous results hold for $\Span(\Spc^{(p)}_m)$ and $\Span(\Spc^{(p)}_{\pi})$ in the $p$-typical setting.
	\end{example}

	\begin{example}[Equivariant semiadditivity]
		For $Q = \FinGrpdfaith \subseteq \PSh(\Glo)$, \cite{CLL_Global}*{Lemma~5.2.3} provides a natural equivalence $Q_{/BG}\simeq\Fin_G$. Thus, we deduce that the assignment $G \mapsto \Span(\Fin_G)$ determines the free equivariantly semiadditive global category.
	\end{example}

	\begin{example}[$G$-semiadditivity]
		For a finite group $G$, taking $Q = \Fin_G \subseteq \Spc_G$ shows that the $G$-category $\ul\Span(\Fin_G)\colon G/H\mapsto {\Span}(\Fin_H)$ of spans of finite $G$-sets is the free $G$-semiadditive $G$-category.
	\end{example}

	\begin{example}[$P$-semiadditivity]
		More generally when $P \subseteq T$ is an atomic orbital subcategory of a small category $T$, we deduce that the $T$-category $\ul{\Span}(\ulfinPsets)$ is the free $P$-semiadditive $T$-category.
	\end{example}

	\begin{example}
			If $Q \subseteq T$ is an inductible subcategory, it is in particular a pre-inductible subcategory of $\PSh(T)$ by \Cref{ex:Inductible_Is_Preinductible}, and hence $\ul{\Span}(Q)$ is the free $Q$-semiadditive $T$-category.
	\end{example}

	\begin{remark}
		The previous example can be used to provide a strengthening of Harpaz's result from \Cref{ex:spans_universal_m_semi}. If we take $T$ to be the category $\Spc_m$ of $m$-finite spaces, then assigning to a category $\Cc$ the $T$-category $\Fun(\blank,\Cc)\colon \Spc_m\catop \to \Cat$ provides a fully faithful inclusion $\Cat \hookrightarrow \Cat_T$, whose essential image consists of those functors $\Spc_m^\op\to\Cat$ that preserve $m$-finite limits. Under this inclusion, the span category $\Span(\Spc_m)$ gets sent to the functor $A\mapsto \Span(\Spc_m)^A \simeq \Span((\Spc_m)_{/A})$, which is precisely the parametrized span category $\ul{\Span}(Q)$ for $T = Q = \Spc_m$. The previous example then tells us that $\ul{\Span}(Q)$ is free among all $\Spc_m$-semiadditive $T$-categories, strengthening Harpaz's statement that it is free among just those contained in the essential image of the inclusion $\Cat \hookrightarrow \Cat_T$.

		This strengthening of Harpaz's result has since been crucially used in work of Shay Ben-Moshe on trans\-chromatic characters \cite{ben-moshe-transchromatic}.
	\end{remark}

	Let us also make the corresponding special case of Corollary~\ref{cor:free-semiadd-more-colims} explicit, which will be a key ingredient in \cite{CLL_Span2} for establishing a universal property for $(\infty,2)$-categories of iterated spans:

	\begin{theorem}
		Let $T$ be any category and let $Q\subseteq S\subseteq T$ be left-cancelable subcategories closed under base change. Assume moreover that every map in $Q$ is truncated (i.e.~$Q$ is inductible). Then
		\[
			\ul\Span(S,Q,S)\colon A\mapsto \Span(S_{/A},S_{/A}\times_SQ,S_{/A})
		\]
		is the free $S$-cocomplete and $Q$-semiadditive $T$-category. More precisely, for any other such $\Dd$ evaluation at the identity section $\pt$ defines an equivalence
		\[
			\ul\Fun^{S\textup-\amalg}(\ul\Span(S,Q,S),\Dd)\iso\Dd.\qednow
		\]
	\end{theorem}

	\section{The \texorpdfstring{$\ul\Span(\Qq)$}{Span(Q)}-tensoring on \texorpdfstring{$\Qq$}{Q}-semiadditive \texorpdfstring{$\Bb$}{B}-categories}

 	In this section, we will show that the property for a $\Bb$-category to be $\Qq$-semiadditive can be characterized via a suitable notion of $\ul{\Span}(\Qq)$-tensorings, generalizing the analogous result of \cite{harpaz2020ambidexterity}*{Section~5.1} in the $m$-semiadditive situation. We will further discuss various useful consequences of this characterization.

	\begin{definition}
		Let $\Cc,\Dd,\Ee$ be $\Bb$-categories. We will use the term \emph{bifunctor} for a $\Bb$-functor $\blank\boxtimes\blank\colon\Cc\times\Dd\to\Ee$.

		If $\Cc$ and $\Ee$ are $\Qq$-cocomplete, then we say such a bifunctor preserves \emph{$\Qq$-colimits in the first variable} if the curried functor $\Cc\to\ul\Fun(\Dd,\Ee)$ is $\Qq$-cocontinuous, or equivalently if the curried functor $\Dd\to\ul\Fun(\Cc,\Ee)$ factors through the full subcategory $\ul\Fun^{\Qq\text-\amalg}(\Cc,\Ee)$. Analogously, we define what it means for a bifunctor to preserve $\Qq$-colimits in the second variable (if $\Dd$ and $\Ee$ have $\Qq$-colimits), or in both variables (if all three of them have $\Qq$-colimits).
	\end{definition}

	Of course, there is a dual notion of \emph{preserving $\Qq$-limits} in some or all of the variables.

	\begin{remark}
		Unraveling the definitions, a functor $F\colon\Cc\times\Dd\to\Ee$ preserves $\Qq$-colimits in the first variable if and only if for every $q\colon A\to B$ in $\Qq$, $X\in\Cc(A)$, and $Y\in\Dd(B)$ the \emph{projection map}
		\begin{equation*}
			q_!(X\boxtimes q^*Y)\to q_!X\boxtimes Y,
		\end{equation*}
		defined as the mate of the naturality equivalence $q^*(\blank\boxtimes Y)= q^*(\blank)\boxtimes q^*Y$ is an equivalence.
	\end{remark}

	\begin{example}\label{rem:tensor_over_spaces}
	Let $\Cc$ be $\Qq$-cocomplete. By Proposition~\ref{prop:uni_prop_U_Q} the evaluation functor $\ev_\pt\colon\ul\Fun^{\Qcoprod}(\ulbbU{\Qq},\Cc)\to\Cc$ is an equivalence, so it has a unique section (automatically an equivalence, hence in particular $\Qq$-cocontinuous). In other words, there is a unique bifunctor $\blank\otimes\blank\colon\ulbbU{\Qq}\times\Cc\to\Cc$ that preserves $\Qq$-colimits in each variable and restricts to the identity on $\{\pt\}\times\Cc$.
	\end{example}

	Using the universal property of spans we can extend this tensoring in the case that $\Cc$ is $\Qq$-\emph{semiadditive}:

	\begin{corollary}\label{cor:tensoring-from-semiadd}
		Let $\Cc$ be $\Qq$-semiadditive. Then the above tensoring $\ulbbU{\Qq}\times\Cc\to\Cc$ extends uniquely to a bifunctor $\ul\Span(\Qq)\times\Cc\to\Cc$. Moreover, this tensoring again preserves $\Qq$-colimits in each variable separately.
		\begin{proof}
			Note that any such extension is necessarily $\Qq$-cocontinuous in each variable by \Cref{cor:test_colimit_pres_span}. Conversely, Theorem~\ref{thm:universal_prop_par_spans} gives by the same argument as in the previous example a unique bifunctor $\ul\Span(\Qq)\times\Cc\to\Cc$ preserving $\Qq$-colimits in each variable and restricting to the identity on $\{\id\}\times\Cc$. Its restriction to $\ulbbU{\Qq}\times\Cc$ then again preserves $\Qq$-colimits in each variable by another application of \Cref{cor:test_colimit_pres_span}, so it necessarily agrees with the canonical tensoring.
		\end{proof}
	\end{corollary}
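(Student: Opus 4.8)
The plan is to deduce the statement formally from the universal property of $\ul{\Span}(\Qq)$ (\Cref{thm:universal_prop_par_spans}), mimicking the construction of the tensoring over $\ulbbU{\Qq}$ in \Cref{rem:tensor_over_spaces}, and then to pin down extensions using \Cref{cor:test_colimit_pres_span}. Throughout I would use cartesian closure of $\Cat(\Bb)$ to curry and uncurry bifunctors; note that a $\Qq$-semiadditive $\Cc$ is in particular $\Qq$-cocomplete, so the canonical tensoring $\otimes\colon\ulbbU{\Qq}\times\Cc\to\Cc$ of \Cref{rem:tensor_over_spaces} exists, and that $\ul{\Span}(\Qq)$ is $\Qq$-semiadditive by \Cref{prop:Span_Complete_And_Cocomplete}.

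For existence, I would use \Cref{thm:universal_prop_par_spans} to get the equivalence $\ev_{\pt}\colon\ul{\Fun}^{\Qoplus}(\ul{\Span}(\Qq),\Cc)\iso\Cc$ (the source being $\Qq$-cocomplete as a full, $\Qq$-colimit-closed subcategory of $\ul{\Fun}(\ul{\Span}(\Qq),\Cc)$). Its unique section $s$ is an equivalence, hence $\Qq$-cocontinuous. Uncurrying $s$ yields a bifunctor $\ul{\Span}(\Qq)\times\Cc\to\Cc$ which restricts to $\id_\Cc$ on $\{\pt\}\times\Cc$ (as $\ev_{\pt}\circ s\simeq\id_\Cc$), preserves $\Qq$-colimits in the $\Cc$-variable (as $s$ is $\Qq$-cocontinuous), and preserves $\Qq$-colimits in the $\ul{\Span}(\Qq)$-variable (as $s$ lands in the full subcategory of $\Qq$-semiadditive functors). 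Restricting along the $\Qq$-cocontinuous inclusion $\ulbbU{\Qq}\hookrightarrow\ul{\Span}(\Qq)$ and invoking \Cref{cor:test_colimit_pres_span}, this restriction still preserves $\Qq$-colimits in both variables and is the identity on $\{\pt\}\times\Cc$; by the uniqueness clause in \Cref{rem:tensor_over_spaces} it must agree with $\otimes$, so the bifunctor just built is an extension of $\otimes$, and the ``moreover'' part is already recorded above.

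For uniqueness, I would take an arbitrary bifunctor $\boxtimes\colon\ul{\Span}(\Qq)\times\Cc\to\Cc$ extending $\otimes$. Currying in the $\ul{\Span}(\Qq)$-variable gives a $\Bb$-functor into $\ul{\Fun}(\Cc,\Cc)$ --- which is $\Qq$-cocomplete by \Cref{prop:fun-colimits} --- whose restriction to $\ulbbU{\Qq}$ is the curried $\otimes$, and this is $\Qq$-cocontinuous since $\otimes$ preserves $\Qq$-colimits in its $\ulbbU{\Qq}$-variable. Then \Cref{cor:test_colimit_pres_span} forces the curried functor on $\ul{\Span}(\Qq)$ to be $\Qq$-cocontinuous, i.e.\ $\boxtimes$ preserves $\Qq$-colimits in the $\ul{\Span}(\Qq)$-variable, so its currying in the $\Cc$-variable is a functor $\Cc\to\ul{\Fun}^{\Qoplus}(\ul{\Span}(\Qq),\Cc)$ that becomes $\simeq\boxtimes(\pt,\blank)=\otimes(\pt,\blank)\simeq\id_\Cc$ after composing with $\ev_{\pt}$; being a section of the equivalence $\ev_{\pt}$, it must coincide with $s$, whence $\boxtimes$ is the uncurrying of $s$, i.e.\ the extension constructed above.

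I do not expect a genuine obstacle here: the argument is formal given \Cref{thm:universal_prop_par_spans} and \Cref{cor:test_colimit_pres_span}. The one place requiring care is the bookkeeping of which of the two variables each (co)continuity condition refers to --- in particular the dictionary between ``preserves $\Qq$-colimits in a variable'' and ``the adjoint currying is $\Qq$-cocontinuous, equivalently factors through $\ul{\Fun}^{\Qoplus}$'' --- together with the routine checks that the full subcategory $\ul{\Fun}^{\Qoplus}(\ul{\Span}(\Qq),\Cc)\subseteq\ul{\Fun}(\ul{\Span}(\Qq),\Cc)$ and the target $\ul{\Fun}(\Cc,\Cc)$ are $\Qq$-cocomplete with $\Qq$-cocontinuous inclusions, so that \Cref{rem:tensor_over_spaces} and \Cref{cor:test_colimit_pres_span} really do apply after replacing the target $\Cc$ by an internal hom.
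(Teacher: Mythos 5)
Your proof is correct and follows essentially the same route as the paper: both invoke \Cref{thm:universal_prop_par_spans} (via the bijection between $\Qq$-cocontinuous bifunctors and sections of $\ev_{\pt}$) for existence, and both use \Cref{cor:test_colimit_pres_span} to bootstrap $\Qq$-cocontinuity of any candidate extension from that of $\otimes$ on $\ulbbU{\Qq}$, yielding uniqueness. The only difference is presentational — you build the bifunctor first and then prove uniqueness, while the paper states the uniqueness observation first — and your more explicit currying bookkeeping is a correct unpacking of the paper's terser phrasing.
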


	The main goal for the rest of this section will be to prove the following converse:

	\begin{theorem}\label{thm:semiadd-from-tensoring}
		Let $\Dd$ be $\Qq$-cocomplete and let $\Vv$ be $\Qq$-semiadditive. Assume there exists a bifunctor $\blank\boxtimes\blank\colon\Vv\times\Dd\to\Dd$ preserving $\Qq$-colimits in each variable separately together with a global section $\mathbb I\in\Gamma\Vv$ such that $\blank\boxtimes\blank$ restricts to the identity on $\{\mathbb I\}\times\Dd$. Then $\Dd$ is $\Qq$-semiadditive.
	\end{theorem}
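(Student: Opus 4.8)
The plan is to exhibit $\Dd$ as a $\Qq$-cocontinuous \emph{retract} of a $\Qq$-semiadditive $\Bb$-category and then transport ambidexterity along the retract. First I would reduce to $\Vv=\ul\Span(\Qq)$, $\mathbb I=\pt$: by \Cref{thm:universal_prop_par_spans} the section $\mathbb I\colon\ul1\to\Vv$ extends to a $\Qq$-cocontinuous functor $\widehat{\mathbb I}\colon\ul\Span(\Qq)\to\Vv$ with $\widehat{\mathbb I}(\pt)\simeq\mathbb I$, and precomposing $\boxtimes$ with $\widehat{\mathbb I}\times\id_\Dd$ produces a bifunctor $\otimes\colon\ul\Span(\Qq)\times\Dd\to\Dd$ that still preserves $\Qq$-colimits in each variable (for the first variable via \Cref{cor:test_colimit_pres_span}) and restricts to the identity on $\{\pt\}\times\Dd$. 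Currying in the second variable classifies $\otimes$ by a functor $\Phi\colon\Dd\to\Ee\coloneqq\ul\Fun^{\Qcoprod}(\ul\Span(\Qq),\Dd)$; preservation of $\Qq$-colimits in the second variable says exactly that $\Phi$ is $\Qq$-cocontinuous, and preservation in the first variable guarantees that $\Phi$ lands in the full subcategory of $\Qq$-cocontinuous functors. Since $\otimes|_{\{\pt\}\times\Dd}=\id$, the composite $\ev_\pt\circ\Phi$ is the identity of $\Dd$; as $\ev_\pt$ is $\Qq$-cocontinuous by \Cref{prop:fun-colimits}, this realizes $\Dd$ as a retract of $\Ee$ inside the category of $\Qq$-cocomplete $\Bb$-categories and $\Qq$-cocontinuous functors.

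The key input is that $\Ee$ is itself $\Qq$-semiadditive. Here one uses the self-duality $\ul\Span(\Qq)^\op\simeq\ul\Span(\Qq)$ to identify $\Ee\simeq\ul\Fun^{\Qprod}(\ul\Span(\Qq),\Dd^\op)^\op$, i.e.\ (the opposite of) the $\Bb$-category of $\Qq$-commutative monoids in the $\Qq$-complete $\Bb$-category $\Dd^\op$; such $\Bb$-categories are $\Qq$-semiadditive, the required left adjoints to restriction together with the invertibility of the norm being built from the canonical self-tensoring of $\ul\Span(\Qq)$ (\Cref{cor:tensoring-from-semiadd}). I would isolate this as a separate lemma. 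Granting it, write $r\coloneqq\ev_\pt$, $s\coloneqq\Phi$, so that $rs=\id_\Dd$; both $r$ and $s$ commute strictly with all restriction functors and, being $\Qq$-cocontinuous, preserve the left adjoints $q_!$ with invertible Beck--Chevalley mates.

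Now prove by induction on $n$ that every $n$-truncated map $q$ in $\Qq$ is $\Dd$-ambidextrous. The case $n=-2$ is trivial. For the inductive step: the diagonal $\Delta_q$ is $(n-1)$-truncated, hence $\Dd$-ambidextrous by the inductive hypothesis, so $q$ and all of its base changes are weakly $\Dd$-ambidextrous. For such a base change $q'$, the $\Qq$-semiadditive $\Ee$ is $\Qq$-complete (\Cref{cor:Semiadditive_Cats_Have_QLimits}), and a routine adjunction transfer — using $rs=\id$ and the strict compatibility with $q'^{*}$ — shows $q'_*{}^{\Dd}\coloneqq r\circ q'_*{}^{\Ee}\circ s$ is right adjoint to $q'^{*}$ together with the requisite base change, so $\Dd$ is $\Qq$-complete and the norm $\Nm_{q'}^{\Dd}\colon q'_!\to q'_*$ is defined. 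Applying \Cref{prop:adj-norm-vs-B-functor}(2) to $F=r$ and to $F=s$ (whose $\BC_!$ are invertible) exhibits $\Nm_{q'}^{\Dd}$ as a retract, in the arrow $\infty$-category, of $\Nm_{q'}^{\Ee}$; the latter is an equivalence and retracts of equivalences are equivalences, so $\Nm_{q'}^{\Dd}$ is an equivalence. By the norm-map criterion for ambidexterity (the remark following \Cref{cons:Ambidexterity}) this proves $q$ is $\Dd$-ambidextrous, closing the induction; hence $\Dd$ is $\Qq$-semiadditive.

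The only genuinely substantial step is establishing that $\Ee$ is $\Qq$-semiadditive; everything else is soft bookkeeping with retracts. A related subtlety worth emphasizing is that $\Qq$-cocontinuous functors a priori only see the ``colimit half'' of the ambidexterity structure — they preserve the left adjoints $q_!$ but need not preserve the associated right-adjoint structure — and it is precisely the retract formalism that circumvents this, since one only ever conjugates a complete adjunction (and its norm map, via \Cref{prop:adj-norm-vs-B-functor}(2)) through the splitting $rs=\id$ rather than asking $r$ or $s$ to preserve a right adjoint on the nose.
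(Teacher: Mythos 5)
There is a genuine circularity in the proposal's central step. You reduce the theorem to proving that
\[
\Ee \;=\; \ul\Fun^{\Qcoprod}\bigl(\ul\Span(\Qq),\Dd\bigr)\;\simeq\;\ul\CMon^\Qq(\Dd^\op)^\op
\]
is $\Qq$-semiadditive, flag this as ``the only genuinely substantial step,'' and say you would isolate it as a lemma. But that lemma is precisely the dual of \Cref{thm:Q-x-out-of-semiadd} specialized to $\Cc=\ul\Span(\Qq)$, and in the paper \Cref{thm:Q-x-out-of-semiadd} is deduced \emph{from} \Cref{thm:semiadd-from-tensoring}${}^\op$. The route you sketch for the lemma --- equip $\ul\CMon^\Qq(\Dd^\op)$ with an $\ul\Span(\Qq)$-tensoring built from the self-tensoring of \Cref{cor:tensoring-from-semiadd} and deduce semiadditivity from that tensoring --- is exactly the inference ``tensoring over a $\Qq$-semiadditive category implies $\Qq$-semiadditivity,'' i.e.\ the theorem you are trying to prove. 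Note also that the lemma is not merely a convenient fact: establishing that $\Ee$ even has parametrized \emph{left} adjoints $q_!$ is nontrivial here, since $\Dd$ is only assumed $\Qq$-cocomplete (so $\Dd^\op$ is $\Qq$-complete but not $\Qq$-cocomplete), and the $\Qq$-colimits of $\ul\CMon^\Qq(\Dd^\op)$ are \emph{not} inherited pointwise from $\ul\Fun(\ul\Span(\Qq),\Dd^\op)$; their existence has to come from the norm equivalence $q_!\simeq q_*$, which is what you are trying to establish. So the lemma really does carry the full weight of the theorem, and no independent proof of it is supplied.

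For contrast, the paper's argument is direct and avoids this: after the inductive reduction, one writes down an explicit candidate unit
\[
m\colon\ \id \;=\; \mathbb I\boxtimes\id \;\xrightarrow{\;\mu\boxtimes\id\;}\; q_!q^*\mathbb I\boxtimes\id \;\xrightarrow{\textup{proj}^{-1}}\; q_!(q^*\mathbb I\boxtimes q^*) \;=\; q_!q^*
\]
using the map $\mu$ from the ambidexterity of $\Vv$ together with the projection formula for $\boxtimes$, and verifies the two triangle identities against $\Nmadj_q$ by invoking \Cref{lemma:norm-vs-bifunctor-q*}(2) and \Cref{prop:norm-vs-bifunctor-q!}(2). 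Those two compatibility results are the actual technical content of the section, and they are what your retract scheme would still need as inputs. The retract bookkeeping you carry out afterwards (transporting the norm along $r$, $s$ with $rs=\id$ via \Cref{prop:adj-norm-vs-B-functor}(2)) is reasonable in spirit, but it is conditional on a lemma that, as written, can only be proven by the argument it is meant to replace.
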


	In practice one applies the previous theorem in the case that $\Vv$ is $\ul\Span(\Qq)$; however, the notation in the proof is simplified by assuming $\Vv$ is arbitrary.

	\subsection{Bifunctors and the adjoint norm} The key idea to prove the theorem will be to use the maps $\mu\colon\id\to q_!q^*$ in $\Vv$ to construct analogous transformations in $\Cc$ that together with the adjoint norm maps will exhibit $q_!$ as right adjoint to $q^*$. To do so, we will first have to understand the interaction of general bifunctors with these sorts of maps better.

	\begin{lemma}\label{lemma:norm-vs-bifunctor-q*}
		Let $\Cc$, $\Dd$, and $\Ee$ be $\Bb$-categories and assume that $\Cc$ and $\Ee$ are $\Qq$-cocomplete and $\Qq_{\le n-1}$-semiadditive. Let $- \boxtimes -\colon \Cc \times \Dd \to \Ee$ be a bifunctor preserving $\Qq$-colimits in the first variable and let $q\colon A \to B$ be an $n$-truncated map in $\Qq$.
		\begin{enumerate}
			\item The diagram
			\begin{equation*}
				\begin{tikzcd}
					q^*q_!(\id\boxtimes q^*) \arrow[r, "\Nmadj"]\arrow[d, "\textup{proj}"', "\sim"] & \id\boxtimes q^*\\
					q^*(q_!\boxtimes\id)\arrow[r, "="'] & q^*q_!\boxtimes q^*\arrow[u, "\Nmadj\boxtimes q^*"']
				\end{tikzcd}
			\end{equation*}
			of natural transformations between functors $\Cc(A)\times\Dd(B)\to\Ee(A)$ commutes.
			\item Assume that $\Cc$ is even $\Qq_{\le n}$-semiadditive. Then the composite
			\begin{equation*}
				q^*\boxtimes q^* = q^*(\id\boxtimes\id) \xrightarrow{q^*(\mu\boxtimes\id)} q^*(q_!q^*\boxtimes\id) \xrightarrow{\textup{proj}^{-1}} q^*q_!(q^*\boxtimes q^*)\xrightarrow{\Nmadj} q^*\boxtimes q^*
			\end{equation*}
			is the identity.
		\end{enumerate}
	\end{lemma}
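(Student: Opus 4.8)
The plan is to obtain part~(1) as a direct application of \Cref{prop:adj-norm-vs-B-functor}(1), and then to deduce part~(2) formally from part~(1) together with a triangle identity.

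For part~(1), I would first pass to the slice topos $\Bb_{/B}$, so that $q$ becomes an $n$-truncated morphism to the terminal object $\ul{1}$. By \Cref{rk:ambidexterity-restriction}, applied to the forgetful functor $\pi_B\colon\Bb_{/B}\to\Bb$ (which preserves pullbacks) and the class $\pi_B^{-1}\Qq$, the restrictions $\pi_B^*\Cc$ and $\pi_B^*\Ee$ are $\pi_B^{-1}\Qq$-cocomplete; and since $\Delta_q$ is $(n-1)$-truncated and lies in $\Qq$, the $\Qq_{\le n-1}$-semiadditivity of $\Cc$ and $\Ee$ shows that $q$ is weakly $\pi_B^*\Cc$- and weakly $\pi_B^*\Ee$-ambidextrous, with adjoint norm maps agreeing with those for $q\colon A\to B$ in $\Cc$ and $\Ee$. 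Next, for a fixed $Y\in\Dd(B)=\Gamma(\pi_B^*\Dd)$, inserting $Y$ into the second slot of $\pi_B^*(\blank\boxtimes\blank)$ yields a $\Bb_{/B}$-functor $F_Y\colon\pi_B^*\Cc\to\pi_B^*\Ee$ which, by the hypothesis that $\blank\boxtimes\blank$ preserves $\Qq$-colimits in the first variable together with \Cref{rmk:fun-slice}, is $\pi_B^{-1}\Qq$-cocontinuous. In particular all Beck--Chevalley maps $\BC_!\colon p_!F_Y\to F_Yp_!$ are invertible, so the standing hypothesis of \Cref{prop:adj-norm-vs-B-functor} is satisfied. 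Applying \Cref{prop:adj-norm-vs-B-functor}(1) to $F_Y$ and $q$ then produces precisely the square of part~(1); it remains only to match the maps, using that the Beck--Chevalley transformation $\BC_!\colon q_!F_Y\to F_Yq_!$ is, via its mate description, exactly the projection map $q_!(\blank\boxtimes q^*Y)\to q_!(\blank)\boxtimes Y$, that $F_Y(\Nmadj_q)=\Nmadj_q\boxtimes q^*Y$ by functoriality of $\boxtimes$ in the first variable, and that the canonical equivalence $q^*Fq_!\simeq Fq^*q_!$ appearing in \Cref{prop:adj-norm-vs-B-functor} is the $\boxtimes$-naturality isomorphism $q^*(q_!(\blank)\boxtimes Y)\simeq q^*q_!(\blank)\boxtimes q^*Y$. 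Letting $Y$ range over $\Dd(B)$ --- commutativity of the square can be checked after fixing $Y$ --- gives the asserted commuting diagram of natural transformations $\Cc(A)\times\Dd(B)\to\Ee(A)$.

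For part~(2), I would write the composite under consideration as $c\circ b\circ a$, with $a=q^*(\mu\boxtimes\id)$, with $b=\textup{proj}^{-1}$ (which exists since $\boxtimes$ preserves $\Qq$-colimits in the first variable), and with $c=\Nmadj$ the adjoint norm in $\Ee$. Now invoke part~(1) with its first variable precomposed with $q^*\colon\Cc(B)\to\Cc(A)$ --- legitimate because part~(1) holds for every object of $\Cc(A)$ --- and note that under this substitution the projection map occurring in part~(1) becomes exactly $b^{-1}$. Thus part~(1) rewrites $c$ as $\bigl((\Nmadj_q q^*)\boxtimes q^*\bigr)\circ(\simeq)\circ b^{-1}$, where $\Nmadj_q$ is now the adjoint norm in $\Cc$ and $\simeq$ is the $\boxtimes$-naturality isomorphism. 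The two copies of $b$ cancel, and commuting the remaining outer $q^*$ past $\boxtimes$ by naturality collapses the composite to $\bigl((\Nmadj_q q^*)\circ(q^*\mu)\bigr)\boxtimes q^*$. Finally $(\Nmadj_q q^*)\circ(q^*\mu)\simeq\id_{q^*}$ is one of the triangle identities for the adjunction $q^*\dashv q_!$ in $\Cc$ exhibited by $(\Nmadj_q,\mu)$ (see \Cref{cons:Ambidexterity}), so the composite equals $\id_{q^*}\boxtimes q^*=\id_{q^*\boxtimes q^*}$, as required.

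The hard part is the bookkeeping in part~(1): carrying out the slice reduction carefully, so that $Y$ becomes a global section and $q$ a morphism to the terminal object, and then correctly identifying the abstract transformations of \Cref{prop:adj-norm-vs-B-functor}(1) --- in particular $\BC_!$ --- with the concrete projection and norm maps in the statement. Once that dictionary is in place, part~(1) is immediate, and part~(2) is a purely formal consequence of it and the triangle identity.
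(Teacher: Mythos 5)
Your argument is correct and matches the paper's own proof almost verbatim: part (1) is obtained by currying $\boxtimes$ at a (global-section) object of the second variable and applying \Cref{prop:adj-norm-vs-B-functor}(1) to the resulting one-variable $\Bb_{/B}$-transformation, and part (2) is then a formal consequence of part (1) plus the triangle identity for $(\mu,\Nmadj)$ in $\Cc$. The only (cosmetic) difference is that you fix one $Y\in\Dd(B)$ at a time whereas the paper packages all of them into the single transformation $\pi_B^*\Cc\to(\pi_B^*\Ee)^{\Dd(B)}$; your identification of the abstract $\BC_!$, naturality equivalence, and $F(\Nmadj_q)$ with the projection, $\boxtimes$-naturality, and $\Nmadj\boxtimes q^*$ maps is exactly the dictionary the paper invokes with the word ``unravelling.''
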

	\begin{proof}
		Unravelling definitions, the first part is an instance of Proposition~\ref{prop:adj-norm-vs-B-functor}-$(1)$ applied to the composite $\pi_A^*\Cc\to\ul\Fun(\pi_A^*\Dd,\pi_A^*\Ee)\to(\pi_A^*\Ee)^{\Dd(A)}$. The second part then follows immediately from this together with the triangle identity for the adjunction $q^*\dashv q_!$ in $\Cc$.
	\end{proof}

	We will also need the following complementary result:

	\begin{proposition}\label{prop:norm-vs-bifunctor-q!}
		Let $\Cc,\Dd,\Ee$ be $\Qq$-cocomplete $\Qq_{\le n-1}$-semiadditive $\Bb$-categories, and let $\blank\boxtimes\blank\colon\Cc\times\Dd\to\Ee$ be a bifunctor preserving $\Qq$-colimits in each variable. Moreover, let $q$ be any $n$-truncated map in $\Qq$. Then:
		\begin{enumerate}
			\item The following diagram commutes:
			\begin{equation}\label{diag:norm-bifunc-balanced}
				\begin{tikzcd}
					q_!(q^*\boxtimes q^*q_!)\arrow[d, "q_!(q^*\boxtimes \Nmadj)"']\arrow[r, "\textup{proj}"] &[1em] q_!q^*\boxtimes q_!\arrow[d, "\textup{proj}^{-1}"] \\
					q_!(q^*\boxtimes \id) &\arrow[l, "q_!(q^*\Nmadj \boxtimes\id)"] q_!(q^*q_!q^*\boxtimes\id)\rlap.
				\end{tikzcd}
			\end{equation}
			\item Assume that $\Cc$ is even $\Qq_{\le n}$-semiadditive. Then the following composite is the identity:
			\begin{equation*}
				\id\boxtimes q_!\xrightarrow{\mu\boxtimes\id} q_!q^*\boxtimes q_!\xrightarrow{\textup{proj}^{-1}} q_!(q^*\boxtimes q^*q_!)\xrightarrow{q_!(q^*\boxtimes\Nmadj)} q_!(q^*\boxtimes\id)\xrightarrow{\textup{proj}} \id\boxtimes q_!.
			\end{equation*}
		\end{enumerate}
	\end{proposition}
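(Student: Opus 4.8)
The plan is to prove $(1)$ first and then deduce $(2)$ from it by a short formal argument, exactly in the spirit of how \Cref{lemma:norm-vs-bifunctor-q*}$(2)$ was obtained from \Cref{lemma:norm-vs-bifunctor-q*}$(1)$. The only real work is in $(1)$, which is the statement that the adjoint norm in the second variable of $\boxtimes$ agrees, after threading through the projection isomorphisms, with the adjoint norm in the first variable; once this is available, $(2)$ collapses by naturality and a triangle identity.

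For $(1)$, I would mimic the proof of \Cref{lemma:norm-vs-bifunctor-q*}$(1)$: realize diagram~\eqref{diag:norm-bifunc-balanced} as an instance of the general compatibility of the adjoint norm with $\Bb$-natural transformations, \Cref{prop:adj-norm-vs-B-functor}$(1)$, now applied after currying $\boxtimes$ in its second variable. Concretely, preserving $\Qq$-colimits in the second variable means that the curried functor $R\colon\Dd\to\ulFun(\Cc,\Ee)$ is $\Qq$-cocontinuous; by \Cref{rem:functor-cat-Q-semi} the target $\ulFun(\Cc,\Ee)$ is again $\Qq_{\le n-1}$-semiadditive (with adjoint norms computed pointwise from those of $\Ee$), and the relevant Beck--Chevalley maps for $R$ are invertible precisely because $R$ is $\Qq$-cocontinuous, so \Cref{prop:adj-norm-vs-B-functor}$(1)$ applies to $R$ at the $n$-truncated map $q$. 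Unwinding the resulting commuting square --- using that $\Qq$-colimits in $\ulFun(\Cc,\Ee)$ are pointwise (\Cref{prop:fun-colimits}), that the adjoint norm there is pointwise the adjoint norm of $\Ee$, and that evaluating the cocontinuity isomorphism of $R$ is literally the projection map --- produces an equality of the form $\Nmadj_q(q^*X\boxtimes Y)=(q^*X\boxtimes\Nmadj_q Y)\circ q^*(\mathrm{proj})$ in $\Ee$, and applying $q_!$ to this identity and re-expressing the outer $q_!q^*q_!$ via the unit yields diagram~\eqref{diag:norm-bifunc-balanced}. (It may be convenient to first reduce to the case $B=1$ via \Cref{rk:norm-res-pb}; and in fact one can run a direct induction on the truncation level of $q$ instead, unwinding the inductive definition of $\Nmadj_q$ via the diagonal with trivial base case $n=-2$, which parallels the proof of \Cref{prop:adj-norm-vs-B-functor}$(1)$ itself.)

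Granting $(1)$, here is how $(2)$ follows. In the composite of $(2)$ one substitutes the commuting square~\eqref{diag:norm-bifunc-balanced} for its middle leg $q_!(q^*\boxtimes\Nmadj_q)$; this introduces two mutually inverse projection isomorphisms in the first variable which cancel, leaving $X\boxtimes q_!Y\xrightarrow{\mu_X\boxtimes\id}q_!q^*X\boxtimes q_!Y$ followed by the conjugate of $q_!(q^*\Nmadj_q X\boxtimes Y)$ by the first-variable projection isomorphism and then the second-variable projection isomorphism. Using naturality of the projection maps in the $\Cc$-variable together with the triangle identity for the (now genuine, since $\Cc$ is $\Qq_{\le n}$-semiadditive) adjunction $q^*\dashv q_!$ in $\Cc$ --- i.e.\ that $\mu$ and $\Nmadj_q$ are a unit--counit pair --- the remaining composite collapses to the identity.

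The step I expect to be the main obstacle is the unwinding in $(1)$: one must track carefully which projection isomorphism (first versus second variable) appears where, over which base object each term lives after the currying and the passage to $\ulFun(\Cc,\Ee)$, and --- as in the proof of \Cref{prop:adj-norm-vs-B-functor}$(1)$ --- one must be careful that certain intermediate squares commute only \emph{before} inverting the norm equivalences, so that the hypothesis that $\Cc$, $\Dd$, $\Ee$ are $\Qq_{\le n-1}$-semiadditive is invoked at exactly the right moment to pass to the inverted form. Everything after $(1)$ is bookkeeping of adjunction data.
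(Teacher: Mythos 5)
Your derivation of $(2)$ from $(1)$ is correct and matches the paper's argument (substitute diagram~\eqref{diag:norm-bifunc-balanced}, cancel the first-variable projections, use naturality of the remaining projection in its first slot, and close with the triangle identity in $\Cc$). The gap is in $(1)$: the paper itself remarks right before its proof that, ``Unlike the previous result, this does not seem to directly translate to a statement about one-variable functors,'' and that remark applies exactly to your main plan.

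Concretely, currying $\boxtimes$ in the second variable to $R\colon\Dd\to\ul\Fun(\Cc,\Ee)$, applying \Cref{prop:adj-norm-vs-B-functor}$(1)$, evaluating at $Y\in\Dd(A)$ and then at $q^*X$ with $X\in\Cc(B)$, and using that the adjoint norm of $\ul\Fun(\Cc,\Ee)$ evaluated at $q^*X$ is the adjoint norm of $\Ee$, yields precisely the identity you write down:
$\Nmadj_q^{\Ee}(q^*X\boxtimes Y)=(q^*X\boxtimes\Nmadj_q^{\Dd}Y)\circ q^*(\textup{proj})$.
But this is just \Cref{lemma:norm-vs-bifunctor-q*}$(1)$ applied to the transposed bifunctor: it relates the norm in $\Ee$ to the norm in the second variable. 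Diagram~\eqref{diag:norm-bifunc-balanced} is a genuinely balanced two-variable statement relating the norm in $\Cc$ (via $q_!(q^*\Nmadj\boxtimes\id)$) to the norm in $\Dd$ (via $q_!(q^*\boxtimes\Nmadj)$), with both projection isomorphisms appearing; the norm of $\Ee$ does not appear in it at all. Applying $q_!$ to your identity gives an equation whose other side is $q_!\Nmadj_q^{\Ee}(q^*X\boxtimes Y)$, and ``re-expressing the outer $q_!q^*q_!$ via the unit'' only reshuffles the source, it does not convert $q_!\Nmadj_q^{\Ee}$ into $q_!(\Nmadj_q^{\Cc}q^*X\boxtimes\id_Y)$ modulo projections. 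To bridge that you would need the first-variable analogue of your identity, but that one lives at $X'\in\Cc(A)$, $W\in\Dd(B)$, and the two evaluations do not overlap for general $Y\in\Dd(A)$. So the reduction does not close.

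Your parenthetical fallback — a direct induction on truncation, unwinding $\Nmadj_q$ through the diagonal $\Delta$ — is the paper's actual strategy, but as stated it is a one-line gesture. What the paper's proof actually needs and supplies, and what is missing from your sketch, is: (a) an auxiliary Beck--Chevalley lemma showing how $q_!$ commutes past $\boxtimes$ across a square in $\Qq$ in both variables simultaneously (the unnumbered lemma preceding the proof), used for the top rectangle of the expanded diagram; (b) \Cref{rk:projections-swapped} to identify the two expansions of the adjoint norm coming from $\pr_1$ versus $\pr_2$; and (c) an application of \Cref{lemma:norm-vs-bifunctor-q*}$(1)$ \emph{at the diagonal} $\Delta$ (which is $(n-1)$-truncated, so the semiadditivity hypothesis kicks in) — this is exactly where the one-variable lemma that your currying argument does prove gets used, but in a subordinate role, not as the whole of $(1)$. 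In its current form your proposal for $(1)$ has a genuine gap.
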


	Unlike the previous result, this does not seem to directly translate to a statement about one-variable functors, making its proof a bit more involved computationally. We start with the following Beck--Chevalley lemma whose proof we leave to the reader:

	\begin{lemma}
		Let $\Cc,\Dd,\Ee$ be $\Qq$-cocomplete $\Bb$-categories and let $\blank\boxtimes\blank\colon \Cc\times\Dd\to\Ee$ be any bifunctor. Assume moreover we have a commutative diagram
		\begin{equation}\label{diag:base-change-boxtimes-balanced}
			\begin{tikzcd}
				A\arrow[d,"g'"']\arrow[r, "f'"] & B\arrow[d, "g"]\\
				C\arrow[r, "f"'] & D
			\end{tikzcd}
		\end{equation}
		in $\Qq$. Then the diagram
		\[\begin{tikzcd}[cramped]
			{f_!g'_!(f^{\prime*}g^*\boxtimes g^{\prime*})} & {f_!(g'_!f^{\prime*}g^*\boxtimes\id)} &[1em] {f_!(f^*g_!g^*\boxtimes\id)} & {g_!g^*\boxtimes f_!} \\
			{g_!f'_!(f^{\prime*}g^*\boxtimes g^{\prime*})} & {g_!(g^*\boxtimes f'_!g^{\prime*})} & {g_!(g^*\boxtimes g^*f_!)} & {g_!g^*\boxtimes f_!}
			\arrow["{\textup{proj}}", from=1-1, to=1-2]
			\arrow["{\BC_!\boxtimes\id}", from=1-2, to=1-3]
			\arrow["{\textup{proj}}", from=1-3, to=1-4]
			\arrow["{\textup{proj}}"', from=2-3, to=2-4]
			\arrow["{\textup{proj}}"', from=2-1, to=2-2]
			\arrow["{g^*\boxtimes\BC_!}"', from=2-2, to=2-3]
			\arrow[Rightarrow, no head, from=1-4, to=2-4]
			\arrow["\sim"', from=1-1, to=2-1]
		\end{tikzcd}\]
		commutes, where the unlabeled equivalence is induced by $(\ref{diag:base-change-boxtimes-balanced})$.\qed
	\end{lemma}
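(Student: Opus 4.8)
The plan is to reduce the statement to the calculus of mates. Recall that for a morphism $q\colon X\to Y$ in $\Qq$ the projection transformation $\mathrm{proj}_q\colon q_!(-\boxtimes q^*-)\to q_!(-)\boxtimes -$ is \emph{defined} as the mate, with respect to the adjunction $q^*\dashv q_!$, of the structural equivalence $\beta_q\colon q^*(-\boxtimes-)\iso q^*(-)\boxtimes q^*(-)$ witnessing that $-\boxtimes-$ is a $\Bb$-functor; likewise, for the square $(\ref{diag:base-change-boxtimes-balanced})$ the transformation $\BC_!\colon g'_!f'^*\to f^*g_!$ is the mate of the coherence equivalence $f'^*g^*\iso g'^*f^*$, and the unlabelled equivalence $f_!g'_!\iso g_!f'_!$ on the left of the claimed square comes from the coherence equivalence $h_!\coloneqq(fg')_!\iso(gf')_!$ for the diagonal $h\coloneqq fg'=gf'$ of $(\ref{diag:base-change-boxtimes-balanced})$. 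In particular every arrow in the hexagon is the mate of an invertible $2$-cell assembled from the structural coherences of $-\boxtimes-$ and of the square, so the whole identity is a purely formal one, living in the homotopy $2$-category of $\Bb$-categories.

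Concretely, I would adjoin the entire hexagon down along the composite adjunction $f_!g'_!\dashv g'^*f^*$ (note $g'^*f^*=f'^*g^*=h^*$). Both the clockwise composite (top then right) and the counterclockwise composite (left then bottom) then become natural transformations
\[
f'^*g^*X\boxtimes g'^*Y\longrightarrow h^*\bigl(g_!g^*X\boxtimes f_!Y\bigr)\simeq f'^*g^*(g_!g^*X)\boxtimes g'^*(f^*f_!Y),
\]
the equivalence being $\beta_h$. Using the mate description of each $\mathrm{proj}_q$ and of $\BC_!$, together with the triangle identities for the various $q^*\dashv q_!$ and the bifunctoriality of $-\boxtimes-$, one rewrites each of these two adjoint composites as a pasting of the coherences $\beta_q$, the coherence $f'^*g^*\simeq g'^*f^*$, and the units $\eta^g\colon\id\to g^*g_!$ and $\eta^f\colon\id\to f^*f_!$; up to these coherences the common value is the external product of the restricted units $g^*X\to g^*g_!g^*X$ and $Y\to f^*f_!Y$, i.e.\ essentially $f'^*(\eta^g_{g^*X})\boxtimes g'^*(\eta^f_{Y})$. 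Since this resulting expression makes no reference to the chosen factorization $h=fg'$ (respectively $h=gf'$) — all factorizations of $h$ being canonically identified, with the equivalence $f_!g'_!\iso g_!f'_!$ exactly the comparison of the two — it is manifestly symmetric in the two factorizations, and hence the clockwise and counterclockwise composites agree. Alternatively, and perhaps more transparently, one may avoid the global adjoining-down and instead tile the hexagon by elementary squares, each commuting by naturality of some $\mathrm{proj}_q$, by naturality of $\BC_!$, or by the one-line identity $\mathrm{proj}_{rq}=\mathrm{proj}_r\circ r_!(\mathrm{proj}_q)$ expressing compatibility of the projection map with composition of morphisms in $\Qq$; either route works.

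The only real difficulty — and the reason this lemma is stated with the proof left to the reader — is the bookkeeping: one must track, at each step, which adjunction a given mate is formed with respect to, and check that the coherence $f_!g'_!\simeq g_!f'_!$ interacts correctly with the two instances of $\BC_!$ appearing on the top and bottom rows. No genuinely new input is needed. It is worth noting that the hypotheses are minimal: we only use that $\Cc,\Dd,\Ee$ are $\Qq$-cocomplete, so that the functors $q_!$ exist and $\mathrm{proj}_q$ and $\BC_!$ are defined — no semiadditivity of any of the three categories is required, the squares $(\ref{diag:base-change-boxtimes-balanced})$ need not be pullbacks, and the bifunctor $-\boxtimes-$ is entirely arbitrary. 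All of the content is the formal compatibility of mates under pasting.
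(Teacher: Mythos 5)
The paper states this lemma with the proof explicitly left to the reader, so there is no argument of record to compare against; but your proposal is correct, and it is the argument the authors clearly have in mind given that the immediately preceding proof of Proposition 6.4 frames everything in terms of mates and $2$-categorical pastings. Your two key observations — that every arrow is the mate (with respect to the relevant $q_!\dashv q^*$) of a coherence, and that the unlabelled equivalence $f_!g'_!\iso g_!f'_!$ is itself the conjugate of the coherence $f'^*g^*\simeq g'^*f^*$, which is exactly what makes the comparison of the two adjunctions along $h=fg'=gf'$ legitimate — are the whole content.

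Adjoining down is the cleanest way to finish, and your computation of the common adjunct is right: after transposing the top composite along $f_!$ and then $g'_!$, the pair $\eta^{g'}$ followed by $g'^*\BC_!$ collapses (triangle identity for $g'$) to $f'^*\eta^g$ followed by a coherence, and the remaining $\text{proj}_f$ transposes to $\id\boxtimes\eta^f$; on the bottom one gets, symmetrically, $\id\boxtimes g'^*\eta^f$ and $f'^*\eta^g\boxtimes\id$, and bifunctoriality of $\boxtimes$ commutes the two factors. So both adjuncts reduce to $f'^*(\eta^g_{g^*X})\boxtimes g'^*(\eta^f_Y)$ up to the structural coherences, as you say. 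You are also right that the hypotheses are genuinely minimal — no pullback condition, no semiadditivity, arbitrary bifunctor — and that the statement lives in the homotopy $2$-category.

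One small caveat on your ``alternative'' route: the hexagon mixes projection maps in \emph{both} variables with two different $\BC_!$'s, so it does not literally tile by instances of $\mathrm{proj}_{rq}=\mathrm{proj}_r\circ r_!(\mathrm{proj}_q)$ plus naturality; one would need a further interchange lemma between the two one-variable projections and $\BC_!$, which is again a mate computation of the same difficulty as the direct adjoining-down. So the ``more transparent'' alternative is not actually shorter. That said, the main argument is complete and correct.
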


	\begin{proof}[Proof of Proposition~\ref{prop:norm-vs-bifunctor-q!}]
	Consider the following diagram:
	\begin{equation*}\hskip-46.44pt\hfuzz=46.44pt
		\begin{tikzcd}[cramped]
			q_!(q^*\boxtimes q^*q_!)\arrow[d,"q_!(q^*\boxtimes\BC_!^{-1})"'] \arrow[r, "\textup{proj}"] &[1em] q_!q^*\boxtimes q_!\arrow[r, "\textup{proj}^{-1}"] &[1em] q_!(q^*q_!q^*\boxtimes\id)\arrow[d, "q_!(\BC_!^{-1}\boxtimes\id)"]\\[.5ex]
			q_!(q^*\boxtimes\pr_{2!}\pr_1^*)\arrow[d, "q_!(q^*\boxtimes\pr_{2!}\mu)"']\arrow[r, "\textup{proj}^{-1}"'] & q_!\pr_{2!}(\pr_2^*q^*\boxtimes\pr_1^*)\simeq q_!\pr_{1!}(\pr_2^*q^*\boxtimes\pr_1^*)\arrow[r, "\textup{proj}"'] & q_!(\pr_{1!}\pr_2^*q^*\boxtimes\id)\arrow[d, "q_!(\pr_{1!}\mu\boxtimes\id)"]\\[.5ex]
			q_!(q^*\boxtimes\pr_{2!}\Delta_!\Delta^*\pr_1^*)\arrow[d,"\sim"'] && q_!(\pr_{1!}\Delta_!\Delta^*\pr_2^*q^*\boxtimes\id)\arrow[d, "\sim"]\\
			q_!(q^*\boxtimes\id)\arrow[rr,equal] && q_!(q^*\boxtimes\id)\rlap.
		\end{tikzcd}
	\end{equation*}

	Note that the right-hand vertical column spells out the definition of $q_!(\Nmadj\boxtimes\id)$, while the left-hand column agrees with $q_!(q^*\boxtimes\Nmadj)$ by virtue of Remark~\ref{rk:projections-swapped}.
	Thus, the first statement amounts to saying that the total rectangle commutes.

	To prove this we first note that the top rectangle commutes by the previous lemma. To show that the bottom rectangle commutes, we expand it as follows:
		\begin{equation*}\hskip-80.2pt\hfuzz=80.2pt
		\begin{tikzcd}[cramped]
			q_!(\pr_{1!}\pr_2^*q^*\boxtimes\id)\arrow[d, "q_!(\pr_{1!}\mu\boxtimes\id)"'] & \arrow[l, "\textup{proj}"'] q_!\pr_{1!}(\pr_2^*q^*\boxtimes\pr_1^*)\arrow[d, "q_!\pr_{1!}\mu"'] \arrow[r,"\sim"] & q_!\pr_{2!}(\pr_2^*q^*\boxtimes\pr_1^*)\arrow[r, "\textup{proj}"]\arrow[d, "q_!\pr_{2!}\mu"] & q_!(q^*\boxtimes\pr_{2!}\pr_1^*)\arrow[d, "q_!(q^*\boxtimes \pr_{2!}\mu)"]\\
			q_!(\pr_{1!}\Delta_!\Delta^*\pr_{2}^*q^*\boxtimes\id)\arrow[dr, bend right=15pt, "\sim"'] & q_!\pr_{1!}\Delta_!(\Delta^*\pr_2^*q^*\boxtimes\Delta^*\pr_{1}^*)\arrow[r, "\sim"]\arrow[d,"\sim"'] & \arrow[d,"\sim"]q_!\pr_{2!}\Delta_!(\Delta^*\pr_2^*q^*\boxtimes\Delta^*\pr_1^*)\arrow[d, "\sim"] & q_!(q^*\boxtimes\pr_{2!}\Delta_!\Delta^*\pr_1^*)\arrow[dl, bend left=15pt, "\sim"]
			\\
			& q_!(q^*\boxtimes\id) \arrow[r,equal] & q_!(q^*\boxtimes\id)
		\end{tikzcd}
	\end{equation*}
	The top square in the middle column commutes by naturality, and the bottom middle square commutes since the composite homotopy $q\pr_1\Delta\simeq q\pr_2\Delta\simeq q$ agrees with the homotopy induced by $\pr_1\Delta\simeq\id$. To see that the two pentagons commute we begin by observing that they are symmetric, and so it suffices to spell out the left-hand one. For this we expand it once again:
		\begin{equation*}\hskip-11.5pt\hfuzz=11.6pt
		\begin{tikzcd}[cramped]
			q_!(\pr_{1!}\pr_{2}^*q^*\boxtimes\id)\arrow[d, "\mu"'] & \arrow[l, "\textup{proj}"'] q_!\pr_{1!}(\pr_2^*q^*\boxtimes\pr_1^*)\arrow[d, "\mu"]\arrow[r, "\mu"] & q_!\pr_{1!}\Delta_!\Delta^*(\pr_2^*q^*\boxtimes\pr_1^*)\arrow[d,equal]\\
			q_!(\pr_{1!}\Delta_!\Delta^*\pr_{2}^*q^*\boxtimes\id)\arrow[d, "\sim"'] &\arrow[l, "\textup{proj}"'] q_!\pr_{1!}(\Delta_!\Delta^*\pr_2^*q^*\boxtimes\pr_1^*)\arrow[d,"\sim"]&\arrow[l, "\,\textup{proj}"'] q_!\pr_{1!}\Delta_!(\Delta^*\pr_{2}^*q^*\boxtimes\Delta^*\pr_{1}^*)\arrow[d, "\sim"] \\
			q_!(\pr_{1!}\Delta_!q^*\boxtimes\id)\arrow[d, "\sim"'] & \arrow[l, "\textup{proj}"] q_!\pr_!(\Delta_!q^*\boxtimes\pr_1^*) & \arrow[l, "\textup{proj}"] q_!\pr_{1!}\Delta_!(q^*\boxtimes\Delta^*\pr_1^*)\arrow[d, "\sim"]\\
			q_!(q^*\boxtimes\id) \arrow[rr,equal] && q_!(q^*\boxtimes\id)\rlap.
		\end{tikzcd}
	\end{equation*}
	Here the top left square as well as the two squares in the middle row commute by naturality, the top right square commutes by Lemma~\ref{lemma:norm-vs-bifunctor-q*} (note that $\Delta$ is $(n-1)$-truncated), and the bottom rectangle commutes by direct inspection. This finishes the proof of the first statement.

	For the second statement, we consider the diagram
	\begin{equation*}
		\begin{tikzcd}
			q_!(q^*\boxtimes\id)\arrow[r, "(q^*\mu\boxtimes\id)"]\arrow[d, "\textup{proj}"', "\sim"] &[2em] q_!(q^*q_!q^*\boxtimes\id)\arrow[d, "\textup{proj}","\sim"'] \arrow[r, "q_!(\Nmadj\boxtimes\id)"] &[3em] q_!(q^*\boxtimes\id)\\
		\id\boxtimes q_!\arrow[r, "\mu\boxtimes\id"'] & q_!q^*\boxtimes\id\arrow[r, "\textup{proj}^{-1}"',"\sim"] & q_!(q^*\boxtimes q^*q_!) \arrow[u, "q_!(q^*\boxtimes\Nmadj)"']
		\end{tikzcd}
	\end{equation*}
	where the left square commutes by naturality while the right-hand square commutes by part $(1)$. As the composite of the top row is the identity by the triangle identity in $\Cc$, the claim follows.
\end{proof}

	Using this, we can now easily prove the theorem:

	\begin{proof}[Proof of Theorem~\ref{thm:semiadd-from-tensoring}]
		It suffices by induction to show that if $\Cc$ is $\Qq_{\le n}$-semiadditive and if $q\colon A\to B$ is any $(n+1)$-truncated map, then $\Nmadj\colon q^*q_!\to\id$ is the counit of an adjunction.

		We claim that the natural map $m\colon\id\to q_!q^*$ defined as the composite
		\begin{equation*}
			\id=\mathbb I\boxtimes\id\xrightarrow{\;\mu\boxtimes\id\;} q_!q^*\mathbb I\boxtimes\id \xrightarrow{\;\textup{proj}^{-1}\,} q_!(q^*\mathbb I\boxtimes q^*)=q_!q^*
		\end{equation*}
		provides a compatible unit, i.e.~we have to verify the triangle identities.	The identity $(\Nmadj q^*)\circ (q^* m)=\id$ is simply a special case of Lemma~\ref{lemma:norm-vs-bifunctor-q*}-$(2)$, while $(q_!\Nmadj)\circ(m q_!)=\id$ follows from Proposition~\ref{prop:norm-vs-bifunctor-q!}-$(2)$ as the projection map $q_!(q^*\boxtimes\id)\to \id\boxtimes q_!$ is simply the identity when restricted to $\mathbb I$ in the first component.
	\end{proof}

	\subsection{Applications}
	The characterization of semiadditivity in terms of the existence of a $\ul\Span(\Qq)$-tensoring has various interesting consequences:

	\begin{theorem}\label{thm:Q-x-out-of-semiadd}
		Let $\Cc,\Dd$ be $\Bb$-categories such that $\Cc$ is $\Qq$-semiadditive and $\Dd$ is $\Qq$-complete. Then $\ul\Fun^{\Qq\text-\times}(\Cc,\Dd)$ is again $\Qq$-semiadditive.
	\begin{proof}
		By Proposition~\ref{prop:fun-colimits}${}^\op$, $\ul\Fun(\Cc,\Dd)$ is $\Qq$-complete. If $F\colon\Cc\to\ul\Fun(\ul A,\Dd)$ defines any object of $\ul\Fun(\Cc,\Dd)(A)$ and $q\colon A\to B$ is any map in $\Qq$, then $q_*F$ is simply given by the composition
		\[
			\Cc\xrightarrow{\;F\;}\ul\Fun(\ul A,\Dd)\xrightarrow{\,q_*\,}\ul\Fun(\ul B,\Dd),
		\]
		hence $\Qq$-continuous as a composition of $\Qq$-continuous functors. In other words, $\ul\Fun^{\Qq\text-\times}(\Cc,\Dd)$ is closed under $\Qq$-limits and hence in particular itself $\Qq$-complete. It will therefore suffice by Theorem~\ref{thm:semiadd-from-tensoring}${}^\op$ to construct a functor
			\begin{equation*}
				\ul\Span(\Qq)\times\ul\Fun^{\Qq\text-\times}(\Cc,\Dd)\to\ul\Fun^{\Qq\text-\times}(\Cc,\Dd)
			\end{equation*}
			preserving $\Qq$-limits in each variable and restricting to the identity on $\{\pt\}\times\ul\Fun^{\Qq\text-\times}(\Cc,\Dd)$. Adjoining over, this amounts to constructing a $\Qq$-continuous section
			\begin{equation}\label{eq:cmon-cotensoring-to-construct}
			\ul\Fun^{\Qq\text-\times}(\Cc,\Dd)\to\ul\Fun^{\Qq\text-\times}(\ul\Span(\Qq),\ul\Fun^{\Qq\text-\times}(\Cc,\Dd))
			\end{equation}
			of the evaluation functor.

			For this we observe that we have by Corollary~\ref{cor:tensoring-from-semiadd}${}^\op$ a functor $-\boxtimes-\colon\ul\Span(\Qq)\times\Cc\to\Cc$ preserving $\Qq$-limits in each variable and restricting to the identity on $\{\pt\}\times\Cc$. Using this, we consider the composite
			\begin{equation*}
				\ul\Fun(\Cc,\Dd)\xrightarrow{\;\boxtimes^*}\ul\Fun(\ul\Span(\Qq)\times\Cc,\Dd)\isoname{\,\textup{adjunction}\,} \ul\Fun\big(\ul\Span(\Qq),\ul\Fun(\Cc,\Dd)\big).
			\end{equation*}
			We claim that this restricts to the desired section $(\ref{eq:cmon-cotensoring-to-construct})$. Indeed, one easily checks that this is $\Qq$-continuous (in fact, it preserves all limits that exist in $\Dd$) and a section. The claim that it lands in $\ul\Fun^{\Qq\text-\times}(\ul\Span(\Qq),\ul\Fun^{\Qq\text-\times}(\Cc,\Dd))$ amounts to saying that for every $\Qq$-continuous $F\colon\Cc\to\Dd^{\ul A}$ the composite $F(\blank\boxtimes\blank)\colon\ul\Span(\Qq)\times\Cc\to\Dd^{\ul A}$ preserves $\Qq$-limits in each variable. This is clear by assumption on $\boxtimes$.
		\end{proof}
	\end{theorem}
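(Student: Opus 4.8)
The plan is to recognize $\ul\Fun^{\Qq\text-\times}(\Cc,\Dd)$ as a $\Qq$-complete $\Bb$-category carrying a $\ul\Span(\Qq)$-tensoring and then to invoke the dual of the tensoring criterion \Cref{thm:semiadd-from-tensoring}. The tensoring will be inherited from the canonical one on the $\Qq$-semiadditive category $\Cc$ supplied by \Cref{cor:tensoring-from-semiadd}.

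First I would check that $\ul\Fun^{\Qq\text-\times}(\Cc,\Dd)$ is $\Qq$-complete. By \Cref{prop:fun-colimits}${}^\op$ the ambient category $\ul\Fun(\Cc,\Dd)$ is $\Qq$-complete, with $\Qq$-limits computed pointwise; and its full subcategory $\ul\Fun^{\Qq\text-\times}(\Cc,\Dd)$ is closed under these, since for $q\colon A\to B$ in $\Qq$ and a family $F\colon\Cc\to\ul\Fun(\ul A,\Dd)$ of $\Qq$-continuous functors the limit $q_*F$ is the composite $\Cc\xrightarrow{F}\ul\Fun(\ul A,\Dd)\xrightarrow{q_*}\ul\Fun(\ul B,\Dd)$ of $\Qq$-continuous functors, hence $\Qq$-continuous.

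By \Cref{thm:semiadd-from-tensoring}${}^\op$ applied with $\Vv=\ul\Span(\Qq)$, it then remains to produce a bifunctor $\ul\Span(\Qq)\times\ul\Fun^{\Qq\text-\times}(\Cc,\Dd)\to\ul\Fun^{\Qq\text-\times}(\Cc,\Dd)$ that preserves $\Qq$-limits in each variable and restricts to the identity on $\{\pt\}\times\ul\Fun^{\Qq\text-\times}(\Cc,\Dd)$; equivalently, currying, a $\Qq$-continuous section of the evaluation functor
\[
\ev_\pt\colon\ul\Fun^{\Qq\text-\times}\big(\ul\Span(\Qq),\ul\Fun^{\Qq\text-\times}(\Cc,\Dd)\big)\longrightarrow\ul\Fun^{\Qq\text-\times}(\Cc,\Dd).
\]
To build it I would take the canonical tensoring $\blank\boxtimes\blank\colon\ul\Span(\Qq)\times\Cc\to\Cc$ of \Cref{cor:tensoring-from-semiadd}${}^\op$ — which preserves $\Qq$-limits in each variable and restricts to the identity on $\{\pt\}\times\Cc$ — and restrict functors along it:
\[
\ul\Fun(\Cc,\Dd)\xrightarrow{\ \boxtimes^*\ }\ul\Fun\big(\ul\Span(\Qq)\times\Cc,\Dd\big)\xrightarrow{\ \sim\ }\ul\Fun\big(\ul\Span(\Qq),\ul\Fun(\Cc,\Dd)\big).
\]
This composite is $\Qq$-continuous (indeed it preserves every limit that exists in $\Dd$, as limits in functor categories are pointwise), it is a section of $\ev_\pt$ because $\boxtimes$ is unital, and it lands in the subcategory of doubly $\Qq$-continuous functors: for any $\Qq$-continuous $F\colon\Cc\to\Dd^{\ul A}$ the composite $F(\blank\boxtimes\blank)$ preserves $\Qq$-limits in each variable, in the first slot because $\boxtimes$ does and $F$ is $\Qq$-continuous, in the second for the same reason.

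I expect the only genuine difficulty to be bookkeeping: tracking which parametrized functor categories sit as full subcategories of which, and checking the doubly-$\Qq$-continuous landing condition carefully, since \Cref{thm:semiadd-from-tensoring} demands a tensoring that is separately $\Qq$-(co)continuous in each slot. No new conceptual ingredient beyond the already-developed $\ul\Span(\Qq)$-tensoring formalism is needed.
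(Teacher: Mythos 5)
Your proposal is correct and follows the paper's own argument essentially verbatim: same reduction via \Cref{thm:semiadd-from-tensoring}${}^\op$, same construction of the section by restriction along the canonical $\ul\Span(\Qq)$-cotensoring of $\Cc$ from \Cref{cor:tensoring-from-semiadd}${}^\op$, and the same verifications. No gaps.
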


	Finally, let us use the tensoring to show that the left and right adjoints $q_!$ and $q_*$ also agree for non-truncated $q$, and to moreover upgrade this to a parametrized comparison.

	\begin{construction}
		Fix $q\colon A\to B$ in $\Qq$, and let $\Cc$ be $\Qq$-semiadditive. We define a natural transformation $m$ from the identity of $\pi_B^*\Cc$ to the composite
		\[
			\pi_B^*\Cc\xrightarrow{\;q^*\;}\Cc(A\times_B\blank)\xrightarrow{\;q_!\;}\pi_B^*\Cc
		\]
		as follows: write $\boxtimes$ for the (essentially unique) $\ul\Span(\Qq)$-tensoring of $\Cc$ and let $\pt=(\id_B\colon B\to B)$ denote the prefered global section of $\pi_B^*\ul\Span(\Qq)$; then we define $m$ as the composite
		\[
			\id_{\pi_B^*\Cc} = \pt\boxtimes\id \xrightarrow{p\boxtimes\id} q_!q^*\pt\boxtimes\id \iso q_!(q^*\pt\boxtimes q^*)=q_!q^*
		\]
		where $p\colon\pt\to q_!q^*\pt$ is represented by the span $B\xleftarrow{\,q\,}A\xrightarrow{\,=\,}B$ and the unlabeled equivalence comes from the projection formula, i.e.~it is the adjunct of the map $q^*\pt\boxtimes q^*\to q^*q_!q^*\pt\boxtimes q^*$ induced by the unit.

		Unravelling definitions, we see that for any $f\colon B'\to B$ and $q'\coloneqq f^*(q)\colon A'\to B'$, the value of $m$ on $f$ is precisely the transformation $m_{q'}\colon \id\to q_!'q^{\prime*}$ considered in the proof of Theorem~\ref{thm:semiadd-from-tensoring}. In particular, we have shown that it is the unit of an adjunction $q^{\prime*}\dashv q'_!$ whenever $q'$ is truncated, with corresponding counit $\Nmadj$.
	\end{construction}

	\begin{theorem}\label{thm:norm-non-truncated}
		Let $\Cc$ be $\Qq$-semiadditive and let $q\colon A\to B$ be any map in $\Qq$, not necessarily truncated. Then the above transformation $m\colon\id\to q_!q^*$ is the unit of a parametrized adjunction $q^*\dashv q_!$ of $\Bb_{/B}$-categories. In particular, there is a natural equivalence $\Nm_q\colon q_!\iso q_*$ of $\Bb_{/B}$-functors.
	\end{theorem}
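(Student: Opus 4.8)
The plan is to reduce to the truncated case, which is exactly what the proof of \Cref{thm:semiadd-from-tensoring} establishes, by a descent argument along a cover of $B$ over which $q$ becomes truncated. Throughout I work with the $\ul{\Span}(\Qq)$-tensoring $\boxtimes$ on $\Cc$ provided by \Cref{cor:tensoring-from-semiadd} (valid since $\Cc$ is $\Qq$-semiadditive, hence $\Qq$-cocomplete, so that $q^*$ has a parametrized left adjoint $q_!$).

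Since $\Qq$ is locally inductible, $q$ is locally truncated, so I would pick a cover $(p_i\colon B_i\to B)_{i\in I}$ such that each base change $q_i\colon A_i\coloneqq B_i\times_B A\to B_i$ is truncated. The restriction-to-slice functor $\pi_{B_i}^*\colon\Cat(\Bb_{/B})\to\Cat(\Bb_{/B_i})$ is a $2$-functor: it carries $\pi_B^*\Cc$ to $\pi_{B_i}^*\Cc$, which is $\pi_{B_i}^{-1}\Qq$-semiadditive by \Cref{rk:norm-res-pb}, it carries $q^*$ to $q_i^*$, and being a $2$-functor it carries the adjunction $q_!\dashv q^*$ to an adjunction with right adjoint $q_i^*$, hence (uniqueness of adjoints) to $q_{i!}\dashv q_i^*$. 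Moreover, by the uniqueness clause of \Cref{cor:tensoring-from-semiadd} applied over $\Bb_{/B_i}$, $\pi_{B_i}^*$ carries $\boxtimes$ to the $\ul{\Span}(\pi_{B_i}^{-1}\Qq)$-tensoring on $\pi_{B_i}^*\Cc$; unwinding the construction preceding this theorem, it follows that $m$ is carried to the transformation $m_{q_i}$ built from that tensoring for the truncated map $q_i$. By the proof of \Cref{thm:semiadd-from-tensoring} --- concretely by \Cref{lemma:norm-vs-bifunctor-q*}(2) and \Cref{prop:norm-vs-bifunctor-q!}(2) --- the transformation $m_{q_i}$ is the unit of a parametrized adjunction $q_i^*\dashv q_{i!}$ of $\Bb_{/B_i}$-categories; write $\Nmadj_{q_i}\colon q_i^*q_{i!}\to\id$ for its (uniquely determined) counit.

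Next I would glue these local adjunctions into a global one. Since $\bigsqcup_i\ul{B_i}\to\ul 1$ is an effective epimorphism in the topos $\Bb_{/B}$, a morphism, or a $2$-morphism, in $\Cat(\Bb_{/B})$ is an equivalence as soon as its restriction along each $\pi_{B_i}$ is; that is, the family $\{\pi_{B_i}^*\}_{i\in I}$ is jointly conservative on $\Cat(\Bb_{/B})$ and on its $2$-morphisms. Now $m$ is already a global transformation $\id_{\pi_B^*\Cc}\to q_!q^*$ with $\pi_{B_i}^*m=m_{q_i}$, and the counit of an adjunction is determined by its unit, so the restrictions of $\Nmadj_{q_i}$ and $\Nmadj_{q_j}$ to $\Bb_{/(B_i\times_B B_j)}$ agree (both being the counit of $q_{ij}^*\dashv q_{ij!}$ with unit $m_{q_{ij}}$), and similarly on higher intersections; by descent they glue to a natural transformation $\Nmadj\colon q^*q_!\to\id_{\Cc(A\times_B\blank)}$ of $\Bb_{/B}$-functors. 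The two triangle identities for $(m,\Nmadj)$ are equations of $2$-morphisms in $\Cat(\Bb_{/B})$, hence may be verified after applying each $\pi_{B_i}^*$, where they hold by construction. Therefore $m$ is the unit of a parametrized adjunction $q^*\dashv q_!$ of $\Bb_{/B}$-categories. For the final assertion, \Cref{cor:Semiadditive_Cats_Have_QLimits} gives that $\Cc$ also admits $\Qq$-limits, so $q^*$ has a parametrized right adjoint $q_*$, with unit $\eta\colon\id\to q_*q^*$; uniqueness of adjoints then furnishes a canonical equivalence $q_!\iso q_*$ of $\Bb_{/B}$-functors, which upon unwinding is the composite $q_!\xrightarrow{\eta q_!}q_*q^*q_!\xrightarrow{q_*\Nmadj}q_*$, i.e.\ the norm map $\Nm_q$.

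The step I expect to be the main obstacle is the descent bookkeeping of the middle two paragraphs: making precise that the $\pi_{B_i}^*$ are jointly conservative on $2$-morphisms of $\Cat(\Bb_{/B})$ --- so that both the gluing of the local counits and the triangle identities reduce to the truncated case --- and checking that the tensoring-defined $m$ and the left adjoint $q_!$ genuinely restrict along each $\pi_{B_i}$ to the data constructed over $\Bb_{/B_i}$. Once these compatibilities are in place, everything else is formal manipulation of adjunctions.
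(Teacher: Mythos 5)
Your proposal is correct in outline, and the final paragraph (passing from $q^*\dashv q_!$ to $\Nm_q\colon q_!\iso q_*$ via $\Qq$-completeness and uniqueness of adjoints) matches the paper. But the middle step — upgrading the local adjunctions to a global one — is handled by a genuinely different route than the paper uses, and the descent bookkeeping there is more delicate than your write-up suggests.

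The paper avoids ever touching the counit or the triangle identities directly. It reformulates the goal as: \emph{$q_!$ admits a left adjoint $q^!$, and the induced map $q^!\to q^*$ is an equivalence}. The existence of $q^!$ then follows from \Cref{prop:adj-criterion-cover}, applied to the covering sieve $\Sigma$ of $\id_B$ consisting of those $f\colon B'\to B$ for which $f^*(q)$ is truncated: for each such $f$ one has the pointwise left adjoint $(q')^*$ (by the truncated case), and — crucially — the Beck--Chevalley condition required by the criterion is exactly the naturality of $m$, so it is automatic. The criterion (which in turn rests on \cite{martiniwolf2021limits}*{Remark~3.3.6}) packages all of the Čech coherence in one stroke. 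The final comparison $q^!\iso q^*$ is a statement about a single natural transformation being invertible, which reduces pointwise to the cover $\Sigma$ with no coherence worries.

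Your route instead glues the local counits $\Nmadj_{q_i}$ and then verifies the triangle identities after restricting along each $\pi_{B_i}^*$. This can be made to work, but not for the reason you give. Joint conservativity of the $\pi_{B_i}^*$ on $2$-morphisms is not enough: gluing a natural transformation from its restrictions requires a full Čech descent datum (coherent homotopies on triple overlaps and beyond), and ``verifying an equation of $2$-morphisms locally'' requires exhibiting, not just a local homotopy over each $B_i$, but a coherent family of such; equality in $\pi_0$ of a limit of spaces is not detected by equality in the $\pi_0$'s of the terms. What rescues the argument is that the space of counits making $m$ the unit of an adjunction is $(-1)$-truncated (either empty or contractible), so ``$m$ extends to an adjunction'' is a proposition — a subterminal sheaf on $\Bb_{/B}$ — and propositions do satisfy descent: nonempty locally implies nonempty globally, with no coherence data to supply. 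If you make that observation explicit, your middle paragraphs become rigorous and your proof goes through. Alternatively, you can short-circuit the whole issue by reformulating as the paper does and invoking \Cref{prop:adj-criterion-cover}, which is exactly the general tool that hides this $(-1)$-truncatedness once and for all.

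One small additional remark: you verify the triangle identities over each $B_i$ by appealing to \Cref{lemma:norm-vs-bifunctor-q*}(2) and \Cref{prop:norm-vs-bifunctor-q!}(2), which is fine — those are precisely what the paper uses to establish the truncated case in the proof of \Cref{thm:semiadd-from-tensoring}, and the paper itself records this reduction in the sentence ``In particular, we have shown that it is the unit of an adjunction $q^{\prime*}\dashv q'_!$ whenever $q'$ is truncated'' just before the theorem, so you are in good company there.
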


	As explained before, the parametrized transformation $\Nm_q$ recovers the usual inductively defined norm on underlying categories whenever $q$ is truncated.

	\begin{proof}
		Fix $q\colon A\to B$ in $\Qq$; to simplify notation, we will replace $\Bb$ by $\Bb_{/B}$, so that $B$ is terminal. We have to show that $m$ is the unit of an adjunction, or equivalently that $q_!$ admits a left adjoint $q^!$ and that the adjoined map $q^!\to q^*$ is an equivalence.

		For the construction of $q^!$, we once more consider the covering sieve $\Sigma\subseteq\Bb$ of the terminal object given by those $f\colon B'\to 1$ such that the pulled back map $q'\coloneqq f^*(q)\colon B'\times A\to B'$ is truncated. By the above, we then have for each such $q'$ an adjunction $(q')^! = q^{\prime*}\dashv q'_!$ with unit $m_{q'}$; the fact that $m$ is a natural transformation of $\Bb$-functors then translates to saying that for every map $g\colon B''\to B'$ in $\Sigma$ the Beck--Chevalley transformation $(q'')^!(A\times g)^*\to g^*(q')^!$ of $(A\times g)^*$ is just the naturality equivalence $(q'')^*(A\times g)^*\to g^*(q')^*$ again, in particular invertible. Thus, Proposition~\ref{prop:adj-criterion-cover} shows that the parametrized left adjoint $q^!$ exists.

		Consider now the natural transformation of $\Bb$-functors $\widetilde m\colon q^!\to q^*$ induced by $m$. By construction of $q^!$, $\tilde m_{B'}$ is an equivalence (even the identity) whenever $B'\in\Sigma$. Given an arbitrary $B\in\Bb$, we can now cover it by objects of $\Sigma$ (for example via the projections $C\times B\to B$ with $C\in\Sigma$). It therefore follows immediately from naturality and descent that $\widetilde m$ is an equivalence as claimed.
	\end{proof}

	\begin{corollary}\label{cor:Characterization_Q_Semiadditivity_nontruncated}
		Let $\Cc$ be a $\Qq$-complete and $\Qq$-cocomplete $\Bb$-category. The following are equivalent:
		\begin{enumerate}
			\item $\Cc$ is $\Qq$-semiadditive.
			\item For every $q\colon A\to B$ in $\Qq$ there exists an equivalence of parametrized functors $q_!\simeq q_*\colon\ulFun(\ul A,\pi_B^*\Cc)\to\pi_B^*\Cc$ (a priori unrelated to the norms).
			\item For every $q\colon A\to B$ in $\Qq$, the functor $q_!\colon\ulFun(\ul A,\pi_B^*\Cc)\to\pi_B^*\Cc$ admits a parametrized left adjoint.
			\item For every $q\colon A\to B$ in $\Qq$, the functor $q_!\colon\ul\Fun(\ul A,\pi_B^*\Cc)\to\pi_B^*\Cc$ preserves $\Qq$-limits.
		\end{enumerate}
		\begin{proof}
			Clearly, $(2)\Rightarrow(3)\Rightarrow(4)$. The implication $(1)\Rightarrow(2)$ is the content of the previous theorem, while $(4)\Rightarrow(1)$ follows from Proposition~\ref{prop:Characterization_Q_Semiadditivity}.
		\end{proof}
	\end{corollary}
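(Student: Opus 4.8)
The plan is to verify the cycle of implications $(1)\Rightarrow(2)\Rightarrow(3)\Rightarrow(4)\Rightarrow(1)$, noting that essentially all of the substance has already been established by the preceding results. For $(1)\Rightarrow(2)$ I would simply invoke \Cref{thm:norm-non-truncated}: if $\Cc$ is $\Qq$-semiadditive, then for every (not necessarily truncated) map $q\colon A\to B$ in $\Qq$ the transformation $m$ built out of the $\ul{\Span}(\Qq)$-tensoring is the unit of a parametrized adjunction $q^*\dashv q_!$, and in particular $q_!\simeq q_*$ as $\Bb_{/B}$-functors. This is exactly statement $(2)$, so there is nothing further to do.

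For $(2)\Rightarrow(3)$: since $\Cc$ is $\Qq$-complete, the restriction $q^*\colon\pi_B^*\Cc\to\ulFun(\ul A,\pi_B^*\Cc)$ admits a parametrized right adjoint $q_*$; transporting the adjunction $q^*\dashv q_*$ along a chosen parametrized equivalence $q_!\simeq q_*$ then exhibits $q^*$ itself as a parametrized left adjoint of $q_!$. The implication $(3)\Rightarrow(4)$ is the formal observation that a parametrized right adjoint preserves all parametrized limits existing in its source (the relevant $\Qq$-limits existing by $\Qq$-completeness of $\Cc$, cf.\ \Cref{prop:fun-colimits}${}^\op$). Finally, $(4)\Rightarrow(1)$: restricting condition $(4)$ to \emph{truncated} maps $q$ gives precisely criterion $(3)$ of \Cref{prop:Characterization_Q_Semiadditivity}, and since $\Cc$ is assumed $\Qq$-complete and $\Qq$-cocomplete, that proposition yields $\Qq$-semiadditivity.

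I do not expect a genuine obstacle here: the corollary merely repackages \Cref{thm:norm-non-truncated} and \Cref{prop:Characterization_Q_Semiadditivity}, where the hard work — extending the inductively defined norm to an honest equivalence of parametrized functors for arbitrary (possibly non-truncated) maps, via the $\ul{\Span}(\Qq)$-tensoring and a descent argument along covering sieves — has already been carried out. The only point deserving a moment of care is to keep all adjunctions parametrized throughout, i.e.\ working in the homotopy $2$-category of $\Bb_{/B}$-categories rather than with underlying adjunctions; this matches how $q_!$, $q_*$, and the norm are set up in the preceding construction, so it causes no difficulty.
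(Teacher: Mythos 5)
Your argument is correct and matches the paper's proof exactly: the paper dispatches $(2)\Rightarrow(3)\Rightarrow(4)$ as "clear," uses \Cref{thm:norm-non-truncated} for $(1)\Rightarrow(2)$, and invokes \Cref{prop:Characterization_Q_Semiadditivity} for $(4)\Rightarrow(1)$. You have merely spelled out the two "clear" steps (transporting the adjunction along $q_!\simeq q_*$, and right adjoints preserving limits), which is a sound and faithful elaboration of the same route.
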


	\section{The universal property of \texorpdfstring{$\Qq$}{Q}-commutative monoids}
	In this section, we introduce the $\Bb$-category $\ul{\CMon}^{\Qq}(\Dd)$ of \textit{$\Qq$-commutative monoids} in a $\Qq$-complete $\Bb$-category $\Dd$, and show that the forgetful functor $\ul{\CMon}^{\Qq}(\Dd) \to \Dd$ exhibits it as the \textit{$\Qq$-semiadditive completion} of $\Dd$.

	\subsection{\texorpdfstring{\except{toc}{$\bm{\Qq}$}\for{toc}{$\Qq$}}{Q}-commutative monoids}\label{subsec:univ_prop_cmon}
	The following is the key definition of this section:
	\begin{definition}[Commutative monoids]
		Given a $\Qq$-complete $\Bb$-category $\Dd$, we define its $\Bb$-category of \textit{$\Qq$-commutative monoids} as
		\[
		\ul\CMon^{\Qq}(\Dd) \coloneqq \ul{\Fun}^{\Qq\text-\times}(\ul{\Span}(\Qq), \Dd).
		\]
		We let $\mathbb{U} \coloneqq \ev_{\pt}\colon \ul\CMon^{\Qq}(\Dd) \to \Dd$ denote the evaluation functor at the point.
	\end{definition}

	Informally speaking, we may think of a $\Qq$-commutative monoid in $\Dd$ as a global section $M$ of $\Dd$ equipped with certain `parametrized addition/transfer maps.' Recall from \Cref{prop:uni_prop_U_Q}$\catop$ that every global section $M$ uniquely extends to a $\Qq$-continuous functor $\ulbbU{\Qq}\catop \to \Dd$ given at level $B \in \Bb$ by sending a map $q\colon A \to B$ in $\Qq$ to the $q$-indexed product $(M_B)^A \coloneqq q_*q^*M_B \in \Dd(B)$. Enhancing $M$ to a $\Qq$-commutative monoid in $\Dd$ is then equivalent to providing an extension of this functor along the inclusion $\ulbbU{\Qq}\catop \hookrightarrow \ul{\Span}(\Qq)$, which we may interpret as providing a suitably coherent collection of `addition/transfer maps' $\smallint_{\!q}\colon (M_B)^A \to M_B$.

	\begin{remark}
	For $A\in\Bb$ a $\Bb_{/A}$-functor $F\colon \ul\Span(\pi_A^{-1}\Qq)\simeq\pi_A^*\ul\Span(\Qq)\to\pi_A^*\Dd$ belongs to $\ul\CMon^{\Qq}(\Dd)(A)$ if and only if it is a Segal functor in the sense of Definition~\ref{defi:segal-functor}, see Proposition~\ref{prop:Functor_CoSegal_Iff_QColimit_Preserving}${}^\op$.
	\end{remark}

	Let us note the following immediate consequence of Theorem~\ref{thm:Q-x-out-of-semiadd}:

	\begin{corollary}
		In the above situation, $\ul\CMon^\Qq(\Dd)$ is $\Qq$-semiadditive.\qed
	\end{corollary}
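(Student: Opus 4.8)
The plan is to deduce the statement as a direct consequence of \Cref{thm:Q-x-out-of-semiadd}. First I would observe that \Cref{prop:Span_Complete_And_Cocomplete}(4) tells us that the $\Bb$-category $\ul\Span(\Qq)$ is itself $\Qq$-semiadditive. Since $\Dd$ is assumed to be $\Qq$-complete, the hypotheses of \Cref{thm:Q-x-out-of-semiadd} are satisfied with $\Cc \coloneqq \ul\Span(\Qq)$, and that theorem therefore yields that $\ul\Fun^{\Qq\text-\times}(\ul\Span(\Qq),\Dd)$ is $\Qq$-semiadditive. By the very definition of $\ul\CMon^{\Qq}(\Dd)$ this functor category is exactly $\ul\CMon^{\Qq}(\Dd)$, so the claim follows.

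The only point worth flagging — and it is not really an obstacle — is the same universe-enlargement caveat that already appears in the proof of \Cref{thm:Q-x-out-of-semiadd}: that result is phrased for an arbitrary $\Qq$-semiadditive source, while strictly speaking its proof passes to a larger universe to regard $\ul\Span(\Qq)$ as small; this causes no difficulty here since the property of being $\Qq$-semiadditive is insensitive to such a change. All the genuine work has already been carried out upstream, namely the construction of the canonical $\ul\Span(\Qq)$-tensoring on a $\Qq$-semiadditive $\Bb$-category (\Cref{cor:tensoring-from-semiadd}) and the converse criterion \Cref{thm:semiadd-from-tensoring}, which together are what power \Cref{thm:Q-x-out-of-semiadd}.

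Accordingly, the write-up is a single sentence invoking \Cref{prop:Span_Complete_And_Cocomplete} and \Cref{thm:Q-x-out-of-semiadd}, and no separate argument is needed.
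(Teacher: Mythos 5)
Your proof is correct and is precisely the argument the paper intends: the corollary is stated as an "immediate consequence of Theorem~\ref{thm:Q-x-out-of-semiadd}," applied with $\Cc = \ul\Span(\Qq)$ (which is $\Qq$-semiadditive by \Cref{prop:Span_Complete_And_Cocomplete}) and the given $\Qq$-complete $\Dd$. The universe-enlargement remark is a reasonable aside but not something the paper dwells on.
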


	In fact, it is the universal $\Qq$-semiadditive completion of $\Dd$ in the following sense:

	\begin{theorem}
		\label{thm:universal_prop_par_Qcom_monoids}
		We have an adjunction $\textup{incl}\colon\Cat^{\Qq\text-\oplus}_{\Bb} \rightleftarrows \Cat^{\Qq\text-\times}_{\Bb}\noloc\ul\CMon^\Qq$, with counit given by the evaluation functor $\mathbb U=\ev_\pt\colon\ul\CMon^\Qq(\Dd)\to\Dd$.
		\begin{proof}
			First observe that $\ul\CMon^\Qq$ indeed lands in $\Qq$-semiadditive categories by the previous corollary. Moreover, Proposition~\ref{prop:fun-colimits}${}^\op$ shows that it preserves $\Qq$-continuous functors and that $\mathbb U$ is $\Qq$-continuous. Thus, it only remains to show that $\mathbb U\colon\ul\CMon^\Qq(\Dd)\to\Dd$ is an equivalence for every $\Qq$-semiadditive $\Dd$ and that $\ul\CMon^\Qq(\mathbb U)\colon\ul\CMon^\Qq(\ul\CMon^\Qq(\Dd))\to\ul\CMon^\Qq(\Dd)$ is an equivalence for every $\Qq$-complete $\Dd$.

			The first statement is precisely the content of Theorem~\ref{thm:universal_prop_par_spans}. Similarly, appealing to the previous corollary once more, we know that $\mathbb U_{\ul\CMon^\Qq(\Dd)}$ is an equivalence for every $\Qq$-complete $\Dd$. Thus, it will suffice for the second statement that the automorphism of $\ul\Fun(\ul\Span(\Qq),\ul\Fun(\ul\Span(\Qq),\Dd))$ exchanging the two span-factors induces an automorphism of $\ul\CMon^\Qq(\ul\CMon^\Qq(\Dd))$. But this follows immediately from the observation that the adjunction equivalence $\ul\Fun(\ul\Span(\Qq),\ul\Fun(\ul\Span(\Qq),\Dd))\simeq\ul\Fun(\ul\Span(\Qq)\times\ul\Span(\Qq),\Dd)$ identifies $\ul\CMon^\Qq(\ul\CMon^\Qq(\Dd))$ with the full subcategory of functors preserving $\Qq$-limits in each variable separately.
		\end{proof}
	\end{theorem}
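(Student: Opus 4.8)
The plan is to deduce the adjunction from the dual of the standard recognition criterion for (co)localization functors (see \cite{HTT}*{Proposition~5.2.7.4}): an endofunctor $R$ of an $\infty$-category $\Mm$ equipped with a natural transformation $\varepsilon\colon R\Rightarrow\id_\Mm$ is a colocalization onto its essential image, with counit $\varepsilon$, as soon as $R\varepsilon$ and $\varepsilon R$ are equivalences. So first I would check that the data is well-posed. Since $\ul{\Span}(\Qq)$ is $\Qq$-semiadditive (\Cref{prop:Span_Complete_And_Cocomplete}), \Cref{thm:Q-x-out-of-semiadd} shows that $\ul\CMon^\Qq(\Dd)=\ul{\Fun}^{\Qq\text-\times}(\ul{\Span}(\Qq),\Dd)$ is $\Qq$-semiadditive whenever $\Dd$ is $\Qq$-complete; by \Cref{prop:fun-colimits}${}^\op$ the construction $\ul\CMon^\Qq$ carries $\Qq$-continuous functors to $\Qq$-continuous functors (which between $\Qq$-semiadditive categories are automatically $\Qq$-semiadditive by \Cref{cor:Functor_Preserves_Limits_Iff_Preserves_Colimits}), and $\mathbb U=\ev_\pt$, being a restriction functor, is $\Qq$-continuous. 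Using \Cref{cor:Functor_Preserves_Limits_Iff_Preserves_Colimits} once more, the inclusion $\Cat^{\Qq\text-\oplus}_\Bb\hookrightarrow\Cat^{\Qq\text-\times}_\Bb$ is fully faithful onto the full subcategory of $\Qq$-semiadditive $\Bb$-categories. Thus $R\coloneqq\textup{incl}\circ\ul\CMon^\Qq$ is an endofunctor of $\Cat^{\Qq\text-\times}_\Bb$ together with $\varepsilon\coloneqq\mathbb U\colon R\Rightarrow\id$, and it remains to check that $\varepsilon R$ and $R\varepsilon$ are equivalences (the essential image of $R$ being then automatically the right one).

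The two statements involving $\varepsilon R$ and the essential image are immediate from the universal property of parametrized spans. Indeed, \Cref{thm:universal_prop_par_spans} says exactly that $\mathbb U_\Dd\colon\ul\CMon^\Qq(\Dd)\to\Dd$ is an equivalence for every $\Qq$-semiadditive $\Dd$; applied to the (automatically $\Qq$-semiadditive) category $\ul\CMon^\Qq(\Dd)$ this gives that $\varepsilon R=\mathbb U_{\ul\CMon^\Qq(\Dd)}$ is an equivalence for arbitrary $\Qq$-complete $\Dd$, and it simultaneously identifies the essential image of $R$ with the $\Qq$-semiadditive $\Bb$-categories.

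The remaining point — that $R\varepsilon=\ul\CMon^\Qq(\mathbb U_\Dd)$ is an equivalence for every $\Qq$-complete $\Dd$ — is the one bit of genuine content, and I would handle it by a symmetry argument rather than a direct computation. Currying twice (using that $\pi_B^*$ preserves internal homs, \Cref{rmk:fun-slice}, together with the description of $\ul{\Fun}^{\Qq\text-\times}$ from \Cref{const:coprod_pres_func_cat}${}^\op$) one identifies $\ul\CMon^\Qq(\ul\CMon^\Qq(\Dd))$ with the full $\Bb$-subcategory of $\ul\Fun(\ul{\Span}(\Qq)\times\ul{\Span}(\Qq),\Dd)$ consisting of the bifunctors preserving $\Qq$-limits in each variable separately. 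Under this identification, $\ul\CMon^\Qq(\mathbb U_\Dd)$ and $\mathbb U_{\ul\CMon^\Qq(\Dd)}$ become the two functors given by restricting the second (respectively the first) $\ul{\Span}(\Qq)$-factor along $\pt\colon\ul 1\to\ul{\Span}(\Qq)$, so they agree after precomposition with the swap automorphism $\sigma$ of $\ul{\Span}(\Qq)\times\ul{\Span}(\Qq)$. Since preserving $\Qq$-limits in each variable separately is manifestly symmetric in the two factors, $\sigma$ restricts to an automorphism of $\ul\CMon^\Qq(\ul\CMon^\Qq(\Dd))$; hence $R\varepsilon$ is an equivalence because $\varepsilon R$ is. Feeding this into the colocalization criterion then yields the adjunction $\textup{incl}\dashv\ul\CMon^\Qq$ with counit $\mathbb U$. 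The main obstacle, such as it is, lies entirely in this last paragraph: making the identification of $\ul\CMon^\Qq(\ul\CMon^\Qq(\Dd))$ with a two-variable functor category precise and compatible with the two evaluation maps; everything else is a formal consequence of \Cref{thm:universal_prop_par_spans} and \Cref{thm:Q-x-out-of-semiadd}.
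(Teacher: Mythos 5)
Your proposal is correct and follows essentially the same route as the paper: reduce the adjunction to the two conditions that $\mathbb U_\Dd$ is an equivalence on $\Qq$-semiadditive $\Dd$ (via Theorem~\ref{thm:universal_prop_par_spans}) and that $\ul\CMon^\Qq(\mathbb U_\Dd)$ is an equivalence on $\Qq$-complete $\Dd$ (via the swap-symmetry of $\ul\CMon^\Qq(\ul\CMon^\Qq(\Dd))$ as bifunctors preserving $\Qq$-limits in each variable). The only cosmetic difference is that you make the underlying colocalization criterion explicit, whereas the paper asserts the reduction directly.
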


	We can further refine this universal property to a statement about parametrized functor categories, generalizing previous results due to Nardin for equivariant semiadditivity \cite{nardin2016exposeIV}*{Corollary~5.11.1 and Theorem~6.5} and due to Harpaz for higher non-parametrized semiadditivity \cite{harpaz2020ambidexterity}*{Corollary~5.15}:

	\begin{theorem}
		\label{thm:universal_prop_par_Qcom_monoids2}
		Let $\Cc$ be $\Qq$-semiadditive and let $\Dd$ be $\Qq$-complete. Then postcomposition with $\mathbb U$ defines an equivalence of $\Bb$-categories
		\begin{equation*}
			\ul\Fun^{\Qq\text-\times}(\Cc,\ul\CMon^\Qq(\Dd))\iso\ul\Fun^{\Qq\text-\times}(\Cc,\Dd).
		\end{equation*}
		\begin{proof}
			It suffices to show that for any $\Qq$-complete $\Bb$-category $\Tt$ the induced map
			\begin{equation}\label{diag:Q-semiadd-fun-univ-prop-hom-test}
				{\hom}_{\Cat(\Bb)^{\Qq\text-\times}}\big(\Tt,\ul\Fun^{\Qq\text-\times}(\Cc,\ul\CMon^\Qq(\Dd))\big)\to{\hom}_{\Cat(\Bb)^{\Qq\text-\times}}\big(\Tt,\ul\Fun^{\Qq\text-\times}(\Cc,\Dd)\big)
			\end{equation}
			is an equivalence. However, using the adjunction equivalence for $\ul\Fun$ and keeping track of $\Qq$-limit conditions as in the proof of the previous theorem, this map agrees up to equivalence with the map
			\begin{equation*}
				{\hom}_{\Cat^{\Qq\text-\times}_\Bb}\big(\Cc,\ul\CMon^\Qq(\ul\Fun^{\Qq\text-\times}(\Tt,\Dd))\big)\to{\hom}_{\Cat^{\Qq\text-\times}_\Bb}\big(\Cc,\ul\Fun^{\Qq\text-\times}(\Tt,\Dd)\big)
			\end{equation*}
			induced by $\mathbb U$, so this is a consequence of the previous theorem.
		\end{proof}
	\end{theorem}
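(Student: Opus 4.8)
The statement is the parametrized enhancement of Theorem~\ref{thm:universal_prop_par_Qcom_monoids}: the latter identifies $\ul\CMon^\Qq$ as a right adjoint with counit $\mathbb U$, which is only an equivalence of \emph{mapping spaces}, whereas here we want an equivalence of \emph{parametrized functor categories}. So the plan is to bootstrap from Theorem~\ref{thm:universal_prop_par_Qcom_monoids}. First I would record the relevant closure properties: by Theorem~\ref{thm:Q-x-out-of-semiadd} both $\ul\Fun^{\Qq\text-\times}(\Cc,\ul\CMon^\Qq(\Dd))$ and $\ul\Fun^{\Qq\text-\times}(\Cc,\Dd)$ are $\Qq$-semiadditive (using that $\Cc$ is $\Qq$-semiadditive and that $\ul\CMon^\Qq(\Dd),\Dd$ are $\Qq$-complete), and postcomposition with the $\Qq$-continuous functor $\mathbb U=\ev_\pt$ is itself $\Qq$-continuous by Proposition~\ref{prop:fun-colimits}${}^\op$. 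Hence the comparison functor lives in $\Cat(\Bb)^{\Qq\text-\times}$, and it is an equivalence there if and only if it is an equivalence of $\Bb$-categories. By the Yoneda lemma in $\Cat(\Bb)^{\Qq\text-\times}$ it therefore suffices to prove that for every $\Qq$-complete $\Bb$-category $\Tt$ the induced map
\[
{\hom}_{\Cat(\Bb)^{\Qq\text-\times}}\big(\Tt,\ul\Fun^{\Qq\text-\times}(\Cc,\ul\CMon^\Qq(\Dd))\big)\longrightarrow{\hom}_{\Cat(\Bb)^{\Qq\text-\times}}\big(\Tt,\ul\Fun^{\Qq\text-\times}(\Cc,\Dd)\big)
\]
is an equivalence of spaces.

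Next I would reduce this to Theorem~\ref{thm:universal_prop_par_Qcom_monoids} by currying. Unwinding the cartesian closure of $\Cat(\Bb)$, a point of either side is a $\Bb$-functor $\Tt\times\Cc\to\Ee$ (for $\Ee\in\{\ul\CMon^\Qq(\Dd),\Dd\}$) preserving $\Qq$-limits in each of the two variables separately; this condition is manifestly symmetric in $\Tt$ and $\Cc$, so (exactly as in the last step of the proof of Theorem~\ref{thm:universal_prop_par_Qcom_monoids}) there is an equivalence
\[
{\hom}_{\Cat(\Bb)^{\Qq\text-\times}}\big(\Tt,\ul\Fun^{\Qq\text-\times}(\Cc,\Ee)\big)\simeq{\hom}_{\Cat(\Bb)^{\Qq\text-\times}}\big(\Cc,\ul\Fun^{\Qq\text-\times}(\Tt,\Ee)\big)
\]
natural in $\Ee$. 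Recalling $\ul\CMon^\Qq(\Ff)=\ul\Fun^{\Qq\text-\times}(\ul\Span(\Qq),\Ff)$ and currying once more in the $\Tt$- and $\ul\Span(\Qq)$-variables yields $\ul\Fun^{\Qq\text-\times}(\Tt,\ul\CMon^\Qq(\Dd))\simeq\ul\CMon^\Qq(\ul\Fun^{\Qq\text-\times}(\Tt,\Dd))$; since $\mathbb U=\ev_\pt$ is evaluation at the \emph{same} object $\pt\in\ul\Span(\Qq)$ on both sides, postcomposition with $\mathbb U_\Dd$ on the left is carried to postcomposition with $\mathbb U_{\ul\Fun^{\Qq\text-\times}(\Tt,\Dd)}$ on the right. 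Here $\ul\Fun^{\Qq\text-\times}(\Tt,\Dd)$ is $\Qq$-complete (as in the proof of Theorem~\ref{thm:Q-x-out-of-semiadd}, the $\Qq$-continuous functors are closed under $\Qq$-limits inside $\ul\Fun(\Tt,\Dd)$), so $\ul\CMon^\Qq$ is defined on it; moreover by Corollary~\ref{cor:Functor_Preserves_Limits_Iff_Preserves_Colimits} the mapping space ${\hom}_{\Cat(\Bb)^{\Qq\text-\times}}(\Cc,-)$ out of the $\Qq$-semiadditive $\Cc$ agrees with ${\hom}_{\Cat(\Bb)^{\Qq\text-\oplus}}(\Cc,-)$ on $\Qq$-semiadditive targets. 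Thus the map in question is the postcomposition map induced by the counit $\mathbb U_{\ul\Fun^{\Qq\text-\times}(\Tt,\Dd)}$ of the adjunction $\mathrm{incl}\dashv\ul\CMon^\Qq$ of Theorem~\ref{thm:universal_prop_par_Qcom_monoids}, which is inverse to the adjunction equivalence and hence an equivalence.

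I expect the main obstacle to be the bookkeeping in the middle step: making the currying equivalence honestly natural in $\Ee$ and checking that the two copies of $\mathbb U$ are matched under it. The cleanest way to handle this — and the reason I would phrase the argument through mapping spaces rather than directly on $\Bb$-categories — is that ``preserves $\Qq$-limits in each variable separately'' is a symmetric condition cutting out a full subspace of ${\hom}_{\Cat(\Bb)}(\Tt\times\Cc,\Ee)$, so the required equivalences and compatibilities become immediate once everything is expressed at that level. A slightly slicker but essentially equivalent route would bypass $\Tt$ entirely: construct a natural equivalence $\ul\Fun^{\Qq\text-\times}(\Cc,\ul\CMon^\Qq(\Dd))\simeq\ul\CMon^\Qq(\ul\Fun^{\Qq\text-\times}(\Cc,\Dd))$ of $\Bb$-categories over $\ul\Fun^{\Qq\text-\times}(\Cc,\Dd)$ and then apply Theorem~\ref{thm:universal_prop_par_spans} to the $\Qq$-semiadditive category $\ul\Fun^{\Qq\text-\times}(\Cc,\Dd)$; but this merely relocates the same identification one categorical level up, so I would keep the $\Tt$-based formulation.
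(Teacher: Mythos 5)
Your proposal is correct and follows essentially the same route as the paper: test against $\Qq$-continuous functors out of an arbitrary $\Qq$-complete $\Tt$, curry to swap $\Tt$ and $\Cc$, identify $\ul\Fun^{\Qq\text-\times}(\Tt,\ul\CMon^\Qq(\Dd))$ with $\ul\CMon^\Qq(\ul\Fun^{\Qq\text-\times}(\Tt,\Dd))$ matching the two copies of $\mathbb U$, and invoke Theorem~\ref{thm:universal_prop_par_Qcom_monoids}. You spell out the Yoneda reduction and the closure properties in more detail than the paper, but the argument is the same.
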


	We close this subsection by giving two variants of the above universal property. Both of these rely on the following observation:

	\begin{lemma}\label{lemma:U-pres-refl-lim}
		Let $\mathcal R\subseteq\Bb$ be any local class. If $\Dd$ is $\mathcal R$-complete and $\Qq$-complete, then $\ul\CMon^\Qq(\Dd)$ is $\mathcal R$-complete and $\mathbb U\colon\ul\CMon^\Qq(\Dd)\to\Dd$ preserves and reflects $\mathcal R$-limits.

		Similarly, if $K$ is any non-parametrized category and $\Dd$ admits fiberwise $K$-shaped limits, then so does $\ul\CMon^\Qq(\Dd)$, and $\mathbb U$ preserves and reflects $K$-shaped limits.
		\begin{proof}
			Proposition~\ref{prop:fun-colimits} shows that the full functor category $\Fun(\ul\Span(\Qq),\Dd)$ has all $\mathcal R$-limits if $\Dd$ has them and that the evaluation functor $\ev_\pt$ preserves $\mathcal R$-limits in this case. As $\CMon^\Qq(\Dd)\subseteq\ul\Fun(\ul\Span(\Qq),\Dd)$ is closed under $\mathcal R$-limits by the same argument as in the beginning of the proof of Theorem~\ref{thm:Q-x-out-of-semiadd}, we then get the same statement for $\mathbb U\colon\ul\CMon^\Qq(\Dd)\to\Dd$.

			The existence and preservation of fiberwise limits follows in the same way from Remark~\ref{rk:fun-colimits-full}. To finish the proof it then suffices to prove that $\mathbb U$ is conservative for every $\Qq$-complete $\Dd$. But $\mathbb U$ factors as the composite
			\[
				\ul\Fun^{\Qq\text-\times}(\ul\Span(\Qq),\Dd)\xrightarrow{\,\textup{res}\;}\ul\Fun^{\Qq\text-\times}(\ulbbU{\Qq}^\op,\Dd)\xrightarrow{\;\ev\;}\Dd
			\]
			and the first functor is conservative as $\ulbbU{\Qq}^\op\subseteq\ul\Span(\Qq)$ is a wide subcategory, while the second functor is even an equivalence by Proposition~\ref{prop:uni_prop_U_Q}${}^\op$.
		\end{proof}
	\end{lemma}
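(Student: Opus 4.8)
The plan is to deduce everything from two ingredients: the pointwise computation of (co)limits in parametrized functor categories, and the conservativity of $\mathbb U=\ev_\pt$. First I would apply the opposite of \Cref{prop:fun-colimits} to the local class $\mathcal R$: this shows that whenever $\Dd$ is $\mathcal R$-complete, the full functor $\Bb$-category $\ul\Fun(\ul\Span(\Qq),\Dd)$ is $\mathcal R$-complete with $\mathcal R$-limits computed pointwise, and that every evaluation functor, in particular $\mathbb U$, preserves $\mathcal R$-limits. The analogous input for the second assertion of the lemma is \Cref{rk:fun-colimits-full}, which yields the same conclusions for fiberwise $K$-shaped limits.

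Next I would check that the full $\Bb$-subcategory $\ul\CMon^\Qq(\Dd)=\ul\Fun^{\Qq\text-\times}(\ul\Span(\Qq),\Dd)$ is closed under $\mathcal R$-limits (respectively fiberwise $K$-shaped limits) inside $\ul\Fun(\ul\Span(\Qq),\Dd)$, by the same argument as at the start of the proof of \Cref{thm:Q-x-out-of-semiadd}. Concretely, membership in $\ul\CMon^\Qq(\Dd)(A)$ is the Segal condition of \Cref{defi:segal-functor}, i.e.\ the requirement that a certain comparison map $F(q)\to q_*F(\pt)$ be an equivalence for every relevant $q$; since the functors $q_*$ are right adjoints, they commute with the pointwise $\mathcal R$-limits in question, so the comparison map of a pointwise limit of Segal functors is the limit of the individual comparison maps and hence again an equivalence. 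Together with the previous paragraph this shows that $\ul\CMon^\Qq(\Dd)$ is $\mathcal R$-complete (respectively admits fiberwise $K$-shaped limits) and that $\mathbb U$ preserves these limits.

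It then remains to upgrade ``preserves'' to ``preserves and reflects''. Here I would use the formal fact that a conservative functor which preserves $J$-shaped limits and whose source admits $J$-shaped limits automatically reflects them: given a cone over $J$ whose image is a limit cone, compare it with an honest limit cone and apply conservativity to the comparison map. This reduces the entire lemma to showing that $\mathbb U$ is conservative for every $\Qq$-complete $\Dd$, which I would prove by factoring it as
\[
\ul\Fun^{\Qq\text-\times}(\ul\Span(\Qq),\Dd)\xrightarrow{\,\textup{res}\;}\ul\Fun^{\Qq\text-\times}(\ulbbU{\Qq}^\op,\Dd)\xrightarrow{\;\ev\;}\Dd,
\]
where the first map is restriction along the inclusion $\ulbbU{\Qq}^\op\hookrightarrow\ul\Span(\Qq)$ of the left-pointing spans from \Cref{prop:Span_Complete_And_Cocomplete} --- a wide subcategory, so restriction along it is conservative --- and the second map is an equivalence by the opposite of \Cref{prop:uni_prop_U_Q}. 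I do not anticipate a genuine obstacle; the only point that merits a little care is confirming that the closure-under-$\mathcal R$-limits step only uses that the relevant $q_*$ are right adjoints together with the pointwise description of limits in functor categories, exactly as in the proof of \Cref{thm:Q-x-out-of-semiadd}.
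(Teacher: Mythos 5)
Your argument is correct and follows the paper's proof essentially step for step: pointwise $\mathcal R$-limits in $\ul\Fun(\ul\Span(\Qq),\Dd)$ via \Cref{prop:fun-colimits}${}^\op$ (respectively \Cref{rk:fun-colimits-full} for fiberwise limits), closure of the Segal subcategory under those limits by the argument from \Cref{thm:Q-x-out-of-semiadd}, and conservativity of $\mathbb U$ from the factorization through $\ul\Fun^{\Qq\text-\times}(\ulbbU{\Qq}^\op,\Dd)$ together with \Cref{prop:uni_prop_U_Q}${}^\op$. The only wrinkle is purely expository — you rephrase the closure step in terms of the Segal comparison maps $F(q)\to q_*F(\pt)$ rather than via the observation that $r_*\circ F$ remains $\Qq$-continuous — but the underlying fact (parametrized right adjoints $q_*$ preserve the relevant limits) is identical, so this is the same proof.
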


	\begin{corollary}
		\label{cor:Unique_Lift_To_CMon}
		Let $\Cc$ be $\Qq$-semiadditive, let $\Dd$ be $\Qq$-complete, and let $F\colon\Cc\to\Dd$ be $\Qq$-continuous. Then $F$ lifts uniquely to a functor $\Cc\to\ul\CMon^\Qq(\Dd)$.
		\begin{proof}
			By Theorem~\ref{thm:universal_prop_par_Qcom_monoids} there is a unique such lift that is in addition $\Qq$-continuous, while the previous lemma shows that in fact \emph{any} lift is $\Qq$-continuous.
		\end{proof}
	\end{corollary}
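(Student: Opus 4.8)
\emph{Proof proposal.} The plan is to deduce this from the functor-category universal property of \Cref{thm:universal_prop_par_Qcom_monoids2} together with the fact, recorded in \Cref{lemma:U-pres-refl-lim}, that $\mathbb U$ preserves and reflects $\Qq$-limits. First I would pass to underlying categories in \Cref{thm:universal_prop_par_Qcom_monoids2}: postcomposition with $\mathbb U$ then yields an equivalence of categories $\Gamma\ul\Fun^{\Qq\text-\times}(\Cc,\ul\CMon^\Qq(\Dd))\iso\Gamma\ul\Fun^{\Qq\text-\times}(\Cc,\Dd)$. In particular, the given $\Qq$-continuous functor $F$ admits an essentially unique lift $\widetilde F\colon\Cc\to\ul\CMon^\Qq(\Dd)$ that is \emph{itself} $\Qq$-continuous and satisfies $\mathbb U\widetilde F\simeq F$; equivalently, the fibre over $F$ of the restricted postcomposition map is contractible. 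Thus the entire remaining content is to promote ``unique among $\Qq$-continuous lifts'' to ``unique among all lifts.''

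For this I would argue that an arbitrary $\Bb$-functor $G\colon\Cc\to\ul\CMon^\Qq(\Dd)$ is $\Qq$-continuous if and only if $\mathbb U G$ is. By \Cref{rmk:fun-slice} this may be checked fibrewise over $\Bb$, and there one uses that $\mathbb U$ is conservative (established in the proof of \Cref{lemma:U-pres-refl-lim}) and preserves and reflects $\Qq$-limits: for each $q\colon A\to B$ in $\Qq$, applying $\mathbb U$ identifies the Beck--Chevalley transformation $Gq_*\Rightarrow q_*G$ with $(\mathbb U G)q_*\Rightarrow q_*(\mathbb U G)$ up to the $\Qq$-limit-preservation equivalences for $\mathbb U$, so one is invertible exactly when the other is. Granting this, since $F=\mathbb U G$ is $\Qq$-continuous by hypothesis, \emph{every} lift $G$ of $F$ is automatically $\Qq$-continuous. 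Hence the fibre over $F$ of the postcomposition map $\mathbb U\circ(-)\colon\Gamma\ul\Fun(\Cc,\ul\CMon^\Qq(\Dd))\to\Gamma\ul\Fun(\Cc,\Dd)$ coincides with the fibre over $F$ of its restriction to $\Qq$-continuous functors, which is contractible by the first paragraph. This is precisely the asserted uniqueness of the lift.

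I do not anticipate a genuine obstacle here; the one point requiring a little care is the reduction in the second paragraph, namely verifying that ``$\mathbb U$ preserves and reflects $\Qq$-limits'' really does yield ``$G$ is $\Qq$-continuous $\iff$ $\mathbb U G$ is'' at the level of parametrized functors. This is a routine unwinding of the Beck--Chevalley formalism for $\Qq$-cocontinuity combined with the fibrewise conservativity of $\mathbb U$, and contains no essential difficulty beyond bookkeeping.
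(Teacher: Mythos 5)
Your proof is correct and follows essentially the same route as the paper's. The paper derives the existence and uniqueness of a $\Qq$-continuous lift from the adjunction in \Cref{thm:universal_prop_par_Qcom_monoids} (equivalently, from \Cref{thm:universal_prop_par_Qcom_monoids2} on global sections, as you do), and then invokes \Cref{lemma:U-pres-refl-lim} to conclude that \emph{any} lift $G$ of a $\Qq$-continuous $F$ is automatically $\Qq$-continuous, exactly as in your second paragraph; the Beck--Chevalley bookkeeping you flag as the only delicate point is indeed just the definition of ``preserves and reflects $\Qq$-limits'' combined with the conservativity of $\mathbb U$, both of which the paper records in \Cref{lemma:U-pres-refl-lim} and its proof.
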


	\begin{corollary}\label{cor:smashing}
		Let $\Cc,\Dd$ be complete $\Bb$-categories and assume $\Cc$ is $\Qq$-semiadditive. Then postcomposition with the forgetful functor induces an equivalence
		\[
			\ul\Fun^\textup{R}(\Cc,\ul\CMon^\Qq(\Dd))\to\ul\Fun^\textup{R}(\Cc,\Dd)
		\]
		of $\Bb$-categories of continuous functors.
		\begin{proof}
			This follows from Theorem~\ref{thm:universal_prop_par_Qcom_monoids2}, observing that by Lemma~\ref{lemma:U-pres-refl-lim} a functor $\Cc\to\ulFun(\ul A,\ul\CMon^\Qq(\Dd))$ is continuous if and only if the induced functor $\Cc\to\ulFun(\ul A,\Dd)$ is so.
		\end{proof}
	\end{corollary}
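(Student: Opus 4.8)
The plan is to deduce this directly from Theorem~\ref{thm:universal_prop_par_Qcom_monoids2}, which already provides an equivalence of $\Bb$-categories
\[
	\ul\Fun^{\Qq\text-\times}(\Cc,\ul\CMon^\Qq(\Dd))\iso\ul\Fun^{\Qq\text-\times}(\Cc,\Dd)
\]
given by postcomposition with $\mathbb U$. Since $\Cc$ is complete, a continuous functor out of $\Cc$ preserves all $\Bb$-limits and in particular all $\Qq$-limits, so $\ul\Fun^\textup{R}(\Cc,\ul\CMon^\Qq(\Dd))$ and $\ul\Fun^\textup{R}(\Cc,\Dd)$ are full $\Bb$-subcategories of the two functor categories above, and it suffices to show that the displayed equivalence restricts to an equivalence between them. (Using Lemma~\ref{lemma:U-pres-refl-lim} with $\Dd$ complete, $\ul\CMon^\Qq(\Dd)$ is complete, so the left-hand side is indeed the expected object.)

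A full $\Bb$-subcategory is determined by which objects it contains in each degree, so this reduces to the following claim: for every $A\in\Bb$ and every object $F$ of $\ul\Fun^{\Qq\text-\times}(\Cc,\ul\CMon^\Qq(\Dd))(A)$ -- i.e.\ every $\pi_A^{-1}\Qq$-continuous $\Bb_{/A}$-functor $F\colon\pi_A^*\Cc\to\pi_A^*\ul\CMon^\Qq(\Dd)$ -- the functor $F$ is continuous if and only if $\mathbb U\circ F$ is. Here I would use that $\pi_A\colon\Bb_{/A}\to\Bb$ is a pullback-preserving left adjoint and that $\pi_A^{-1}\Qq$ is again locally inductible (Remark~\ref{rk:norm-res-pb}), together with the fact that $\pi_A^*$ preserves internal homs (Remark~\ref{rmk:fun-slice}) and that the parametrized span construction restricts to slices, to obtain a canonical equivalence $\pi_A^*\ul\CMon^\Qq(\Dd)\simeq\ul\CMon^{\pi_A^{-1}\Qq}(\pi_A^*\Dd)$ under which $\pi_A^*\mathbb U$ corresponds to the forgetful functor $\mathbb U$. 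Replacing $\Bb$ by $\Bb_{/A}$, we are thus reduced to the case $A=1$: a $\Qq$-continuous functor $F\colon\Cc\to\ul\CMon^\Qq(\Dd)$ is continuous if and only if $\mathbb U\circ F$ is.

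This last statement is exactly Lemma~\ref{lemma:U-pres-refl-lim}: applied with $\mathcal R$ the (local) class of all morphisms of $\Bb$ it shows that $\mathbb U$ preserves and reflects all $\Bb$-limits, and its fiberwise half (run over all small shapes $K$) shows that $\mathbb U$ preserves and reflects all fiberwise limits; the hypotheses hold since $\Dd$ is complete and $\Qq$-complete. Hence $F$ preserves exactly those limits needed to be continuous if and only if $\mathbb U\circ F$ does, and we are done.

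The argument is short once Theorem~\ref{thm:universal_prop_par_Qcom_monoids2} and Lemma~\ref{lemma:U-pres-refl-lim} are in hand; the only point I would want to write out carefully is the compatibility of the constructions $\ul\CMon^\Qq(\blank)$ and $\mathbb U$ with the base-change functors $\pi_A^*$, which is routine but is where a coherence subtlety could hide.
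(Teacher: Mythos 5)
Your proof is correct and follows essentially the same approach as the paper: invoke Theorem~\ref{thm:universal_prop_par_Qcom_monoids2} for the $\Qq$-limit-preserving functor categories, then use Lemma~\ref{lemma:U-pres-refl-lim} to show the equivalence restricts to the full $\Bb$-subcategories of continuous functors. The only cosmetic difference is in how you pass to level $A\in\Bb$: you use the slice description $\ul\Fun(\Cc,\Dd)(A)\simeq\Fun_{\Bb_{/A}}(\pi_A^*\Cc,\pi_A^*\Dd)$ and the base-change compatibility $\pi_A^*\ul\CMon^\Qq(\Dd)\simeq\ul\CMon^{\pi_A^{-1}\Qq}(\pi_A^*\Dd)$, whereas the paper instead works with the cotensor description and the (equally routine, but slightly less setup-heavy) identification $\ul\Fun(\ul A,\ul\CMon^\Qq(\Dd))\simeq\ul\CMon^\Qq(\ul\Fun(\ul A,\Dd))$, staying in $\Cat(\Bb)$ throughout; both bookkeeping choices are valid, and you have correctly flagged the one coherence point that needs writing out.
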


	\subsection{Non-parametrized (higher) semiadditivity}
	Let us make explicit how \Cref{thm:universal_prop_par_Qcom_monoids2} recovers various results from non-parametrized higher category theory, and in particular Harpaz's result alluded to above. Recall once more that taking global section defines an equivalence $\Cat(\Spc)\iso\Cat$, and hence $\Spc$-parametrized functor categories are just non-parametrized functor categories between the underlying categories. By Example~\ref{ex:non-param-colim}, the subcategory of $\Qq$-limit preserving functors then consists precisely of those functors that preserve $A$-shaped limits in the usual sense for every $A\in\Qq_{/1}\subseteq\Spc\subseteq\Cat$.

	\begin{example}[Commutative monoids]
		Applying the theorem to the pre-inductible subcategory $\Fin \subseteq \Spc$ of finite sets recovers the well-known result that for every category $\Cc$ with finite products the forgetful functor
		\[
			\Fun^{\times}(\Span(\Fin),\Cc) \xrightarrow{\ev_{\pt}} \Cc
		\]
		exhibits its source as the universal semiadditive category equipped with a finite-product-preserving functor to $\Cc$. In other words, we have an equivalence of categories $\CMon(\Cc) \simeq \Fun^{\times}(\Span(\Fin),\Cc)$, as was previously established by Cranch \cite{cranch}*{Theorem~5.4} (for an ad-hoc construction of $\Span(\Fin)$) or in \cite{BachmannHoyois2021Norms}*{Proposition~C.1} (using Barwick's construction of $\Span$).
	\end{example}

	\begin{example}[$m$-commutative monoids, {\cite{harpaz2020ambidexterity}*{Corollary~5.14}}]
		More generally, consider the pre-inductible subcategory $\Spc_m \subseteq \Spc$ of $m$-finite spaces for some $-2 \leq m < \infty$, and let $\Cc$ be a category with $m$-finite limits. Then Theorem \ref{thm:universal_prop_par_Qcom_monoids2} translates to saying that the forgetful functor
		\[
			\CMon^m(\Cc) := \Fun^{m\text{-fin}}(\Span(\Spc_m), \Cc) \xrightarrow{\ev_{\pt}} \Cc
		\]
		exhibits $\CMon^m(\Cc)$ as the universal $m$-semiadditive category equipped with an $m$-finite limit-preserving functor to $\Cc$, a fact previously proven by Harpaz \cite{harpaz2020ambidexterity}*{Corollary~5.14}. If we instead consider the subcategory $\Spc_{\pi} \subseteq \Spc$, we also obtain the analogous statement for $m = \infty$, previously proven by Carmeli, Schlank, and Yanovski \cite{CSY2021AmbiHeight}*{Proposition~2.1.16}.
	\end{example}

	\begin{example}[$p$-typical $m$-commutative monoids]
		As a new variant of the previous example, we may consider for $-2 \leq m < \infty$ the pre-inductible subcateory $\Spc^{(p)}_m \subseteq \Spc$ of \textit{$p$-typical} $m$-finite spaces from \Cref{ex:p_typical}. For a category $\Cc$ admitting $\Spc^{(p)}_m$-indexed limits, we define the category $\CMon^m_{(p)}(\Cc)$ of \textit{$p$-typically $m$-commutative monoids in $\Cc$} as the full subcategory
		\[
			\CMon^m_{(p)}(\Cc) \subseteq \Fun(\Span(\Spc^{(p)}_m), \Cc),
		\]
		of functors which preserve $\Spc^{(p)}_m$-indexed limits. The evaluation functor to $\Cc$ then enjoys the `$p$-typical analogue' of the previous universal property.
	\end{example}

		\subsection{\texorpdfstring{\for{toc}{$\Qq$}\except{toc}{$\bm{\Qq}$}}{Q}-stability} Building on the result of \Cref{subsec:univ_prop_cmon}, we can now introduce a notion of \emph{$\Qq$-stability} generalizing our work in \cite{CLL_Global}. For this let us first recall the notion of \emph{fiberwise stability} from \cite{nardin2016exposeIV}*{Definition~3.5} and \cite{martiniwolf2022presentable}*{Definition~7.3.4}:

	\begin{definition}
		A $\Bb$-category $\Dd\colon\Bb^\op\to\Cat$ is called \emph{fiberwise stable} if it factors through the non-full subcategory $\Cat^{\textup{ex}}\subseteq\Cat$ of stable categories and exact functors. We write $\Cat(\Bb)^\textup{fib\,ex}\mathrel{:=}\Fun^\textup{R}(\Bb^\op,\Cat^\textup{ex})$ and call its maps \emph{fiberwise exact}.\footnote{Note that $\Cat^{\textup{ex}}$ admits all limits, which are computed in $\Cat$, by \cite{HA}*{Theorem 1.1.4.4}.}
	\end{definition}

	\begin{remark}
		Write $\Cat^\textup{lex}$ for the category of left exact functors between categories with finite limits, and set $\Cat(\Bb)^\textup{fib\,lex}\coloneqq\Fun^\textup{R}(\Bb^\op,\Cat^\textup{lex})$. Then the inclusion $\Cat(\Bb)^\textup{fib\,ex}\hookrightarrow\Cat(\Bb)^\textup{fib\,lex}$ admits a right adjoint $\Sp^\textup{fib}$ given by postcomposing with the right adjoint $\Sp$ of $\Cat^\textup{lex}\hookrightarrow\Cat^\textup{ex}$.
	\end{remark}

	\begin{definition}
		We say that $\Dd$ is \emph{$\Qq$-stable} if it is both $\Qq$-semiadditive and fiberwise stable. We write $\Cat(\Bb)^{\Qq\text{-ex}}\coloneqq\Cat(\Bb)^{\Qq\text-\oplus}\cap\Cat(\Bb)^{\text{fib}\,\text{ex}}$ for the category whose objects are the $\Qq$-stable categories and whose morphisms are the functors that are both fiberwise exact and $\Qq$-semiadditive.
	\end{definition}

	\begin{lemma}\label{lemma:sp-fib-q-ex}
		The adjunction $\textup{incl}\colon\Cat(\Bb)^\textup{fib\,ex}\rightleftarrows\Cat(\Bb)^\textup{fib\,lex}\noloc\Sp^\textup{fib}$ restricts to an adjunction $\Cat(\Bb)^{\Qq\textup{-ex}}\rightleftarrows\Cat(\Bb)^{\Qq\text-\oplus,\,\textup{lex}}$.
		\begin{proof}
			Recall that the functor $\Sp=\Fun_*^\textup{exc}(\Spc^\text{fin}_*,\blank)$ can be extended to an $(\infty,2)$-functor; all that we will need below is that it induces a functor on homotopy $(2,2)$-categories. Observe now that each $q^*\colon\Cc(B)\to\Cc(A)$ has a \emph{left exact} left adjoint $q_!\simeq q_*$ by $\Qq$-semiadditivity, so $2$-functoriality of $\Sp$ implies that also $\Sp(q^*)=q^*\colon\Sp^{\fib}(\Cc)(B)\to\Sp^{\fib}(\Cc)(A)$ has a left adjoint given by $\Sp(q_!)$ with the induced unit and counit. The Beck--Chevalley maps for these left adjoints are then again obtained from the Beck--Chevalley maps in $\Cc$ via applying $\Sp$, and in particular the left adjoints again satisfy base change, i.e.~$\Sp^{\fib}(\Cc)$ is $\Qq$-cocomplete. In the same way, one shows that $\Sp^{\fib}(\Cc)$ is $\Qq$-complete and that $\Sp^{\fib}$ sends $\Qq$-continuous functors to $\Qq$-continuous functors.

			It will then be enough to show by Proposition~\ref{prop:Characterization_Q_Semiadditivity} that for any pullback
			\begin{equation*}
				\begin{tikzcd}
					A\times_BA\arrow[r, "\pr_1"]\arrow[d,"\pr_2"']\arrow[dr,phantom,"\lrcorner"{very near start}] & A\arrow[d, "q"]\\
					A\arrow[r, "q"'] & B
				\end{tikzcd}
			\end{equation*}
			in $\Qq$ the double Beck--Chevalley map $q_!\pr_{1*}\to q_*\pr_{2!}$ for $\Sp^{\fib}(\Cc)$ is an equivalence. This however follows again immediately from $2$-functoriality of $\Sp^{\fib}$ and the corresponding statement for $\Cc$.
		\end{proof}
	\end{lemma}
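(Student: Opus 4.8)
The plan is to verify directly that the stabilization adjunction $\textup{incl}\colon\Cat(\Bb)^\textup{fib\,ex}\rightleftarrows\Cat(\Bb)^\textup{fib\,lex}\noloc\Sp^\textup{fib}$ restricts, for which it suffices to show that $\textup{incl}$ carries $\Cat(\Bb)^{\Qq\textup{-ex}}$ into $\Cat(\Bb)^{\Qq\text-\oplus,\,\textup{lex}}$, that $\Sp^\textup{fib}$ carries $\Cat(\Bb)^{\Qq\text-\oplus,\,\textup{lex}}$ into $\Cat(\Bb)^{\Qq\textup{-ex}}$, and that the unit and counit restrict to these subcategories; the triangle identities then transfer automatically from the ambient adjunction. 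The inclusion is the trivial direction: a $\Qq$-stable $\Bb$-category is in particular $\Qq$-semiadditive and fiberwise left exact, and a fiberwise exact $\Qq$-semiadditive functor is in particular fiberwise left exact.

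The heart of the matter is that $\Sp^\textup{fib}$ preserves $\Qq$-semiadditivity. For this I would use that $\Sp=\Fun^\textup{exc}_*(\Spc^\textup{fin}_*,\blank)\colon\Cat^\textup{lex}\to\Cat^\textup{ex}$ refines to an $(\infty,2)$-functor, so that it preserves adjunctions together with their units and counits, and preserves mates. Concretely, for a $\Qq$-semiadditive fiberwise left exact $\Cc$ and a map $q\colon A\to B$ in $\Qq$, the restriction functor $q^*$ has a left adjoint $q_!\simeq q_*$ which is left exact (being simultaneously a right adjoint); applying $\Sp$ to the adjunction $q_!\dashv q^*$ in the $2$-category of left exact categories produces $\Sp(q_!)\dashv\Sp(q^*)$, exhibiting $\Sp(q_!)$ as a left adjoint of the restriction functor of $\Sp^\textup{fib}(\Cc)$ along $q$. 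The Beck--Chevalley maps for $\Sp^\textup{fib}(\Cc)$ are the $\Sp$-images of the Beck--Chevalley maps of $\Cc$, hence equivalences, so $\Sp^\textup{fib}(\Cc)$ is $\Qq$-cocomplete, and dually $\Qq$-complete. To upgrade to $\Qq$-semiadditivity I would invoke the double Beck--Chevalley criterion of \Cref{prop:Characterization_Q_Semiadditivity}: the map $q_!\pr_{1*}\to q_*\pr_{2!}$ for $\Sp^\textup{fib}(\Cc)$ is again obtained by applying $\Sp$ to the corresponding invertible map for $\Cc$. Since $\Sp^\textup{fib}(\Cc)$ is fiberwise stable by construction, it is $\Qq$-stable. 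The same $2$-functoriality argument shows that $\Sp^\textup{fib}$ sends a $\Qq$-continuous (equivalently $\Qq$-cocontinuous) fiberwise left exact functor to one of the same kind, the relevant Beck--Chevalley $2$-cell being transported to an equivalence.

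It then remains to restrict the unit and counit. For $\Dd$ fiberwise stable the unit $\Dd\to\Sp^\textup{fib}(\Dd)$ is an equivalence, hence a morphism in $\Cat(\Bb)^{\Qq\textup{-ex}}$ when $\Dd$ is $\Qq$-stable. For $\Cc$ in $\Cat(\Bb)^{\Qq\text-\oplus,\,\textup{lex}}$ the counit is the fiberwise $\Omega^\infty\colon\Sp^\textup{fib}(\Cc)\to\Cc$, which is fiberwise left exact by the definition of $\Sp$; moreover it is $\Qq$-continuous, since $\Omega^\infty$ is a component of the $2$-natural transformation $\Sp\Rightarrow\textup{id}$ and hence commutes, up to coherent equivalence, with every $1$-cell of $\Cat^\textup{lex}$ — in particular with each $q^*$ and each $\Sp(q_!)$, the latter being the $q_!$-operation of $\Sp^\textup{fib}(\Cc)$; thus $\Omega^\infty$ is $\Qq$-cocontinuous and therefore $\Qq$-continuous by \Cref{cor:Functor_Preserves_Limits_Iff_Preserves_Colimits}. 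With the two functors, the unit and the counit all restricted, the adjunction restricts as claimed.

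The main obstacle I expect is the $2$-categorical bookkeeping: one must know not merely that $\Sp$ preserves adjunctions but that the parametrized left adjoint $q_!$ of $\Sp^\textup{fib}(\Cc)$ is computed fiberwise as $\Sp(q_!)$ with the induced unit and counit, and that all the comparison maps entering \Cref{prop:Characterization_Q_Semiadditivity} — Beck--Chevalley, double Beck--Chevalley, and the norm — are genuinely the $\Sp$-images of their counterparts for $\Cc$. Making this precise requires working in the homotopy $2$-categories (or $(\infty,2)$-categories) of $\Cat^\textup{lex}$, $\Cat^\textup{ex}$, and of $\Bb$-categories, and checking that $\Sp^\textup{fib}$ is compatible with the formation of mates there.
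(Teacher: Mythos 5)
Your proof is correct and follows essentially the same strategy as the paper: exploit the $(\infty,2)$-functoriality of $\Sp=\Fun^\textup{exc}_*(\Spc^\textup{fin}_*,\blank)$ to transport the left exact adjoints $q_!\simeq q_*$, their units and counits, and the (double) Beck--Chevalley $2$-cells, then invoke the double Beck--Chevalley criterion of \Cref{prop:Characterization_Q_Semiadditivity} to recognize $\Qq$-semiadditivity of $\Sp^\fib(\Cc)$. The paper's proof establishes exactly the same three facts — $\Sp^\fib(\Cc)$ is $\Qq$-cocomplete, $\Qq$-complete, and $\Qq$-semiadditive, and $\Sp^\fib$ preserves $\Qq$-continuity of functors — by the same mate/$2$-functoriality bookkeeping. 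One point where you are actually \emph{more} careful than the paper: you explicitly check that the unit (automatic, being an equivalence) and the counit $\Omega^\infty$ lie in the non-full subcategories $\Cat(\Bb)^{\Qq\textup{-ex}}$ and $\Cat(\Bb)^{\Qq\text-\oplus,\,\textup{lex}}$, which is genuinely needed since these are not full subcategories and so merely restricting the two functors does not formally yield a restricted adjunction. The paper leaves this step implicit. Your argument for the counit — that $\Omega^\infty$, being a component of the counit of an $\infty$-adjunction, is coherently natural with respect to all left exact $1$-cells and in particular intertwines $\Sp(q_!)$ with $q_!$, whence $\Qq$-cocontinuity follows by identifying the relevant Beck--Chevalley mate with a naturality square — is sound, though it does rest on the (standard but worth naming) fact that in the $\infty$-categorical setting natural transformations are automatically coherently natural and compatible with mating.
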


	\begin{definition}
		Let $\Cc$ be a $\Bb$-category with $\Qq$-limits and finite fiberwise limits. We define $\ul\Sp^\Qq(\Cc)\coloneqq \ul\Sp^{\fib}(\ul\CMon^{\Qq}(\Cc))$, and we write $\Omega^\infty\colon\ul\Sp^\Qq(\Cc)\to\Cc$ for the composite
		\[
		\ul\Sp^{\fib}(\ul\CMon^{\Qq}(\Cc))\xrightarrow{\,\Omega^\infty\,} \ul\CMon^\Qq(\Cc)\xrightarrow{\;\mathbb U\;}\Cc.
		\]
	\end{definition}

	\begin{proposition}
		This defines a functor $\ul\Sp^\Qq\colon\Cat(\Bb)^{\Qq\text-\times,\,\textup{fib\,lex}}\to\Cat(\Bb)^{\Qq\textup{-ex}}$ right adjoint to the inclusion, with counit given by $\Omega^\infty$.
		\begin{proof}
			It suffices by Lemma~\ref{lemma:sp-fib-q-ex} to show that the adjunction $\textup{incl}\colon\Cat^{\Qq\text-\oplus}_\Bb\rightleftarrows\Cat^{\Qq\text-\times}_\Bb\noloc\ul\CMon^\Qq$ from Theorem~\ref{thm:universal_prop_par_Qcom_monoids} restricts to $\Cat^{\Qq\text-\oplus,\,\textup{fib\,lex}}_\Bb\rightleftarrows\Cat^{\Qq\text-\times,\,\textup{fib\,lex}}_\Bb$. This is immediate from \Cref{lemma:U-pres-refl-lim}.
		\end{proof}
	\end{proposition}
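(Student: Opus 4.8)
The plan is to obtain the asserted adjunction by composing two adjunctions that are already at hand. The first is the $\Qq$-semiadditive completion $\textup{incl}\colon\Cat(\Bb)^{\Qq\text-\oplus}\rightleftarrows\Cat(\Bb)^{\Qq\text-\times}\noloc\ul\CMon^\Qq$ from \Cref{thm:universal_prop_par_Qcom_monoids}, with counit $\mathbb U=\ev_\pt$; the second is the fiberwise stabilization, which by \Cref{lemma:sp-fib-q-ex} restricts to an adjunction $\textup{incl}\colon\Cat(\Bb)^{\Qq\text{-ex}}\rightleftarrows\Cat(\Bb)^{\Qq\text-\oplus,\,\textup{fib\,lex}}\noloc\Sp^\textup{fib}$ with counit $\Omega^\infty$. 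Since $\ul\CMon^\Qq$ lands in $\Cat(\Bb)^{\Qq\text-\oplus}$ whereas $\Sp^\textup{fib}$ is only defined on the smaller category $\Cat(\Bb)^{\Qq\text-\oplus,\,\textup{fib\,lex}}$, the one preliminary step is to cut the first adjunction down so that $\ul\CMon^\Qq$ preserves fiberwise left exactness, i.e.\ to show that $\textup{incl}\colon\Cat(\Bb)^{\Qq\text-\oplus,\,\textup{fib\,lex}}\rightleftarrows\Cat(\Bb)^{\Qq\text-\times,\,\textup{fib\,lex}}\noloc\ul\CMon^\Qq$ is again an adjunction with counit $\mathbb U$.

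I would carry out this preliminary step using \Cref{lemma:U-pres-refl-lim}, which carries all of the genuine content. If $\Dd$ is $\Qq$-complete and fiberwise left exact, then $\ul\CMon^\Qq(\Dd)$ is $\Qq$-semiadditive (the corollary to \Cref{thm:Q-x-out-of-semiadd}, as already used in the proof of \Cref{thm:universal_prop_par_Qcom_monoids}) and moreover fiberwise left exact, with $\mathbb U$ both preserving and reflecting finite fiberwise limits, by \Cref{lemma:U-pres-refl-lim}; hence $\ul\CMon^\Qq$ sends objects of $\Cat(\Bb)^{\Qq\text-\times,\,\textup{fib\,lex}}$ to objects of $\Cat(\Bb)^{\Qq\text-\oplus,\,\textup{fib\,lex}}$. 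If in addition $F$ is $\Qq$-continuous and fiberwise exact, then $\ul\CMon^\Qq(F)$ is $\Qq$-continuous (by the dual of \Cref{prop:fun-colimits}, again as in the proof of \Cref{thm:universal_prop_par_Qcom_monoids}) and fiberwise exact, the latter because $\mathbb U\circ\ul\CMon^\Qq(F)\simeq F\circ\mathbb U$ is fiberwise exact while $\mathbb U$ reflects finite fiberwise limits. Finally, the counit $\mathbb U_\Dd$ is $\Qq$-continuous and fiberwise exact by \Cref{lemma:U-pres-refl-lim}, and the unit $\Cc\to\ul\CMon^\Qq(\Cc)$ at a $\Qq$-semiadditive $\Cc$ is an equivalence by \Cref{thm:universal_prop_par_spans} and hence in particular $\Qq$-continuous and fiberwise exact; since the left adjoint $\textup{incl}$ trivially respects all this structure, these checks show that the adjunction of \Cref{thm:universal_prop_par_Qcom_monoids} restricts to the fiberwise left exact subcategories.

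Composing this restricted adjunction with the one from \Cref{lemma:sp-fib-q-ex} then produces an adjunction whose left adjoint is the (composite) inclusion $\Cat(\Bb)^{\Qq\text{-ex}}\hookrightarrow\Cat(\Bb)^{\Qq\text-\times,\,\textup{fib\,lex}}$ — which is well defined, since a $\Qq$-stable category is $\Qq$-complete by \Cref{cor:Semiadditive_Cats_Have_QLimits} and fiberwise left exact, and a functor that is fiberwise exact and $\Qq$-semiadditive is $\Qq$-continuous by \Cref{cor:Functor_Preserves_Limits_Iff_Preserves_Colimits} — whose right adjoint is $\Sp^\textup{fib}\circ\ul\CMon^\Qq=\ul\Sp^\Qq$ (landing in $\Qq$-stable categories because $\Sp^\textup{fib}$ does by \Cref{lemma:sp-fib-q-ex}), and whose counit at $\Dd$ is the composite $\mathbb U_\Dd\circ\Omega^\infty_{\ul\CMon^\Qq(\Dd)}$, which is precisely the map $\Omega^\infty\colon\ul\Sp^\Qq(\Dd)\to\Dd$ fixed in the definition of $\ul\Sp^\Qq$. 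This is exactly the proposition. The only thing requiring care — and hence the main, if mild, obstacle — is the bookkeeping of restricting and then composing adjunctions along the non-full inclusions $\Cat(\Bb)^{\Qq\text-\oplus}\hookrightarrow\Cat(\Bb)^{\Qq\text-\times}$ and their fiberwise left exact analogues; but this reduces to exactly the four pointwise verifications made above, so once \Cref{lemma:U-pres-refl-lim} is in place no genuine difficulty remains.
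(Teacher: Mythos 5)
Your proof is correct and follows exactly the route the paper takes: compose the $\ul\CMon^\Qq$ adjunction with the fiberwise stabilization adjunction from \Cref{lemma:sp-fib-q-ex}, the only real work being that the former restricts to the fiberwise left exact subcategories, which you deduce from \Cref{lemma:U-pres-refl-lim}. The paper compresses the four pointwise verifications you spell out into a single "immediate from \Cref{lemma:U-pres-refl-lim}," but the content is identical.
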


	\subsection{A presentable universal property for \texorpdfstring{\for{toc}{$\Qq$}\except{toc}{$\bm\Qq$}}{Q}-commutative monoids}
	If $\Dd$ has $\Qq$-limits, then Theorem~\ref{thm:universal_prop_par_Qcom_monoids2} identifies (certain) functors \textit{into} $\ul\CMon^\Qq(\Dd)$; the goal of this subsection is to give (under a mild smallness assumption) a similar property for functors \textit{out of} $\ul\CMon^\Qq(\Dd)$ whenever $\Dd$ is a presentable $\Bb$-category. For this we first recall:

	\begin{definition}
		A $\Bb$-category $\Cc\colon\Bb^\op\to\Cat$ is called \emph{presentable} if it is $\Bb$-cocomplete and factors through the non-full subcategory $\Pr^\textup{L}\subseteq\Cat$, i.e.~each $\Cc(A)$ is presentable and each $f^*\colon\Cc(B)\to\Cc(A)$ is a left adjoint.

		We write $\Pr(\Bb)^\textup{L}$ for the category of presentable $\Bb$-categories and left adjoint functors, and $\Pr(\Bb)^\textup{R}$ for the category of presentable $\Bb$-categories and right adjoint functors.
	\end{definition}

	\begin{remark}
		\cite{martiniwolf2022presentable} instead defines presentable $\Bb$-categories as accessible Bousfield localizations of presheaf categories. This is equivalent to the above definition by Theorem~6.2.4 of \emph{op.\ cit.}
	\end{remark}

	\begin{remark}
		By definition, every presentable $\Bb$-category is in particular cocomplete. Moreover, an easy application of the non-parametrized Special Adjoint Functor Theorem shows that every presentable $\Bb$-category is also complete, also see \cite{martiniwolf2022presentable}*{Corollary~6.2.5}.
	\end{remark}

	Because we have not bounded the size of $\Qq$, the $\Bb$-category $\ul\CMon^\Qq(\Cc)$ does not necessarily have to be presentable, even if $\Cc$ was presentable. To fix this, we introduce:

	\begin{definition}
		We say that a wide local subcategory $\Qq\subseteq\Bb$ is \emph{slicewise small} if the category $\Qq_{/A}$ is (essentially) small for every $A\in\Bb$, i.e.~if $\ulbbU{\Qq}$ is a small $\Bb$-category.
	\end{definition}

	\begin{example}
		If $Q\subseteq\PSh(T)$ is a small pre-inductible category, then the corresponding locally inductible subcategory $Q_{\loc}$ of $\PSh(T)$ is slicewise small. This follows immediately from the fact that $\ulbbU{\Qq}$ is the limit extension of the small $T$-category $A\mapsto Q_{/A}$.

		In particular, all examples of locally inductible categories from Section~\ref{subsec:Examples} are slicewise small.
	\end{example}

	\begin{proposition}\label{prop:cmon-in-pres-is-pres}
		Assume that $\Qq$ is slicewise small and let $\Dd$ be presentable. Then the inclusion $\ul\CMon^\Qq(\Dd)\subseteq\ul\Fun_\Bb(\ul\Span(\Qq),\Dd)$ is an accessible Bousfield localization. In particular, $\ul\CMon^\Qq(\Dd)$ is again presentable.
		\begin{proof}
			As seen in the proof of Lemma~\ref{lemma:U-pres-refl-lim}, $\ul\CMon^\Qq(\Dd)$ is complete and the inclusion is continuous. It will therefore suffice to show that we have an accessible Bousfield localization in each individual level $A\in\Bb$: the pointwise left adjoints will then assemble into a $\Bb$-left adjoint by Remark~\ref{rk:adjoint-via-limits}.

			For this recall the description $\ul\Fun_\Bb(\ul\Span(\Qq),\Dd)(A)\simeq\Fun_{\Bb_{/A}}(\pi_A^*\ul\Span(\Qq),\pi_A^*\Dd)$, under which $\ul\CMon^\Qq(\Dd)(A)$ corresponds precisely to the $\pi_A^*\Qq$-continuous functors (see \Cref{const:coprod_pres_func_cat}). In other words, after replacing $\Bb$ by $\Bb_{/A}$ it will suffice to give a set of maps $S$ such that a functor $F\in\Fun_\Bb(\ul\Span(\Qq),\Dd)$ preserves $\Qq$-limits if and only if it is $S$-local.

			We now observe that since each $\Dd(B)$ for $B\in\Bb$ is presentable, we can find a set $\mathcal T_B\subseteq\Dd(B)$ of objects jointly detecting equivalences. Moreover, observe that for any $X\in\ul\Span(\Qq)(B)$ the evaluation functor $\Fun_{\Bb}(\ul\Span(\Qq),\Dd)\to \Dd(B), F\mapsto F_A(B)$ agrees up to the equivalence from the Yoneda lemma with restriction along the map $\ul B\to \Cc$ classifying $X$, so it is a right adjoint (with left adjoint given by parametrized Kan extension).

			Fix now any map $q\colon A\to B$ in $\Qq$ and $X\in\ul\Span(\Qq)(A)$. Then the comparison map $q_*F_A(X)\to F_Bq_*(X)$ in $\Dd(B)$ is natural in $F$ and hence so is the induced map $\hom(T,q_*F_A(X))\to\hom(T,F_Bq_*(X))$ of spaces for any $T\in \mathcal T_B$. The source and target of this map are corepresentable by the above, so this map agrees by the Yoneda lemma with $\hom(t,F)$ for some suitable map $t=t_{q,X,T}$ in $\Fun_\Bb(\ul\Span(\Qq), \Dd)$. By choice of $\Tt_B$ we see that $q_*F_A\to F_Bq_*$ is an equivalence if and only if $F$ is local with respect to the set of all $t_{q,X,T}$ with $T\in\Tt_B$ and $X$ running through objects of $\ul\Span(\Qq)$ (up to equivalence).

			Pick now a small category $\mathcal M$ together with a left exact localization $L\colon\PSh(\mathcal M)\to\Bb$, yielding a set $\Bb_0\coloneqq L(\mathcal M)$ of objects of $\Bb$ such that every $B\in\Bb$ can be covered by elements of $\Bb_0$. By \Cref{lemma:cocont-local}, we then see that $F$ is $\Qq$-continuous if and only if the Beck--Chevalley map $Fq_*\to q_*F$ is an equivalence for all $q\colon A\to B$ in $\Qq$ such that $B\in\Bb_0$. By choice of $\Bb_0$ and assumption on $\Qq$, there is only a set worth of such maps (up to equivalence). Thus, we may take $S$ to be the \emph{set} of all $t_{q,X,T}$ for such $q$ and for $X$ and $T$ as before.
		\end{proof}
	\end{proposition}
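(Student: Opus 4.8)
The plan is to realize $\ul\CMon^\Qq(\Dd)$ as an accessible Bousfield localization of the presentable $\Bb$-category $\ul\Fun_\Bb(\ul\Span(\Qq),\Dd)$. This target is presentable because slicewise smallness of $\Qq$ makes $\ul\Span(\Qq)$ a small $\Bb$-category, and parametrized functor categories out of a small $\Bb$-category into a presentable one are presentable by \cite{martiniwolf2022presentable}. As observed in the proof of \Cref{lemma:U-pres-refl-lim}, the inclusion $\ul\CMon^\Qq(\Dd)\hookrightarrow\ul\Fun_\Bb(\ul\Span(\Qq),\Dd)$ is continuous, so by \Cref{rk:adjoint-via-limits} it suffices to construct a left adjoint to the inclusion in each level $A\in\Bb$; these then automatically assemble into a parametrized left adjoint. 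Using $\ul\Fun_\Bb(\ul\Span(\Qq),\Dd)(A)\simeq\Fun_{\Bb_{/A}}(\pi_A^*\ul\Span(\Qq),\pi_A^*\Dd)$ and $\pi_A^*\ul\Span(\Qq)\simeq\ul\Span(\pi_A^{-1}\Qq)$ — where $\pi_A^{-1}\Qq$ is again a slicewise-small locally inductible subcategory of the topos $\Bb_{/A}$, and $\ul\CMon^\Qq(\Dd)(A)$ corresponds to the $\pi_A^{-1}\Qq$-continuous functors (see \Cref{const:coprod_pres_func_cat}) — this reduces to the case that $A$ is terminal. There the goal is to find a \emph{set} $S$ of morphisms in the presentable category $\Fun_\Bb(\ul\Span(\Qq),\Dd)$ such that a $\Bb$-functor $F\colon\ul\Span(\Qq)\to\Dd$ is $\Qq$-continuous if and only if it is $S$-local, after which \cite{HTT}*{Proposition~5.5.4.15} furnishes the accessible localization.

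The second step is to turn $\Qq$-continuity into such a locality condition. For a map $q\colon A\to B$ in $\Qq$ and an object $X\in\ul\Span(\Qq)(A)$, $\Qq$-continuity of $F$ asks that the Beck--Chevalley comparison relating $q_*F_A(X)$ and $F_Bq_*(X)$ be an equivalence in $\Dd(B)$. As $\Dd(B)$ is presentable we may fix a small set $\Tt_B\subseteq\Dd(B)$ of objects jointly detecting equivalences, so this holds iff $\hom(T,q_*F_A(X))\to\hom(T,F_Bq_*(X))$ is an equivalence of spaces for all $T\in\Tt_B$. By the categorical Yoneda lemma (\Cref{prop:Yoneda}), evaluation $F\mapsto F_B(Y)$ at any $Y\in\ul\Span(\Qq)(B)$ is a right adjoint — it is restriction along the classifying map $\ul B\to\ul\Span(\Qq)$, whose left adjoint is parametrized left Kan extension (which exists since $\Dd$ is presentable, hence cocomplete, and $\ul\Span(\Qq)$ is small) — so $F\mapsto\hom(T,F_Bq_*(X))$ is corepresentable on $\Fun_\Bb(\ul\Span(\Qq),\Dd)$; and the adjunction $q^*\dashv q_*$ in $\Dd$ rewrites $\hom(T,q_*F_A(X))\simeq\hom(q^*T,F_A(X))$, which is corepresentable as well. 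Since the comparison is natural in $F$, Yoneda produces a single morphism $t_{q,X,T}$ in $\Fun_\Bb(\ul\Span(\Qq),\Dd)$ such that $F$ inverts the $(q,X)$-comparison precisely when $F$ is $\{t_{q,X,T}:T\in\Tt_B\}$-local.

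The third step is to cut the collection of triples $(q,X,T)$ down to a set. Choose a left-exact localization $L\colon\PSh(\Mm)\to\Bb$ from a small presheaf topos, giving a set $\Bb_0\coloneqq L(\Mm)$ of objects such that every object of $\Bb$ admits a cover by objects of $\Bb_0$. By the locality of $\Qq$-continuity (\Cref{lemma:cocont-local}) it then suffices to impose the Beck--Chevalley condition only for maps $q\colon A\to B$ in $\Qq$ with $B\in\Bb_0$; for each such $B$ there is only a set of such $q$ (namely the small category $\Qq_{/B}$, by slicewise smallness), only a set of objects $X\in\Span(\Qq_{/A})$ (smallness of $\ul\Span(\Qq)$), and the set $\Tt_B$, so $S\coloneqq\{t_{q,X,T}\}$ over all these is an honest set. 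Carrying this out over every slice $\Bb_{/A}$ yields accessible Bousfield localizations levelwise which, by \Cref{rk:adjoint-via-limits}, glue to an accessible Bousfield localization $\ul\Fun_\Bb(\ul\Span(\Qq),\Dd)\to\ul\CMon^\Qq(\Dd)$; presentability of $\ul\CMon^\Qq(\Dd)$ follows, since an accessible Bousfield localization of a presentable $\Bb$-category is presentable.

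I expect the main obstacle to be the third step: passing from the a priori proper class of test maps $q$ to a genuine set requires the right form of locality of $\Qq$-continuity (the dual of \Cref{lemma:cocont-local}) together with the fact that a topos is generated under colimits by a small set of objects, and one must check that the reflections built slicewise are compatible enough for \Cref{rk:adjoint-via-limits} to glue them. The naturality-in-$F$ bookkeeping of the second step — making sure both sides of the comparison are genuinely corepresentable so that Yoneda manufactures the maps $t_{q,X,T}$ — is the other delicate point, though it is essentially formal.
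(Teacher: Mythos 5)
Your proof takes essentially the same route as the paper: reduce to a pointwise Bousfield localization via \Cref{rk:adjoint-via-limits}, then after slicing turn $\Qq$-continuity into $S$-locality by choosing jointly conservative sets $\Tt_B$, expressing both sides of the Beck–Chevalley comparison as corepresentable functors of $F$ to manufacture the maps $t_{q,X,T}$, and finally cutting down to a genuine set of test maps $q$ by covering with a set $\Bb_0$ coming from a small site and invoking \Cref{lemma:cocont-local}. Your observation that $\hom(T,q_*F_A(X))\simeq\hom(q^*T,F_A(X))$ is a helpful way to make corepresentability of the source explicit, but it is the same argument the paper compresses into ``by the above.''
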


	\begin{corollary}\label{cor:cmon-free-functor}
		Assume that $\Qq$ is slicewise small and let $\Dd$ be presentable. Then the forgetful functor $\mathbb U\colon\ul\CMon^\Qq(\Dd)\to\Dd$ admits a left adjoint $\mathbb P$.
		\begin{proof}
			We may factor $\mathbb U$ as the composite
			\begin{equation*}
				\ul\CMon^\Qq(\Dd)\hookrightarrow\ul\Fun_\Bb(\ul\Span(\Qq),\Dd)\xrightarrow{\ev_\pt}\ul\Fun_\Bb(\ul 1,\Dd)\simeq\Dd.
			\end{equation*}
			The first functor admits a left adjoint by the previous proposition, while the second one admits a left adjoint via parametrized left Kan extension.
		\end{proof}
	\end{corollary}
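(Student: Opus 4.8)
The plan is to realize $\mathbb U$ as a composite of two functors, each admitting a left adjoint, and to take the composite of those adjoints. Concretely, I would factor $\mathbb U$ through the full parametrized functor category as
\[
	\ul\CMon^\Qq(\Dd)\;\overset{\iota}{\hookrightarrow}\;\ul\Fun_\Bb(\ul\Span(\Qq),\Dd)\;\xrightarrow{\;\ev_\pt\;}\;\ul\Fun_\Bb(\ul1,\Dd)\simeq\Dd,
\]
where the middle map is restriction along the distinguished point $\pt\colon\ul1\to\ul\Span(\Qq)$ and the last equivalence is the one from Proposition~\ref{prop:Yoneda}; unwinding the definition of $\mathbb U=\ev_\pt$ shows that this composite is indeed $\mathbb U$.

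For the inclusion $\iota$, I would invoke Proposition~\ref{prop:cmon-in-pres-is-pres}: since $\Qq$ is slicewise small and $\Dd$ is presentable, $\iota$ is an accessible Bousfield localization, in particular a right adjoint. For $\ev_\pt=\pt^*$, the relevant point is that $\ul\Span(\Qq)$ is a \emph{small} $\Bb$-category — slicewise smallness of $\Qq$ makes $\ulbbU\Qq$ small, and applying $\Span$ levelwise to the small categories $\Qq_{/A}$ preserves smallness — while $\Dd$, being presentable, is cocomplete. Hence parametrized left Kan extension $\pt_!$ along $\pt$ exists by \cite{martiniwolf2021limits}*{Corollary~6.3.7} and is left adjoint to $\pt^*$. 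Composing the localization functor with $\pt_!$ produces the desired left adjoint $\mathbb P$ to $\mathbb U$.

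There is essentially no hard obstacle left at this stage: the substantive input is Proposition~\ref{prop:cmon-in-pres-is-pres}, whose proof uses presentability of $\Dd$ together with slicewise smallness of $\Qq$ to reduce $\Qq$-continuity to locality against a \emph{set} of maps; once that is in hand, everything here is formal bookkeeping (the factorization and the smallness claim). Alternatively, one could bypass the explicit factorization and argue via an adjoint functor theorem: $\ul\CMon^\Qq(\Dd)$ is presentable by Proposition~\ref{prop:cmon-in-pres-is-pres}, and $\mathbb U$ preserves all limits (being the composite of the reflective inclusion $\iota$ with the limit-preserving restriction $\ev_\pt$) and is accessible, so it admits a left adjoint. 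I would nonetheless prefer the factorization argument, since it makes $\mathbb P$ transparent and directly reusable in the sequel.
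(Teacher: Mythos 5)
Your argument is essentially identical to the paper's: factor $\mathbb U$ through $\ul\Fun_\Bb(\ul\Span(\Qq),\Dd)$, use Proposition~\ref{prop:cmon-in-pres-is-pres} for the reflective inclusion, and parametrized left Kan extension along $\pt$ for $\ev_\pt$. You simply spell out the smallness justification for the Kan extension and mention an equivalent adjoint-functor-theorem route, but the core decomposition and the key inputs are the same.
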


	\begin{proposition}\label{prop:CMon-Q-presentable-adjunction}
		If $\Qq$ is slicewise small, then the adjunction
		\[
		\textup{incl}\colon\Cat(\Bb)^{\Qq\text-\oplus}\rightleftarrows\Cat(\Bb)^{\Qq\text-\times}\noloc\ul\CMon^\Qq
		\]
		restricts to an adjunction $\Pr(\Bb)^\textup{R,\,$\Qq$-$\oplus$}\rightleftarrows\Pr(\Bb)^\textup{R}$.
		\begin{proof}
			The previous proposition and corollary show that $\ul\CMon^\Qq$ restricts on objects accordingly and that the counit $\mathbb U$ lies in $\Pr^\textup{R}$. Moreover, the unit is even an equivalence as the inclusion is fully faithful, so  it only remains to show that for any adjunction $F\colon\Cc\rightleftarrows\Dd\noloc G$ of presentable $\Bb$-categories, $\ul\CMon^\Qq(G)$ is again a right adjoint. But indeed, the composition of $\ul\Fun(\Span(\Qq),F)$ with the localization $\ul\Fun_\Bb(\ul\Span(\Qq),\Dd)\to\ul\CMon^\Qq(\Dd)$ is easily seen to restrict to the desired left adjoint.
		\end{proof}
	\end{proposition}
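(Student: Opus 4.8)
The plan is to upgrade the adjunction $\textup{incl}\colon\Cat(\Bb)^{\Qq\text-\oplus}\rightleftarrows\Cat(\Bb)^{\Qq\text-\times}\noloc\ul{\CMon}^{\Qq}$ of \Cref{thm:universal_prop_par_Qcom_monoids} by checking that both functors, and the unit and counit, stay inside the respective subcategories of presentable $\Bb$-categories; since restricting an adjunction to a pair of subcategories stable under the two functors and the (co)unit is again an adjunction, this suffices. Four things have to be verified. \emph{(i)} For presentable $\Dd$ the $\Bb$-category $\ul{\CMon}^{\Qq}(\Dd)$ is again presentable: this is \Cref{prop:cmon-in-pres-is-pres}, and is where slicewise smallness of $\Qq$ is used; and it is $\Qq$-semiadditive as a corollary of \Cref{thm:Q-x-out-of-semiadd} (applied with source $\ul{\Span}(\Qq)$). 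Thus $\ul{\CMon}^{\Qq}$ lands on objects in presentable $\Qq$-semiadditive $\Bb$-categories. \emph{(ii)} The counit $\mathbb U=\ev_\pt\colon\ul{\CMon}^{\Qq}(\Dd)\to\Dd$ admits a left adjoint $\mathbb P$ by \Cref{cor:cmon-free-functor}, hence is a morphism of $\Pr(\Bb)^\textup{R}$. \emph{(iii)} The inclusion $\Cat(\Bb)^{\Qq\text-\oplus}\hookrightarrow\Cat(\Bb)^{\Qq\text-\times}$ is fully faithful, since by \Cref{cor:Functor_Preserves_Limits_Iff_Preserves_Colimits} a functor between $\Qq$-semiadditive categories preserves $\Qq$-limits iff it preserves $\Qq$-colimits; hence the unit $\id\to\ul{\CMon}^{\Qq}\circ\textup{incl}$ is an equivalence — equivalently $\mathbb U$ is an equivalence on $\Qq$-semiadditive categories, which is \Cref{thm:universal_prop_par_spans} — so the unit trivially lands in every subcategory in sight.

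The substantive point is \emph{(iv)}: that $\ul{\CMon}^{\Qq}$ sends a right-adjoint morphism of presentable $\Bb$-categories to a right-adjoint morphism. Let $F\colon\Cc\rightleftarrows\Dd\noloc G$ be such an adjunction. Postcomposition gives an adjunction $F_*\colon\ul\Fun_\Bb(\ul{\Span}(\Qq),\Cc)\rightleftarrows\ul\Fun_\Bb(\ul{\Span}(\Qq),\Dd)\noloc G_*$ of $\Bb$-categories. Being a right adjoint, $G$ preserves $\Qq$-limits, so $G_*$ sends $\Qq$-continuous functors to $\Qq$-continuous ones; writing $i_\Dd\colon\ul{\CMon}^{\Qq}(\Dd)\hookrightarrow\ul\Fun_\Bb(\ul{\Span}(\Qq),\Dd)$ for the fully faithful inclusion, with reflector $L_\Dd$ (which exists by \Cref{prop:cmon-in-pres-is-pres}), this says $G_*\circ i_\Dd\simeq i_\Cc\circ\ul{\CMon}^{\Qq}(G)$. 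I then claim that
\[
	L_\Dd\circ F_*\circ i_\Cc\colon\ul{\CMon}^{\Qq}(\Cc)\longrightarrow\ul{\CMon}^{\Qq}(\Dd)
\]
is a left adjoint of $\ul{\CMon}^{\Qq}(G)$. This is a formal mate computation: for $X\in\ul{\CMon}^{\Qq}(\Cc)$ and $Y\in\ul{\CMon}^{\Qq}(\Dd)$ the (parametrized) mapping groupoids satisfy
\[
	\Map(L_\Dd F_* i_\Cc X, Y)\simeq\Map(F_* i_\Cc X, i_\Dd Y)\simeq\Map(i_\Cc X, G_* i_\Dd Y)\simeq\Map(i_\Cc X, i_\Cc\ul{\CMon}^{\Qq}(G)Y)\simeq\Map(X, \ul{\CMon}^{\Qq}(G)Y),
\]
using in turn $L_\Dd\dashv i_\Dd$, $F_*\dashv G_*$, the identification above, and full faithfulness of $i_\Cc$; one runs this compatibly with the parametrized structure, e.g.\ by carrying it out in $\Cat(\Bb_{/A})$ after base change along $\pi_A$ for each $A\in\Bb$, exactly as in the proof of \Cref{prop:cmon-in-pres-is-pres} (cf.\ \Cref{rk:adjoint-via-limits}). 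Finally $\ul{\CMon}^{\Qq}(G)$ is automatically $\Qq$-semiadditive, being a right adjoint between $\Qq$-semiadditive categories — equivalently, a restriction of $G_*$, which preserves $\Qq$-limits by \Cref{prop:fun-colimits}${}^\op$.

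Assembling \emph{(i)}--\emph{(iv)}, the restricted functors $\textup{incl}$ and $\ul{\CMon}^{\Qq}$ between $\Pr(\Bb)^\textup{R}$ and $\Pr(\Bb)^{\textup{R},\,\Qq\text-\oplus}$, together with the restrictions of the unit and counit, satisfy the triangle identities automatically (these hold in the ambient adjunction), so they form the asserted adjunction. The only real obstacle is \emph{(iv)}, and within it the fact that $F_*$ itself does \emph{not} preserve the subcategory of $\Qq$-commutative monoids — only $G_*$ does — so the left adjoint must be built as $L_\Dd F_* i_\Cc$ and not by naive restriction; one then has to make sure the Bousfield reflectors of \Cref{prop:cmon-in-pres-is-pres} are natural enough, in particular compatible with the $\Bb$-parametrization, for the mate argument to go through. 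Everything else is bookkeeping with results already established.
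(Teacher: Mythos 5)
Your proof follows exactly the same strategy as the paper: restrict objects via \Cref{prop:cmon-in-pres-is-pres}, put the counit in $\Pr(\Bb)^{\textup R}$ via \Cref{cor:cmon-free-functor}, note the unit is an equivalence, and construct the left adjoint of $\ul\CMon^\Qq(G)$ as $L_\Dd\circ F_*\circ i_\Cc$. You just spell out the mate computation that the paper leaves as ``easily seen,'' and your cautionary remark about naturality of the Bousfield reflectors is a reasonable, if unverified, flag that both proofs leave at the same level of detail.
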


	Dualizing we get:

	\begin{corollary}
		If $\Qq$ is slicewise small, the inclusion $\Pr(\Bb)^\textup{L,\,$\Qq$-$\oplus$}\hookrightarrow\Pr(\Bb)^\textup{L}$ admits a left adjoint given on objects by $\Dd\mapsto\ul\CMon^\Qq(\Dd)$ and with unit given by the left adjoints $\mathbb P$ of the forgetful maps.\qed
	\end{corollary}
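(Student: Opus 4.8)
The plan is to deduce the statement formally from \Cref{prop:CMon-Q-presentable-adjunction} by passing to opposite categories, using the presentability duality $\Pr(\Bb)^\textup{L}\simeq(\Pr(\Bb)^\textup{R})^\op$, which is the identity on objects and sends a left adjoint functor to its right adjoint (see \cite{martiniwolf2022presentable}). First I would check that this duality restricts to an equivalence $\Pr(\Bb)^\textup{L,\,$\Qq$-$\oplus$}\simeq(\Pr(\Bb)^\textup{R,\,$\Qq$-$\oplus$})^\op$: both sides have the presentable $\Qq$-semiadditive $\Bb$-categories as objects, and by \Cref{cor:Functor_Preserves_Limits_Iff_Preserves_Colimits} a functor between two such categories is $\Qq$-cocontinuous if and only if it is $\Qq$-continuous, so a left adjoint between them is automatically $\Qq$-semiadditive and so is its right adjoint; hence passing to adjoints identifies the two morphism classes. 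Since both inclusions $\Pr(\Bb)^\textup{L,\,$\Qq$-$\oplus$}\hookrightarrow\Pr(\Bb)^\textup{L}$ and $\Pr(\Bb)^\textup{R,\,$\Qq$-$\oplus$}\hookrightarrow\Pr(\Bb)^\textup{R}$ are the identity on objects, they correspond to one another under these equivalences (the first being the opposite of the second).

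Next I would apply $(\blank)^\op$ to the adjunction $\textup{incl}\dashv\ul\CMon^\Qq$ from \Cref{prop:CMon-Q-presentable-adjunction}. Passing to opposites turns a left adjoint into a right adjoint and exchanges unit and counit, so this produces an adjunction between $(\Pr(\Bb)^\textup{R})^\op$ and $(\Pr(\Bb)^\textup{R,\,$\Qq$-$\oplus$})^\op$ in which $\ul\CMon^\Qq$ is now the \emph{left} adjoint and the inclusion is the right adjoint. Transporting this adjunction across the two equivalences from the previous step yields precisely an adjunction exhibiting a left adjoint to the inclusion $\Pr(\Bb)^\textup{L,\,$\Qq$-$\oplus$}\hookrightarrow\Pr(\Bb)^\textup{L}$, given on objects by $\Dd\mapsto\ul\CMon^\Qq(\Dd)$ (well-defined by \Cref{prop:CMon-Q-presentable-adjunction}).

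Finally, for the description of the unit: the counit of the adjunction in \Cref{prop:CMon-Q-presentable-adjunction} at $\Dd$ is the forgetful functor $\mathbb U\colon\ul\CMon^\Qq(\Dd)\to\Dd$, viewed as a morphism of $\Pr(\Bb)^\textup{R}$. Under $(\blank)^\op$ this counit becomes the unit of the opposite adjunction, and under the presentability duality a morphism of $\Pr(\Bb)^\textup{R}$ is identified with its left adjoint as a morphism of $\Pr(\Bb)^\textup{L}$; hence the unit of the resulting adjunction at $\Dd$ is the left adjoint of $\mathbb U$, which exists by \Cref{cor:cmon-free-functor} and is exactly the free functor $\mathbb P\colon\Dd\to\ul\CMon^\Qq(\Dd)$. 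The step that I expect to require genuine care — and the only one that is not pure bookkeeping — is this last compatibility: that the presentability duality exchanges units with counits and sends the right adjoint $\mathbb U$ to the left adjoint $\mathbb P$. One can also sidestep the $2$-categorical manipulation entirely and argue directly on mapping categories: for presentable $\Qq$-semiadditive $\Cc$, precomposition with $\mathbb P$ defines a map $\Map_{\Pr(\Bb)^\textup{L}}(\ul\CMon^\Qq(\Dd),\Cc)\to\Map_{\Pr(\Bb)^\textup{L}}(\Dd,\Cc)$ which, upon identifying left adjoints with their right adjoints, becomes postcomposition with $\mathbb U$ on $\Map_{\Pr(\Bb)^\textup{R}}(\Cc,-)$ and is therefore an equivalence by \Cref{prop:CMon-Q-presentable-adjunction}; naturality in $\Cc$ then upgrades this to the asserted adjunction.
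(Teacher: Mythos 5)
Your proof is correct and takes exactly the route the paper intends: the paper's own proof is the single word "Dualizing," and you have simply filled in the expected details — transporting the adjunction $\textup{incl}\dashv\ul\CMon^\Qq$ of \Cref{prop:CMon-Q-presentable-adjunction} across the duality $\Pr(\Bb)^\textup{L}\simeq(\Pr(\Bb)^\textup{R})^\op$ and checking that the counit $\mathbb U$ corresponds under this duality to the unit $\mathbb P$. (One small simplification: since $\Pr(\Bb)^{\textup{L},\Qq\text-\oplus}$ and $\Pr(\Bb)^{\textup{R},\Qq\text-\oplus}$ are \emph{full} subcategories of $\Pr(\Bb)^\textup{L}$ and $\Pr(\Bb)^\textup{R}$ respectively — every left adjoint is automatically $\Qq$-cocontinuous and every right adjoint automatically $\Qq$-continuous — the duality restricts to them for free, without needing to invoke \Cref{cor:Functor_Preserves_Limits_Iff_Preserves_Colimits}.)
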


	The usual argument internalizes this to the following equivalence of $\Bb$-categories:

	\begin{theorem}\label{thm:universal_prop_par_Qcom_monoids_presentable}
		Assume $\Qq$ is slicewise small, let $\Cc$ be presentable, and let $\Dd$ be $\Qq$-semiadditive and presentable. Then restriction along $\mathbb P\colon\Cc\to\ul\CMon^\Qq(\Cc)$ induces an equivalence
		\begin{equation*}
			\ul\Fun^\textup{L}_\Bb(\ul\CMon^\Qq(\Cc),\Dd)\iso\ul\Fun_\Bb^\textup{L}(\Cc,\Dd).\qednow
		\end{equation*}
	\end{theorem}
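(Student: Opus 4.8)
The plan is to deduce this by dualizing the ``smashing'' equivalence \Cref{cor:smashing}. First I would record the preliminaries: since $\Qq$ is slicewise small and $\Cc$ is presentable, \Cref{prop:cmon-in-pres-is-pres} shows that $\ul\CMon^\Qq(\Cc)$ is presentable (and it is $\Qq$-semiadditive), and \Cref{cor:cmon-free-functor} provides the left adjoint $\mathbb P$ to $\mathbb U=\ev_\pt\colon\ul\CMon^\Qq(\Cc)\to\Cc$. As $\mathbb P$ is a left adjoint and this is stable under passing to slices, precomposition with $\mathbb P$ carries left adjoint functors to left adjoint functors, so restriction along $\mathbb P$ genuinely defines a morphism of $\Bb$-categories
\[
	\mathbb P^*\colon\ul\Fun^\textup{L}_\Bb(\ul\CMon^\Qq(\Cc),\Dd)\longrightarrow\ul\Fun^\textup{L}_\Bb(\Cc,\Dd),
\]
and the claim is that it is an equivalence.

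Next I would use the parametrized adjoint functor theorem \cite{martiniwolf2022presentable} in the form of a natural equivalence $\ul\Fun^\textup{L}_\Bb(\mathcal X,\mathcal Y)\simeq\ul\Fun^\textup{R}_\Bb(\mathcal Y,\mathcal X)\catop$ for presentable $\Bb$-categories $\mathcal X,\mathcal Y$, given by sending a functor to its parametrized right adjoint. Applying this to $(\mathcal X,\mathcal Y)=(\ul\CMon^\Qq(\Cc),\Dd)$ and to $(\mathcal X,\mathcal Y)=(\Cc,\Dd)$ — all four $\Bb$-categories being presentable — identifies $\mathbb P^*$ with the $\catop$ of the postcomposition functor
\[
	\mathbb U\circ(\blank)\colon\ul\Fun^\textup{R}_\Bb(\Dd,\ul\CMon^\Qq(\Cc))\longrightarrow\ul\Fun^\textup{R}_\Bb(\Dd,\Cc);
\]
here one uses that, under the adjoint correspondence, precomposition with the left adjoint $\mathbb P$ of $\mathbb U$ goes over to postcomposition with $\mathbb U$. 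It thus suffices to prove that $\mathbb U\circ(\blank)$ is an equivalence, and this is precisely \Cref{cor:smashing} applied with its roles reversed (taking ``$\Cc$'' to be $\Dd$ and ``$\Dd$'' to be $\Cc$): the hypotheses hold since $\Dd$ is $\Qq$-semiadditive and, being presentable, complete, while $\Cc$ is presentable hence complete.

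I expect the only real friction to be in the middle step: citing the presentable duality $\ul\Fun^\textup{L}_\Bb(\mathcal X,\mathcal Y)\simeq\ul\Fun^\textup{R}_\Bb(\mathcal Y,\mathcal X)\catop$ in a form precise enough that the identification of $\mathbb P^*$ with $(\mathbb U\circ(\blank))\catop$ is literally a mate/naturality statement. If one prefers to avoid invoking this duality, an alternative is the slicewise route: by \Cref{rmk:fun-slice} the value of either side at $B\in\Bb$ is a functor category $\Fun^\textup{L}_{\Bb_{/B}}(\pi_B^*(\blank),\pi_B^*\Dd)$, and since equivalences in $\Cat(\Bb)$ are detected levelwise it is enough to treat each $B$ separately. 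One then identifies $\pi_B^*\ul\CMon^\Qq(\Cc)\simeq\ul\CMon^{\pi_B^{-1}\Qq}(\pi_B^*\Cc)$ (compatibly with $\mathbb P$), using that $\pi_B^*$ preserves internal homs and carries $\ul\Span(\Qq)$ to $\ul\Span(\pi_B^{-1}\Qq)$ and products hence adjunctions, together with the fact that $\pi_B^{-1}\Qq$ is again slicewise small and locally inductible and makes $\pi_B^*\Dd$ presentable $\pi_B^{-1}\Qq$-semiadditive (\Cref{rk:norm-res-pb}); the statement then reduces to the non-parametrized case over $\Bb_{/B}$, with the bulk of the work being this base-change compatibility and the naturality needed to reassemble the levelwise equivalences.
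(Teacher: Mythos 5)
Your primary (dualization) route is correct in outline but takes a genuinely different path from the paper. The paper deduces the theorem by ``internalizing'' the external adjunction $\Pr(\Bb)^{\textup{L},\,\Qq\text-\oplus}\rightleftarrows\Pr(\Bb)^\textup{L}$ established in the preceding corollary (essentially the slicewise route you sketch as an alternative). You instead pass to the dual side via the presentable duality $\ul\Fun^\textup{L}_\Bb(\Xx,\Yy)\simeq\ul\Fun^\textup{R}_\Bb(\Yy,\Xx)\catop$ and invoke the already-internal statement of \Cref{cor:smashing}. Both are short; yours has the merit of making the reduction fully explicit rather than compressing it into ``the usual argument internalizes,'' at the cost of having to cite the Martini--Wolf duality and verify the mate identification (which you correctly flag as the delicate step).

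There is, however, one genuine mismatch you need to close. The duality $\ul\Fun^\textup{L}_\Bb(\Xx,\Yy)\simeq\ul\Fun^\textup{R}_\Bb(\Yy,\Xx)\catop$ is formulated with $\ul\Fun^\textup{R}$ being the $\Bb$-category of \emph{right adjoint} (equivalently accessible continuous) functors, whereas \Cref{cor:smashing} is stated for the $\Bb$-category of \emph{all continuous} functors between complete $\Bb$-categories, with no accessibility hypothesis. A continuous functor between presentable ($\Bb$-)categories need not be accessible, so these are a priori distinct full sub-$\Bb$-categories of $\ul\Fun_\Bb(\Dd,-)$, and you cannot directly paste the duality onto \Cref{cor:smashing}. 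The gap is closable in a few lines: $\mathbb U$ is a right adjoint by \Cref{cor:cmon-free-functor}, so $\mathbb U\circ(\blank)$ carries right adjoints to right adjoints; conversely, for a right adjoint $G\colon\Dd\to\Cc$, its (by \Cref{cor:smashing} unique) continuous lift to $\ul\CMon^\Qq(\Cc)$ must agree with $\ul\CMon^\Qq(G)\circ\mathbb U_\Dd^{-1}$, where $\mathbb U_\Dd$ is an equivalence since $\Dd$ is $\Qq$-semiadditive; and $\ul\CMon^\Qq(G)$ is again a right adjoint by \Cref{prop:CMon-Q-presentable-adjunction}. Hence the equivalence of \Cref{cor:smashing} restricts to the full sub-$\Bb$-categories of right adjoints, and your argument then goes through.
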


	\subsection{Fiberwise modules}
	We can also prove a presentable universal property for $\ul{\Sp}^\Qq(\Cc)$ when $\Cc$ is presentable. In fact the only thing relevant about the property of stability is that it is equivalent to being a module over the idempotent object $\Sp$ in $\PrL$. We present the argument in this generality.

	\begin{definition}[See \cite{CSY2021AmbiHeight}*{Definition~5.2.4}]
		A \emph{mode} is an idempotent object in $\Pr^\textup{L}$, i.e.~a pair $(\Ee,E)$ of a presentable category $\Ee$ together with an object $E\in\Ee$ such that the map $\Ee\simeq{\Spc}\otimes\Ee\to\Ee\otimes\Ee$ induced by $E$ is an equivalence.

		Given any mode $(\Ee,E)$, a \emph{module} over it is a presentable category $\Ff$ such that the map $\Ff\simeq{\Spc}\otimes\Ff\to\Ee\otimes\Ff$ induced by $E$ is an equivalence. We write $\Mod_\Ee\subseteq\Pr^\textup{L}$ for the full subcategory of $\Ee$-modules.
	\end{definition}

	As usual, we will just refer to $\Ee$ as a mode when the object $E\in\Ee$ is understood.

	\begin{remark}
		In the above setting, $\Ee$ actually admits a (unique) commutative algebra structure with unit $E$ \cite{HA}*{Proposition 4.8.2.9}, and for this algebra structure the forgetful functor from $\Ee$-modules (in the usual sense) to presentable categories is fully faithful with essential image $\Mod_\Ee$, see \cite{HA}*{Proposition~4.8.2.10}, justifying the terminology.

		As a direct consequence, the inclusion $\Mod_\Ee\hookrightarrow\Pr^\textup{L}$ admits a left adjoint given by $\Ee\otimes\blank$; in particular, $\Mod_\Ee$ is closed under limits.
	\end{remark}

	\begin{remark}\label{rk:tensor-product-lim}
		As a left adjoint, $\Ee\otimes\blank\colon\Pr^\textup{L}\to\Pr^\textup{L}$ preserves all colimits; we will need below that it also preserves certain limits, namely limits of diagrams $X\colon K\to\Pr^\textup{L}$ such that all structure maps $X(k\to\ell)$ are also \emph{right} adjoints. Indeed, on $\Pr^\textup{L}\cap\Pr^\textup{R}$ the functoriality of the Lurie tensor product $\Ee\otimes\Cc=\Fun^\textup{R}(\Ee^\op,\Cc)$ is simply given by postcomposition, so the statement is clear.
	\end{remark}

	\begin{example}
		The pair $(\Spc,1)$ is a mode, and every presentable category is a $\Spc$-module.
	\end{example}

	\begin{example}\label{ex:stable-mode}
		The pair $(\Sp,\mathbb S)$ is a mode, and the $\Sp$-modules are precisely the \emph{stable} presentable categories, see \cite{HA}*{Proposition 4.8.2.18}.
	\end{example}

	\begin{example}
		The pair $(\Set,1)$ is a mode, and the $\Set$-modules are precisely the presentable $1$-categories, see \cite{HA}*{Proposition 4.8.2.15}.
	\end{example}

	\begin{example}\label{ex:Ab-mode}
		The pair $(\Ab,\mathbb Z)$ is a mode, and the $\Ab$-modules are precisely the presentable additive $1$-categories; this is immediate from the previous example together with \cite{groth-gepner-nikolaus}*{Theorem~4.6}.
	\end{example}

	\begin{example}[cf.~\cite{harpaz2020ambidexterity}*{Corollary~5.21}]\label{ex:Q-semiadd-mode}
		Let $\Qq\subseteq\Spc$ be locally inductible. We claim that $(\CMon^\Qq(\Spc),\mathbb P(1))$ is a mode whose modules are precisely the $\Qq$-semiadditive categories.

		Indeed, if $\Dd$ is presentable, then the map $\Dd\to\CMon^\Qq(\Spc)\otimes\Dd$ is left adjoint to the forgetful map $\Fun^\textup{R}(\Dd^\op,\CMon^\Qq(\Spc))\to\Fun^\textup{R}(\Dd^\op,\Spc)\simeq\Dd$, and Corollary~\ref{cor:smashing} shows that this is an equivalence whenever $\Dd$ is $\Qq$-semiadditive and cocomplete (so that $\Dd^\op$ is complete). It follows immediately that $\CMon^\Qq(\Spc)$ is a mode and that every presentable $\Qq$-semiadditive category is a module over it. On the other hand, one easily checks that $\Fun^\textup{R}(\Dd^\op,\CMon^\Qq(\Spc))$ is $\Qq$-semiadditive for every presentable $\Dd$, so conversely every $\CMon^\Qq(\Spc)$-module is $\Qq$-semiadditive.
	\end{example}

	\begin{definition}
		A fiberwise presentable $\Bb$-category $\Cc\colon\Bb^\op\to\Pr^\textup{L}$ is called a \emph{fiberwise $\Ee$-module} if it factors through the full subcategory $\Mod_\Ee\subseteq\Pr^\textup{L}$, i.e.~if every $\Cc(A)$ is an $\Ee$-module. We write $\Mod_\Ee(\Bb)\subseteq\Pr^\textup{L}(\Bb)$ for the full subcategory spanned by those presentable categories that are in addition fiberwise $\Ee$-modules.
	\end{definition}

	\begin{example}\label{ex:fw-stable-mode}
		By Example~\ref{ex:stable-mode}, a (fiberwise) presentable $\Bb$-category is fiberwise stable if and only if it is a fiberwise $\Sp$-module.
	\end{example}

	\begin{example}\label{ex:qfib-mode}
		By \Cref{lem:fiberwise-semiadd} and \Cref{ex:Q-semiadd-mode}, a (fiberwise) presentable $\Qq$-semiadditive $\Bb$-category is always a fiberwise $\CMon^{Q_{\fib}}(\Spc)$-module.
	\end{example}

	\begin{lemma}\label{lemma:fiberwise-module}
		Let $\Ee$ be any mode. Then the inclusion $\Mod_\Ee(\Bb)\subseteq\Pr^\textup{L}(\Bb)$ admits a left adjoint given by applying $\Ee\otimes\blank$ pointwise, with unit $\Cc\to\Ee\otimes\Cc$ induced by the map $\Spc\to\Ee$.

		Moreover, this restricts to an adjunction $\Mod_\Ee^{\Qq\text-\oplus}(\Bb)\rightleftarrows\Pr^{\textup{L},\,\Qq\text-\oplus}(\Bb)$.
		\begin{proof}
			First observe that the pointwise tensor product $\Ee\otimes\Cc$ is indeed a $\Bb$-category for any presentable $\Bb$-category $\Cc$ by Remark~\ref{rk:tensor-product-lim}. Next, let us show that $\Ee\otimes\Cc$ is again $\Bb$-cocomplete, hence presentable. For this we recall that the tensor product lifts to an $(\infty,2)$-functor; all we will need below is that it is a functor on the homotopy $2$-category of $\Pr^\textup{L}$ (which also follows immediately from its construction as a functor category), and hence sends adjunctions to adjunctions. Given now any $f\colon A\to B$ in $\Bb$, both $f^*$ and $f_!$ are left adjoints, so they form an internal adjunction in $\Pr^\textup{L}$ and hence induce an adjunction $\Ee\otimes f_!\dashv\Ee\otimes f^*$. Given any $g\colon B'\to B$, the base change map $(\Ee\otimes f'_!)(\Ee\otimes g^{\prime*})\to(\Ee\otimes g^*)(\Ee\otimes f_!)$ is then induced via $2$-functoriality from the base change map $f'_!g^{\prime*}\to g^*f_!$, so it is invertible as the latter one is. Finally, the same $2$-functoriality argument together with Proposition~\ref{prop:adj-criterion-MW} shows that the canonical map $\Cc\to\Ee\otimes\Cc$ is indeed a parametrized left adjoint.

			If $\Cc$ is now $\Qq$-semiadditive, then the functor $q_*$ is itself a map in $\Pr^\textup{L}$ for any $q\colon A\to B$ in $\Qq$ (as $q_*\simeq q_!$), so the right adjoint of $\Ee\otimes q^*$ is given by $\Ee\otimes q_*$, with the induced unit and counit. Arguing as before, we see that the double Beck--Chevalley map $(\Ee\otimes\pr_{1!})(\Ee\otimes q_*)\to(\Ee\otimes\pr_{2*})(\Ee\otimes q_!)$ is invertible, so that $\Ee\otimes\Cc$ is $\Qq$-semiadditive, proving the second statement.
		\end{proof}
	\end{lemma}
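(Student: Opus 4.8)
The plan is to deduce everything from the fact that $\Ee\otimes\blank\colon\Pr^\textup{L}\to\Pr^\textup{L}$ can be promoted to a functor of homotopy $2$-categories — on $\Pr^\textup{L}\cap\Pr^\textup{R}$ it is just $\Fun^\textup{R}(\Ee^\op,\blank)$ with functoriality given by postcomposition, so it visibly sends adjunctions to adjunctions and (invertible) mates to (invertible) mates — and then to read off the parametrized statements levelwise. First I would check that $\Ee\otimes\Cc$, obtained by applying $\Ee\otimes\blank$ pointwise to a presentable $\Bb$-category $\Cc$, is again a presentable $\Bb$-category (and a fiberwise $\Ee$-module, which is automatic). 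That $\Ee\otimes\Cc\colon\Bb^\op\to\Cat$ still preserves limits, and is therefore a $\Bb$-category, is the one step that is not pure formalism, and is exactly the content of Remark~\ref{rk:tensor-product-lim}: the limit diagrams encoding descent for a presentable $\Bb$-category have right-adjoint transition maps, and $\Ee\otimes\blank$ preserves limits of such diagrams. For $\Bb$-cocompleteness I would apply the $2$-functor $\Ee\otimes\blank$ to the internal adjunctions $f_!\dashv f^*$ in $\Pr^\textup{L}$ (both are morphisms of $\Pr^\textup{L}$, the first by cocompleteness of $\Cc$), obtaining adjunctions $\Ee\otimes f_!\dashv\Ee\otimes f^*$ whose Beck--Chevalley squares are the images under $\Ee\otimes\blank$ of the Beck--Chevalley equivalences in $\Cc$. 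The same $2$-functoriality together with the criterion for the existence of parametrized adjoints (fiberwise adjoints plus Beck--Chevalley) shows that the canonical map $\Cc\to\Ee\otimes\Cc$ induced by $\Spc\to\Ee$ is a parametrized left adjoint.

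With this in place the adjunction $\Ee\otimes\blank\dashv\textup{incl}$ is formal: $\Mod_\Ee(\Bb)\subseteq\Pr^\textup{L}(\Bb)$ is full, the unit $\eta\colon\id\Rightarrow\Ee\otimes\blank$ restricts on a fiberwise $\Ee$-module $\Dd$ to the levelwise equivalences $\Dd(A)\iso\Ee\otimes\Dd(A)$ coming from the module structure — hence to an equivalence of $\Bb$-categories — and $\eta$ becomes an equivalence after applying $\Ee\otimes\blank$ by idempotency of $\Ee$; so $(\Ee\otimes\blank,\eta)$ exhibits a reflective localization. This is the first statement.

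For the second, assume $\Cc$ is moreover $\Qq$-semiadditive. Then for $q\colon A\to B$ in $\Qq$ the right adjoint $q_*\simeq q_!$ of $q^*$ is itself a morphism of $\Pr^\textup{L}$, so $\Ee\otimes q_*$ is right adjoint to $\Ee\otimes q^*$ with unit and counit the images of those for $\Cc$; hence $\Ee\otimes\Cc$ is $\Qq$-complete, and, together with the $\Qq$-cocompleteness coming from presentability, Proposition~\ref{prop:Characterization_Q_Semiadditivity} applies. I would verify one of its double-Beck--Chevalley criteria: the double Beck--Chevalley map for $\Ee\otimes\Cc$ attached to the pullback of $q$ with itself is the image under $\Ee\otimes\blank$ of the corresponding (invertible) map for $\Cc$, hence invertible, so $\Ee\otimes\Cc$ is $\Qq$-semiadditive. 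The same argument shows $\Ee\otimes\blank$ sends $\Qq$-continuous functors to $\Qq$-continuous functors — it preserves invertibility of the Beck--Chevalley maps $Fq_*\to q_*F$ — so the adjunction restricts to $\Mod_\Ee^{\Qq\text-\oplus}(\Bb)\rightleftarrows\Pr^{\textup{L},\,\Qq\text-\oplus}(\Bb)$.

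The main obstacle I expect is the first point — that tensoring pointwise with $\Ee$ preserves the descent condition — since $\Ee\otimes\blank$ is a priori only a left adjoint on $\Pr^\textup{L}$ and hence need not commute with the limits involved; this is exactly why Remark~\ref{rk:tensor-product-lim} is isolated beforehand. Everything else is bookkeeping: propagating adjunctions, units and counits, mates, and Beck--Chevalley squares through the $2$-functor $\Ee\otimes\blank$, and then quoting the parametrized adjoint functor theorem and the already-established characterizations of $\Qq$-semiadditivity.
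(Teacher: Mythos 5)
Your proof matches the paper's argument essentially step for step: isolate Remark~\ref{rk:tensor-product-lim} as the one non-formal input, propagate fiberwise adjunctions and Beck--Chevalley squares through the $2$-functor $\Ee\otimes\blank$, invoke Proposition~\ref{prop:adj-criterion-MW} for the parametrized adjoint, and apply the double Beck--Chevalley criterion of Proposition~\ref{prop:Characterization_Q_Semiadditivity} for $\Qq$-semiadditivity. The only difference is that you spell out explicitly why idempotency makes $(\Ee\otimes\blank,\eta)$ a reflective localization and why $\Ee\otimes\blank$ preserves $\Qq$-continuous morphisms, both of which the paper leaves implicit; these are correct and harmless additions.
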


	Let us specialize this to the stable case (Example~\ref{ex:fw-stable-mode}):

	\begin{corollary}\label{cor:Q-stable-presentable}
		Assume $\Qq$ is slicewise small. The inclusion $\Pr(\Bb)^\textup{L,\,$\Qq$-ex}\hookrightarrow\Pr(\Bb)^\textup{L}$ admits a left adjoint $\ul\Sp^\Qq\coloneqq{\Sp}\otimes\ul\CMon^\Qq$. For every presentable $\Cc$, the unit $\Sigma^\infty_+\colon\Cc\to\ul\Sp^\Qq(\Cc)$ is given by the composite
		\begin{equation*}
			\Cc\xrightarrow{\;\mathbb P\;}\ul\CMon^\Qq(\Cc)\xrightarrow{\,\Sigma^\infty_+\,}{\Sp}\otimes\ul\CMon^\Qq(\Cc)=\ul\Sp^\Qq(\Cc).
		\end{equation*}
		\begin{proof}
			Combine Proposition~\ref{prop:CMon-Q-presentable-adjunction} with Lemma~\ref{lemma:fiberwise-module}.
		\end{proof}
	\end{corollary}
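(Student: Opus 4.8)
The plan is to realize the asserted adjunction as the composite of two adjunctions already established in this section, and then to match up the endpoint categories. Throughout one uses the slicewise-smallness hypothesis on $\Qq$: this is exactly what Proposition~\ref{prop:cmon-in-pres-is-pres} and Corollary~\ref{cor:cmon-free-functor} need in order to guarantee that $\ul\CMon^\Qq$ sends presentable $\Bb$-categories to presentable $\Bb$-categories and that the free functors $\mathbb P$ exist, so that all the categories below are well-posed.

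First I would invoke the dual of Proposition~\ref{prop:CMon-Q-presentable-adjunction}, that is, the (unlabeled) corollary immediately following it, which exhibits $\Pr(\Bb)^{\textup{L},\,\Qq\text-\oplus}$ as a reflective subcategory of $\Pr(\Bb)^\textup{L}$, with reflector $\ul\CMon^\Qq$ and unit the free functors $\mathbb P\colon\Cc\to\ul\CMon^\Qq(\Cc)$. Next I would apply Lemma~\ref{lemma:fiberwise-module} to the stable mode $(\Sp,\mathbb S)$ of Example~\ref{ex:stable-mode}: this produces an adjunction whose left adjoint is $\Sp\otimes\blank$ computed pointwise, with unit the maps $\Sigma^\infty_+\colon\Dd\to\Sp\otimes\Dd$ induced by $\Spc\to\Sp$, and, crucially, the final sentence of that lemma says it restricts to an adjunction $\Mod_\Sp^{\Qq\text-\oplus}(\Bb)\rightleftarrows\Pr(\Bb)^{\textup{L},\,\Qq\text-\oplus}$. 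Composing the two adjunctions gives a left adjoint $\Cc\mapsto\Sp\otimes\ul\CMon^\Qq(\Cc)=\ul\Sp^\Qq(\Cc)$ to the inclusion of $\Mod_\Sp^{\Qq\text-\oplus}(\Bb)$, and a routine chase of units through the composite (both right adjoints being inclusions) yields exactly the stated factorization $\Cc\xrightarrow{\mathbb P}\ul\CMon^\Qq(\Cc)\xrightarrow{\Sigma^\infty_+}\Sp\otimes\ul\CMon^\Qq(\Cc)$.

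The remaining, and really the only, point to check is that $\Mod_\Sp^{\Qq\text-\oplus}(\Bb)$ coincides with $\Pr(\Bb)^{\textup{L},\,\Qq\text{-ex}}$ as a subcategory of $\Pr(\Bb)^\textup{L}$. On objects this is Example~\ref{ex:fw-stable-mode}: a presentable $\Bb$-category is a fiberwise $\Sp$-module precisely when it is fiberwise stable, so the objects of $\Mod_\Sp^{\Qq\text-\oplus}(\Bb)$ are exactly the $\Qq$-stable presentable $\Bb$-categories. On morphisms one notes that any functor in $\Pr(\Bb)^\textup{L}$ between such categories is automatically fiberwise exact (fiberwise left adjoints between stable categories preserve finite colimits) and automatically $\Qq$-semiadditive (a $\Bb$-cocontinuous functor preserves $\Qq$-colimits), so there is no discrepancy; one should also remark that the $\ul\Sp^\Qq(\Cc)$ produced here agrees with the earlier $\ul\Sp^{\fib}(\ul\CMon^\Qq(\Cc))$, since the stabilization of a presentable category computes simultaneously the fiberwise-lex and the mode-theoretic stabilizations. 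I expect this bookkeeping of morphism classes, together with the reconciliation of the two descriptions of $\ul\Sp^\Qq$, to be the only part requiring any care: it is where all the content beyond the two cited results resides.
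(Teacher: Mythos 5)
Your proof is correct and takes exactly the route of the paper's (very terse) proof, which simply composes the reflective localization from the corollary after Proposition~\ref{prop:CMon-Q-presentable-adjunction} with the adjunction from Lemma~\ref{lemma:fiberwise-module} applied to the mode $(\Sp,\mathbb S)$. The bookkeeping you flag at the end — that morphisms of $\Pr(\Bb)^\textup{L}$ between $\Qq$-stable presentable $\Bb$-categories are automatically fiberwise exact and $\Qq$-semiadditive, and that $\Sp\otimes\Dd\simeq\Sp^{\fib}(\Dd)$ for presentable $\Dd$ — is indeed the only content beyond the two citations, and you have identified it correctly.
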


	As before, we formally deduce the following internal version:

	\begin{theorem}
		Assume $\Qq$ is slicewise small. Let $\Cc$ be any presentable $\Bb$-category, and let $\Dd$ be presentable and $\Qq$-stable. Then restriction along $\Sigma^\infty_+\colon\Cc\to\ul\Sp^\Qq(\Cc)$ induces an equivalence
		\begin{equation*}
			\ul\Fun_\Bb^\textup{L}(\ul\Sp^\Qq(\Cc),\Dd)\iso\ul\Fun_\Bb^\textup{L}(\Cc,\Dd).\qednow
		\end{equation*}
	\end{theorem}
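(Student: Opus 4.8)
The plan is to deduce this from Corollary~\ref{cor:Q-stable-presentable} by the same formal internalization that produces Theorem~\ref{thm:universal_prop_par_Qcom_monoids_presentable} from Proposition~\ref{prop:CMon-Q-presentable-adjunction}. Since equivalences of $\Bb$-categories are detected levelwise and since the $B$-level of $\ul\Fun^\textup{L}_\Bb(\Cc',\Dd')$ is $\Fun^\textup{L}_{\Bb_{/B}}(\pi_B^*\Cc',\pi_B^*\Dd')$ by the presentable analogue of Remark~\ref{rmk:fun-slice} \cite{martiniwolf2022presentable}, it suffices to prove the statement on underlying categories for every topos: namely that for $\Cc$ presentable and $\Dd$ presentable and $\Qq$-stable, restriction along $\Sigma^\infty_+$ is an equivalence of categories $\Fun^\textup{L}_\Bb(\ul\Sp^\Qq(\Cc),\Dd)\to\Fun^\textup{L}_\Bb(\Cc,\Dd)$. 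The stated $\Bb$-categorical version then follows by applying this over $\Bb_{/B}$ for each $B\in\Bb$, using that $\pi_B^{-1}\Qq$ is again slicewise small, that $\pi_B^*$ commutes with the pointwise tensor product $\Sp\otimes\blank$ and with $\ul\CMon^\Qq$ (as it commutes with $\ul\Span(\Qq)$, with internal homs, and with $\Qq$-limits), hence with $\ul\Sp^\Qq$ and with $\Sigma^\infty_+$, and that $\pi_B^*$ sends presentable $\Qq$-stable $\Bb$-categories to presentable $\pi_B^{-1}\Qq$-stable $\Bb_{/B}$-categories (Remark~\ref{rk:norm-res-pb}).

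For the underlying-category statement I would first note that $\Pr(\Bb)^\textup{L,\,$\Qq$-ex}$ is a \emph{full} subcategory of $\Pr(\Bb)^\textup{L}$: a left adjoint $\Bb$-functor between presentable $\Bb$-categories preserves all parametrized and fiberwise colimits, and hence between $\Qq$-stable categories it is automatically $\Qq$-cocontinuous and fiberwise exact. Therefore the adjunction $\ul\Sp^\Qq\dashv\textup{incl}$ of Corollary~\ref{cor:Q-stable-presentable}, whose unit is $\Sigma^\infty_+$, amounts precisely to the statement that for every presentable $\Qq$-stable $\Bb$-category $\Ee$ restriction along $\Sigma^\infty_+$ induces an equivalence of mapping spaces
\[
\Map_{\Pr(\Bb)^\textup{L}}(\ul\Sp^\Qq(\Cc),\Ee)\;\iso\;\Map_{\Pr(\Bb)^\textup{L}}(\Cc,\Ee).
\]
Taking $\Ee=\Dd$ shows the restriction functor $\Fun^\textup{L}_\Bb(\ul\Sp^\Qq(\Cc),\Dd)\to\Fun^\textup{L}_\Bb(\Cc,\Dd)$ is an equivalence on cores; it remains to upgrade this to an equivalence of categories.

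To do so I would feed in the cotensors $\ul\Fun([n],\Dd)$ for $n\ge0$, with $[n]$ regarded as a constant $\Bb$-category. Each $\ul\Fun([n],\Dd)$ is again presentable — a functor category into a presentable $\Bb$-category — and again $\Qq$-stable: $\Qq$-semiadditivity is Remark~\ref{rem:functor-cat-Q-semi}, while fiberwise stability is inherited from $\Dd$ because fiberwise (co)limits in functor categories are computed pointwise (Remark~\ref{rk:fun-colimits-full}). That same pointwise description of colimits shows that a $\Bb$-functor into $\ul\Fun([n],\Dd)$ is a left adjoint if and only if all of its components are, which yields natural equivalences $\Fun^\textup{L}_\Bb(\mathcal X,\ul\Fun([n],\Dd))\simeq\Fun([n],\Fun^\textup{L}_\Bb(\mathcal X,\Dd))$ for $\mathcal X$ equal to $\Cc$ or $\ul\Sp^\Qq(\Cc)$, compatible with restriction along $\Sigma^\infty_+$. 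Passing to cores and invoking the equivalence of the previous paragraph with $\Ee=\ul\Fun([n],\Dd)$ then shows that the functor $\Fun^\textup{L}_\Bb(\ul\Sp^\Qq(\Cc),\Dd)\to\Fun^\textup{L}_\Bb(\Cc,\Dd)$ becomes an equivalence after applying $\iota\Fun([n],\blank)$ for every $n$; as a functor of categories with this property is itself an equivalence, we are done. The only genuine work is the bookkeeping in the first paragraph — verifying that $\pi_B^*$ commutes with all the relevant constructions and preserves presentability and $\Qq$-stability — together with the (routine but load-bearing) check that $\ul\Fun([n],\Dd)$ remains presentable and $\Qq$-stable so that Corollary~\ref{cor:Q-stable-presentable} applies to it; everything else is purely formal.
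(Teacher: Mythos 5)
The paper states this theorem with a bare \verb|\qednow|, declaring it a formal consequence of Corollary~\ref{cor:Q-stable-presentable}; your proposal fills in exactly the kind of routine argument the authors had in mind, and it is correct.

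Two points are worth flagging as the load-bearing observations that make the formal argument go through, and you identify both. First, the reductions via $\pi_B^*$: the identification $\ul\Fun_\Bb^\textup{L}(\Cc',\Dd')(B)\simeq\Fun^\textup{L}_{\Bb_{/B}}(\pi_B^*\Cc',\pi_B^*\Dd')$, the compatibility $\pi_B^*\ul\Sp^\Qq(\Cc)\simeq\ul\Sp^{\pi_B^{-1}\Qq}(\pi_B^*\Cc)$ (which decomposes into $\pi_B^*$ commuting with $\ul\Span(\blank)$, with $\ul\Fun^{\Qq\text-\times}$, and with the pointwise Lurie tensor $\Sp\otimes\blank$), the preservation of the slicewise-smallness hypothesis, and the preservation of presentability and $\Qq$-stability by $\pi_B^*$ — all needed, all true, all correctly identified. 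Second, the upgrade from a core equivalence to a categorical equivalence by cotensoring with $[n]$: this hinges on (a) fullness of $\Pr(\Bb)^{\textup{L},\,\Qq\text{-ex}}\hookrightarrow\Pr(\Bb)^\textup{L}$, which you justify correctly by noting that a left adjoint between $\Qq$-stable presentables automatically preserves $\Qq$-colimits and finite fiberwise colimits, hence is $\Qq$-semiadditive and fiberwise exact; (b) the fact that $\ul\Fun([n],\Dd)$ is again presentable and $\Qq$-stable, so that the mapping-space equivalence from the adjunction of Corollary~\ref{cor:Q-stable-presentable} applies to it; and (c) the pointwise description of colimits in functor $\Bb$-categories, which both certifies (b) and yields $\Fun^\textup{L}_\Bb(\mathcal X,\ul\Fun([n],\Dd))\simeq\Fun([n],\Fun^\textup{L}_\Bb(\mathcal X,\Dd))$. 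Since the complete Segal space of a category is $n\mapsto\iota\Fun([n],\blank)$, the conclusion follows.

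One stylistic remark: the initial reduction to underlying categories via $\pi_B^*$ is fine but not strictly necessary — one could run the $\iota((\blank)^{[n]})$ test directly on the map of $\Bb$-categories, as the paper itself does in the proof of Lemma~\ref{lem:Colimit_Of_Subcategories}, using the fully faithful embedding $\Cat(\Bb)\hookrightarrow\Fun(\Delta^\op,\Bb')$. Your route through the slices is a mild detour, but every step of it is correct and the extra bookkeeping is handled properly.
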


	\section{Sheaves with transfers}
	Given a locally inductible subcategory $\Qq \subseteq \Bb$, our definition of $\Qq$-commutative monoids in a general $\Bb$-category $\Cc$ makes heavy use of the language of parametrized category theory. In this section, we will see that in suitable cases $\Qq$-commutative monoids in $\Cc$ admit a concrete non-parametrized description in terms of \textit{sheaves with transfers}:

	\begin{definition}
		 Let $\Ee$ be a presentable category. Recall that an $\Ee$-valued sheaf on $\Bb$ is a limit-preserving functor $\Bb\catop \to \Ee$. We define an  \textit{$\Ee$-valued sheaf with $\Qq$-transfers on $\Bb$} to be a functor $M\colon \Span(\Bb,\Bb,\Qq) \to \Ee$ for which the restriction
		\[
		\Bb\catop \simeq \Span(\Bb,\Bb,\iota\Bb) \hookrightarrow \Span(\Bb,\Bb,\Qq) \xrightarrow{\;M\;} \Ee
		\]
		is an $\Ee$-valued sheaf on $\Bb$. We denote by
		\[
			\Shv(\Bb;\Ee) \subseteq \Fun(\Bb\catop,\Ee) \qquad \text{ and } \qquad \Shv^{\Qq}(\Bb;\Ee) \subseteq \Fun(\Span(\Bb,\Bb,\Qq),\Ee)
		\]
		the resulting full subcategories of sheaves and sheaves with transfers, respectively.
	\end{definition}

	\begin{remark}
	Suppose $\Bb = \Shv_{\tau}(\Aa)$ is the sheaf topos on some subcanonical Grothendieck site $(\Aa,\tau)$. Suppose further that for all maps $A\rightarrow B$ in $\Qq$ such that $B\in \Aa$, also $A\in \Aa$. Then the category $\Shv^{\Qq}(\Bb;\Ee)$ is equivalent to the subcategory $\Shv^{\Qq}_{\tau}(\Aa;\Ee)$ of $\Fun(\Span(\Aa,\Aa,\Aa \cap \Qq),\Ee)$ spanned by those functors whose underlying functor $\Aa\catop \to \Ee$ is a $\tau$-sheaf, see \Cref{prop:Smaller_Spans} below.

	In particular, when $\Bb = \PSh_{\Sigma}(\Aa)$ is the sifted cocompletion of an extensive category $\Aa$, the notion of sheaves with transfers reduces to that of a \textit{Mackey functor}; this special case will be discussed in more detail in \Cref{subsec:Mackey_Functors}.
	\end{remark}

	\begin{construction}
		For every $A\in\Bb$, write $\Bb_{/A}[\Qq]\coloneqq \Bb_{/A}\times_{\Bb}\Qq$ for the wide subcategory of $\Bb_{/A}$ given by those maps over $A$ whose underlying map in $\Bb$ belongs to $\Qq$. Then the assignment $A\mapsto(\Bb_{/A},\Bb_{/A},\Bb_{/A}[\Qq])$ extends to a functor $\Bb\to\AdTrip$ via pushforward, hence giving rise to a $\Bb$-presheaf of categories
		\[
		A\mapsto\Fun(\Span(\Bb_{/A},\Bb_{/A},\Bb_{/A}[\Qq]),\Ee).
		\]
		We denote by $\ul\Shv^{\Qq}(\Bb;\Ee)$ the full subfunctor given in degree $A$ by the subcategory $\Shv^{\Bb_{/A}[\Qq]}(\Bb_{/A};\Ee)$. In a similar fashion, we obtain a $\Bb$-presheaf $\ul\Shv(\Bb;\Ee)$ and the inclusions $(\Bb_{/A},\Bb_{/A},\core\Bb_{/A})\hookrightarrow(\Bb_{/A},\Bb_{/A},\Bb_{/A}[\Qq])$ induce a forgetful map $\mathbb U\colon\ul\Shv^{\Qq}(\Bb;\Ee)\to\ul\Shv(\Bb;\Ee)$ of $\Bb$-presheaves.

		Since $\Shv(\Bb_{/B};\Ee) \simeq \Shv(\Bb_{/B}) \otimes \Ee$ in $\PrL$, the functor $\ul{\Shv}(\Bb;\Ee)\colon \Bb\catop \to \Cat$ agrees with the $\Bb$-category $\Omega_{\Bb} \otimes \Ee$ that \cite{martiniwolf2022presentable}*{Section~8.3} associate to the presentable category $\Ee$.
	\end{construction}

	The main result of this section is the following:

	\begin{theorem}\label{thm:span-BBQ}
		Let $\Ee$ be a presentable category. Then there is a unique equivalence
		\[
		\ul\Shv^{\Qq}(\Bb;\Ee) \iso \ul\CMon^\Qq(\ul\Shv(\Bb;\Ee))
		\]
		of $\Bb$-presheaves over $\ul\Shv(\Bb;\Ee)$. In particular, $\ul\Shv^{\Qq}(\Bb;\Ee)$ is a $\Bb$-category.
	\end{theorem}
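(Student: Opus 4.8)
The plan is to construct the equivalence by hand, as a map of $\Bb$-presheaves valued in $\Cat$, and then verify it is a levelwise equivalence.

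\emph{Reduction to underlying categories.} Both $\ul\CMon^\Qq(\ul\Shv(\Bb;\Ee))$ and $\ul\Shv^\Qq(\Bb;\Ee)$ are functors $\Bb\catop\to\Cat$ compatible with passage to slices: using $\pi_A^*\ul\Span(\Qq)\simeq\ul\Span(\Bb_{/A}[\Qq])$, $\pi_A^*\ul\Shv(\Bb;\Ee)\simeq\ul\Shv(\Bb_{/A};\Ee)$ and the slice formula of \Cref{rmk:fun-slice}, one identifies the value at $A\in\Bb$ of either side with the same construction performed over the topos $\Bb_{/A}$ with its locally inductible subcategory $\Bb_{/A}[\Qq]$. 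Hence it suffices to produce, for every topos $\Bb$, a map of $\Bb$-presheaves which at the terminal object is an equivalence $\Shv^\Qq(\Bb;\Ee)\xrightarrow{\ \sim\ }\Fun^{\Qprod}_\Bb(\ul\Span(\Qq),\ul\Shv(\Bb;\Ee))$. In particular, once such an equivalence is in hand, $\ul\Shv^\Qq(\Bb;\Ee)$ inherits from the right-hand side the structure of a $\Bb$-category.

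\emph{The comparison bifunctor.} First I would write down a map of $\Bb$-presheaves
\[
\ul\Span(\Qq)\times\ul\Shv^\Qq(\Bb;\Ee)\longrightarrow\ul\Shv(\Bb;\Ee)
\]
which at level $B\in\Bb$ sends an object $q\colon X\to B$ of $\Span(\Qq_{/B})$ together with an $\Ee$-valued sheaf with $\Bb_{/B}[\Qq]$-transfers $M$ to the sheaf $[\,(Y\to B)\mapsto M(X\times_B Y)\,]$ on $\Bb_{/B}$, with the evident functoriality in $q$ (base change of spans), in $M$, and in $B$; naturality in $B$ is a direct check using that iterated pullbacks in a topos agree and that the $\Bb$-presheaf structure of $\ul\Shv(\Bb;\Ee)$ is given by the étale pullbacks. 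Currying in the first variable — legitimate since $\Cat(\Bb)\hookrightarrow\Fun(\Bb\catop,\Cat)$ preserves internal homs into $\Bb$-categories, cf.\ the proof of \Cref{prop:Yoneda} — yields a map of $\Bb$-presheaves $\ul\Shv^\Qq(\Bb;\Ee)\to\ul\Fun_\Bb(\ul\Span(\Qq),\ul\Shv(\Bb;\Ee))$. I would then check that it lands in the full subpresheaf $\ul\CMon^\Qq(\ul\Shv(\Bb;\Ee))$: by \Cref{prop:Functor_CoSegal_Iff_QColimit_Preserving}${}^\op$ this is the Segal condition, which unravels to the identity $M(X\times_B Y)\simeq\bigl(q_*\,[\,(Z\to X)\mapsto M(Z)\,]\bigr)(Y\to B)$, i.e.\ to the statement that the value of our functor at $q$ is the $q$-indexed parametrized product of its value at the identity. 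This holds because $q_*$ on $\ul\Shv(\Bb;\Ee)$ is pushforward of sheaves along the étale map $q$, and because base change preserves colimits in a topos (so that $Y\mapsto M(X\times_B Y)$ is again a sheaf).

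\emph{The inverse, and uniqueness.} For the inverse I would send a $\Qq$-commutative monoid $N\colon\ul\Span(\Qq)\to\ul\Shv(\Bb;\Ee)$ to the functor $\Span(\Bb,\Bb,\Qq)\to\Ee$ given on objects by $X\mapsto N_X(\id_X)(\id_X)$, the value of the sheaf $N_X(\id_X)$ at the terminal object of $\Bb_{/X}$; on a backward map $X\leftarrow Z$ it is the restriction structure of the sheaf $N_X(\id_X)$ (using $\Bb$-naturality of $N$, which gives $N_Z(\id_Z)\simeq f^*N_X(\id_X)$ for $f\colon Z\to X$), and on a forward map $z\colon Z\to Y$ in $\Qq$ it is extracted from the Segal identity $N_Y(z)\simeq z_*N_Z(\id_Z)$. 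That this lands in $\Shv^\Qq(\Bb;\Ee)$, i.e.\ that $X\mapsto N_X(\id_X)(\id_X)$ is a sheaf on $\Bb$, reduces again by $\Bb$-naturality of $N$ to each $N_X(\id_X)$ being a sheaf on $\Bb_{/X}$. Chasing the formulas — using $\Bb$-naturality for the backward maps and the Segal condition for the forward ones — shows the two constructions are mutually inverse (on objects this is already visible from $N_X(\id_X)(\id_X)\simeq M(X)$). Finally, uniqueness of the equivalence \emph{over} $\ul\Shv(\Bb;\Ee)$ follows from \Cref{cor:Unique_Lift_To_CMon} (any $\Bb$-functor to $\ul\CMon^\Qq(\ul\Shv(\Bb;\Ee))$ lying over the forgetful functor is automatically $\Qq$-continuous, hence determined by the universal property of $\ul\CMon^\Qq$) together with conservativity of both forgetful functors — on the right by \Cref{lemma:U-pres-refl-lim}, on the left because $\Bb\catop\hookrightarrow\Span(\Bb,\Bb,\Qq)$ is a wide subcategory.

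\emph{Main obstacle.} The genuinely delicate part is the coherence bookkeeping: upgrading the two assignments above from their descriptions on objects and morphisms to honest functors — natural in all of $q$, $M$ and $B$ — and honestly mutually inverse. I expect the clean way is to realize both the bifunctor $\ul\Span(\Qq)\times\ul\Shv^\Qq(\Bb;\Ee)\to\ul\Shv(\Bb;\Ee)$ and its adjoint/inverse from a single map of $\AdTrip$-valued functors, exploiting that $\Span\colon\AdTrip\to\Cat$ is a right adjoint \cite{HHLNa}, so that the higher cells are produced formally; the remaining nontrivial input is then exactly the compatibility of the $\Qq$-limit (Segal) condition on the parametrized side with the sheaf condition on $\Bb\catop$, which is governed by the projection-formula and descent properties of the étale pushforwards $q_*$ on $\ul\Shv(\Bb;\Ee)$.
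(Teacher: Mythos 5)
Your proposed formula is right, and it does agree on objects with what the paper's equivalence actually produces: unwinding the paper's proof one finds that the value at $B\in\Bb$ sends a sheaf with transfers $M$ to the $\Bb_{/B}$-functor whose value at $q\colon X\to Y$ (with $Y\to B$) is the presheaf $Z\mapsto M(X\times_Y Z)$, exactly as you write. The issue is that what you relegate to the ``main obstacle'' paragraph is not bookkeeping but is the substantive mathematical content of the theorem, and your proposed cure for it is too vague to carry the argument.

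Concretely: building the comparison as a \emph{coherent} bifunctor $\ul\Span(\Qq)\times\ul\Shv^\Qq(\Bb;\Ee)\to\ul\Shv(\Bb;\Ee)$, and building the inverse from the object-level recipe $N\mapsto\bigl(X\mapsto N_X(\id_X)(\id_X)\bigr)$ with the two separate functorialities for backward and forward arrows, requires genuinely gluing the contravariant (restriction) and covariant (transfer/Segal) structure into one functor out of a span category. The assertion that this can be produced ``formally'' from a single map of $\AdTrip$-valued functors because $\Span\colon\AdTrip\to\Cat$ is a right adjoint does not do the work: the right-adjointness is what gives $\ul\Span(\Qq)$ the structure of a $\Bb$-category in the first place, but it does not by itself produce the comparison or its inverse. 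The paper instead proceeds by (a) identifying $\Aa$-functors $\ul\Span(\Qq)\to\ul\PSh(\Aa;\Ee)$ with non-parametrized functors $\Span_\fw(\Ar(\Aa)^\Qq)\to\Ee$ via the explicit cocartesian unstraightening of $\ul\Span(\Qq)$ coming from \cite{HHLNa}; (b) showing (Proposition~\ref{prop:charact-Q-semiadd}) that $\Qq$-continuity on the parametrized side translates into the non-parametrized condition of inverting a concrete class of spans $\Ww_s^{\Span}$; (c) invoking the span-level localization criterion of \cite{CHLL_Bispans} to show (Proposition~\ref{prop:local-BBQ}) that the source map $s\colon\Span_\fw(\Ar(\Aa)^\Qq)\to\Span(\Aa,\Aa,\Qq)$ \emph{is} that localization. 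Step (c) is precisely the coherence statement you need; your sketch does not supply a substitute for it.

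A second gap is the reduction to underlying categories. You assert at the start that it ``suffices to produce a map which at the terminal object is an equivalence,'' but the two sides are a priori only objectwise identified, and the functoriality in $A$ of the resulting equivalence is itself nontrivial. The paper devotes the subsection culminating in Lemma~\ref{lemma:ar-pb}, the explicit comparison $\Ar(\Aa_{/A})\simeq\Ar(\Aa)\times_\Aa\Aa_{/A}$ together with its naturality in $A$, precisely to upgrade the underlying equivalence to a parametrized one; the final paragraph of the proof of Proposition~\ref{prop:mackey-presheaves} then checks that this is compatible with the forgetful functor by reducing to naturality in the class $\Qq$. Your uniqueness argument via \Cref{cor:Unique_Lift_To_CMon} and \Cref{lemma:U-pres-refl-lim} is fine (once some equivalence exists, making $\ul\Shv^\Qq(\Bb;\Ee)$ a $\Bb$-category, any automorphism of $\ul\CMon^\Qq(\ul\Shv(\Bb;\Ee))$ over $\ul\Shv(\Bb;\Ee)$ is the identity), so the remaining work is squarely in producing the coherent equivalence and parametrizing it, neither of which your sketch resolves.
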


	Before moving to the proof of the theorem, let us explain how it allows us to describe the free presentable $\Qq$-semiadditive and $\Qq$-stable $\Bb$-categories in non-parametrized terms:

	\begin{corollary}\label{cor:free-semiadditive-Mackey}
		Assume $\Qq$ is slicewise small. Then the $\Bb$-category $\ul\Shv^{\Qq}(\Bb;\Spc)$ is the free presentable $\Qq$-semiadditive $\Bb$-category, i.e.~for every other presentable $\Qq$-semiadditive $\Dd$ evaluation at a certain `free sheaf with transfers' $\mathbb P(1)\in\Shv^{\Qq}(\Bb;\Spc)$ induces an equivalence
		\begin{equation*}
			\ul\Fun^\textup{L}(\ul\Shv^{\Qq}(\Bb;\Spc),\Dd)\iso\Dd.
		\end{equation*}
	\end{corollary}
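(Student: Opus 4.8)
The plan is to deduce the statement by concatenating three universal properties established earlier, applied to the presentable $\Bb$-category $\Cc\coloneqq\ul\Shv(\Bb;\Spc)$: first, the identification $\ul\Shv^\Qq(\Bb;\Spc)\simeq\ul\CMon^\Qq(\ul\Shv(\Bb;\Spc))$ of \Cref{thm:span-BBQ}; second, the presentable universal property of $\Qq$-commutative monoids from \Cref{thm:universal_prop_par_Qcom_monoids_presentable}, which applies since $\Qq$ is slicewise small; and third, the universal property of $\ul\Shv(\Bb;\Spc)$ as the free presentable $\Bb$-category, i.e.\ $\ul\Fun^\textup{L}(\ul\Shv(\Bb;\Spc),\Dd)\iso\Dd$ for every presentable $\Bb$-category $\Dd$ (this is \cite{martiniwolf2022presentable}, using that $\ul\Shv(\Bb;\Ee)\simeq\Omega_\Bb\otimes\Ee$). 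Note that $\Cc=\ul\Shv(\Bb;\Spc)$ is itself presentable, so \Cref{thm:universal_prop_par_Qcom_monoids_presentable} may indeed be invoked with this $\Cc$ and with $\Dd$ the given presentable $\Qq$-semiadditive $\Bb$-category.

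Concretely, \Cref{thm:universal_prop_par_Qcom_monoids_presentable} yields an equivalence
\[
\ul\Fun^\textup{L}(\ul\CMon^\Qq(\ul\Shv(\Bb;\Spc)),\Dd)\iso\ul\Fun^\textup{L}(\ul\Shv(\Bb;\Spc),\Dd)
\]
given by restriction along the free functor $\mathbb P\colon\ul\Shv(\Bb;\Spc)\to\ul\CMon^\Qq(\ul\Shv(\Bb;\Spc))$ of \Cref{cor:cmon-free-functor}. I would precompose this with the equivalence of \Cref{thm:span-BBQ} and postcompose with $\ul\Fun^\textup{L}(\ul\Shv(\Bb;\Spc),\Dd)\iso\Dd$, which is evaluation at the representable generator $1\in\Shv(\Bb;\Spc)$. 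The resulting composite is the desired equivalence $\ul\Fun^\textup{L}(\ul\Shv^\Qq(\Bb;\Spc),\Dd)\iso\Dd$. To see that it is evaluation at a single object, one unwinds the three equivalences: a cocontinuous $F\colon\ul\Shv^\Qq(\Bb;\Spc)\to\Dd$ is sent to its transport along \Cref{thm:span-BBQ}, then to its restriction along $\mathbb P$, then to the value of that restriction at $1$, which is $F$ evaluated at the image of $1$ under $\Shv(\Bb;\Spc)\xrightarrow{\mathbb P}\ul\CMon^\Qq(\ul\Shv(\Bb;\Spc))\xrightarrow{\ \sim\ }\Shv^\Qq(\Bb;\Spc)$. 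We take $\mathbb P(1)\in\Shv^\Qq(\Bb;\Spc)$ to be exactly this image, the ``free sheaf with $\Qq$-transfers on the point.''

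The corollary is therefore entirely formal given the earlier results, and I do not anticipate a genuine obstacle; the only point demanding care is the bookkeeping of base objects. Specifically, one must use that \Cref{thm:span-BBQ} gives an equivalence of $\Bb$-categories \emph{over} $\ul\Shv(\Bb;\Spc)$, so that evaluation at the generator on the sheaf-with-transfers side corresponds to the forgetful functor composed with evaluation on the $\Qq$-commutative-monoid side, and one must match the free functor $\mathbb P$ of \Cref{cor:cmon-free-functor} with the free-sheaf-with-transfers functor under this identification; both follow directly from the cited constructions. Finally, replacing \Cref{thm:universal_prop_par_Qcom_monoids_presentable} by its stable analogue \Cref{cor:Q-stable-presentable} in the same argument gives the corresponding description of the free presentable $\Qq$-stable $\Bb$-category, which one may record alongside.
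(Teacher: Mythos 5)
Your proposal is correct and takes essentially the same route as the paper: both concatenate the equivalence of Theorem~\ref{thm:span-BBQ}, the presentable universal property of $\ul\CMon^\Qq$ from Theorem~\ref{thm:universal_prop_par_Qcom_monoids_presentable}, and the universal property of $\ul\Spc_\Bb \simeq \ul\Shv(\Bb;\Spc)$ as the free presentable $\Bb$-category. The only difference is that you spell out the bookkeeping identifying the universal element $\mathbb{P}(1)$, which the paper leaves implicit here and instead addresses separately in Corollary~\ref{cor:free-is-corep}.
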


	We will give a concrete description of $\mathbb P(1)$ in Corollary~\ref{cor:free-is-corep}.

	\begin{proof}
		Note that for $\Ee = \Spc$ we have $\ul\Shv(\Bb;\Ee) = \ul{\Spc}_{\Bb}$. The claim thus follows from the string of equivalences
		\[
			\ul\Fun^\textup{L}(\ul\Shv^{\Qq}(\Bb;\Spc),\Dd)\overset{\ref{thm:span-BBQ}}{\simeq} \ul\Fun^\textup{L}_\Bb(\ul\CMon^\Qq(\ul{\Spc}_{\Bb}),\Dd) \overset{\ref{thm:universal_prop_par_Qcom_monoids_presentable}}{\simeq} \ul\Fun_\Bb^\textup{L}(\ul{\Spc}_{\Bb},\Dd) \iso \Dd,
		\]
		where the last equivalence holds by \cite{martiniwolf2021limits}*{Theorem~7.1.1}.
	\end{proof}

	\begin{corollary}\label{cor:free-stable-Mackey}
	Assume $\Qq$ is slicewise small. Then the $\Bb$-category $\ul\Shv^{\Qq}(\Bb;\Sp)$ is the free presentable $\Qq$-stable $\Bb$-category, i.e.~for any other such $\Dd$ evaluation at a certain global section $\mathbb S\in\Shv^{\Qq}(\Bb;\Sp)$ induces an equivalence
	\begin{equation*}
		\ul\Fun^\textup{L}(\ul\Shv^{\Qq}(\Bb;\Sp),\Dd)\iso\Dd.
	\end{equation*}
	\begin{proof}
		Combining Corollaries~\ref{cor:Q-stable-presentable} and~\ref{cor:free-semiadditive-Mackey}, the free presentable $\Qq$-stable category is given by ${\Sp}\otimes\ul\Shv^{\Qq}(\Bb;\Spc)$. Using the explicit description of ${\Sp}\otimes\blank$ as $\Fun^{\textup{R}}(\Sp^\op,\blank)$, this is immediately seen to be equivalent to $\ul\Shv^{\Qq}(\Bb;\Sp)$.
	\end{proof}
	\end{corollary}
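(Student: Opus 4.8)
The plan is to deduce the corollary formally from the universal properties already in hand. First I would recall from \cite{martiniwolf2021limits}*{Theorem~7.1.1} that $\ul\Shv(\Bb;\Spc)=\ul{\Spc}_{\Bb}$ is the free presentable $\Bb$-category on a point, and from Corollary~\ref{cor:Q-stable-presentable} that $\ul\Sp^\Qq={\Sp}\otimes\ul\CMon^\Qq$ is left adjoint to the inclusion $\Pr(\Bb)^\textup{L,\,$\Qq$-ex}\hookrightarrow\Pr(\Bb)^\textup{L}$. Composing these two adjunctions, $\ul\Sp^\Qq(\ul\Shv(\Bb;\Spc))$ is the free presentable $\Qq$-stable $\Bb$-category on a point, i.e.\ for every presentable $\Qq$-stable $\Dd$ restriction along $\Sigma^\infty_+\mathbb P(1)$ induces an equivalence $\ul\Fun^\textup{L}(\ul\Sp^\Qq(\ul\Shv(\Bb;\Spc)),\Dd)\iso\Dd$. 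By definition $\ul\Sp^\Qq(\ul\Shv(\Bb;\Spc))={\Sp}\otimes\ul\CMon^\Qq(\ul\Shv(\Bb;\Spc))$, which Theorem~\ref{thm:span-BBQ} identifies with ${\Sp}\otimes\ul\Shv^\Qq(\Bb;\Spc)$; so it remains only to produce a natural equivalence ${\Sp}\otimes\ul\Shv^\Qq(\Bb;\Spc)\iso\ul\Shv^\Qq(\Bb;\Sp)$ of $\Bb$-categories.

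For this I would argue levelwise. By Lemma~\ref{lemma:fiberwise-module} the fiberwise $\Sp$-module ${\Sp}\otimes\ul\Shv^\Qq(\Bb;\Spc)$ is obtained by applying ${\Sp}\otimes\blank=\Fun^\textup{R}(\Sp^\op,\blank)$ in each degree, so it suffices to check, for each $A\in\Bb$ and the induced class $\Bb_{/A}[\Qq]$, that $\Fun^\textup{R}(\Sp^\op,\Shv^{\Bb_{/A}[\Qq]}(\Bb_{/A};\Spc))\simeq\Shv^{\Bb_{/A}[\Qq]}(\Bb_{/A};\Sp)$ compatibly with the transition functors. Replacing $\Bb$ by $\Bb_{/A}$, this comes down to the observation that $\Shv^\Qq(\Bb;\Ee)\subseteq\Fun(\Span(\Bb,\Bb,\Qq),\Ee)$ is closed under limits and that limits in $\Fun(\Span(\Bb,\Bb,\Qq),\Ee)$ are pointwise: a limit-preserving functor $\Sp^\op\to\Shv^\Qq(\Bb;\Spc)$ is the same datum as a functor $\Span(\Bb,\Bb,\Qq)\to\Fun^\textup{R}(\Sp^\op,\Spc)=\Sp$, and the condition of being a sheaf with transfers transports to the condition that the restriction to $\Bb\catop$ be limit-preserving, since limits of spectra are detected after applying the $\Omega^\infty$-functors. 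Naturality in $A$ is then automatic from the functoriality of all the constructions involved.

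The one point requiring a little care is the bookkeeping in the second paragraph: forming ${\Sp}\otimes\ul\Shv^\Qq(\Bb;\Spc)$ as a \emph{$\Bb$-category} must genuinely commute with passing to levels and with the sheaf-with-transfers localization, rather than merely computing the underlying category. This is exactly what Remark~\ref{rk:tensor-product-lim} and Lemma~\ref{lemma:fiberwise-module} are designed to supply (the functoriality of ${\Sp}\otimes\blank$ on $\Pr^\textup{R}$ being postcomposition, so that it preserves the limits presenting a presentable $\Bb$-category), together with the accessible-localization presentation of $\ul\CMon^\Qq(\ul\Shv(\Bb;\Spc))$ from Proposition~\ref{prop:cmon-in-pres-is-pres}; with those in place everything else is formal. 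Finally, tracing through the identifications, the distinguished global section is $\mathbb S=\Sigma^\infty_+\mathbb P(1)\in\Shv^\Qq(\Bb;\Sp)$, as already recorded in Corollary~\ref{cor:Q-stable-presentable}.
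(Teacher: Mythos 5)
Your proof is correct and follows essentially the same route as the paper's: apply the universal property of $\ul{\Spc}_\Bb$, then the left adjoint $\ul\Sp^\Qq = \Sp\otimes\ul\CMon^\Qq$ from Corollary~\ref{cor:Q-stable-presentable}, then identify $\Sp\otimes\ul\Shv^\Qq(\Bb;\Spc)\simeq\ul\Shv^\Qq(\Bb;\Sp)$ via $\Sp\otimes\blank=\Fun^\textup{R}(\Sp^\op,\blank)$. The paper compresses the first part into a citation to Corollary~\ref{cor:free-semiadditive-Mackey} and calls the last identification ``immediate,'' whereas you correctly spell out the levelwise verification that a limit-preserving functor $\Sp^\op\to\Shv^\Qq(\Bb;\Spc)$ is the same as a $\Sp$-valued sheaf with transfers — this unpacking is accurate and fills in the detail the paper leaves implicit.
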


	\begin{remark}\label{rk:mackey-mode-univ-prop}
		More generally, if $\Ee$ is any mode, then \Cref{lemma:fiberwise-module} shows that $\ul\Shv^{\Qq}(\Bb;\Spc)\otimes\Ee\simeq\ul\Shv^{\Qq}(\Bb;\Ee)$ is the free presentable $\Qq$-semiadditive fiberwise $\Ee$-module.
	\end{remark}

	\begin{remark}[Fiberwise semiadditivity, redux]\label{rk:fiberwise-mackey}
		Let $\Ff\subseteq\Spc$ be locally inductible such that the unique left exact left adjoint $\Spc\to\Bb$ maps $\Ff$ into $\Qq$; for example, we could take the maximal such class $\Qq_{\fib}\subseteq\Spc$ from Lemma~\ref{lem:fiberwise-semiadd}. Combining said lemma with Example~\ref{ex:Q-semiadd-mode}, we see that every presentable $\Qq$-semiadditive category $\Cc$ is a fiberwise $\CMon^{\Ff}(\Spc)$-module, so that $\CMon^{\Ff}(\Spc)\otimes\Cc\iso\Cc$. Specializing $\Cc$, we obtain an equivalence $\ul\Shv^{\Qq}(\Bb;\CMon^{\Ff}(\Ee))\iso\ul\Shv^{\Qq}(\Bb;\Ee)$.

		In particular, we see that if $\Qq$ contains the map $1\amalg 1\to 1$, then the free presentable $\Qq$-semiadditive $\Bb$-category can be equivalently described as $\ul\Shv^\Qq(\Bb;\CMon)$.
	\end{remark}

	We also record the following useful corollary:

	\begin{corollary}
	Given a $\Qq$-semiadditive $\Bb$-category $\Dd$, every $\Qq$-continuous functor $\Dd \to \ul\Shv(\Bb;\Ee)$ admits a unique lift to $\ul\Shv^\Qq(\Bb;\Ee)$: the forgetful functor
	\[
		\ul\Fun^{\Qprod}(\Dd, \ul\Shv^{\Qq}(\Bb;\Ee)) \to \ul\Fun^{\Qprod}(\Dd,\ul\Shv(\Bb;\Ee))
	\]
	is an equivalence.
	\end{corollary}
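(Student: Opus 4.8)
The plan is to obtain this statement as a direct consequence of \Cref{thm:span-BBQ} and \Cref{thm:universal_prop_par_Qcom_monoids2}. First I would record that the $\Bb$-category $\ul\Shv(\Bb;\Ee)$ is presentable: as observed right after its construction it coincides with $\Omega_\Bb\otimes\Ee$, which is presentable. Consequently $\ul\Shv(\Bb;\Ee)$ is $\Bb$-complete, and since $\Qq$ is in particular a class of morphisms of $\Bb$ it is also $\Qq$-complete. This is precisely the hypothesis needed in order to use $\ul\Shv(\Bb;\Ee)$ as the target in \Cref{thm:universal_prop_par_Qcom_monoids2}.

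Next I would invoke \Cref{thm:span-BBQ} to replace $\ul\Shv^{\Qq}(\Bb;\Ee)$ by $\ul\CMon^{\Qq}(\ul\Shv(\Bb;\Ee))$. The point is that that equivalence is asserted \emph{over} $\ul\Shv(\Bb;\Ee)$, so it carries the forgetful functor $\mathbb U\colon\ul\Shv^{\Qq}(\Bb;\Ee)\to\ul\Shv(\Bb;\Ee)$ to the evaluation functor $\mathbb U\colon\ul\CMon^{\Qq}(\ul\Shv(\Bb;\Ee))\to\ul\Shv(\Bb;\Ee)$. Postcomposition with $\mathbb U$ on parametrized functor categories out of the $\Qq$-semiadditive $\Bb$-category $\Dd$ then identifies the functor in the statement with the comparison map
\[
	\ul\Fun^{\Qprod}\big(\Dd,\ul\CMon^{\Qq}(\ul\Shv(\Bb;\Ee))\big)\longrightarrow\ul\Fun^{\Qprod}\big(\Dd,\ul\Shv(\Bb;\Ee)\big),
\]
which is an equivalence by \Cref{thm:universal_prop_par_Qcom_monoids2}, applied with its ``$\Cc$'' instantiated as our $\Qq$-semiadditive $\Dd$ and its ``$\Dd$'' instantiated as the $\Qq$-complete $\ul\Shv(\Bb;\Ee)$. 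The ``unique lift'' phrasing in the first sentence of the corollary then follows by passing to underlying categories (and is, alternatively, an instance of \Cref{cor:Unique_Lift_To_CMon} combined with \Cref{thm:span-BBQ}).

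Since all the substantive work is already done in the cited results, I do not expect a genuine obstacle; the only two points calling for care are checking the $\Qq$-completeness hypothesis of \Cref{thm:universal_prop_par_Qcom_monoids2} — which is exactly where presentability of $\ul\Shv(\Bb;\Ee)$ enters — and making sure the identification supplied by \Cref{thm:span-BBQ} is compatible with the forgetful functors, so that postcomposition really does match up on both sides.
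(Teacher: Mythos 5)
Your proof is correct and takes exactly the same route as the paper, which simply cites \Cref{thm:span-BBQ} together with \Cref{thm:universal_prop_par_Qcom_monoids2}; the two points of care you flag (the $\Qq$-completeness of $\ul\Shv(\Bb;\Ee)$ via its presentability, and compatibility of the identification with the forgetful functors via the ``over $\ul\Shv(\Bb;\Ee)$'' clause in \Cref{thm:span-BBQ}) are precisely what makes the combination legitimate.
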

	\begin{proof}
	This is immediate from \Cref{thm:span-BBQ} and the universal property of $\ul{\CMon}^\Qq$ from \Cref{thm:universal_prop_par_Qcom_monoids2}.
	\end{proof}

	In particular, every $\Qq$-semiadditive $\Bb$-category admits a canonically enrichment in sheaves with transfers:

	\begin{corollary}
		\label{cor:Unique_Mackey_Functor_Enrichment}
		For every $\Qq$-semiadditive $\Bb$-category $\Dd$, there is a unique functor \[\ul\Hom^{\Qq}_{\Dd}\colon \Dd\catop \times \Dd \to \ul\Shv^\Qq(\Bb;\Spc)\] that lifts the parametrized hom-functor $\ul\Hom_{\Dd}\colon \Dd\catop \times \Dd \to \ul{\Spc}_{\Bb}$. \qed
	\end{corollary}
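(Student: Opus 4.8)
The plan is to deduce the corollary from Corollary~\ref{cor:Unique_Lift_To_CMon}, after currying in the right way. A direct application of that corollary to $\Cc = \Dd\catop \times \Dd$ is not available: the functor $\ul\Hom_{\Dd}\colon \Dd\catop\times\Dd \to \ul\Shv(\Bb;\Spc) = \ul{\Spc}_{\Bb}$ does \emph{not} preserve $\Qq$-limits — already non-parametrically, the mapping space out of a biproduct in each variable is an iterated product rather than the diagonal one. Instead I would pass to the parametrized presheaf category.

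Write $\ul{\PSh}(\Dd) \coloneqq \ul{\Fun}(\Dd\catop, \ul{\Spc}_{\Bb})$. Since $\ul{\Spc}_{\Bb}$ is complete, in particular $\Qq$-complete, Proposition~\ref{prop:fun-colimits}${}^\op$ shows that $\ul{\PSh}(\Dd)$ is $\Qq$-complete and that its $\Qq$-limits are computed pointwise. Let $h\colon \Dd \to \ul{\PSh}(\Dd)$ denote the curried form of $\ul\Hom_{\Dd}$, i.e.\ the parametrized Yoneda embedding $Y \mapsto \ul\Hom_{\Dd}(\blank, Y)$. Because $\ul\Hom_{\Dd}(X,\blank)\colon \Dd \to \ul{\Spc}_{\Bb}$ preserves all limits for every object $X$ and limits in $\ul{\PSh}(\Dd)$ are pointwise, the functor $h$ preserves all limits, in particular all $\Qq$-limits. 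Since $\Dd$ is $\Qq$-semiadditive, Corollary~\ref{cor:Unique_Lift_To_CMon} now produces a lift $\widetilde h\colon \Dd \to \ul{\CMon}^{\Qq}(\ul{\PSh}(\Dd))$ of $h$ along the forgetful functor $\mathbb U$, and this lift is unique among \emph{all} lifts — not merely the $\Qq$-continuous ones — by the second half of that corollary (via Lemma~\ref{lemma:U-pres-refl-lim}).

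It remains to identify the target. Since parametrized $\Qq$-limits in functor categories are pointwise (Proposition~\ref{prop:fun-colimits}) and $\Cat(\Bb)$ is cartesian closed, the currying equivalence $\ul{\Fun}(\ul\Span(\Qq), \ul{\Fun}(\Ii, \Ee)) \simeq \ul{\Fun}(\Ii, \ul{\Fun}(\ul\Span(\Qq), \Ee))$ restricts to an equivalence $\ul{\CMon}^{\Qq}(\ul{\Fun}(\Ii, \Ee)) \simeq \ul{\Fun}(\Ii, \ul{\CMon}^{\Qq}(\Ee))$ over $\ul{\Fun}(\Ii, \Ee)$, for every $\Bb$-category $\Ii$ and every $\Qq$-complete $\Ee$. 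Taking $\Ii = \Dd\catop$ and $\Ee = \ul{\Spc}_{\Bb} = \ul\Shv(\Bb;\Spc)$ and combining with Theorem~\ref{thm:span-BBQ} (which supplies $\ul{\CMon}^{\Qq}(\ul{\Spc}_{\Bb}) \simeq \ul\Shv^{\Qq}(\Bb;\Spc)$ over $\ul{\Spc}_{\Bb}$) identifies $\ul{\CMon}^{\Qq}(\ul{\PSh}(\Dd))$ with $\ul{\Fun}(\Dd\catop, \ul\Shv^{\Qq}(\Bb;\Spc))$, compatibly with the forgetful functors to $\ul{\PSh}(\Dd)$. Uncurrying $\widetilde h$ through this equivalence produces the desired functor $\ul\Hom^{\Qq}_{\Dd}\colon \Dd\catop \times \Dd \to \ul\Shv^{\Qq}(\Bb;\Spc)$ lifting $\ul\Hom_{\Dd}$; naturality of the currying equivalences and their compatibility with $\mathbb U$ transport the uniqueness of $\widetilde h$ among lifts of $h$ to the uniqueness of $\ul\Hom^{\Qq}_{\Dd}$ among lifts of $\ul\Hom_{\Dd}$.

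I expect the only real subtlety to be the conceptual one already isolated above: one must curry into $\ul{\PSh}(\Dd)$ rather than work with $\Dd\catop \times \Dd$ directly, exploiting that the parametrized Yoneda embedding \emph{is} $\Qq$-continuous even though $\ul\Hom_{\Dd}$ is not. The two auxiliary facts invoked in the last step — that the parametrized Yoneda embedding preserves limits, and that $\ul{\CMon}^{\Qq}$ commutes with $\ul{\Fun}(\Ii, \blank)$ — are formal consequences of the pointwiseness of parametrized $\Qq$-limits (together with the parametrized Yoneda lemma) and require no new input.
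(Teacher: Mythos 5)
Your argument is correct, and it makes explicit a real subtlety that the paper's terse $\qed$ glosses over: the lifting results cited nearby require the input functor to be $\Qq$-continuous, yet $\ul\Hom_\Dd\colon\Dd^\op\times\Dd\to\ul\Spc_\Bb$ is \emph{not} $\Qq$-continuous as a bifunctor --- already for $\Qq=\Fin$ one has $\Hom(X_1\oplus X_2,Y_1\oplus Y_2)\simeq\prod_{i,j}\Hom(X_i,Y_j)$ rather than $\prod_i\Hom(X_i,Y_i)$, so a direct application with domain $\Dd^\op\times\Dd$ is not available. Currying to the parametrized Yoneda embedding $h\colon\Dd\to\ul{\PSh}(\Dd)$, which \emph{does} preserve $\Qq$-limits, invoking \Cref{cor:Unique_Lift_To_CMon} there (which also covers uniqueness among arbitrary, not merely $\Qq$-continuous, lifts via \Cref{lemma:U-pres-refl-lim}), and transporting back across the identification $\ul\CMon^\Qq(\ul\Fun(\Dd^\op,\ul\Spc_\Bb))\simeq\ul\Fun(\Dd^\op,\ul\CMon^\Qq(\ul\Spc_\Bb))$ compatibly with the forgetful functors is exactly the right way to fill this in. Both auxiliary facts you invoke --- that the parametrized Yoneda embedding is limit-preserving, and that $\ul\CMon^\Qq$ commutes with $\ul\Fun(\Ii,\blank)$ over the forgetful maps --- are correct consequences of the pointwise nature of parametrized limits in functor categories, and the latter is precisely the kind of currying manipulation the paper already performs in the proofs of Theorems~\ref{thm:universal_prop_par_Qcom_monoids} and~\ref{thm:universal_prop_par_Qcom_monoids2}.
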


	\subsection{Reduction to presheaves with transfers} We will now move towards the proof of \Cref{thm:span-BBQ}. It will be technically convenient to deduce the theorem from a more general result about presheaves with transfers. Throughout, let $\Aa$ be any category, not necessarily small, and let $\Qq\subseteq\Aa$ be left-cancelable and closed under base change.

	\begin{definition}
		For any category $\Ee$, we write $\ul\PSh^\Qq(\Aa;\Ee)$ for the functor \[\Aa^\op\to\Cat, A\mapsto\Fun(\Span(\Aa_{/A},\Aa_{/A},\Aa_{/A}[\Qq]),\Ee)\] with functoriality via pushforward. We will refer to global sections of this as \emph{presheaves with $\Qq$-transfers}.

		We further define $\ul\PSh(\Aa;\Ee)\coloneqq\ul\PSh^{\iota\Aa}(\Aa;\Ee)\colon A\mapsto\Fun((\Aa_{/A})^\op,\Ee)$, and we write $\mathbb U\colon\ul\PSh^\Qq(\Aa;\Ee)\to\ul\PSh(\Aa;\Ee)$ for the evident forgetful map.
	\end{definition}

	\begin{remark}
		If $\Aa=T$ is small, the category $\ul\PSh(\Aa;\Ee)$ is denoted $\ul\Ee_T$ and called the \emph{category of $T$-objects} in \cite{CLL_Global}*{Example~2.1.11}.
	\end{remark}

	\begin{lemma}
		The $\Aa$-category $\ul\PSh(\Aa;\Ee)$ has all $\Qq$-limits.
		\begin{proof}
			Let $q\colon A\to B$ be any map in $\Qq$. As $\Qq$ is closed under base change, $\Aa_{/q}\colon\Aa_{/A}\to\Aa_{/B}$ has a right adjoint given by pullback along $q$, and this satisfies base change with respect to pushforward along maps in $\Aa$ by the pasting law for pullbacks. The claim now follows simply from $2$-functoriality of $\PSh$.
		\end{proof}
	\end{lemma}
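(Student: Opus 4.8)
The plan is to deduce the lemma by transporting two elementary facts about slice categories through the $2$-functor $\PSh:=\Fun((\blank)^\op,\Ee)$ (sending a functor to its associated restriction functor): the existence of the pullback adjunction on slices, and its Beck--Chevalley compatibility with postcomposition. Fixing $q\colon A\to B$ in $\Qq$, I would first exhibit an explicit right adjoint $q_*$ to the restriction functor $q^*\colon\ul\PSh(\Aa;\Ee)(B)\to\ul\PSh(\Aa;\Ee)(A)$, and then verify the Beck--Chevalley condition for the resulting family of adjoints. Note that no (co)completeness assumption on $\Ee$ is needed, since $q_*$ will itself be a restriction functor rather than a Kan extension.

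First I would assemble the slice-level input. By definition, the $\Aa$-functoriality of $\ul\PSh(\Aa;\Ee)$ along a map $f$ is restriction along the postcomposition functor $\Aa_{/f}\colon\Aa_{/A}\to\Aa_{/B}$. Since $\Qq$ is closed under base change, pullback along $q$ defines a functor $q^\bullet\colon\Aa_{/B}\to\Aa_{/A}$, and the universal property of pullbacks identifies it as a right adjoint of $\Aa_{/q}$. Moreover, for a pullback square over $\Aa$ with horizontal maps $f\colon B'\to B$, $g\colon A'\to A$ and with $q$ (hence also $q'$) in $\Qq$, the pasting law for pullbacks supplies a canonical equivalence $\Aa_{/g}\circ q'^\bullet\simeq q^\bullet\circ\Aa_{/f}$ — unwound, just the identification $A\times_B Y\simeq A'\times_{B'} Y$ for a map $Y\to B$ factored through $B'$. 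This equivalence is exactly the Beck--Chevalley mate, at the slice level, of the (strictly) commuting square of postcomposition functors equipped with their pullback right adjoints.

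Then I would invoke that $\PSh$ is a $2$-functor reversing both $1$- and $2$-cells, and that any such $2$-functor preserves internal adjunctions (without interchanging left and right) and preserves mates. Applying $\PSh$ to the adjunction $\Aa_{/q}\dashv q^\bullet$ produces an adjunction $q^*=(\Aa_{/q})^*\dashv(q^\bullet)^*$; writing $q_*$ for the right adjoint $(q^\bullet)^*$, this is given concretely by $(q_* G)(Y\to B)\simeq G(A\times_B Y\to A)$. Applying $\PSh$ to the slice-level Beck--Chevalley equivalence of the previous paragraph shows that the mate $\BC_*\colon f^* q_*\to q'_* g^*$ is an equivalence for every pullback square with $q\in\Qq$. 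Together these are exactly the two conditions in the definition of ``$\ul\PSh(\Aa;\Ee)$ admits $\Qq$-limits''.

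The mathematical content lies entirely in the two slice-level statements, both of which are immediate from the pasting law. I expect the only delicate point to be the variance bookkeeping in the last step: one must check that passing to presheaves turns restriction along the \emph{left} adjoint $\Aa_{/q}$ into a functor $q^*$ that acquires a \emph{right} adjoint (this is where the contravariance of $\PSh$ on $1$-cells is essential), and that the abstract Beck--Chevalley transformation of the new adjoints is genuinely the $\PSh$-image of the slice-level mate — the standard naturality of mates under $2$-functors.
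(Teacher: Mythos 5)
Your proposal is correct and follows exactly the same route as the paper's proof: exhibit the slice-level adjunction $\Aa_{/q}\dashv q^\bullet$ and the pasting-law Beck--Chevalley equivalence, then transport both through the $2$-functor $\PSh$. You spell out the variance bookkeeping (that $\PSh$ is contravariant on both $1$- and $2$-cells, hence sends left adjoints to left adjoints and is compatible with mates) which the paper compresses into the phrase "$2$-functoriality of $\PSh$," but the argument is the same.
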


	\begin{construction}\label{constr:cartesian-fibration-UQ}
		If $\Aa$ has all pullbacks, then the target map $t\colon\Ar(\Aa)\to\Aa$ is a cartesian fibration classifying the functor $\Aa^\op\to\Cat,A\mapsto\Aa_{/A}$. If $\Qq\subseteq\Aa$ is closed under base change, then this restricts to a cartesian fibration $t\colon\Ar(\Aa)^\Qq\to\Aa$ where the source denotes the full subcategory spanned by the maps in $\Qq$; this then classifies the functor $\ulbbU{\Qq}\colon A\mapsto\Qq_{/A}$ considered before if $\Aa=\Bb$ is a topos.

		If now $\Aa$ is arbitrary, then embedding it in a pullback-preserving way into a category with all pullbacks, we see that $t\colon\Ar(\Aa)^\Qq\to\Aa$ is still a cartesian fibration. We denote the straightening $\Aa^\op\to\Cat$ again by $\ulbbU{\Qq}\colon A\mapsto\Qq_{/A}$; note that this agrees with the previous functor of the same name if $\Aa=\Bb$ is a topos. Taking spans levelwise then as before gives us a functor $\ul\Span(\Qq)\colon A\mapsto\Span(\Qq_{/A})$.
	\end{construction}

	\begin{proposition}\label{prop:mackey-presheaves}
		There is an equivalence \[\Phi\colon\ul\PSh^\Qq(\Aa;\Ee)\iso\ul\Fun_\Aa^{\Qq\text-\times}\big(\ul\Span(\Qq),\ul\PSh(\Aa;\Ee)\big)\] of $\Aa$-categories over $\ul\PSh(\Aa;\Ee)$.
	\end{proposition}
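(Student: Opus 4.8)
The plan is to construct the comparison $\Phi$ by hand from base change along $\Qq$-maps, to check it lands in $\Qq$-commutative monoids and lies over $\mathbb U$, and then to see it is an equivalence by unwinding both sides into functors out of a single total category. First I would reduce to underlying categories: all the constructions in sight are levelwise and compatible with slicing, i.e. $\pi_A^*\ul\Span(\Qq)\simeq\ul\Span(\Qq_{/A})$, $\pi_A^*\ul\PSh(\Aa;\Ee)\simeq\ul\PSh(\Aa_{/A};\Ee)$, and $\pi_A^*\ul\PSh^\Qq(\Aa;\Ee)\simeq\ul\PSh^{\Qq_{/A}}(\Aa_{/A};\Ee)$, where $\Qq_{/A}\subseteq\Aa_{/A}$ is again left-cancelable and closed under base change. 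By \Cref{rmk:fun-slice} it therefore suffices to produce, naturally in the pair $(\Aa,\Qq)$ and over $\ul\PSh(\Aa;\Ee)(1_\Aa)=\Fun(\Aa^\op,\Ee)$, an equivalence between $\Fun(\Span(\Aa,\Aa,\Aa[\Qq]),\Ee)$ and the underlying category of $\ul\CMon^\Qq(\ul\PSh(\Aa;\Ee))$, i.e. the category of $\Qq$-limit-preserving parametrized functors $\ul\Span(\Qq)\to\ul\PSh(\Aa;\Ee)$; here one may assume $\Aa$ has a terminal object, as the slices $\Aa_{/A}$ do.

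To construct the comparison, for every $B\in\Aa$ base change makes $(p\colon X\to B,\ h\colon Y\to B)\mapsto(X\times_BY)$ into a bifunctor
\[
\Span(\Qq_{/B})\times(\Aa_{/B})^\op\longrightarrow\Span(\Aa,\Aa,\Aa[\Qq]),
\]
natural in $B$, with functoriality by pullback of spans in the first variable and by identity-section spans in the second; this is well-defined precisely because $\Qq$ is closed under base change and left-cancelable. Currying the composite of this bifunctor with a presheaf with transfers $M$ yields, naturally in $B$, a functor $\Span(\Qq_{/B})\to\Fun((\Aa_{/B})^\op,\Ee)=\ul\PSh(\Aa;\Ee)(B)$, hence a parametrized functor $\Phi(M)\colon\ul\Span(\Qq)\to\ul\PSh(\Aa;\Ee)$. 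Since $X\times_BY=Y$ when $X=B$, restriction of $\Phi(M)$ along $\pt=\id_B$ recovers $M|_{(\Aa_{/B})^\op}$, so $\Phi$ lies over $\mathbb U$. Moreover $\Phi(M)$ preserves $\Qq$-limits: by the dual of \Cref{cor:test_colimit_pres_span} it is enough to see that its restriction to $\ulbbU{\Qq}^\op\hookrightarrow\ul\Span(\Qq)$, which sends $q\colon X\to B$ in $\Qq$ to $[Y\mapsto M(X\times_BY)]$, is $\Qq$-continuous; this is immediate, since $q_*$ in $\ul\PSh(\Aa;\Ee)$ is computed by pullback and $[Y\mapsto M(X\times_BY)]$ is manifestly stable under base change of fibre products. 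Thus $\Phi$ factors through $\ul\CMon^\Qq(\ul\PSh(\Aa;\Ee))$. (Alternatively, one can check directly that $\ul\PSh^\Qq(\Aa;\Ee)$ is itself $\Qq$-semiadditive — the restriction functors along $\Qq$-maps being restrictions of functor categories along the span functors induced by the base-change adjunctions, to which \Cref{prop:Span_Complete_And_Cocomplete} applies — in which case $\Phi$ is the unique lift of the $\Qq$-continuous functor $\mathbb U$ afforded by \Cref{cor:Unique_Lift_To_CMon}.)

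It remains to show $\Phi$ is an equivalence, and this is the crux; I would argue it by identifying both sides as full subcategories of a common functor category. Parametrized functors into $\ul\PSh(\Aa;\Ee)$ are computed by $\Fun_\Aa(\Cc,\ul\PSh(\Aa;\Ee))\simeq\Fun\big((\int_\Aa\Cc)^{\mathrm{fop}},\Ee\big)$, where $\int_\Aa\Cc\to\Aa$ is the cartesian unstraightening and $(-)^{\mathrm{fop}}$ the fibrewise opposite (the parametrized Yoneda lemma when $\Ee=\Spc$, using $\PSh(\Aa)_{/\ul B}\simeq\PSh(\Aa_{/B})$; the general case by a direct straightening argument). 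Applied to $\Cc=\ul\Span(\Qq)$ and using \Cref{constr:cartesian-fibration-UQ}, this presents $\ul\CMon^\Qq(\ul\PSh(\Aa;\Ee))(1_\Aa)$ as the full subcategory of $\Fun(\mathcal T^{\mathrm{fop}},\Ee)$, with $\mathcal T=\int_\Aa\ul\Span(\Qq)$ the fibrewise span category of $\Ar(\Aa)^\Qq\to\Aa$, spanned by the functors inverting the ``unit'' morphisms coming from the adjunctions $\Span(q_!)\dashv\Span(q^*)$ of \Cref{prop:Span_Complete_And_Cocomplete}. On the other hand one builds a functor $j\colon\Span(\Aa,\Aa,\Aa[\Qq])\to\mathcal T^{\mathrm{fop}}$ sending $Y$ to $(\id_Y\colon Y\to Y)$, a left-pointing map to the corresponding backward span, and a $\Qq$-map $g\colon Y\to Y'$ to the morphism of $\mathcal T$ classified by the span $Y'\xleftarrow{g}Y\xrightarrow{=}Y$; under the above identifications $\Phi$ is restriction along $j$. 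The hard part is precisely to show that restriction along $j$ is an equivalence onto that full subcategory — equivalently, that $j$ exhibits its target as the localization of $\Span(\Aa,\Aa,\Aa[\Qq])$ compatible with the unit-inverting condition. As in the proofs of \Cref{lem:Extending_Covariant_Direction} and \Cref{lem:Segal_covariant}, this should reduce to a cofinality computation — weak contractibility of the relevant comma categories against $j$ — which unwinds to elementary manipulations with fibre products, the pasting law, and left-cancelability of $\Qq$. Finally, uniqueness of $\Phi$ as an equivalence over $\ul\PSh(\Aa;\Ee)$ follows from conservativity of $\mathbb U$ on both sides (a wide-subcategory restriction, resp.\ \Cref{lemma:U-pres-refl-lim}) together with \Cref{thm:universal_prop_par_Qcom_monoids2}, which forces the only $\Qq$-continuous self-equivalence of $\ul\CMon^\Qq(\ul\PSh(\Aa;\Ee))$ over $\mathbb U$ to be the identity.
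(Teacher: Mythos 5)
The proposal correctly identifies the shape of the argument (describe both sides as functors out of a total category and compare), constructs a plausible explicit formula for $\Phi$, and offers a nice alternative for getting the lift via \Cref{cor:Unique_Lift_To_CMon}. But the key step — showing the comparison is an equivalence — is not correctly addressed, and that step is the whole substance of the proof.

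Concretely: after unwinding, the equivalence amounts to showing that the source functor $s\colon\Span_\fw(\Ar(\Aa)^\Qq)\to\Span(\Aa,\Aa,\Qq)$ exhibits $\Span(\Aa,\Aa,\Qq)$ as the localization of the fibrewise span category at the class $\Ww^{\Span}_s$ of spans whose right leg is a ``constantification'' map (equation \eqref{eq:Inverted_spans}). You claim this ``should reduce to a cofinality computation — weak contractibility of comma categories against $j$ — as in the proofs of \Cref{lem:Extending_Covariant_Direction} and \Cref{lem:Segal_covariant}.'' That is not the right tool: those lemmas use finality of inclusions of slices to compute pointwise Kan extensions, whereas what you need here is a \emph{localization} statement, i.e.\ that $s^*$ is fully faithful with essential image the $\Ww^{\Span}_s$-inverting functors. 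Cofinality (Quillen's Theorem~A) controls colimits, not localizations, and there is no straightforward translation of one into the other here. Note also that although at the non-span level $s\colon\Ar(\Aa)^\Qq\to\Aa$ is easily seen to be a localization (it has a fully faithful left adjoint $\const$ with counit in $\Ww_s$, see \Cref{lemma:source-const-loc}), this adjunction does \emph{not} automatically pass to span categories: $\const$ is not a map of adequate triples, so \cite{BachmannHoyois2021Norms}*{Corollary~C.21} does not apply. The paper instead invokes a dedicated criterion for span functors being localizations (\cite{CHLL_Bispans}*{Theorem~4.1.1}), reducing the span-level localization to two facts at the $\Ar(\Aa)^\Qq$-level; that external input is the missing ingredient in your sketch, and it is not elementary.

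Two smaller points. First, the direction of ``$\Phi$ is restriction along $j$'' is reversed: your $j\colon\Span(\Aa,\Aa,\Aa[\Qq])\to\mathcal T^{\mathrm{fop}}$ induces $j^*\colon\Fun(\mathcal T^{\mathrm{fop}},\Ee)\to\Fun(\Span(\Aa,\Aa,\Aa[\Qq]),\Ee)$, i.e.\ a map from the $\CMon$-side back to the $\PSh^\Qq$-side, which (on the relevant full subcategory) is the \emph{inverse} of $\Phi$, not $\Phi$ itself. Second, the reduction to a single underlying category ``by naturality in $(\Aa,\Qq)$'' is stated too quickly: the identifications $\Ar(\Aa_{/A})^\Qq\simeq\Ar(\Aa)^\Qq\times_\Aa\Aa_{/A}$, compatibly with spans, weak equivalences, and the Grothendieck constructions involved, require a genuine argument (the paper devotes \Cref{lemma:ar-pb} and the surrounding observations to exactly this), and this is what actually gets you the $\Aa$-naturality you assume for free.
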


	The proof of the proposition will take up the next two subsections; for now let us record that it immediately implies the theorem:

	\begin{proof}[Proof of Theorem~\ref{thm:span-BBQ}, assuming Proposition~\ref{prop:mackey-presheaves}]
		Applying the previous proposition for $\Aa=\Bb$ (and using the comparison of internal homs from the proof of Proposition~\ref{prop:Yoneda}), it only remains to show that $F\colon\Span(\Bb_{/A},\Bb_{/A},\Bb_{/A}[\Qq])\to\Ee$ restricts to a sheaf $(\Bb_{/A})^\op\to\Ee$ if and only if $\Phi(F)\colon\pi_A^*\ul\Span(\Qq)\to\pi_A^*\ul\PSh(\Bb;\Ee)$ factors through $\ul\Shv(\Bb;\Ee)$. As $\Phi$ is a functor over $\ul\PSh(\Bb;\Ee)$, it is clear that $F$ restricts to a sheaf if and only if $\Phi(F)(\id_A)\colon\Bb_{/A}^\op\to\Ee$ is a sheaf. However, in this case we have for any $f\colon B\to A$ in $\Bb$ and any $q\colon C\to B$ in $\Qq$ equivalences \[\Phi(F)(q)\simeq\Phi(F)(q_*(fq)^*\id_A)\simeq q_*(fq)^*\Phi(F)(\id_A)\] by $\Qq$-continuity; the claim follows as $\ul\Shv(\Bb;\Ee)\subseteq\ul\PSh(\Bb;\Ee)$ is closed under $\Qq$-limits by the above description of limits and local cartesian closure of $\Bb$.
	\end{proof}

	\subsection{Comparison of underlying categories}
	Before establishing the full param\-etrized equivalence from \Cref{prop:mackey-presheaves}, we will prove in this subsection that there exists an equivalence on global sections:
	\[
		\Fun(\Span(\Aa,\Aa,\Qq), \Ee) \iso \Fun^{\Qq\text-\times}(\ul\Span(\Qq),\ul\PSh(\Aa;\Ee)).
	\]
	The outline of the proof is as follows:

	\begin{enumerate}
		\item As we will recall below, there is a $1{:}1$-correspondence between $\Aa$-functors $F\colon \ul{\Span}(\Qq)\to\ul\PSh(\Aa;\Ee)$ and non-parametrized functors $\widetilde{F}\colon \smallint\ul{\Span}(\Qq)\to\Ee$ from the total category of the cocartesian unstraightening of $\ul{\Span}(\Qq)$. Following \cite{HHLNa}, we describe this unstraightening $\smallint\ul\Span(\Qq)$ explicitly in terms of certain spans in the arrow category $\Ar(\Aa)$ of $\Aa$.
		\item Next, we prove that a parametrized functor $F\colon\ul\Span(\Qq)\to\ul\PSh(\Aa;\Ee)$ preserves $\Qq$-limits if and only if its associated functor $\widetilde F\colon\smallint\ul\Span(\Qq)\to\Ee$ inverts a certain explicit class of maps $\Ww$, see Proposition~\ref{prop:charact-Q-semiadd}. In particular, $\Qq$-continuous functors $\ul\Span(\Qq)\to\ul\PSh(\Aa;\Ee)$ correspond to functors out of the localization of $\smallint\ul\Span(\Qq)$ at $\Ww$.
		\item Finally, we show in Proposition~\ref{prop:local-BBQ} that this localization is given by the span category $\Span(\Aa,\Aa,\Qq)$.
	\end{enumerate}

	Let us start by making the unstraightening $\smallint\ul\Span(\Qq)$ explicit:

	\begin{proposition}
		Let $\Ar(\Aa)^\Qq\subseteq\Ar(\Aa)$ again denote the full subcategory spanned by the maps of $\Qq$ and write $\Ar(\Aa)^{\Qq,\,\fw}\subseteq\Ar(\Aa)^\Qq$ for the wide subcategory of all maps inverted by $t\colon\Ar(\Aa)^\Qq\to\Aa$.

		Then $(\Ar(\Aa)^\Qq,\Ar(\Aa)^\Qq,\Ar(\Aa)^{\Qq,\,\fw})$ is an adequate triple, and
		\[
			t\colon\Span_{\fw}(\Ar(\Aa)^\Qq)\coloneqq\Span(\Ar(\Aa)^\Qq,\Ar(\Aa)^\Qq,\Ar(\Aa)^{\Qq,\,\fw})\to\Span(\Aa,\Aa,\iota\Aa)\simeq\Aa^\op
		\]
		is a cocartesian fibration classifying the functor $\ul\Span(\Qq)$ from Construction~\ref{constr:cartesian-fibration-UQ}.
	\end{proposition}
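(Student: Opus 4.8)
The plan is to deduce the whole statement from the cartesian fibration $t\colon\Ar(\Aa)^\Qq\to\Aa$ of \Cref{constr:cartesian-fibration-UQ}, whose fibre over $A$ is the slice $\Qq_{/A}$ and whose restriction functors are the pullback functors $g^*$. All of the work consists in seeing that the ``fiberwise'' span construction applied to $t$ produces exactly the unstraightening of $\ul\Span(\Qq)$.

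First I would verify the adequate triple axioms. Both $\Ar(\Aa)^\Qq$ and $\Ar(\Aa)^{\Qq,\,\fw}$ are visibly wide subcategories containing all equivalences and closed under composition, so the only real point is the existence of ambigressive pullbacks with legs in the correct classes. An egressive morphism $(r\colon W\to A)\to(q\colon X\to A)$ is precisely a map $W\to X$ over $A$, and since $W\to X\to A$ and $X\to A$ both lie in $\Qq$, left-cancelability (\Cref{lem:CharacterizationClosedUnderDiagonals}) forces $W\to X$ into $\Qq$. Pulling this back along an arbitrary $(q_1\colon X_1\to A_1)\to(q\colon X\to A)$ is computed componentwise in $\Ar(\Aa)$: the resulting object is $X_1\times_X W\to A_1$ with structure map through $X_1$, and it lies in $\Ar(\Aa)^\Qq$ because $X_1\times_X W\to X_1$ is a base change of $W\to X$; its projection to $(q_1)$ is a map over $A_1$, hence egressive, and its projection to $(r)$ is an arbitrary $\Qq$-square, hence ingressive. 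Stability of these squares under pasting is the pasting law for pullbacks in $\Aa$. Moreover $t$ sends ingressives to maps in $\Aa$, egressives to equivalences (by definition of $\Ar(\Aa)^{\Qq,\,\fw}$), and ambigressive pullbacks to ambigressive pullbacks, so it is a morphism of adequate triples; applying $\Span\colon\AdTrip\to\Cat$ (\cite{HHLNa}*{Theorem~2.18}) yields $\Span(t)\colon\Span_\fw(\Ar(\Aa)^\Qq)\to\Span(\Aa,\Aa,\iota\Aa)\simeq\Aa^\op$.

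It then remains to show $\Span(t)$ is a cocartesian fibration with straightening $\ul\Span(\Qq)$. A morphism of $\Aa^\op$ from $A$ to $B$ is represented by a span $A\xleftarrow{g}B\xrightarrow{=}B$ with $g\colon B\to A$ in $\Aa$; for an object $q\colon X\to A$ of $\Qq_{/A}$ I would take as candidate cocartesian lift the span
\[
(q\colon X\to A)\xleftarrow{\ (\mathrm{pr}_X,\,g)\ }(g^*q\colon X\times_A B\to B)\xrightarrow{\ =\ }(g^*q\colon X\times_A B\to B),
\]
whose backward leg is the $t$-cartesian lift of $g$ and whose forward leg is an identity. Granting that this span is $\Span(t)$-cocartesian, every morphism of $\Aa^\op$ has a cocartesian lift, so $\Span(t)$ is a cocartesian fibration; its fibre over $A$ is $\Span$ of the fibre $\Qq_{/A}$ of $t$, and the explicit lifts show that the transition functor along $g\colon B\to A$ is $\Span(g^*)\colon\Span(\Qq_{/A})\to\Span(\Qq_{/B})$, which is exactly $\ul\Span(\Qq)$ evaluated at the corresponding morphism. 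Hence the straightening of $\Span(t)$ is $\ul\Span(\Qq)$.

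The hard part will be this last cocartesian-fibration verification: one has to unwind composition in the span category of the adequate triple $\Ar(\Aa)^\Qq$ explicitly in order to confirm that postcomposition with the candidate lift implements restriction along $g$ (up to an equivalence), and hence that it has the required universal property. If \cite{HHLNa} contains a ready-made statement that $\Span_\fw$ of a cartesian fibration with pullback-admitting fibres and pullback-preserving restriction functors is automatically a cocartesian fibration classified by the fibrewise $\Span$, that could be cited in place of Steps~3--4 and would render the remaining argument purely formal; absent that, the span-category bookkeeping is the only non-routine ingredient.
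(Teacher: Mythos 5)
Your proposal is correct, and the ready-made citation you hope for at the end does exist: the paper's entire proof is a one-line invocation of \cite{HHLNa}*{Theorem~3.9}, which states precisely that applying $\Span$ fiberwise to a cartesian fibration whose fibers admit, and whose transition functors preserve, the relevant pullbacks yields a cocartesian fibration classifying the fiberwise span categories, with the backward legs of the ambient span triple given by the $t$-cartesian edges. Your Steps~1--2 (verifying the adequate triple axioms and that $t$ is a map of adequate triples) and the candidate-lift bookkeeping you flag in Steps~3--4 are exactly the content that goes into proving that theorem, so they are sound but can be bypassed entirely once the citation is in hand, together with the observation that $t\colon\Ar(\Aa)^\Qq\to\Aa$ is by construction the cartesian fibration classifying $A\mapsto\Qq_{/A}$.
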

	\begin{proof}
		This is an instance of \cite{HHLNa}*{Theorem~3.9}, using that $t\colon\Ar(\Aa)^\Qq\to\Aa$ is (by definition) the cartesian fibration classifying $\ulbbU{\Qq}\colon A\mapsto\Qq_{/A}$.
	\end{proof}

	\begin{lemma}\label{prop:unparametrize-span}
		For every presentable category $\Ee$, there is a natural equivalence
		\[
		\Fun_\Aa(\ul{\Span}(\Qq),\ul\PSh(\Aa;\Ee)) \iso \Fun(\Span_\fw(\Ar(\Aa)^\Qq), \Ee).
		\]
	\end{lemma}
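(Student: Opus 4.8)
The plan is to deduce this from the preceding proposition together with a general \textbf{unstraightening principle}. That proposition exhibits
\[
  t\colon\Span_\fw(\Ar(\Aa)^\Qq)\longrightarrow\Span(\Aa,\Aa,\iota\Aa)\simeq\Aa^\op
\]
as the cocartesian fibration classifying the functor $\ul\Span(\Qq)\colon\Aa^\op\to\Cat$. Writing $\smallint\Cc\to\Aa^\op$ for the cocartesian unstraightening of an arbitrary $\Aa$-category $\Cc$, the lemma will follow once we establish a natural equivalence
\[
  \Fun_\Aa\bigl(\Cc,\ul\PSh(\Aa;\Ee)\bigr)\iso\Fun\bigl(\smallint\Cc,\Ee\bigr)
\]
of $\infty$-categories, and then specialize to $\Cc=\ul\Span(\Qq)$, rewriting $\smallint\ul\Span(\Qq)\simeq\Span_\fw(\Ar(\Aa)^\Qq)$ by the proposition.

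The first and main point is to recognize $\ul\PSh(\Aa;\blank)$ as right adjoint to the total-space functor. For each $A\in\Aa$ the coslice $(\Aa^\op)_{A/}$ is canonically $(\Aa_{/A})^\op$, so $\ul\PSh(\Aa;\Ee)\colon\Aa^\op\to\Cat$ is precisely $A\mapsto\Fun\bigl((\Aa^\op)_{A/},\Ee\bigr)$; this is the standard formula for the right adjoint of $\smallint(\blank)\colon\Cat(\Aa)=\Fun(\Aa^\op,\Cat)\to\Cat$ coming from straightening/unstraightening (see e.g.\ \cite{HTT}, and in the present span-theoretic language \cite{HHLNa}). Both $\Cat$ and $\Cat(\Aa)$ are cartesian closed, $\smallint(\blank)$ preserves finite products (being, up to equivalence, the forgetful functor from cocartesian fibrations over $\Aa^\op$, which preserves colimits), and $\ul\PSh(\Aa;\blank)$ preserves cotensors by $[n]$; hence the adjunction is $\Cat$-enriched, and unwinding the enrichment yields the displayed equivalence of $\infty$-categories, natural in $\Cc$ and $\Ee$.

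If one prefers to avoid the enriched adjunction abstractly, the same equivalence can be checked directly on the generators $\ul B\times[n]$ ($B\in\Aa$, $n\ge 0$) of $\Cat(\Aa)$ under colimits: as functors of $\Cc$ both sides send colimits of $\Aa$-categories to limits — the right-hand side since $\smallint(\blank)$ preserves colimits and $\Fun(\blank,\Ee)$ is continuous, the left-hand side as an internal hom — so it suffices to treat these generators. For $\Cc=\ul B$ the categorical Yoneda lemma (Proposition~\ref{prop:Yoneda}) identifies the left-hand side with $\ul\PSh(\Aa;\Ee)(B)=\Fun\bigl((\Aa_{/B})^\op,\Ee\bigr)$, while a direct inspection of cocartesian lifts shows $\smallint\ul B\simeq(\Aa_{/B})^\op$ over $\Aa^\op$; the case $\ul B\times[n]$ then follows by cotensoring with $[n]$ on both sides.

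I expect the only genuinely delicate step to be the passage from the mapping-\emph{space} statement of ordinary straightening/unstraightening to the mapping-\emph{category} statement — equivalently, the verification that $\smallint(\blank)\dashv\ul\PSh(\Aa;\blank)$ is compatible with the cartesian closed structures (or, in the alternative route, the fact that $\smallint(\blank)$ preserves colimits). Everything afterwards — the identification $\smallint\ul B\simeq(\Aa_{/B})^\op$, the reduction to generators, and substituting the preceding proposition to rewrite $\smallint\ul\Span(\Qq)$ — is routine.
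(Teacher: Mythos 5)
Your proposal follows the same overall strategy as the paper: identify $\smallint\ul\Span(\Qq)$ via the preceding proposition, then invoke a general equivalence $\Fun_\Aa(\Cc,\ul\PSh(\Aa;\Ee))\simeq\Fun(\smallint\Cc,\Ee)$ for arbitrary $\Aa$-categories $\Cc$. The difference is that the paper simply cites \cite{CLL_Global}*{Lemma~2.2.13} (after a universe jump) for that general equivalence, whereas you re-derive it from scratch via an enriched-adjunction argument. Your derivation is essentially correct: recognizing $\ul\PSh(\Aa;\blank)$ as right adjoint to the total-category functor $\smallint(\blank)\colon\Fun(\Aa^\op,\Cat)\to\Cat$ via the coend formula $\smallint\Cc\simeq\int^A \Cc(A)\times(\Aa^\op)_{A/}$, and then promoting to a $\Cat$-enriched adjunction because $\ul\PSh(\Aa;\blank)$ preserves cotensors by $[n]$ (which is manifest, since both cotensors are computed pointwise as $\Fun([n],\blank)$). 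The alternative argument via generators $\ul B\times[n]$ and \Cref{prop:Yoneda} is also sound. What your approach buys you is self-containedness; what the paper's citation buys is brevity.

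One slip in the justification: you claim $\smallint(\blank)$ preserves finite products ``being, up to equivalence, the forgetful functor from cocartesian fibrations over $\Aa^\op$.'' This is not so — the product in $\Cat(\Aa)$ corresponds to the \emph{fiber} product of cocartesian fibrations over $\Aa^\op$, which the forgetful functor to $\Cat$ does not send to a product (unless $\Aa^\op$ is terminal). The parenthetical you give actually only justifies that $\smallint(\blank)$ preserves colimits, which is what you need for the adjunction to exist. This does not derail your argument, because the adjacent observation that $\ul\PSh(\Aa;\blank)$ preserves cotensors is the condition that actually yields the enrichment; the product-preservation claim is extraneous and should simply be dropped.
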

	\begin{proof}
		Applying \cite{CLL_Global}*{Lemma~2.2.13} in a larger universe, there is a natural equivalence
		\[
			\Fun_\Aa(\ul{\Span}(\Qq),\ul\PSh(\Aa;\Ee)) \simeq \Fun(\smallint \ul{\Span}(\Qq), \Ee);
		\]
		The claim is now immediate from the above explicit description of $\smallint\ul\Span(\Qq)$.
	\end{proof}

	Now that we have obtained a description of $\Aa$-functors $\ul{\Span}({\Qq}) \to \ul{\PSh}(\Aa;\Ee)$ as non-parametrized functors out of an explicit span category, we would like to identify which of them correspond to $\Qq$-continuous parametrized functors. This is addressed by the following result:

	\begin{proposition}\label{prop:charact-Q-semiadd}
		Consider an $\Aa$-functor $F\colon \ul{\Span}({\Qq}) \to \ul{\PSh}(\Aa;\Ee)$, and let
		\[
		\widetilde{F}\colon \Span(\Ar(\Aa)^\Qq,\Ar(\Aa)^\Qq,\Ar(\Aa)^{\Qq,\fw}) \to \Ee
		\]
		denote the associated functor from \Cref{prop:unparametrize-span}. Then $F$ preserves $\Qq$-limits if and only if $\widetilde{F}$ inverts the collection $\Ww_s^{\Span}$ of all maps of the form
		\begin{equation}\label{eq:Inverted_spans}
			\begin{tikzcd}
				{C} & C & C \\
				B & { A} & A
				\arrow[Rightarrow, no head, from=1-2, to=1-1]
				\arrow[Rightarrow, no head, from=1-2, to=1-3]
				\arrow["q", from=2-2, to=2-1]
				\arrow["qf"', from=1-1, to=2-1]
				\arrow["f", from=1-3, to=2-3]
				\arrow["f", from=1-2, to=2-2]
				\arrow[Rightarrow, no head, from=2-3, to=2-2]
			\end{tikzcd}
		\end{equation}
		for composable morphisms $f$ and $q$ in $\Qq$.
	\end{proposition}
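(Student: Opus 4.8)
The plan is to translate the parametrized condition ``$F$ preserves $\Qq$-limits'' into an explicit pointwise Beck--Chevalley condition on the unparametrized functor $\widetilde F$, and then to recognise the relevant Beck--Chevalley components as the values of $\widetilde F$ on the morphisms making up $\Ww_s^{\Span}$.

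First I would make the correspondence $F\leftrightarrow\widetilde F$ explicit. From the proof of \Cref{prop:unparametrize-span} (which invokes \cite{CLL_Global}*{Lemma~2.2.13} in a larger universe), $\widetilde F$ sends an object $(q\colon C\to A)$ of $\Span_{\fw}(\Ar(\Aa)^\Qq)$ --- lying over $A\in\Aa^\op$ and representing $(C\xrightarrow{q}A)\in\Span(\Qq_{/A})$ --- to the value of the presheaf $F_A(C\to A)\colon(\Aa_{/A})^\op\to\Ee$ at the terminal object $\id_A$. Naturality of $F$ in the restriction direction then upgrades this to $F_A(X)(h\colon D\to A)\simeq\widetilde F(h^*X\colon C\times_A D\to D)$ for every $X=(C\xrightarrow{g}A)\in\Span(\Qq_{/A})$. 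I would combine this with the description of $\Qq$-limits in $\ul{\PSh}(\Aa;\Ee)$ established above (the lemma preceding \Cref{constr:cartesian-fibration-UQ}): for $q\colon A\to B$ in $\Qq$ the functor $q_*\colon\Fun((\Aa_{/A})^\op,\Ee)\to\Fun((\Aa_{/B})^\op,\Ee)$ is precomposition with the pullback functor $q^*\colon\Aa_{/B}\to\Aa_{/A}$, so that $(q_*M)(h\colon D\to B)=M(D\times_B A\to A)$; on the source, $q_*$ on $\ul{\Span}(\Qq)$ is $\Span(q_\circ)$ with unit and counit the flipped spans of \Cref{prop:Span_Complete_And_Cocomplete}.

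Next I would reformulate the condition. By definition $F$ preserves $\Qq$-limits iff for each $q\colon A\to B$ in $\Qq$ the Beck--Chevalley comparison $F_B q_*\to q_* F_A$ of functors $\Span(\Qq_{/A})\to\Fun((\Aa_{/B})^\op,\Ee)$ is an equivalence, and this is tested on objects $X=(C\xrightarrow{g}A)$ and, pointwise, on objects $(D\xrightarrow{h}B)\in\Aa_{/B}$. Feeding in the formulas above, and using the canonical identification $C\times_A(D\times_B A)\cong C\times_B D$ (the fibre product over $B$ along $qg$ and $h$), the component of $F_B q_*\to q_* F_A$ at $X$ and $h$ becomes a map $\widetilde F(C\times_B D\to D)\to\widetilde F(C\times_B D\to D\times_B A)$. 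I would then chase the mate defining this component --- unwinding the units and counits of $q^*\dashv q_*$ on both $\ul{\Span}(\Qq)$ and $\ul{\PSh}(\Aa;\Ee)$ and transporting through the equivalence of the previous step --- to identify it with $\widetilde F$ evaluated on the morphism \eqref{eq:Inverted_spans} associated to the composable pair of $\Qq$-maps $C\times_B D\to D\times_B A$ (a base change of $g$) and $D\times_B A\to D$ (a base change of $q$). Conversely, for any composable pair $f\colon C\to A$, $q\colon A\to B$ in $\Qq$, the morphism \eqref{eq:Inverted_spans} is recovered as such a component by taking $X=(C\xrightarrow{f}A)$, $D=B$ and $h=\id_B$. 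Since $\Qq$ is closed under base change, every base change of a member of $\Ww_s^{\Span}$ again lies in $\Ww_s^{\Span}$, so ``$F_B q_*\to q_* F_A$ is invertible for all $q$'' is equivalent to ``$\widetilde F$ inverts every member of $\Ww_s^{\Span}$'', which is the claim.

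The main obstacle is this last matching: identifying the abstractly defined Beck--Chevalley transformation with the concrete span \eqref{eq:Inverted_spans}, and in particular checking that the pointwise identifications above genuinely assemble into the Beck--Chevalley transformation rather than merely agreeing on objects. To keep the bookkeeping of pullbacks under control I would first reduce, via the slice equivalences $\pi_B^*\ul{\PSh}(\Aa;\Ee)\simeq\ul{\PSh}(\Aa_{/B};\Ee)$ and $\pi_B^*\ul{\Span}(\Qq)\simeq\ul{\Span}(\Aa_{/B}[\Qq])$, to the case where $B$ is terminal, so that only evaluation at identity objects of the slices intervenes.
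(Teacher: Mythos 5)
Your proposal is correct in spirit but takes a genuinely different route from the paper. You attack the problem head-on: unwind the Grothendieck correspondence pointwise, write out the Beck--Chevalley transformations $F_Bq_*\to q_*F_A$ that encode $\Qq$-continuity, compute their components at a pair $(X,h)$, and recognize each as $\widetilde F$ applied to a base change of a morphism in $\Ww_s^{\Span}$; conversely, every member of $\Ww_s^{\Span}$ appears by taking $h=\id_B$. This works, and your identification of sources and targets of the components is correct, but as you yourself flag, the real content is showing that the concrete morphism \eqref{eq:Inverted_spans} genuinely \emph{is} the Beck--Chevalley mate after unravelling all the units and counits (on both $\ul\Span(\Qq)$ and $\ul\PSh(\Aa;\Ee)$), not just something with the right domain and codomain. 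That chase is feasible but fiddly, and your suggestion to reduce to $B$ terminal via slicing helps but does not eliminate it.

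The paper's proof sidesteps that bookkeeping entirely with a formal argument. It first reduces (via \Cref{cor:test_colimit_pres_span} and \Cref{prop:uni_prop_U_Q}) to checking that the restriction of $F$ to $\ulbbU{\Qq}^\op$ is right Kan extended from the point, and observes that the invertibility condition on $\widetilde F$ likewise only sees the backward arrows $(\Ar(\Aa)^\Qq)^\op$. Then it invokes \Cref{lemma:source-const-loc}: the source functor $s\colon\Ar(\Aa)^\Qq\to\Aa$ is a localization at $\Ww_s$ with left adjoint $\const$. Hence $(\const^\op)^*$ has right adjoint $(s^\op)^*$, whose essential image consists precisely of the functors inverting $\Ww_s$, and the desired equivalence drops out of the naturality square identifying the two right Kan extension operations. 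So where you propose a direct computation of mates, the paper repackages the same information as a localization statement and lets adjunction formalities do the work. Your approach is more explicit and gives concrete information about what the comparison maps look like; the paper's approach is shorter and avoids the coherence verification that you correctly identified as the main obstacle.
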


	The proof relies on the following simple observation:

	\begin{lemma}\label{lemma:source-const-loc}
		Let $\Aa$ be any category and let $\Qq$ be a wide subcategory. Then the source map $s\colon\Ar(\Aa)^\Qq\to\Aa$ is a localization at the class $\Ww_s$ of maps of the form
		\begin{equation}\label{diag:counit-s-const}
			\begin{tikzcd}
				A\arrow[d, "f"']\arrow[r,equals] & A\arrow[d,"qf"]\\
				B\arrow[r, "q"'] & C
			\end{tikzcd}
		\end{equation}
		with $q$ and $f$ in $\Qq$. Moreover, $s$ admits a left adjoint $\const$ given by the inclusion of constant arrows.
		\begin{proof}
			It is clear that the inclusion of constant arrows is left adjoint and right inverse to $s$. To complete the proof it now suffices to observe that $s$ inverts the maps $(\ref{diag:counit-s-const})$ and that the counit $\const\circ s\to\id$ is levelwise of this form.
		\end{proof}
	\end{lemma}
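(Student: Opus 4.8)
The claim is that the source map $s\colon \Ar(\Aa)^\Qq \to \Aa$ exhibits $\Aa$ as the localization of $\Ar(\Aa)^\Qq$ at the class $\Ww_s$, and that $s$ admits a left adjoint $\const$ given by the inclusion of constant arrows. The plan is to first establish the adjunction $\const \dashv s$, then deduce the localization statement by a general principle: a left adjoint whose counit is pointwise in a class $\Ww$ and which itself inverts $\Ww$ exhibits its target as the localization at $\Ww$.

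\textbf{Step 1: the adjunction.} I would first check that $\const\colon \Aa \to \Ar(\Aa)^\Qq$, sending $A$ to the identity arrow $\id_A$ (which indeed lies in $\Qq$ since $\Qq$ is wide), is well-defined and left adjoint to $s$. The unit $\id_\Aa \to s\circ\const$ is an equivalence since $s(\id_A) = A$ on the nose. For the counit $\const\circ s \to \id_{\Ar(\Aa)^\Qq}$, note that an object of $\Ar(\Aa)^\Qq$ is a map $f\colon A \to B$ in $\Qq$, and $\const(s(f)) = \id_A$; the component of the counit at $f$ is the square
\[
\begin{tikzcd}
A \arrow[d,equals]\arrow[r,equals] & A\arrow[d,"f"]\\
A\arrow[r,"f"'] & B,
\end{tikzcd}
\]
which is a valid morphism in $\Ar(\Aa)^\Qq$ since both vertical legs ($\id_A$ and $f$) lie in $\Qq$. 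The triangle identities are then immediate from the fact that $s$ carries this counit to the identity. (Alternatively one can invoke a mapping-space computation: a morphism $\id_A \to g$ in $\Ar(\Aa)^\Qq$, where $g\colon C\to D$, is a pair of maps $A\to C$, $A\to D$ in $\Qq$ making the obvious square commute, and the map $A\to D$ is determined by $A\to C$ via $g$ — by left-cancelability both lie in $\Qq$ automatically once $A\to C$ does — so $\Map_{\Ar(\Aa)^\Qq}(\id_A, g)\simeq \Map_\Aa(A,C) = \Map_\Aa(A, s(g))$, naturally.)

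\textbf{Step 2: the localization.} Since $s$ is a left inverse to the fully faithful $\const$, it is a localization functor in the abstract sense (it is a reflective localization read backwards; more precisely $s$ is a left adjoint whose left adjoint is fully faithful, so $s$ exhibits $\Aa$ as the colocalization — equivalently, $\Aa \simeq (\Ar(\Aa)^\Qq)[W^{-1}]$ where $W$ is the class of maps $s$ inverts). It therefore remains only to identify that class $W$ with (the saturation of) $\Ww_s$. On one hand, $s$ evidently inverts every map of the form \eqref{diag:counit-s-const}, since applying $s$ to such a square yields $\id_A\colon A \to A$. On the other hand, I would observe that the counit $\const\circ s \to \id$ is \emph{levelwise} a map of the form \eqref{diag:counit-s-const} — indeed its component at $f\colon A\to B$ in $\Qq$ is exactly the square displayed in Step 1, which is of the required shape with "$q$" $=f$ and "$f$" $=\id_A$ — and this is precisely the hypothesis needed for the standard criterion: a localization functor $L$ with a fully faithful right-inverse section whose counit is pointwise in a class $\Ww$ presents the target as $L[\Ww^{-1}]$, provided $L$ inverts $\Ww$. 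The cited formulation in the lemma ("it now suffices to observe that $s$ inverts the maps \eqref{diag:counit-s-const} and that the counit $\const\circ s\to\id$ is levelwise of this form") is exactly this criterion, so the two bullet points complete the argument.

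\textbf{Expected main obstacle.} None of the steps is genuinely hard; the only point requiring a little care is making the abstract localization criterion precise in the $\infty$-categorical setting — i.e.\ articulating why "left adjoint with fully faithful section, counit pointwise in $\Ww$, and $L$ inverts $\Ww$" forces $\Aa\simeq(\Ar(\Aa)^\Qq)[\Ww^{-1}]$. This is a standard fact (it is the dual of the characterization of Bousfield localizations, cf.\ \cite{HTT}*{Proposition~5.2.7.12}), and the paper clearly intends it to be invoked without further comment, which is why the lemma's proof is left to a one-sentence sketch. I would simply cite this and move on, as the authors do.
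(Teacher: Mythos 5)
Your proof is correct and takes essentially the same approach as the paper, which compresses the argument into one sentence: establish the adjunction $\const\dashv s$ with $\const$ fully faithful, then invoke the general localization criterion from the two observations that $s$ inverts $\Ww_s$ and that the counit $\const\circ s\to\id$ is levelwise in $\Ww_s$. Two small slips worth flagging: in Step~2 you write that ``$s$ is a left adjoint whose left adjoint is fully faithful'' when $s$ is of course the \emph{right} adjoint (of the fully faithful $\const$); and in your parenthetical mapping-space argument the legs $A\to C$ and $A\to D$ of a morphism $\id_A\to g$ in $\Ar(\Aa)^\Qq$ are arbitrary maps in $\Aa$, not required to lie in $\Qq$ (the subcategory $\Ar(\Aa)^\Qq\subseteq\Ar(\Aa)$ is \emph{full}), and the appeal to left-cancelability is both unnecessary and unavailable here since the lemma assumes only that $\Qq$ is a wide subcategory — but the conclusion $\Map_{\Ar(\Aa)^\Qq}(\id_A,g)\simeq\Map_\Aa(A,C)$ is nonetheless correct, as the square is determined by its top edge.
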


	\begin{proof}[Proof of Proposition~\ref{prop:charact-Q-semiadd}]
		We first recall from \Cref{cor:test_colimit_pres_span} and \Cref{prop:uni_prop_U_Q} that $F$ preserves $\Qq$-limits if and only if its restriction to $\ulbbU{\Qq}^\op$ is right Kan extended from the point. Similarly, the invertibility condition on $\tilde F$ only depends on its restriction to the subfibration $t\colon(\Ar(\Aa)^\Qq)^\op\to \Aa^\op$ classifying $\ulbbU{\Qq}^\op$; by naturality, we are therefore reduced to proving that a functor $F\colon\ulbbU{\Qq}^\op\to\ul\PSh(\Aa;\Ee)$ is right Kan extended from the point if and only if the associated functor $\widetilde F\colon (\Ar(\Aa)^\Qq)^\op\to\Ee$ inverts the maps $(\Ww_s)^\op$ from $(\ref{diag:counit-s-const})$.

		For this let us consider the naturality square
		\begin{equation*}
			\begin{tikzcd}
				\Fun(\ulbbU{\Qq}^\op,\ul\PSh(\Aa;\Ee))\arrow[r, "\sim"]\arrow[d, "\textup{res}"'] & \Fun((\Ar(\Aa)^\Qq)^\op,\Ee)\arrow[d,"\textup{res}=(\const^\op)^*"]\\
				\Fun(\ul1,\ul\PSh(\Aa;\Ee))\arrow[r, "\sim"'] & \Fun(\Aa^\op,\Ee)
			\end{tikzcd}
		\end{equation*}
		associated to the map $\ul 1\to \ulbbU{\Qq}$ classifying the point.

		The horizontal maps are equivalences and the vertical maps admit right adjoints; it then follows formally that the top horizontal map restricts to an equivalence between the essential images of these adjoints. The right adjoint of the vertical arrow on the left is precisely right Kan extension. On the other hand, by Lemma~\ref{lemma:source-const-loc} we have an adjunction $s^\op\dashv\const^\op$, so that the right adjoint of $(\const^\op)^*\colon\Fun((\Ar(\Aa)^\Qq)^\op,\Ee)\to\Fun(\Aa^\op,\Ee)$ is given by $(s^\op)^*$. Appealing to the lemma once more, the essential image of this functor is precisely characterized by the above invertibility condition.
	\end{proof}

	As a consequence of the previous result, a functor $\Span_\fw(\Ar(\Aa)^\Qq)\to\Ee$ preserves $\Qq$-limits if and only if it factors through the localization of $\Span_\fw(\Ar(\Aa)^\Qq)$ at the maps of the form $(\ref{eq:Inverted_spans})$. We will now give an explicit description of this localization:

	\begin{construction}
		Consider the source functor $s \colon \Ar(\Aa)^\Qq \to \Aa$ once more. By left-cancelability of $\Qq$, this maps $\Ar(\Aa)^{\Qq,\,\fw}$ into $\Qq$. As $\Qq$ is closed under base change, this then further shows that the pullback of a map in  $\Ar(\Aa)^{\Qq,\,\fw}$ along a map in $\Ar(\Aa)^\Qq$ is just computed pointwise, so that $s$ preserves all requisite pullbacks. Altogether, we see that $s$ defines a map of adequate triples $(\Ar(\Aa)^{\Qq},\Ar(\Aa)^{\Qq},\Ar(\Aa)^{\Qq,\,\fw}) \to (\Aa,\Aa,\Qq)$, and thus induces a functor
		\begin{equation*}
			s\colon \Span_\fw(\Ar(\Aa)^\Qq) \to \Span(\Aa,\Aa,\Qq).
		\end{equation*}
	\end{construction}

	\begin{proposition}\label{prop:local-BBQ}
		This is a localization at the class of maps $\Ww^{\Span}_s$ from $(\ref{eq:Inverted_spans})$.
		\begin{proof}
			By the localization criterion from \cite{CHLL_Bispans}*{Theorem~4.1.1}, it will be enough to show that $s\colon \Ar(\Aa)^\Qq\to\Aa$ is a localization at the maps $\Ww_s$ and that $s\colon\Ar(\Aa)^{\Qq,\,\textup{fw}}\to\Qq$ is a right fibration. However, the first statement is an instance of Lemma~\ref{lemma:source-const-loc}, while for the second statement it is enough to observe that $\Ar(\Qq)^{\fw}=\Ar(\Aa)^{\Qq,\,\fw}$ consists precisely of the cartesian edges of the cartesian fibration $s\colon\Ar(\Qq)\to \Qq$.
		\end{proof}
	\end{proposition}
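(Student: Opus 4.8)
The plan is to deduce the statement from the general localization criterion for span categories \cite{CHLL_Bispans}*{Theorem~4.1.1}, which asserts that a morphism of adequate triples $F\colon(\Cc,\Cc_{\mathrm b},\Cc_{\mathrm f})\to(\Dd,\Dd_{\mathrm b},\Dd_{\mathrm f})$ induces a localization $\Span(F)$ at the class of purely backward spans whose backward leg lies in a class $W$, provided that (i) the underlying functor $F\colon\Cc\to\Dd$ is a localization at $W$, and (ii) the restriction $F\colon\Cc_{\mathrm f}\to\Dd_{\mathrm f}$ to the egressive maps is a right fibration. The construction preceding the proposition already records that $s$ is a morphism of adequate triples $(\Ar(\Aa)^\Qq,\Ar(\Aa)^\Qq,\Ar(\Aa)^{\Qq,\fw})\to(\Aa,\Aa,\Qq)$ -- here one uses that $\Qq$ is left-cancelable and closed under base change, so that the relevant pullbacks of $\fw$-maps are computed levelwise and hence preserved by $s$ -- so it only remains to verify (i) and (ii) and to identify the resulting localizing class with $\Ww^{\Span}_s$.

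For hypothesis (i), I would invoke \Cref{lemma:source-const-loc}: the source map $s\colon\Ar(\Aa)^\Qq\to\Aa$ admits the fully faithful left adjoint $\const$, inverts the maps of $\Ww_s$, and has counit $\const\circ s\to\id$ levelwise of the form $(\ref{diag:counit-s-const})$, hence is a localization at $\Ww_s$. Unwinding the description of morphisms in $\Span_{\fw}(\Ar(\Aa)^\Qq)$, a purely backward span whose backward leg lies in $\Ww_s$ is exactly one of the maps displayed in $(\ref{eq:Inverted_spans})$, so the class of morphisms produced by the criterion is precisely $\Ww^{\Span}_s$.

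For hypothesis (ii), left-cancelability of $\Qq$ shows that the upper edge of any $\fw$-square between $\Qq$-arrows again lies in $\Qq$, so that $\Ar(\Aa)^{\Qq,\fw}$ coincides with the wide subcategory $\Ar(\Qq)^{\fw}\subseteq\Ar(\Qq)$ of squares in $\Qq$ whose lower edge is an equivalence. The source map $s\colon\Ar(\Qq)\to\Qq$ is a cartesian fibration whose cartesian morphisms over a given map are precisely the squares with invertible lower edge; thus $\Ar(\Qq)^{\fw}$ is the subcategory of $s$-cartesian edges, and restricting a cartesian fibration to its cartesian edges yields a right fibration over the base. Hence $s\colon\Ar(\Aa)^{\Qq,\fw}=\Ar(\Qq)^{\fw}\to\Qq$ is a right fibration, and the criterion applies, giving the claim.

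The conceptual content is thus entirely carried by \cite{CHLL_Bispans}*{Theorem~4.1.1} and \Cref{lemma:source-const-loc}; I expect the only delicate points to be orientation bookkeeping -- keeping straight which leg of a span is ingressive versus egressive, both in $\Span_{\fw}(\Ar(\Aa)^\Qq)$ and in $\Span(\Aa,\Aa,\Qq)$, and correspondingly checking that $\Span(s)$ really inverts exactly the spans $(\ref{eq:Inverted_spans})$ -- and verifying that the localizing class output by the criterion is the explicit family $\Ww^{\Span}_s$ rather than some a priori larger saturation of it. Once these identifications are made, the proposition follows formally.
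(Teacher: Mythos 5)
Your proof is correct and follows exactly the same route as the paper: invoke the localization criterion from \cite{CHLL_Bispans}*{Theorem~4.1.1}, verify that $s\colon\Ar(\Aa)^\Qq\to\Aa$ is a localization via \Cref{lemma:source-const-loc}, and verify the right-fibration condition by identifying $\Ar(\Aa)^{\Qq,\fw}=\Ar(\Qq)^\fw$ with the cartesian edges of the cartesian fibration $s\colon\Ar(\Qq)\to\Qq$. Your write-up just spells out a few of the details (left-cancelability giving the identification of the two wide subcategories, and the explicit matching of $\Ww^{\Span}_s$ with the localizing class) that the paper leaves implicit.
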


	Combining the above results, we can now prove the equivalence from \Cref{prop:mackey-presheaves} on underlying non-parametrized categories:

	\begin{proposition}\label{prop:unparam-mackey-presheaf}
		There is a natural equivalence of non-parametrized categories $\Fun^{\Qq\text-\times}(\ul\Span(\Qq),\ul\PSh(\Aa;\Ee))\simeq \Fun(\Span(\Aa,\Aa,\Qq), \Ee)$.
		\begin{proof}
			Combining \Cref{prop:unparametrize-span} and \Cref{prop:charact-Q-semiadd}, the left hand side is equivalent to the full subcategory $\mathcal F\subseteq\Fun(\Span_\fw(\Ar(\Aa)^\Qq),\Ee)$ spanned by the functors inverting $\Ww^{\Span}_s$. On the other hand, Proposition~\ref{prop:local-BBQ} shows that precomposing with $s$ induces an equivalence between $\Fun(\Span(\Aa,\Aa,\Qq), \Ee)$ and the same $\Ff$.
		\end{proof}
	\end{proposition}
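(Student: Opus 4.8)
The plan is to assemble the equivalence from the three preceding results, with \Cref{prop:charact-Q-semiadd} doing the main bookkeeping. I would start from \Cref{prop:unparametrize-span}, which gives a natural equivalence
\[
\Fun_\Aa(\ul\Span(\Qq),\ul\PSh(\Aa;\Ee))\iso\Fun(\Span_\fw(\Ar(\Aa)^\Qq),\Ee)
\]
identifying parametrized functors out of $\ul\Span(\Qq)$ with ordinary functors out of the total category of its cocartesian unstraightening $\smallint\ul\Span(\Qq)\simeq\Span_\fw(\Ar(\Aa)^\Qq)$. The goal is then to show that this equivalence carries the full subcategory $\Fun^{\Qq\text-\times}(\ul\Span(\Qq),\ul\PSh(\Aa;\Ee))$ onto a full subcategory of the target that is independently identified with $\Fun(\Span(\Aa,\Aa,\Qq),\Ee)$.

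For the first matching, I would apply \Cref{prop:charact-Q-semiadd}: a parametrized functor $F$ preserves $\Qq$-limits if and only if the associated functor $\widetilde F\colon\Span_\fw(\Ar(\Aa)^\Qq)\to\Ee$ inverts the class $\Ww^{\Span}_s$ of spans displayed in $(\ref{eq:Inverted_spans})$. Hence the equivalence above restricts to an equivalence of $\Fun^{\Qq\text-\times}(\ul\Span(\Qq),\ul\PSh(\Aa;\Ee))$ with the full subcategory $\Ff\subseteq\Fun(\Span_\fw(\Ar(\Aa)^\Qq),\Ee)$ spanned by the $\Ww^{\Span}_s$-inverting functors. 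For the second matching, I would invoke \Cref{prop:local-BBQ}, which says that $s\colon\Span_\fw(\Ar(\Aa)^\Qq)\to\Span(\Aa,\Aa,\Qq)$ is a localization at $\Ww^{\Span}_s$; by the universal property of localizations, precomposition along $s$ is fully faithful with essential image precisely $\Ff$. Composing the two equivalences $\Fun(\Span(\Aa,\Aa,\Qq),\Ee)\iso\Ff\iso\Fun^{\Qq\text-\times}(\ul\Span(\Qq),\ul\PSh(\Aa;\Ee))$ then yields the statement.

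There is no genuinely hard step here: the proposition is an assembly of \Cref{prop:unparametrize-span}, \Cref{prop:charact-Q-semiadd}, and \Cref{prop:local-BBQ}, and essentially all the work has already been done in proving those. The one point I would be careful about is naturality in $\Ee$: the unstraightening equivalence of \Cref{prop:unparametrize-span} is natural by construction and $s^*$ is manifestly functorial in $\Ee$, so it remains only to observe that the intermediate subcategory $\Ff$ — being cut out by a fixed class of maps $\Ww^{\Span}_s$ in a fixed source category, independent of $\Ee$ — varies naturally with $\Ee$, which makes the whole composite natural.
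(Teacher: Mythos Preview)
Your proposal is correct and follows essentially the same approach as the paper: both assemble the equivalence by combining \Cref{prop:unparametrize-span} and \Cref{prop:charact-Q-semiadd} to identify the left-hand side with the full subcategory $\Ff$ of $\Ww^{\Span}_s$-inverting functors, and then invoke the localization result \Cref{prop:local-BBQ} to identify $\Ff$ with $\Fun(\Span(\Aa,\Aa,\Qq),\Ee)$ via $s^*$.
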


	\subsection{Proof of \Cref{prop:mackey-presheaves}}
	We will now show how one can upgrade the non-parametrized equivalence of Proposition~\ref{prop:unparam-mackey-presheaf} to a parametrized equivalence, yielding a proof of \Cref{prop:mackey-presheaves} and thus completing the proof of Theorem~\ref{thm:span-BBQ}. The basic idea will be to reduce this to the unparametrized statement with $\Aa$ replaced by $\Aa_{/A}$ for all $A \in \Aa$; however, some care has to be taken to get all coherences straight.

	\begin{observation}\label{obs:parametrized-model-grothendieck}
		Let $\Cc$ be an $\Aa$-category and let $\Ee$ be presentable. Combining the categorical Yoneda lemma with \cite{CLL_Global}*{Lemma~2.2.13}, we obtain an equivalence
		\begin{equation*}
			\ul\Fun_\Aa(\Cc,\ul\PSh(\Aa;\Ee))(A)\iso\Fun(\smallint(\Cc\times\ul A),\Ee)=\Fun(\smallint\Cc\times_{\Aa^\op}(\Aa_{/A})^\op,\Ee)
		\end{equation*}
		natural in $\Cc$, $\Ee$, and in $A\in\Aa^\op$.
	\end{observation}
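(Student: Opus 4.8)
The plan is to build the claimed equivalence by composing three known equivalences, following exactly the recipe indicated in the statement. The starting point is the categorical Yoneda lemma in the form of the last equivalence of \Cref{prop:Yoneda}: applied to the $\Aa$-category $\Dd\coloneqq\ul\PSh(\Aa;\Ee)$ and the object $A$ --- and jumping to a larger universe if $\Aa$ is not small, so that the cited $T$-category results of \cite{CLL_Global} apply --- it yields a natural equivalence
\[
	\ul\Fun_\Aa(\Cc,\ul\PSh(\Aa;\Ee))(A)\simeq\Fun_\Aa\bigl(\Cc\times\ul A,\ul\PSh(\Aa;\Ee)\bigr).
\]
First I would record this, keeping track that it is natural in $\Cc$, in $\Ee$, and in $A\in\Aa^\op$.

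Next I would rewrite the right-hand side using \cite{CLL_Global}*{Lemma~2.2.13}, which identifies $\ul\PSh(\Aa;\Ee)=\ul\Ee_\Aa$ as the cotensor of $\Ee$ and produces a natural equivalence $\Fun_\Aa(\Dd',\ul\PSh(\Aa;\Ee))\simeq\Fun(\smallint\Dd',\Ee)$ for every $\Aa$-category $\Dd'$; this is precisely the identification already used in the proof of \Cref{prop:unparametrize-span}. Taking $\Dd'=\Cc\times\ul A$ gives $\Fun_\Aa(\Cc\times\ul A,\ul\PSh(\Aa;\Ee))\simeq\Fun(\smallint(\Cc\times\ul A),\Ee)$, and it only remains to identify the indexing category $\smallint(\Cc\times\ul A)$. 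Since cocartesian unstraightening over $\Aa^\op$ is an equivalence $\Fun(\Aa^\op,\Cat)\xrightarrow{\,\sim\,}\mathrm{coCart}(\Aa^\op)$ and the product of two cocartesian fibrations over $\Aa^\op$ is their fibre product over $\Aa^\op$, unstraightening carries the pointwise product $\Cc\times\ul A$ to $\smallint\Cc\times_{\Aa^\op}\smallint\ul A$; and since $\ul A=\hom_\Aa(-,A)$ is the presheaf represented by $A$, its (co)cartesian unstraightening over $\Aa^\op$ is the projection $(\Aa_{/A})^\op\to\Aa^\op$. Hence $\smallint(\Cc\times\ul A)\simeq\smallint\Cc\times_{\Aa^\op}(\Aa_{/A})^\op$, which is the equality appearing on the right of the displayed equivalence.

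The only part that needs genuine care is naturality, and in particular naturality in $A\in\Aa^\op$: on the left this functoriality is given by restriction of parametrized functor categories, whereas on the right it is induced by the functors $\Aa_{/f}\colon\Aa_{/A'}\to\Aa_{/A}$ associated to $f\colon A'\to A$ via precomposition of diagrams. The plan is to obtain this for free by exhibiting the whole chain as an equivalence of functors $\Aa^\op\to\Cat$: \Cref{prop:Yoneda} and \cite{CLL_Global}*{Lemma~2.2.13} are each natural in all their arguments, the assignment $A\mapsto\ul A$ is functorial (being the Yoneda embedding $\Aa\to\Cat(\Aa)$), and unstraightening is a functor, so it converts $A\mapsto\Cc\times\ul A$ into $A\mapsto\smallint\Cc\times_{\Aa^\op}(\Aa_{/A})^\op$ with exactly the transition maps $\Aa_{/f}$, while the parameters $\Cc$ and $\Ee$ simply come along for the ride. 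I expect this naturality bookkeeping --- rather than any single equivalence --- to be the main, though entirely routine, obstacle.
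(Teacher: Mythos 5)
Your argument is exactly what the paper intends: the observation is stated as a combination of the categorical Yoneda lemma (\Cref{prop:Yoneda}) with \cite{CLL_Global}*{Lemma~2.2.13}, and you spell out precisely these two steps plus the routine identification $\smallint(\Cc\times\ul A)\simeq\smallint\Cc\times_{\Aa^\op}(\Aa_{/A})^\op$ coming from unstraightening the representable $\ul A$ to $(\Aa_{/A})^\op\to\Aa^\op$. Your handling of the naturality in $A$ via functoriality of all the ingredients is also how this is meant to go, so the proposal is correct and follows the paper's approach.
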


	Below we will apply this to $\Cc=\ul\Span(\Qq)$, in which case we have the same explicit description of the cocartesian unstraightening as before. Let us also describe the resulting pullback explicitly:

	\begin{lemma}\label{lemma:ar-pb}
		Let $\Aa$ be any category and let $A\in\Aa$. Then $(\Ar(\pi_A),t)\colon\Ar(\Aa_{/A})\to\Ar(\Aa)\times_\Aa \Aa_{/A}$ is an equivalence of cartesian fibrations over $\Aa$. Moreover, this can be made natural in $A\in\Aa$ (with respect to the functoriality via postcomposition).
		\begin{proof}
			It is clear that this is a map of cartesian fibrations, so it is enough to show that it underlies an equivalence in $\Fun(\Aa,\Cat_{/\Aa})\simeq\Fun(\Aa,\Cat)_{/\const\,\Aa}$.

			We begin by making some cocartesian unstraightenings explicit. 	The cocartesian unstraightening of $\Aa_{/\bullet}\colon\Aa\to\Cat$ is the fibration $t\colon\Ar(\Aa)=\Fun([1],\Aa)\to\Aa$. As unstraightening commutes with $\Cat$-tensors, it also commutes with $\Cat$-cotensors, so the unstraightening $\Xx\to\Aa$ of $\Ar(\Aa_{/\bullet})$ is given by the cotensor $t^{[1]}$ in $\Cat_{/\Aa}^\textup{cocart}$, i.e.~by the pullback
			\begin{equation*}
				\begin{tikzcd}
					\Xx\arrow[r]\arrow[d]\arrow[dr,phantom,"\lrcorner"{very near start}] & \Fun([1]\times[1],\Aa)\arrow[d, "{(\blank,1)}^*"]\\
					\Aa\arrow[r, "\const"'] & \Fun([1],\Aa)
				\end{tikzcd}
			\end{equation*}
			where $(\blank,1)\colon[1]\to[1]\times[1]$ denotes the map classifying the edge $(0,1)\to(1,1)$. The composite
			\begin{equation*}
				\Xx\to\Fun([1]\times[1],\Aa)\xrightarrow{((-,0)^*,(1,1)^*)}\Fun([1],\Aa)\times\Aa
			\end{equation*}
			then straightens to a natural transformation given pointwise by $\Ar(\pi_A)$, while the target map $\Ar(\Aa_{/\bullet})\to\Aa_{/\bullet}$ unstraightens to the map $\Xx\to\Fun([1],\Aa)$ induced by restricting to the edge $(1,0)\to(1,1)$. Altogether, we get a commutative square of maps of cocartesian fibrations
			\begin{equation*}
				\begin{tikzcd}
					\Xx\arrow[dr, "{(1,1)}^*"{description}]\arrow[dd, "{(1,\blank)}^*"']\arrow[rr,"{((-,0)^*,(1,1)^*)}"] &&[-1em] \Fun([1],\Aa)\times\Aa\arrow[dd, "t\times\Aa"]\arrow[dl, "\pr_2"{description}]\\
					&\Aa
					\\
					\Fun([1],\Aa)\arrow[ur, "t"{description}]\arrow[rr, "{(s,t)}"'] && \arrow[ul, "\pr_2"{description}]\Aa\times\Aa
				\end{tikzcd}
			\end{equation*}
			such that the induced map on pullbacks pointwise straightens to the map $(\Ar(\pi_A),t)$. Moreover, the diagonal composite $\Xx\to\Aa\times\Aa$ straightens to the structure map $\Ar(\Aa_{/\bullet})\to\const\,\Aa$, so it only remains to show that this is a pullback square in $\Cat$.

			By direct inspection, the pullback is given by $\Fun(\Lambda^2_1,\Aa)\simeq\Fun([2],\Aa)$ with the comparison map $\Xx\to\Fun([2],\Aa)$ induced by restriction along the map $f\colon[2]\to[1]\times[1]$ classifying $(0,0)\to(1,0)\to(1,1)$. The claim therefore amounts to saying that $f$ induces an equivalence $[2]\to \big([1]\times[1]\big)\big/\big([1]\times\{1\}\big)$. However, one immediately checks that an inverse equivalence is induced by the map $[1]\times[1]\to[2],(a,b)\mapsto\min\{2,a+42b\}$.
		\end{proof}
	\end{lemma}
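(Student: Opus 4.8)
\textbf{Plan of proof for \Cref{lemma:ar-pb}.}

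The strategy is exactly what the statement hints at: rather than chasing the equivalence $(\Ar(\pi_A),t)\colon\Ar(\Aa_{/A})\to\Ar(\Aa)\times_\Aa\Aa_{/A}$ directly, I would exhibit both sides as unstraightenings of functors $\Aa\to\Cat$ and check that a natural map between these functors is a levelwise equivalence. First I would observe that a map of cartesian fibrations over $\Aa$ which is fiberwise an equivalence is automatically an equivalence of cartesian fibrations, so it suffices to work one object $A$ at a time once naturality has been taken care of. To organize the naturality, I would instead work in $\Fun(\Aa,\Cat)_{/\const\Aa}$, i.e.\ I would cocartesian-unstraighten everything: the target $\Aa_{/\bullet}\colon\Aa\to\Cat$ unstraightens to $t\colon\Ar(\Aa)=\Fun([1],\Aa)\to\Aa$, and the assignment $A\mapsto\Ar(\Aa_{/A})$ unstraightens to the cotensor $t^{[1]}$ computed in $\Cat^{\textup{cocart}}_{/\Aa}$, because unstraightening preserves $\Cat$-(co)tensors. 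This cotensor is the pullback $\Xx$ of $(\blank,1)^*\colon\Fun([1]\times[1],\Aa)\to\Fun([1],\Aa)$ along $\const\colon\Aa\to\Fun([1],\Aa)$, as written in the excerpt.

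Next I would identify, under this model, the two structure maps I need: the comparison map $(\Ar(\pi_A),t)$ and the target map $\Ar(\Aa_{/\bullet})\to\Aa_{/\bullet}$. The target map unstraightens to the map $\Xx\to\Fun([1],\Aa)$ induced by restriction along the inclusion of the edge $(1,0)\to(1,1)$ in $[1]\times[1]$, while the map to $\Fun([1],\Aa)\times\Aa$ given by $((\blank,0)^*,(1,1)^*)$ straightens pointwise to $(\Ar(\pi_A),t)$. Assembling these, I get a commuting square of maps of cocartesian fibrations over $\Aa$ whose induced map on pullbacks straightens pointwise to $(\Ar(\pi_A),t)$, and whose diagonal composite $\Xx\to\Aa\times\Aa$ straightens to the structure map $\Ar(\Aa_{/\bullet})\to\const\Aa$. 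So the entire statement — equivalence and naturality at once — reduces to checking that this square of categories
\[
\begin{tikzcd}
\Xx\arrow[r]\arrow[d] & \Fun([1],\Aa)\times\Aa\arrow[d, "t\times\Aa"]\\
\Fun([1],\Aa)\arrow[r, "{(s,t)}"'] & \Aa\times\Aa
\end{tikzcd}
\]
is a pullback in $\Cat$.

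Finally I would compute that pullback by hand. Since $t\colon\Fun([1],\Aa)\to\Aa$ has fiber over the second factor picked out by restriction to the vertex $1$, the fiber product identifies with $\Fun(\Lambda^2_1,\Aa)$ — functors on the cospan $\{0\}\to\{1\}\leftarrow\{2\}$ — which is equivalent to $\Fun([2],\Aa)$ since the inclusion $\Lambda^2_1\hookrightarrow[2]$ is inner anodyne (equivalently, a localization of simplicial sets, or one can just use the Segal condition). Tracing through, the comparison map $\Xx\to\Fun([2],\Aa)$ is restriction along the map $f\colon[2]\to[1]\times[1]$ classifying the zig-zag $(0,0)\to(1,0)\to(1,1)$. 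So the remaining point is purely combinatorial: $f$ induces an equivalence $[2]\xrightarrow{\sim}([1]\times[1])/([1]\times\{1\})$ of posets (equivalently of $1$-categories). I would verify this by writing down an explicit inverse, e.g.\ $(a,b)\mapsto\min\{2,a+2b\}$ (the ``$42$'' in the excerpt is a typo for $2$), and checking it is order-preserving, collapses $[1]\times\{1\}$ to the object $2$, and is mutually inverse to $f$ after the collapse. The main obstacle is not any single hard idea but keeping the unstraightening bookkeeping honest — in particular being careful that the cotensor of the cocartesian fibration $t$ is formed in $\Cat^{\textup{cocart}}_{/\Aa}$ (not plain $\Cat_{/\Aa}$) and that the various restriction maps $(\blank,0)^*$, $(\blank,1)^*$, $(1,\blank)^*$ are matched up with the correct legs of the square; once the picture is set up correctly, the final verification is the elementary poset computation above.
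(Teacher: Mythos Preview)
Your proposal is correct and follows essentially the same approach as the paper's proof, step for step. One minor remark: the ``$42$'' is not a typo but a joke---since $b\in\{0,1\}$ and we take $\min$ with $2$, any integer $\ge 2$ works in place of $2$, so $(a,b)\mapsto\min\{2,a+42b\}$ is a perfectly valid inverse.
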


	\begin{observation}
		The equivalence from the previous lemma restricts to natural equivalences of cartesian fibrations
		\begin{equation*}
			\Ar(\Aa_{/A})^\Qq\coloneqq\Ar(\Aa_{/A})^{\Aa_{/A}[\Qq]}\iso\Ar(\Aa)^\Qq\times_\Aa\Aa_{/A}
		\end{equation*}
		for all $A\in\Aa$. By direct inspection, this identifies the weak equivalences $\Ww_s\subseteq\Ar(\Aa_{/A})^{\Qq}$ from Lemma~\ref{lemma:source-const-loc} with $\Ww_s\times_\Aa\Aa_{/A}$.

		Similarly, one checks that it restricts to an equivalence
		\begin{equation*}
			\Ar(\Aa_{/A})^{\Qq,\,\fw}\iso\Ar(\Aa)^{\Qq,\,\fw}\times_{\core\Aa}\core(\Aa_{/A}).
		\end{equation*}

		Using that $\Span$ preserves pullbacks of adequate triples, we therefore get a natural commutative diagram
		\begin{equation*}
			\begin{tikzcd}[cramped]
				\big({\Ar(\Aa_{/A})}^{\Qq}\big)^\op\arrow[d,hook]\arrow[r,"\sim"]&\big({\Ar(\Aa)}^\Qq\big)^\op\times_{\Aa^\op}(\Aa_{/A})^\op\arrow[d,hook]\\
				\Span_\fw\big({\Ar(\Aa_{/A})}^{\Qq}\big)\arrow[r, "\sim"'] & \Span_\fw\big({\Ar(\Aa)}^\Qq)^\op\times_{\Aa^\op}(\Aa_{/A})^\op
			\end{tikzcd}
		\end{equation*}
		where the horizontal maps are equivalences of \emph{co}cartesian fibrations, and the lower one identifies $\Ww^{\Span}_s$ with $\Ww^{\Span}_s\times_{\Aa^\op}(\Aa_{/A})^\op$.
	\end{observation}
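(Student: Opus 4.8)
The plan is to obtain every claimed identification by restricting the natural equivalence $\Ar(\Aa_{/A})\iso\Ar(\Aa)\times_\Aa\Aa_{/A}$ of \Cref{lemma:ar-pb}, and then to repackage the resulting comparison of marked subcategories as a pullback square of adequate triples on which one evaluates the limit-preservation of $\Span$.

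First I would spell out the equivalence of \Cref{lemma:ar-pb}: it is $(\Ar(\pi_A),t)$, so on objects it sends an arrow $g\colon(X\to A)\to(Y\to A)$ of $\Aa_{/A}$ to the pair consisting of its underlying arrow $g\colon X\to Y$ in $\Aa$ and its target $(Y\to A)\in\Aa_{/A}$; in particular the first coordinate records the underlying $\Aa$-morphism and the second coordinate is the target functor $t$. Since $\Ar(\Aa_{/A})^\Qq=\Ar(\Aa_{/A})^{\Aa_{/A}[\Qq]}$ and $\Ar(\Aa)^\Qq$ are by definition the full subcategories on the objects whose underlying $\Aa$-morphism lies in $\Qq$, the equivalence restricts to a full-subcategory equivalence $\Ar(\Aa_{/A})^\Qq\iso\Ar(\Aa)^\Qq\times_\Aa\Aa_{/A}$, natural in $A$ and compatible with the cartesian fibrations over $\Aa$ (given on both sides by the target object). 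Under it the target functor $t\colon\Ar(\Aa_{/A})^\Qq\to\Aa_{/A}$ corresponds to the projection to $\Aa_{/A}$. Two consequences follow by direct inspection: a morphism of the fiber product whose $\Aa_{/A}$-component is invertible automatically has invertible target edge in its $\Ar(\Aa)^\Qq$-component, so the maps inverted by the projection are exactly $\Ar(\Aa)^{\Qq,\fw}\times_{\core\Aa}\core(\Aa_{/A})$, giving the claimed equivalence $\Ar(\Aa_{/A})^{\Qq,\fw}\iso\Ar(\Aa)^{\Qq,\fw}\times_{\core\Aa}\core(\Aa_{/A})$; and the class $\Ww_s\subseteq\Ar(\Aa_{/A})^\Qq$ of squares with identity source from \Cref{lemma:source-const-loc} is sent by $\Ar(\pi_A)$ to the squares of the corresponding shape in $\Ar(\Aa)^\Qq$, hence corresponds to the morphisms of the fiber product whose $\Ar(\Aa)^\Qq$-component lies in $\Ww_s$, i.e.\ to $\Ww_s\times_\Aa\Aa_{/A}$. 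All three identifications are natural in $A$.

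Next I would assemble these into adequate triples. By the previous paragraph, the triple $\big(\Ar(\Aa_{/A})^\Qq,\Ar(\Aa_{/A})^\Qq,\Ar(\Aa_{/A})^{\Qq,\fw}\big)$ is the pullback in $\AdTrip$ of the target map $t\colon\big(\Ar(\Aa)^\Qq,\Ar(\Aa)^\Qq,\Ar(\Aa)^{\Qq,\fw}\big)\to(\Aa,\Aa,\core\Aa)$ along $\pi_A\colon(\Aa_{/A},\Aa_{/A},\core(\Aa_{/A}))\to(\Aa,\Aa,\core\Aa)$, since limits of adequate triples are computed levelwise on the underlying category and on the two marked subcategories. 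Because $\Span\colon\AdTrip\to\Cat$ preserves limits, being a right adjoint (\cite{HHLNa}*{Theorem~2.18}), applying it yields a natural equivalence
\[
	\Span_\fw(\Ar(\Aa_{/A})^\Qq)\iso\Span_\fw(\Ar(\Aa)^\Qq)\times_{\Aa^\op}(\Aa_{/A})^\op ,
\]
using $\Span(\Aa,\Aa,\core\Aa)\simeq\Aa^\op$ and $\Span(\Aa_{/A},\Aa_{/A},\core(\Aa_{/A}))\simeq(\Aa_{/A})^\op$. Restricting to the wide subcategories of left-pointing spans on both sides recovers the opposite of the first equivalence above, and the square in question commutes by functoriality of $\Span$. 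Both horizontal maps are equivalences over $(\Aa_{/A})^\op$; since their sources and targets are cocartesian fibrations over $(\Aa_{/A})^\op$ — on the left by the cocartesian-fibration description of $\Span_\fw(\Ar(\blank)^\Qq)$ recalled above, applied to $\Aa_{/A}$ with $\Aa_{/A}[\Qq]$, and on the right as the base change of $\Span_\fw(\Ar(\Aa)^\Qq)\to\Aa^\op$ along $(\Aa_{/A})^\op\to\Aa^\op$ — any equivalence over the base is automatically an equivalence of cocartesian fibrations. Finally, transporting the identification $\Ww_s\leftrightarrow\Ww_s\times_\Aa\Aa_{/A}$ along the canonical inclusion of left-pointing spans identifies $\Ww^{\Span}_s$ with $\Ww^{\Span}_s\times_{\Aa^\op}(\Aa_{/A})^\op$.

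The argument is essentially bookkeeping; the one point that needs care is the identification in the middle step — checking from \Cref{lemma:ar-pb} that the target functor corresponds to the projection (so that the egressive maps and the class $\Ww_s$ land where claimed) — together with keeping track of naturality in $A$ through all the restrictions and the $\AdTrip$-pullback, which is forced since every construction involved is determined by a universal property.
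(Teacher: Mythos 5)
Your proposal is correct, and it spells out exactly the verifications the paper compresses into ``by direct inspection'' and ``one checks'': the equivalence of Lemma~\ref{lemma:ar-pb} restricts because all three conditions (being in $\Qq$, being in $\Ww_s$, being fiberwise) are detected on the underlying $\Aa$-arrows, after which the identification of adequate triples and the limit-preservation of $\Span$ do the rest. The one step deserving its one-line justification -- that an equivalence of categories over a base, with cocartesian-fibration sources and targets, is automatically an equivalence of cocartesian fibrations -- is indeed standard and correctly invoked.
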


	Combining this with Observation~\ref{obs:parametrized-model-grothendieck} we get a natural equivalence
	\begin{equation*}
		{\ul\Fun_\Aa}\big({\ul\Span(\Qq)},\ul\PSh(\Aa;\Ee)\big)(A)\iso{\Fun}\big({\Span_\fw}\big({\Ar(\Aa_{/A})}^{\Qq}\big),\Ee\big).
	\end{equation*}

	\begin{corollary}\label{cor:par-Q-semiadd-inv}
		Consider any $F\in\ul\Fun(\ul\Span(\Qq),\ul\PSh(\Aa;\Ee))(A)$, with associated functor
		$\widetilde F\colon\Span_\fw(\Ar(\Aa_{/A})^\Qq)\to\Ee$. Then $F$ belongs to the full subcategory $\ul\Fun^{\Qq\text-\times}(\ul\Span(Q),\ul\PSh(\Aa;\Ee))(A)$ if and only if $\widetilde F$ inverts all maps in $\Ww_s^{\Span}$.
		\begin{proof}
			As in the proof of Proposition~\ref{prop:charact-Q-semiadd}, both conditions only rely on the restriction to backwards arrows. We now have a commutative square
			\begin{equation*}
				\begin{tikzcd}
					\Aa_{/A}\arrow[r,equals]\arrow[d, "\const"'] & \Aa\times_\Aa\Aa_{/A}\arrow[d, "\const\times_\Aa\Aa_{/A}"]\\
					\Ar(\Aa_{/A})^\Qq\arrow[r,"\sim"'] & \Ar(\Aa)^\Qq\times_\Aa\Aa_{/A}
				\end{tikzcd}
			\end{equation*}
			and hence altogether a commutative square
			\begin{equation*}
				\begin{tikzcd}
					\ul\Fun_\Aa\big(\ulbbU{\Qq}^\op,\ul\PSh(\Aa;\Ee)\big)(A)\arrow[d,"\pt^*"']\arrow[r,"\sim"]&\Fun\big((\Ar(\Aa_{/A})^\Qq)^\op,\Ee\big)\arrow[d, "(\const^\op)^*"]\\
					\ul\Fun_\Aa\big(\ul 1,\ul\PSh(\Aa;\Ee)\big)(A)\arrow[r, "\sim"'] & \Fun((\Aa_{/A})^\op,\Ee)\rlap.
				\end{tikzcd}
			\end{equation*}
			By the same formal Beck--Chevalley yoga as before, an object of the top left corner is $\Qq$-continuous if and only if the top horizontal equivalence maps it into the essential image of the right adjoint of the right-hand vertical map. Replacing $\Aa$ by $\Aa_{/A}$, this essential image was identified in the proof of Proposition~\ref{prop:charact-Q-semiadd} as precisely those functors satisfying the above invertibility condition.
		\end{proof}
	\end{corollary}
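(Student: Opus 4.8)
The plan is to recognize the statement as Proposition~\ref{prop:charact-Q-semiadd} applied to the slice category $\Aa_{/A}$, equipped with the subcategory $\Aa_{/A}[\Qq]$, in place of $(\Aa,\Qq)$; note that $\Aa_{/A}[\Qq]$ is again left-cancelable and closed under base change, so that proposition does apply. All the comparisons needed to carry out this reduction have essentially been assembled in \Cref{lemma:ar-pb}, \Cref{obs:parametrized-model-grothendieck}, and the observations recorded immediately before the statement.

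Concretely, I would proceed in two steps. First, \Cref{obs:parametrized-model-grothendieck} identifies $\ul\Fun_\Aa(\ul\Span(\Qq),\ul\PSh(\Aa;\Ee))(A)$ with $\Fun(\smallint\ul\Span(\Qq)\times_{\Aa^\op}(\Aa_{/A})^\op,\Ee)$ naturally, and \Cref{lemma:ar-pb} (using that $\Span$ preserves the relevant pullbacks of adequate triples) identifies this fiber product with $\Span_\fw(\Ar(\Aa_{/A})^\Qq)$ — the total category of the cocartesian unstraightening of the $\Aa_{/A}$-category $\pi_A^*\ul\Span(\Qq)\simeq\ul\Span(\Aa_{/A}[\Qq])$ — and matches the weak equivalences $\Ww_s$ and the span class $\Ww^{\Span}_s$ of $(\ref{eq:Inverted_spans})$ with the analogous data for $\Aa_{/A}$. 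In particular, under this equivalence the functor $\widetilde F$ attached to $F$ in the paragraph preceding the statement coincides with the functor attached to $F$ by \Cref{prop:unparametrize-span} for $\Aa_{/A}$. Second, one checks that the full subcategory $\ul\Fun^{\Qq\text-\times}(\ul\Span(\Qq),\ul\PSh(\Aa;\Ee))(A)$ corresponds precisely to the $\Aa_{/A}[\Qq]$-continuous $\Aa_{/A}$-functors, by the definition of the parametrized functor category in degree $A$. Granting both, the corollary is literally \Cref{prop:charact-Q-semiadd} with $\Aa$ replaced by $\Aa_{/A}$.

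For the write-up it is cleanest to simply rerun the Beck--Chevalley argument from the proof of \Cref{prop:charact-Q-semiadd} in this slice. Both conditions only see the restriction to backwards arrows — the $\Qq$-continuity of $F$ is equivalent to right Kan extension from $\pt$ by \Cref{cor:test_colimit_pres_span} and \Cref{prop:uni_prop_U_Q}${}^\op$, and every span in $\Ww^{\Span}_s$ has identity forward leg, so inverting it is a condition on the restriction of $\widetilde F$ to $(\Ar(\Aa_{/A})^\Qq)^\op$, amounting to inverting the maps $(\Ww_s)^\op$ of $(\ref{diag:counit-s-const})$ for $\Aa_{/A}$. The compatibility of \Cref{lemma:ar-pb} with the inclusion of constant arrows then yields a commutative square of functor categories with the equivalences above on top and bottom, $\pt^*$ on the left, and $(\const^\op)^*$ on the right; since $\pt$ admits a parametrized right Kan extension and $\const$ is right adjoint to the localization $s\colon\Ar(\Aa_{/A})^\Qq\to\Aa_{/A}$ at $\Ww_s$ (\Cref{lemma:source-const-loc}), both vertical maps have right adjoints, and the standard mate argument shows the top equivalence identifies the essential image of the first (the $\Qq$-continuous functors) with that of $(s^\op)^*$ (the functors inverting $(\Ww_s)^\op$, equivalently $\Ww^{\Span}_s$). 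The only real work is the coherence bookkeeping in \Cref{lemma:ar-pb}: making sure its equivalence is natural enough to be passed simultaneously through $\Span$, through functor categories, and through the relevant wide subcategories and weak-equivalence classes so that the square genuinely commutes — there is no new conceptual ingredient beyond the $A=1$ case already treated in \Cref{prop:charact-Q-semiadd}.
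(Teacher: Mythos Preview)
Your proposal is correct and follows essentially the same approach as the paper: reduce both conditions to the restriction along backwards arrows, set up the commutative square comparing $\pt^*$ on the parametrized side with $(\const^\op)^*$ on the non-parametrized side (using the compatibility of \Cref{lemma:ar-pb} with the inclusion of constant arrows), and then run the Beck--Chevalley/mate argument exactly as in \Cref{prop:charact-Q-semiadd} with $\Aa$ replaced by $\Aa_{/A}$. Your write-up is in fact slightly more explicit than the paper's about why the reduction to the slice is legitimate and where each earlier ingredient enters.
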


	\begin{proof}[Proof of Proposition~\ref{prop:mackey-presheaves}]
		By the above, we have a map of $\Aa$-categories
		\[
			\ul\Fun^{\Qq\text-\times}_\Aa\big(\ul\PSh(\Aa;\Ee)\big)\to{\Fun}\big(\Span_\fw(\Ar(\Aa_{/\bullet})^\Qq),\Ee\big)
		\]
		that induces an equivalence onto the full subcategory $\Ff$ spanned in degree $A\in\Aa$ by those functors that invert $\Ww_s^{\Span}$. On the other hand, we have a natural map $s\colon \Span_\fw(\Ar(\Aa_{/\bullet})^\Qq)\to\Span(\Aa_{/\bullet},\Aa_{/\bullet},\Aa_{/\bullet}[\Qq])$, and using Proposition~\ref{prop:local-BBQ} with $\Aa$ replaced by $\Aa_{/A}$ this likewise induces an equivalence onto $\Ff$, yielding an equivalence $\ul\PSh^\Qq(\Aa;\Ee)\simeq\ul\Fun^{\Qq\text-\times}_\Aa(\ul\Span(\Qq),\ul\PSh(\Aa;\Ee))$. It remains to show that this equivalence is compatible with the forgetful functors.

		We will show more generally that our equivalence is compatible with passing to a smaller left-cancelable $\Qq'\subseteq\Qq$ closed under base change. This is clear for restriction along $s$. For the map $\ul\Fun_\Aa(\ul\Span(\Qq),\ul\PSh(\Aa;\Ee))\to\Fun(\Span_\fw(\Ar(\Aa_{/\bullet})^\Qq),\Ee)$ note that this holds for the intermediate composite $\ul\Fun_\Aa(\ul\Span(\Qq),\ul\PSh(\Aa;\Ee))\to\ul\Fun(\Span_\fw(\Ar(\Aa)^\Qq)\times_{\Aa^\op}(\Aa_{/\bullet})^\op,\Ee)$ simply by naturality. Finally, the equivalence
		$
			\Span_\fw(\Ar(\Aa)^\Qq)\times_{\Aa^\op}(\Aa_{/\bullet})^\op\simeq\Span_\fw(\Ar(\Aa_{/\bullet})^\Qq)
		$
		was construced as restriction of a fixed equivalence $\Span(\Ar(\Aa))\times_{\Aa^\op}(\Aa_{/\bullet})^\op\simeq\Span(\Ar(\Aa_{/\bullet})),$ so it is again compatible with passing to a subclass.
	\end{proof}

	\subsection{The free sheaf with transfers}
	\label{subsec:Free_Mackey_Functor}
	For a finite group $G$, an easy application of the Yoneda lemma shows that the free Mackey functor $\Span(\Fin_G)\to\Ab$ is corepresented by the $1$-point set. The analogue holds in our general setting of sheaves with transfers, except that proving that the corepresented functor actually is a sheaf with transfers is not entirely trivial. This will rely on the following computation:

	\begin{lemma}\label{lemma:corep-Mackey}
		Let $\Aa$ be a category with a terminal object $1$, and let $\Qq\subseteq\Aa$ be closed under base change and diagonals. Then the restriction of \[\hom(1,\blank)\colon\Span(\Aa,\Aa,\Qq)\to\Spc\] to $\Aa^\op$ is equivalent to $\iota\ulbbU{\Qq}=\iota\Qq_{/\blank}\colon\Aa^\op\to\Spc$, and this equivalence sends $\id_1\in\hom(1,1)$ to the object $\id_1\in\iota(\Qq_{/1})$.
		\begin{proof}
			We will prove this by computing the cocartesian unstraightening of the restriction of $\hom(1,\blank)$ to $\Aa\catop$, and show it agrees with the unstraightening of $\iota \ulbbU{\Qq}$.
			We first give an explicit description of the forgetful functor $\pi\colon\Span(\Aa,\Aa,\Qq)_{1/}\to\Span(\Aa,\Aa,\Qq)$, which is the unstraightening of $\hom(1,\blank)\colon \Span(\Aa,\Aa,\Qq)\to\Spc$. To this end, consider the adequate triple $Q_{[1]}$ from \cite{HHLNa}*{Lemma~2.5 and Definition~2.16}: the underlying category is the full subcategory of $\Fun(\Lambda^2_0,\Aa)$ spanned by the functors sending $0\to2$ to a map in $\Qq$. The backward maps consist of all diagrams
			\begin{equation*}
				\begin{tikzcd}
					X_1\arrow[d] &\arrow[l] X_0\arrow[r]\arrow[d]\arrow[dr,phantom,"\lrcorner"{very near start}] & X_2\arrow[d]\\
					Y_1 &\arrow[l] Y_0\arrow[r] & Y_2
				\end{tikzcd}
			\end{equation*}
			such that the right-hand square is a pullback. The forward maps are given by those natural transformations that are pointwise in $\Qq$ and for which the \emph{left-hand} square is a pullback.

			By Corollary~2.22 of \emph{op.~cit.}, we may identify the functor $(s,t)\colon\Ar(\Span(\Aa,\Aa,\Qq))\to\Span(\Aa,\Aa,\Qq)^{\times2}$ with the map $(\ev_1,\ev_2)\colon\Span(Q_{[1]})\to\Span(\Aa,\Aa,\Qq)^{\times2}$. Pulling back to $\{1\}$ in the first factor and using that $\Span$ preserves limits, we obtain the following description of $\pi\colon\Span(\Aa,\Aa,\Qq)_{1/}\to\Span(\Aa,\Aa,\Qq)$: the source is the category of spans in $\Ar(\Aa)^\Qq$ of the form
			\begin{equation*}
				\begin{tikzcd}
					Y_0\arrow[d] &\arrow[dl,phantom,"\llcorner"{very near start}]\arrow[l]X_0\arrow[d]\arrow[r,equals] & Z_0\arrow[d]\\
					Y_2 & \arrow[l] X_2\arrow[r, "q"'] & Z_2
				\end{tikzcd}
			\end{equation*}
			where the left-hand square is a pullback and $q$ belongs to $\Qq$ (note that compared to the previous diagram this has been rotated by $\frac32\pi$ radians); the forgetful map is then given by the target map.

			We thus obtain the unstraightening of $\hom(1,\blank)\vert_{\Aa\catop}$ by restricting this forgetful functor to $\Aa\catop\simeq\Span(\Aa,\Aa,\iota\Qq)$ in the target. The resulting functor is the target map $t\catop\colon (\Ar(\Aa)^{\Qq}_{\mathrm{cart}})\catop \to \Aa\catop$, where $\Ar(\Aa)^{\Qq}_{\mathrm{cart}}$ is the wide subcategory of $\Ar(\Aa)^{\Qq}$ spanned by the cartesian squares. As these are precisely the cartesian morphisms for the cartesian fibration $t\colon \Ar(\Aa)^{\Qq} \to \Aa$ classified by $\ulbbU{\Qq}$, we conclude that this resulting functor is indeed the cocartesian unstraightening of $\iota \ulbbU{\Qq}$.

			The claim about the image of $\id_1$ follows immediately by unravelling the above equivalences.
		\end{proof}
	\end{lemma}

	\begin{remark}
		As part of the above proof, we have seen that the cocartesian unstraightening of $\hom(1,\blank)\colon\Span(\Aa,\Aa,\Qq)\to\Spc$ is given by
		\begin{equation}\label{eq:hom1-full}
			\Span(t)\colon\Span(\Ar(\Aa)^\Qq,\Ar(\Aa)^\Qq_{\text{cart}},\Ar(\Aa)^\Qq_{\text{cocart}})\to\Span(\Aa,\Aa,\Qq),
		\end{equation}
		where $\Ar(\Aa)^\Qq_{\text{cocart}}$ denotes the wide subcategory of those maps inverted by the source map (i.e.~the cocartesian edges of $t\colon\Ar(\Qq)\to\Qq$).

		On the other hand, since $\ulbbU{\Qq}\colon\Aa\to\Cat$ is $\Qq$-cocomplete, we can apply Barwick's \emph{unfurling construction} to it: this is the cocartesian fibration
		\[
			\Span(t)\colon\Span(\Ar(\Aa)^\Qq,\Ar(\Aa)^\Qq_{\text{cart}},\Ar(\Qq))\to\Span(\Aa,\Aa,\Qq),
		\]
		whose restriction to $\Aa^\op$ is the cocartesian unstraightening of $\ulbbU{\Qq}$, while its restriction to $\Qq$ encodes the functoriality via the left adjoints to pullback (i.e., postcomposition), see \cite{HHLNa}*{Example~3.4}. By Theorem~3.1 of \emph{op.\ cit.}, $(\ref{eq:hom1-full})$ is precisely the subcategory of cocartesian edges of this fibration; in other words: $\hom(1,\blank)$ as a functor on all of $\Span(\Aa,\Aa,\Qq)$ is obtained from the unfurling of $\ulbbU{\Qq}=\Qq_{/\blank}$ by passing to groupoid cores pointwise. In particular, its covariant functoriality in $\Qq$ is given by postcomposition.
	\end{remark}

	\begin{corollary}
		Let $\Bb$ be a topos and let $\Qq\subseteq\Bb$ be locally inductible. Then $\hom(1,-)\colon\Span(\Bb,\Bb,\Qq)\to\Spc$ is a sheaf with transfers.
		\begin{proof}
			This is immediate from the lemma as $\ulbbU{\Qq}$ is a $\Bb$-category.
		\end{proof}
	\end{corollary}

	\begin{corollary}\label{cor:free-is-corep}
		The free sheaf with transfers $\mathbb P(1)\colon\Span(\Bb,\Bb,\Qq)\to\Spc$ is corepresented by $1$.
		\begin{proof}
			By the non-parametrized Yoneda lemma, $\hom_{\Span}(1,\blank)$ corepresents evaluation at $1$ on the category of all functors $\Span(\Bb,\Bb,\Qq)\to\Spc$. As the same holds true on $\Mack^{\Qq}(\Bb;\Spc)$ for $\mathbb P(1)$ by adjointness, and since $\hom_{\Span}(1,\blank)\in\Mack^{\Qq}(\Bb;\Spc)$ by the previous corollary, the claim follows.
		\end{proof}
	\end{corollary}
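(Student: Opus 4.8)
The plan is to identify $\mathbb P(1)$ with the corepresentable functor $\hom_{\Span(\Bb,\Bb,\Qq)}(1,\blank)$ by showing that both corepresent evaluation at $1$ on the category $\Shv^\Qq(\Bb;\Spc)$. First I would unwind what $\mathbb P(1)$ is: via \Cref{thm:span-BBQ} and \Cref{cor:cmon-free-functor}, $\mathbb P$ is the left adjoint of the forgetful functor $\mathbb U\colon\Shv^\Qq(\Bb;\Spc)\to\Spc$, and $\mathbb U$ sends a sheaf with transfers $M\colon\Span(\Bb,\Bb,\Qq)\to\Spc$ to its value $M(1)$ at the terminal object $1$. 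Consequently, for every $M\in\Shv^\Qq(\Bb;\Spc)$ there is a natural equivalence $\Map(\mathbb P(1),M)\simeq M(1)$; that is, $\mathbb P(1)$ corepresents the functor $\ev_1$ on $\Shv^\Qq(\Bb;\Spc)$.

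Next I would invoke the ordinary, non-parametrized Yoneda lemma for the object $1\in\Span(\Bb,\Bb,\Qq)$: it says that $\hom_{\Span(\Bb,\Bb,\Qq)}(1,\blank)$ corepresents $\ev_1$ on the \emph{whole} functor category $\Fun(\Span(\Bb,\Bb,\Qq),\Spc)$. Here is where the real input enters: by \Cref{lemma:corep-Mackey} this corepresentable functor is itself a sheaf with transfers, hence already lives in the full subcategory $\Shv^\Qq(\Bb;\Spc)\subseteq\Fun(\Span(\Bb,\Bb,\Qq),\Spc)$. Restricting the Yoneda mapping-space equivalence along this fully faithful inclusion, $\hom_{\Span}(1,\blank)$ therefore also corepresents $\ev_1$ on $\Shv^\Qq(\Bb;\Spc)$. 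Since two objects of a category corepresenting the same functor are canonically equivalent, we conclude $\mathbb P(1)\simeq\hom_{\Span}(1,\blank)$, as desired.

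The only genuinely nontrivial ingredient is \Cref{lemma:corep-Mackey} — that $\hom_{\Span}(1,\blank)$ restricts to a sheaf on $\Bb^\op$ (indeed, to $\core\ulbbU{\Qq}$) — which we are free to assume here; granting it, the corollary is a purely formal consequence of the Yoneda lemma together with the defining adjunction $\mathbb P\dashv\mathbb U$. The one point that requires a moment's care is the usual universe bookkeeping, as both $\Shv^\Qq(\Bb;\Spc)$ and the ambient functor category are large categories, but this does not affect the argument since everything is phrased in terms of natural equivalences of mapping spaces.
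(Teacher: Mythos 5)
Your proof is correct and follows essentially the same route as the paper's: both identify $\mathbb P(1)$ with $\hom_{\Span}(1,\blank)$ by noting that each corepresents evaluation at $1$ on $\Shv^{\Qq}(\Bb;\Spc)$ — the former by the adjunction $\mathbb P\dashv\mathbb U$ and the latter via the Yoneda lemma combined with Lemma~\ref{lemma:corep-Mackey}. The only cosmetic slip is describing $\mathbb U$ as landing in $\Spc$ (its codomain is really $\Shv(\Bb;\Spc)\simeq\Bb$, with $M(1)$ arising only after further composing with $\hom_\Bb(1,-)$), but this does not affect the argument.
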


	\section{Examples and applications}
	In this section, we discuss various examples and applications of our results. We start in \Cref{subsec:Smaller_Spans} by proving a general result which lets us in many practical situations reduce the amount of data needed to encode sheaves with transfers. In \Cref{subsec:Mackey_Functors} we discuss a special case where sheaves with transfers simplify to Mackey functors. In the remainder of the section we then specialize our results to the contexts of equivariant semiadditivity and `very $G$-semiadditivity,' and discuss various interesting consequences and applications.

	\subsection{Sheaves with transfers on Grothendieck sites}
	\label{subsec:Smaller_Spans}

	In most cases of interest, the information encoded in a sheaf with transfers $F\colon\Span(\Bb,\Bb,\Qq)\to\Ee$ is highly redundant:

	\begin{example}
		If $\Bb$ is the topos of $\infty$-groupoids and $\Qq = \Fin_{\loc}$ is the class of finite covering maps, then the associated category of sheaves with transfers does not immediately recover the definition of the category of commutative monoids as $\Fun^\textup{$\times$}(\Span(\Fin),\Ee)$ but instead describes it in a somewhat bloated way as a subcategory of $\Fun(\Span(\Spc,\Spc,\Qq),\Ee)$.
	\end{example}

	\begin{example}
		For a topos $\Bb$, if we take $\Qq=\core\Bb$ to consist of only the equivalences in $\Bb$, then sheaves with transfers are simply limit-preserving functor $F\colon\Bb^\op\to\Ee$. If $\Bb=\PSh(T)$, such a functor is completely determined by its restriction along the Yoneda embedding. More generally, if $\Bb = \Shv_{\tau}(\Aa)$ is given by sheaves on some Grothendieck site $(\Aa,\tau)$ we may equivalently describe $F$ as a functor $\Aa^\op\to\Ee$ satisfying $\tau$-descent: by \cite{SAG}*{Proposition 1.3.1.7} there is an equivalence
		\[
		\Shv(\Bb;\Ee) \simeq \Shv_{\tau}(\Aa;\Ee).
		\]
	\end{example}

	The goal of this subsection is to generalize the last equivalence to the case for sheaves with non-trivial transfers, as long as $\Qq$ is determined by the intersection $\Aa \cap \Qq$ inside $\Bb = \Shv_{\tau}(\Aa)$.

	\begin{definition}[Sheaves with transfers on a Grothendieck site]
		Let $\Aa$ be a small category equipped with a Grothendieck topology $\tau$, and let $Q \subseteq \Aa$ be a wide $\tau$-local subcategory closed under base change and diagonals. Given a complete category $\Ee$, we define an \textit{$\Ee$-valued sheaf with $Q$-transfers} on $(\Aa,\tau)$ to be a functor $M\colon \Span(\Aa,\Aa,Q) \to \Ee$ whose restriction $M\vert_{\Aa\catop}\colon \Aa\catop \to \Ee$ is a $\tau$-sheaf. The categories of sheaves with $\Qq$-transfers on the slices $(\Aa_{/A},\tau)$ for varying $A\in\Aa$ then assemble into a functor $\ul\Shv^Q_\tau(\Aa;\Ee)\colon\Aa^\op\to\Ee$.
	\end{definition}

	\begin{example}
		If $\Qq\subseteq\Shv_\tau(\Aa)$ is locally inductible, then its preimage in $\Aa$ satisfies the above assumptions.
	\end{example}

	\begin{proposition}
		\label{prop:Smaller_Spans}
		Let $\Qq\subseteq\Bb$ be a locally inductible subcategory. Assume we have a small full subcategory $\Aa\subseteq\Bb$ equipped with a subcanonical Grothendieck topology $\tau$ such that the following conditions are satisfied:
		\begin{enumerate}
			\item The inclusion $\Aa\hookrightarrow\Bb$ extends to an equivalence $\Shv_\tau(\Aa)\iso\Bb$.
			\item The subcategory $\Aa$ is closed under maps in $\Qq$ in the following sense: for every $B\in\Aa$ and $A\to B$ in $\Qq$, also $A\in\Aa$.
		\end{enumerate}
		Then the inclusion $(\Aa,\Aa,\Aa\cap\Qq)\hookrightarrow(\Bb,\Bb,\Qq)$ is a map of adequate triples and the restriction functor $\Fun(\Span(\Bb,\Bb,\Qq),\Ee)\to\Fun(\Span(\Aa,\Aa,\Aa\cap\Qq),\Ee)$ admits a right adjoint, restricting to an adjoint equivalence
		\begin{equation*}
			\Shv^{\Qq}(\Bb;\Ee)\simeq \Shv^{\Aa\cap\Qq}_{\tau}(\Aa;\Ee).
		\end{equation*}
		\begin{proof}
			Consider a pullback square
			\begin{equation*}
				\begin{tikzcd}
					A'\arrow[r, "f'"]\arrow[dr,phantom,"\lrcorner"{very near start}]\arrow[d, "q'"'] & A\arrow[d, "q\in\Qq\cap\Aa"]\\
					B'\arrow[r, "f\in\Aa"'] & B
				\end{tikzcd}
			\end{equation*}
			in $\Bb$ with $f\in\Aa$ and $q\in\Qq\cap\Aa$ as indicated. Since $\Qq$ is closed under base change, $q$ belongs again to $\Qq$, so the second assumption implies that all four objects belong to $\Aa$. It follows immediately that $(\Aa,\Aa,\Aa\cap\Qq)$ is an adequate triple and that $(\Aa,\Aa,\Aa\cap\Qq)\hookrightarrow(\Bb,\Bb,\Qq)$ is indeed a map of adequate triples.

			We now observe that the second assumption on $\Qq$ guarantees that the induced map $\iota\colon\Span(\Aa,\Aa,\Aa\cap\Qq)\to\Span(\Bb,\Bb,\Qq)$ is fully faithful. We claim that we have an adjunction
			\begin{equation*}
				\iota^*\colon\Fun(\Span(\Bb,\Bb,\Qq),\Ee)\rightleftarrows\Fun(\Span(\Aa,\Aa,\Aa\cap\Qq),\Ee)\noloc\iota_*
			\end{equation*}
			with fully faithful right adjoint and such that $\iota_*X|_{\Bb^\op}$ is right Kan extended from $X|_{\Aa^\op}$. Indeed, after embedding $\Ee$ in a limit preserving way into a very large category $\widehat\Ee$ with large limits, this is an instance of Proposition~\ref{prop:restriction-final}${}^\op$ (for $\Bb=\Spc$) as the second assumption on $\Aa$ guarantees that the subcategory $\Span(\Aa,\Aa,\Aa\cap\Qq)\subseteq\Span(\Bb,\Bb,\Qq)$ is compatible with the canonical factorization system. By the above explicit description of $\iota_*X|_{\Bb^\op}$ this right adjoint then actually restricts accordingly.

			It is then clear that $\iota^*$ restricts to a functor $\Shv^{\Qq}(\Bb;\Ee)\to\Shv^{\Aa\cap\Qq}_\tau(\Aa;\Ee)$. On the other hand, appealing to the above description of $\iota_*X|_{\Bb^\op}$ once more shows that $\iota_*$ restricts to an essentially surjective functor in the other direction since a functor $\Bb^\op\to\Ee$ is continuous if and only if it is right Kan extended from an $\Aa$-sheaf by \cite{SAG}*{Proposition 1.3.1.7}.
		\end{proof}
	\end{proposition}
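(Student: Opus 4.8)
The plan is to exhibit the restriction functor as the restriction, to the sheaf subcategories, of an adjunction between full functor categories, and then to recognize its right adjoint as a right Kan extension which is already understood on the underlying site by \cite{SAG}*{Proposition~1.3.1.7}.

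First I would check that $(\Aa,\Aa,\Aa\cap\Qq)$ is an adequate triple and that its inclusion into $(\Bb,\Bb,\Qq)$ is a map of such. The only content here is that the two kinds of pullbacks needed for the span construction — pulling back a map of $\Aa$ along a map of $\Aa\cap\Qq$, and conversely — exist in $\Aa$ and land in the prescribed classes: since $\Qq$ is closed under base change, the relevant pulled-back map is again a $\Qq$-map, and then hypothesis (2), applied to this $\Qq$-map into an object of $\Aa$, puts the pullback back into $\Aa$ (so in particular it is computed as in $\Bb$). Thus the functor $\iota\colon\Span(\Aa,\Aa,\Aa\cap\Qq)\to\Span(\Bb,\Bb,\Qq)$ exists by \cite{HHLNa}. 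Hypothesis (2) also makes $\iota$ fully faithful: any span $X\leftarrow C\to Y$ with $X,Y\in\Aa$ and right leg in $\Qq$ already has $C\in\Aa$, and the same for the higher coherence data, so by the explicit description of mapping spaces in span categories the comparison on mapping spaces is an equivalence.

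Next I would produce the adjunction $\iota^*\dashv\iota_*$ on functor categories. I would observe that $\Span(\Aa,\Aa,\Aa\cap\Qq)^\op$ is a full subcategory of $\Span(\Bb,\Bb,\Qq)^\op$ compatible with the canonical factorization system: the left class of the opposite factorization system consists of (the opposites of) the forward $\Qq$-maps, so compatibility for a full subcategory reduces once more to hypothesis (2). Enlarging the universe so that $\Ee$ embeds limit-preservingly into a category $\widehat\Ee$ with all large limits, Proposition~\ref{prop:restriction-final}${}^\op$ then applies and yields a right adjoint $\iota_*$ to restriction along $\iota$, with the crucial extra information that $(\iota_*X)|_{\Bb^\op}$ is the right Kan extension of $X|_{\Aa^\op}$ along $\Aa^\op\hookrightarrow\Bb^\op$. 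Being right Kan extension along the fully faithful $\iota$, the functor $\iota_*$ is itself fully faithful, so the counit $\iota^*\iota_*\simeq\id$ is an equivalence; and since $\Aa$ is small and $\Bb$ is locally small, the comma categories computing all these Kan extensions are essentially small, so the limits involved are small and $\iota_*$ in fact preserves $\Ee$-valued functors. Hence $\iota^*\dashv\iota_*$ already lives on $\Fun(-,\Ee)$.

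Finally I would cut down to the sheaf subcategories. That $\iota^*$ sends $\Shv^\Qq(\Bb;\Ee)$ into $\Shv^{\Aa\cap\Qq}_\tau(\Aa;\Ee)$ is immediate from the equivalence $\Shv(\Bb;\Ee)\simeq\Shv_\tau(\Aa;\Ee)$ (restriction along $\Aa^\op\hookrightarrow\Bb^\op$) of \cite{SAG}*{Proposition~1.3.1.7}. Conversely, if $X$ restricts to a $\tau$-sheaf on $\Aa$ then $(\iota_*X)|_{\Bb^\op}$ is its right Kan extension, hence a $\Bb$-sheaf by the same citation, so $\iota_*X\in\Shv^\Qq(\Bb;\Ee)$; together with the counit being an equivalence this gives essential surjectivity of $\iota^*$ on the sheaf subcategories. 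It remains to see that the unit $F\to\iota_*\iota^*F$ is an equivalence for $F\in\Shv^\Qq(\Bb;\Ee)$: since $\Bb^\op\subseteq\Span(\Bb,\Bb,\Qq)$ is wide, this may be checked after restriction to $\Bb^\op$, where it becomes the canonical map from the sheaf $F|_{\Bb^\op}$ to the right Kan extension of $F|_{\Aa^\op}$ — an equivalence once more by \cite{SAG}*{Proposition~1.3.1.7}. This yields the adjoint equivalence $\iota^*\colon\Shv^\Qq(\Bb;\Ee)\rightleftarrows\Shv^{\Aa\cap\Qq}_\tau(\Aa;\Ee)\noloc\iota_*$. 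I expect the main obstacle to be the variance bookkeeping in the penultimate step: correctly identifying the opposite factorization system on $\Span(\Bb,\Bb,\Qq)^\op$, checking that compatibility of the full subcategory is exactly hypothesis (2), and verifying that Proposition~\ref{prop:restriction-final}${}^\op$ outputs precisely the Kan-extension description of $(\iota_*X)|_{\Bb^\op}$; once that description is in hand the rest is a formal consequence of \cite{SAG}*{Proposition~1.3.1.7}.
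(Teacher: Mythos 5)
Your proof is correct and follows essentially the same route as the paper's: checking that the inclusion of adequate triples is well-defined using hypothesis (2), obtaining the adjunction $\iota^*\dashv\iota_*$ from Proposition~\ref{prop:restriction-final}${}^\op$ after a universe enlargement (with the key verification that $\Span(\Aa,\Aa,\Aa\cap\Qq)^\op$ is compatible with the opposite factorization system, which indeed reduces to hypothesis (2)), and then restricting to the sheaf subcategories via \cite{SAG}*{Proposition~1.3.1.7}. The only difference is stylistic: you check the unit directly by reducing to the wide subcategory $\Bb^\op$, whereas the paper obtains the equivalence from full faithfulness plus essential surjectivity of the restricted $\iota_*$; both routes are valid and formally equivalent.
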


	\begin{corollary}
		\label{cor:Smaller_Spans}
		In the above situation, restriction along $\Aa\hookrightarrow\Bb$ induces an equivalence
		\begin{equation*}
			\ul\Shv^{\Qq}(\Bb;\Ee)|_{\Aa^\op}\iso\ul\Shv^{\Aa\cap\Qq}_\tau(\Aa;\Ee).
		\end{equation*}
		\begin{proof}
			It is clear that the inclusion induces an $\Aa^\op$-natural map $\ul\Shv^{\Qq}(\Bb;\Ee)|_{\Aa^\op}\to\ul\Shv^{\Aa\cap\Qq}_\tau(\Aa;\Ee)$, and the previous proposition with $\Aa$ replaced by $\Aa_{/A}$ for varying $A\in\Aa$ shows that is indeed an equivalence.
		\end{proof}
	\end{corollary}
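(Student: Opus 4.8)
The plan is to deduce the corollary from Proposition~\ref{prop:Smaller_Spans}, applied not only to $\Aa\hookrightarrow\Bb$ itself but to all of its slices simultaneously. First I would promote the comparison to a map of $\Aa^\op$-presheaves: the inclusion $\Aa\hookrightarrow\Bb$ induces, for every $A\in\Aa$, a map of adequate triples
\[
  (\Aa_{/A},\Aa_{/A},(\Aa\cap\Qq)_{/A})\longrightarrow(\Bb_{/A},\Bb_{/A},\Bb_{/A}[\Qq]),
\]
and these are compatible with the pushforward functoriality in $A$. Applying $\Span(\blank)$ and then $\Fun(\blank,\Ee)$ thus produces a map of $\Aa^\op$-presheaves of categories, which one checks restricts on the full subcategories of sheaves with transfers to a map $\ul\Shv^{\Qq}(\Bb;\Ee)|_{\Aa^\op}\to\ul\Shv^{\Aa\cap\Qq}_\tau(\Aa;\Ee)$; this is the $\Aa^\op$-natural map of the statement. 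Since equivalences of presheaves are detected objectwise, it remains to show that each component is an equivalence of ordinary categories.

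So fix $A\in\Aa$. I would verify that $\Aa_{/A}\hookrightarrow\Bb_{/A}$, equipped with the induced (still subcanonical) topology $\tau_{/A}$ and the wide subcategory $\pi_A^{-1}\Qq=\Bb_{/A}[\Qq]$, satisfies the hypotheses of Proposition~\ref{prop:Smaller_Spans}. Indeed $\Bb_{/A}$ is again a topos and the canonical comparison $\Shv_{\tau_{/A}}(\Aa_{/A})\iso\Bb_{/A}$ is an equivalence (the slice of a sheaf topos over a representable object is the sheaf topos on the slice site, cf.\ \cite{SAG}); the subcategory $\pi_A^{-1}\Qq$ is again locally inductible (cf.\ Remark~\ref{rk:norm-res-pb}); and $\Aa_{/A}$ is closed under maps in $\pi_A^{-1}\Qq$, because if $C\to B$ lies in $\Qq$ with $B\in\Aa$ then $C\in\Aa$ by hypothesis~(2) of the proposition, so $C\to A$ lies in $\Aa_{/A}$. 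Moreover, under these identifications one has $\Aa_{/A}\cap\pi_A^{-1}\Qq=(\Aa\cap\Qq)_{/A}$, so that Proposition~\ref{prop:Smaller_Spans} in this situation yields exactly an adjoint equivalence $\Shv^{\Bb_{/A}[\Qq]}(\Bb_{/A};\Ee)\simeq\Shv^{(\Aa\cap\Qq)_{/A}}_{\tau_{/A}}(\Aa_{/A};\Ee)$ induced by restriction along $\Aa_{/A}\hookrightarrow\Bb_{/A}$.

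Finally I would observe that this equivalence is precisely the $A$-component of the map constructed above, so the latter is an objectwise equivalence and hence an equivalence of $\Aa^\op$-presheaves. The only genuinely delicate point is the bookkeeping: one must check that the objectwise equivalences supplied by Proposition~\ref{prop:Smaller_Spans} really are the components of the single globally defined $\Aa^\op$-natural map, and that the slice identifications $\Shv_{\tau_{/A}}(\Aa_{/A})\simeq\Bb_{/A}$ are themselves natural in $A$. Both are consequences of the manifest functoriality in $A$ of all the ingredients---the adequate triples, the $\Span$ functor, the restriction functors, and the slice-of-a-sheaf-topos comparison---so no input beyond Proposition~\ref{prop:Smaller_Spans} is required.
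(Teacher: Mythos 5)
Your proposal is correct and follows essentially the same route as the paper: construct the $\Aa^\op$-natural map from the induced maps of adequate triples, then check it is an objectwise equivalence by applying Proposition~\ref{prop:Smaller_Spans} with $\Aa$ replaced by each slice $\Aa_{/A}$. The paper's proof is just a terser statement of the same argument; your extra care in verifying that the slices again satisfy the hypotheses (the slice topos identification, closure of $\Aa_{/A}$ under maps in $\pi_A^{-1}\Qq$, and the identification $\Aa_{/A}\cap\pi_A^{-1}\Qq=(\Aa\cap\Qq)_{/A}$) is a welcome expansion of what the paper leaves implicit.
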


	\begin{corollary}
		\label{cor:fs-Mackey-sheaf}
		Let $\Aa\subseteq\Bb$ be equipped with a topology $\tau$ satisfying the assumptions of Proposition~\ref{prop:Smaller_Spans}. Identifying $\Bb$-categories with $\tau$-sheaves of categories on $\Aa$, we have:
		\begin{enumerate}[(1)]
			\item The free presentable $\Qq$-semiadditive $\Bb$-category is given by $\ul{\Shv}^{\Aa \cap \Qq}_{\tau}(\Aa;\Spc)$.
			\item The free presentable $\Qq$-stable $\Bb$-category is given by $\ul{\Shv}^{\Aa \cap \Qq}_{\tau}(\Aa;\Sp)$.
		\end{enumerate}
	\end{corollary}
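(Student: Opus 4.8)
The plan is to deduce Corollary~\ref{cor:fs-Mackey-sheaf} entirely formally from results already established, the point being only to recognize the right-hand sides of (1) and (2) as restrictions of the universal objects $\ul\Shv^{\Qq}(\Bb;\Spc)$ and $\ul\Shv^{\Qq}(\Bb;\Sp)$. Concretely, by \Cref{cor:free-semiadditive-Mackey} the free presentable $\Qq$-semiadditive $\Bb$-category is $\ul\Shv^{\Qq}(\Bb;\Spc)$ and by \Cref{cor:free-stable-Mackey} the free presentable $\Qq$-stable $\Bb$-category is $\ul\Shv^{\Qq}(\Bb;\Sp)$, provided $\Qq$ is slicewise small. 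So two things remain: check the smallness hypothesis, and transport these $\Bb$-categories along the equivalence that identifies $\Bb$-categories with $\tau$-sheaves of categories on $\Aa$ (which is available by assumption~(1) of \Cref{prop:Smaller_Spans}).

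For slicewise smallness I would argue as follows. Assumption~(2) of \Cref{prop:Smaller_Spans} says that every map in $\Qq$ with target in $\Aa$ has source in $\Aa$; since $\Aa\subseteq\Bb$ is full, this gives $\Qq_{/B}=(\Aa\cap\Qq)_{/B}$ for every $B\in\Aa$, which is essentially small because $\Aa$ is small. Now for arbitrary $A\in\Bb$, using that $\Aa$ is a small generating site we may write $A\simeq\colim_{(B\to A)\in\Aa_{/A}}B$ as a small colimit of objects of $\Aa$; since $\ulbbU{\Qq}\colon\Bb^\op\to\Cat$ preserves limits, this exhibits $\Qq_{/A}=\ulbbU{\Qq}(A)$ as a small limit of essentially small categories, hence essentially small. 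Thus $\Qq$ is slicewise small. (This is the only step of the proof with genuine content, and it is where the small-site and closure hypotheses of \Cref{prop:Smaller_Spans} are used; it is short, so I do not expect a real obstacle here.)

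Finally I would apply \Cref{cor:Smaller_Spans} with $\Ee=\Spc$ and with $\Ee=\Sp$ to obtain $\Aa^\op$-natural equivalences
\[
	\ul\Shv^{\Qq}(\Bb;\Spc)|_{\Aa^\op}\iso\ul\Shv^{\Aa\cap\Qq}_\tau(\Aa;\Spc)\qquad\text{and}\qquad\ul\Shv^{\Qq}(\Bb;\Sp)|_{\Aa^\op}\iso\ul\Shv^{\Aa\cap\Qq}_\tau(\Aa;\Sp).
\]
Under assumption~(1) of \Cref{prop:Smaller_Spans}, restriction along $\Aa\hookrightarrow\Bb$ identifies $\Bb$-categories with $\tau$-sheaves of categories on $\Aa$, a $\Bb$-category corresponding precisely to its restriction to $\Aa^\op$; so these equivalences say exactly that $\ul\Shv^{\Aa\cap\Qq}_\tau(\Aa;\Spc)$ and $\ul\Shv^{\Aa\cap\Qq}_\tau(\Aa;\Sp)$ are the $\tau$-sheaves of categories corresponding to $\ul\Shv^{\Qq}(\Bb;\Spc)$ and $\ul\Shv^{\Qq}(\Bb;\Sp)$ (both of which are $\Bb$-categories by \Cref{thm:span-BBQ}). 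Combining this with the first paragraph gives both (1) and (2).
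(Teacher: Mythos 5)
Your proof is correct and follows the paper's own route exactly: cite Corollaries~\ref{cor:free-semiadditive-Mackey} and~\ref{cor:free-stable-Mackey} for the free presentable $\Qq$-semiadditive and $\Qq$-stable $\Bb$-categories, then transport along the equivalence of Corollary~\ref{cor:Smaller_Spans}. The one thing you add — verifying that $\Qq$ is slicewise small from the hypotheses of Proposition~\ref{prop:Smaller_Spans}, by combining condition~(2) with the limit-preservation of $\ulbbU{\Qq}$ and the fact that each $A\in\Bb$ is a small colimit of objects of $\Aa$ — is a genuine hypothesis of the cited corollaries that the paper leaves implicit, and your argument for it is sound.
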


	\begin{proof}
		In light of \Cref{cor:free-semiadditive-Mackey} and \Cref{cor:free-stable-Mackey}, these are direct consequences of the previous corollary applied to the two cases $\Ee = \Spc$ and $\Ee = \Sp$.
	\end{proof}

	\begin{example}[Presheaf topoi]
		\label{ex:preinductible-topology}
		The conditions of \Cref{prop:Smaller_Spans} are in particular satisfied in the case where $\Bb = \PSh(T)$ is a presheaf topos on some small category $T$ and where $\Qq = Q_{\loc}$ is obtained from a small pre-inductible subcategory $Q \subseteq \PSh(T)$. In this case, we let $\Aa \subseteq \PSh(T)$ denote the essential image of the inclusion $Q \hookrightarrow \PSh(T)$, and equip it with the Grothendieck topology $\tau$ in which collection $\{f_i\colon A_i \to A\}$ generates a covering sieve if and only if the map $\bigsqcup_{i \in I} A_i \to A$ is an effective epimorphism in $\PSh(T)$, or equivalently if every morphism $B \to A$ from a representable object $B \in T$ factors through one of the morphisms $f_i$.

		To see that the conditions of \Cref{prop:Smaller_Spans} are satisfied, first note that condition (2) is clear. For condition (1), notice that the image of the full inclusion $T \hookrightarrow \Aa \hookrightarrow \Shv_{\tau}(\Aa)$ consists of completely compact objects which generate $\Shv_{\tau}(\Aa)$ under colimits, so that restriction along this functor defines an equivalence $\Shv_{\tau}(\Aa) \iso \PSh(T)$ by \cite{HTT}*{Corollary~5.1.6.11}. It is clear that this equivalence restricts on $\Aa$ to the inclusion $\Aa \hookrightarrow \PSh(T)$, showing condition (1).
	\end{example}

	\begin{example}[Trivial descent]
		\label{ex:Trivial_Descent}
		As an extreme special case of \Cref{ex:preinductible-topology}, assume that $T$ is a small category equipped with an inductible subcategory $Q \subseteq T$. Then $\Qq = Q_\loc\subseteq \PSh(T)$ and $\Aa = T\subseteq \PSh(T)$, equipped with the trivial Grothendieck topology, satisfy the assumptions of \Cref{cor:fs-Mackey-sheaf}. We conclude that the free presentable $Q$-semiadditive $T$-category $\ul\CMon^Q(\ul{\Spc}_T)$ is given by the functor
		\[
		A\mapsto \Fun(\Span(T_{/A},T_{/A},T_{/A}[Q]), \Spc).
		\]
		Note that the functors are not required to satisfy any sort of descent or limit-preservation condition. In particular, we obtain:
	\end{example}

	\begin{corollary}
		\label{cor:Trivial_Descent_Enrichment}
		Let $T$ be a small category and let $Q \subseteq T$ be an inductible subcategory. For every $Q$-semiadditive $T$-category $\Cc$, there is a unique collection of functors
		\[
		\Hom^Q_{\Cc(A)}\colon \Cc(A)\catop \times \Cc(A) \to \Fun(\Span(T_{/A}, T_{/A}, T_{/A}[Q]), \Spc)
		\]
		which are natural in $A \in T\catop$ and whose underlying functors $\Cc(A)\catop \times \Cc(A) \to \Spc$ given by evaluation at $\id_A$ are the Hom-functors.
	\end{corollary}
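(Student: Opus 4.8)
The plan is to obtain this as a direct specialization of the sheaves‑with‑transfers enrichment of \Cref{cor:Unique_Mackey_Functor_Enrichment} to the presheaf topos $\Bb=\PSh(T)$ and the locally inductible subcategory $\Qq=Q_{\loc}$, combined with the concrete identification of $\ul\Shv^{\Qq}(\Bb;\Spc)$ on representables furnished by \Cref{ex:Trivial_Descent}. No genuinely new input is needed beyond these two results; the work is in matching up the abstract parametrized construction with the concrete "evaluate at $\id_A$" description in the statement.

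For existence, I would first pass to the limit extension $\Cc\colon\PSh(T)^\op\to\Cat$, which is a $\Bb$-category by \Cref{rmk:Limit_Extension} and is $Q_{\loc}$-semiadditive by \Cref{prop:Check_Semiadditivity_Locally}. \Cref{cor:Unique_Mackey_Functor_Enrichment} then produces a unique $\Bb$-functor $\ul\Hom^{Q_{\loc}}_{\Cc}\colon\Cc^\op\times\Cc\to\ul\Shv^{Q_{\loc}}(\Bb;\Spc)$ lifting the parametrized Hom-functor $\ul\Hom_{\Cc}\colon\Cc^\op\times\Cc\to\ul\Shv(\Bb;\Spc)=\ul\Spc_{\Bb}$ along $\mathbb U$. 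Restricting along the Yoneda embedding $T^\op\hookrightarrow\PSh(T)^\op$, and using \Cref{cor:Smaller_Spans} together with \Cref{ex:Trivial_Descent} to identify $\ul\Shv^{Q_{\loc}}(\Bb;\Spc)|_{T^\op}$ with the functor $A\mapsto\Fun(\Span(T_{/A},T_{/A},T_{/A}[Q]),\Spc)$ (here the Grothendieck topology on $T$ is trivial, so that $\Shv^{Q}_{\tau}=\Fun(\Span(-,-,-),\Spc)$ carries no descent condition and $(Q_{\loc})_{/A}=Q_{/A}$ for $A\in T$), one obtains a $T$-functor $\Cc^\op\times\Cc\to\ul\Shv^{Q_{\loc}}(\Bb;\Spc)|_{T^\op}$. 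Since a $T$-functor into a $T$-category is precisely a collection of functors natural in $A\in T^\op$, this is exactly the sought‑after collection $\Hom^Q_{\Cc(A)}$.

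It then remains to identify its underlying functors and to argue uniqueness, and this is where all the care is required. Under the identification above, the forgetful map $\mathbb U$ at level $A$ is restriction along the wide inclusion $T_{/A}^\op\simeq\Span(T_{/A},T_{/A},\core T_{/A})\hookrightarrow\Span(T_{/A},T_{/A},T_{/A}[Q])$, landing in $\Fun(T_{/A}^\op,\Spc)=\ul\Spc_{\Bb}(A)$. Hence evaluation at the distinguished object $\id_A$ of the span category factors as $\mathbb U$ followed by evaluation at the terminal object $\id_A\in T_{/A}$; on $\ul\Spc_{\Bb}$ this last step is the section functor $\ev_{\id_A}\colon\Bb_{/A}\to\Spc$, and the composite $\ev_{\id_A}\circ(\ul\Hom_{\Cc})_A$ is by construction of the parametrized Hom-functor the ordinary mapping‑space functor $\Map_{\Cc(A)}$. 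Since $\Hom^Q_{\Cc(A)}$ lifts $\ul\Hom_{\Cc}$, this gives the required compatibility. Conversely, given any collection $F=(F_A)$ natural in $A$ with $\ev_{\id_A}\circ F_A\simeq\Map_{\Cc(A)}$, i.e.\ a $T$-functor $F\colon\Cc^\op\times\Cc\to\ul\Shv^{Q_{\loc}}(\Bb;\Spc)|_{T^\op}$, the same factorization of $\ev_{\id_A}$ through $\mathbb U$ together with naturality forces $\mathbb U\circ F$ to have value $\Map_{\Cc(B)}(f^*X,f^*Y)$ at each $(f\colon B\to A)\in T_{/A}$, so that $\mathbb U\circ F=\ul\Hom_{\Cc}|_{T^\op}$; limit‑extending $F$ back to a $\Bb$-functor (valid because $\ul\Shv^{Q_{\loc}}(\Bb;\Spc)$ is the limit extension of its restriction to $T^\op$) exhibits it as a lift of $\ul\Hom_{\Cc}$ along $\mathbb U$, whence $F\simeq\ul\Hom^{Q_{\loc}}_{\Cc}|_{T^\op}$ by the uniqueness clause of \Cref{cor:Unique_Mackey_Functor_Enrichment}.

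The main obstacle I expect is purely organizational rather than mathematical: carefully tracking the chain of identifications (limit extension $T$-categories $\leftrightsquigarrow$ $\PSh(T)$-categories, $Q$ versus $Q_{\loc}$, the trivial‑descent equivalence of \Cref{cor:Smaller_Spans}, and the comparison of $\ul\Shv^{Q_{\loc}}(\Bb;\Spc)$ with $\ul\CMon^{Q_{\loc}}(\ul\Spc_{\Bb})$ from \Cref{thm:span-BBQ}) and in particular verifying that evaluation at the identity span $\id_A$ factors through $\mathbb U$ in the way asserted, so that the weaker hypothesis "$\ev_{\id_A}\circ F_A$ is the Hom-functor" really does pin $F$ down.
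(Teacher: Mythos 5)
Your proposal tracks the paper's own argument closely: the paper's proof is simply to (i) invoke Example~\ref{ex:Trivial_Descent} to identify $\ul\CMon^Q(\ul\Spc_T)$ with $A\mapsto\Fun(\Span(T_{/A},T_{/A},T_{/A}[Q]),\Spc)$, and (ii) apply Corollary~\ref{cor:Unique_Lift_To_CMon} to lift $\ul\Hom_\Cc$ uniquely through $\mathbb U$. You instead invoke Corollary~\ref{cor:Unique_Mackey_Functor_Enrichment} together with Corollary~\ref{cor:Smaller_Spans} and Example~\ref{ex:Trivial_Descent}, but since Corollary~\ref{cor:Unique_Mackey_Functor_Enrichment} is itself the combination of Corollary~\ref{cor:Unique_Lift_To_CMon} with Theorem~\ref{thm:span-BBQ}, the two derivations are genuinely the same, just packaged differently. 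So the overall approach is the paper's.

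Where I would push back is the uniqueness discussion you flag as the main point requiring care. You argue that the levelwise hypothesis $\ev_{\id_A}\circ F_A\simeq\Map_{\Cc(A)}$ together with naturality in $A$ forces $\mathbb U\circ F\simeq\ul\Hom_\Cc$ as $T$-functors, and you verify this by computing that $\mathbb U F_A(X,Y)$ has the correct \emph{value} $\Map_{\Cc(B)}(f^*X,f^*Y)$ at every $(f\colon B\to A)\in T_{/A}$. But agreement of values does not produce an equivalence of functors $T_{/A}^\op\to\Spc$, let alone an equivalence of $T$-functors into $\ul\Spc_T$: a $T$-functor $G\colon\Dd\to\ul\Spc_T$ corresponds to a functor $\smallint\Dd\to\Spc$, and the assignments $\ev_{\id_A}\circ G_A$ only record its restriction to the fibers $\Dd(A)\subseteq\smallint\Dd$, which does not determine it. (Already for $T=\Delta^1$ and $\Dd=\ul1$, a $T$-functor $\ul1\to\ul\Spc_T$ is an arrow $X_1\to X_0$ in $\Spc$, and the levelwise data only records $X_0$ and $X_1$ but not the structure map.) So the literal levelwise reading is strictly weaker, and your argument leaves a real gap at this step. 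The correct resolution is to read ``whose underlying functors\ \dots\ given by evaluation at $\id_A$ are the Hom-functors'' as saying that the underlying $T$-functor $\mathbb U\circ F$ is the parametrized Hom-functor $\ul\Hom_\Cc$ -- which is exactly the data one recovers by applying the straightening equivalence $\Fun_T(\Cc^\op\times\Cc,\ul\Spc_T)\simeq\Fun\big(\smallint(\Cc^\op\times\Cc),\Spc\big)$, whose formula \emph{is} evaluation at identities. Under that reading, uniqueness is immediate from Corollary~\ref{cor:Unique_Lift_To_CMon}, and no further argument is needed.
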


	\begin{proof}
		Given the identification of the previous example, the corollary follows immediately from the fact that the parametrized hom functor $\ul{\Hom}_{\Cc}(-,-)\colon {\Cc}\catop \times \Cc \to \ul{\Spc}_T$ uniquely lifts through the functor $\mathbb U\colon \ul\CMon^Q(\ul{\Spc}_T)\to\ul\Spc_T$, which holds by \Cref{cor:Unique_Lift_To_CMon}.
	\end{proof}

	\subsection{Mackey functors}\label{subsec:Mackey_Functors}
	We will now specialize \Cref{prop:Smaller_Spans} to the case where the topos $\Bb$ is given by the sifted cocompletion $\PSh_{\Sigma}(\Aa) = \Fun^{\times}(\Aa\catop, \Spc)$ of an \emph{extensive category} $\Aa$. For this recall:

	\begin{definition}\label{defi:extensive}
		A category $\Aa$ is called \textit{extensive} if it satisfies the following conditions:
		\begin{enumerate}
			\item The category $\Aa$ has finite coproducts.
			\item For all $A,B\in\Aa$ the square
			\[
				\begin{tikzcd}
					\emptyset\arrow[r]\arrow[d] & A\arrow[d]\\
					B\arrow[r] & A\amalg B
				\end{tikzcd}
			\]
			is a pullback square.
			\item For all $n\ge0$ and all families $(A_i'\to A_i)_{i=1}^n$ of maps in $\Aa$, the square
			\[
				\begin{tikzcd}
					A_i'\arrow[r]\arrow[d] & \coprod_{i=1}^nA_i'\arrow[d]\\
					A_i\arrow[r] & \coprod_{i=1}^nA_i
				\end{tikzcd}
			\]
			is a pullback square.
		\end{enumerate}
	\end{definition}

	Every extensive category $\Aa$ admits a canonical Grothendieck topology generated by the finite coproduct decompositions. The corresponding sheaf category $\Shv_{\sqcup}(\Aa)$ agrees with $\PSh_{\Sigma}(\Aa)$ by \cite{BachmannHoyois2021Norms}*{Lemma~2.4}. In particular $\PSh_{\Sigma}(\Aa)$ is a topos, and $\Ee$-valued sheaves on $\PSh_{\Sigma}(\Aa)$ correspond to product-preserving functors $\Aa\catop \to \Ee$:
	\[
		\Shv(\PSh_{\Sigma}(\Aa);\Ee) \simeq \Fun^{\times}(\Aa,\Ee).
	\]

	\begin{definition}
		Let $Q \subseteq \Aa$ be a wide subcategory whose morphisms are closed under base change, disjoint unions, and diagonals. Given a category $\Ee$ admitting finite products, we define an $\Ee$-valued \textit{$Q$-Mackey functor on $\Aa$} to be a functor $\Span(\Aa,\Aa,Q) \to \Ee$ that preserves finite products. We write
		\[
			\Mack^Q(\Aa;\Ee) := \Fun^{\times}(\Span(\Aa,\Aa,Q), \Ee)
		\]
		for the category of $Q$-Mackey functors. The assignment $A \mapsto \Mack^Q(\Aa_{/A};\Ee)$ then defines a functor $\ul\Mack^Q(\Aa;\Ee) \colon \Aa\catop \to \Cat$.
	\end{definition}

	Note that a functor $F\colon \Span(\Aa,\Aa, Q) \to \Ee$ preserves products if and only if its restriction to $\Aa\catop$ preseves finite products, see e.g.\ \cite{CHLL_NRings}*{Proposition~2.2.5}, which is in turn equivalent to the condition that it is a sheaf with respect to the finite coproduct topology. In particular, \Cref{cor:Smaller_Spans} specializes to:

	\begin{corollary}\label{cor:extensive-mackey}
		Let $\Bb = \PSh_{\Sigma}(\Aa)$ for an extensive category $\Aa$, and let $Q \subseteq \Aa$ be a wide subcategory whose morphisms are closed under base change, disjoint unions, and diagonals. Let $\Qq := Q_{\loc}$. Then restriction along the inclusion $\Aa \subseteq \Bb$ defines an equivalence
		\[
			\ul\Shv^{\Qq}(\Bb;\Ee)\vert_{\Aa\catop} \iso \ul\Mack^Q(\Aa;\Ee). \qednow
		\]
	\end{corollary}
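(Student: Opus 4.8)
The plan is to present the extensive setting as a special case of \Cref{prop:Smaller_Spans} and then translate the resulting sheaf condition into the product-preservation condition that defines Mackey functors. Throughout I take $\tau$ to be the extensive (finite-coproduct) topology on $\Aa$.

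First I would verify that $\Qq = Q_{\loc}$ is a locally inductible subcategory of $\Bb$. It is a local wide subcategory closed under base change directly from the construction of $Q_{\loc}$, reasoning as in \Cref{rk:q-loc-colim}: the representables generate $\Bb = \PSh_\Sigma(\Aa)$ under colimits and the covering sieves are generated by finite coproduct decompositions, all of whose legs lie in $\Aa$. It is locally truncated because after base change along any map from an object of $\Aa$ every morphism of $Q_{\loc}$ becomes a morphism of $Q \subseteq \Aa$, which is truncated; and it is closed under diagonals because $Q$ is closed under diagonals, hence left-cancelable by \Cref{lem:CharacterizationClosedUnderDiagonals}, and this property passes to $Q_{\loc}$ by the same pullback-pasting argument that shows $Q_{\loc}$ is locally inductible for a pre-inductible $Q$.

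Next I would check that $\Aa \subseteq \Bb$ with the topology $\tau$ satisfies the hypotheses of \Cref{prop:Smaller_Spans}, and likewise every slice $\Aa_{/A} \subseteq \Bb_{/A} = \PSh_\Sigma(\Aa_{/A})$ for $A \in \Aa$ (using that slices of extensive categories are extensive and that the induced topology is again the extensive one). Condition~(1), namely $\Shv_\tau(\Aa) \simeq \PSh_\Sigma(\Aa) = \Bb$, is \cite{BachmannHoyois2021Norms}*{Lemma~2.4}, and $\tau$ is subcanonical since representables on an extensive category preserve finite products. For condition~(2): if $q\colon X \to B$ lies in $\Qq = Q_{\loc}$ with $B \in \Aa$, then pulling back along $\id_B$ shows $X \simeq B \times_B X \in \Aa$ and that $q$ is equivalent to a map of $Q$; the same computation, together with closure of $Q$ under base change in $\Aa$, identifies $\Aa \cap \Qq$ with $Q$ (compatibly in all slices).

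Granting this, \Cref{cor:Smaller_Spans} provides an equivalence of $\Aa$-categories $\ul\Shv^{\Qq}(\Bb;\Ee)\vert_{\Aa\catop} \iso \ul\Shv^{Q}_\tau(\Aa;\Ee)$, and it only remains to identify the target with $\ul\Mack^Q(\Aa;\Ee)$. For each $A \in \Aa$ a functor $\Span(\Aa_{/A}, \Aa_{/A}, \Aa_{/A}[Q]) \to \Ee$ preserves finite products if and only if its restriction to $(\Aa_{/A})\catop$ does, by \cite{CHLL_NRings}*{Proposition~2.2.5}, and the latter is exactly the $\tau$-sheaf condition on $\Aa_{/A}$ because the covers of $\tau$ are the finite coproduct decompositions; hence $\Shv^Q_\tau(\Aa_{/A};\Ee) = \Mack^Q(\Aa_{/A};\Ee)$, naturally in $A$. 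Composing the two identifications yields the claim. I expect the one genuinely non-formal step to be the verification that $Q_{\loc}$ is locally inductible — in particular local truncatedness and closure under diagonals — together with the (routine but slightly bureaucratic) passage to all slices $\Aa_{/A}$; everything else is an immediate application of the cited results.
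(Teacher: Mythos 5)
Your proposal is correct and follows essentially the same route as the paper: the paper's (terse) proof consists of invoking \Cref{cor:Smaller_Spans} for the finite-coproduct topology on $\Aa$ and then observing, via \cite{CHLL_NRings}*{Proposition~2.2.5}, that the sheaf condition on $\Span(\Aa,\Aa,Q) \to \Ee$ is precisely the product-preservation condition defining a Mackey functor. Your extra work — checking that $Q_{\loc}$ is locally inductible, that $\tau$ is subcanonical with $\Shv_\tau(\Aa)\simeq\PSh_\Sigma(\Aa)$, and that $\Aa\cap\Qq = Q$ slicewise — is exactly the verification the paper leaves implicit when saying ``$\Cref{cor:Smaller_Spans}$ specializes''; the only point worth flagging is that your local-truncatedness argument quietly assumes the morphisms of $Q$ are truncated, a hypothesis not stated in the corollary (but also tacitly assumed by the paper, and automatic whenever $\Aa$ is a $1$- or $(2,1)$-category as in all of the paper's examples).
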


	Specializing \Cref{cor:fs-Mackey-sheaf} to this situation, it follows that the free presentable $\Qq$-semiadditive and $\Qq$-stable $\Bb$-categories are given by space-valued and spectrum-valued Mackey functors, respectively.

	As a special case, we may consider a presheaf category $\Bb = \PSh(T)$, so that $\Bb$-categories correspond to $T$-categories. We write $\mathbb F_T$ for the finite coproduct completion of $T$, so that $\Bb \simeq \PSh_{\Sigma}(\mathbb F_T)$. Given a wide subcategory $P \subseteq T$, we further write $\mathbb F^P_T$ for the wide subcategory of $\mathbb F_T$ whose maps are finite coproducts of maps $\coprod_{i=1}^n A_i\to B$ with each $A_i\to B$ in $P$. If $P\subseteq T$ is atomic orbital (see Example~\ref{ex:Finite_P_Sets}) then the subcategory $\mathbb F^P_T$ is pre-inductible, with the corresponding notion of semiadditivity given by $P$-semiadditivity for $T$-categories.

	\begin{corollary}\label{cor:atomic-orbital-Mackey}
		For an atomic orbital subcategory $P \subseteq T$, the free $P$-semiadditive presentable $T$-category is given by
		\[\ul\Mack^{P}_{T}\colon A\mapsto \Fun^{\times}(\Span((\mathbb F_T)_{/A},(\mathbb F_T)_{/A},(\mathbb F_T)_{/A}[\mathbb F^P_T]),\Spc).\]
		More precisely, for every $P$-semiadditive presentable $T$-category $\Dd$, evaluation at $\hom(1,\blank)\colon \Span(\mathbb F_T,\mathbb F_T,\mathbb F_T^P)\to\Spc$ defines an equivalence
		\begin{equation*}
			\ul\Fun_T^\textup{L}(\ul\Mack^P_T,\Dd)\iso\Dd.
		\end{equation*}
		Similarly, the free $P$-stable presentable $T$-category is given by \[A\mapsto  \Fun^{\times}(\Span((\mathbb F_T)_{/A},(\mathbb F_T)_{/A},(\mathbb F_T)_{/A}[\mathbb F^P_T]),\Sp).\]

		More generally, if $P\subseteq T$ is any wide subcategory such that $\mathbb F^P_T$ is pre-inductible, then these define the free presentable $\mathbb F^P_T$-semiadditive and -stable $T$-categories, respectively.
	\end{corollary}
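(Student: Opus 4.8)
The plan is to obtain this as a direct specialization of the theory of sheaves with transfers, exploiting that $\PSh(T)$ is the sheaf topos $\PSh_\Sigma(\finTsets) = \Shv_{\sqcup}(\finTsets)$ for the finite-coproduct completion $\finTsets$ of $T$, which is extensive (cf.\ \Cref{subsec:Mackey_Functors}). First I would set $\Bb \coloneqq \PSh(T)$, $Q \coloneqq \finPsets \subseteq \finTsets$, and $\Qq \coloneqq Q_{\loc} \subseteq \Bb$, and check that $\finPsets$ satisfies the hypotheses of \Cref{cor:extensive-mackey}: it is wide in $\finTsets$ and closed under disjoint unions of morphisms by construction (\Cref{ex:Finite_P_Sets}), while closure under base change and under diagonals is, for atomic orbital $P$, part of the definition (cf.\ \cite{CLL_Global}*{Definition~4.3.1}), and in the general case is precisely what pre-inductibility of $\finPsets$ records (\Cref{rmk:Preinductible_Closed_Under_Base_Change} together with \Cref{lem:CharacterizationClosedUnderDiagonals}); moreover $\Qq$ is slicewise small since $\finPsets$ is small. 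I would also recall that under the equivalence $\Cat(\Bb) \simeq \Cat_T$ of \Cref{rmk:Limit_Extension} the notion of $\Qq$-semiadditivity corresponds to $\finPsets$-semiadditivity in the sense of \Cref{def:Q-semiadditivity_preinductible} by \Cref{prop:Check_Semiadditivity_Locally}, which for atomic orbital $P$ is $P$-semiadditivity by \Cref{ex:Finite_P_Sets}, and that presentable $\Bb$-categories correspond to presentable $T$-categories.

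With this in place the identification of the free object is immediate: by \Cref{cor:free-semiadditive-Mackey} the $\Bb$-category $\ul\Shv^{\Qq}(\Bb;\Spc)$ is the free presentable $\Qq$-semiadditive $\Bb$-category, and by \Cref{cor:extensive-mackey} (with $\Ee = \Spc$) restriction along $\finTsets \hookrightarrow \Bb$ identifies its restriction to $\finTsets^\op$ with $\ul\Mack^{\finPsets}(\finTsets;\Spc)\colon A \mapsto \Fun^{\times}(\Span((\finTsets)_{/A},(\finTsets)_{/A},(\finTsets)_{/A}[\finPsets]),\Spc)$. Restricting once more along the fully faithful $T \hookrightarrow \finTsets$ — whose composite with $\finTsets \hookrightarrow \Bb$ is the Yoneda embedding implementing $\Cat(\Bb) \simeq \Cat_T$ — then exhibits the $T$-category $\ul\Mack^{P}_T$ of the statement as the free presentable $P$-semiadditive $T$-category. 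The $\Sp$-valued assertion is proved verbatim, replacing \Cref{cor:free-semiadditive-Mackey} by \Cref{cor:free-stable-Mackey} and \Cref{cor:extensive-mackey} by its $\Ee = \Sp$ instance; the last sentence of the corollary is the same argument with ``atomic orbital $P$'' replaced by ``$P$ with $\finPsets$ pre-inductible.''

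For the explicit universal property I would combine \Cref{cor:free-semiadditive-Mackey} — evaluation at the free sheaf with transfers $\mathbb P(1)$ induces $\ul\Fun^{\textup{L}}(\ul\Shv^{\Qq}(\Bb;\Spc),\Dd) \iso \Dd$ for every presentable $\Qq$-semiadditive $\Dd$ — with \Cref{cor:free-is-corep}, by which $\mathbb P(1) = \hom(1,\blank)$, whose image under the restriction equivalence of \Cref{cor:extensive-mackey} is $\hom(1,\blank)\colon \Span(\finTsets,\finTsets,\finPsets) \to \Spc$; transporting along $\Cat(\Bb) \simeq \Cat_T$, which preserves internal homs by the reduction used in \Cref{prop:Yoneda} and \Cref{rmk:fun-slice}, turns $\ul\Fun^{\textup{L}}$ into $\ul\Fun_T^{\textup{L}}$ and yields the claimed equivalence $\ul\Fun_T^{\textup{L}}(\ul\Mack^{P}_T,\Dd) \iso \Dd$, and likewise in the stable case. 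I do not expect a genuine obstacle, since the corollary carries no new mathematical content beyond \Cref{cor:fs-Mackey-sheaf} and \Cref{cor:extensive-mackey}; the only delicate point is the bookkeeping, namely keeping track of the three nested embeddings $T \hookrightarrow \finTsets \hookrightarrow \PSh(T)$ together with the translations between $\Bb$-categories and $T$-categories, between presentable $\Bb$-categories and presentable $T$-categories, and between $\Qq$-semiadditivity and $P$-semiadditivity, so as to confirm that both the corepresenting object $\mathbb P(1)$ and the $\ul\Fun^{\textup{L}}$-universal property descend to exactly the evaluation map and $\ul\Fun_T^{\textup{L}}$-equivalence asserted.
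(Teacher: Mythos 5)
Your proposal is correct and follows essentially the same route as the paper: specialize \Cref{cor:fs-Mackey-sheaf} together with \Cref{cor:extensive-mackey} to $\Bb=\PSh(T)\simeq\PSh_\Sigma(\finTsets)$ and $Q=\finPsets$, then invoke \Cref{cor:free-is-corep} to identify the universal element as $\hom(1,\blank)$. The paper's proof is terser but uses exactly this combination of results; your extra care with the nested embeddings $T\hookrightarrow\finTsets\hookrightarrow\PSh(T)$ and the $\Cat(\Bb)\simeq\Cat_T$ dictionary fills in bookkeeping the paper leaves implicit.
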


	An independent proof of this corollary (excluding the last sentence) has been given concurrently by P\"utzst\"uck \cite{puetzstueck} using the theory of \emph{algebraic patterns} of \cite{BHS_Algebraic_Patterns}.

	\begin{proof}
		The description of the free $P$-semiadditive presentable $T$-category is the special space $\Ee=\Spc$ of the previous corollary, while the stable statement is the special case $\Ee=\Sp$.

		Finally, Corollary~\ref{cor:free-is-corep} shows that $\hom(1,\blank)$ is the universal element, and so the equivalence 	$\ul\Fun_T^\textup{L}(\ul\Mack^P_T,\Dd)\iso\Dd$ is given by evaluation at $\hom(1,\blank)$ as stated.
	\end{proof}

	\begin{remark}
		We can also describe the universal element of the above model of the free $P$-stable presentable $T$-category as follows:

		As $\mathbb F^P_T$ contains all fold maps $X\amalg X\to X$, Remark~\ref{rk:fiberwise-mackey} implies that $\Mack^P_T(\Spc)$ is equivalent to $\Fun^\oplus(\Span(\mathbb F_T,\mathbb F_T,\mathbb F_T^P), \CMon)$ via the forgetful map.

		As the delooping functor $\CMon\to\Sp$ is left adjoint to the forgetful functor (in particular semiadditive), it induces a functor $\Fun^\oplus(\Span(\mathbb F_T,\mathbb F_T,\mathbb F_T^P), \CMon)\to\Fun^\oplus(\Span(\mathbb F_T,\mathbb F_T,\mathbb F_T^P), \Sp)$ left adjoint to the forgetful functor. By adjointness, this then sends (the lift of) $\hom(1,\blank)$ to the universal element $\mathbb S$; in other words,~$\mathbb S$ is given by pointwise delooping the unique $E_\infty$-monoid structure on $\hom(1,\blank)$.
	\end{remark}

	\subsection{Equivariant and global homotopy theory}
	We will now explain how to use this to prove, in a unified way, Mackey functor descriptions of various categories studied in equivariant homotopy theory.

	Firstly, we can reprove the (by now classical) Mackey functor description of $G$-equivariant spectra \cite{cmnn}*{Theorem~A.1} for a finite group $G$ as well as its refinement to equivariantly commutative monoids recently established by Marc \cite{marc-n-infty}:

	\begin{corollary}\label{cor:equiv-spectra}
		There is an equivalence, natural in $G\in\Orb$, between $\Mack^G\coloneqq\Fun^\oplus(\Span(\mathbb F_G), \CMon)$ and the \hbox{($\infty$-)}category of Shimakawa's $G$-equivariant special $\Gamma$-spaces \cite{shimakawa}.

		Similarly, there is a natural equivalence between genuine $G$-spectra \textup(say, in the incarnation of symmetric $G$-spectra \cite{hausmann-equivariant}\hskip 0pt minus 1pt\textup){\hskip0pt minus 2.5pt} and $\Mack^G(\Sp)\hskip0pt minus .5pt\coloneqq\hskip0pt minus .5pt\Fun^\oplus(\Span(\mathbb F_G), \Sp)$.
		\begin{proof}
			By \cite{CLL_Clefts}*{Theorem~7.17} the categories of $G$-equivariant special $\Gamma$-spaces make up the free presentable equivariantly semiadditive $\Orb$-category. The same holds for the categories $\Mack^G$ by Corollary~\ref{cor:atomic-orbital-Mackey}, proving the first statement.

			The second statement follows similarly from \cite{CLL_Clefts}*{Theorem~9.5}.
		\end{proof}
	\end{corollary}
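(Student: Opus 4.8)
The plan is to exhibit both sides of each claimed equivalence as the free object for one and the same universal property, so that the equivalence itself becomes formal once one invokes the identifications already available in \cite{CLL_Clefts}.

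For the first statement I would argue as follows. The category $\Mack^G = \Fun^{\oplus}(\Span(\mathbb F_G),\CMon)$ agrees, via the fold-map identification of \Cref{rk:fiberwise-mackey} (equivalently the remark following \Cref{cor:atomic-orbital-Mackey}), with $\Fun^{\times}(\Span(\mathbb F_G),\Spc)$; and by \Cref{cor:atomic-orbital-Mackey}, applied to the orbit category $\Orb_G$ together with the (atomic orbital) subcategory $P=\Orb_G$ governing $G$-semiadditivity (\Cref{ex:Finite_G_Sets}), this is exactly the value at the terminal orbit $G/G$ of the free presentable $G$-semiadditive $G$-category. As $G$ ranges over the global orbit category $\Orb$, these values — together with the restriction functors along group homomorphisms, as in the $\Orb$-parametrized form of \Cref{cor:atomic-orbital-Mackey} and using \cite{CLL_Global}*{Lemma~5.2.3} to keep the various slices of orbits under control — assemble into the free presentable equivariantly semiadditive $\Orb$-category. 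On the other hand, \cite{CLL_Clefts}*{Theorem~7.17} identifies Shimakawa's $G$-equivariant special $\Gamma$-spaces, again as a functor of $G\in\Orb$, with the free presentable equivariantly semiadditive $\Orb$-category. Since a free object for a universal property is unique up to equivalence, and the universal property is natural in the parameter, the two $\Orb$-categories agree; in particular this yields the asserted equivalence natural in $G\in\Orb$.

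For the second statement I would run the identical argument using the stable half of \Cref{cor:atomic-orbital-Mackey}: the free presentable $G$-stable $G$-category has underlying category $\Fun^{\times}(\Span(\mathbb F_G),\Sp)$, which coincides with $\Fun^{\oplus}(\Span(\mathbb F_G),\Sp)=\Mack^G(\Sp)$ because a functor out of $\Span(\mathbb F_G)$ into the stable category $\Sp$ preserves finite products if and only if its restriction to $\mathbb F_G\catop$ does, so that biproduct-preservation is automatic, cf.\ \cite{CHLL_NRings}*{Proposition~2.2.5}. Combining this with \cite{CLL_Clefts}*{Theorem~9.5}, which identifies symmetric $G$-spectra (in the incarnation of \cite{hausmann-equivariant}) with the free presentable equivariantly stable $\Orb$-category, again produces the desired natural equivalence.

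The only real content beyond quoting \Cref{cor:atomic-orbital-Mackey} and the two theorems of \cite{CLL_Clefts} is the bookkeeping needed to line up the universal properties: one must check that ``equivariant semiadditivity'' as used in \cite{CLL_Clefts} is the same notion as the $\finPsets$-semiadditivity recovered by our framework (\Cref{ex:equiv_sadd}, \Cref{ex:Finite_P_Sets}), that the passage from the Mackey-functor model of \Cref{cor:atomic-orbital-Mackey} to $\Span(\mathbb F_G)$ is compatible with the $\Orb$-functoriality in $G$ so that the resulting comparison is genuinely natural, and that the flavour of ``free presentable $\cdots$ category'' pinned down by evaluation at $\hom(1,\blank)$ in \Cref{cor:atomic-orbital-Mackey} matches the one verified on the topological side in \cite{CLL_Clefts}. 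None of these steps is hard, but getting the coherences aligned is where the care goes; this is the main (and essentially the only) obstacle.
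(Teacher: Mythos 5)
Your proposal is correct and takes essentially the same route as the paper: identify $\Mack^G$ (resp.\ $\Mack^G(\Sp)$) as the value at $G$ of the free presentable equivariantly semiadditive (resp.\ stable) $\Orb$-category via \Cref{cor:atomic-orbital-Mackey} together with \Cref{rk:fiberwise-mackey}, and match this against the same universal property for special $\Gamma$-$G$-spaces and symmetric $G$-spectra established in \cite{CLL_Clefts}. The one small organizational wrinkle is that you first apply \Cref{cor:atomic-orbital-Mackey} with $T=\Orb_G$, $P=\Orb_G$ for a fixed $G$ and then ``assemble'' over $G\in\Orb$; the cleaner move (and the one the paper implicitly makes) is to apply \Cref{cor:atomic-orbital-Mackey} once with $T=\Orb$ and $P=\Orb$, which directly produces the $\Orb$-parametrized comparison and makes the naturality in $G$ automatic rather than something to be assembled by hand.
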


	Similarly, we obtain Mackey functor models for $G$-global homotopy theory:

	\begin{corollary}\label{cor:global-spectra}
		There exists an equivalence, natural in $G\in\Glo$, between the \hbox{($\infty$-)}category of $G$-global special $\Gamma$-spaces in the sense of \cite{g-global}*{Definition~2.2.50} and $\Fun^\oplus(\Span(\FinGrpd_{/BG},\FinGrpd_{/BG}, \FinGrpd_{/BG}[\FinGrpdfaith]),\CMon)$, where as before $\FinGrpd=\mathbb F_\Glo$ is the $(2,1)$-category of finite $1$-groupoids and $\FinGrpdfaith$ denotes the wide subcategory of faithful functors.

		Similarly, there exists a natural equivalence between the category of $G$-global spectra \cite{g-global}*{Theorem~3.1.40} and $\Fun^\oplus(\Span(\FinGrpd_{/BG},\FinGrpd_{/BG}, \FinGrpd_{/BG}[\FinGrpdfaith]),\Sp)$.
		\begin{proof}
			These follow as before as these models make up the universal  presentable equivariantly semiadditive and equivariantly stable global categories, respectively, by \cite{CLL_Global}*{Theorems~5.3.1 and~7.3.2}.
		\end{proof}
	\end{corollary}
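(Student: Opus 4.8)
The plan is to deduce both equivalences purely formally from the universal properties of $G$-global homotopy theory established in \cite{CLL_Global} together with the concrete models for free presentable semiadditive and stable parametrized categories obtained above. The first step is to identify the relevant instance of the general framework. By \Cref{ex:equiv_sadd} a global category $\Cc\colon\Glo^\op\to\Cat$ is equivariantly semiadditive precisely when it is $\FinGrpdfaith$-semiadditive, where $\FinGrpdfaith\subseteq\PSh(\Glo)$ is the pre-inductible subcategory of faithful functors between finite groupoids; combining \Cref{prop:Check_Semiadditivity_Locally} with \Cref{rmk:Limit_Extension}, this is the same as saying that the limit extension of $\Cc$ to the topos $\Bb\coloneqq\PSh(\Glo)$ is $\Qq$-semiadditive for $\Qq\coloneqq(\FinGrpdfaith)_{\loc}$, and likewise equivariant stability corresponds to $\Qq$-stability. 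I would also record the following facts, all essentially immediate: the $(2,1)$-category $\FinGrpd=\mathbb F_\Glo$ of finite groupoids is extensive with $\PSh_\Sigma(\FinGrpd)\simeq\PSh(\Glo)=\Bb$; the wide subcategory $\FinGrpdfaith\subseteq\FinGrpd$ is closed under base change, finite coproducts, and diagonals (closure under the first two is classical, and closure under diagonals is part of $\FinGrpdfaith$ being pre-inductible, cf.\ \Cref{ex:equiv_sadd}); and $\Qq$ is slicewise small since $\FinGrpdfaith$ is essentially small.

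The second step applies the structural results of the present paper. By \Cref{cor:free-semiadditive-Mackey} the free presentable $\Qq$-semiadditive $\Bb$-category is $\ul\Shv^\Qq(\Bb;\Spc)$, and by \Cref{cor:extensive-mackey} (applied with $\Aa=\FinGrpd$ and $Q=\FinGrpdfaith$) its restriction along $\FinGrpd\hookrightarrow\Bb$ is the Mackey functor presheaf
\[
A\mapsto\Fun^\times\bigl(\Span(\FinGrpd_{/A},\FinGrpd_{/A},\FinGrpd_{/A}[\FinGrpdfaith]),\Spc\bigr).
\]
Restricting further along $\Glo\hookrightarrow\FinGrpd$ and evaluating at $A=BG$ identifies the underlying global category of the free presentable $\Qq$-semiadditive $\Bb$-category with $G\mapsto\Fun^\times(\Span(\FinGrpd_{/BG},\FinGrpd_{/BG},\FinGrpd_{/BG}[\FinGrpdfaith]),\Spc)$, naturally in $G\in\Glo$. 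To match the formulation of the statement I would note that the fold map $1\amalg 1\to 1$ is a functor of discrete groupoids, hence faithful, hence lies in $\Qq$, so \Cref{rk:fiberwise-mackey} lets us replace $\Spc$ by $\CMon$ above (using that $\Fun^\oplus=\Fun^\times$ for the semiadditive target $\CMon$). For the stable statement I would run the same argument with \Cref{cor:free-stable-Mackey} in place of \Cref{cor:free-semiadditive-Mackey}, obtaining the same formula with $\Sp$ replacing $\CMon$.

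The third and final step is the comparison of universal properties. By \cite{CLL_Global}*{Theorem~5.3.1} the global category $G\mapsto(G\text{-global special }\Gamma\text{-spaces})$ is the free presentable equivariantly semiadditive global category, and by \cite{CLL_Global}*{Theorem~7.3.2} the global category $G\mapsto(G\text{-global spectra})$ is the free presentable equivariantly stable global category. Under the dictionary of the first step these are exactly the free presentable $\Qq$-semiadditive, respectively $\Qq$-stable, $\Bb$-categories, so each corepresents, on the category of presentable $\Qq$-semiadditive $\Bb$-categories and colimit-preserving functors (respectively its stable variant), the same functor as the Mackey functor model produced in the second step; an object corepresenting a given functor is unique up to a canonical equivalence, which is in particular natural in $G\in\Glo$. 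The only point requiring genuine care is this last matching: one must check that "free presentable equivariantly semiadditive global category'' in the sense of \cite{CLL_Global} is tested against the same class of morphisms (colimit-preserving, equivariantly semiadditive functors of presentable global categories) as the universal property coming out of \Cref{cor:free-semiadditive-Mackey}; granting this, the remainder is an unwinding of definitions.
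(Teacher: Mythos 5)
Your argument is correct and follows essentially the same route as the paper: the paper simply invokes the already-packaged \Cref{cor:atomic-orbital-Mackey} (with $T=\Glo$ and $P$ the faithful maps), whereas you unfold that corollary's proof by chaining \Cref{cor:free-semiadditive-Mackey}, \Cref{cor:extensive-mackey}, \Cref{rk:fiberwise-mackey}, and \Cref{cor:free-stable-Mackey} explicitly, before matching universal properties against \cite{CLL_Global}*{Theorems~5.3.1 and~7.3.2}. The two presentations are the same argument at different levels of unwinding, and the point you flag at the end (that the universal properties in \cite{CLL_Global} and the ones produced here are tested against the same class of morphisms) is indeed the only thing requiring care and is handled by the dictionary of your first step via \Cref{prop:Check_Semiadditivity_Locally}.
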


	\begin{remark}
		In the special case $G=1$ the above models recover Schwede's ultra-commutative monoids and global spectra \cite{schwede2018global} for the global family of finite groups. In this setting they first appeared as \cite{global-mackey}*{Theorems~4.22 and~5.17}.
	\end{remark}

	We can further use this to describe the free presentable globally semiadditive and globally stable global categories (Example~\ref{ex:global-semiadditivity}):

	\begin{corollary}
		The assignment $\Mack^\Glo_\Glo\colon G\mapsto \Fun^\oplus(\Span(\mathscr F_{/BG}),\CMon)$ defines the free presentable globally semiadditive global category. Similarly, the free presentable globally stable global category is given by $G\mapsto\Fun^\oplus(\Span(\mathscr F_{/BG}),\Sp)$.\qed
	\end{corollary}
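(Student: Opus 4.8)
Both statements will be read off from the last sentence of Corollary~\ref{cor:atomic-orbital-Mackey}. The plan is to apply it with $T=\Glo$ and with $P\subseteq\Glo$ taken to be the (trivially) wide subcategory consisting of all morphisms. For this choice $\mathbb F^P_\Glo$ is exactly the finite coproduct completion $\mathbb F_\Glo$ equipped with all of its morphisms, which under the identification of Example~\ref{ex:equiv_sadd} is the full subcategory $\FinGrpd\subseteq\PSh(\Glo)$ of finite groupoids; this is pre-inductible by Example~\ref{ex:global-semiadditivity}, and by that same example a global category is $\mathbb F^P_\Glo$-semiadditive in the sense of Definition~\ref{def:Q-semiadditivity_preinductible} precisely when it is globally semiadditive. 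Since $\mathbb F^P_\Glo$ consists of \emph{all} maps, the wide subcategory $(\mathbb F_\Glo)_{/A}[\mathbb F^P_\Glo]$ is all of $(\mathbb F_\Glo)_{/A}=\FinGrpd_{/A}$ for every $A\in\Glo$. Corollary~\ref{cor:atomic-orbital-Mackey} therefore gives that
\[
	G\mapsto\Fun^\times(\Span(\FinGrpd_{/BG}),\Spc)
	\qquad\text{and}\qquad
	G\mapsto\Fun^\times(\Span(\FinGrpd_{/BG}),\Sp)
\]
are, respectively, the free presentable globally semiadditive and the free presentable globally stable global category.

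It then remains only to rewrite these two $\Glo$-categories in the form appearing in the statement. For the stable version this is purely formal: $\Sp$ is semiadditive, so any finite-product-preserving functor out of $\Span(\FinGrpd_{/BG})$ automatically preserves finite biproducts, whence $\Fun^\times(\Span(\FinGrpd_{/BG}),\Sp)=\Fun^\oplus(\Span(\FinGrpd_{/BG}),\Sp)$, naturally in $G$, which is the asserted description. For the semiadditive version I would use that $\FinGrpd=\mathbb F^P_\Glo$ contains all fold maps $X\amalg X\to X$, so the remark following Corollary~\ref{cor:atomic-orbital-Mackey} (equivalently, the final clause of Remark~\ref{rk:fiberwise-mackey} applied to $\Qq=(\mathbb F^P_\Glo)_{\loc}$) provides a natural equivalence $\Fun^\times(\Span(\FinGrpd_{/BG}),\Spc)\simeq\Fun^\oplus(\Span(\FinGrpd_{/BG}),\CMon)$ induced by the forgetful functor $\CMon\to\Spc$. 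Recalling that $\mathscr F=\FinGrpd$, this is exactly the stated description of $\Mack^\Glo_\Glo$.

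I do not anticipate any real difficulty here: the entire substance is already contained in the general theory, and the only thing to verify by hand is the bookkeeping — namely that $\mathbb F^\Glo_\Glo$ (with all maps) agrees with the pre-inductible subcategory $\FinGrpd$ of Example~\ref{ex:global-semiadditivity}, so that $\mathbb F^\Glo_\Glo$-semiadditivity is global semiadditivity, and that the slice $(\mathbb F_\Glo)_{/A}[\mathbb F^\Glo_\Glo]$ is the whole of $\FinGrpd_{/A}$; both are immediate from the definitions. Alternatively one could argue directly from Corollary~\ref{cor:fs-Mackey-sheaf} (or Corollary~\ref{cor:extensive-mackey}), using that $\FinGrpd$ is extensive with its finite-coproduct topology and that $\PSh_\Sigma(\FinGrpd)=\PSh(\Glo)$, but routing through Corollary~\ref{cor:atomic-orbital-Mackey} is the shortest path since it already packages precisely this case.
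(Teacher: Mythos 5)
Your proof is correct and is exactly what the paper leaves implicit (the paper marks this corollary \textup{\qed} with no written argument): apply the final sentence of Corollary~\ref{cor:atomic-orbital-Mackey} with $T=\Glo$ and $P=\Glo$, noting $\mathbb F^{\Glo}_{\Glo}=\FinGrpd$ is pre-inductible with $\mathbb F^{\Glo}_{\Glo}$-semiadditivity equal to global semiadditivity by Example~\ref{ex:global-semiadditivity}, then rewrite $\Fun^\times(\Span(\FinGrpd_{/BG}),\Spc)$ as $\Fun^\oplus(\Span(\FinGrpd_{/BG}),\CMon)$ via Remark~\ref{rk:fiberwise-mackey} since $\FinGrpd$ contains the fold maps. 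Both the route and the bookkeeping match the paper's intent.
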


	\begin{remark}\label{rk:1-semiadd-embed}
		Note that compared to the category $\smash{\Mack^{\smash{\Spc_1}}_{\Spc}}$ of $1$-commutative monoids, we have fewer limit conditions in $\Mack^\Glo_\Glo$, i.e.~the two notions do \emph{not} agree. Instead, the above descriptions tell us that $1$-commutative monoids embed fully faithfully into `fully globally commutative monoids' as those objects whose underlying global space is in the image of the fully faithful right adjoint of the forgetful functor $U=\ev_1\colon\PSh(\Glo)\to\Spc$. Such global spaces are called \emph{cofree} in \cite{schwede2018global}*{Definition~1.2.28} or \emph{Borel complete} in \cite{CLL_Adams}.
	\end{remark}

	\begin{remark}
		Compared to the objects of classical global homotopy theory, the above `fully global' versions come with extra structure in the form of `deflations,' additive transfers along surjective group homomorphisms.

		In addition to the non-equivariant examples arising via the previous remark, several interesting ultra-commutative monoids like the infinite orthogonal, unitary, and symplectic groups $\textbf{O}$, $\textbf{U}$, and $\textbf{Sp}$ \cite{schwede2018global}*{Examples~2.3.6, 2.3.7, and~2.3.9}, as well as various global spectra occuring in nature like the sphere, the global algebraic $K$-theory of any $\mathbb Q$-algebra \cite{schwede-k}*{Definition~10.2 and Remark~10.7}, or global complex topological $K$-theory $\textbf{KU}$ \cite{schwede2018global}*{Construction~6.4.9} and its real analogue $\textbf{KO}$ are expected to enhance accordingly, making these fully global categories interesting objects of study. As another example, fully global Mackey functors arising from $K$-theoretic constructions have recently been applied to height 1 chromatic homotopy theory, see \cite{Yuan24Sphere} and \cite{CY23Groth}. We moreover remark that objects of the category $\Mack^\Glo_\Glo(\Ab)$ \big(which can be viewed as a decategorification of $\Mack^\Glo_\Glo(\Sp)$\big), or more generally $\Mack^\Glo_\Glo(\textup{Mod}_R)$ for an ordinary commutative ring $R$, have been well-studied in representation theory under the name \emph{biset functors}, see e.g.~\cite{bouc-biset-book}.
	\end{remark}

	\subsection{Mackey profunctors and quasi-finitely genuine \texorpdfstring{$\bm G$}{G}-spectra}
	As a new application of our results, we obtain universal characterizations for the category $\myMm(G,\Z)$ of \textit{$\Z$-valued $G$-Mackey profunctors} introduced by Kaledin \cite{Kaledin2022Mackey} and the category $\smash{\Sp_G^\textup{qfin}}$ of \textit{quasi-finitely genuine $G$-spectra} of Krause--McCandless--Nikolaus \cite{KMN2023Polygonic}.

	Let $G$ be an arbitrary group. Recall from \Cref{ex:Quasi-finite_G_Sets} the pre-inductible subcategory $\QFin_G \subseteq \PSh({\widehat{\Orb}}_G)$ of quasi-finite $G$-sets.

	\begin{definition}[Mackey profunctors, {cf.\ \cite{Kaledin2022Mackey}*{Definition~3.2}, \cite{KMN2023Polygonic}*{Definition~4.5}}]
		Let $G$ be a group and let $\Ee$ be a presentable category. A functor $M\colon \Span(\QFin_G) \to \Ee$ is called \textit{very additive} if for every quasi-finite $G$-set $S$ the canonical map
		\begin{equation}\label{eq:vsadd-comp-map}
		M(S) \to \prod_{\overline{s} \in S/G} M(\pi^{-1}(\overline{s}))
		\end{equation}
		is an equivalence in $\Ee$, where $\pi\colon S \to S/G$ denotes the quotient map. We write
		\[
			\Mack^{\mathrm{pro}}_G(\Ee) := \Fun^{\mathrm{vadd}}(\Span(\QFin_G),\Ee)
		\]
		for the full subcategory of the functor category spanned by the very additive functors, and refer to its objects as \textit{$\Ee$-valued $G$-Mackey profunctors}. The assignment $G/H \mapsto \Mack^{\mathrm{pro}}_H(\Ee)$ naturally defines a $G$-procategory $\ul{\Mack}^{\mathrm{pro}}_G(\Ee)\colon \smash{\widehat{\Orb}}_G\catop \to \Cat$.
	\end{definition}

	In order to apply our main results to this situation, we have to understand the Grothendieck topology $\tau$ on $\QFin_G$ provided by \Cref{ex:preinductible-topology}.

	\begin{lemma}
		A functor $M\colon \Span(\QFin_G) \to \Ee$ is a $\tau$-sheaf if and only if it is very additive.
	\end{lemma}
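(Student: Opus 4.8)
The plan is to unwind both conditions into concrete statements about the topology $\tau$ and the product-preservation property of $M$, and then check they coincide. First I would recall from \Cref{ex:preinductible-topology} that $\tau$ is the Grothendieck topology on $\QFin_G$ in which a sieve is covering precisely when it contains a family $\{f_i\colon A_i \to A\}$ such that $\bigsqcup_i A_i \to A$ is an effective epimorphism in $\PSh(\widehat{\Orb}_G)$, equivalently such that every map $G/H \to A$ from a finite orbit factors through some $f_i$. By \cite{SAG}*{Proposition~1.3.1.7} (or the reasoning behind it), being a $\tau$-sheaf is equivalent to sending the Čech nerves of covering families to limits; since $\QFin_G$ is extensive-like in the relevant sense, it suffices to check the sheaf condition on a generating set of covers.

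The key observation is that the topology $\tau$ on $\QFin_G$ is generated by two types of covers: the \emph{finite coproduct decompositions} $\{A_i \hookrightarrow \bigsqcup_i A_i\}$, and the \emph{orbit decomposition} of a transitive-over-$S/G$ piece, i.e.\ for a quasi-finite $G$-set $S$ the family of inclusions $\{\pi^{-1}(\overline s) \hookrightarrow S\}_{\overline s \in S/G}$ where $\pi\colon S \to S/G$. Indeed, any quasi-finite $G$-set $S$ decomposes as $S = \bigsqcup_{\overline s \in S/G} \pi^{-1}(\overline s)$ into its $G$-orbits, and a map $G/H \to S$ from a finite orbit lands in exactly one $\pi^{-1}(\overline s)$; this shows $\{\pi^{-1}(\overline s)\hookrightarrow S\}$ is $\tau$-covering (note this may be an \emph{infinite} family, which is where the difference from ordinary Mackey functors enters). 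Conversely, covers by infinitely many orbits together with finite coproduct decompositions generate $\tau$, because any $\tau$-cover of $S$ can be refined to one consisting of (unions of) single orbits. Then I would verify that the sheaf condition against a finite coproduct decomposition is automatically satisfied by \emph{any} functor out of a span category — this is the standard fact that $F\colon\Span(\QFin_G)\to\Ee$ restricted to $\QFin_G^\op$ sends finite coproducts to products iff $F$ does, used already in \Cref{subsec:Mackey_Functors} — while the sheaf condition against the orbit-decomposition cover of $S$ is, after identifying the relevant Čech nerve (the inclusions $\pi^{-1}(\overline s)\hookrightarrow S$ have no higher intersections since the $\pi^{-1}(\overline s)$ are disjoint), exactly the requirement that the map $(\ref{eq:vsadd-comp-map})$ be an equivalence, i.e.\ very additivity.

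The main obstacle I anticipate is bookkeeping around the possibly infinite orbit decompositions: I need to confirm that the descent condition for an \emph{infinite} jointly-covering family of subobjects with pairwise empty (more precisely, pairwise disjoint with initial intersection) overlaps reduces to the single product comparison $(\ref{eq:vsadd-comp-map})$ rather than to some more elaborate limit over an infinite Čech nerve, and that no compatibility between the finite-coproduct part and the orbit part produces extra conditions. Concretely, one checks that the Čech nerve of $\bigsqcup_{\overline s}\pi^{-1}(\overline s)\twoheadrightarrow S$ in $\PSh(\widehat{\Orb}_G)$ is the constant-on-$\pi_0$ groupoid, so the sheaf condition is just the product formula; and one checks that a functor satisfying the product formula for all orbit-decompositions already satisfies it for arbitrary $\tau$-covers by a refinement argument. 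Once these two points are in place, the equivalence ``$\tau$-sheaf $\iff$ very additive'' follows by combining them with the automatic finite-product part, completing the proof.
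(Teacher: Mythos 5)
Your proof of the ``only if'' direction matches the paper's: identify the orbit decomposition $\{\pi^{-1}(\overline s)\hookrightarrow S\}$ as a $\tau$-cover with pairwise initial overlaps, so that descent is exactly the product comparison map. So far so good.

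For the ``if'' direction, however, your approach diverges substantially from the paper's and the crux is exactly the step you defer (``one checks that a functor satisfying the product formula for all orbit-decompositions already satisfies it for arbitrary $\tau$-covers by a refinement argument''). The paper avoids this entirely: it forms the limit-extension $N\colon\PSh(\widehat{\Orb}_G)^\op\to\Ee$ of $M'\vert_{\widehat{\Orb}_G^\op}$, observes that $N\vert_{\QFin_G^\op}$ is the right Kan extension of $M'\vert_{\widehat{\Orb}_G^\op}$ along $\widehat{\Orb}_G\hookrightarrow\QFin_G$ and hence receives a canonical map from $M'$, and then concludes this map is an equivalence because both sides are very additive and agree on orbits. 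This packages the ``refinement'' into a single universal-property argument. Your version requires three nontrivial verifications you do not carry out: (i) that the orbit-decomposition sieve $\Sigma_0$ is contained in, and moreover cofinal in, every covering sieve $\Sigma$ on $S$ (this in turn requires a Quillen~A computation for each $T\in\Sigma$ of the category of $U\in\Sigma_0$ over $T$); (ii) that $\lim_{\Sigma_0^\op} M$ actually computes the product $\prod_{\overline s} M(\pi^{-1}(\overline s))$ — note that $\Sigma_0$ is not simply a coproduct of slices but a ``wedge'' along $\emptyset$, so this reduction uses $M(\emptyset)\simeq *$, which itself is the instance of very additivity for $S=\emptyset$ and must be noted; and (iii) that the possibly infinite family causes no trouble in (i)--(ii). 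I believe all of this can be made to work, but it is genuinely more delicate than the paper's route, and your write-up would be incomplete without these checks.

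Separately, one claim in your outline is simply wrong as stated: the sheaf condition for a finite coproduct decomposition is \emph{not} ``automatically satisfied by any functor out of a span category.'' What the reference to \cite{CHLL_NRings} actually gives is that $F\colon\Span(\Aa,\Aa,\Qq)\to\Ee$ preserves finite products if and only if its restriction $F\vert_{\Aa^\op}$ does; for a general $F$ neither holds. The correct observation in this spot is that finite coproduct descent is implied by very additivity (refine a finite decomposition by orbits), but it is not for free.
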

	\begin{proof}
		For the `only if'-direction, note that for every quasi-finite $G$-set $S$ the canonical map $\bigsqcup_{\overline{s} \in S/G} \pi^{-1}(\overline{s}) \twoheadrightarrow S$ is a surjection on $H$-fixed points for all finite-index $H \leqslant G$, and thus becomes an effective epimorphism in $\PSh(\smash{\widehat{\Orb}}_G)$. In particular, the inclusions $\{\pi^{-1}(\overline{s}) \hookrightarrow S\}_{\overline{s} \in S/G}$ define a $\tau$-cover. Note that for $\bar s\not=\bar t$ the pullback $\pi^{-1}(\overline{s})\times_S\pi^{-1}(\overline{t})$ is initial; unravelling the definitions, we therefore see that descent with respect to this cover precisely amounts to the map $(\ref{eq:vsadd-comp-map})$ being an equivalence. In particular, every $\tau$-sheaf is very additive.

		Conversely, assume that $M$ is very additive. We have to show that $M$ is a $\tau$-sheaf, or equivalently that its restriction $M' \coloneqq M\vert_{\QFin_G\catop}\colon \QFin_G\catop \to \Ee$ extends to a continuous functor $\PSh(\smash{\widehat{\Orb}}_G)^{\op}\to\Ee$. Define $N \colon \PSh(\smash{\widehat{\Orb}}_G)\catop \to \Ee$ as the limit-extension of the restriction $M'\vert_{\smash{\widehat{\Orb}}_G\catop}\colon \smash{\widehat{\Orb}}_G\catop \to \Ee$. Because $\QFin_G$ is a full subcategory of $\PSh(\widehat{\Orb}_G)$, the restriction of $N$ to $\QFin_G\catop$ is precisely the right Kan extension of $M'\vert_{\smash{\widehat{\Orb}}_G\catop}$ along the inclusion, so that there is a canonical map $M' \to N\vert_{\QFin_G\catop}$ extending the identity on $\widehat{\Orb}_G$. As both sides are very additive (for $N$ by the first paragraph), we see that this an equivalence, finishing the proof.
	\end{proof}

	\begin{corollary}
		For a presentable category $\Ee$ there is a canonical equivalence
		\begin{equation*}
			\ul{\CMon}_{\Bb}^{\QFin_G}(\ul{\Shv}(\Bb;\Ee))\simeq \ul{\Mack}^{\textup{pro}}_G(\Ee),
		\end{equation*}
		where $\Bb := \PSh(\smash{\widehat{\Orb}_G})$.
		\begin{proof}
			Both sides are canonically equivalent to $\ul{\Mack}^{\QFin_G}_{\Bb}(\Ee)$: for the left-hand side this is by \Cref{thm:span-BBQ} while for the right-hand side this is a combination of the previous lemma with \Cref{cor:Smaller_Spans}.
		\end{proof}
	\end{corollary}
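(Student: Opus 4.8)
The plan is to produce the equivalence by exhibiting both sides as the $\Bb$-category $\ul{\Shv}^{\Qq}(\Bb;\Ee)$ of $\Ee$-valued sheaves with $\Qq$-transfers, where $\Qq \coloneqq (\QFin_G)_{\loc} \subseteq \Bb = \PSh(\widehat{\Orb}_G)$ is the locally inductible subcategory generated by the pre-inductible subcategory $\QFin_G$ of \Cref{ex:Quasi-finite_G_Sets}.

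First I would dispatch the left-hand side. By definition, $\ul{\CMon}_{\Bb}^{\QFin_G}(\ul{\Shv}(\Bb;\Ee)) = \ul{\CMon}^{\Qq}(\ul{\Shv}(\Bb;\Ee))$, and \Cref{thm:span-BBQ}, applied to the presentable category $\Ee$ and the locally inductible subcategory $\Qq \subseteq \Bb$, supplies a canonical (and unique) equivalence of $\Bb$-categories $\ul{\CMon}^{\Qq}(\ul{\Shv}(\Bb;\Ee)) \simeq \ul{\Shv}^{\Qq}(\Bb;\Ee)$ over $\ul{\Shv}(\Bb;\Ee)$. No further input is needed here.

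For the right-hand side I would pass through the site $(\QFin_G,\tau)$ furnished by \Cref{ex:preinductible-topology} for $T = \widehat{\Orb}_G$ and $Q = \QFin_G$: concretely, a family $\{f_i\colon S_i \to S\}$ is a $\tau$-cover exactly when $\coprod_i S_i \to S$ is an effective epimorphism in $\PSh(\widehat{\Orb}_G)$, and $\Shv_{\tau}(\QFin_G) \simeq \PSh(\widehat{\Orb}_G) = \Bb$ with $\QFin_G \hookrightarrow \Bb$ the defining inclusion. Since every morphism of $\QFin_G$ lies in $\Qq$ (so that $\QFin_G \cap \Qq = \QFin_G$), \Cref{cor:Smaller_Spans} identifies the restriction $\ul{\Shv}^{\Qq}(\Bb;\Ee)\vert_{\QFin_G^\op}$ with the $\QFin_G$-category $\ul{\Shv}^{\QFin_G}_{\tau}(\QFin_G;\Ee)$ of $\tau$-sheaves with transfers. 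Restricting further along $\widehat{\Orb}_G \hookrightarrow \QFin_G$ and using the slice identifications $(\QFin_G)_{/(G/H)} \simeq \QFin_H$ for finite-index subgroups $H \leqslant G$, the value of this $\widehat{\Orb}_G$-category at $G/H$ becomes the full subcategory of $\Fun(\Span(\QFin_H),\Ee)$ spanned by the functors whose restriction to $\QFin_H^\op$ is a $\tau$-sheaf; by the lemma preceding the corollary, applied with $G$ replaced by $H$, these are precisely the very additive functors, so the category in question is $\Fun^{\vadd}(\Span(\QFin_H),\Ee) = \Mack^{\mathrm{pro}}_H(\Ee)$. Hence $\ul{\Shv}^{\Qq}(\Bb;\Ee)\vert_{\widehat{\Orb}_G^\op} \simeq \ul{\Mack}^{\mathrm{pro}}_G(\Ee)$, and composing with the equivalence of the previous paragraph (uniqueness in \Cref{thm:span-BBQ} making the composite canonical) completes the argument.

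The step I expect to require the most care — though no genuinely new idea — is keeping the three parametrizations in sync: $\ul{\Mack}^{\mathrm{pro}}_G(\Ee)$ lives over $\widehat{\Orb}_G^\op$, the $\ul{\CMon}$ and $\ul{\Shv}^{\Qq}$ constructions over $\Bb^\op$, and the site version over $\QFin_G^\op$. One must use that \Cref{thm:span-BBQ} and \Cref{cor:Smaller_Spans} yield equivalences of $\Bb$-presheaves (rather than mere levelwise equivalences), so that restriction along $\widehat{\Orb}_G \hookrightarrow \QFin_G \hookrightarrow \Bb$ respects all restriction functors, and that the slicewise identifications $\Bb_{/(G/H)} \simeq \PSh(\widehat{\Orb}_H)$, $\pi_{G/H}^{-1}\Qq \simeq (\QFin_H)_{\loc}$, and $(\QFin_G)_{/(G/H)} \simeq \QFin_H$ are natural in $G/H \in \widehat{\Orb}_G$. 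Once this bookkeeping is arranged, all the mathematical substance is already contained in \Cref{thm:span-BBQ}, \Cref{cor:Smaller_Spans}, and the lemma identifying $\tau$-descent with very additivity.
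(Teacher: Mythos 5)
Your proof is correct and takes the same approach as the paper's: identify both sides with $\ul{\Shv}^{\Qq}(\Bb;\Ee)$ (for $\Qq=(\QFin_G)_{\loc}$), using \Cref{thm:span-BBQ} for the left-hand side and \Cref{cor:Smaller_Spans} together with the preceding lemma for the right-hand side. The paper's version is considerably terser but rests on exactly the same ingredients; the bookkeeping you flag about the three parametrization levels is indeed left implicit there.
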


	In the case $\Ee = \Sp$, the category $\Mack^{\mathrm{pro}}_G(\Sp)$ is precisely the category $\Sp_G^\textup{qfin}$ of \textit{quasi-finitely genuine $G$-spectra} of Krause--McCandless--Nikolaus \cite{KMN2023Polygonic}*{Definition~4.5}. Corollary~\ref{cor:free-stable-Mackey} therefore specializes to:

	\begin{theorem}\label{thm:qfin}
		The category $\Sp_G^\textup{qfin}$ is the underlying category of the free presentable very $G$-semiadditive stable $G$-procategory.\qed
	\end{theorem}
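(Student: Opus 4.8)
The plan is to combine the general machinery developed in the preceding sections with the identification of the relevant Grothendieck topology established immediately above. Concretely, I would set $\Bb := \PSh(\smash{\widehat{\Orb}}_G)$ and let $\Qq := (\QFin_G)_{\loc}$ be the locally inductible subcategory generated by the pre-inductible subcategory $\QFin_G \subseteq \Bb$ from \Cref{ex:Quasi-finite_G_Sets}; by \Cref{prop:Check_Semiadditivity_Locally}, a $G$-procategory is very $G$-semiadditive precisely when its limit-extension is $\Qq$-semiadditive. Since $\QFin_G$ is slicewise small (it is a small pre-inductible category, hence $(\QFin_G)_{\loc}$ is slicewise small by the example following \Cref{prop:cmon-in-pres-is-pres}), \Cref{cor:free-stable-Mackey} applies and tells us that $\ul\Shv^{\Qq}(\Bb;\Sp)$ is the free presentable $\Qq$-stable $\Bb$-category.

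Next I would identify $\ul\Shv^{\Qq}(\Bb;\Sp)$ explicitly. By \Cref{ex:preinductible-topology}, the pair $\Aa := \QFin_G \subseteq \Bb$ equipped with the Grothendieck topology $\tau$ described there satisfies the hypotheses of \Cref{prop:Smaller_Spans} and \Cref{cor:fs-Mackey-sheaf}; in particular $\Shv_\tau(\Aa) \simeq \Bb$ and $\Aa$ is closed under maps in $\Qq$ (any pullback of a quasi-finite covering map along a map of quasi-finite $G$-sets is again quasi-finite). Hence \Cref{cor:fs-Mackey-sheaf}(2) gives that the free presentable $\Qq$-stable $\Bb$-category is $\ul\Shv^{\Aa\cap\Qq}_\tau(\Aa;\Sp)$, i.e.\ the $G$-procategory sending $G/H$ to the full subcategory of $\Fun(\Span(\QFin_H),\Sp)$ spanned by those functors whose restriction to $\QFin_H^\op$ is a $\tau$-sheaf. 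By the Lemma proved just above this excerpt's final statement, a functor $\Span(\QFin_H) \to \Sp$ is a $\tau$-sheaf if and only if it is very additive, so this full subcategory is exactly $\Mack^{\mathrm{pro}}_H(\Sp) = \Sp^{\qfgen}_H$. Taking underlying categories (evaluating at $G/G \in \smash{\widehat{\Orb}}_G$, whose representable presheaf is the terminal object) then yields $\Sp^{\qfgen}_G$.

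Finally I would assemble these identifications: the free presentable very $G$-semiadditive stable $G$-procategory is, by the translation of \Cref{prop:Check_Semiadditivity_Locally} together with the fact that the equivalence $\Cat(\Bb) \simeq \Cat_{\smash{\widehat{\Orb}}_G}$ carries free objects to free objects, the limit-extension of $\ul\Shv^{\Aa\cap\Qq}_\tau(\Aa;\Sp) \simeq \ul{\Mack}^{\mathrm{pro}}_G(\Sp)$; its underlying category is $\Sp^{\qfgen}_G$, which is the claim.

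The main obstacle I anticipate is not any single deep step but rather the careful bookkeeping of the various reductions: one must check that \enquote{very $G$-semiadditive} as defined via $\QFin_G$-semiadditivity of $G$-procategories matches $\Qq$-semiadditivity of $\Bb$-categories under limit-extension (this is \Cref{prop:Check_Semiadditivity_Locally} applied to the pre-inductible $\QFin_G$), that the smallness hypothesis needed for \Cref{cor:free-stable-Mackey} genuinely holds, and that the topology $\tau$ from \Cref{ex:preinductible-topology} is the same one featuring in the \enquote{very additive} lemma — the latter is exactly the content of that lemma, so the real work there is already done. The one point requiring a small verified argument is that $\Aa = \QFin_G$ is closed under maps in $\Qq = (\QFin_G)_{\loc}$: a map $A \to B$ in $\Qq$ with $B$ quasi-finite has all its base changes to orbits $G/H$ (with $H$ finite-index) landing in $\finPsets$-type covers, and since $\QFin_G$ is closed under base change and the \enquote{fiberwise} description of $\Qq_{\loc}$ forces $A$ to again be quasi-finite, condition (2) of \Cref{prop:Smaller_Spans} holds. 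With that in hand the proof is a one-line chain of the cited equivalences.
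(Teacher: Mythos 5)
Your proposal is correct and follows essentially the same route as the paper: the paper derives the theorem directly from Corollary~\ref{cor:free-stable-Mackey} after establishing, via the lemma identifying very additive functors with $\tau$-sheaves and the corollary immediately preceding the theorem (which invokes Theorem~\ref{thm:span-BBQ} and Corollary~\ref{cor:Smaller_Spans}), that $\ul{\CMon}^{\QFin_G}_{\Bb}(\ul{\Shv}(\Bb;\Sp)) \simeq \ul{\Mack}^{\mathrm{pro}}_G(\Sp)$ and that $\Mack^{\mathrm{pro}}_G(\Sp) = \Sp_G^{\qfgen}$. Your spelled-out verification of the hypotheses (slicewise smallness, closure of $\QFin_G$ under $\Qq$-maps) is exactly the bookkeeping the paper implicitly delegates to Example~\ref{ex:preinductible-topology} and Example~\ref{ex:Quasi-finite_G_Sets}, so there is no substantive divergence.
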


	On the other hand, for $\Ee = \Ab$ the category $\Mack^{\mathrm{pro}}_G(\Ab)$ is precisely the category $\myMm(G,\Z)$ of \textit{$\Z$-valued Mackey profunctors} introduced by Kaledin \cite{Kaledin2022Mackey}*{Definition~3.2}. Combining Remark~\ref{rk:mackey-mode-univ-prop} with Example~\ref{ex:Ab-mode} we therefore similarly get:

	\begin{theorem}\label{thm:Mackey-pro-fun}
		\label{thm:univ_prop_Kaledins_profunctors}
		The category $\myMm(G,\Z)$ of $G$-Mackey profunctors in abelian groups is the underlying category of the free presentable $1$-truncated very $G$-additive $G$-procategory.\qed
	\end{theorem}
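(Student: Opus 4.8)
The plan is to identify the $1$-truncated very $G$-additive presentable $G$-procategories with a category of fiberwise modules over the mode $\Ab$, and then read off the free object from the results on sheaves with transfers. Throughout write $\Bb\coloneqq\PSh(\widehat{\Orb}_G)$, so that $G$-procategories are $\Bb$-categories; recall from \Cref{ex:Quasi-finite_G_Sets} that $\QFin_G\subseteq\Bb$ is pre-inductible with associated notion of semiadditivity the very $G$-semiadditivity, which by \Cref{prop:Check_Semiadditivity_Locally} agrees with $(\QFin_G)_{\loc}$-semiadditivity of the corresponding $\Bb$-category. By \Cref{ex:Ab-mode} a presentable category is an $\Ab$-module precisely when it is a $1$-truncated additive presentable category, so a presentable $G$-procategory is $1$-truncated and very $G$-additive exactly when it is $(\QFin_G)_{\loc}$-semiadditive and factors through $\Mod_\Ab\subseteq\Pr^\textup{L}$, i.e.\ is a $(\QFin_G)_{\loc}$-semiadditive fiberwise $\Ab$-module. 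Hence the category in question (with left adjoint functors) is the full subcategory of $\Mod_\Ab(\Bb)$ on the $(\QFin_G)_{\loc}$-semiadditive objects.

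Next I would assemble the free object. Since $(\QFin_G)_{\loc}$ is slicewise small (it is among the examples of \Cref{subsec:Examples}) and $(\Ab,\Z)$ is a mode, composing the left adjoint $\ul\CMon^{(\QFin_G)_{\loc}}$ from \Cref{thm:universal_prop_par_Qcom_monoids_presentable} with the pointwise tensoring $\Ab\otimes(-)$ from \Cref{lemma:fiberwise-module} exhibits a left adjoint to the inclusion of $(\QFin_G)_{\loc}$-semiadditive fiberwise $\Ab$-modules into $\Pr(\Bb)^\textup{L}$; applied to the free presentable $\Bb$-category $\ul{\Spc}_{\Bb}=\ul\Shv(\Bb;\Spc)$ and using \Cref{thm:span-BBQ} this gives
\[
\Ab\otimes\ul\CMon^{(\QFin_G)_{\loc}}(\ul\Shv(\Bb;\Spc))\simeq\Ab\otimes\ul\Shv^{(\QFin_G)_{\loc}}(\Bb;\Spc)\simeq\ul\Shv^{(\QFin_G)_{\loc}}(\Bb;\Ab),
\]
the last step by \Cref{rk:mackey-mode-univ-prop}. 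Finally, exactly as in the discussion preceding \Cref{thm:qfin} — combining \Cref{thm:span-BBQ} with the fact that a functor $\Span(\QFin_G)\to\Ee$ is very additive iff its restriction to $\QFin_G\catop$ is a $\tau$-sheaf, together with \Cref{cor:Smaller_Spans} — there is a canonical equivalence $\ul\Shv^{(\QFin_G)_{\loc}}(\Bb;\Ab)\simeq\ul\Mack^{\mathrm{pro}}_G(\Ab)$ of $\widehat{\Orb}_G$-categories. Passing to underlying categories (evaluating at the terminal orbit $G/G$) and recalling that $\Mack^{\mathrm{pro}}_G(\Ab)$ is Kaledin's category $\myMm(G,\Z)$ by definition then completes the argument.

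The one step that is not pure bookkeeping is the identification in the first paragraph, namely that ``very $G$-additive'' hides no further coherence on the parametrized transfers beyond fiberwise additivity. I would settle this by noting that $\QFin_G$ contains all fold maps $S\amalg S\to S$ (since $G/G\amalg G/G$ is quasi-finite), so by the example following \Cref{lem:fiberwise-semiadd} every fiber of a very $G$-semiadditive $G$-procategory is automatically semiadditive and every restriction functor is semiadditive; ``additive'' then only asks that the fiberwise hom-monoids be group-complete, which is a levelwise condition, and together with $1$-truncatedness this is exactly factorization through $\Mod_\Ab$. Everything downstream is the mode-theoretic computation above, entirely parallel to the proof of \Cref{thm:qfin}.
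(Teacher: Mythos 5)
Your proof is correct and follows essentially the same chain of reasoning the paper intends by its terse ``\qed'': the identification of $1$-truncated very $G$-additive presentable $G$-procategories with $(\QFin_G)_{\loc}$-semiadditive fiberwise $\Ab$-modules, the mode computation $\Ab\otimes\ul\Shv^{\Qq}(\Bb;\Spc)\simeq\ul\Shv^{\Qq}(\Bb;\Ab)$ via \Cref{rk:mackey-mode-univ-prop} and \Cref{ex:Ab-mode}, and the identification of $\ul\Shv^{(\QFin_G)_{\loc}}(\Bb;\Ab)$ with $\ul{\Mack}^{\mathrm{pro}}_G(\Ab)$ via the very-additivity-equals-$\tau$-sheaf lemma are exactly the three ingredients the paper combines. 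The one place where you supply something the paper leaves implicit is the terminology analysis in your last paragraph: the paper never spells out that ``$1$-truncated very $G$-additive'' means ``$\QFin_G$-semiadditive and fiberwise an $\Ab$-module,'' so your observation that $\QFin_G$ contains the fold maps (so fibers are automatically semiadditive by the example following \Cref{lem:fiberwise-semiadd}, and ``additive'' is only the levelwise group-completion condition) is a genuinely useful unpacking, not a detour.
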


	\appendix

	\section{A criterion for adjoints}
	In this short appendix we will recall a criterion from \cite{martiniwolf2021limits} for the existence of adjoints of parametrized functors and specialize it to a statement about parametrized colimits. We begin with the following characterization:

	\begin{proposition}[See \cite{martiniwolf2021limits}*{Proposition~3.2.9}]\label{prop:adj-criterion-MW}
		A $\Bb$-functor $G\colon\Cc\to\Dd$ admits a left adjoint if and only if the following conditions are satisfied:
		\begin{enumerate}
			\item For each $A\in\Bb$, the functor $G_A\colon\Cc(A)\to\Dd(A)$ admits a left adjoint $F_A$.
			\item For each $f\colon A\to B$ the Beck--Chevalley transformation $F_Af^*\to f^*F_B$ is an equivalence.
		\end{enumerate}
		In this case, the left adjoint $F$ is given at any object $A\in\Bb$ by the pointwise left adjoint $F_A$, and for any morphism $f\colon A\to B$ by the Beck--Chevalley square.\qed
	\end{proposition}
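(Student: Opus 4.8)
The plan is to reduce this to the analogous criterion for relative adjoints of cocartesian fibrations. For the \emph{only if} direction, recall that evaluation at an object $A\in\Bb$ underlies a $2$-functor $\ev_A\colon\Cat(\Bb)\to\Cat$; since $2$-functors preserve adjunctions, an adjunction $F\dashv G$ in $\Cat(\Bb)$ restricts fibrewise to an adjunction $F_A\dashv G_A$ in $\Cat$ for every $A$, which gives condition~(1) and simultaneously shows that any left adjoint of $G$ is computed pointwise by the $F_A$. For condition~(2) I would invoke the mate calculus: by definition $\BC\colon F_Af^*\to f^*F_B$ is the mate, formed with respect to the pointwise adjunctions $F_\bullet\dashv G_\bullet$, of the naturality equivalence $f^*G_B\simeq G_Af^*$ of the $\Bb$-functor $G$. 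Since the units and counits of the adjunctions $F_A\dashv G_A$ are precisely the components of the unit $\eta\colon\id_\Cc\to GF$ and counit $\epsilon\colon FG\to\id_\Dd$ of $F\dashv G$ — which are themselves morphisms in $\Cat(\Bb)$, i.e.\ $\Bb$-natural transformations — a standard mate argument identifies this transformation with the naturality equivalence of the $\Bb$-functor $F$ along $f$, hence it is invertible.

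For the \emph{if} direction, I would pass to unstraightenings: $G$ corresponds to a map $\widetilde G\colon\widetilde\Cc\to\widetilde\Dd$ of cocartesian fibrations over $\Bb^\op$ preserving cocartesian edges, where the cocartesian pushforward of $\widetilde\Cc$ along (the $\Bb^\op$-morphism underlying) $f\colon A\to B$ is $f^*\colon\Cc(B)\to\Cc(A)$. Condition~(1) says that each fibre $\widetilde G_A=G_A$ admits a left adjoint, which by Lurie's relative adjoint functor theorem (cf.~\cite{HA}*{Section~7.3.2}) already yields a relative left adjoint $\widetilde F\colon\widetilde\Dd\to\widetilde\Cc$ over $\Bb^\op$; condition~(2) is exactly the Beck--Chevalley compatibility needed for $\widetilde F$ to again preserve cocartesian edges. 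Straightening $\widetilde F$ produces a natural transformation $F\colon\Dd\to\Cc$ of functors $\Bb^\op\to\Cat$, and since $\Cc$ lies in the \emph{full} subcategory $\Cat(\Bb)=\Fun^{\mathrm R}(\Bb^\op,\Cat)$, no further hypothesis on $F$ is needed for it to be a morphism of $\Bb$-categories. The relative adjunction supplies unit and counit transformations over $\Bb^\op$ satisfying the triangle identities; straightening turns these into $\Bb$-natural $\eta$ and $\epsilon$, and the triangle identities persist because they are detected fibrewise. Unwinding the construction also gives the two final clauses: $\widetilde F$ restricts on each fibre to the given $F_A$, and the transition functoriality of a relative left adjoint along a cocartesian lift of $f\colon A\to B$ is by construction the Beck--Chevalley mate, so the naturality square of $F$ along $f$ is the Beck--Chevalley square.

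The main obstacle is the bookkeeping in the \emph{if} direction: one must check that condition~(2), phrased as invertibility of $\BC\colon F_Af^*\to f^*F_B$, is genuinely the hypothesis guaranteeing that the relative left adjoint of $\widetilde G$ preserves cocartesian edges — equivalently, that the straightened $F$ is $\Bb$-natural rather than merely oplax natural — and that the variances ($\Bb$ versus $\Bb^\op$, cartesian versus cocartesian) are tracked consistently throughout. A clean way to organise this is to express everything through the pointwise adjunctions and to observe that the mate of the invertible Beck--Chevalley square for $G$ along $f$ is invertible if and only if the corresponding mate for $F$ along $f$ is; this is precisely the content of \cite{martiniwolf2021limits}*{Proposition~3.2.9}, which one may alternatively cite verbatim.
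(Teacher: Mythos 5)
The paper itself gives no proof of this proposition: the \qed directly after the statement indicates that it is recalled verbatim from Martini--Wolf, whose Proposition~3.2.9 is established in the formalism of internal higher category theory. Your argument therefore supplies a proof where the paper offers only a citation, and your route --- 2-functoriality of evaluation plus the mate calculus for the forward direction, and unstraightening plus the relative adjoint functor theorem for the converse --- is the standard fibration-theoretic alternative. Both directions are essentially sound.

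Two details deserve tightening. First, a variance issue: HA Proposition~7.3.2.6 is stated for (locally) \emph{cartesian} fibrations and produces relative \emph{left} adjoints; the cocartesian dual produces relative \emph{right} adjoints. Since you pass to the cocartesian unstraightening over $\Bb^\op$ and then want a relative left adjoint, the reference and the fibration type as written do not line up. The fix is easy (use the cartesian unstraightening over $\Bb$, or invoke the dual explicitly), and you flag the bookkeeping as the main obstacle, but the proof as stated has the variances crossed. Second, HA~7.3.2.6 only yields the \emph{existence} of the relative adjoint $\widetilde F$; the further claim that $\widetilde F$ preserves (co)cartesian edges --- which is what is needed to straighten it into a genuine morphism of $\Fun(\Bb^\op,\Cat)$ rather than a lax natural transformation, and which is exactly where condition (2) enters --- is a separate lemma not contained in 7.3.2.6. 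You correctly identify (2) as the right hypothesis for this step, but the relevant criterion (a relative left adjoint of a map of cartesian fibrations preserves cartesian edges if and only if the Beck--Chevalley transformations are invertible) should be named or sketched rather than absorbed into the blanket reference to ``Section~7.3.2.''
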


	\begin{remark}\label{rk:adjoint-via-limits}
		If the restriction functor $f^*$ has a right adjoint $f_*$, the second condition is equivalent to demanding that the Beck--Chevalley map $G_Bf_*\to f_*G_A$ be invertible. In particular, if $\Cc$ and $\Dd$ are $\Bb$-complete, then $G$ has a left adjoint if and only if it preserves $\Bb$-limits and each $G_A$ has a left adjoint.
	\end{remark}

	The following proposition allows us to significantly reduce the amount of conditions we have to check:

	\begin{proposition}\label{prop:adj-criterion-cover}
		Let $G\colon\Cc\to\Dd$ be a $\Bb$-functor. Assume there exists a covering sieve $\Sigma\subseteq\Bb$ of the terminal object $1\in\Bb$ such that for every $A\in\Sigma$ the functor $G_A$ admits a left adjoint $F_A$ and such that for every $f\colon A\to B$ in $\Sigma$ the Beck--Chevalley map $F_Af^*\to f^*F_B$ is invertible. Then $G$ admits a left adjoint.
	\end{proposition}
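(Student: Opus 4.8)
The plan is to deduce this from the pointwise criterion \Cref{prop:adj-criterion-MW}: I must produce a left adjoint $F_A$ of $G_A$ for \emph{every} $A\in\Bb$ together with an invertible Beck--Chevalley transformation $F_Af^*\to f^*F_B$ for \emph{every} $f\colon A\to B$ in $\Bb$. The hypotheses only supply this over the sieve $\Sigma$, so the work is to spread the data out over all of $\Bb$, which I will do by gluing along the cover $\{C\to 1\}_{C\in\Sigma}$.

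\emph{Step 1 (adjoints over $\Sigma$).} Fix $C\in\Sigma$ and pass to the slice topos $\Bb_{/C}$ with its $\Bb_{/C}$-categories $\pi_C^*\Cc$, $\pi_C^*\Dd$ and the functor $\pi_C^*G$. Since $\Sigma$ is a sieve, every object $B$ admitting a map to $C$ lies in $\Sigma$; hence by hypothesis $(\pi_C^*G)_{(B\to C)}=G_B$ admits a left adjoint for all $(B\to C)\in\Bb_{/C}$, and for every morphism of $\Bb_{/C}$ the associated Beck--Chevalley map — which is computed by the restriction functors of $\Cc$ and $\Dd$ themselves, cf.\ \Cref{rmk:fun-slice} — is invertible because its underlying map in $\Bb$ again has target in $\Sigma$. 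Thus conditions (1) and (2) of \Cref{prop:adj-criterion-MW} hold for $\pi_C^*G$, so it admits a parametrized left adjoint $F^{(C)}\colon\pi_C^*\Dd\to\pi_C^*\Cc$ over $\Bb_{/C}$.

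\emph{Step 2 (gluing).} Choose a small covering family $\{C_i\to 1\}_{i\in I}$ with all $C_i\in\Sigma$, and let $\check C_\bullet$ be its Čech nerve. Each $\check C_n$ is a coproduct of objects each of which maps to some $C_i$, hence lies in $\Sigma$; using $\Bb_{/\coprod_j A_j}\simeq\prod_j\Bb_{/A_j}$, Step 1 therefore furnishes a parametrized left adjoint of $\pi_{\check C_n}^*G$ over each $\Bb_{/\check C_n}$. Parametrized left adjoints are unique, and restriction of one along a pullback of topoi is again one, so these local adjoints are pairwise compatible; since $\ul\Fun(\Dd,\Cc)$ is a $\Bb$-category, hence a sheaf on $\Bb$, the compatible system assembles to an object of $\lim_{[n]\in\Delta}\ul\Fun(\Dd,\Cc)(\check C_n)\simeq\ul\Fun(\Dd,\Cc)(1)=\Fun_\Bb(\Dd,\Cc)$, i.e.\ to a $\Bb$-functor $F\colon\Dd\to\Cc$ with $\pi_{C_i}^*F\simeq F^{(C_i)}$. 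In the same way the units $\id\to\pi_{\check C_n}^*(GF)$ glue — via the sheaf property of $\ul\Fun(\Dd,\Dd)$ — to $\eta\colon\id_\Dd\to GF$, and the counits glue to $\epsilon\colon FG\to\id_\Cc$; the triangle identities are equalities of maps in parametrized functor categories, so they may be checked after restriction along the cover $\{C_i\to 1\}$, where they hold by construction. Hence $(\eta,\epsilon)$ exhibits $F\dashv G$.

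\emph{Expected main obstacle.} The routine parts are Step 1 (a direct application of \Cref{prop:adj-criterion-MW} over a slice) and the observation that $\Sigma$ being a sieve makes every object over $C$ lie in $\Sigma$. The genuinely technical point is the locality of parametrized adjunctions used in Step 2 — that an adjunction of $\Bb$-functors can be reconstructed from compatible adjunctions over the members of a cover. This is a soft consequence of descent for parametrized functor categories (and for their mapping spaces), but it requires some care to organise the coherence data of the glued unit and counit; if one prefers, the same content can be packaged as a Čech-descent refinement of \Cref{prop:adj-criterion-MW} applied to the cosimplicial map $[n]\mapsto\bigl(\Cc(\check C_n)\to\Dd(\check C_n)\bigr)$.
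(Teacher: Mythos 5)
Your Step 1 coincides with the paper's: since $\Sigma$ is a sieve, the pointwise criterion \Cref{prop:adj-criterion-MW} applied over each slice $\Bb_{/C}$, $C\in\Sigma$, yields a parametrized left adjoint of $\pi_C^*G$. Where you depart is Step 2. The paper at this point simply cites \cite{martiniwolf2021limits}*{Remark~3.3.6}, which states precisely that being a right adjoint is a local condition on $\Bb$-functors, and is done. You instead attempt to reprove this locality result by hand via Čech descent, and this is where your argument is underdeveloped.

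The issue you should be careful about: in the $\infty$-categorical setting, ``pairwise compatible'' does not suffice to produce an object of $\lim_{[n]\in\Delta}\ul\Fun(\Dd,\Cc)(\check C_n)$; one needs a full coherent cosimplicial datum. The salvageable version of your argument is to observe that for each $[n]$ the full subgroupoid $L_n\subseteq\iota\,\ul\Fun(\Dd,\Cc)(\check C_n)$ spanned by the parametrized left adjoints of $\pi_{\check C_n}^*G$ is either empty or contractible (uniqueness of adjoints), that restriction preserves left adjoints so the $L_n$ form a subfunctor, and that each $L_n$ is nonempty by Step 1; hence $\lim_n L_n$ is contractible and maps into $\iota\Fun_\Bb(\Dd,\Cc)$, producing the candidate $F$ with no coherence management needed. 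You would then still need to verify that this $F$ is adjoint to $G$; your sketch of gluing $\eta$, $\epsilon$ and checking the triangle identities locally is strategically correct — mapping spaces in $\ul\Fun$ also satisfy descent, and commutativity of a diagram in a limit of spaces can be tested on the pieces — but you have not assembled the cosimplicial coherence data for $\eta$ and $\epsilon$ either. You flag this yourself as the ``genuinely technical point,'' and it really is: what you have written is a correct outline of a proof of the Martini--Wolf locality lemma, not a complete one. Given that that lemma is available off the shelf, citing it (as the paper does) is both shorter and more robust.
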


	\begin{proof}
		As $\Sigma$ is a sieve, the assumptions imply via the previous proposition that for every $A\in\Sigma$ the $\Bb_{/A}$-functor $\pi_A^*G\colon\pi_A^*\Cc\to\pi_A^*\Dd$ is a right adjoint. As the objects of $\Sigma$ cover $1 \in \Bb$, \cite{martiniwolf2021limits}*{Remark~3.3.6} then implies that also $G$ itself is a right adjoint as claimed.
	\end{proof}

	\begin{corollary}\label{cor:colimits-local}
		Let $\Qq\subseteq\Bb$ local and let $\Cc$ be any $\Bb$-category. Assume that for every $q\colon A\to B$ there exists a covering sieve $\Sigma\subseteq\Bb_{/B}$ such that for every $(f\colon B'\to B)\in\Sigma$ restriction functor $q^{\prime*}\colon \Cc(B') \to \Cc(A \times_B B')$ along $q' := q^*(f)$ admits a left adjoint $q'_!$, and such that these left adjoint satisfy base change along maps in $\Sigma$. Then $\Cc$ is $\Qq$-cocomplete.
		\begin{proof}
			We have to show that for each $q$ the $\Bb_{/B}$-functor $q^*\colon\pi_B^*\Cc\to\ul\Fun(\ul A,\pi_B^*\Cc)$ admits a left adjoint. This is however simply an instance of the previous proposition (with $\Bb_{/B}$ in place of $\Bb$).
		\end{proof}
	\end{corollary}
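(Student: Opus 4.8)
The plan is to reduce the statement to \Cref{prop:adj-criterion-cover} after passing to the slice topos $\Bb_{/B}$. Recall from the discussion following \Cref{def:Q_Colimits} (which invokes \cite{martiniwolf2021limits}*{Corollary~3.2.11}) that $\Cc$ is $\Qq$-cocomplete precisely when, for every $q\colon A\to B$ in $\Qq$, the $\Bb_{/B}$-functor
\[
q^*\colon \pi_B^*\Cc \longrightarrow \ul\Fun(\ul A, \pi_B^*\Cc)\simeq \Cc(A\times_B\blank)
\]
admits a parametrized left adjoint. So I would fix such a $q$ and apply \Cref{prop:adj-criterion-cover}, with $\Bb$ there replaced by $\Bb_{/B}$, to the functor $G\coloneqq q^*$ and the given covering sieve $\Sigma\subseteq\Bb_{/B}$ of the terminal object $\id_B$.

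The one thing to check is that the hypotheses of \Cref{prop:adj-criterion-cover} translate into the hypotheses of the corollary. For an object $(f\colon B'\to B)\in\Sigma$, the categorical Yoneda lemma (\Cref{prop:Yoneda}) together with \Cref{rmk:fun-slice} identifies the $B'$-component of $G$ with the restriction functor $q^{\prime*}\colon\Cc(B')\to\Cc(A\times_B B')$ along $q'\coloneqq f^*(q)$, and identifies, for a morphism $g$ in $\Sigma$, the relevant Beck--Chevalley transformation for $G$ with the Beck--Chevalley transformation relating the functors $q^{\prime*}$ and $q^{\prime\prime*}$ coming from the corresponding pullback square (this is the same bookkeeping used e.g.\ in \Cref{rk:norm-res-pb}). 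Under these identifications, the assumption that each $q^{\prime*}$ for $(f\colon B'\to B)\in\Sigma$ admits a left adjoint and that these satisfy base change along maps in $\Sigma$ is exactly what \Cref{prop:adj-criterion-cover} requires. Hence $q^*$ admits a parametrized left adjoint; letting $q$ range over $\Qq$ shows that $\Cc$ is $\Qq$-cocomplete.

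The proof is essentially immediate once the first reduction is made, so there is no serious obstacle; the only mildly delicate point is the second paragraph's matching of the abstract pointwise data and Beck--Chevalley maps of the $\Bb_{/B}$-functor $q^*$ with the concrete restriction functors $q^{\prime*}$ and their base-change transformations. This is routine use of \Cref{prop:Yoneda} and \Cref{rmk:fun-slice} and requires no new input beyond what \Cref{prop:adj-criterion-cover} already provides.
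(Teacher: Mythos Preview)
Your proposal is correct and follows exactly the paper's approach: reduce $\Qq$-cocompleteness to the existence of a parametrized left adjoint for each $q^*\colon \pi_B^*\Cc\to\ul\Fun(\ul A,\pi_B^*\Cc)$, then apply \Cref{prop:adj-criterion-cover} over $\Bb_{/B}$ with the given sieve $\Sigma$. You have simply spelled out the identification of the pointwise data and Beck--Chevalley conditions that the paper leaves implicit.
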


	We also note the following result complementing this corollary:

	\begin{lemma}\label{lemma:cocont-local}
		Let $\Qq\subseteq\Bb$ be local and let $F\colon\Cc\to\Dd$ be a functor of $\Qq$-cocomplete $\Bb$-categories.  Assume that for every $q\colon A\to B$ in $\Qq$ there exists a cover $(f_i\colon B_i\to B)_{i\in I}$ (not necessarily a sieve) such that for every $i\in I$ the Beck--Chevalley map $q'_!F_{A\times_BB'_i}\to F_{B'_i}q'_!$ is an equivalence, where $q'=f_i^*(q)$ denotes the pullback of $q$ along $f_i$.	Then $F$ is $\Qq$-cocontinuous.
		\begin{proof}
			Fix $q\colon A\to B$ together with such a covering; we have to show that the Beck--Chevalley map $\BC_!\colon q_!F_A\to F_Bq_!$ is an equivalence. As the $f_i$ form a cover, it will be enough to show that $f_i^*\BC_!$ is an equivalence for every $i\in I$, i.e.~that the pasting
			\begin{equation*}
				\begin{tikzcd}
					\arrow[d,"F"']\Cc(A)\arrow[r, "q_!"] & \Cc(B)\arrow[d, "F"] \arrow[r, "f_i^*"] & \Cc(B_i)\arrow[d, "F"]\\
					\Dd(A)\arrow[ur, Rightarrow, "\BC_!"{description}]\arrow[r, "q_!"'] &\Dd(B)\arrow[r, "f_i^*"'] & \Dd(B_i)
				\end{tikzcd}
			\end{equation*}
			is invertible. However, pasting with the equivalences $f_i^*q_!\to q'_!f_i^*$ coming from $\Qq$-cocompleteness and appealing to the compatibility of mates with pastings this is equivalent to saying that the pasting
			\begin{equation*}
				\begin{tikzcd}
					\Cc(A)\arrow[r, "(A\times_Bf_i)^*"]\arrow[d, "F"'] &[2em]\arrow[d, "F"] \Cc(A\times_BB_i)\arrow[r,"q_!'"] & \Cc(B_i)\arrow[d, "F"]\\
					\Dd(A)\arrow[r, "(A\times_Bf_i)^*"'] & \Dd(A\times_BB_i)\arrow[r, "q'_!"']\arrow[ur,Rightarrow,"\BC_!"{description}] & \Dd(B_i)
				\end{tikzcd}
			\end{equation*}
			is invertible, which holds by assumption on $f_i$.
		\end{proof}
	\end{lemma}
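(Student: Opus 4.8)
The plan is to verify the defining condition of $\Qq$-cocontinuity by hand. So fix a morphism $q\colon A\to B$ in $\Qq$; I must show that the Beck--Chevalley transformation $\BC_!\colon q_!F_A\to F_Bq_!$ of functors $\Cc(A)\to\Dd(B)$ is invertible. Pick a cover $(f_i\colon B_i\to B)_{i\in I}$ as in the hypothesis, and for each $i$ let $q_i'\coloneqq f_i^*(q)\colon A\times_BB_i\to B_i$ be the pullback of $q$ and $g_i\colon A\times_BB_i\to A$ the leg of the resulting pullback square mapping to $A$.

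First I would make a descent reduction: since $\Dd\colon\Bb^\op\to\Cat$ is a sheaf and the $f_i$ cover $B$, the restriction functors $f_i^*\colon\Dd(B)\to\Dd(B_i)$ are jointly conservative, so it is enough to show that $f_i^*\BC_!$ is an equivalence for every $i$. Then I would rewrite this restricted transformation in terms of the data over $B_i$: by construction $f_i^*\BC_!$ is the pasting of the square filled by $\BC_!$ for $q$ with the (strictly commuting) naturality square of $F$ along $f_i^*$. Now $\Qq$-cocompleteness of $\Cc$ and of $\Dd$ supplies Beck--Chevalley equivalences $f_i^*q_!\iso(q_i')_!g_i^*$ on both sides, compatibly with $F$. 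Whiskering these equivalences onto $f_i^*\BC_!$ and using the standard compatibility of the formation of mates with horizontal composition, I would identify $f_i^*\BC_!$ with the pasting of the naturality square of $F$ along $g_i^*$ (again strictly commuting) with the square filled by $\BC_!$ for $q_i'$. The latter pasting is just $(\BC_!\text{ for }q_i')$ whiskered with $g_i^*$, which is an equivalence exactly by the hypothesis that $(q_i')_!F_{A\times_BB_i}\to F_{B_i}(q_i')_!$ is an equivalence. That finishes the argument.

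I expect the only delicate point to be the bookkeeping in the rewriting step: one has to check that the Beck--Chevalley equivalences $f_i^*q_!\simeq(q_i')_!g_i^*$ used for $\Cc$ and for $\Dd$ are compatible with $F$, so that it is legitimate to substitute them inside the pasting. This is a formal $2$-categorical fact — it follows from naturality of Beck--Chevalley squares with respect to natural transformations of functors, together with the compatibility of mates with pasting — but it is worth carrying out carefully, e.g.\ by working throughout in the homotopy $2$-category of $\Fun(\Bb^\op,\Cat)$. Everything else is routine.
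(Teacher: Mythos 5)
Your proposal follows the same strategy as the paper's proof: reduce by descent (joint conservativity of the $f_i^*$) to showing $f_i^*\BC_!$ is invertible, then paste with the Beck--Chevalley equivalences supplied by $\Qq$-cocompleteness of $\Cc$ and $\Dd$ and invoke compatibility of mates with pasting to reduce to the $\BC_!$ square for $q_i'$, which is invertible by hypothesis. The "delicate point" you flag — that substituting the two BC equivalences inside the pasting is legitimate — is exactly what "compatibility of mates with pasting" delivers; there is no additional coherence to check beyond that, and the paper appeals to it in the same way.
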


	\bibliography{reference}

\end{document}